\documentclass[11pt]{article}
\usepackage[margin=1in]{geometry}

\usepackage{authblk} 
\usepackage{graphicx}
\usepackage{amssymb}
 
\usepackage{amsmath}
\usepackage{enumerate}
\usepackage{amsfonts}
\usepackage[mathcal]{euscript}
\usepackage{amsthm}
\usepackage[all,2cell,import]{xy} \UseAllTwocells
\usepackage{xcolor}
\usepackage{tkz-euclide}
\usepackage{tikz}
	\usetikzlibrary{3d} 

\usepackage{url}
\makeatletter
\def\url@leostyle{%
	\@ifundefined{selectfont}{\def\UrlFont{\sf}}{\def\UrlFont{\small\ttfamily}}}
\makeatother
\setcounter{tocdepth}{3}
\urlstyle{leo}
\usepackage[refpage]{nomencl}

\makenomenclature
\usepackage{makeidx}
\makeindex
\numberwithin{equation}{section}

\usepackage[colorlinks=true, pdfstartview=FitV, linkcolor=black,
			   citecolor=black, urlcolor=black]{hyperref}
\usepackage{bookmark}
\usepackage[titletoc]{appendix} 
\hypersetup{hypertexnames=false} 

          \newcommand{\nc}{\newcommand}
          \nc{\DMO}{\DeclareMathOperator}	
          \nc{\commentout}[1]{}

          \nc{\newnotation}{\nomenclature}
          \nc{\wrap}{\cW}
          \nc{\Tw}{\mathsf{Tw}}
          \nc{\loc}{\mathsf{Loc}}
          \nc{\Top}{\mathcal{T}\!\operatorname{op}}
          \nc{\Diff}{\mathcal{D}\!\operatorname{iff}}
          \DMO{\emb}{Emb}
          \nc{\Ind}{\mathrm{Ind}}
          \nc{\Loc}{\mathsf{Loc}}
          \nc{\Cob}{\mathsf{Cob}}
          \nc{\mul}{\mathsf{Mul}}
          \nc{\fat}{\mathsf{fat}}
          \nc{\cob}{\mathsf{Cob}}
          \nc{\coh}{\mathsf{Coh}}
          \nc{\Liouaut}{\Aut}
          \nc{\Liouauto}{{\Aut^o}}
          \nc{\Liouautb}{\Aut^{b}}
          \nc{\Liouautgr}{\Aut^{gr}}
          \nc{\Liouautgrb}{\Aut^{gr,b}}
          \nc{\idem}{\mathsf{Idem}}
          \nc{\sets}{\mathsf{Sets}}
          \nc{\near}{\mathsf{near}}
          \nc{\sing}{\mathsf{Sing}}
          \nc{\Sing}{\mathsf{Sing}}
          \nc{\perf}{\mathsf{Perf}}
          \nc{\linear}{\mathsf{linear}}
          \nc{\block}{\mathsf{block}}
          \nc{\ssets}{\mathsf{sSets}}
          \nc{\cmpct}{\mathsf{cmpct}}
          \nc{\compact}{\mathsf{cmpct}}
          \nc{\pwrap}{\mathsf{PWrap}}
          \nc{\coder}{\mathsf{Coder}}
          \nc{\bimod}{\mathsf{Bimod}}
          \nc{\grmod}{\mathsf{GrMod}}
          \nc{\Morita}{\mathsf{Morita}}
          \nc{\morita}{\mathsf{Morita}}
          \nc{\spaces}{\mathsf{Spaces}}
          \nc{\posets}{\mathsf{Poset}}
          \nc{\pwrms}{\mathsf{PWrFuk}_{M,S}}
          \nc{\pwrmf}{\mathsf{PWrFuk}_{M,F}}
          \nc{\pwrapmf}{\mathsf{PWrFuk}_{M,F}}
          \nc{\fuk}{\mathsf{Fukaya}}
          \nc{\infwr}{\mathsf{InfWr}}
          \nc{\fukaya}{\mathsf{Fukaya}}
          \nc{\autml}{\mathsf{Aut}_{M,\Lambda}}
          \nc{\fukml}{\mathsf{Fukaya}_{M,\Lambda}}
          \nc{\fukmle}{\mathsf{Fukaya}_{M,\Lambda,\epsilon}}
          \nc{\fukmod}{\wrfukcompact(M)\modules}
          \nc{\lag}{\mathsf{Lag}}
          \nc{\lagm}{\lag_M}
          \nc{\lago}{\lag^o}
          \nc{\lagml}{\lag_{M,\Lambda}} 
          \nc{\lagmle}{\lag_{M,\Lambda,\epsilon}}
          \nc{\Fun}{\mathsf{Fun}}
          \nc{\fun}{\mathsf{Fun}}
          \nc{\vect}{\mathsf{Vect}}
          \nc{\chain}{\mathsf{Chain}}
          \nc{\chainn}{Chain}
          \nc{\wrfuk}{\mathsf{WrFukaya}}
          \nc{\wrfukcompact}{\mathsf{WrFukaya}_{\mathsf{cmpct}}}
          \nc{\pwrfuk}{\mathsf{PWrFukaya}}
          \nc{\inffuk}{\mathsf{InfFuk}}
          \nc{\pwrfukml}{\mathsf{PWrFukaya}_{M,\Lambda}}
          \nc{\inffukml}{\mathsf{InfFuk}_{M,\Lambda}}
          \nc{\nattrans}{\mathsf{NatTrans}}
          \nc{\corres}{\mathsf{Corres}}
          \nc{\fukep}{\fukaya_\Lambda(M,\epsilon)}
          \nc{\fukepop}{\fukaya_\Lambda(M,\epsilon)^{\op}}
          \nc{\lagep}{\lag_\Lambda(M,\epsilon)}
          \DMO{\cyl}{cyl} 
          \nc{\dbcoh}{D^b\mathsf{Coh}}
          \nc{\corr}{\mathsf{Corr}}

          \nc{\cat}{\mathsf{Cat}}
          \nc{\Cat}{\mathsf{Cat}}
          \nc{\ainfty}{\mathsf{A}_\infty}
          \nc{\inftycat}{\mathcal{C}\!\operatorname{at}_\infty}
          \nc{\inftyCat}{\mathcal{C}\!\operatorname{at}_\infty}
          \nc{\inftyGpd}{\mathcal{G}\!\operatorname{pd}_\infty}
          \nc{\Ainftycat}{\mathcal{C}\!\operatorname{at}_{A_\infty}}
          \nc{\dgcat}{\mathcal{C}\!\operatorname{at}_{dg}}
          \nc{\ainftycat}{\mathcal{C}\!\operatorname{at}_{A_\infty}}
          \nc{\stablecat}{\mathcal{C}\!\operatorname{at}_\infty^{\Ex}}

          \DMO{\op}{op}
          \DMO{\sd}{sd} 
          \DMO{\im}{im}
          \DMO{\ev}{ev}
          \DMO{\st}{st}
          \DMO{\stable}{Ex}
          \DMO{\map}{Map}
          \nc{\sdcoll}{\sd^{\coll}}
          \nc{\Excoll}{\Ex^{\coll}}
          \DMO{\inj}{inj}
          \DMO{\fib}{fib}
          \DMO{\conf}{Conf}
          \DMO{\chains}{Chains}
          \DMO{\cochains}{Cochains}
          \DMO{\cone}{Cone}
          \DMO{\Map}{Map}
          \DMO{\ran}{Ran}
          \DMO{\rot}{Rot}
          \DMO{\leg}{Leg}
          \DMO{\imm}{Imm}
          \DMO{\adj}{adj}
          \DMO{\symp}{Symp}
          \DMO{\tree}{Tree}
          \DMO{\cube}{Cube}
          \DMO{\weak}{weak}
          \DMO{\strong}{strong}
          \DMO{\Hoch}{Hoch}
          \DMO{\front}{front}
          \DMO{\flow}{Flow}
          \DMO{\floer}{Floer}
          \DMO{\Maps}{Maps}
          \DMO{\exact}{exact}
          \DMO{\excess}{Excess}
          \DMO{\Decomp}{Decomp}
          \DMO{\decomp}{Decomp}
          \DMO{\collar}{collar}
          \DMO{\yoneda}{Yoneda}
          \DMO{\hamspace}{Ham}
          \DMO{\sympspace}{Symp}
          \DMO{\holomaps}{Holomaps}
          \DMO{\comp}{Comp}
          \DMO{\crit}{Crit}
          \DMO{\test}{{test}}
          \DMO{\triv}{triv}
          \DMO{\sign}{sign}
          \DMO{\topp}{top}
          \DMO{\movie}{movie}
          \DMO{\indx}{Index}
          \DMO{\Break}{Break} 
          \DMO{\zero}{zero} 
          \DMO{\ob}{Ob}
          \DMO{\gr}{Gr} 
          \DMO{\Gr}{Gr} 
          \DMO{\cl}{Cl} 
          \DMO{\grlag}{GrLag}
          \DMO{\Pin}{Pin}
          \DMO{\Graph}{Graph}
          \DMO{\pin}{Pin}
          \DMO{\gap}{Gap}
          \DMO{\Ex}{Ex}
          \DMO{\id}{id}
          \DMO{\End}{End}
          \DMO{\sym}{Sym}
          \DMO{\aut}{Aut}
          \DMO{\Aut}{Aut}
          \DMO{\haut}{hAut}
          \DMO{\hAut}{hAut}
          \DMO{\DK}{DK} 
          \DMO{\poly}{poly} 
          \DMO{\diff}{Diff}
          \DMO{\coll}{coll}
          \DMO{\dist}{dist} 
          \DMO{\coker}{coker} 
          \nc{\kernel}{\ker} 
          \DMO{\sspan}{span}
          \DMO*{\colim}{colim}
          \DMO*{\hocolim}{hocolim}	
          \DMO*{\holim}{holim}
          \DMO{\sk}{sk}

          \DMO{\ho}{ho}
          \DMO{\fin}{fin}
          \DMO{\tor}{Tor}
          \DMO{\ext}{Ext}
          \DMO{\ret}{Ret}
          \DMO{\ham}{Ham}
          \DMO{\con}{con}
          \DMO{\leaf}{leaf}
          \DMO{\supp}{supp}
          \DMO{\edge}{edge}
          \DMO{\edges}{edges}
          \DMO{\Image}{image}
          \DMO{\roots}{roots}
          \DMO{\height}{height}
          \DMO{\finmod}{FinMod}
          \DMO{\leaves}{leaves}
          \DMO{\planar}{planar}
          \DMO{\vertices}{vertices}

          \nc{\lagg}{\lag^{\cG}}
          \nc{\iso}{\mathsf{Iso}}
          \nc{\Set}{\mathsf{Set}}
          \nc{\Ass}{\mathsf{ \bf Ass}}
          \nc{\Mod}{\mathsf{Mod}}
          \nc{\modules}{\mathsf{Mod}}
          \nc{\sset}{\mathsf{sSet}}
          \nc{\liou}{\mathsf{Liou}}
          \nc{\poset}{\mathsf{Poset}}
          \nc{\trno}{T^*\RR^n_{\geq 0}}
          \nc{\spectra}{\mathsf{Spectra}}
          \nc{\tensorfin}{\tensor^{\fin}}
          \nc{\lagptg}{\lag_{pt,pt}^{\cG}}
          \nc{\Fin}{\mathcal{F}\mathsf{in}}
          \nc{\lagnl}{\lag_{N,\Lambda}}
          \nc{\lagmlg}{\lag_{M,\Lambda}^{\cG}}
          \nc{\lagsplit}{\lag^{\mathsf{split}}}
          \nc{\lagktimes}{(\lag^{\dd k})^\times}
          \nc{\lagplanar}{\lag^{\times,\planar}}
          \nc{\Cont}{\text{\rm Cont}}
          \nc{\Ham}{\text{\rm Ham}}
          \nc{\Dev}{\text{\rm Dev}}
          \nc{\Lin}{\text{\rm Lin}}
          \nc{\Int}{\text{\rm Int}}
          \nc{\Hom}{\text{\rm Hom}}
          \nc{\Chord}{\text{\rm Chord}}
          \nc{\nbhd}{\mathcal{N}\text{\rm{bhd}}}

          \nc{\smsh}{\wedge}
          \nc{\un}{\underline}
          \nc{\xto}{\xrightarrow}
          \nc{\xra}{\xto}
          \nc{\tensor}{\otimes}
          \nc{\del}{\partial}
          \nc{\dd}{\diamond}
          \nc{\tri}{\triangle}
          \nc{\bb}{\Box}
          \nc{\into}{\hookrightarrow}
          \nc{\onto}{\twoheadrightarrow}
          \nc{\contains}{\supset}
          \nc{\transverse}{\pitchfork}
          \nc{\uncirc}{\underline{\circ}}

          \nc{\eqn}{\begin{equation}}
          \nc{\eqnn}{\begin{equation}\nonumber}
          \nc{\eqnd}{\end{equation}}
          \nc{\enum}{\begin{enumerate}}
          \nc{\enumd}{\end{enumerate}}
          \nc{\beastar}{\begin{eqnarray*}}
          \nc{\eeastar}{\end{eqnarray*}}

          \def\cA{\mathcal A}\def\cB{\mathcal B}\def\cC{\mathcal C}\def\cD{\mathcal D}
          \def\cE{\mathcal E}\def\cG{\mathcal G}
          \def\cL{\mathcal L}
          \def\cP{\mathcal P}
          
          \def\cW{\mathcal W}

          \def\CC{\mathbb C}\def\DD{\mathbb D}
          \def\GG{\mathbb G}

          \def\RR{\mathbb R}\def\SS{\mathbb S}
          \def\VV{\mathbb V}\def\WW{\mathbb W}
          \def\ZZ{\mathbb Z}

          \def\bL{\mathbf L}

          \def\fC{\mathfrak C}

          \nc{\Euc}{\mathsf{Euc}}
          \nc{\mfld}{\mathsf{Mfld}}
          \nc{\DTop}{\mathsf{DTop}}
          \nc{\simp}{\mathsf{Simp}}
          \nc{\Ainftycatt}{A_\infty Cat}
          \nc{\dgcatt}{dg Cat}
          \nc{\StableCat}{StableCat}
          \nc{\subdivision}{\mathsf{subdiv}}
          \nc{\Kan}{\mathcal{K}\mathsf{an}}

          \nc{\deR}{\operatorname{deR}}

          \theoremstyle{definition}
          \newtheorem{theorem}{Theorem}[section]
          
          \newtheorem{prop}[theorem]{Proposition}
          \newtheorem{lemma}[theorem]{Lemma}
          \newtheorem{warning}[theorem]{Warning}
          \newtheorem{cor}[theorem]{Corollary}
          \newtheorem{corollary}[theorem]{Corollary}
          \newtheorem{construction}[theorem]{Construction}

          \newtheorem{defn}[theorem]{Definition}
          \newtheorem{notation}[theorem]{Notation}
          \newtheorem{example}[theorem]{Example}
          \newtheorem{choice}[theorem]{Choice}
          
          \newtheorem{recollection}[theorem]{Recollection}
          \newtheorem{remark}[theorem]{Remark}
          \newtheorem{figurelabel}[theorem]{Figure} 

          \newtheorem{maintheorem}{Theorem}


\nc{\shrink}{\mathsf{shrink}}
\nc{\subdiv}{\mathsf{sd}}
\nc{\bsd}{\mathsf{bsd}}
\nc{\eqvs}{\mathsf{eq}}
\nc{\fs}{\mathfrak{s}}
\nc{\kihara}{\mathfrak{Kihara}}
\DMO{\collared}{coll}

\nc{\stab}{\mathsf{Stab}}

\nc{\wein}{\mathsf{Wein}}
\nc{\weinlocal}{\mathcal{W}\!\operatorname{ein}^{\dd}}
\nc{\LLiou}{\mathcal{L}\!\operatorname{iou}}
\nc{\lioulocal}{\mathcal{L}\!\operatorname{iou}^{\dd}}
\nc{\LLioustab}{\mathcal{L}\!\operatorname{iou}^{\dd}}
\nc{\flexsub}{\mathfrak{s}}
\nc{\liouvit}{\mathsf{Liou}_{\mathsf{Vit}}}
\nc{\weinvit}{\mathsf{Wein}_{\mathsf{Vit}}}
\nc{\weintop}{\mathsf{Wein}}
\nc{\weinstr}{\mathsf{Wein}_{\mathsf{str}}}
\nc{\lioustrstab}{\mathsf{Liou}^{\dd}_{\mathsf{str}}}
\DMO{\lioudmo}{Liou}
\nc{\lioutop}{\lioudmo^{\Top}}
\nc{\lioudiff}{\lioudmo^{\Diff}}
\nc{\lioucoll}{\lioudmo^{\collared}}
\nc{\embliou}{\emb_{\liou}}
\nc{\emblioucoll}{\emb_{\liou}^{\collared}}
\nc{\emblioudef}{\emb_{\liou}^{\mathcal{D}\!\operatorname{ef}}}
\nc{\weinstab}{\mathsf{Wein}^{\dd}}
\DMO{\sub}{sub}

\nc{\semi}{\mathsf{semi}}
\nc{\semisets}{\ssets_{\semi}}
\nc{\cLstr}{\cL_{\mathsf{str}}}

\DMO{\kan}{Kan}
\DMO{\Pic}{Pic}
\DMO{\finite}{finite}
\DMO{\nonunital}{nu}
\nc{\catsimp}{\mathcal{C}\!\operatorname{at}^{\Delta}}
\nc{\catsimpnon}{\mathcal{C}\!\operatorname{at}^{\Delta}_{\nonunital}}
\nc{\fcnu}{\fC_{\nonunital}}

\nc{\Toppara}{\operatorname{Top}}
\nc{\Topparavec}{\operatorname{Vec}}
\nc{\Toptriv}{\operatorname{Vec}_{\operatorname{Charts}}}

\nc{\liouenr}{\mathsf{Liou}_{\mathsf{enr}}}
\nc{\lioustr}{\mathsf{Liou}_{\mathsf{str}}}
\nc{\lioumoore}{\mathsf{Liou}_{\mathsf{forms}}}
\nc{\lioucollar}{\mathsf{Liou}_{\mathsf{collar}}}
\nc{\eqs}{\mathfrak{eq}}

\nc{\lioudelta}{\mathsf{Liou}_\Delta}
\nc{\lioudeltadef}{\mathsf{Liou}^{\mathcal{D}\!\operatorname{ef}}_\Delta}
\nc{\lioudeltadefstab}{(\lioudeltadef)^{\dd}}
\nc{\liousub}{\mathbb{L}\mathrm{iou}}

\nc{\embtop}{\mathcal{E}\kern-0.22em \operatorname{mb}}

\nc{\strict}{\mathrm{str}}
\nc{\defliou}{\mathcal{D}\!\operatorname{ef}}
\nc{\deflioumoore}{\mathcal{D}\!\operatorname{ef}_{\operatorname{Moore}}}
\nc{\liougen}{\liou^\mathsf{gen}}

\makeatletter
\newcommand{\LiouUnder@}[2]{%
  \vtop{\m@th\ialign{##\cr
    \hfil$#1\operator@font Liou$\hfil\cr
    \noalign{\nointerlineskip\kern1.5\ex@}#2\cr
    \noalign{\nointerlineskip\kern-\ex@}\cr}}%
}
\newcommand{\LiouUnder}{
  \mathop{\mathpalette\LiouUnder@{\rightarrowfill@\textstyle}}\nmlimits@
}
\makeatother

\begin{document}
\title{The $\infty$-category of stabilized Liouville sectors}
\author{Oleg Lazarev, Zachary Sylvan, and Hiro Lee Tanaka}
\maketitle

\begin{abstract}

We prove the surprising fact that the infinity-category of stabilized Liouville sectors is a localization of an ordinary category of stabilized Liouville sectors and strict sectorial embeddings. 

From the perspective of homotopy theory, this result continues a trend of realizing geometrically meaningful mapping spaces through the categorically formal process of localizing. From the symplectic viewpoint,
these results allow us to reduce highly non-trivial coherence results to much simpler verifications. For example, we prove that the wrapped Fukaya category is coherently functorial on stabilized Liouville sectors: Not only does a wrapped category receive a coherent action from stabilized automorphism spaces of a Liouville sector, spaces of sectorial embeddings map to spaces of functors between wrapped categories in a way respecting composition actions. 
As a consequence, we observe that wrapped Floer theory for sectors works in families.
As we will explain, our methods immediately establish such coherence results for most known sectorial invariants, including Lagrangian cobordisms.

As another application, we show that this infinity-category admits a symmetric monoidal structure, given by direct product of underlying sectors. The existence of this structure relies on a computation---familiar from the foundations of factorization homology---that localizations detect certain isotopies of smooth manifolds. Moreover, we characterize the symmetric monoidal structure using a universal property, again producing a simple-as-possible criterion for verifying whether invariants are both continuously and multiplicatively coherent in a compatible way. 

Much of this paper is devoted to the foundational work of rigorously constructing the infinity-category of stabilized Liouville sectors, where any sector $M$ is identified with its stabilization $M \times T^*[0,1]$. We begin with a verification that our construction is indeed an infinity-category (which relies on new constructions in Liouville geometry), then show that our infinity-category computes the ``correct'' mapping spaces (employing new simplicial techniques). A significant ingredient is a demonstration that two notions of sectorial maps give rise to homotopy equivalent spaces of maps.
\end{abstract}

\setcounter{tocdepth}{2}
\tableofcontents

\clearpage
\section{Introduction}
The theory of Liouville manifolds, and Liouville sectors more generally, has lacked a convenient framework for proving fundamental categorical questions regarding symplectic invariants. For example, is the wrapped Fukaya category functor sensitive to the topology and continuous  actions of Liouville embedding spaces?

Prior to the present work, the ``obvious'' way to try and answer the above Floer-theoretic question would have been to write down (or prove the existence of) a coherent system of auxiliary Floer data, and prove that the associated moduli spaces of pseudoholomorphic curves compactify in such a way as to output the desired $A_\infty$-categorical formulas. Such an approach is, while well-established, time-consuming. Not only that, different questions of coherence raise different flavors of difficulties when constructing relevant moduli spaces, requiring ad hoc techniques in each setting---as an example, we invite the reader to ponder how to prove continuous coherences are compatible with multiplicative/Kunneth-type formulas in wrapped Floer theory. A more fundamental scientific issue is the difficulty for an independent user or referee to verify such methods efficiently. Even if most agree ``it can be done,'' the classical techniques also result in a troubling number who agree ``I don't want to write or read the details.'' The works~\cite{oh-tanaka-actions, oh-tanaka-smooth-approximation} exhibited one method of proving coherence results while avoiding the gnarliest of these details. See also Remarks~\ref{remark. another plus} and~\ref{remark. comparing is easier now}.

As it turns out, and as we show in this work and its follower(s), one can avoid such details
and prove certain coherence results with minimal Floer theory---i.e., while constructing only minimally-intricate moduli spaces of holomorphic disks. 

More fundamentally, we construct here an $\infty$-category 
$\lioulocal$ of stabilized  Liouville sectors (Notation~\ref{notation. lioulocal}). This $\infty$-category has excellent formal properties, and the results here open the door for applying powerful $\infty$-categorical machinery to Liouville symplectic geometry.

\begin{remark}
To our readers unfamiliar with $\infty$-categories: One would not lose much intuition by pretending that an $\infty$-category is simply a category wherein the collection of morphisms may form a space (and not just a set). For example, the collection of spaces can form an $\infty$-category in two distinct ways. First, we may  think of the collection of continuous functions as simply a set. Second, one may endow the collection of continuous functions with the compact-open topology and enjoy the fact that composition of continuous functions is continuous.
\end{remark}

\begin{warning}
Throughout this work, the word ``stable'' refers to the geometric process of increasing dimension. It does not refer to stability in the sense of higher algebra or pretriangulated categories. 
For example, 
$\lioulocal$ is not a stable $\infty$-category in the sense of Lurie~\cite{higher-algebra}.
\end{warning}

First, some preliminaries. Let $M$ and $N$ be Liouville sectors (Definition~\ref{defn. sector}). 

\begin{defn}
A smooth, codimension zero, proper embedding $f: M \to N$ is a {\em sectorial embedding} if $f^*\lambda^N = \lambda^M + dh$ for some compactly supported smooth function $h$ on $M$. A sectorial embedding is {\em strict} if $f^*\lambda^N = \lambda^M$.\footnote{This is not the notion of strictness used  in~\cite{abouzaid-seidel-recombination}.} 

We say (a not necessarily strict) $f$ is a {\em sectorial equivalence}, or an equivalence of Liouville sectors, if there exist a (not necessarily strict) sectorial embedding $g: N\to M$ and isotopies $g\circ f \sim \id_N$, $f \circ g \sim \id_M$, through (not necessarily strict)  sectorial embeddings. 
\end{defn}
Sectorial equivalences are a natural class of maps known to leave wrapped Fukaya categories unchanged (see Remark~2.45 of~\cite{last-flexible}, Lemma~3.33 of~\cite{gps}, and Proposition~\ref{prop. wrapped category invariant under sectorial equivalences} below).

Finally, for the sake of exposition, we present an undefined idea: If $M$ and $N$ are Liouville sectors, let
	\eqn\label{eqn. emb informal}
	\embtop^{\dd}(M,N) \approx \{f: M \times T^*[0,1]^k \to N \times T^*[0,1]^{(\dim M + 2k - \dim N)/2},
	\qquad
	k>>0\}
	\eqnd
denote the space of stable (not necessarily strict) sectorial embeddings. 

\begin{warning}\label{warning. stable embeddings undefined}
This space is not well-defined as written above---hence the approximation symbol---because even if $f$ is a sectorial embedding, its stabilization $f \times \id_{T^*[0,1]}$ is typically not a sectorial embedding, as the compact-support condition on $h$ is not preserved. At the same time, it is natural to contemplate a ``stabilized'' mapping space: For Weinstein sectors, the wrapped category is known to be unchanged under stabilization~\cite{gps-descent}. We give a precise formulation of this mapping space in Definition~\ref{defn. stabilized embedding space}.
\end{warning}

\subsection{A coherence result}
In the introduction, we will denote by capital letters (A, B, C, et cetera) what we view to be the central results . 

To begin, we informally state a result that we make formal in Theorem~\ref{theorem. localization}. It guarantees that certain invariants known only to be sensitive to the {\em set} of   strict embeddings can in fact be made sensitive to the {\em space} of stable, not-necessarily-strict embeddings.

\begin{theorem}\label{theorem. informal functoriality}
Let $\WW$ be an invariant of Liouville sectors taking values in an $\infty$-category admitting filtered colimits and assume the following:
\enum[(a)]
	\item\label{item. W is strict functorial} ($\WW$ is functorial with respect to strict inclusions.) To any {\em strict} sectorial embedding $f: M \to N$, one can functorially assign a pushforward map $f_*: \WW(M) \to \WW(N)$.

	\item\label{item. W stabilizes assumption} ($\WW$ stabilizes.) 	$\WW$ is equipped with natural maps $\WW(M) \to \WW(M \times T^*[0,1])$. This allows us to define the ``stabilized'' invariant associated to $M$, which we call $\WW^{\dd}(M)$.\footnote{
	Concretely, $\WW^{\dd}(M)$ is the colimit of the diagram $\WW(M) \to \WW(M \times T^*[0,1]) \to \WW(M \times T^*[0,1]^2) \to \ldots$. This filtered colimit exists by assumption; see also Remark~\ref{remark. filtered colimits}
	}.
	
	\item\label{item. W inverts equivalences assumption} ($\WW$ stably inverts equivalences.) If $f: M \to N$ is a strict sectorial embedding that happens to be an equivalence of Liouville sectors, then the induced map $f_*: \WW^{\dd}(M) \to \WW^{\dd}(N)$ is an equivalence.
\enumd
Then, for any pair of Liouville sectors $M$ and $N$, $\WW$ induces a continuous map from the space $\embtop^{\dd}(M,N)$ to the space of morphisms from $\WW^{\dd}(M)$ to $\WW^{\dd}(N)$ and this map extends the assignments~\eqref{item. W is strict functorial}. Finally, upon varying $M$ and $N$, these maps coherently respect composition.
\end{theorem}

\begin{remark}
In fact, one may replace~\eqref{item. W inverts equivalences assumption} with the assumption that $\WW$ stably inverts any of three other natural notions of equivalences---trivial inclusions, bordered deformation equivalences, or movie inclusions---and the conclusion of the theorem still holds (see Section~\ref{section. equivalent localizations}). We note that, in practice, some invariants may satisfy a stronger assumption than~\eqref{item. W inverts equivalences assumption}: If $f$ is a strict equivalence, the map $f_*: \WW(M) \to \WW(N)$ may be an equivalence even before passing to the stabilized invariants. This is the case when $\WW$ is the wrapped Fukaya category. 
\end{remark}

\begin{remark} \label{remark. filtered colimits}
In many examples of $\infty$-categories, a filtered colimit may be modeled by an increasing union along well-behaved inclusions (of possibly infinitely many objects). So one would not lose much intuition by imagining that $\WW^{\dd}(M)$ is a union $\bigcup_{n \geq 0} \WW(M \times T^*[0,1]^n)$ along the stabilization maps~\eqref{item. W stabilizes assumption}. Finally, most invariants take values in familiar $\infty$-categories---of groups, modules, categories, sets, spaces, chain complexes et cetera---where such filtered colimits exist, so the assumption that $\WW$ takes values in a setting with filtered colimits is not onerous. 
\end{remark}

The wrapped Fukaya category\footnote{This functoriality is proven in~\cite{gps}, and other foundational works include~\cite{abouzaid-loops, abouzaid-seidel-wrapped, sylvan-wrapped}. We also point out the font difference between $\WW$ (which refers to a general invariant) and $\cW$ (which refers to the wrapped Fukaya category).} $\cW(M)$ and the Lagrangian cobordism $\infty$-category\footnote{In the notation of~\cite{nadler-tanaka}, we set $\lag(M) = \lag_\Lambda(M)$ where $\Lambda$ is the skeleton of $M$.}  $\lag(M)$
are both known to satisfy the hypotheses of Theorem~\ref{theorem. informal functoriality}. In fact, for all Liouville sectors, $\lag(M) \simeq \lag^{\dd}(M)$.
So we have the following as an example application:

\begin{theorem}\label{theorem. mapping spaces}
Let $M$ and $N$ be Liouville sectors. Then 
\enum
	\item There are continuous maps
	\eqnn
	\embtop^{\dd}(M,N) \to \fun(\lag(M),\lag(N)),
	\qquad
	\embtop^{\dd}(M,N) \to \fun(\cW^{\dd}(M),\cW^{\dd}(N))
	\eqnd
to the space of functors. 
	\item Further, these maps can be made to respect composition operations homotopy-coherently.
\enumd
\end{theorem}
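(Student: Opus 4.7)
The plan is to deduce Theorem \ref{theorem. mapping spaces} directly from Theorem \ref{theorem. informal functoriality} by invoking it for the generic invariant $\WW$ taken to be $\lag$ and $\cW$ in turn. The conclusion of Theorem \ref{theorem. informal functoriality} simultaneously supplies a continuous map out of $\embtop^{\dd}(M,N)$ and the homotopy-coherent composition compatibility, so the entire task reduces to verifying the three hypotheses---functoriality with respect to strict sectorial embeddings, inversion of sectorial equivalences, and existence of stabilization maps---for each of the two specific invariants.

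For $\WW = \cW$, strict-functoriality is built into the construction of the wrapped Fukaya category in \cite{gps}; hypothesis (2) is Lemma~3.33 of \cite{gps}; and the stabilization map $\cW(M) \to \cW(M \times T^*[0,1])$ is constructed in \cite{gps-descent}, where it is moreover shown to be an equivalence for Weinstein $M$ (accounting for $\cW^{\dd}(M) \simeq \cW(M)$ in that setting). For $\WW = \lag$, the Nadler-Tanaka construction records simplices of families of exact Lagrangian branes and so is strictly functorial on strict sectorial embeddings while being manifestly invariant under isotopy, and in particular under sectorial equivalences; the stabilization map is given by Cartesian product with the zero section of $T^*[0,1]$; and the identification $\lag(M) \simeq \lag^{\dd}(M)$ noted just before the theorem lets us rewrite $\fun(\lag^{\dd}(M),\lag^{\dd}(N))$ as $\fun(\lag(M),\lag(N))$ in the statement.

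With the hypothesis-checks complete, Theorem \ref{theorem. informal functoriality} applied to each invariant delivers the two desired maps and their composition-coherence. The real work lies upstream in Theorem \ref{theorem. informal functoriality} itself: once the localization description of the stabilized Liouville $\infty$-category $\lioudelta^{\dd}$ is in place, any $\WW$ satisfying (1)--(3) extends uniquely to an $\infty$-functor out of it, and $\infty$-functors automatically induce continuous, composition-respecting maps on mapping spaces---which is precisely what assertions (1) and (2) require. The main obstacle I anticipate is therefore not in this theorem but in reconciling the precise notion of ``sectorial equivalence'' required by the localization formalism with the variants appearing in \cite{gps} and in the Lagrangian cobordism framework, and in verifying that the stabilization maps for $\lag$ and $\cW$ organize into the diagrammatic data demanded by hypothesis (3) of Theorem \ref{theorem. informal functoriality} rather than merely into a sequence of pointwise maps.
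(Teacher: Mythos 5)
Your proposal takes essentially the same approach as the paper: reduce Theorem~\ref{theorem. mapping spaces} to Theorem~\ref{theorem. informal functoriality} by verifying hypotheses (1)--(3) for $\WW = \lag$ and $\WW = \cW$ in turn, then read off the conclusion. The one detail you gloss over but the paper makes explicit is the choice of target $\infty$-category $\cD$ for the wrapped-Fukaya case: Lemma~3.33 of~\cite{gps} produces quasi-equivalences of $A_\infty$-categories, but these are not invertible morphisms in the ordinary $1$-category of $A_\infty$-categories, so hypothesis~(2) is not literally satisfied there. One must instead compose the $\lioustr$-indexed family from~\cite{gps} with the passage to the $\infty$-category $\Ainftycat$ of $A_\infty$-categories (as constructed in~\cite{oh-tanaka-localizations}), where quasi-equivalences do become genuine equivalences---and then take $\cD = \Ainftycat$. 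Similarly, for $\lag$ the paper records $\cD$ as the $\infty$-category of stable $\infty$-categories. Supplying these targets is a small but necessary completion of your argument; otherwise your reasoning matches the paper's.
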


We emphasize the uniformity of the result. Before this work, one would have had to use Floer-theoretic techniques to prove the wrapped Fukaya category result and Lagrangian cobordism techniques to prove---you get the idea. At the expense of passing to {\em stabilized} invariants, our results allow one to cordon off all non-uniform treatments to the verification of the hypotheses in Theorem~\ref{theorem. informal functoriality}. Note also that one could consider a chain-complex valued, or space-valued, invariant of sectors (rather than a categorical invariant); Theorem~\ref{theorem. informal functoriality} still guarantees continuous maps to the relevant mapping spaces. This uniformity is useful for comparing invariants as well---see Corollary~\ref{cor. natural transformations of invariants}. 

The theorem induces a map of $A_\infty$-spaces from $\embtop^{\dd}(M,M)$ to $\fun(\cW^{\dd}(M),\cW^{\dd}(M))$. (The ``homotopy coherence'' of the theorem guarantees that these are indeed maps respecting the $A_\infty$ structures.) It is then automatic that the higher homotopy groups of the former map to the negative Hochschild cohomology groups of $\cW(M)$. Indeed, observe the isomorphisms
    \begin{align}\nonumber
        \pi_i\fun(\cW(M),\cW(M))
            &\cong
          HH^{1-i}(\cW(M),\cW(M))
          &
          (i > 1),
         \\
         \pi_1\fun(\cW(M),\cW(M))
           & \cong
          (HH^{0}(\cW(M),\cW(M)))^\times
          \nonumber
    \end{align}
where the $\times$ superscript indicates the units of the ring.
Then we have:

\begin{corollary}
    There exists a natural map
    \begin{equation}\nonumber
        \pi_i 	\embtop^{\dd}(M,M)
        \to
          HH^{1-i}(\cW(M),\cW(M))
          \qquad
          (i > 0).
    \end{equation}
And for any other sector $N$, the following diagram 
commutes:
    \begin{equation}
        \nonumber
        \xymatrix{
        \pi_i \embtop^{\dd}(M,N) \times \pi_i \embtop^{\dd}(M,M)
        \ar[r] \ar[d]
        & \pi_i \embtop^{\dd}(M,N) \ar[d] \\
        \pi_i \fun(\cW^{\dd}(M),\cW^{\dd}(N)) \times
        \pi_i \fun(\cW^{\dd}(M),\cW^{\dd}(M))
        \ar[r]
        & 
        \pi_i \fun(\cW^{\dd}(M),\cW^{\dd}(N))
        }
    \end{equation}
\end{corollary}

\begin{remark}\label{remark. no analysis needed for families}
This is a stable analogue of results from~\cite{oh-tanaka-actions}---indeed, to our knowledge, ibid. was the first work which demonstrated that localization techniques in $\infty$-category theory (and not just localization along positive wrappings as in~\cite{gps}) yield geometrically meaningful results in wrapped Floer theory. One major departure from~\cite{oh-tanaka-actions} is the absence of analytical arguments regarding $J$-holomorphic disks in the present work -- indeed, ibid. requires a set-up for Floer theory in families, while here the $\infty$-categorical techniques allow us to avoid them completely, and simply piggy-back on the results of~\cite{gps}.
\end{remark}

\begin{example}[Floer theory works in families]
In particular, suppose that $E \to X$ is a bundle of Liouville sectors over some base space $X$ with fiber $F$ and let $\Aut(F)$ denote the space of (not necessarily strict) Liouville automorphisms. Then the composition
    \eqnn
    X \to B\Aut(F)
    \to B\embtop^{\dd}(F,F)
    \to B\aut(\WW(F))
    \eqnd
where the last map is guaranteed to exist by Theorem~\ref{theorem. informal functoriality}, is an invariant of the bundle. In fact, the last part of the composition gives a way to study the topology of Liouville automorphism spaces.  For example, when $\WW$ is the wrapped Fukaya category, and noting the natural map from $B\Aut(\WW(F))$ to the shift of Hochschild cochains of $\WW(F)$, we obtain a Liouville version of the Seidel homomorphism~\cite{seidel-representation} (for all higher homotopy groups, not just $\pi_1$, of $\Aut(F)$). Put another way, wrapped Floer theory behaves well in families. As in Remark~\ref{remark. no analysis needed for families}, we obtain these results with no need to address compactness of moduli of disks in families, nor to carefully study degenerations of disks in families.

\end{example}

\begin{remark}[Another advantage]
\label{remark. another plus}
Because current definitions of ($\ZZ$-linear) Fukaya categories only depend on counting 0-dimensional components of moduli spaces, as do the definitions of most $A_\infty$ functors, the combinatorics involved in proving coherence results are largely manageable even if tedious. However, for spectrally enriched Floer invariants, which necessarily extract data from all higher-dimensional moduli spaces, far more delicate book-keeping would be necessary to construct families of functors out of families of embeddings. Theorem~\ref{theorem. informal functoriality} allows us to deduce the coherent functoriality of such spectral invariants simply by verifying ``discrete'' or ``embedding-by-embedding'' (as opposed to families of embeddings) properties, which are far easier to establish.
\end{remark}

\begin{remark}
Finally, many known methods for constructing actions of {\em unstabilized} sectorial embedding spaces on the above invariants actually factor through the stabilized embedding spaces anyway.\footnote{While this expectation is intuitive, the hard work is in translating known constructions---see~\cite{oh-tanaka-actions} for example---to the methods used in this paper.} So for invariants satisfying the hypotheses of Theorem~\ref{theorem. informal functoriality}, no information is lost upon passage to stabilization. 
\end{remark}

\subsection{The infinity-categories of Liouville sectors}
Let us now dip our toes into the details---by highlighting our main constructions, and one fundamental property. These details serve the most important philosophical take-away from our work: $\infty$-categorical localizations give rise to meaningful objects in symplectic geometry. 
We refer the reader to Table~\ref{table. the categories of sectors} for a summary of the various $\infty$-categories we construct.

The first construction of our work (Definition~\ref{defn. lioudelta}) is an $\infty$-category 
	\eqnn
	\lioudelta
	\eqnd
whose objects are Liouville sectors, and whose commutative diagrams consist of certain data organized by the combinatorics of barycentric subdivisions. 

\begin{example}
A morphism from $M_0$ to $M_1$ is not the data of a sectorial embedding, but equivalent to the data of two {\em strict} embeddings
	\eqnn
	M_0 \times T^*[0,\epsilon] \to M_{01} \times T^*[0,1] \leftarrow M_1 \times T^*[1-\epsilon,1]
	\eqnd
where the left-pointing arrow is demanded to be an isomorphism to its image, where both embeddings are collared in an appropriate way, and where we specify a Liouville structure on $M_{01} \times T^*[0,1]$ which may not be a direct product of the  Liouville structures of the two factors. The Liouville  structure on $M_{01} \times T^*[0,1]$ will be such that it determines a 1-parameter family of Liouville structures on $M_1$ itself, through deformations by derivatives of compactly supported functions.

Note that the above diagram is in the ``shape'' of the barycentric subdivision of a 1-simplex. 
Likewise, a $k$-simplex in $\lioudelta$ will consist of a diagram of sectors in the shape of a barycentric subdivision of a $k$-simplex. See~\eqref{eqn. 2-simplex isotopy} for the case $k=2$.
\end{example}

\begin{maintheorem}[Theorem~\ref{theorem. lioudelta is an infinity-cat}]
\label{theorem. liou delta is oo-cat}
$\lioudelta$ is an $\infty$-category.
\end{maintheorem}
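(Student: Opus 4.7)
The plan is to verify the weak Kan condition directly: for every $0<i<n$, show that any map $\Lambda^n_i \to \lioudelta$ extends to a map $\Delta^n \to \lioudelta$. Since an $n$-simplex of $\lioudelta$ is a diagram of Liouville sectors indexed by the poset of non-empty chains in $[n]=\{0,\dots,n\}$ (the poset of cells of the barycentric subdivision of $\Delta^n$), together with collared strict sectorial embeddings among their appropriate stabilizations and Liouville structures on those stabilizations, an inner horn $\Lambda^n_i$ specifies all of this data except for (i) the piece indexed by the single chain $\{0,\dots,\hat{i},\dots,n\}$ and (ii) the top piece indexed by the chain $\{0,\dots,n\}$, together with the strict embeddings and Liouville-structure compatibilities that connect these pieces to the rest. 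So the problem reduces to constructing these two missing sectors, the missing embeddings, and the missing Liouville structures, in a way that matches everything present in the horn.

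First I would construct the sector $M_{0\dots\hat{i}\dots n}$ indexed by the missing face. The key observation is that the horn already contains, as part of its $d_{i-1}$- and $d_{i+1}$-faces, two ``parallel'' sub-diagrams that under the barycentric combinatorics must map into $M_{0\dots\hat{i}\dots n}$ via collared strict embeddings. One can then simply define $M_{0\dots\hat{i}\dots n}$ as the ``mapping-cylinder-like'' sector obtained by gluing along collars, using the explicit product-with-$T^*[0,1]^k$ structure of the neighboring pieces; this is essentially the same elementary cylinder construction that already appears in the definition of a $1$-simplex. The required strict embeddings from lower sectors are then tautological, and the Liouville structure on this new piece is pieced together from the Liouville structures already specified on the adjacent cylindrical pieces of the horn, using the fact that two Liouville structures agreeing near the boundary can be interpolated by a compactly supported exact deformation along the new $[0,1]$-direction.

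Next I would construct the top sector $M_{0\dots n}$ with its Liouville structure on the appropriate stabilization. Because the combinatorics of $\sd(\Delta^n)$ organize the top cell as a cone on the (already-completed) boundary diagram, a completely analogous cylinder-glue construction works: take a product of the $(i-1)$-face and $(i+1)$-face total sectors with $T^*[0,1]$, identify them along the newly constructed face indexed by $\{0,\dots,\hat{i},\dots,n\}$, and choose a Liouville form which restricts to each boundary Liouville structure through a compactly supported exact interpolation. The resulting strict embeddings from all lower pieces into $M_{0\dots n}$ are the canonical inclusions; the collaring conditions are satisfied by construction, because both constructions use strict product-with-interval collars near all glued faces.

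The main obstacle, and the only step that requires anything beyond formal simplicial-combinatorial bookkeeping, is showing that the interpolating Liouville structures can always be chosen to satisfy the precise compactly-supported-exact deformation condition built into $\lioudelta$, on all pieces simultaneously and compatibly with restrictions to every sub-simplex of the horn. The strategy here is to proceed by dimension in $\sd(\Delta^n)$: one uses a cut-off argument on the interval directions introduced by stabilization, leveraging that the difference between any two candidate Liouville 1-forms on a cylinder $X \times T^*[0,1]$ that agree at the two ends is necessarily exact after stabilization. Once this lemma is stated in the form needed (its proof is a small variant of arguments already present earlier in the paper on collared structures and deformations), the horn-filling reduces to a routine induction over the combinatorial dimension of missing cells, establishing that $\lioudelta$ satisfies the inner horn extension property and is therefore an $\infty$-category.
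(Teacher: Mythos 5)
Your proposal has a correct high-level picture of what data is missing from an inner horn, but it misidentifies both the formal framework and the actual geometric difficulties, and the construction you sketch would not work as stated.

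The most fundamental gap is that $\lioudelta$ is defined only as a \emph{semisimplicial} set (Definition~\ref{defn. lioudelta}): the strict collaring conditions obstruct any naive definition of degeneracy maps. You cannot simply ``verify the weak Kan condition directly'' for a simplicial set that you do not yet have; you must first promote $\lioudelta$ to a simplicial set. The paper does this via Steimle's theorem (Theorem~\ref{theorem. steimle}), which requires establishing (i) the semisimplicial weak Kan condition and (ii) that every object admits an idempotent self-equivalence (Lemma~\ref{lemma. idempotents self equivalences}). Your proof makes no mention of degeneracies or identity morphisms, and without them there is no $\infty$-category to speak of.

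The second issue is your ``mapping-cylinder-like'' construction of the missing sectors $M_{[n]\setminus\{i\}}$ and $M_{[n]}$. This is not what happens and cannot happen: the max-localizing condition \ref{item. lioudelta is max localizing} forces all $M_A$ in a single simplex to be strictly isomorphic (and in particular to have the same dimension), and since $\max([n]\setminus\{i\}) = \max([n]) = n$ for inner horns, both missing sectors are forced to be (isomorphic to) $M_{\{n\}}$. The paper simply declares $M_{[n]} := M_{\{n\}}$ (Construction~\ref{construction. tilde alpha [n]}); there is no gluing. A mapping cylinder would raise dimension and break the definition. Relatedly, your claim that the required strict embeddings ``are then tautological'' is precisely where you are skipping all of the difficulty: the paper builds the families of sectorial embeddings using a family of shrinking isotopies (Choice~\ref{choice. sigma}, Section~\ref{section. shrinking}), isotopy extension for sectorial embeddings (Propositions~\ref{prop. isotopy extension} and~\ref{prop. isotopy extension 2}), and a nontrivial ``continuous-to-smooth'' approximation argument (Proposition~\ref{prop. continuous to smooth}, which in turn needs Proposition~\ref{prop. families of Liouvilles are families of Hamiltonians} to linearize the problem locally). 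None of this is formal bookkeeping.

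Finally, you identify the hard step as interpolating Liouville structures by compactly supported exact deformations. This is in fact the \emph{easy} step: the space of compactly supported smooth functions is convex, hence contractible, so once the underlying families of embeddings are constructed and collared, extending the $h$'s filling out the movie structures is routine (this is why the paper often says ``we ignore the Liouville structures'' in the $\hom^R$ computations). The genuine obstacle lies in producing smooth, continuous, collared families of sectorial embeddings parametrized by simplices, compatibly across faces---which is where the isotopy extension and shrinking machinery is indispensable.
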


The construction of $\lioudelta$ is the technical heart of our paper. Indeed, even the verification that $\lioudelta$ is an $\infty$-category is not formal, as one requires results from Liouville geometry (the existence of isotopy extensions and the ability to ``squeeze'' a sector away from its boundary and corner strata). Moreover, we first define $\lioudelta$ as a semisimplicial set; the geometry just mentioned allows us to use a result of Steimle~\cite{steimle} to conclude $\lioudelta$ is an $\infty$-category, and a result of the third author~\cite{tanaka-non-strict} to deduce the existence of certain functors out of $\lioudelta$. 

We then prove:

\begin{maintheorem}[Theorem~\ref{theorem. EmbLiou is homLiouDelta}]
\label{theorem. hom liou delta is hom liou}
Fix two Liouville sectors $M$ and $N$, and let $\embtop_{\liou}(M,N)$ denote the space of sectorial embeddings from $M$ to $N$, topologized to permit control near infinity over compact families. (See Definition~\ref{defn. topology on embliou}.) Then there exists a homotopy equivalence
between the (unstabilized) mapping spaces
	\eqnn
	\embtop_{\liou}(M,N)
	\simeq
	\hom_{\lioudelta}(M,N).
	\eqnd
\end{maintheorem}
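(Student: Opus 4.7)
My approach is to produce a direct map of simplicial sets from a simplicial model of $\embtop_{\liou}(M,N)$ to $\hom_{\lioudelta}(M,N)$, and show that it is a weak equivalence by identifying its fibers with contractible spaces of auxiliary Liouville-geometric data. The construction leverages a tautological fact: a sectorial embedding $f\colon M\to N$ already furnishes all the essential data of a 1-simplex of $\lioudelta$, once one absorbs the non-strictness of $f$ into a deformation of the Liouville form on the target stabilization.

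First I would construct the map explicitly on 1-simplices. Given $f\colon M\to N$ with $f^*\lambda^N=\lambda^M+dh$ and $h$ compactly supported, set $M_{01}:=N$. The right-hand arrow $N\times T^*[1-\epsilon,1]\hookrightarrow N\times T^*[0,1]$ is the collar inclusion and is strict for the standard Liouville form. For the left arrow, $(f\times\id)\colon M\times T^*[0,\epsilon]\to N\times T^*[0,1]$ fails to be strict, but it becomes strict if one replaces the product Liouville form on $N\times T^*[0,1]$ by the deformed form $\lambda^N+\lambda^{T^*[0,1]}-d(\chi\cdot \tilde h)$, where $\tilde h$ extends $f_* h$ by zero and $\chi$ is a cutoff in the $[0,1]$-direction equal to $1$ near $[0,\epsilon]$ and $0$ near $[1-\epsilon,1]$. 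This is precisely the sort of non-product Liouville structure permitted by $\lioudelta$, and it determines a deformation of $\lambda^N$ by $d$ of a compactly supported function, as required. Running this construction on smooth families of sectorial embeddings parameterized by a simplex $\Delta^k$ produces the desired map on $k$-simplices; higher simplices in $\lioudelta$ arise by performing the same construction over each interval of the barycentric subdivision poset with coherently chosen cutoffs and primitives.

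Next I would show this map is a trivial Kan fibration. By the standard lifting criterion, this reduces to showing that the space of \emph{extra choices} needed to lift a (family of) sectorial embedding(s) to a simplex of $\lioudelta$ is contractible. These choices decompose into: a choice of intermediate sector $M_{01}$ equipped with strict collared embeddings and the required isomorphism; a choice of deformed Liouville form on $M_{01}\times T^*[0,1]$ refining the given product structure at the two ends; a choice of compactly supported primitive $h$ (unique up to a locally constant function, which becomes a point after normalizing at a collared boundary component); and choices of cutoff functions and collarings. Each of these is an affine space, a torsor for a contractible group, or retracts onto a canonical basepoint: Liouville deformations with prescribed boundary behaviour are contractible by standard linear-interpolation arguments, strict collared refinements of a sectorial embedding are contractible by isotopy extension (which was already established in the proof of Theorem A), and the space of ``intermediate'' sectors $M_{01}$ deformation-retracts onto the canonical choice $M_{01}=N$.

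The main obstacle is coherence: an $n$-simplex of $\lioudelta$ specifies data on \emph{every} interval $[I,J]$ in the barycentric subdivision of $\Delta^n$, and one must choose primitives, cutoffs, and deformed Liouville forms that restrict compatibly under every face operator. This is the step where the inductive machinery from the proof of Theorem A (in particular, the collaring, isotopy extension, and squeezing constructions for Liouville sectors) does genuine work: one builds the coherent family of choices by induction on the dimension of faces and on the height in the subdivision poset, with each inductive step unobstructed precisely because the individual choice spaces are contractible rather than merely $n$-connected. A secondary subtlety concerns tracking the compact-support condition on the potentials $h$ in families, since the ambient sectors are non-compact; this I would handle by normalizing each primitive to vanish on a fixed collared boundary component, where compact support forces vanishing anyway. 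Once the fiber contractibility is established uniformly over simplices, the map is a trivial fibration between Kan complexes and hence the asserted homotopy equivalence.
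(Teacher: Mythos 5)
Your overall strategy captures the paper's first two moves: (i) a sectorial embedding plus a deformation $\chi\cdot\tilde h$ of the product Liouville form on $N\times T^*\Delta^1$ produces a $1$-simplex of $\lioudelta$; and (ii) the auxiliary data---cutoffs, compactly supported primitives, collarings---all live in convex/contractible spaces, so the resulting fibration has contractible fibers. This is essentially Construction~\ref{construction. edges in lioudelta} together with the paper's $\widetilde{\emblioucoll}(M,N)\cong\hom'_{\lioudelta}(M,N)\to\emblioucoll(M,N)$ chain. So far so good.

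The gap is in your handling of the intermediate sectors $M_A$. An arbitrary vertex $F$ of $\hom^R_{\lioudelta}(M,N)$ has a target-side object $Q=M_{\{0,1\}}$ that is \emph{not} literally $N$ but only comes with a diffeomorphism $\phi\colon N\cong Q$ (encoded at the terminal vertex). Your proposal asserts that ``the space of intermediate sectors $M_{01}$ deformation-retracts onto the canonical choice $M_{01}=N$,'' but that is precisely the statement that requires real work, and contractibility of the pointwise choice of pair $(Q,\phi)$ does not deliver it. The issue is not the single vertex but coherence: a $k$-simplex of $\hom^R_{\lioudelta}$ carries a \emph{family} $\Phi_N^Q\colon\Lambda^{k+1}_{k+1}\to\embliou(N,Q)$ (and likewise $\Phi_M^Q$) constrained along horns and boundaries, and you must homotope these simultaneously rel boundary to the constant family $\underline{\phi}$ while preserving the composition laws of a $\lioudelta$-simplex. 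The paper isolates this as Lemma~\ref{lemma. hom' is homR} (the inclusion $\hom'_{\lioudelta}(M,N)\hookrightarrow\hom^R_{\lioudelta}(M,N)$ is an equivalence), and it does \emph{not} follow from an induction over a tower of contractible fibers: the map $\hom'\to\hom^R$ is an inclusion, not a fibration, and the argument for bijectivity on $\pi_0$ and each $\pi_k$ involves explicit capping/cylinder constructions, the change-of-basis to max-constant simplices, and the homotopy-commutativity statement $\phi^{-1}\circ\Phi_N^Q\sim\Phi_N^Q\circ\phi^{-1}$ of Lemma~\ref{lemma. Phi commutes with phi}. Your proposal silently absorbs this step into the contractibility discussion and gives no mechanism for doing it rel the horn constraints.

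There is also a direction confusion that points at the same underlying gap. If your fibration is to have as fibers ``the space of extra choices needed to lift a sectorial embedding to a simplex of $\lioudelta$,'' then the fibration must run from $\hom_{\lioudelta}$ to $\emb_{\liou}$, not the reverse. But a forgetful map $\hom^R_{\lioudelta}(M,N)\to\emb_{\liou}(M,N)$ is not well-defined at the simplicial-set level precisely because of the ambient $Q$'s: you can form the composite embedding $M\to M_{\{0,1\}}\xleftarrow{\cong}N$ for a single edge, but for higher simplices the various identifications $M_A\cong N$ need not be simplicially compatible, which is why the paper restricts to the sub-object $\hom'_{\lioudelta}$ before forgetting. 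Making that restriction a weak equivalence is the substantive content your proposal still owes.
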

The underlying philosophy of the theorem is that families of (not necessarily strict) embeddings may be modeled by certain diagrams in the shapes of barycentric subdivisions (consisting only of strict embeddings). The take-away is that one should think of $\lioudelta$ 
as a different $\infty$-categorical model of the usual space-enriched category of sectors, just as some chain complexes admit equivalent but differently-behaved models.

\begin{remark}
\label{remark. hom versus cat equivalence of lioudelta}
It is possible to construct an equivalence of $\infty$-categories between $\lioudelta$ and the space-enriched category of Liouville sectors, but the combinatorial details are quite involved so we leave this for a separate work. Regardless, we give a ``moral'' argument in Section~\ref{section. lioudelta equivalence sketch} as to why one should expect such an equivalence.
\end{remark}

To obtain a stabilization of $\lioudelta$, we construct in Definition~\ref{defn. lioudeltadef} an auxiliary $\infty$-category 
	\eqnn
	\lioudeltadef
	\eqnd
equivalent to $\lioudelta$ (Theorem~\ref{theorem. lioudelta is lioudeltadef}), but whose diagrams allow for families of Liouville structures that are not necessarily related by compactly supported deformations. The constraints imposed by strictness in the barycentric subdivision diagrams, and the freedoms granted by this larger class of deformations, pay off in the following observation: The stabilization assignment $M \mapsto T^*[0,1] \times M$ is a functor from $\lioudeltadef$ to itself.\footnote{
\label{footnote.direct product}
In contrast, this assignment does not know what to do for non-strict diagrams of Liouville sectors for the reasons raised in Warning~\ref{warning. stable embeddings undefined}. Indeed, much of our simplicial finagling began so we could avoid dealing with this issue. We also note the need to pass to a larger class of deformations (not just compactly supported deformations) for the same reason.} So we can now take the colimit (in our setting, an increasing union)
	\eqn\label{eqn. lioudeltadefstab}
	\lioudeltadefstab := \colim \left(\lioudeltadef \xrightarrow{T^*[0,1] \times - }
	\lioudeltadef \xrightarrow{T^*[0,1] \times - } \ldots \right).
	\eqnd
Of course, because $\lioudelta \simeq \lioudeltadef$, we may also construct an $\infty$-category 
	\eqn\label{eqn. lioudeltastab}
	\lioudelta^{\dd}:= \colim \left(\lioudelta \xrightarrow{T^*[0,1] \times - }
	\lioudelta \xrightarrow{T^*[0,1] \times - } \ldots \right)
	\eqnd
where we have abused notation to let $T^*[0,1] \times -$ denote the induced endofunctor of $\lioudelta$ as well.  $\lioudelta^{\dd}$ and $\lioudeltadefstab$ are equivalent as $\infty$-categories; we call them both the $\infty$-category of stabilized Liouville sectors. (See also Notation~\ref{notation. lioulocal}.)

\begin{remark}\label{remark. stable Emb}
Concretely, an object of $\lioudelta^{\dd}$ is a pair $(M,k)$ where $M$ is a Liouville sector and $k$ is a non-negative integer, up to the relation $(M,k) \sim (T^*[0,1]^N \times M, k+N)$. In particular, there are some objects such as $(T^*\RR^0, k)$ which one might think of as the $k$th ``destabilization'' of $T^*\RR^0$. 

Now let us discuss the morphisms. If there is a morphism from $(M,k)$ to $(M',k')$, it must be that $\dim(M) + 2k' = \dim(M') + 2k$. Thus, $\lioudelta^{\dd}$ is a disjoint union of many subcategories, indexed by the difference between $2k$ and the underlying dimension of $M$. 
By Theorem~\ref{theorem. hom liou delta is hom liou}, we see that a morphism from $M$ to $N$ is, up to contractible choice, the data of a (not necessarily strict) sectorial embedding $ T^*[0,1]^a  \times M \to T^*[0,1]^b \times N$ for $\dim M + a = \dim N + b$. 
\end{remark}

In light of Theorem~\ref{theorem. mapping spaces} and our ability to stabilize~\eqref{eqn. lioudeltastab}, the mapping spaces $\hom_{\lioudelta^{\dd}}(M,N)$ are {\em the} model for $\embtop^{\dd}(M,N)$ we use in this work. This makes precise the informal description in~\eqref{eqn. emb informal}:

\begin{defn}[$\embtop^{\dd}$]\label{defn. stabilized embedding space}
For two Liouville sectors $M$ and $N$, we define
    \eqnn
     \embtop^{\dd}(M,N):= \hom_{\lioudelta^{\dd}}(M,N).
    \eqnd
\end{defn}

 \begin{remark}
Moreover, for any two choices of even integers $a$ and $a'$, the full subcategory of those $(M,k)$ for which $\dim M - 2k = a$ is equivalent to the full subcategory of those $(M',k')$ for which $\dim M' - 2k' = a'$. And, by definition of $\WW^{\dd}$ (in Theorem~\ref{theorem. informal functoriality}), we see that $\WW^{\dd}(M,k) \simeq \WW^{\dd}(M,0)$ regardless of $k$. 
Thus, in studying the invariant $\WW^{\dd}$, one may safely consider only the subcategory of $\lioudelta^{\dd}$ for which $\dim M = 2k$.

However, the subcategories with $\dim M \neq 2k$ will become relevant when we consider symmetric monoidal functors out of $\lioudelta^{\dd}$. See Remark~\ref{remark. monoidal structure sees other k components}
\end{remark}

\begin{table}[tbp]
\centering
\resizebox{\textwidth}{!}{
\begin{tabular}{p{0.13\textwidth}p{0.35\textwidth}p{0.35\textwidth}}
\hline \hline
$\lioustr$&
   {A category of Liouville sectors and  only strict embeddings. Notation~\ref{notation. lioustr}.} 
   & Sees no homotopical information of embedding spaces, but easy to construct functors out of.\\ \hline  \hline
$\lioustrstab$ &
  {A category of stabilized Liouville   sectors and strict embeddings. \eqref{eqn. lioustr defn}} 
  &   Easy to define as a colimit because $\lioustr$ admits an endofunctor   called stabilization.\\ \hline  \hline
 $\lioudelta$ &
  An $\infty$-category of Liouville sectors. Morphism spaces are equivalent to   spaces of sectorial embeddings. Definition~\ref{defn. lioudelta}. &
  Diagrams involve only strict embeddings, but can encode deformations of   Liouville structures by compactly supported exact deformations. \\  
$\lioudeltadef$ &
  An $\infty$-category of Liouville sectors. Morphism spaces are equivalent to   spaces of sectorial embeddings. Definition~\ref{defn. lioudeltadef}.&
  Diagrams involve only strict embeddings, but can encode deformations of   Liouville structures by arbitrary exact deformations. Morphism spaces do not admit obvious composition law.\\ \hline  \hline
$\lioudeltadefstab$ &
  An $\infty$-category of stabilized Liouville sectors. \eqref{eqn. lioudeltadefstab}&
  Easy to define as a colimit because $\lioudeltadef$ admits an endofunctor   called stabilization. \\   
$\lioudelta^{\dd}$ &
  An $\infty$-category of stabilized Liouville sectors. \eqref{eqn. lioudeltastab}&
  Defined indirectly, as $\lioudelta$ does not admit obvious stabilization endofunctor. \\   
$ \lioustrstab[(\eqs^\dd)^{-1}]$ &
  {Localization of $\lioustrstab$ along (strict) sectorial equivalences} 
  & Useful universal property.\\  
$\lioulocal$ &
  {A convenient notation denoting any of the three equivalent $\infty$-categories of stabilized sectors. Notation~\ref{notation. lioulocal}.} 
  & Notation is clean.\\ \hline\hline
$\lioulocal_{/(B' \to B)}$ &
  {A version of $\lioulocal$ where stabilized sectors are equipped with tangential data, and morphisms are maps equipped with compatibilities of tangential data. Construction~\ref{construction. Liou with structures}.} 
  & Necessary to articulate functoriality of various versions of Fukaya and microlocal categories.\\ \hline\hline
\end{tabular}
}
  \caption{Summary of the various categories and $\infty$-categories of Liouville sectors. The horizontal double-lines delineate equivalences classes of $\infty$-categories. For example, $\lioudelta$ and $\lioudeltadef$ are equivalent $\infty$-categories.}
  \label{table. the categories of sectors}
\end{table}

\subsection{Stabilized Liouville sectors via localization}
We now come to the one main property: $\lioudeltadefstab$ is a localization. 

\begin{remark}
To situate the reader, recall that a functor $\cA \to \cB$ is called a localization if it exhibits $\cB$ as a universal (in this case, initial) $\infty$-category that inverts a particular collection of morphisms in $\cA$. In particular, if $F: \cA \to \cC$ is a functor sending all morphisms in this collection to equivalences in $\cC$, then $F$ canonically factors through $\cB$ up to homotopy. A central tool in modern homotopy theory is to begin with $\cA$ a discrete category (i.e., a category in the classical sense~\cite{mac-lane}), and to localize $\cA$ to an $\infty$-category with rich topological information. For example, if one localizes the ordinary category of CW complexes along homotopy equivalences, one recovers the $\infty$-category of CW complexes (and in particular, the homotopy type of all mapping {\em spaces} between CW complexes).\footnote{To see this, apply the results of Dwyer-Kan~\cite{dwyer-kan-function-complexes} to a convenient model category of spaces, and note that Dwyer-Kan localization models $\infty$-categorical localizations---e.g., via Proposition~1.2.1 of~\cite{hinich-dwyer-kan}.} In the present work, we recover the stabilized sectorial embedding spaces using this process (Theorem~\ref{theorem. localization} below).
\end{remark}

So, what is $\lioudeltadefstab$ a localization of? 

\begin{notation}\label{notation. lioustr}\label{notation. lioustrstab}
Let $\lioustr$ be the category (in the classical sense) whose objects are Liouville sectors, and whose morphisms are strict sectorial embeddings. We stabilize to define a category (in the classical sense) 
	\eqn\label{eqn. lioustr defn}
	\lioustrstab := \colim \left( \lioustr \xrightarrow{T^*[0,1] \times -}  \lioustr \xrightarrow{T^*[0,1] \times -} \ldots \right)
	\eqnd 
of stabilized Liouville sectors, where morphism sets are sets of strict stable sectorial embeddings. (The $T^*[0,1]\times -$ functor is well-defined on the nose because $\lioustr$ only contains strict sectorial embeddings.) 
\end{notation}
Based on our models, one can straightforwardly construct a functor (Construction~\ref{construction. j from lioustrstab to lioudeltadefstab})
	\eqn\label{eqn. the functor}
	\lioustrstab \to \lioudeltadefstab.
	\eqnd
And though sectorial equivalences are of course not invertible in $\lioustrstab$, their images become invertible up to homotopy in $\lioudeltadefstab$ (Proposition~\ref{prop. homotopic maps are homotopic}).

Let $\eqs^{\dd} \subset \lioustrstab$ denote the collection of (strict) sectorial equivalences.

\begin{maintheorem}\label{theorem. localization}
The functor~\eqref{eqn. the functor} is a localization of $\lioustrstab$ along (strict) sectorial equivalences. More precisely, the induced map
	\eqnn
	 \lioustrstab[(\eqs^\dd)^{-1}] \to \lioudeltadefstab
	\eqnd
is an equivalence of $\infty$-categories.
\end{maintheorem}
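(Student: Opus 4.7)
The plan is to verify that the functor $F: \lioustrstab \to \lioudeltadefstab$ from~\eqref{eqn. the functor} sends sectorial equivalences to equivalences, and that the induced functor $\tilde F: \lioustrstab[\eqs^{-1}] \to \lioudeltadefstab$ is an equivalence of $\infty$-categories. Well-definedness of $\tilde F$ is already in hand via Proposition~\ref{prop. homotopic maps are homotopic}: it guarantees that $F(\eqs)$ consists of equivalences, so the universal property of $\lioustrstab[\eqs^{-1}]$ produces $\tilde F$. Essential surjectivity is immediate because $F$ is the identity on objects, so the content reduces to showing that $\tilde F$ is fully faithful on every pair $M,N$.

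The key observation for full faithfulness is that the $k$-simplices of $\lioudelta$ (and hence of $\lioudeltadefstab$) are literally shaped like barycentric subdivisions. A $1$-morphism from $M_0$ to $M_1$ is a pair of strict embeddings into a common Liouville sector $M_{01}$ where the wrong-way leg is an isomorphism onto its image---that is, a strict sectorial equivalence---and higher simplices are iterated zig-zag diagrams of the same flavor. This is precisely the combinatorial shape of morphisms in a hammock, or calculus-of-fractions, localization of $\lioustrstab$ at $\eqs$. The plan is therefore to identify both mapping spaces with $\embtop^{\dd}(M,N)$: the right-hand side by a stabilized version of Theorem~\ref{theorem. hom liou delta is hom liou}, and the left-hand side by modeling $\lioustrstab[\eqs^{-1}]$ via precisely these zig-zag diagrams and checking that the resulting simplicial set has the expected homotopy type.

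The main obstacle is verifying that $\eqs$ admits enough calculus of fractions in $\lioustrstab$ to justify the hammock model as computing the localization. Concretely, given a zig-zag of strict embeddings and sectorial equivalences, one must complete it into a span of the required form by producing appropriate strict sectorial equivalences; this completion is exactly the content of the isotopy-extension and sector-squeezing technology already invoked for Theorem~\ref{theorem. liou delta is oo-cat}. The use of $\lioudeltadef$ in place of plain $\lioudelta$ matters here because the enlarged class of Liouville deformations permits these completions to proceed without running into compact-support obstructions on the deformation data. Finally, stabilization is harmless: $T^*[0,1] \times -$ preserves $\eqs$ and $\infty$-categorical localization commutes with the filtered colimits of~\eqref{eqn. lioustr defn} and~\eqref{eqn. lioudeltadefstab}, so the stabilized statement reduces to its unstabilized analogue.
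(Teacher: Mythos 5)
Your high-level diagnosis is correct: a simplex of $\lioudeltadefstab$ is a barycentric-subdivision-shaped diagram in $\lioustrstab$ whose max-localizing legs are sectorial equivalences, and this is structurally what one expects from a localization. You also correctly identify that $\beta$ is essentially surjective for free, and that stabilization commutes with localization (the paper makes this precise in Lemma~\ref{lemma. increasing union of localizations is localization} and Corollary~\ref{cor. stab loc is loc stab}). But the mechanism you propose for the remaining step---full faithfulness---diverges from the paper and has a real gap.

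You propose to ``identify the left-hand side by modeling $\lioustrstab[\eqs^{-1}]$ via precisely these zig-zag diagrams and checking that the resulting simplicial set has the expected homotopy type,'' and you locate the obstacle at ``verifying that $\eqs$ admits enough calculus of fractions.'' This misses where the difficulty actually lives. First, the hammock localization computes $\lioustrstab[\eqs^{-1}]$ for \emph{any} relative category, without any calculus-of-fractions hypothesis; a calculus of fractions would only let you collapse the hammocks to spans or cospans, which is not the bottleneck. Second, and more importantly, the paper never directly computes the mapping spaces of $\lioustrstab[\eqs^{-1}]$ at all---it deliberately avoids exactly the computation you're proposing to do, because there is no independent handle on those mapping spaces other than comparison with $\lioudeltadefstab$ itself. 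Your plan, as written, either requires an unstated proof that hammock mapping spaces are equivalent to $\embtop^{\dd}(M,N)$ (a project roughly the size of the whole theorem), or it quietly presupposes the result.

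The paper's actual route, in Section~\ref{section. proof of localization}, is a clean two-sided sandwich that sidesteps this. It builds a map $\alpha \colon \lioudeltadefstab \to \Ex_{\simeq}(\lioustrstab[\eqs^{-1}])$ by reading every barycentric-subdivision simplex of $\lioudeltadefstab$ as a functor $\sd(\Delta^I) \to \lioustrstab$ and post-composing with the localization functor (Remarks~\ref{remark. lioudelta to exliou} and~\ref{remark. lioudelta is a sub of Ex lioustr}, Construction~\ref{construction. alpha}). The technical heart is Proposition~\ref{prop. Exsim(C) is C}, which says $\Ex_{\simeq}(\cC) \simeq \cC$ for any $\infty$-category $\cC$; this is what makes the barycentric-subdivision intuition rigorous, and it is precisely the ingredient your proposal lacks. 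With $\alpha$ in hand, the paper shows that $\alpha \circ \beta$ is homotopic to the canonical equivalence $m \colon \lioustrstab[\eqs^{-1}] \to \Ex_{\simeq}(\lioustrstab[\eqs^{-1}])$ (Lemma~\ref{lemma. k equivalence}), and, separately, that $\Ex_{\simeq}(\beta) \circ \alpha$ is an equivalence (Lemma~\ref{lemma. main lemma for Ex}, which uses that $\Ex_{\simeq}$ promotes to a simplicially enriched functor, Lemma~\ref{lemma. Ex is homotopical}). A two-out-of-six style argument then yields that $\alpha$ and $\beta$ are both equivalences (Corollary~\ref{cor. beta is equivalence}). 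Theorem~\ref{theorem. hom liou delta is hom liou} plays no role in the localization proof itself---it is a statement about $\lioudeltadefstab$, not about the abstract localization---so your plan to invoke it on ``both sides'' cannot work as stated. In short: right intuition, wrong mechanism, and the piece you flag as ``the main obstacle'' is not the real one.
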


\begin{remark}
Let us explain where stabilization (the $\dd$ superscript) enters the picture. The point is that $\lioudelta$ already incorporates movies (Definition~\ref{defn. movie object}) in defining its morphisms, and receives a natural map from the localization of $\lioustr$ -- however, to exhibit an inverse to this map, we resort to a construction (Construction~\ref{construction. alpha}) where the movies in our morphisms must be interpretable as objects. In particular, we must stabilize our sectors to treat objects and movies on the same footing.
\end{remark}

Combining with Theorem~\ref{theorem. hom liou delta is hom liou}, we see that a category (in the classical sense) localizes to give an $\infty$-category with rich geometric information---e.g., the homotopy type of spaces of stable sectorial embeddings.

\begin{notation}[$\lioulocal$]
\label{notation. lioulocal}
The three $\infty$-categories 
	$ \lioustrstab[(\eqs^\dd)^{-1}]$, $\lioudeltadefstab$, and $\lioudelta^{\dd}$ are hence all equivalent. Each model has their advantages, but notations are all clunky, so we will use the notation
	\eqnn
	\lioulocal
	\eqnd
to denote $ \lioustrstab[(\eqs^\dd)^{-1}]$ (and hence, up to equivalence, $\lioudelta^{\dd}$ and $\lioudeltadefstab$). 
In fact, there are three other localizations all equivalent to $ \lioustrstab[(\eqs^\dd)^{-1}]$ (Theorem~\ref{thm. three localizations of lioustr}) and we will notate them all by $\lioulocal$.

We call $\lioulocal$ the $\infty$-category of stabilized Liouville sectors.
\end{notation}

\subsection{Localization exhibits coherent functoriality}

Assuming Theorem~\ref{theorem. hom liou delta is hom liou} and Theorem~\ref{theorem. localization}, we may now prove Theorem~\ref{theorem. informal functoriality}:

\begin{proof}[Proof of Theorem~\ref{theorem. informal functoriality}.]
Let us first make precise the hypotheses of Theorem~\ref{theorem. informal functoriality}. The first hypothesis states that $\WW$ is in fact a functor $\lioustr \to \cD$, where $\cD$ is some $\infty$-category 
 admitting filtered colimits. 
The third assumption is the data of a diagram
	\eqn\label{eqn. stabilization triangle}
	\xymatrix{
		\lioustr \ar[r]^{\WW} & \cD \\
		\lioustr \ar[u]^{T^*[0,1] \times -} \ar[ur]_{\WW}
	}
	\eqnd
commuting up to specified natural transformation.
By passing to the colimit~\eqref{eqn. lioustr defn}, one obtains a functor $\WW^{\dd}: \lioustrstab \to \cD$. By the second assumption, and by the universal property of localization, Theorem~\ref{theorem. localization} guarantees a factorization up to homotopy
	\eqnn
	\xymatrix{
	\lioustrstab \ar[rr]^-{\WW^{\dd}} \ar[d] && \cD. \\
	\lioulocal \ar@{-->}[urr]^{\exists}
	}
	\eqnd
Because Theorem~\ref{theorem. hom liou delta is hom liou} tells us that the mapping spaces of $\lioulocal$ are precisely the stabilized sectorial embedding spaces (see also Remark~\ref{remark. stable Emb}), this dashed functor produces the desired maps
	\eqnn
	\embtop^{\dd}(M,N) \simeq \hom_{\lioudelta^{\dd}}(M,N) \to \hom_{\cD}(\WW^{\dd}(M),\WW^{\dd}(N)).
	\eqnd
The ``coherence'' with respect to compositions is a vague term whose specific meaning is that the dashed arrow is a functor.
\end{proof}

\begin{proof}[Proof of Theorem~\ref{theorem. mapping spaces}.]
For $\lag$, let $\cD$ be the $\infty$-category of stable $\infty$-categories; then the hypotheses of Theorem~\ref{theorem. informal functoriality} are known. For the wrapped Fukaya category, we note that the work ~\cite{gps} establishes a functor from $\lioustr$ to the ordinary category of $A_\infty$-categories; this in turn maps to the $\infty$-category of $A_\infty$-categories $\Ainftycat$---this is the localization of the ordinary category of $A_\infty$-categories along quasi-equivalences. Thus one may take $\cD = \Ainftycat$ and apply Theorem~\ref{theorem. informal functoriality}. 
\end{proof}

For the record, and now that we have the notation, let us state the precise formulation of Theorem~\ref{theorem. mapping spaces}:

\begin{theorem}[Precise restatement of Theorem~\ref{theorem. mapping spaces}]
Let $\Ainftycat$ denote the $\infty$-category of $A_\infty$-categories.  Then the wrapped Fukaya category construction, along with the functoriality established in~\cite{gps}, induces a functor of $\infty$-categories
	\eqnn
	\lioulocal \to \Ainftycat
	\eqnd
which sends a Weinstein sector $M$ to $\cW(M)$ and a Liouville sector more generally to $\cW^{\dd}(M)$.
Likewise, $\lag$ defines a functor from $\lioulocal$ to the $\infty$-category of stable $\infty$-categories.
\end{theorem}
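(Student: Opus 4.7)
The plan is to apply Theorem~\ref{theorem. informal functoriality} verbatim to each of the two invariants, so the work is reduced to verifying its three hypotheses (strict functoriality, inversion of sectorial equivalences, and existence of stabilization maps) in each case and then being careful about targets.

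For the wrapped Fukaya category, I would first invoke~\cite{gps} to obtain a functor from the ordinary category $\lioustr$ to the ordinary category of $A_\infty$-categories. Composing with a construction of $\Ainftycat$ as in~\cite{oh-tanaka-localizations}, in which quasi-equivalences are upgraded to genuine equivalences, we obtain an $\infty$-categorical functor $\cW \colon \lioustr \to \Ainftycat$. Hypothesis (ii) of Theorem~\ref{theorem. informal functoriality} is then exactly the statement that a sectorial equivalence induces a quasi-equivalence of wrapped categories, which is~\cite[Remark 2.45]{last-flexible} and~\cite[Lemma 3.33]{gps}. For hypothesis (iii), the canonical strict sectorial inclusion $M \hookrightarrow M\times T^*[0,1]$, natural in $M$, provides the requisite stabilization transformation. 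One then takes the colimit along the stabilization maps to produce $\cW^{\dd}\colon \lioustrstab \to \Ainftycat$, and the universal property of localization encoded by Theorem~\ref{theorem. localization} yields the desired dashed factorization through $\lioulocal$. For Weinstein $M$, the known equivalence $\cW(M) \simeq \cW(M \times T^*[0,1])$ (see~\cite{gps-descent}) makes the stabilization colimit collapse, so the value at $M$ is $\cW(M)$ as claimed.

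The Lagrangian cobordism case is entirely parallel, using the stable $\infty$-category of $A_\infty$-categories as target. Functoriality on strict sectorial embeddings and equivalence-invariance for $\lag$ are proved in the $\lag$ framework of~\cite{nadler-tanaka}; moreover, since $\lag$ is already stabilization-invariant (as stated after Theorem~\ref{theorem. mapping spaces}), the colimit along $T^*[0,1]\times -$ is automatically $\lag$ itself, so no new stabilization ingredient is needed to apply Theorem~\ref{theorem. informal functoriality}. The application of Theorem~\ref{theorem. localization} again produces the functor out of $\lioulocal$.

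The only genuine subtlety here is the passage from the $1$-categorical output of the GPS construction to a functor landing in an $\infty$-category in which quasi-equivalences are actually invertible; without this, hypothesis (ii) of Theorem~\ref{theorem. informal functoriality} is not literally meaningful. Once that bookkeeping is in place (which is handled in~\cite{oh-tanaka-localizations}), the remainder is formal: the identification $\hom_{\lioulocal}(M,N) \simeq \embtop^{\dd}(M,N)$ from Theorem~\ref{theorem. hom liou delta is hom liou}, combined with the factorization supplied by Theorem~\ref{theorem. localization}, immediately yields both the functors of $\infty$-categories asserted in the theorem and the interpretation of their effect on mapping spaces in terms of stabilized sectorial embeddings.
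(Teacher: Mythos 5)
Your approach matches the paper's proof essentially step for step: both apply Theorem~\ref{theorem. informal functoriality}, both appeal to~\cite{gps} and to~\cite{oh-tanaka-localizations} to pass from the $1$-categorical GPS functor to a functor into $\Ainftycat$ where quasi-equivalences become invertible, and both handle $\lag$ by observing that its hypotheses are already known. The extra detail you supply about stabilization being the strict inclusion $M \hookrightarrow M\times T^*[0,1]$ and about $\lag$ being stabilization-invariant is consistent with what the paper asserts in passing.

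One small slip: for the Lagrangian cobordism invariant you take the target to be ``the stable $\infty$-category of $A_\infty$-categories,'' but the statement and the paper take $\cD$ to be the $\infty$-category of stable $\infty$-categories (an $\infty$-category whose \emph{objects} are stable $\infty$-categories, not itself a stable $\infty$-category, and not a category of $A_\infty$-categories). The $\lag$ construction outputs stable $\infty$-categories, not $A_\infty$-categories, so this is the correct codomain for that case.
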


\begin{remark}
Of course, there are natural maps from the unstabilized embedding spaces $\embtop(M,N)$ to the stabilized embedding spaces $\embtop^{\dd}(M,N)$. So one would expect Theorem~\ref{theorem. informal functoriality} to imply that, for invariants satisfying the hypotheses of the theorem, any ``naturally'' continuous extension of $\WW$ to an $\infty$-category of unstabilized Liouville sectors will actually factor through the stabilization, and in particular only be sensitive to stabilized embedding spaces. We refer to this as an expectation in the present work, rather than a theorem, because we do not extend Theorem~\ref{theorem. hom liou delta is hom liou} to the level of $\infty$-categories. (The theorem only equates mapping spaces, without checking compositional compatibilities.) See also Remark~\ref{remark. hom versus cat equivalence of lioudelta}. 
\end{remark}

\begin{corollary}[of Theorem~\ref{theorem. localization}]\label{cor. natural transformations of invariants}
Let $\WW$ and $\VV$ be functors $\lioustr \to \cD$ equipped with stabilizing natural transformations $\WW(-) \to \WW(- \times T^*[0,1])$ and $\VV(-) \to \VV(- \times T^*[0,1])$. Further assume that one is supplied with a natural transformation $\eta: \WW \to \VV$ compatible with stabilization. Concretely, $\eta$ is the data of a map $\Delta^2 \times \Delta^1 \to \inftycat$ where $\Delta^2 \times \{0\}$ is the diagram~\eqref{eqn. stabilization triangle}, and likewise $\Delta^2 \times \{1\}$ is the corresponding diagram for $\VV$.

Then there is a natural transformation
	$
	\eta^{\dd} : \WW^{\dd} \to \VV^{\dd}
	$
between functors from $\lioulocal$ to $\cD$. In particular, the diagram of mapping spaces
	\eqnn
	\xymatrix{
	\embtop^{\dd}(M,N) \ar[r] \ar[d]
	& \hom_{\cD}(\WW^{\dd}(M),\WW^{\dd}(N)) \ar[d]^{(\eta_N)_*} \\
	\hom_{\cD}(\VV^{\dd}(M),\VV^{\dd}(N)) \ar[r]^{(\eta_M)^*}
	& \hom_{\cD}(\WW^{\dd}(M),\VV^{\dd}(N))
	}
\eqnd
commutes up to homotopy. In particular, if $\cD$ is tensored over spaces this implies a compatibility of module actions:
    \eqnn
    \xymatrix{
    \embtop^{\dd}(M,N)
    \tensor \WW^{\dd}(M) \ar[r] \ar[d]^{\eta_M} & \WW^{\dd}(N) \ar[d]^{\eta_N}	 \\
    \embtop^{\dd}(M,N)
    \tensor \VV^{\dd}(M) \ar[r]  & \VV^{\dd}(N) 
    }
    \eqnd
If $\eta: \WW \to \VV$ is further a natural equivalence of functors from $\lioustr$ to $\cD$, then $\eta^{\dd}$ is a natural equivalence of functors from $\lioulocal$ to $\cD$.
\end{corollary}

\begin{remark}\label{remark. comparing is easier now}
Again, the power is in only having to check properties of $\eta$ that avoid mention of families of embeddings. As example applications: (I) If one has a $G$-action on $M \in \lioulocal$ by some topological group $G$, one sees that the map $\WW^{\dd}(M) \to \VV^{\dd}(M)$ is $G$-equivariant (by only checking discrete properties of $\eta$). (II) If one has several models for spectrally enriched wrapped Fukaya categories, one need only check that they are equivalent as functors out of $\lioustr$ to conclude on may compute mapping-space information about one model in terms of mapping-space information about the other.
\end{remark}

\subsection{Multiplicative coherence}
So far, we have only discussed the ``continuous'' coherence of various invariants (i.e., whether invariants also map mapping spaces coherently to each other). We now turn our attention to multiplicative coherence (i.e., how invariants behave with respect to direct products of sectors---examples include Kunneth formulas).

Let us first make the simple observation that $\lioustr$ is symmetric monoidal under direct product.  We lift this structure through the following theorem.

\begin{maintheorem}\label{theorem. lioustab is symmetric monoidal}
The functor $\lioustr \to \lioulocal$ induces a symmetric monoidal structure on $\lioulocal$. This symmetric monoidal structure
\enum
	\item Acts on objects by direct product of Liouville sectors, and
	\item Is universal for symmetric monoidal functors out of $\lioustr$ that invert sectorial equivalences, and that send $T^*[0,1]$ to a monoidally invertible object.
\enumd
\end{maintheorem}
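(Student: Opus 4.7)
The plan is to construct the symmetric monoidal structure in two stages—first equipping $\lioustrstab$ with a symmetric monoidal structure in which $T^*[0,1]$ becomes invertible, then descending to the localization $\lioulocal$—and to extract the universal property by composing the universal properties of the two stages.

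First I would observe that $\lioustr$ is symmetric monoidal under direct product with unit $T^*\RR^0$, since the product Liouville form is the direct sum and the direct product of two strict sectorial embeddings is strict on the nose. Crucially, the class $\eqs$ of strict sectorial equivalences is closed under tensoring with arbitrary objects: crossing an equivalence inverse and its defining isotopies with $\id_L$ yields an inverse and isotopies for $f \times \id_L$. Extending the tensor product to $\lioustrstab$ via $(M,k) \otimes (M',k') = (M \times M', k+k')$ with unit $(T^*\RR^0, 0)$, the defining relation $(M,k) \sim (M \times T^*[0,1], k+1)$ makes $(T^*[0,1], 0)$ invertible with inverse $(T^*\RR^0, 1)$. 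The substantive claim is that $\lioustrstab$, regarded as a colimit in the $\infty$-category of symmetric monoidal $\infty$-categories, represents the symmetric monoidal localization of $\lioustr$ at the object $T^*[0,1]$. By the standard criterion that a sequential colimit under $X \otimes -$ computes the $\otimes$-localization at $X$ provided the symmetry action on $X^{\otimes n}$ is trivialized for $n$ large (cf.\ Robalo's thesis, and the Ayala-Francis foundations of factorization homology), this reduces to verifying that the cyclic permutation on $T^*[0,1]^{\otimes n}$ becomes homotopic to the identity in $\lioulocal$ after enough stabilization. This is the ``localizations detect isotopies'' computation alluded to in the abstract: the cyclic permutation of $[0,1]^n$ is isotopic to the identity through diffeomorphisms for $n$ sufficiently large, and such an isotopy lifts via the cotangent construction to a path of strict sectorial equivalences of $T^*[0,1]^n$, whose image is a homotopy in $\lioulocal$ by Theorem~\ref{theorem. localization}.

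With the symmetric monoidal structure on $\lioustrstab$ in hand, I would then apply the symmetric monoidal Dwyer-Kan localization (Lurie, \emph{Higher Algebra}, Proposition 4.1.7.4) to the pair $(\lioustrstab, \eqs)$: closure of $\eqs$ under $\otimes$ upgrades the ordinary localization $\lioustrstab \to \lioulocal$ to a symmetric monoidal localization, with tensor product on objects given by direct product of sectors. The universal property of the composite $\lioustr \to \lioulocal$ is the conjunction of the two: a symmetric monoidal functor $\lioustr \to \cC$ factors through $\lioulocal$ iff it inverts sectorial equivalences and sends $T^*[0,1]$ to a $\otimes$-invertible object. I expect the main obstacle to be the isotopy lifting in the middle step—the cyclic-permutation trivialization must be realized by strict sectorial equivalences rather than arbitrary diffeomorphisms, and the resulting isotopies must be coherently organized at the $\infty$-categorical level using the barycentric-subdivision models for $\lioudelta^{\dd}$; the remaining steps are routine applications of the general machinery.
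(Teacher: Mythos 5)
Your decomposition has the two operations in the wrong order, and this is not a matter of taste: it breaks the key step.

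You propose to stabilize first (claiming $\lioustr \to \lioustrstab$ is the symmetric monoidal localization inverting $T^*[0,1]$) and then localize along $\eqs$. To invoke Robalo's criterion for the first step, you need the cyclic permutation of $T^*[0,1]^{\tensor 3}$ to be homotopic to the identity \emph{in $\lioustr$}, the category being stabilized. But $\lioustr$ is an ordinary category: its nerve has no nondegenerate $2$-cells witnessing homotopies, and $T^*\tau$ is a genuine non-identity automorphism of $T^*[0,1]^3$, so $T^*\tau \not\simeq \id$ in $N(\lioustr)$. (Compare the free symmetric monoidal category on one object $T$, where $\End(T^{\tensor n}) = S_n$: there the sequential colimit $\stab_T$ is definitely not the $\tensor$-localization.) So your first step is not a $\tensor$-localization, and the universal property in (2) of the theorem does not follow. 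The paper avoids this by localizing along $\eqs$ \emph{before} stabilizing: only in $\lioustr[\eqs^{-1}]$ is there room for $T^*\tau \sim \id$ (Lemma~\ref{lemma. main monoidal lemma}), and only then can one apply Proposition~\ref{prop. symmetric T} to stabilize.

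Your final remark, that the trivialization ``is a homotopy in $\lioulocal$ by Theorem~\ref{theorem. localization},'' contains a second and closely related slip. The criterion must be verified in the \emph{unstabilized} localization $\lioustr[\eqs^{-1}]$, whose mapping spaces are emphatically not computed by Theorem~\ref{theorem. localization} (which only identifies mapping spaces after stabilization, and the stabilization functor $\lioustr[\eqs^{-1}] \to \lioulocal$ is not fully faithful in general). This is exactly why Lemma~\ref{lemma. main monoidal lemma} is, as the paper says, the least formal part of the argument: one must show directly that the abstract localization $\mfldcmpct[\eqs^{-1}]$ detects the isotopy from the cyclic permutation of $[-1,1]^3$ to the identity, via the $B_M$-style argument paralleling Proposition 5.4.5.10 of \emph{Higher Algebra}. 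The geometric observation (the isotopy, and its lift through $T^*$) that you supply is the easy part; the hard content is persuading the localization to see it, and an appeal to the stabilized mapping-space theorem cannot substitute for that work.
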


\begin{remark}
 
Let $\cD$ be a symmetric monoidal $\infty$-category with monoidal product $\tensor_\cD$. Recall that an object $X$ is called {\em (monoidally) invertible} if there exists an object $Y$ for which $X \tensor_{\cD} Y$ is equivalent to the monoidal unit $1_{\cD}$. 

As an example, if $\cD$ is the $\infty$-category of chain complexes over $R$ with $\tensor_{\cD}$ (a model for) the derived tensor product, then a chain complex equivalent to $R[n]$---$R$ concentrated in a single degree---is an invertible object with inverse $R[-n]$. In particular, $R[n]$ for $n \neq 0$ is an invertible object that is not equivalent to the monoidal unit $R$.
\end{remark}

\begin{remark}
 
The proof of Theorem~\ref{theorem. lioustab is symmetric monoidal} is not formal. The proof hinges on a non-trivial computation relating the space of smooth self-embeddings of $[0,1]^3$ to certain mapping spaces in a localization of the category of compact manifolds with corners. (See Lemma~\ref{lemma. main monoidal lemma} and Remark~\ref{remark. the least formal part of monoidal structure}.) Such results---relating the categorically formal process of localization to meaningful mapping spaces---are familiar from the foundations of factorization homology for manifolds~\cite{aft-1,aft-2}. Indeed, we make use of a proof strategy from Lurie's work~\cite{higher-algebra} (where factorization homology is referred to as topological chiral homology).

After this computation, however, the proof of Theorem~\ref{theorem. lioustab is symmetric monoidal} comes down to the simple geometric fact that the $(123)$ permutation of the coordinates of $[0,1]^3$ is isotopic to the identity morphism.
\end{remark}

\begin{remark}
The first part of Theorem~\ref{theorem. lioustab is symmetric monoidal} would, to put it mildly, be a pain in the backside to prove ``by hand.'' To see why, the reader may appreciate that the direct product of two sectorial embeddings is not a sectorial embedding (as Liouville-flow-equivariance near infinity is violated upon taking direct product); this issue was raised already in Warning~\ref{warning. stable embeddings undefined}. The standard techniques to create a symmetric monoidal structure in such a setting would (i) construct collections of coCartesian fibrations whose fibers involve spaces of deformations of $f_1 \times f_2$ to honest sectorial embeddings, and (ii) lose the attention of many potential readers. We avoid (i) (and we hope (ii) has not happened yet).

In contrast, Theorem~\ref{theorem. localization}
allows us to prove Theorem~\ref{theorem. lioustab is symmetric monoidal} without contemplating spaces of ways to deform products of sectorial embeddings.

Finally, it is one thing to produce a symmetric monoidal structure, and it is another to characterize it by a universal property. This is the other appealing feature of Theorem~\ref{theorem. lioustab is symmetric monoidal}.
\end{remark}

\begin{example}
Theorem~\ref{theorem. lioustab is symmetric monoidal} immediately implies that the space of stable sectorial self-embeddings of a point---i.e., of $T^*[0,1]^\infty$---is an $E_\infty$-algebra. (One could also prove this using a standard Eckmann-Hilton argument, but such an argument would get hairy quickly because one must unpack and utilize a particular model of stabilized mapping spaces.)

We remark in passing that this $E_\infty$-space is not group-like, and hence does not define a spectrum without passing to its units. (To see the space is not group-like, one simply embeds an exact Lagrangian copy of $T^*_0\RR^N$ as an interesting object in the Fukaya category of $T^*[0,1]^N$, for example, as a zero object; the endofunctor resulting from the induced sectorial embedding is not an equivalence.) This is in contrast to the classical analogue: The space of smooth self-embeddings of $[0,1]^\infty$ is homotopy equivalent to the infinite orthogonal group $O = O(\infty)$ (Proposition~\ref{prop. O_n}) which is group-like.
\end{example}

\begin{remark}
\label{remark. monoidal structure sees other k components}
 
We gave a description of the objects of $\lioulocal$ (equivalently, of $\lioudelta^{\dd}$) in Remark~\ref{remark. stable Emb}. 
Let us remark why the objects with $2k \neq \dim M$ become crucial for understanding Theorem~\ref{theorem. lioustab is symmetric monoidal}.

First, the symmetric monoidal structure on objects is given by $(M,k) \tensor (M',k') \simeq (M \tensor M', k + k')$ where $M \tensor M'$ is the manifold $M \times M'$ equipped with the product Liouville structure (Definition~\ref{defn. products of sectors}).

Note that via the inclusion $\lioustr \to \lioulocal$, the sector $T^*[0,1]$ is identified with the object $(T^*[0,1],0)$. This object is not the monoidal unit. (The object $(T^*[0,1],1) \sim (\RR^0,0)$ is the unit.) But it is invertible, with inverse $(\RR^0,1)$, as
	\eqnn
	(\RR^0,1) \tensor (T^*[0,1],0) 
	\simeq
	(\RR^0 \tensor T^*[0,1],1 + 0)
	\simeq
	(T^*[0,1],1)
	\simeq
	(\RR^0,0).
	\eqnd
One sees that stabilization thus renders the operation $-\times T^*[0,1]$ invertible, in a symmetric monoidal fashion.
\end{remark}

Here is a corollary of Theorem~\ref{theorem. lioustab is symmetric monoidal}. It is a symmetric monoidal version of Theorem~\ref{theorem. informal functoriality}.

\begin{theorem}\label{theorem. symmetric monoidal functors}
Let $\cD$ be a symmetric monoidal $\infty$-category and let $\WW: \lioustr \to \cD$ be a symmetric monoidal functor sending $T^*[0,1]$ to a monoidally invertible object of $\cD$, and inverting sectorial equivalences. Then $\WW $ canonically extends to a symmetric monoidal functor $\lioulocal \to \cD$.
\end{theorem}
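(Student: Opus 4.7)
The plan is to deduce Theorem~\ref{theorem. symmetric monoidal functors} as a direct unpacking of the universal property already established in part (2) of Theorem~\ref{theorem. lioustab is symmetric monoidal}. My first step is simply to observe that the hypotheses on $\WW$ in the statement of Theorem~\ref{theorem. symmetric monoidal functors} -- that $\WW: \lioustr \to \cD$ is symmetric monoidal, that $\WW(T^*[0,1])$ is monoidally invertible, and that $\WW$ inverts sectorial equivalences -- match exactly the three conditions appearing in the universal property of $\lioulocal$ as stated in Theorem~\ref{theorem. lioustab is symmetric monoidal}.

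Next, applying this universal property yields, up to a contractible space of choices, a symmetric monoidal functor $\widetilde{\WW}: \lioulocal \to \cD$ together with an equivalence of symmetric monoidal functors $\widetilde{\WW} \circ j \simeq \WW$, where $j: \lioustr \to \lioulocal$ denotes the canonical symmetric monoidal functor supplied by Theorem~\ref{theorem. lioustab is symmetric monoidal}. This $\widetilde{\WW}$ is the desired extension. The phrase ``canonically extends'' in the statement of Theorem~\ref{theorem. symmetric monoidal functors} is then interpreted in the standard $\infty$-categorical sense: the space of such extensions $\widetilde{\WW}$ (together with the compatibility $\widetilde{\WW} \circ j \simeq \WW$ in symmetric monoidal functors) is contractible.

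The remaining tasks are bookkeeping. First, I would verify that the universal property from Theorem~\ref{theorem. lioustab is symmetric monoidal} is formulated in the sense of symmetric monoidal functors (not merely underlying functors of $\infty$-categories), so that the extension $\widetilde{\WW}$ inherits the symmetric monoidal structure from $\WW$ rather than just its underlying data. This will be part of the formal statement of Theorem~\ref{theorem. lioustab is symmetric monoidal}. Second, I would note explicitly that no new Liouville-geometric or simplicial input is needed: all of the substantive content -- including the non-formal computation of self-embeddings of $[0,1]^3$ (Lemma~\ref{lemma. main monoidal lemma}) and the identification of $\lioulocal$ as a localization of $\lioustrstab$ (Theorem~\ref{theorem. localization}) -- is absorbed into Theorem~\ref{theorem. lioustab is symmetric monoidal} itself. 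In this sense the only ``hard part'' of Theorem~\ref{theorem. symmetric monoidal functors} is precisely Theorem~\ref{theorem. lioustab is symmetric monoidal}; Theorem~\ref{theorem. symmetric monoidal functors} is the mapping-out reformulation of that same universal property, made salient for applications.
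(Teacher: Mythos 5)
Your proposal is correct and matches the paper's approach exactly: the paper explicitly introduces this theorem as ``a corollary of Theorem~\ref{theorem. lioustab is symmetric monoidal}'' and gives no separate proof, precisely because it is the mapping-out reformulation of the universal property in part~(2) of that theorem, just as you observe.
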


\begin{remark}
Suppose we have a symmetric monoidal functor $\WW$ from $\lioustr$ to some symmetric monoidal $\infty$-category $\cD$, and that $\WW$ sends $T^*[0,1]$ to an invertible object, while sending sectorial equivalences to equivalences in $\cD$. By Theorem~\ref{theorem. symmetric monoidal functors}, we are guaranteed a symmetric monoidal extension of $\WW$, which we will denote by $\widetilde{\WW}: \lioulocal \to \cD$. Let us also denote $\WW(T^*[0,1])$ by $\cL$ (to invoke a line bundle---a prototypical example of an invertible non-unit).

The object $T^*[0,1]$ of $\lioustr$ is then identified with the object $(T^*[0,1],0)$ of $\lioulocal$, so $\widetilde{\WW}(T^*[0,1],0) \simeq \cL$. More generally, we have that $\widetilde{\WW}(T^*[0,1],k) \simeq \cL^{\tensor_\cD 1-k}$, where the negative tensor powers make sense because $\cL$ is invertible. Even more generally, for any Liouville sector $M$, we have that $\widetilde{\WW}(M,k) \simeq \WW(M) \tensor_\cD \cL^{\tensor_\cD {-k}}$. Note that these equivalences are specified by the symmetric monoidal structure of the functor $\widetilde{\WW}$.

Thus, in contrast with the outcome of Theorem~\ref{theorem. informal functoriality}, the data of $\widetilde{\WW}$ is not formally determined by  its restriction to the full subcategory of $\lioulocal$ with $\dim M - 2k = 0$. (See Remark~\ref{remark. stable Emb}.)
\end{remark}

\begin{remark}
In contrast with Theorem~\ref{theorem. informal functoriality}, there is no need to assume that $\cD$ admits filtered colimits, and there is no assumption on $\WW$ admitting a stabilization natural transformation $\WW(-) \to \WW(- \times T^*[0,1])$.

Indeed, there is a symmetry-breaking in the literature when the natural transformation $\WW(M) \to \WW(M \times T^*[0,1])$, is constructed; in the wrapped Fukaya category context, one typically chooses a cotangent fiber $T^*_p[0,1]$ of $[0,1]$ and (on objects) the natural transformation is defined as $L \mapsto L \times T^*_p[0,1]$. Meanwhile, the stabilization of Theorem~\ref{theorem. lioustab is symmetric monoidal} does not rely on such symmetry-breaking data; instead, the symmetric monoidal structure, together with a check of a {\em property} (that $T^*[0,1]$ is sent to a monoidally invertible object) guarantees the existence of a symmetric monoidal functor from the stabilized localization.

Finally, we note the utility of Theorem~\ref{theorem. lioustab is symmetric monoidal} and versatility of $\lioulocal$. 
Even when the invariant $\WW(T^*[0,1])$ is not the monoidal unit (hence when $\WW(M \times T^*[0,1])$ need not necessarily equal $\WW(M)$) one can produce a meaningful multiplicative invariant. A simple example is the functor to chain complexes sending $M$ to its compactly supported and supported-away-from-the-boundary deRham forms, which sends $T^*[0,1]$ to a complex equivalent to $\RR$ concentrated in cohomological degree 2. In particular, while invariants of the type in Theorem~\ref{theorem. informal functoriality} are insensitive to the underlying dimension of a sector $M$ in $\lioustr$,  invariants arising in Theorem~\ref{theorem. symmetric monoidal functors} may still be sensitive to dimension.
\end{remark}

Unlike Theorem~\ref{theorem. mapping spaces}, the verifications that $\lag$ and $\cW$ may be promoted to be symmetric monoidal are yet to be written in the literature. For $\lag$, the problem is still wide open, even for the Weinstein case. For $\cW$, the Kunneth formula is proven for wrapped categories of Weinsteins, while a proof of the full symmetric monoidal structure  (even for Weinsteins) is not yet recorded, though the details are not expected to be onerous. 
Regardless, the reader may appreciate that lifting the {\em discrete} functors out of $\lioustr$ to be symmetric monoidal is far easier than trying to write, by hand, a compatibility between continuous coherences and multiplicative coherences. Theorem~\ref{theorem. symmetric monoidal functors} eliminates the necessity for writing down such a compatibility.

\begin{cor}
Let $\WW: \lioustr\to \cD$ be a symmetric monoidal functor satisfying the hypotheses of Theorem~\ref{theorem. symmetric monoidal functors}. Then whenever $M$ is an $E_n$-algebra in $\lioulocal$, $\WW(M)$ is an $E_n$-algebra in $\cD$.
\end{cor}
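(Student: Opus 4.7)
The plan is simply to invoke Theorem~\ref{theorem. symmetric monoidal functors} and then apply the general principle that symmetric monoidal functors of $\infty$-categories preserve $E_n$-algebras. In more detail, the first step is to extend $\WW$ to a symmetric monoidal functor $\widetilde{\WW}\colon \lioulocal \to \cD$. This is exactly what Theorem~\ref{theorem. symmetric monoidal functors} provides: the hypotheses on $\WW$ (namely that $T^*[0,1]$ is sent to a monoidally invertible object, and that sectorial equivalences are inverted) are precisely what is needed to apply the universal property characterizing $\lioulocal$ as a symmetric monoidal localization.

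The second step is to recall that an $E_n$-algebra in a symmetric monoidal $\infty$-category $\cC$ is, by definition, an $\infty$-operad morphism $E_n^{\tensor} \to \cC^{\tensor}$ (where $\cC^{\tensor}$ denotes the underlying $\infty$-operad of $\cC$). Given a symmetric monoidal functor $F\colon \cC \to \cC'$, which by definition induces a morphism $F^{\tensor}\colon \cC^{\tensor} \to (\cC')^{\tensor}$ of $\infty$-operads over $\Fin_*$, composition with $F^{\tensor}$ produces from any $E_n$-algebra in $\cC$ an $E_n$-algebra in $\cC'$. This is a completely formal consequence of the definitions; it is recorded, for instance, in~\cite{higher-algebra} and applies to any $n$ (including $n = \infty$, i.e., commutative algebras).

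Applying this with $F = \widetilde{\WW}$, we conclude that if $M$ is an $E_n$-algebra in $\lioulocal$---i.e., equipped with a map $E_n^{\tensor} \to \lioulocal^{\tensor}$ picking out $M$ over $\langle 1\rangle \in \Fin_*$---then $\widetilde{\WW}(M)$ inherits the structure of an $E_n$-algebra in $\cD$. Since $\widetilde{\WW}$ extends $\WW$ on objects coming from $\lioustr$, we have $\widetilde{\WW}(M) \simeq \WW(M)$ whenever $M$ is in the image of $\lioustr \to \lioulocal$, and more generally $\widetilde{\WW}(M)$ provides the desired value in the stabilized setting. There is no real obstacle here: all of the work is in Theorem~\ref{theorem. symmetric monoidal functors} itself, and the corollary is essentially the observation that this theorem produces a symmetric monoidal functor, to which the formal preservation of $E_n$-algebras under symmetric monoidal functors applies.
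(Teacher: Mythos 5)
Your proof is correct and is essentially the same (and really the only) approach: apply Theorem~\ref{theorem. symmetric monoidal functors} to obtain a symmetric monoidal extension $\widetilde{\WW}\colon \lioulocal \to \cD$, then invoke the formal fact that symmetric monoidal functors of $\infty$-categories preserve $E_n$-algebras by precomposition at the level of $\infty$-operad maps. The paper leaves this corollary unproved precisely because the argument is this deduction, and your remark that the paper's $\WW(M)$ is a mild abuse of notation for $\widetilde{\WW}(M)$ on objects not in the image of $\lioustr$ is an accurate reading.
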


\begin{remark}
In particular, if $\WW$ lands in $\infty$-categories, any $E_n$-algebra $M$ in stabilized Liouville sectors induces an $E_{n+1}$-algebra structure on endomorphisms of the unit of $\WW(M)$. At present, we know no way of producing algebra objects in stabilized Liouville sectors, but in~\cite{last-flexible}, we show that upon localizing with respect to another natural class of sectorial maps (the {\em subcritical} maps), one can find a plethora of $E_\infty$-algebras, and these conjecturally represent symmetric monoidal localizations of the stable homotopy category. 
\end{remark}

\subsection{Results for Liouville sectors with tangential structures}
Finally, by now we know that we should equip our Liouville sectors
with
tangential structures to, for example, construct invariants over different base rings. (Without a trivialization of $\det(TM)^{\tensor 2}$, for example, $\cW(M)$ is not $\ZZ$-graded.) We prove in this work that  all the results stated so far have analogues in the setting where our sectors are equipped with tangential structures. Let us make this precise.

Fix $E_\infty$-spaces $B'$ and $B$, and fix $E_\infty$ maps $B' \to B$ and $BU \to B$. Then for any Liouville sector $M$, by virtue of its natural map to $BU$ classifying the compatible-with-symplectic-form complex tangent bundle, we may ask for a $B'$-reduction of its $B$-tangential structure. 

\begin{example}
For example, if $BU \to BU(1) = B$ classifies $\det^2$ and $B'$ is a point, a $B'$-reduction amounts to a trivialization of the stable $\det^2$-bundle.
\end{example}

We let $(\lioustrstab)_{/(B' \to B)}$ denote the $\infty$-category whose objects are Liouville sectors equipped with $B'$-reductions of $B$-structures, and where morphisms are pairs $(f, h)$ where $f$ is a strict sectorial inclusion, and $h$ is a homotopy of $B'$-reductions from the reduction on the domain to the reduction pulled back along $f$.
We let $\eqs_{/B' \to B} \subset (\lioustrstab)_{/(B' \to B)}$ denote those $(f,h)$ where $f$ is a sectorial equivalence.

Finally, we let $\lioulocal_{/(B' \to B)}$ denote the $\infty$-category of stabilized Liouville sectors with reductions of $B$-structures to $B'$. 

\begin{maintheorem}\label{maintheorem. B structures}
The map from the localization
	\eqnn
	(\lioustrstab)_{/(B' \to B)}[\eqs_{/B' \to B}^{-1}] \to \lioulocal_{/(B' \to B)}	
	\eqnd 
is an equivalence of $\infty$-categories. 
\end{maintheorem}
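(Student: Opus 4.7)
The plan is to exhibit both sides of the map as Grothendieck constructions (total spaces of right fibrations) of appropriate space-valued functors, and then to deduce the result by combining Theorem~\ref{theorem. localization} with a general compatibility between localization and unstraightening.

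First, I would define a functor $\cR: \lioustrstab^{\op} \to \spaces$ sending a stabilized Liouville sector $M$ to the space of $B'$-reductions of its canonical $B$-structure, namely
\[ \cR(M) \;:=\; \Maps(M, B') \times^h_{\Maps(M, B)} \{\tau_M\}, \]
where $\tau_M: M \to BU \to B$ is classified by the compatible-with-symplectic-form complex tangent bundle. Strict sectorial embeddings act by pullback of reductions. Because the tangent bundle of $T^*[0,1]$ is trivializable and the structure maps $B' \to B$ and $BU \to B$ are $E_\infty$, this assignment is stable under the stabilization colimit defining $\lioustrstab$. By inspection of definitions, $(\lioustrstab)_{/(B' \to B)}$ is precisely the unstraightening of $\cR$, and the collection $\eqs_{/(B' \to B)}$ is exactly the preimage of $\eqs$ under the canonical projection to $\lioustrstab$.

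Next, I would check that $\cR$ inverts $\eqs$. Any sectorial equivalence $f$ is in particular a homotopy equivalence of the underlying manifolds with corners and commutes, up to canonical homotopy, with the tangential classifying maps; hence $\cR(f)$ is a weak equivalence of spaces. By the universal property of localization together with Theorem~\ref{theorem. localization}, $\cR$ descends (uniquely up to contractible choice) to a functor $\bar{\cR}: \lioulocal^{\op} \to \spaces$, whose unstraightening is, by a parallel inspection, equivalent to $\lioulocal_{/(B' \to B)}$. The canonical comparison functor in the theorem corresponds, under these identifications, to the natural map of unstraightenings induced by the localization $\lioustrstab \to \lioulocal$.

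The final step is to invoke a general $\infty$-categorical principle: if $p: \widetilde{\cC} \to \cC$ is the right fibration classifying $G: \cC^{\op} \to \spaces$, and if $W \subset \cC$ is a class of morphisms on which $G$ acts by equivalences, then the induced map $\widetilde{\cC}[p^{-1}(W)^{-1}] \to \cC[W^{-1}]$ is again a right fibration and classifies the descended functor $\bar{G}$. Applied with $G = \cR$ and $W = \eqs$, this yields the theorem. I expect the main obstacle to be the rigorous verification of this localization/unstraightening compatibility; I would obtain it by combining the stability of right fibrations under base change with the universal characterization of localization (as, e.g., in Lurie's \emph{Higher Topos Theory}), or, alternatively, by a direct mapping-space computation patterned on the techniques of Theorem~\ref{theorem. hom liou delta is hom liou}.
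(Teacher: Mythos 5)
Your proposal is essentially the same as the paper's proof. Both arguments reduce the theorem to a single general fact: pulling back a right fibration along a localization yields a localization (equivalently, in your straightening language, the right fibration over $\cC[W^{-1}]$ classifying the descended functor $\bar{G}$ has total space $\widetilde{\cC}[p^{-1}(W)^{-1}]$). The paper then proves this fact by observing that the formation of colimits of $\infty$-categories commutes with pullback along right fibrations, reducing to the elementary case $\Delta^1 \to \Delta^0$. You defer the key lemma to a reference or to a mapping-space computation rather than giving this reduction, but the structural content is identical.
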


We have an induced symmetric monoidal structure on $\lioulocal_{/(B' \to B)}$ characterizable by two different universal properties---one induced from the localization, and the other by virtue of being a pullback of symmetric monoidal $\infty$-categories. See Section~\ref{section. tangential}.

\subsection{The geometric insights}
This paper emerged as the authors were trying to create the correct formalism for articulating a {\em symmetric monoidal} localization process for Liouville sectors. 
(Indeed, the foundations from the present work allow for the symmetric monoidal localizations---for categories of sectors, and conjecturally of wrapped Fukaya categories themselves---in~\cite{last-flexible}.) 

While we have so far conveyed the $\infty$-categorical results, let us now speak to the new geometric insights we incorporate in the present work. The categorical results were not at all obligated to exist based on the geometry known to us when we first began this project.

The first innocent observation is that while stabilization $M \mapsto M \times T^*[0,1]$ is easy to define for objects (sectors), it does not preserve the standard class of morphisms (sectorial embeddings). 
At first it seemed natural to thus create an $\infty$-category equipped with data of smooth isotopies through sectorial embeddings, to deform {\em morphisms} to render them sectorial after stabilizing.

However, also floating in the air was the knowledge that carrying the data of a deformation of {\em Liouville structures} (on objects) was natural for both Fukaya-categorical and symplectic reasons.

By Moser's Trick, it seems precisely articulable that carrying auxiliary data of deformations of Liouville structures is somehow equivalent to carrying auxiliary data of deforming the morphisms themselves. We do not explore this equivalence, but suffice it to say that---faced with the fork in the road of deforming morphisms or deforming Liouville structures---we settled on carrying deformations of Liouville structures as the $\infty$-categorical data. This is how movies, and barycentric subdivision diagrams, came about. Each object $M \times T^*\Delta^k$ at a barycenter encodes a $\Delta^k$-family of Liouville structures on $M$.

Then, to preserve closure under direct products, it was natural to consider only maps that were {\em strict} sectorial embeddings, then identify a class of deformations closed under products. This class had to be made surprisingly large to accommodate both useful examples and closure under products. (See Definition~\ref{defn. various deformation embeddings}---in particular, our deformations of Liouville structures are not through only compactly supported deformations.) The key theorem that justifies this choice of class is that the space of strict-morphisms-equipped-with-this-class-of-deformations (where the deformations need not be compactly supported)
is homotopy equivalent to the usual  space of all sectorial embeddings 
(which respect the Liouville form up to $d$ of a {\em compactly} supported function).
See Theorem~\ref{theorem. embedding spaces are deformation embedding spaces} and the equivalences of~\eqref{eqn. diagram of embedding spaces}. 
In other words, up to homotopy equivalence, we can recover the ``standard'' morphism spaces for Liouville sectors. Carrying around deformations does not change the homotopy type of morphism spaces.

Then, we had to prove some generalizations of standard facts from the differential topology of manifolds with corners: isotopy extension, the ability to shrink a Liouville sector to have image away from its own boundary, and the ability to create a single (higher-dimensional) object for which a single map encodes a family of maps for lower-dimensional objects. 
We call these objects movies (Section~\ref{section.movies}), and they generalize a construction we learned from~\cite{eliashberg-revisited}.

\subsection{Later works and other future directions}
\label{section. later works}

We will show in later works that $\lioulocal$ has finite colimits. 

\begin{remark}
 Note that $\lioudelta$ (i.e., before stabilizing) does not admit all colimits, for the same reason that the category of sets with injections does not. For example, if $X$ and $Y$ are both non-empty and zero-dimensional sectors, there is no categorical coproduct $X \coprod Y$. 
\end{remark}

 As we will show in later works, direct product is compatible with finite colimits in each variable. So $\lioulocal$ can be Ind-completed to be a presentably symmetric monoidal $\infty$-category. Further, building on results of~\cite{gps-descent} one can show that the wrapped Fukaya category functor preserves all colimits.  By the adjoint functor theorem, the wrapped Fukaya category functor admits a right adjoint. 
In particular, to any stable $\infty$-category, we can canonically associate an Ind-stabilized-Liouville-sector. 

Let us note that neither the unit nor the counit of this adjunction is expected to be an equivalence. Regardless, the adjunction shows that for certain Ind-Liouville sectors, the space of morphisms into them may be computed purely algebraically as a space of functors.
The ability to even state the ideas in the previous paragraphs precisely is, we note, another outcome of the present work.

Another direction of interest is the articulation of the Weinstein version of the tools we use here. Let $\weinlocal \subset \lioulocal$ denote the full subcategory of those (stabilized) sectors admitting Weinstein structures.\footnote{We do not define in this paper any notion of maps between Weinstein sectors that are ``compatible'' with Weinstein structures; for our purposes, a map of Liouville sectors-that-happen-to-admit-Weinstein-structures is simply a (not necessarily strict) sectorial embedding.}
This is of course a natural $\infty$-category to study. 
However, we do not characterize $\weinlocal$ as a localization of anything, and in particular, we do not give a proof that invariants defined only for Weinstein sectors (and not for Liouville sectors generally) induce compatible maps from stabilized mapping spaces. A notable invariant which, at present, is only defined for Weinstein sectors is the microlocal category $\mathfrak{Sh}(M)$ of Nadler-Shende~\cite{nadler-shende}.

Regardless, it is straightforward to construct a Weinstein version of $\lioudelta$---an $\infty$-category whose simplices consist of barycentric subdivision diagrams of {\em Weinstein} sectors with strict inclusions. In a previous draft of this work, we expressed our belief that morphism spaces arising from $\wein_\Delta$ are the only candidate we know for defining a rich ``space of Weinstein morphisms.'' However, work in progress seems to show that this candidate space is homotopy equivalent to a fully faithful space of Liouville embeddings (i.e., a subspace consisting of certain connected components of the Liouville embedding spaces.) Moreover, we expect the methods of this work carry over to show that $\wein_\Delta^{\dd}$ is a localization of $\wein^{\dd}$ along Weinstein equivalences. (A Weinstein equivalence is a sectorial embedding $f: M \to N$ admitting an inverse-up-to-Weinstein-isotopy $g: N\to M$. More precisely, the isotopies $g \circ f \sim \id$ and $f \circ g \sim \id$ have movies---Example~\ref{example. isotopies are 2 simplices}--- that admit Weinstein structures.)
Thus, for every pair of objects $X,Y$, it seems that the natural map $\hom_{\wein^{\dd}_\Delta}(X,Y) \into \hom_{\lioulocal}(X,Y)$ is an inclusion of connected components, and that invariants like those of Nadler-Shende receive actions from Liouville embedding spaces whose elements are Liouville-homotopic to maps respecting Weinstein structures.

It seems to remain an important open problem to construct an $\infty$-category with meaningful spaces of ``Weinstein morphisms'' (of which no homotopically useful definition exists at present). A ``correct'' definition is delicate, analogous to the way that one must carefully define what we mean by families of handle decompositions (Morse functions) to create invariants sensitive to simple homotopy equivalences and K-theoretic data.

\subsection{This paper is \texorpdfstring{$\infty$}{infinity}-categorical in most of its tools}
While we hope the results of this work will appeal to both homotopy theorists and symplectic geometers, we remark that we rely heavily on $\infty$-categorical machinery. 

\subsection{Acknowledgments}
 We would like to thank Yasha Eliashberg for very helpful conversations. 
	The first author was supported by the NSF postdoctoral fellowship, Award \#1705128. 
	The first and second authors were partially supported by the Simons Foundation through grant \#385573, the Simons Collaboration on Homological Mirror Symmetry. 
	The third author was supported by a Texas State University
	 Research Enhancement Program grant, the Texas State University Presidential Seminar and Valero Awards, and by an NSF CAREER Grant under Award Number 2044557; he would also like to thank Rune Haugseng and Jacob Lurie for helpful communications.
	
\clearpage
\section{Some Liouville geometry}

Here we collect some facts about Liouville geometry and establish notation, conventions, and definitions along the way. The main results original to this work are:
\begin{itemize}
\item A movie construction, which provides a way to construct a single (higher-dimensional) Liouville sector encoding a family of Liouville structures on a manifold $M$ (Proposition~\ref{prop. movies are sectors}). This is a higher-dimensional, and minor, generalization of a construction of Eliashberg~\cite{eliashberg-revisited}.
\item A ``shrinking isotopy'' existence theorem assuring that we may always find a self-equivalence of a Liouville sector whose image is bounded away from the sectorial boundary (Section~\ref{section. shrinking}).
\item An isotopy extension theorem for Liouville sectors (Proposition~\ref{prop. isotopy extension}), and
\item That the space of ``embeddings equipped with deformation data rendering the embeddings strict'' is homotopy equivalent to the space of sectorial embeddings (Section~\ref{section. different deformation spaces}). 
\end{itemize}

\subsection{Basic definitions}
	
	\subsubsection{Liouville sectors}
	
	The following agrees with the notion from~\cite{gps-descent} of a \emph{straightened} Liouville sector with corners. Because sectors will always be straightened in our work, we remove the adjective.
	
	\newenvironment{sector-properties}{
	  \renewcommand*{\theenumi}{(S\arabic{enumi})}
	  \renewcommand*{\labelenumi}{(S\arabic{enumi})}
	  \enumerate
	}{
	  \endenumerate
}

	\begin{defn}[Liouville sector]
	\label{defn. sector}
Fix an exact symplectic manifold $( M, \omega=d\lambda )$ possibly with corners, together with the data, for each $ x \in \partial^i M $ in a codimension $i$ corner $i \geq 1$, of a 
		neighborhood $\nbhd(x)$ inside $M$ and a codimension-preserving symplectic submersion
		\eqn\label{eqn. corner projections}
		\pi_x \colon \nbhd(x) \to T^*[0, 1)^i.
		\eqnd
		We say this collection of data is a {\em Liouville sector} if it satisfies the following:
		\begin{sector-properties}
			\item\label{item. finite type} $\lambda$ has finite type. This means that $M$ admits a proper, smooth function $M \to \RR_{\geq 0}$ which, outside some compact subset of $M$, is linear with respect to the Liouville flow of $M$.
			\item\label{item. splitting lambda} Each $\pi_x$ is flat, and $\lambda$ is split with respect to $\{\pi_x\}_{x \in \del M}$. This means: Using the flat splitting of $\nbhd(x)$ as a product of 
			a fiber of $\pi_x$ and the image of $\pi_x$, $\lambda|_{\nbhd(x)} =  \lambda^{\text{fiber}} + \pi_x^*\mathbf{p}d\mathbf{q}$. (See Remark~\ref{remark. splitting symplectic submersions}.)
			\item\label{item. projections compatible} If $y \in \partial^j M \cap \nbhd(x)$ with $j \leq i$, then on the overlap we have $\pi_y = \pi_{yx} \circ \pi_x$, where $\pi_{yx} \colon T^*[0, 1)^i \to T^*[0,1)^j$ is a projection to $j$ components (not necessarily respecting the order of coordinates).
		\end{sector-properties}
	We will usually denote a Liouville sector by $(M, \lambda)$ or just $M$, leaving the family of projections $\{\pi_x\}$ implicit. 
	\end{defn}

	\begin{remark}[The splitting]
	\label{remark. splitting symplectic submersions}
	\label{remark. liouville flow is tangent to boundary}
	Let us explain the splitting in~\ref{item. splitting lambda}. When $\pi: E \to B$ is a symplectic submersion---so that $E$ is symplectic and the symplectic form on $E$ restricts to a symplectic form on each fiber of $\pi$---one has a natural horizontal distribution (given by the symplectic orthogonal to the vertical tangent spaces). We say that the submersion is flat if this distribution is integrable, and in particular, defines a flat connection. We assume this of our projection maps, so $\pi$ canonically splits $E$ as a smooth manifold,  locally on $E$. In particular, note that we may always shrink $\nbhd(x)$ so that $\nbhd(x)$ itself admits a splitting; indeed, the split expression for $\lambda$ in~\ref{item. splitting lambda} assumes we have chosen such a neighborhood for every $x$.

	To see a sample consequence, let $v_\lambda$ denote the Liouville vector field (obtained as the symplectic dual to $\lambda$). Condition~\ref{item. splitting lambda} guarantees not only a splitting of the symplectic form, but also a splitting of the Liouville form; as a result, $v_\lambda$ splits. In particular, because each $\pi_x$ is codimension-preserving, we observe that the Liouville flow---i.e., the flow of $v_\lambda$---is tangent to each boundary and corner stratum of $M$. Moreover, because the Liouville form on the $T^*[0,1)^i$ factor is ${\mathbf p}d{\mathbf q}$, any trajectory of the Liouville flow which begins away from $\partial M$ must remain away from $\partial M$.

	A further consequence, by shrinking $\nbhd(x)$ so the image of $\pi_x$ is a box, is that the composite projection $\nbhd(x) \xrightarrow{\pi_x} T^*[0,1]^i \xrightarrow{\mathbf q} [0,1)^i$ induces a collaring of the stratum of $x$ in $\nbhd(x)$ by choosing a fiber of ${\mathbf q} \circ \pi_x$ and writing $\nbhd(x) \cong \text{fiber} \times \Image({\mathbf q} \circ \pi_x)$.  This exhibits a collar of $\nbhd(x)$ as a smooth manifold with corners.

	\end{remark} 
	
	\begin{remark}[Completeness]
	\label{remark. sectors are completions}
	Let $\rho: M \to \RR_{\geq 0}$ be a smooth function as in Condition~\ref{item. finite type}, and let $v_\lambda$ denote the Liouville vector field, defined by $d\lambda(v_\lambda,-) = \lambda(-)$. 
	The linearity in Condition~\ref{item. finite type} means that $v_\lambda(\rho) = \rho$ (outside some compact subset of $M$). 
	 This has the following consequences.
	
	First, all large-enough positive real numbers are regular values of $\rho$. Because $\rho$ is proper and takes non-negative values, we thus see that $K = \rho^{-1}([0,R])$ for large enough $R$ is some compact, smooth, codimension-zero submanifold-with-corners of $M$. Because the Liouville flow is tangent to the boundary and corner strata of $M$ (Remark~\ref{remark. liouville flow is tangent to boundary}),  every point in the complement of $K$ ``flows to infinity'' under the Liouville flow. So, $M$ is the completion of a compact Liouville domain-with-corners (where the contact boundary may also have corners). As a result, one has a well-defined ``contact boundary at infinity'' (possibly with corners) which we will denote by 
		\eqnn
		\del_\infty M.
		\eqnd
	Moreover, because $M$ is the completion of a compact domain under positive Liouville flow, the Liouville flow is complete (in both the positive and negative flow directions).
	 
	This has the following implication for our submersions. For any $x \in \del^i M$, there is some point $x'$ in the same codimension $i$ stratum as $x$ whose image under the projection $\pi_{x'}$ is in the zero section of $T^*[0,1)^i$. For example, one can take $x'$ to be the $t \to \infty$ limit of $x$ under the negative Liouville flow.
 
	\end{remark}
	 
	Before we further explore Definition~\ref{defn. sector}, we recall a fact from smooth topology. For any compact smooth manifold $X$ with corners and any element $x \in \del^i X$ of the codimension $i$ locus, consider the stratum $(\del^i X)_x$ of $x$. (Such a stratum is always an $i$-dimensional manifold without boundary or corners, always a connected component of $\del^i X$, and often non-compact.) The fact we wish to recall is that $(\del^i X)_x$ admits a compactification 
		\eqnn
		\overline{(\del^i X)_x}
		\eqnd
	to a smooth $i$-dimensional manifold possibly with corners, equipped with an immersion to $X$ extending the embedding of $(\del^i X)_x$.	
	 
	Despite our overline notation, this compactification is not a closure in $X$, and the immersion is often not an embedding. One construction of this compactification is as follows. A point $\tilde y$ in the compactification is an equivalence class of a triplet $(y, u, \phi)$ where $y$ is a point in the closure of $(\del^i X)_x$ inside $M$, $u$ is an element of $(\RR_{\geq 0})^{\dim M - i}$, and  $\phi$ is a germ about $u$ of a codimension-preserving, smooth embedding of some open subset of $(\RR_{\geq 0})^{\dim M -i}$ into the closure of $(\del^i X)_x$ inside $X$, satisfying $\phi(u) = y$. One declares an equivalence $(y,u,\phi) \sim (y',u',\phi')$ when $y = y'$ and there exists a germ of a codimension-preserving diffeomorphism $j$ for which $j(u) = u'$.

	\begin{example}\label{example. rounded square}
	Let $X$ be the manifold with corners obtained by smoothing three of the corners of the square $[0,1]^2$. The codimension 2 stratum is its own compactification; it consists of a point. Meanwhile, the codimension 0 stratum (the interior of $X$) compactifies to $X$ itself. 
	 
	The codimension 1 case is of most interest. The lone codimension-one stratum $\del^1 X$ is diffeomorphic to an open interval, and its compactification $\overline{\del^1 X}$ is diffeomorphic to a closed interval (as opposed to the closure of $\del^1 X$ in $X$, which is homeomorphic to a circle), immersing to the square with both endpoints mapping to the unsmoothed corner.
	\end{example}

	\begin{remark}\label{remark. immersion restricts to embedding} 
	Note that the immersion $\overline{\del^1 X} \to X$ restricts to an embedding on the original $\del^1 X$. We will see identical behavior below when a sector $\overline{F}$ immerses to $M$, and the immersion restricts to an embedding on the interior $F \subset \overline{F}$.
	\end{remark}

	\begin{remark}[Liouville sectors associated to strata]\label{remark. sector fibers at each stratum}
	We will now see that every stratum of codimension $i$ has the structure of a trivial fiber bundle over $\RR^i$, with fiber $F$ naturally identified as the interior of a Liouville sector $\overline{F}$.
	This fiber depends not only on the codimension, but also on the stratum itself (i.e., on the connected component within the codimension $i$ locus).
	
	Let $K$ be the compact subset from Remark~\ref{remark. sectors are completions}, and fix an element $x \in K \cap \del^i M \subset \del^i K$; we denote the stratum of $\del^i K$ containing $x$ by $(\del^i K)_x$. By compactness of $K$,~\ref{item. projections compatible} guarantees a submersion 
		\eqnn
		\pi: \nbhd((\del^i K)_x) \to T^*[0,1)^i
		\eqnd
	defined on a neighborhood of the whole of $(\del^i K)_x$ (and not only locally). 
	
	As per the discussion preceding this remark, let us compactify $(\del^i K)_x$ to a smooth manifold $\overline{(\del^i K)_x}$ of dimension $\dim M - i$, possibly with boundary and corners, equipped with a natural immersion to $M$. 
	$\overline{(\del^i K)_x}$ inherits a natural 1-form by pulling back $\lambda$, and is compact because $K$ is. Again by compactness and~\ref{item. projections compatible}, the maps $\{\pi_y\}_{y \in \text{closure of $(\del^i K)_x$ in $M$}}$ glue together to give a well-defined submersion 
		\eqnn
		\overline{\pi}: \overline{(\del^i K)_x} \to T^*_{\{0\}}[0,1)^i \cong \RR^i
		\eqnd
	to the cotangent fiber of the origin. Note that (i) there is no well-defined analogue of $\overline{\pi}$ on the closure of $(\del^i K)_x$ inside $M$, precisely because of the ambiguity in permuting the coordinates of $T^*[0,1)^i$ in~\ref{item. projections compatible}, and (ii) the the fibers of $\overline{\pi}$ are compact manifolds with corners.
	 
	By Remark~\ref{remark. sectors are completions}, the image of $\overline{\pi}$ contains the origin. Let $\overline{F_K}$ be the fiber of $\overline{\pi}$ 
above the origin, 
	and consider its completion $\overline{F}$ with respect to the Liouville flow (with respect to the pullback of $\lambda$). Then $\overline{F}$ admits a natural immersion to $M$ by extending, via the Liouville flow, the immersion of $\overline{F_K}$. We observe $\overline{F}$ is a Liouville sector, where~\ref{item. finite type} is satisfied for example by pulling back a $\rho$ exhibiting~\ref{item. finite type} for $M$.  
	 
	Letting $F$ denote the interior of $\overline{F}$, the immersion of $\overline{F}$ restricts to an embedding of $F$ to $M$  (see Remark~\ref{remark. immersion restricts to embedding}). So we may identify $F$ with the Liouville completion of the fiber of $\pi$ above the origin of $T^*_{\{0\}}[0,1)^i$.
	Next, note that the splitting condition~\ref{item. splitting lambda} guarantees that the fibers of $\pi$ are all isomorphic, hence the fibers of $\overline{\pi}$ are isomorphic; this shows that each codimension $i$ stratum $(\del^i M)_x$ of $M$ fibers over $T^*_{\{0\}}[0,1)^i \cong \RR^i$ with fibers isomorphic to $F$, though we caution that the only fiber-inclusion into $M$ commuting with the Liouville flow is the inclusion of the fiber above the origin. 
	 
	Finally, we note that by realizing $(\del^i M)_x$ as a Liouville completion of $(\del^i K)_x$, and using the compactness of $K$, we may arrange for each $\pi_y, y \in (\del^i M)_x$ to have image with uniform $q$ coordinate, giving a collaring of the entire stratum:
		\eqn\label{eqn. nbhd of boundary collared}
		\nbhd( (\del^i M)_x ) \cong F \times T^*[0,q_0)^i
		\eqnd
	where $F$ is the interior of the Liouville sector associated to the stratum of $x$ and $q_0>0$ is some positive real number. 
 	\end{remark}
	
	\begin{example} 
	Consider the case where $M$ has boundary but no corners. By Remark~\ref{remark. sector fibers at each stratum}, each boundary component of $M$ has associated to it a canonical Liouville sector $F$ (where $F$ may depend on the boundary component, and $F = \overline{F}$ because $M$ has no corners).  
	 
	Letting $q_0$ be as in~\eqref{eqn. nbhd of boundary collared}, choose a diffeomorphism from $[0,q_0)$ to $[0,1)$. Then every boundary component of $M$ has a neighborhood admitting an identification---as smooth manifolds with boundary equipped with a 1-form---with $F \times T^*[0,1)$. (Where $F$ again depends on the boundary component.) As a smooth manifold with boundary, we thus have a natural collaring
		\eqnn
		(F \times T^*_{\{0\}}[0,1)) \times [0,1).
		\eqnd
identifying the boundary component of $M$ with the smooth manifold $F \times T^*_{\{0\}}[0,1) \cong F \times \RR$.
	\end{example}
	
	\begin{defn}[Sectorial boundary]
		Let $M$ be a Liouville sector. Then the {\em sectorial boundary} of $M$ is the boundary of $M$ when considered as a smooth manifold with corners---in other words, the union of all boundary and corner strata of $M$.
	\end{defn}

	\begin{defn}[Products]
	\label{defn. products of sectors}
	Given two Liouville sectors $(M, \lambda^M), (N, \lambda^N)$, we define the product sector $M \tensor N$ to be $(M \times N, \lambda^M + \lambda^N)$ with the induced projection maps $\{\pi_{(x,y)} = \pi_x \times \pi_y\}$.

	For any integer $k \geq 1$, the sector $M \tensor T^*D^k$ is called a {\em stabilization} of the sector $M$. 
	\end{defn}
	
	\begin{remark}
	In this work, we will frequently consider product manifolds whose Liouville forms do not split (see Construction~\ref{construction. movie}). We reserve the notation $M \tensor N$ for the split setting.
	\end{remark}
	
	\begin{example}
	Let $Q$ be a smooth manifold with corners. Then for every $x$ in a codimension $i$ stratum, $\nbhd(x)$ admits a (codimension-preserving)  collaring projection $\nbhd(x) \to [0,1)^{i}$. The space of such projections is contractible, and a choice of such projection for each $x$ induces projections~\eqref{eqn. corner projections} on $T^*Q$. With this structure, it is standard to see that $T^*Q$ is a Liouville sector in the sense of Definition~\ref{defn. sector}.
	
	For more examples, we refer the reader to Section 2 of~\cite{last-flexible}, and especially to the construction of sectors from stopped domains.
	\end{example}
	
	\subsubsection{Families of exact deformations of Liouville structures}
	Though most of the literature contemplates 1-parameter families of Liouville structures, we will contemplate families of Liouville structures parametrized by smooth manifolds $S$. In fact, we will typically take $S$ to be a simplex $\Delta^k$ (which is a smooth manifold with corners).
	
	\begin{defn}[Deformations of Liouville structures]
	\label{defn. deformations of liouville structures}
	Fix a Liouville sector $(N,\lambda)$. Let $S$ be a smooth manifold, possibly with corners, and fix a smooth function $h: N \times S \to \RR$. Then we say that the family of 1-forms 
		\eqn\label{eqn. d_N notation}
		\{\lambda_s = \lambda + d_N h_s\}_{s \in S}
		\eqnd
	is an $S$-parametrized {\em exact family} of Liouville structures, or an exact {\em deformation} of Liouville structures, if for every $s \in S$, $(N,\lambda_s)$ is a Liouville sector, and

	\newenvironment{deformation-tameness}{
	  \renewcommand*{\theenumi}{(T)}
	  \renewcommand*{\labelenumi}{(T)}
	  \enumerate
	}{
	  \endenumerate
}
	\begin{deformation-tameness}
	\item\label{item. tameness} 
	There exists a function $R: N \times S \to \RR_{\geq 0}$ such that, locally on $S$, $R$ is proper and one can find a compact subset $K \subset N$ for which $R_s$ is $\lambda_s$-linear outside $K$.  
	\end{deformation-tameness}
	Here, by $R$ being proper locally on $S$, we mean that for every $s \in S$, there is a compact neighborhood $A_s$ of $s$ so that $R|_{N \times A_s}$ is proper.
	The local existence of $K$ means: Let For every $s \in S$, there exists an open $U \subset S$ with $s \in U$ and a compact subset $K_s \subset M$ for which $\lambda_{s'}(X_{R_{s'}})(x) = R_{s'}(x)$ for all $x\not \in K$ and for all $s' \in U$. 
	 
	Here, $X_{R_{s'}}$ is the Hamiltonian vector field on $N$ associated to $R_{s'}$.
	\end{defn}

	\begin{remark}
	For every $s \in S$, $h_s : N \cong N \times \{s\} \to \RR$ is the restriction of $h$. $d_N h_s = d h_s$ is the de~Rham derivative of $h_s$ with respect to the de~Rham operator on $N$. In Definition~\ref{defn. deformations of liouville structures}, the subscript $N$ is used to emphasize that no derivatives are being taken in the $S$ directions.
	\end{remark}
	
	\begin{remark}
	 
	The reader may ask, ``a deformation of {\em what?}'' If one chooses a basepoint $s_0 \in S$, then one may consider the $S$-parametrized family as deforming $\lambda_{s_0}$. In the absence of a natural basepoint, ``family'' may be a better term than deformation. In most of the examples of this paper, $S$ will be a simplex with ordered vertices, hence there will be a natural basepoint given by the terminal vertex of the simplex.
	\end{remark}
	
	\begin{remark}
	Condition~\ref{item. tameness} is a ``tameness-near-infinity'' assumption on the family. Let us motivate it. It turns out there are various topologies one could put on the space of possible deformations. We choose to topologize as follows: One filters the set of all possible deformations by the compact subsets of $K$---for any $K$, we consider only deformations that are all linear outside $K$, meaning roughly that we always have a symplectization coordinate outside of $K$. For every $K$, we give the $K$-supported deformation space a weak $C^\infty$ topology, then we topologize the space of all deformations by taking the direct limit topology (by filtering by the variable $K$).\footnote{See also Definition~\ref{defn. topology on embliou}.} Unwinding definitions, this means that if we have an $S$-parametrized family of deformations, then locally on $S$, we can find compact $K$ (which is not uniformly defined over all $S$) controlling the linearity of the family.
	
	This condition did not appear out of thin air. First, it is natural to ask for a single $K$ to control behavior-at-infinity when $S$ is compact. Then, the local existence of $K$ is an inevitable consequence of sheafifying this desideratum (and hence when incorporating non-compact $S$).
		\end{remark}
	
	\begin{remark}
	\label{remark. families for S compact}
	In most of our examples, $S$ will be compact. When $S$ is compact, we may assume $R: N \times S \to \RR_{\geq 0}$ is itself proper. We can further choose a single $K$ so that regardless of $s$, $R_s$ is $\lambda_s$-linear outside the fixed $K$. Then, if necessary, we may enlarge $K$ so that $K$ contains the union of all the skeleta/cores (i.e., the locus of points that do not escape to infinity) of $(M,\lambda_s)$. This enlargement ensures that, outside of this $K$, any trajectory of the Liouville flow of $\lambda_s$ will escape toward infinity (as evidenced by the growth of $R$). 
	\end{remark}
	
	\begin{example}
	Consider an exotic Weinstein structure on $\RR^{2n}$ that contains a standard Weinstein $\RR^{2n}$ in the interior. One can then isotope the ``exotic'' regions toward infinity while leaving the standard copy in place. One can arrange for this to result in a smooth family of Liouville structures, defined on $S=[0,1]$, from the exotic structure to the standard structure. However, this family does not satisfy Condition~\ref{item. tameness}. In particular, Condition~\ref{item. tameness} precludes such ill-behaved ``isotopies'' between exotic and standard structures. (See also Remark~11.24 of~\cite{cieliebak-eliashberg}.)
	\end{example}

	\begin{defn}
	\label{defn. deformation types of liouville structures}
		$S$-parametrized exact deformations (Definition~\ref{defn. deformations of liouville structures}) will further be called (in order of increasing restrictiveness):
		\begin{itemize}
			\item {\em Bordered} if for each $s$, $\lambda_s$ respects the splitting~\ref{item. splitting lambda}.	    
			\item {\em Interior} if $\lambda_s$ is constant (i.e., $s$-independent) near $\del N$ and bordered. 
			\item {\em Compactly supported} if (i) locally in $S$, there exists a compact set $K \subset N$ for which $\supp(\lambda_s - \lambda_0) \subset K$, and (ii) the deformation is interior. 
		\end{itemize}
	\end{defn}

	\begin{remark}
	\label{remark. bordered homotopy}
		For a deformation $\{\lambda_s\}$ to be bordered means exactly that $\{\lambda_s\}$, in a neighborhood of $\del N$, is a direct product of deformations---a deformation of Liouville structure of the fiber $F$, and a constant (non-)deformation of the structure on $T^*[0,1)^i$---with respect to the splitting in~\ref{item. splitting lambda}.
	\end{remark}
	
	\begin{remark}
		If $\{\lambda_s\}$ is a compactly supported deformation, the functions $h_s$ can be chosen to vanish near $\del N$ and $\{\lambda_s\}$ is constant (i.e., $s$-independent) near this boundary.
		
		Put another way, we abusively use compactly supported to mean ``interior and compactly supported.''
	\end{remark}

	\begin{remark}
	Suppose $S$ is compact and the family $\{\lambda_s\}_{s \in S}$ is compactly supported. Then the compact subset $K \subset N$ may be chosen uniformly (i.e., independent of $s \in S$).
	\end{remark}
	
	\begin{remark}
	When $S$ is the smooth $n$-simplex equipped with a collaring, we will also define what it means for $\{\lambda_s\}_{s \in S}$ to be collared (Definition~\ref{defn. collared h}).
	\end{remark}
			
	\subsubsection{Various morphisms}
	We discuss maps between Liouville sectors. Here is the most basic class:
	
\begin{defn}\label{defn. sectorial embedding}
Fix two Liouville sectors $(M,\lambda^M)$ and $(N,\lambda^N)$. A {\em sectorial embedding}, or a map of Liouville sectors, is a function $f: M \to N$ satisfying the following: $f$ is proper, $f$ is a codimension-zero smooth embedding, and there exists some smooth, compactly supported function $h: M \to \RR$ for which $f^*\lambda^N = \lambda^M + dh$. 

If $f$ is further a diffeomorphism, we call $f$ an {\em isomorphism} of Liouville sectors.
\end{defn}

	\begin{remark}
	 
	By the assumption that $f$ is proper and codimension zero, one can equally ask for the existence of a compactly supported $g$ on $N$ for which $f^*(\lambda^N + dg) = \lambda^M$ (rather than asking for the compactly supported function to be defined on $M$).
	\end{remark}

\begin{defn}\label{defn. strict embedding}
We call a sectorial embedding $f$ {\em strict} if $f^*\lambda^N = \lambda^M$. We further call $f$ a {\em strict isomorphism} if $f$ is a diffeomorphism.
\end{defn}

\begin{remark}
In~\cite{last-flexible}, what we here call a strict sectorial embedding is called a strict proper inclusion.
\end{remark}

	\begin{remark}
		Note we allow the sectorial boundary of $M$ to intersect the sectorial boundary of $N$. (Compare with Convention~3.1 of~\cite{gps}.) 
		If $M$ is contained in the interior of $N$, then  the complement  $N \backslash i(M)$ is also a Liouville sector,  with a strict sectorial embedding into $N$; if $N$ is further a  Weinstein sector, then so is $N\backslash i(M)$.
	\end{remark}	
	
	\begin{remark}\label{remark. strict is monoidal}
		If $i$ and $i'$ are strict sectorial embeddings, so is the product $i \times i'$. In contrast, if $i$ and $i'$ are sectorial embeddings, then the product may not be (due to the compact support condition).
	\end{remark}

\begin{remark}
\label{remark. sectorial embeddings are eventually conical}
Let $f: M \to N$ be a sectorial embedding. Outside a compact region, $f$ strictly respects $\lambda$. Because Liouville sectors may, outside a compact set, be modeled as symplectizations of a contact manifold-with-corners (Remark~\ref{remark. sectors are completions}), we can contemplate the induced map $\RR_t \times \del_\infty M \to \RR_t \times \del_\infty N$, where both domain and codomain have Liouville form given by $\lambda \cong e^t \lambda|_{\del_\infty}$. A standard computation shows that any such map must be of the form $(t,x) \mapsto (t + \tau(x), y(x))$ where $y$ is a contact embedding of $\del_\infty M$ into $\del_\infty N$ for which the contact form is scaled by $y^*(\lambda^N) = e^{-\tau(x)} \lambda^M$ on $\del_{\infty} M$. Note that both $\tau: \del_\infty M \to \RR$ and $y$ are independent of $t$.
\end{remark}

\begin{defn}\label{defn. sectorial equivalence}
Let $f: M \to N$ be a sectorial embedding. We call $f$ an {\em equivalence}, or {\em sectorial equivalence}, or an {\em equivalence of Liouville sectors} if there exists a sectorial embedding $g: N \to M$ so that $g \circ f$ and $f \circ g$ are smoothly isotopic, through sectorial embeddings, to the identity functions of $M$ and $N$, respectively.
\end{defn}

\begin{example}\label{example. from isotopies to equivalences}
Fix two smooth manifolds $R$ and $S$, possibly with corners. Then $T^*R$ and $T^*S$, equipped with the canonical Liouville form $pdq$, are Liouville sectors. 

Let $f: S \to R$ and $g: R \to S$ be smooth maps so that $g \circ f$ is smoothly isotopic to $\id_S$ and $f \circ g$ is smoothly isotopic to $\id_R$. (An example: Take $S = D^n$ and $R = [0,1]^n$. We can choose $f$ and $g$ to be smooth embeddings given by a sufficient scaling $x \mapsto t x$ for $t$ a positive real number.) 

Then $T^*f$ and $T^*g$ are both sectorial equivalences.

If $M$ and $N$ are Liouville sectors, let
	$
	M \tensor T^*R
	$
	and
	$
	N \tensor T^*S
	$
denote the direct product of sectors (Definition~\ref{defn. products of sectors}).
If $\phi: M \to N$ is a strict isomorphism of Liouville sectors---meaning $\phi$ is a diffeomorphism and $\phi^*\lambda^N = \lambda^M$---then the map
	\eqnn
	\phi \times T^*f : M \tensor T^*R \to N \tensor T^*S
	\eqnd
is an equivalence of Liouville sectors.
\end{example}

	The above morphisms are specified completely by a single mapping. We now introduce a class of morphisms specified by a mapping, together with a family of Liouville structures in the codomain. 
	
	\begin{defn}
	\label{defn. various deformation embeddings}
	Let $(M,\lambda^M)$ and $(N,\lambda^N)$ be Liouville sectors.
	Consider a pair $(f, \{\lambda^N_s\}_{s \in [-1,1]})$ where $f$ is a smooth, proper embedding (with a priori no compatibility with $\lambda^N_s$) and $\{\lambda^N_s\}$ is a 1-parameter exact family of Liouville structures of $N$ with $\lambda^N_1 = \lambda^N$. 
	
	We say that $(f,\{\lambda^N_s\})$ is a \textit{bordered, interior, or compactly supported deformation embedding} if 
	\enum
	\item $\{\lambda^N_s\}$ is a bordered, interior, or compactly supported family (respectively),
	\item The family $\lambda^N_s$ is constant (i.e., $s$-independent) near $s=-1$ and near $s=1$, and 
	\item $f^*\lambda^N_{-1} = \lambda^M$.    
	\enumd
	\end{defn}
	
	\begin{remark}
	\label{remark. why composition of families is annoying}
	Note that composition of deformation morphisms is cumbersome to define on the nose, if not impossible. For example, given two deformation embeddings $f_{01} : M_0 \to M_1, f_{12}: M_1 \to M_2$, one would have to make a non-canonical choice of extending $\{(f_{12})_*\lambda^{M_1}_s\}_{s}$ and implement a Moore-path-space-like concatenation of $s$-intervals to define a composition and define a topologically enriched category of Liouville sectors with morphisms given by deformation morphisms. 
	
	This at least partially motivates the reason we use $\infty$-categorical models in this work. The higher simplices of an $\infty$-category {\em are} the data of all potentially non-canonical choices---so we need not make any choices---and (by definition of $\infty$-category) we need not define any compositions that depend on such choices.
	\end{remark}
	
\subsection{The movie construction}\label{section.movies}
The following generalizes a construction of~\cite[Section~3.3]{eliashberg-revisited}.

\begin{construction}[The movie construction]
\label{construction. movie}
Let $(M,\lambda)$ be a Liouville sector and
let $S$ be a smooth manifold, possibly with corners. Fix an $S$-parametrized exact family of Liouville structures encoded by a smooth function $h: M \times S \to \RR$ (Definition~\ref{defn. deformations of liouville structures}). Then the {\em movie construction} associated to $h$ is the pair
	\eqn\label{eqn. movie form}
	(M \times T^*S, \lambda + {\bf p}d{\bf q} + dh)
	\eqnd
where  ${\bf p}d{\bf q}$ is the standard Liouville form on $T^*S$, and we out of sloth identify ${\bf p}d{\bf q}$ and $\lambda$ with their pullbacks to $M \times T^*S$.
\end{construction}

Observe that the movie construction is an exact symplectic manifold. We will see in Proposition~\ref{prop. movies are sectors} that the movie construction is always a Liouville sector provided that the deformation encoded by $h$ is bordered and collared  (Definition~\ref{defn. collaring of S}) along the corners of $S$.

\begin{remark}
Setting $\lambda_s = \lambda + d_M h_s$ as in~\eqref{eqn. d_N notation}, we observe
	\eqnn
	\lambda + {\bf p}d{\bf q} + dh
	=
	\lambda_s + {\bf p}d{\bf q} + d_{S} h.
	\eqnd
where the last term denotes we only take the deRham derivative of $h$ in the $S$ directions. Both sides of the above equation are written informally; the most rigorous expression for this 1-form is
	\eqnn
	\pi_M^* \lambda + \pi_{T^*S}^* {\bf p}d{\bf q} + \pi_{M \times S}^*dh
	\eqnd
where $\pi_M$, $\pi_{T^*S}$, and $\pi_{M \times S}$ are the projections from $M \times T^*S$ to $M, T^*S$, and $M \times S$ respectively.
\end{remark}

\begin{remark}
 
The presentation of the form as $\lambda + {\bf p}d{\bf q} + dh$ breaks symmetry by apotheosizing a particular Liouville structure $\lambda$ on $M$.
Later, we will often take $S = \Delta^n$ to be a simplex with ordered vertices, and justify the symmetry-breaking by further demanding that $h_s$ vanishes (as a function on $M$) in a neighborhood of the terminal vertex of $\Delta^n$.
\end{remark}

The main result we will use about the movie construction is that the construction itself is a Liouville sector (Proposition~\ref{prop. movies are sectors}), provided that $h$ is collared appropriately. We say what we mean by an appropriate collar now.

\begin{defn}[Collaring]
\label{defn. collaring of S}
Let $S$ be a smooth manifold with corners. A {\em collaring} of $S$ is the data, for every codimension $i \geq 1$ boundary stratum $\del^i S \subset S$, of a smooth, codimension-respecting embedding
	\eqnn
	\phi_i: \del^i S \times [-1,\epsilon_i)^i \to S,
	\qquad
	(x, \vec u) \mapsto \phi_i(x,\vec u)
	\eqnd
for which $\phi_i(x,0) = x$, and for which there exists a $\delta=\delta(i,j)$ for which
	\eqnn
	\phi_i(\phi_j(x,\vec u'), \vec u'')
	=
	\phi_j(x, \vec u)
	\eqnd
whenever $\phi_j(x,\vec u') \in \del^i S$ and $|\vec u'|, |\vec u''|< 1 + \delta$. Here, $\vec u \in [-1,\epsilon_j)^j$ is a vector obtained by replacing some of the null coordinates of $\vec u'$ by the coordinates of $\vec u''$, up to some re-ordering of coordinates.
\end{defn}

\begin{remark}
In short, a collaring is a trivialization of tubular neighborhoods of each stratum using cubical coordinates. The compatibility condition states that these trivializations respect the passage to higher-codimension strata. We will later choose collarings for simplices $S = \Delta^k$, and so that the collarings are compatible with respect to natural simplicial face maps. (See Definition~\ref{defn. collaring convention}.)
\end{remark}

\begin{remark}
It would be most natural to not demain that collarings always extend beyond 0. However, here (and everywhere else in this work) we demand that collaring coordinates extend beyond the interval $[-1,0]$ to guarantee a {\em standard} embedding of $[-1,0]$ into these collaring neighborhoods. The reasoning behind this is ultimately a coherence/categorical issue: These collars allow us to make sure that our eventual $\infty$-categories $\lioudelta$ and $\lioudeltadef$ are comprised of honestly commuting diagrams of collared objects, rather than of data that allows for shrinking isotopies and reparametrizations.
\end{remark}

\begin{defn}[Collared family]
\label{defn. collared h}
\label{defn. collared}
Fix a collaring on $S$. We say a smooth function $h: M \times S \to \RR$ is {\em collared} (with respect to the collaring on $S$) if, for every codimension $i \geq 1$ stratum of $S$, there is some neighborhood of $\del^i S$ so that $h$ factors through the projection	
	\eqnn
	M \times \nbhd(\del^i S) \to M \times \del^i S.
	\eqnd
In other words, $h$ is collared if it is independent of the ``normal directions'' to $\del^i S$ specified by the collaring $\{\phi_i\}$.

An $S$-parametrized exact family of Liouville structures encoded by a smooth function $h: M \times S \to \RR$ (Definition~\ref{defn. deformations of liouville structures}) will also be called collared if $h$ is collared.

More generally, any structure on $S$---such as a function on $S$, a family over $S$, et cetera---is called collared if it is constant along the normal directions determined by the collaring.
\end{defn}

\begin{example}
Let $S = \Delta^I$ be an $I$-simplex with a collaring. If $h: M \times S \to \RR$ is collared, then $h$ is independent of the $\Delta^I$-coordinate near every vertex of $\Delta^I$. More precisely, given a vertex $v$, there exists a neighborhood $\nbhd(v) \subset \Delta^I$ so that $h|_{M \times \nbhd(v)}$ factors as
	\eqnn
	M \times \nbhd(v) 
	\to
	M
	\xrightarrow{h_v}
	\RR
	\eqnd
for some smooth function $h_v$.
\end{example}

\begin{prop}
\label{prop. movies are sectors}
Let $(M,\lambda)$ be a Liouville sector and $S$ a compact manifold-with-corners equipped with a collaring. Fix a bordered, collared exact deformation encoded by a smooth function $h: M \times S \to \RR$ (Definitions~\ref{defn. deformations of liouville structures} and~\ref{defn. collared h}). Then the movie construction of $M$ (Construction~\ref{construction. movie}) is a Liouville sector.
\end{prop}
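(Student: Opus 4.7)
The plan is to verify the three sector axioms~\ref{item. finite type}--\ref{item. projections compatible} for $E := M \times T^*S$ equipped with the 1-form $\Lambda := \lambda + \mathbf{p}d\mathbf{q} + dh$. Since $d\Lambda = \omega_M + \omega_{T^*S}$ is automatically symplectic, the real work lies in producing the corner projections and an appropriate exhaustion function. The sectorial boundary of $E$ decomposes naturally as $(\partial M \times T^*S) \cup (M \times T^*S|_{\partial S})$, and I will treat the two kinds of strata separately before combining at mixed corners.

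For the splitting~\ref{item. splitting lambda} and compatibility~\ref{item. projections compatible}, I first consider a point $(x, y)$ with $x$ in a codimension-$i$ corner stratum of $\partial M$. The bordered hypothesis on $\{\lambda_s\}$ ensures that, in the neighborhood $\nbhd(x) \cong F \times T^*[0,1)^i$ guaranteed by~\ref{item. splitting lambda} on $M$, each $\lambda_s$ splits as $\lambda^F_s + \mathbf{p}^M d\mathbf{q}^M$ with the cotangent factor $s$-independent. Hence $h$ is independent of the $T^*[0,1)^i$-coordinates near $\partial M$, and the movie form on $\nbhd(x) \times T^*S$ splits as $(\lambda^F + \mathbf{p}^S d\mathbf{q}^S + dh^F) + \mathbf{p}^M d\mathbf{q}^M$, the first summand being itself an $S$-movie on the Liouville fiber $F$. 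The corner projection at $(x, y)$ is then $\pi_x$ extended trivially across $T^*S$. Next I consider a point $(m, y)$ with $y$ lying over a codimension-$j$ stratum $\partial^j S$. Using the collaring $\phi_j$ from Definition~\ref{defn. collaring of S} to split $\nbhd(y) \subset T^*S$ symplectomorphically as $T^*(\partial^j S) \times T^*[0, \epsilon_j)^j$, the collaredness of $h$ forces $dh$ to carry no components in the normal directions $d\vec u$, and the movie form splits as $(\lambda + \mathbf{p}^\tau d\mathbf{q}^\tau + dh) + \mathbf{p}^{\vec u} d\mathbf{q}^{\vec u}$, the first summand being a $\partial^j S$-movie of $M$ (which is a Liouville sector by downward induction on stratum dimension). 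At mixed corners, the two independences of $h$ commute and produce a codimension-$(i+j)$ splitting with projection to $T^*[0,1)^{i+j}$; axiom~\ref{item. projections compatible} then follows from the corresponding compatibility on $M$, the collaring compatibility built into Definition~\ref{defn. collaring of S}, and the commutation of the two factor projections.

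For finite type~\ref{item. finite type}, since $S$ is compact I will appeal to Remark~\ref{remark. families for S compact} to obtain a properly exhausting $R: M \times S \to \mathbb{R}_{\geq 0}$ and a uniform compact $K \subset M$ outside of which $R_s$ is $\lambda_s$-linear for every $s$. Fixing a Riemannian metric on $S$ with smoothed dual fiberwise norm $\|\cdot\|$ on $T^*S$, I will take the candidate exhaustion $\rho(m, q, \mathbf{p}) := R(m, q) + \tfrac{1}{2}\|\mathbf{p} + d_S h(m, q)\|^2$. A direct computation of the Liouville vector field of $\Lambda$ gives $v_\Lambda = v_M + v_T$, where $v_M$ is slicewise the Liouville field of $\lambda_s$ on $M$ and $v_T = (\mathbf{p} + d_S h)\, \partial_{\mathbf{p}}$; the Moser-shifted choice of $\rho$ is designed so that its leading-order behavior under $v_\Lambda$ outside $K$ is $\rho$ itself, with cross-terms coming from the $m$-dependence of $d_S h$ being subleading at infinity. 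A smooth rescaling and cutoff then produce a genuinely $\Lambda$-linear proper exhaustion outside a compact subset of $E$.

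The hardest part will be the splitting analysis at mixed corners: one must verify that the two independences --- one coming from borderedness (in the $T^*[0,1)^i$-coordinates near $\partial M$) and one from collaredness (in the $\vec u$-coordinates near $\partial S$) --- interlock correctly to produce the required codimension-$(i+j)$ product decomposition, and that the corner projection they jointly supply is compatible both with the $M$-side projections and with the collaring transitions between higher- and lower-codimension strata of $S$. The finite-type step is essentially a routine computation once the Moser-shifted $\rho$ is chosen to cancel the leading-order effect of the shift $\mathbf{p} \mapsto \mathbf{p} + d_S h$ in the Liouville vector field.
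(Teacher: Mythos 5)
Your handling of the projections and splitting axioms (\ref{item. splitting lambda}, \ref{item. projections compatible}) is sound and agrees in substance with the paper, which declares the corner projections to be products of the $M$-projections with the ones furnished by the collaring near $\partial T^*S$, then observes that borderedness of $\{\lambda_s\}$ controls the splitting near $\partial M$ while collaredness of $h$ (so $d_S h \equiv 0$ near $\partial T^*S$) controls it near $\partial T^*S$; your three-case analysis spells this out and is fine.

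The genuine gap is in the finite-type verification. Your Liouville field $v_\Lambda = v_{\lambda_s} + (\mathbf{p} + d_S h)\partial_{\mathbf{p}}$ is correct, but along $v_\Lambda$ (which fixes $q$ and moves $m$ by $v_{\lambda_s}$) one has $\dot\sigma = \sigma + (\partial_m d_S h)\cdot v_{\lambda_s}$ for $\sigma := \mathbf{p} + d_S h$, so
\[
  v_\Lambda\!\left(\tfrac12\|\sigma\|^2\right) = \|\sigma\|^2 + \bigl\langle \sigma,\ (\partial_m d_S h)\cdot v_{\lambda_s}\bigr\rangle .
\]
The cross term is not controlled by the hypotheses: Condition~\ref{item. tameness} constrains the $\lambda_s$-linearity of $R_s$ outside $K$ but says nothing about the mixed partial $\partial_m d_S h$ at infinity in $M$, while $v_{\lambda_s}$ grows as the flow escapes $K$. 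So ``subleading at infinity'' is an unjustified assertion, and if the cross term dominates with the wrong sign no ``smooth rescaling and cutoff'' will rescue $\rho = R + \tfrac12\|\sigma\|^2$ into a $\Lambda$-linear proper exhaustion. The paper sidesteps this: using compactness of $S$ to fix a single $K \subset M$, it exhibits the compact set $A := \{(x,s,\sigma) : x \in K,\ |\sigma| \le |d_S h(x,s)|\}$ and checks pointwise that outside $A$ the flow escapes --- if $x \notin K$ the $M$-component flows out, and if $x \in K$ but $|\sigma| > |d_S h(x,s)|$ then $\dot{\mathbf{p}} = \mathbf{p} + d_S h$ is dominated by $\mathbf{p}\partial_{\mathbf{p}}$ so $\sigma \to \infty$ --- never needing to differentiate $h$ in the $M$-direction. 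You should argue escape from a compact set this way rather than trying to guess the exhaustion function directly.
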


\begin{defn}
\label{defn. movie object}
When the deformation is bordered (Definition~\ref{defn. deformation types of liouville structures}) and collared (Definition~\ref{defn. collared}), we will call the Liouville sector~\eqref{eqn. movie form} an {\em $S$-movie}, or simply a {\em movie} when $S$ is clear or irrelevant. When the deformation is further compactly supported, we will call the movie compactly supported as well.

(We will use this terminology most often when $S$ is a simplex---i.e., we will often consider $\Delta^I$-movies.)
\end{defn}

\begin{proof}[Proof of Proposition~\ref{prop. movies are sectors}.]
Because $(M,\lambda)$ is a sector, it comes with the data~\eqref{eqn. corner projections} of projections $\pi_x$ near its corner strata. Further, the collaring of $S$ (Definition~\ref{defn. collaring of S}) induces such projections near the corner strata of $T^*S$. The products of these define the projections for $M \times T^*S$. We must now ensure that these projections, together with the 1-form~\eqref{eqn. movie form}, satisfy the properties~\ref{item. finite type}, \ref{item. splitting lambda}, and~\ref{item. projections compatible} from Definition~\ref{defn. sector}. We proceed in reverse order.

\ref{item. projections compatible} is satisfied, as the projection maps are products of maps satisfying \ref{item. projections compatible}.

\ref{item. splitting lambda} is satisfied because the function $h$ is collared (so that $d_Sh \equiv 0$ near $\del T^*S$, see Definition~\ref{defn. collared h}), because ${\bf p}d{\bf q}$ satisfies~\ref{item. splitting lambda}, and because $\{\lambda_s\}$ is a bordered deformation (Definition~\ref{defn. deformation types of liouville structures}).

To verify~\ref{item. finite type}, it suffices to exhibit a compact subset $A \subset M \times T^*S$ outside of which every element escapes to infinity. For this, because $S$ is compact, we may assume there is a single compact $K \subset M$ outside which every $\lambda_s$-trajectory escapes to infinity. ($K$ is independent of $s$. See Remark~\ref{remark. families for S compact}.) 

We define $A$ to be the compact set of those $(x,s,\sigma)$ for which $x \in K$ and $|\sigma| \leq |d_Sh(x,s)|$. 

If we have $(x,s,\sigma) \in (M \times T^*S) \setminus A$ for which $x \not \in K$, then by assumption on $K$, we know that the Liouville flow of $(x,s,\sigma)$ (which is given by the flow of $\lambda_s$ in the $M$ component) will force the $x$ component to flow toward infinity, hence force $(x,s,\sigma)$ to flow toward infinity. On the other hand, if $x \in K$, we know by definition of $A$ that $|\sigma| > |d_S(h(x,s)|$. Noting that the Liouville vector field on $M \times T^*S$ is given, in the $T^*S$ component, in local coordinates as
	\eqnn
	\sum \left(p_i + {\frac{\del h}{\del q_i}(x,s)} \right) \del_{p_i}
	\eqnd
the definition of $A$ shows that the standard Liouville flow given by ${\bf p}{\del_{\bf p}}$ dominates. In particular, we see that the $\sigma$ coordinate of $(x,s,\sigma)$ will tend to infinity as we flow $(x,s,\sigma)$ along the Liouville flow of the movie. This proves~\ref{item. finite type}.
\end{proof}

\begin{remark}
 
The movie of any collared, bordered deformation is Liouville. However, unless the deformation takes place through Weinsten deformations, the resulting movie is not Weinstein. Nor, as far as we know, does the movie admit any deformation of Liouville structures to be a Weinstein sector. 
\end{remark}

\subsection{The movie construction for maps}
We have see that a deformation of Liouville structures may be encoded into a single sector by the movie construction. Likewise, suppose we now have an $S$-parametrized collection $\{y_s: M \to N\}_{s \in S}$ of (not necessarily strict) sectorial embeddings. We will see here that the data of $\{y_s\}$ may be converted into the data of a single map of movies, and in fact, a single {\em strict} map of movies.

We leave the proof of the following as an exercise for the reader. We remind the reader that $M \tensor T^*S$ is our notation for a sector with the product Liouville structure, while $M \times T^*S$ is direct product of manifolds with a Liouville structure that may not necessarily be the product Liouville structure. (See Definition~\ref{defn. products of sectors}.)

\begin{prop}
\label{prop. movies of maps}
Let $S$ be a compact manifold-with-corners equipped with a collaring.
Let $M \times T^*S$ and $N \times T^*S$ be movies (associated to bordered and collared families of Liouville structures $\{\lambda^M_s\}_{s \in S}$, $\{\lambda^N_s\}_{s \in S}$; see Definition~\ref{defn. movie object}). Assume we are given a {\em strict} sectorial embedding $\widetilde{ y}: M \times T^*S \to N \times T^*S$ for which there exists a function $y: M \times S \to N, (x,s) \mapsto y_s(x),$ fitting into the following commutative diagram:
	\eqn\label{eqn. movie square}
	\xymatrix{
	M \times T^*S \ar[d] \ar[rr]^{\widetilde{y}} 
		&& N \times T^*S \ar[d] \\
	M \times S \ar[rr]^{(x,s) \mapsto (y_s(x),s)} 
		&& N \times S.
	}
	\eqnd
Then for each $s \in S$, $y_s$ is a strict sectorial embedding for the Liouville structures $\lambda^M_s$ and $\lambda^N_s$.

Conversely, suppose we have a smooth, collared, continuous family $\{y_s: M \to N\}_{s \in S}$ of not-necessarily-strict sectorial embeddings (where for each $s \in S$, $y_s$ is a sectorial embedding from $(M,\lambda^M_s)$ to $(N,\lambda^N_s)$). Then there exists (i) a compactly supported function $g: N \times S \to \RR$ and (ii) a smooth map $\widetilde{y}: M \times T^*S \to N \times T^*S$ so that (a) $\widetilde y$ makes~\eqref{eqn. movie square} commute, and (b) $\widetilde y$ is a {\em strict} sectorial embedding when the domain is given the movie structure associated to $\{\lambda^M_s\}$, and when the codomain is given the movie sectorial structure associated to $\{\lambda^N_s + d(g_s)\}_{s \in S}$.
\end{prop}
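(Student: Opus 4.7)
The forward direction is immediate from restriction. Pull back the strictness identity $\widetilde{y}^*(\lambda^N + \mathbf{p}d\mathbf{q} + dh^N) = \lambda^M + \mathbf{p}d\mathbf{q} + dh^M$ to the zero section $M \times S \times \{0\} \subset M \times T^*S$ and evaluate on a tangent vector $\partial_x$ in the $M$-direction. Both $\mathbf{p}d\mathbf{q}$ contributions vanish on $\partial_x$ at $p=0$, and the surviving equation reads $y_s^*\lambda^N(\partial_x) + y_s^*(d_N h^N_s)(\partial_x) = \lambda^M(\partial_x) + (d_M h^M_s)(\partial_x)$, i.e.\ $y_s^*\lambda^N_s = \lambda^M_s$. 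Hence each $y_s$ is a strict sectorial embedding for the fiberwise Liouville structures.

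For the converse, I first produce compactly supported primitives $k_s: M \to \RR$ satisfying $y_s^*\lambda^N_s = \lambda^M_s + dk_s$ and assemble them into a smooth, collared family $k: M \times S \to \RR$. This can be done by integration along paths from a fixed basepoint, or via a partition-of-unity construction; compactness of $S$ and properness of $y_s$ together yield uniform compact support, and collaredness of $\{y_s\}$, $\{\lambda^M_s\}$, and $\{\lambda^N_s\}$ transfers to $k$. I then define a smooth, compactly supported $g: N \times S \to \RR$ by setting $g_s(y_s(x)) := -k_s(x)$ on the image of $y_s$ and extending by zero to all of $N$; this is smooth because $k_s$ vanishes near the boundary of its support and $y_s$ is a proper embedding, and $g$ inherits collaredness from $k$.

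Next, I write the ansatz $\widetilde{y}(x,s,p) := (y_s(x), s, p - \alpha(x,s))$, where $\alpha(x,s) \in T^*_s S$ is to be determined. The desired strictness identity $\widetilde{y}^*(\lambda^N + \mathbf{p}d\mathbf{q} + d(h^N + g)) = \lambda^M + \mathbf{p}d\mathbf{q} + dh^M$ splits into components along $\partial_x$ and along $\partial_s$. The $\partial_x$-components reduce exactly to $y_s^*(\lambda^N_s + d_N g_s) = \lambda^M_s$, which holds automatically by the construction of $g$. The $\partial_s$-components then force the explicit formula $\alpha(\partial_s) = \lambda^N(\partial_s y_s(x)) + \partial_s\bigl((h^N + g)(y_s(x),s)\bigr) - \partial_s h^M(x,s)$, which uniquely and smoothly determines $\alpha$, and hence $\widetilde{y}$. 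Commutativity of~\eqref{eqn. movie square} is built into the ansatz.

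What remains is bookkeeping. The differential of $\widetilde{y}$ is block triangular with invertible diagonal blocks $dy_s$ and $\mathrm{id}_{T^*S}$, so $\widetilde{y}$ is an immersion; injectivity and properness of $\widetilde{y}$ follow from injectivity and properness of $y_s$ together with local boundedness of $\alpha$ on compacts. The modified codomain family $\{\lambda^N_s + d_N g_s\}$ is a bordered, collared exact deformation since adding a compactly supported, collared exact term preserves all the conditions of Definition~\ref{defn. deformation types of liouville structures}. The main obstacle is the technical but standard step of producing $k$ with simultaneous smoothness in $(x,s)$, uniform compact support, and collar compatibility; once this is in hand, the rest of the proof is a direct calculation with the ansatz.
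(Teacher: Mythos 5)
The paper itself gives no proof of this proposition (``We leave the proof of the following as an exercise for the reader''), so there is nothing to compare against; I can only judge whether your argument works on its own terms.

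Your forward direction is correct, though the phrase ``both $\mathbf{p}d\mathbf{q}$ contributions vanish on $\partial_x$ at $p=0$'' is a little loose: the source-side term vanishes because $p=0$, but the target-side term $\widetilde{y}^*(\mathbf{p}d\mathbf{q})(\partial_x)$ vanishes because the commutativity of~\eqref{eqn. movie square} forces the $S$-component of $d\widetilde{y}(\partial_x)$ to be zero, regardless of the $T^*S$-fiber coordinate of $\widetilde{y}(x,s,0)$. Spelling that out is worth doing, but the conclusion $y_s^*\lambda^N_s = \lambda^M_s$ is right. Your converse follows the only reasonable strategy --- strictify the family by a compactly supported exact modification $dg_s$ of the codomain, then shear in the $T^*S$ fiber by an $\alpha(x,s)$ determined from the $ds$-components --- and your formula for $\alpha$ is the correct one. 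The block-triangular argument for embeddedness and properness is fine.

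The gap is in the construction of $g$. You set $g_s := -k_s\circ y_s^{-1}$ on $y_s(M)$ and ``extend by zero.'' The justification that this is smooth (``$k_s$ vanishes near the boundary of its support'') does not address the relevant boundary: $y_s(M)$ is a codimension-zero submanifold-with-corners of $N$, with free boundary $y_s(\partial M)\cap \mathrm{int}(N)$, and the extension of $g_s$ by zero across that free boundary is smooth only if $k_s$ vanishes to infinite order along $\partial M$. But a sectorial embedding only provides a compactly supported $k_s$, and $\partial M$ is non-compact, so nothing forces $\supp k_s \cap \partial M = \emptyset$. You need either a Whitney-type extension off $y_s(M)$ (rather than extension by zero), or a preliminary shrinking isotopy of $M$ that pushes $\supp k_s$ into the interior, or an appeal to some convention about $h$ in Definition~\ref{defn. sectorial embedding} vanishing near $\partial M$ that the paper does not make explicit. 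Related and more serious: you assert that $\{\lambda^N_s + d_N g_s\}$ is bordered, but borderedness requires $d_N g_s$ to split near $\partial N$ as in \ref{item. splitting lambda}, which a generic compactly supported $g_s$ does not do. Since the phrase ``movie sectorial structure associated to $\{\lambda^N_s + d(g_s)\}$'' in the statement presupposes (via Proposition~\ref{prop. movies are sectors}) that this family is bordered and collared, this is a genuine obligation, not bookkeeping. You should arrange $g_s$ to vanish in a neighborhood of $\partial N$ and explain why that is possible, rather than appealing to the general slogan that adding a compactly supported exact term preserves all conditions of Definition~\ref{defn. deformation types of liouville structures}.
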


\begin{defn}
\label{defn. movie of maps}
Fix two movies $M \times T^*S$ and $N \times T^*S$. A map $\widetilde{y}: M \times T^*S \to N \times T^*S$ is called a {\em movie of embeddings} if 
	\enum
	\item $\widetilde{y}$ is a strict sectorial embedding, and
	\item There exists a smooth map $y: M \times S \to N$ rendering the diagram~\ref{eqn. movie square} commutative. (By Proposition~\ref{prop. movies of maps}, then each $y_s$ is necessarily a sectorial embedding.)
	\enumd
We will say that $\widetilde{y}$ is the movie of the family of sectorial embeddings $\{y_s : M \to N\}_{s \in S}$.
\end{defn}

\begin{remark}
\label{remark. movies give smooth maps to embedding spaces}
Suppose we are given a movie of embeddings $\widetilde{y}: M \times T^*S \to N \times T^*S$ where $S$ is collared. Suppose we are further given that the movies of $M$ and $N$ are both movies of compactly supported deformations (Definition~\ref{defn. movie object}). Then the family $\{y_s: M \to N\}$---guaranteed to exist by Proposition~\ref{prop. movies of maps} and obviously uniquely determined by $\widetilde{y}$---determines a smooth and continuous map from $S$ to the space $\embtop_{\liou}(M,N)$ of sectorial embeddings from $M$ to $N$ (Definitions~\ref{defn. topology on embliou} and~\ref{defn. smooth map}). The space $\embtop_{\liou}(M,N)$ is defined unambiguously because  for any $s,s'\in S$ the Liouville structures $\lambda^M_{s}$, $\lambda^M_{s'}$ are related by $d$ of a compactly supported function. (In particular, the identity function is a Liouville isomorphism from $(M,\lambda^M_s)$ to $(M,\lambda^M_{s'})$.) Likewise for $N$.

If further $\widetilde{y}$ is assumed collared, then $\{y_s\}$ defines a smooth and collared and continuous map from $S$.
\end{remark}

\subsection{Shrinking near the boundary, rounding corners}
\label{section. shrinking}

\begin{defn}\label{defn:shrinking_isotopy}
  Let $M$ be any Liouville sector. A \emph{shrinking isotopy} of $M$ is a smooth family $\{\sigma_s: M \to M\}_{s \in [0,1]}$ of strict sectorial embeddings such that $\sigma_0 = \id_M$ and $\sigma_s(\partial M) \cap \partial M = \emptyset$ for $s>0$.
\end{defn}
\begin{lemma}\label{lem:id_is_deformable}
  Every Liouville sector admits a shrinking isotopy.
\end{lemma}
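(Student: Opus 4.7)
The plan is to realize $\sigma_s$ as the time-$s$ flow $\varphi_{X_H}^s$ of a single time-independent Hamiltonian $H\colon M \to \RR$ satisfying the strictness identity $\lambda(X_H) = H$; this identity forces $\mathcal{L}_{X_H}\lambda = d(\lambda(X_H)) + \iota_{X_H}d\lambda = dH - dH = 0$, and hence $\sigma_s^*\lambda = \lambda$ at every $s$ for which $\sigma_s$ is defined. So I just need to cook up $H$ globally smooth on $M$, with $X_H$ pointing strictly into the interior at every point of $\partial M$, and with a complete flow.

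The local model is dictated by the splitting axiom~\ref{item. splitting lambda}. For each boundary hypersurface $\Sigma_\alpha \subset \partial M$ the projection~\eqref{eqn. corner projections} supplies a neighborhood $U_\alpha$ with Darboux-like coordinates $(q_\alpha, p_\alpha)$ on the cotangent factor, in which $\Sigma_\alpha = \{q_\alpha = 0\}$ and $\lambda|_{U_\alpha} = \lambda^{F_\alpha} + p_\alpha\,dq_\alpha$. I would pick a smooth cutoff $\chi_\alpha \colon [0,\epsilon_\alpha) \to \RR_{\geq 0}$ with $\chi_\alpha(0) > 0$ and compact support in $[0,\epsilon_\alpha)$, and set $H_\alpha := p_\alpha\chi_\alpha(q_\alpha)$, extended by zero to $M$. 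A direct computation in the splitting gives $\lambda(X_{H_\alpha}) = p_\alpha\chi_\alpha(q_\alpha) = H_\alpha$, so each $H_\alpha$ is strict, and the flow satisfies $\dot q_\alpha = \chi_\alpha(q_\alpha) \geq 0$ with strictly positive derivative at $q_\alpha = 0$. I then set $H := \sum_\alpha H_\alpha$. The compatibility axiom~\ref{item. projections compatible} guarantees that near a codimension-$i$ corner $\bigcap_k \Sigma_{\alpha_k}$ the coordinates $(q_{\alpha_k}, p_{\alpha_k})$ are mutually symplectically orthogonal and exhibit $\lambda$ as $\lambda^F + \sum_k p_{\alpha_k}\,dq_{\alpha_k}$; so $H$ near the corner is the sum of the corresponding $H_{\alpha_k}$, which is a sum of mutually strict Hamiltonians. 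Hence $H$ is locally finite, smooth, and satisfies $\lambda(X_H) = H$ globally.

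Completeness and displacement are then essentially bookkeeping. In each local splitting $\dot q_\alpha$ is bounded and $\dot p_\alpha$ is linear in $p_\alpha$; combined with the finite-type axiom~\ref{item. finite type} this yields a complete flow $\{\sigma_s\}_{s \in \RR}$ of diffeomorphisms, and since $X_H$ is tangent to the fibres of every corner projection, each $\sigma_s$ is a strict sectorial self-embedding of $M$. For the displacement property: if $x \in \partial M$ lies in exactly the hypersurfaces $\{\Sigma_{\alpha_k}\}_k$, then $\dot q_{\alpha_k}(\sigma_0(x)) = \chi_{\alpha_k}(0) > 0$ together with the monotonicity $\dot q_{\alpha_k} \geq 0$ forces $q_{\alpha_k}(\sigma_s(x)) > 0$ for all $s > 0$; for any other $\beta$ with $x \in U_\beta$ we already have $q_\beta(x) > 0$ and $\dot q_\beta \geq 0$, so $\sigma_s(x) \notin \Sigma_\beta$. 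Thus $\sigma_s(\partial M) \cap \partial M = \emptyset$ for every $s > 0$, and $\{\sigma_s\}_{s \in [0,1]}$ is the required shrinking isotopy. The main technical care I anticipate is in checking that the global sum $H$ really is smooth and strict at the higher-codimension strata, which reduces to the explicit mutual orthogonality of the local Darboux coordinates guaranteed by~\ref{item. splitting lambda} and~\ref{item. projections compatible}.
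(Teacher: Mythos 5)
Your approach---flowing along a Hamiltonian $H$ satisfying $\lambda(X_H)=H$, built from local models $p_\alpha\chi_\alpha(q_\alpha)$---is a legitimate and essentially equivalent reformulation of what the paper does: the paper's cotangent lift $T^*f_t$ of a base isotopy $f_t$ on $[0,\varepsilon]$ \emph{is} the (time-dependent) Hamiltonian flow of exactly such a $p\cdot\xi_t(q)$, and both hinge on the splitting $\lambda=\lambda^F+p\,dq$ from~\ref{item. splitting lambda}. Your sign computation $\mathcal{L}_{X_H}\lambda=0$ and the Poisson-commutativity of the $H_{\alpha_k}$ near corners (via~\ref{item. projections compatible}) are fine.

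The gap is the very first step, which you treat as free: \emph{``the projection~\eqref{eqn. corner projections} supplies a neighborhood $U_\alpha$ with Darboux-like coordinates $(q_\alpha,p_\alpha)$.''} Definition~\ref{defn. sector} only provides a separate submersion $\pi_x$ on a neighborhood $\nbhd(x)$ of each point $x\in\partial M$, and the compatibility axiom~\ref{item. projections compatible} relates projections at \emph{different} codimensions---it does not force $\pi_x$ and $\pi_{x'}$ to agree for two codimension-one points $x,x'$ on the same hypersurface. So there is no given global coordinate $q_\alpha$ on a tubular neighborhood of $\Sigma_\alpha$, and a fortiori no uniform $\epsilon_\alpha>0$ bounding the collar width from below; near infinity the $\nbhd(x)$ may shrink. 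Your cutoff $\chi_\alpha$ with support in a fixed $[0,\epsilon_\alpha)$, and hence the global $H_\alpha$, is therefore not yet well-defined. This is precisely why the paper's proof opens with ``After changing the projections $\pi_x$ near infinity, we can arrange that $\pi_x$ is Liouville equivariant up to permutation. In particular, there is some $\varepsilon>0$ so that the $\pi_x$ assemble to a collection of surjections $\boldsymbol\pi^i:\nbhd(\partial^iX)\to T^*[0,\varepsilon]^i/S_i$.'' You need that step---modify the projections to be Liouville-equivariant at infinity, then glue them into global $\boldsymbol\pi^i$ with a uniform collar width---before writing down a single global $H$. With that added, the remainder of your argument (strictness, completeness via commuting with $Z$, inward-pointing at $\partial M$, displacement for $s>0$) goes through.
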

\begin{proof}
  Fix a Liouville sector $M$. After changing the projections $\pi_x$ near infinity, we can arrange that $\pi_x$ is Liouville equivariant up to permutation. In particular, there is some $\varepsilon > 0$ so that the $\pi_x$ assemble to a collection of surjections
  \[
    \boldsymbol{\pi}^i \colon \nbhd(\partial^i X) \to T^*[0,\varepsilon]^i/S_i
  \]
  with $S_i$ the symmetric group acting by permutation. This decomposes $M$ into pieces indexed by maximal nearby codimension. On each piece, the desired isotopy is induced by applying an embedding $f_t \colon[0,\varepsilon] \to [0,\varepsilon]$ to each component which is identity near $\varepsilon$ and sends $0$ to $\frac{t}{2\varepsilon}$.
\end{proof}

\begin{corollary}\label{cor:rounding_corners}
  Every Liouville sector $X$ is equivalent to a Liouville sector $X^{rc}$ with smooth boundary, in the sense that there are maps $X \to X^{rc}$ and $X^{rc} \to X$ whose compositions are isotopic to identity.
\end{corollary}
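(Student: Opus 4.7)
The strategy is to sandwich a smoothing $X^{rc}$ between $X$ and a shrunken copy $\sigma_1(X)$, using the shrinking isotopy from Lemma~\ref{lem:id_is_deformable} to produce the required homotopies. First, I apply Lemma~\ref{lem:id_is_deformable} to obtain a shrinking isotopy $\sigma_t \colon X \to X$ with $\sigma_0 = \id$ and $\sigma_1(X) \cap \partial X = \emptyset$. In particular, $\sigma_1(X)$ lies at positive distance from each corner stratum of $X$, giving room inside the boundary collar to modify $X$ without disturbing $\sigma_1(X)$.

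Second, I construct $X^{rc}$ by rounding off the corners of $X$ within those collars. Near each codimension-$i$ corner the projection $\pi_x$, together with the splitting condition~\ref{item. splitting lambda}, provides a local Liouville decomposition $\nbhd(x) \cong T^*[0,\varepsilon)^i \times F$. Choosing a smooth submanifold-with-boundary $U_i \subset [0,\varepsilon)^i$ which agrees with $[0,\varepsilon)^i$ outside a small neighborhood of the origin but has smooth (corner-free) boundary, one replaces the corner piece by $T^*U_i \times F$ in $X$. Performing this inductively from deepest corners outward, with each $U_i$ chosen to contain the image of $\sigma_1(X)$ in the relevant neighborhood, produces a codimension-zero submanifold $X^{rc} \subset X$ with smooth boundary. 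Near infinity, the $U_i$ must be chosen to be invariant under the Liouville flow, so that the finite-type condition~\ref{item. finite type} is preserved; the compatibility~\ref{item. projections compatible} of corner projections then ensures that the local rounded models glue to a global Liouville sector.

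Third, I verify the equivalence. The inclusion $X^{rc} \hookrightarrow X$ and the corestricted shrinking $\sigma_1 \colon X \to X^{rc}$ are sectorial embeddings. Their composite $X \to X^{rc} \to X$ is $\sigma_1$, which is sectorially isotopic to $\id_X$ via $\sigma_t$. The other composite $X^{rc} \to X \to X^{rc}$ is $\sigma_1|_{X^{rc}}$; by arranging the $U_i$ to be invariant under the componentwise embeddings $f_t$ that define $\sigma_t$ in the proof of Lemma~\ref{lem:id_is_deformable}, the restricted isotopy $\sigma_t|_{X^{rc}}$ remains inside $X^{rc}$ and witnesses isotopy to $\id_{X^{rc}}$.

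The main obstacle is the corner-rounding in step two: one must ensure that the locally rounded product models glue to a genuine Liouville sector (not merely an exact symplectic manifold-with-smooth-boundary) and, crucially, that the rounded model near infinity still satisfies the Liouville-equivariance and finite-type axioms. These requirements are not automatic from pure differential topology; the inductive compatibility of the projections $\pi_x$ and the flow-invariant choice of the $U_i$ near infinity are doing the real work. The other steps are essentially formal consequences of having the shrinking isotopy available.
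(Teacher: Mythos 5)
Your strategy is the same as the paper's: use the shrinking isotopy from Lemma~\ref{lem:id_is_deformable} for the equivalence, and cut out a neighborhood of the corner strata to produce $X^{rc}$. Where you part ways is in the organization of the rounding. You build $X^{rc}$ by locally replacing $T^*[0,\varepsilon)^i \times F$ with $T^*U_i \times F$ for ad hoc smoothed $U_i$, working inductively through corner depths and then asserting that the pieces glue, while flagging the gluing and finite-type issues as the remaining obstacle. The paper instead produces a single permutation-invariant cutoff $\boldsymbol\kappa^i = \prod_j \kappa$ on $[0,\varepsilon]^i$, splits it by its gradient to form the cotangent map $T^*\boldsymbol\kappa^i$, and composes with the assembled projections $\boldsymbol\pi^i$ to get one global symplectic submersion $\pi^{rc}\colon\nbhd^\circ(\partial X)\to T^*(0,1)$ splitting the Liouville form; $X^{rc}$ is then just $X^\circ\setminus(\pi^{rc})^{-1}T^*(0,\tfrac12)$. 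That global construction is precisely what discharges your flagged obstacle: the smooth boundary of $X^{rc}$ automatically carries the required symplectic-submersion-to-$T^*[0,1)$ structure (it is a regular level of $\pi^{rc}$), the compatibility across corner depths is built in because $\kappa\equiv 1$ near $\varepsilon$ (so extra factors drop out), and the split form near $\partial X^{rc}$ gives the finite-type/Liouville-equivariance near infinity for free. Your verification of the two isotopies at the end is correct and is more explicit than what the paper writes down, but the hard content---that the rounded thing is genuinely a Liouville sector---is exactly the part you leave at the level of assertion, and it is the part the $\boldsymbol\kappa^i$ device is designed to handle. Note also that the paper is not merely proving this corollary: it records the projection $\pi^{rc}$ itself as the corner-rounding projection of Definition~\ref{defn:corner_round_proj}, which is reused later (e.g.\ in Lemma~\ref{lemma. big maps shrink}), so producing $X^{rc}$ without producing $\pi^{rc}$ would leave you to redo this work.
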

Here, \emph{rc} stands for \emph{rounded corners}.
\begin{proof}
  Take a cutoff function $\kappa \colon [0,\varepsilon] \to [0,1]$ which vanishes at $0$ and equals $1$ near $\varepsilon$, and has strictly positive derivative on $\kappa^{-1}[0,1)$. Define
  \[
    \boldsymbol\kappa^i \colon [0,\varepsilon]^i \to [0,1]
  \]
  to be the product of $\kappa$ on each component. $\boldsymbol\kappa^i$ is a submersion over $(0,1)$, and we can split it by declaring $\nabla\boldsymbol\kappa^i$ to be horizontal, where $\nabla$ is the gradient with respect to the standard Euclidean metric. This splitting allows us to define the cotangent map $T^*\boldsymbol\kappa^i$, which is a symplectic submersion over $T^*(0,1)$.

  Since $\boldsymbol\kappa^i$ is permutation-invariant, we can consider the composition
  \[
    \pi^{rc}|_{\nbhd^\circ(\partial^i X)} = T^*\boldsymbol\kappa^i \circ \boldsymbol\pi^i,
  \]
  where $\nbhd^\circ$ refers to a punctured neighborhood and $\boldsymbol\pi^i$ is as in the proof of Lemma \ref{lem:id_is_deformable}. The local $\pi^{rc}$ assemble to a global symplectic submersion $\nbhd^\circ(\partial X) \to T^*(0,1)$ with split Liouville form.

  We can now define the rounded-corner sector
  \[
    X^{rc} := X^\circ \setminus (\pi^{rc})^{-1}T^*\left(0, \tfrac12\right).
  \]
\end{proof}

In fact, we will often be more interested in the projection $\pi^{rc}$ than the rounded sector $X^{rc}$, so let us make the following definition.

\begin{defn}
  \label{defn:corner_round_proj}
  For a Liouville sector $X$, a \emph{corner-rounding projection} is a map $\pi^{rc} \colon \nbhd^\circ(\partial X) \to T^*(0,1)$ such that
  \begin{enumerate}[(i)]
    \item The composition
      \[
        \nbhd^\circ(\partial X) \xrightarrow{\pi^{rc}} T^*(0,1) \to (0,1)
      \]
      extends to a smooth map
      \[
        \nbhd(\partial X) \to [0,1)
      \]
      sending $\partial X$ to $0$.
    \item $\pi^{rc}$ is a symplectic fibration over $T^*(0,1)$ which splits the Liouville form as
      \[
        \lambda_X = \lambda_F + pdq.
      \]
  \end{enumerate}
  When working in a neighborhood of $\partial X$, we will sometimes abuse notation and write $(\pi^{rc})^{-1}T^*[0,a)$ to mean $\partial X \cup (\pi^{rc})^{-1}T^*(0,a)$.
\end{defn}

\begin{corollary}
  \label{cor:corner_round_proj}
  Every Liouville sector admits a corner-rounding projection.
  \qed
\end{corollary}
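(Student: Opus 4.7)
The plan is to take the map $\pi^{rc}$ constructed inside the proof of Corollary~\ref{cor:rounding_corners} and simply verify that it satisfies the two clauses of Definition~\ref{defn:corner_round_proj}. The construction already sits there: one starts from the Liouville-equivariant projections $\boldsymbol\pi^i \colon \nbhd(\partial^i X) \to T^*[0,\varepsilon]^i/S_i$ arranged in the proof of Lemma~\ref{lem:id_is_deformable}, and then sets $\pi^{rc}|_{\nbhd^\circ(\partial^i X)} = T^*\boldsymbol\kappa^i \circ \boldsymbol\pi^i$ for the cutoff $\boldsymbol\kappa^i \colon [0,\varepsilon]^i \to [0,1]$. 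So the real content is just unpacking that this assembles to a globally defined map with the stated two properties.

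For clause (i), I would observe that the base map $\boldsymbol\kappa^i \circ (\text{underlying projection of } \boldsymbol\pi^i)$ is defined not just on the punctured neighborhood $\nbhd^\circ(\partial^i X)$ but on all of $\nbhd(\partial^i X)$, sends $\partial^i X$ to $0 \in [0,1)$, and is smooth because $\boldsymbol\kappa^i$ and $\boldsymbol\pi^i$ are smooth on their full domains. One has to check compatibility across strata of different codimension, using the fact (property \ref{item. projections compatible} of Definition~\ref{defn. sector}) that $\pi_y = \pi_{yx} \circ \pi_x$ on overlaps; since each $\kappa$ is just a product over components, the various $\boldsymbol\kappa^i$ glue consistently under these coordinate-forgetting maps, so the extensions from different strata agree on overlaps.

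For clause (ii), the map $T^*\boldsymbol\kappa^i$ is a symplectic submersion over $T^*(0,1)$ by construction (using the horizontal lift of $\nabla\boldsymbol\kappa^i$), and $\boldsymbol\pi^i$ is a symplectic submersion whose Liouville form splits as $\lambda_F + \mathbf{p}\,d\mathbf{q}$ by~\ref{item. splitting lambda} in Definition~\ref{defn. sector}. Composition of symplectic submersions with integrable horizontal distributions is again a symplectic submersion, so $\pi^{rc}$ is a symplectic fibration over $T^*(0,1)$. The splitting of $\lambda_X$ as $\lambda_F + \mathbf{p}\,d\mathbf{q}$ is inherited, because on the image of $T^*\boldsymbol\kappa^i$ the pullback of $\mathbf{p}\,d\mathbf{q}$ on $T^*(0,1)$ is still $\mathbf{p}\,d\mathbf{q}$ in the $\boldsymbol\kappa^i$-coordinate, while the remaining directions factor through the fiber of $\boldsymbol\pi^i$ with its own Liouville sector structure $\lambda_F$.

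The main obstacle I anticipate is the gluing check: making sure that the locally defined $\pi^{rc}|_{\nbhd^\circ(\partial^i X)}$ for varying codimension $i$ patch into a single smooth map on all of $\nbhd^\circ(\partial X)$, and that the extension across $\partial X$ in clause (i) is compatible on overlaps. Everything else is immediate from the construction in Corollary~\ref{cor:rounding_corners}; this compatibility reduces to the symmetry of $\kappa$ under permutation together with condition~\ref{item. projections compatible}, so no genuinely new geometry is required.
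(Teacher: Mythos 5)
Your proposal is correct and matches the paper's intended argument exactly: the paper tacks a \qed onto the corollary precisely because the proof of Corollary~\ref{cor:rounding_corners} already constructs the map $\pi^{rc}|_{\nbhd^\circ(\partial^i X)} = T^*\boldsymbol\kappa^i \circ \boldsymbol\pi^i$ and records that the local pieces assemble to a global symplectic submersion with split Liouville form. Your unpacking of the two clauses of Definition~\ref{defn:corner_round_proj} — the extension across $\partial X$ for clause (i) via smoothness of $\boldsymbol\kappa^i\circ\boldsymbol\pi^i$ and the stratum-compatibility from~\ref{item. projections compatible}, and the splitting in clause (ii) via~\ref{item. splitting lambda} together with the horizontal lift of $\nabla\boldsymbol\kappa^i$ — is just making explicit what the paper leaves implicit.
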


\subsection{Isotopy extension}

Let $M$ and $N$ be smooth manifolds, and suppose one is given a smooth isotopy $\{f_t: M \to N\}_{t \in [0,1]}$. The classical isotopy extension theorem guarantees conditions under which $f_t$ may be extended to an ambient isotopy $\{g_t: N \to N\}$ so that $g_t \circ f_0 = f_t$. This is also true in families---i.e., when there is an $(s,t)$-dependence for $s$ a smooth parameter.

Here we prove the Liouville analogue when the $f_{s,t}$ are (codimension zero) sectorial embeddings.  

\begin{remark}
Even classically, one faces issues when $M$ and $N$ are non-compact, or have boundary. A typical example is to consider $M = [-1,1]$ and $N= [-2,2]$, and let $f_t$ be an isotopy that expands $M$ to become all of $N$---e.g., $f_t(x) = x + tx$. Clearly such an isotopy cannot extend to one of $N$. However, after shrinking $N$ in a $t$-dependent manner (and hence changing the original isotopy of $M$), one can indeed find an isotopy extension. This is the reason for the shrinking in Proposition~\ref{prop. isotopy extension} below.
\end{remark}

\begin{prop}[Isotopy extension for sectorial embeddings]\label{prop. isotopy extension}
Fix 
\begin{itemize}
\item a smooth manifold $S$ (possibly with corners),
\item $f: M \to N$ a sectorial embedding,
\item a smooth family of sectorial embeddings $\{\widetilde{j_{s,0}} : N \to N\}_{s \in S}$, and
\item a smooth family of sectorial embeddings $\{j_{s,t} : M \to N\}_{(s,t) \in S \times [0,1]}$, for which $j_{s,0} = \widetilde{j_{s,0}} \circ f$. 
\end{itemize}
Then the following are true:

\enum[(i)]
\item \label{item. need to shrink isotopies}
For any smoothly-$t$-dependent family of shrinking isotopies $\shrink_{t}: N \to N$ with $\shrink_{0} = \id_N$, there exists a smooth family of sectorial embeddings
	\eqnn
	\{ \widetilde{j_{s,t}} : N \to N\}_{(s,t) \in S \times [0,1]}
	\eqnd
so that $\widetilde{ j_{s,t}} \circ f = \shrink_{t} \circ j_{s,t}$.

\item \label{item. when we dont need to shrink isotopies} Suppose there is an $(s,t)$-dependent $\nbhd(\del N)_{s,t}$ such that
	\eqn\label{eqn. isotopies away from del N}
	\forall s,t
	\qquad
	\nbhd(\del N)_{s,t} \cap j_{s,t}(M) = \emptyset.
	\eqnd
Then one can choose $\shrink_{t} = \id_N$ for all $t$. In particular, there exists a smooth family of sectorial embeddings
	\eqnn
	\{ \widetilde{j_{s,t}} : N \to N\}_{(s,t) \in S \times [0,1]}
	\eqnd
so that $\widetilde{ j_{s,t}} \circ f = j_{s,t}$.

In words, assuming~\eqref{eqn. isotopies away from del N}, any isotopy of a Liouville subsector $M$ extends to an isotopy of the parent sector $N$, and smoothly so in $S$-families.
\enumd
\end{prop}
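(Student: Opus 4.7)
The plan is to construct $\widetilde{j_{s,t}}$ as the composition $\Phi^{H_s}_t \circ \widetilde{j_{s,0}}$, where $\Phi^{H_s}_t$ denotes the time-$t$ flow of a time-dependent Hamiltonian $H_{s,t}:N \to \RR$ that I will build. Under such an ansatz the required identity becomes a prescription of $X_{H_{s,t}}$ along the moving image: at $\shrink_t \circ j_{s,t}(x) \in N$, the Hamiltonian vector field $X_{H_{s,t}}$ must equal $\del_t(\shrink_t \circ j_{s,t})(x)$. So the entire proof reduces to producing a Hamiltonian with the correct prescribed restriction, smoothly parameterized by $(s,t)\in S\times [0,1]$, and whose flow generates sectorial self-embeddings of $N$.

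I would first handle case (ii), where $\shrink_t = \id_N$ and the image $j_{s,t}(M)$ is bounded away from $\del N$. Differentiating the exactness relation $j_{s,t}^* \lambda^N = \lambda^M + d h_{s,t}$ in $t$ and applying Cartan's formula gives
\begin{equation*}
j_{s,t}^*\bigl(\iota_{\del_t j_{s,t}}\,\omega^N\bigr) = d\bigl(\dot h_{s,t} - j_{s,t}^*\iota_{\del_t j_{s,t}}\lambda^N\bigr),
\end{equation*}
identifying $\del_t j_{s,t}$ along the image as the restriction of the Hamiltonian vector field of an explicit function, which one extends from the image to a tubular neighborhood in the standard way. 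Hypothesis~\eqref{eqn. isotopies away from del N} allows cutting off the resulting primitive by a bump function supported in $N \setminus \nbhd(\del N)$, yielding a globally defined $H_{s,t}$ smooth in $(s,t)$. Near infinity, Remark~\ref{remark. sectorial embeddings are eventually conical} combined with the finite-type condition~\ref{item. finite type} lets me modify $H_{s,t}$ to be $\lambda^N$-linear outside a compact set, so that its flow is $\lambda^N$-preserving at infinity and hence a sectorial self-embedding. The resulting $\widetilde{j_{s,t}}$ satisfies the desired identity by construction.

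For case (i), I would apply the same strategy to the vector field $\del_t(\shrink_t \circ j_{s,t})$ along $\shrink_t \circ j_{s,t}(M)$. For $t>0$ the defining property of a shrinking isotopy (Definition~\ref{defn:shrinking_isotopy}) places this image at positive distance from $\del N$, so the Hamiltonian cutoff of case (ii) applies. To obtain smoothness at $t=0$, where the image may meet $\del N$, I decompose $H_{s,t} = H^{\shrink}_t + \widetilde H_{s,t}$, with $H^{\shrink}_t$ a globally defined Hamiltonian generating $\shrink_t$---such a Hamiltonian exists because the shrinking isotopy of Lemma~\ref{lem:id_is_deformable} is constructed by cotangent lifts of embeddings of $[0,\varepsilon]$ on each piece, and these are manifestly Hamiltonian flows---and $\widetilde H_{s,t}$ built as in case (ii) to supply the correction $(D\shrink_t)(\del_t j_{s,t})$ along the moving image. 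The strictly inward-pointing character of $\shrink_t$ for $t>0$ provides the necessary room for the tubular cutoffs of $\widetilde H_{s,t}$, and one can arrange the cutoff radii to scale with $t$ so as to extend smoothly to $t=0$.

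The main obstacle is the simultaneous bookkeeping of: smoothness in $(s,t)$ down to $t=0$; correct pointwise prescription along the moving image; support compatibility with $\del N$; and $\lambda^N$-linearity at infinity. This requires a choice of tubular neighborhoods of $\shrink_t \circ j_{s,t}(M)$ uniform in $s$ (working locally in $S$ where needed) whose radii do not collapse in $(s,t)$, together with a compatible smooth family of bump functions. The needed uniformity is furnished by the control of $j_{s,t}$ at infinity coming from Remark~\ref{remark. sectorial embeddings are eventually conical} and by the quantitative inward-pointing property of the shrinking isotopy from Lemma~\ref{lem:id_is_deformable}.
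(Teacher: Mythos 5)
Your approach in case (ii) is essentially identical to the paper's: differentiate the exactness relation $j_{s,t}^*\lambda^N = \lambda^M + dh_{s,t}$ in $t$, apply Cartan's formula to identify the velocity field along the image as Hamiltonian with $H_{s,t} = \lambda^N(X_{s,t}) - \dot h_{s,t}$, extend to a globally defined $\widetilde{H_{s,t}}$ that is $\lambda^N$-linear outside a compact set and (using the hypothesis~\eqref{eqn. isotopies away from del N}) supported away from $\del N$, then take the time-dependent Hamiltonian flow precomposed with $\widetilde{j_{s,0}}$. The one genuine departure is in case (i), where you decompose $H_{s,t} = H_t^{\shrink} + \widetilde{H}_{s,t}$ with $H_t^{\shrink}$ generating the shrinking isotopy. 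The paper instead runs the same Hamiltonian-extraction argument directly on the composite family $\shrink_t \circ j_{s,t}$, which is a smooth family of sectorial embeddings and hence already has a Hamiltonian velocity field, with image away from $\del N$ for $t > 0$. Your decomposition is valid: the shrinking isotopy consists of strict sectorial embeddings, so by the very formula you derived its generator is Hamiltonian with $H^{\shrink}_t = \lambda^N(\del_t \shrink_t)$, and Hamiltonian vector fields add. What the decomposition buys you is a more explicit treatment of the $t=0$ endpoint, since the global $H^{\shrink}_t$ carries the boundary behavior while the correction piece only requires cutoffs in the region where the shrinking has already created room. The paper's version is slightly more economical and handles the $t \to 0^+$ limit implicitly (``by design''); your variant makes that bookkeeping explicit at the cost of one extra step. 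Either route closes the argument.
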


\begin{proof}
Consider the $(s,t)$-dependent vector fields $X_{s,t} := {\frac {\del j_{s,t}}{\del t}}$ defined on the images $j_{s,t}(M)$. Because each $j_{s,t}$ is a sectorial embedding, we know there exist compactly supported smooth functions $h_{s,t} : M \to \RR$ for which $j_{s,t}^*{\lambda^N} = \lambda^M + dh_{s,t}$. 
Along the images $j_{s,t}(M) \subset N$, the Lie derivative of $\lambda^N$ along $X_{s,t}$ is given by
	\eqnn
	\cL_{X_{s,t}} \lambda^N = \lim_{r \to 0} 
	{\frac
	{j_{s,t+r}^*\lambda^N - \lambda^N}
	{r}
	}
	= \lim_{r \to 0} 
	{\frac
	{dh_{s,t+r}}
	{r}
	}
	=
	{\frac {\del (d h_{s,t}) }{\del t}}
	=
	d({\frac {\del h_{s,t}}{\del t}}).
	\eqnd
While Cartan's magic formula gives
		\eqnn
	\cL_{X_{s,t}} \lambda^N = 
	d (\iota_{X_{s,t}}(\lambda^N))
	+
	\iota_{X_{s,t}}(d \lambda^N)).
	\eqnd
 
Thus,
	\eqnn
	\omega(-,X_{s,t})
	=
	d\lambda^N(-,X_{s,t})
	=
	d\left(
	\lambda^N(X_{s,t})
	-
	{\frac {\del h_{s,t}} {\del t} }
	\right)
	\eqnd

In other words, for every $(s,t)$, we see that $X_{s,t}$ is a Hamiltonian vector field with associated Hamiltonian
	\eqnn
	H_{s,t} = \lambda^N(X_{s,t}) - {\frac {\del h_{s,t}}{\del t}}.
	\eqnd
We can extend each $H_{s,t}$ to a smooth function $\widetilde{H_{s,t}}: N \to \RR$ which satisfies $\widetilde{H_{s,t}} = d\widetilde{H_{s,t}}(Z^N)$ outside of an $(s,t)$-dependent compact subset, and we can choose such extensions smoothly with respect to $(s,t)$. (Here, $Z^N$ is the Liouville vector field on $N$; the equation guarantees that the flow of $X_{\widetilde H_{s,t}}$ generates sectorial embeddings.) We thus obtain Hamiltonian vector fields $X_{\widetilde{H_{s,t}}}$ on $N$. If these vector fields were complete on $N$, their $t$-dependent flows (pre-composed with $\widetilde{j_{s,0}}$) would extend the isotopies $j_{s,t}$, as desired.

If we assume~\eqref{eqn. isotopies away from del N}, we can obviously choose the extensions $\widetilde{H_{s,t}}$ to be supported away from $\del N$. (Note that the support of $\widetilde{H_{s,t}}$ does not even need to be uniform in the $(s,t)$ variables.) Thus the associated Hamiltonian flow is complete, and \eqref{item. when we dont need to shrink isotopies} is proven.

To prove the more general setting in~\eqref{item. need to shrink isotopies}, choose a shrinking isotopy $\shrink_t: N \to N$ (Section~\ref{section. shrinking}). So  $\shrink_0 = \id
_N$, and for every $t>0$, $\shrink_t$ is a proper embedding with $\shrink_t(N) \cap \del N = \emptyset$. Let $X_{s,t}$ be the vector field associated to the flow of $\shrink_{t} \circ j_{s,t}$ (rather than $j_{s,t}$); as shown above, this is a Hamiltonian vector field (because $\shrink_t \circ j_{s,t}$ is a sectorial embedding for every $t$), so let $G_{s,t}$ be the associated Hamiltonian. Then by design, we can choose an extension $\widetilde{G_{s,t}}$ whose the $t$-dependent Hamiltonian flow is complete and extends $\shrink_t \circ j_{s,t}$, as desired. 
\end{proof}

In fact, the exact same proof method yields the following: 
\begin{prop}\label{prop. isotopy extension 2}
Fix 
\begin{itemize}
\item a smooth manifold $S$ (possibly with corners),
\item $f: M \to \widetilde{M}$ a sectorial embedding,
\item a smooth family of sectorial embeddings $\{\widetilde{j_{s,0}} : \widetilde{M} \to N\}_{s \in S}$, and
\item a smooth family of sectorial embeddings $\{j_{s,t} : M \to N\}_{(s,t) \in S \times [0,1]}$, for which $j_{s,0} = \widetilde{j_{s,0}} \circ f$. 
\end{itemize}
Then the following are true:

\enum[(i)]
\item 
There exists a smoothly-$t$-dependent family of shrinking isotopies $\shrink_{t}: N \to N$ with $\shrink_{0} = \id_N$, and a smooth family of sectorial embeddings
	\eqnn
	\{ \widetilde{j_{s,t}} : \widetilde{M} \to N\}_{(s,t) \in S \times [0,1]}
	\eqnd
so that $\widetilde{ j_{s,t}} \circ f = \shrink_{t} \circ j_{s,t}$.

\item   Suppose there is an $(s,t)$-dependent $\nbhd(\del N)_{s,t}$ such that
	\eqnn
	\forall s,t
	\qquad
	\nbhd(\del N)_{s,t} \cap j_{s,t}(M) = \emptyset.
	\eqnd
Then one can choose $\shrink_{t} = \id_N$ for all $t$. In particular, there exists a smooth family of sectorial embeddings
	\eqnn
	\{ \widetilde{j_{s,t}} : \widetilde{M} \to N\}_{(s,t) \in S \times [0,1]}
	\eqnd
so that $\widetilde{ j_{s,t}} \circ f = j_{s,t}$.

In words, assuming~\eqref{eqn. isotopies away from del N}, any isotopy of a Liouville subsector $M$ extends to an isotopy of the parent sector $\widetilde{ M}$, and smoothly so in $S$-families.
\enumd
\end{prop}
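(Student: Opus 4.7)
The plan is to follow the same Hamiltonian-extension strategy as in Proposition~\ref{prop. isotopy extension}, modified only to account for the fact that the ambient isotopy now lives on $N$ while the output maps are defined on $\widetilde{M}$. First I would compute the infinitesimal generator $X_{s,t} := \frac{\partial j_{s,t}}{\partial t}$ as a time-dependent vector field defined along the image $j_{s,t}(M) \subset N$. Using that each $j_{s,t}: M \to N$ is a sectorial embedding, write $j_{s,t}^*\lambda^N = \lambda^M + dh_{s,t}$ for a smoothly $(s,t)$-varying family of compactly supported functions $h_{s,t}: M \to \mathbb{R}$, and repeat the Cartan-magic-formula computation of Proposition~\ref{prop. isotopy extension} to identify $X_{s,t}$ as the Hamiltonian vector field on $j_{s,t}(M)$ associated to
\[
H_{s,t} = \lambda^N(X_{s,t}) - \frac{\partial h_{s,t}}{\partial t}.
\]

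Next I would extend $H_{s,t}$ to a smooth family $\widetilde{H}_{s,t}: N \to \mathbb{R}$ satisfying $\widetilde{H}_{s,t} = d\widetilde{H}_{s,t}(Z^N)$ outside an $(s,t)$-dependent compact subset of $N$, so that the associated Hamiltonian vector field $X_{\widetilde{H}_{s,t}}$ generates sectorial embeddings wherever its flow is defined. Assuming the flow $\Phi_{s,t}$ of $X_{\widetilde{H}_{s,t}}$ exists on $N$ for all $t \in [0,1]$, the family $\widetilde{j_{s,t}} := \Phi_{s,t} \circ \widetilde{j_{s,0}}: \widetilde{M} \to N$ is a smooth family of sectorial embeddings, and by construction $\widetilde{j_{s,t}} \circ f = \Phi_{s,t} \circ \widetilde{j_{s,0}} \circ f = \Phi_{s,t} \circ j_{s,0} = j_{s,t}$, which proves part~(ii) since under hypothesis~\eqref{eqn. isotopies away from del N} one may arrange $\widetilde{H}_{s,t}$ to be supported away from $\partial N$, making $X_{\widetilde{H}_{s,t}}$ complete.

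For part~(i), one would resolve completeness by precomposing with the shrinking isotopy $\shrink_t: N \to N$ of Lemma~\ref{lem:id_is_deformable}: replace the target family $j_{s,t}$ by $\shrink_t \circ j_{s,t}$, which for $t>0$ has image avoiding $\partial N$, and redo the previous computation to produce a Hamiltonian $G_{s,t}$ whose extension $\widetilde{G}_{s,t}$ is supported away from $\partial N$. Completeness of the resulting Hamiltonian flow then yields $\widetilde{j_{s,t}}: \widetilde{M} \to N$ satisfying $\widetilde{j_{s,t}} \circ f = \shrink_t \circ j_{s,t}$, as desired.

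The step that requires the most care---and is the main obstacle---is the extension of $H_{s,t}$ to $\widetilde{H}_{s,t}$: one must extend smoothly in the parameter $(s,t)$, respect the Liouville-linearity condition near infinity of $N$ (so that the resulting time-dependent flow consists of sectorial embeddings rather than merely symplectomorphisms), and, in the setting of part~(ii), simultaneously be supported away from $\partial N$. Since $S$ is allowed to have corners and can be non-compact, the compact sets controlling the linearity must vary with $(s,t)$; one handles this by working locally in $S$ and patching via a smooth partition of unity, exactly as in the proof of Proposition~\ref{prop. isotopy extension}. Once this extension is in hand, the rest of the argument is a direct transcription of the earlier proof, with $\widetilde{j_{s,0}}$ replacing $f$ as the ``initial condition'' on which the ambient Hamiltonian flow acts.
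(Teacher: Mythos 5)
Your proposal is correct and matches the paper's intent: the paper gives no separate proof of this proposition, stating only that ``the exact same proof method'' as Proposition~\ref{prop. isotopy extension} applies, and your writeup correctly supplies those details. The one structural point that matters in generalizing from $f\colon M\to N$ to $f\colon M\to\widetilde{M}$ is that the Hamiltonian extension $\widetilde{H}_{s,t}$ and its flow still live on the ambient sector $N$, and the output family $\widetilde{j_{s,t}}$ is obtained by composing this ambient flow with $\widetilde{j_{s,0}}\colon\widetilde{M}\to N$ rather than with a self-map of $N$; you identify and execute this correctly, and the verification $\Phi_{s,t}\circ j_{s,0}=j_{s,t}$ goes through because $j_{s,t}(M)$ is codimension zero so $X_{\widetilde{H}_{s,t}}$ restricts to $X_{H_{s,t}}$ on it. (Two cosmetic notes: you say ``precomposing'' with $\shrink_t$ but then correctly write the postcomposition $\shrink_t\circ j_{s,t}$; and in part (i) the statement only asks for \emph{some} shrinking isotopy, which your argument provides, though the method in fact works for any choice, as in Proposition~\ref{prop. isotopy extension}.)
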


\begin{remark}\label{remark. can isotopy extend while collaring}
At the heart of isotopy extension is the ability to choose Hamiltonians $\widetilde{H_{s,t}}$. This flexibility is quite useful. For example, suppose that the original isotopies are collared in some way in the $(s,t)$ variable---say, so that the isotopies are $s$-independent along some open subset of $S$, or $t$-independent in some neighborhood of 0 or 1 in $[0,1]$. Then we may obviously choose $\widetilde{H_{s,t}}$ with the same collaring properties, so that the chosen isotopy extensions $\widetilde{j_{s,t}}$ inherit the collaring properties of $j_{s,t}$. (For $t$-independence, one would have to modify the $t$-dependence of the shrinking isotopies $\shrink_t$ accordingly; this is straightforward.)
\end{remark}

\subsection{Spaces of sectorial embeddings}

\begin{defn}[The topology on $\embtop_{\liou}(M,N)$.]
\label{defn. topology on embliou}
Fix two Liouville sectors $M$ and $N$, along with a compact subset $K \subset M$. We let
	\eqnn
	\embtop_{\liou}^K(M,N)
	\eqnd
denote the set of sectorial embeddings $f: M \to N$ such that, writing $f^*\lambda^N = \lambda^M + dh$, $\supp h \subset K$. The topology is inherited from the collection of all smooth maps equipped with the weak $C^\infty$ Whitney topology\footnote{We can motivate the use of the weak Whitney topology, as opposed to the strong Whitney topology, by noting that the natural map of diffeomorphism spaces $\diff(Q,Q') \to \diff(T^*Q,T^*Q')$ is not continuous if the codomain is given the strong topology. See related discussions in~\cite{oh-tanaka-smooth-approximation}.}. We topologize the collection of all sectorial embeddings using a direct limit:
	\eqnn
	\embtop_{\liou}(M,N) := \bigcup_{K} \embtop_{\liou}^K(M,N)
	\eqnd
where the union runs over all compact subsets $K \subset M$. 
\end{defn}

\begin{remark}
This topology has the following consequence. Let $S$ be a manifold. Then for every continuous map $j: S \to \embtop_{\liou}(M,N)$, we are guaranteed the following analogue of Property~\ref{item. tameness}: For every $s_0 \in S$, there exists a neighborhood $U \subset S$ containing $s_0$, and a compact subset $K \subset M$, so that whenever $s \in U$, we know that $j_s^*\lambda^N - \lambda^M$ is $d$ of some function with support in $K$. Note that $K$ is uniform for all $s \in U$. 

In particular, when $S$ is compact, there is a single compact $K$---i.e., one may take $U = S$.
\end{remark}

\begin{notation}[$\emb_{\liou}(M,N)$]
\label{notation. emb liou}
We let
	\eqnn
	\emb_{\liou}(M,N) := \sing(\embtop_{\liou}(M,N))
	\eqnd
denote the singular complex. Explicitly, $\emb_{\liou}(M,N)$ is the simplicial set for which a $k$-simplex is a continuous map $\Delta^k \to \embtop_{\liou}(M,N)$.

We emphasize the font difference between $\embtop$ and $\emb$. 
\end{notation}

\begin{remark}
Writing spaces of maps between non-compact manifolds as a colimit over subspaces of maps behaving well on compact objects is a standard procedure in both smooth geometry and in symplectic geometry. See for example Remark~10.5 of~\cite{mcduff-salamon-intro}.
\end{remark}

Let us now say what we mean by a smooth map into a mapping space.

\begin{defn}[Smooth map into a mapping space]
\label{defn. smooth map}
Let $S,M,N$ be smooth manifolds (possibly with corners) and let $C^\infty(M,N)$ be the set of smooth maps from $M$ to $N$. Then a function $S \to C^\infty(M,N)$ is set to be {\em smooth} if the induced function $S \times M \to N$ is smooth in the usual sense.
\end{defn}

\begin{remark}
Definition~\ref{defn. smooth map} is a natural definition for smoothness, and is employed in many other settings, perhaps most notably in the setting of diffeological spaces.
\end{remark}

\begin{remark}[Smooth and continuous maps into $\embtop_{\liou}(M,N)$]
\label{remark. smooth and continuous}
So fix a smooth manifold $S$ (possibly with corners) and a function $S \to \embtop_{\liou}(M,N)$. Suppose this function is smooth in the sense of Definition~\ref{defn. smooth map}, so that $S \times M \to N$ is a smooth map. We note that smoothness does not impose the local tameness-at-infinity of Condition~\ref{item. tameness}, i.e., the continuity with respect to the colimit-topology of Definition~\ref{defn. topology on embliou}. 

For this reason, we will often talk about {\em smooth and continuous} maps into $\embtop_{\liou}(M,N)$ to consider smooth maps satisfying Condition~\ref{item. tameness}. This is of course dissonant with the usual fact that smoothness implies continuity; perhaps a better terminology would have been {\em smooth and locally tame near infinity}.
But this is more linguistic than mathematical. One could have built in the $K$-filtered behavior into our definition of smoothness to avoid having to say ``smooth and continuous.''
\end{remark}

\begin{notation}
Given a function $j: S \to \embtop_{\liou}(M,N)$, we will denote by $j_s$ the induced map
	\eqnn
	M \cong M \times \{s\} \into M \times S \to N.
	\eqnd
We will also write $j(s,x)$ to denote $j(s)(x)$. So for example, $j_s(x) = j(s,x)$.
\end{notation}

\subsection{Spaces of (bordered) deformation embeddings}
\label{section. spaces of deformation embeddings}
Here, we will define various spaces of deformation embeddings between two given Liouville sectors. In Section~\ref{section. different deformation spaces}, we will prove these spaces are all homotopy equivalent. To begin, we need a space of deformation of Liouville structures which is sensitive to Condition~\ref{item. tameness}.

\begin{defn}
  \label{defn:liou_struct_space}
  For $N$ a Liouville sector and $K \subset N$ a compact subset, define $\bL_K(N)$ to be the space of pairs $(\lambda_N+dh, r) \in \Omega^1(N) \times C^\infty(N)$ for which $(N, \omega=d(\lambda+dh))$ is a Liouville sector with respect to the projections $\pi_x$ of $N$, and such that $r$ is positive and of linear growth with respect to $\lambda$ outside $K$. (The function $r$ exhibits that $\lambda$ is finite type as in~\ref{item. finite type}.)

  The \emph{space of bordered modifications} of $N$ is
  \[
    \bL(N) := \left( \bigcup_K \bL_K(N) \right) / \sim,
  \]
  where the equivalence relation forgets the second component $r$.

As usual, by a {\em smooth} map $S \to \bL(N)$, we mean a function for which the adjoint is smooth function (so that the $S$-parametrized family of $(\lambda,r)$ depends smoothly on $S$). 
\end{defn}

We now define two homotopy equivalent (in fact, contractible)  spaces of deformations.

\begin{defn}
  \label{defn. deformation space}
  Let $N$ be a Liouville sector. The \emph{space of bordered Liouville deformations} of $N$ is the subspace
  \[
  \defliou(N) \subset C^\infty\bigl( \Delta^1, \bL(N)\bigr)_{\lambda_t}
  \]
  for which $\lambda_t$ agrees with $\lambda^N$ in a neighborhood of the terminal vertex of $\Delta^1$ and is $t$-independent in a neighborhood of the initial vertex of $\Delta^1$.
  
   The \emph{Moore path space of bordered Liouville deformations} of $N$ is a subspace
  \[
    \deflioumoore(N) \subset
    \bigl(\RR_{\ge0}\bigr)_T \times C^\infty\bigl((\RR)_t, \bL(N)\bigr)_{\lambda_t}
  \]
  
  consisting of the following elements. Given 
  	\eqnn
	\left(T, (t \mapsto (\lambda_t,r_t))\right)
	\in
	\bigl(\RR_{\ge0}\bigr)_T \times C^\infty\bigl((\RR)_t, \bL(N)\bigr)_{\lambda_t}
	\eqnd 
	we demand that $\lambda_t$ agrees with $\lambda^N$ for $t\le0$ and is $t$-independent for $t\ge T$.
  
We note that $\defliou(N)$ and $\deflioumoore(N)$ both have obvious notions of smooth maps from a smooth manifold $B$.
\end{defn}

\begin{remark}
The two spaces in Definition~\ref{defn. deformation space} weakly homotopy equivalent, and in fact contractible; but the utility of the Moore deformation space is the evident concatenation operation. Likewise, the two spaces in Definition~\ref{defn. compact deformation space} below are homotopy equivalent.
\end{remark}

\begin{defn}
  \label{defn. compact deformation space}
  The \emph{space of compactly supported Liouville deformations} of $N$ is the subspace
  \[
  \defliou^{\cmpct}(N) \subset\defliou(N)   
  \]
  for which $\lambda_t$ is a compactly supported deformation. We likewise have the Moore path space analogue.
  
   The \emph{Moore path space of compactly supported Liouville deformations} of $N$ is the subspace
  \[
    \deflioumoore^{\cmpct} \subset \deflioumoore(N).
  \]
These again have obvious notions of smooth maps from a smooth manifold $B$.
\end{defn}

This allows us to define models for spaces of deformation embeddings.

\begin{defn}
\label{defn. deformation morphism space}
  The \emph{ space of deformation embeddings} of Liouville sectors from $M$ to $N$ is the subspace
  \[
    \embtop_{\liou}^{\defliou}(M,N)
    \subset
    \defliou(N)_{\lambda^N_t} \times C^\infty(M, N)_{f}
  \]
  such that, near the initial vertex of $\Delta^1$, we have that $f^*\lambda^N_t = \lambda^M$.  Note that $C^\infty$ is given the weak Whitney topology.
  
   The \emph{Moore path space of deformation embeddings} of $N$ is the subspace
  \[
  \embtop_{\liou,\operatorname{Moore}}^{\defliou}(M,N)
    \subset
    \deflioumoore(N)_{\lambda^N_t} \times C^\infty(M, N)_{f}
  \]
  for which $f^*\lambda^N_0 = \lambda^M$.
  
  Again, both spaces have natural notions of smooth maps to them.
\end{defn}

We also have the following versions:

\begin{defn}
\label{defn. compactly supported deformation morphism space}
  The \emph{ space of compactly supported deformation embeddings} of Liouville sectors from $M$ to $N$ is the subspace
  \[
    \embtop_{\liou}^{\defliou,\cmpct}(M,N)\subset
    \embtop_{\liou}^{\defliou}(M,N)
  \]
  such that $\lambda^N_t$ is a compactly supported family of deformations. Likewise, the \emph{Moore path space of compactly supported deformation embeddings} of $N$ is the subspace
  \[
  \embtop_{\liou,\operatorname{Moore}}^{\defliou,\cmpct}(M,N)
  \subset
  \embtop_{\liou,\operatorname{Moore}}^{\defliou}(M,N)
  \]
  such that $\lambda^N_t$ is a compactly supported family of deformations.
  
  Again, both spaces have natural notions of smooth maps to them.
\end{defn}

We have bombarded the reader with various notions of morphisms between Liouville sectors. Here are the three most important notions in this work:
\enum
\item Sectorial embeddings (Definition~\ref{defn. sectorial embedding}), each of which is the datum of a single map $f$. 
\item Compactly supported deformation embeddings (Definition~\ref{defn. various deformation embeddings}), which are maps $f$ equipped with a compactly supported (and hence bordered) deformation of Liouville structures in the codomain.
\item (Bordered) deformation embeddings (Definition~\ref{defn. various deformation embeddings}), which are maps $f$ equipped with a bordered deformation of Liouville structures in the codomain.
\enumd

Having defined the spaces of these morphisms, we may observe maps between them as follows:
\eqn\label{eqn. diagram of embedding spaces}
\xymatrix{
	&& \embtop_{\liou,\operatorname{Moore}}^{\defliou,\cmpct}(M,N)  \ar[rr]
	&& \embtop_{\liou,\operatorname{Moore}}^{\defliou}(M,N)\\
\embtop_{\liou}(M,N) 
	&& \ar[ll]_{\sim}  \embtop_{\liou}^{\defliou,\cmpct}(M,N) \ar[rr] \ar[u]^{\sim} 
	&& \embtop_{\liou}^{\defliou}(M,N)\ar[u]^{\sim} 
}
\eqnd
The vertical equivalences 
are induced by the usual equivalences between path spaces and their Moore path space models. The left-pointing equivalence is induced by noting that the space of compactly supported functions is convex, hence contractible.

We will later prove (Theorem~\ref{theorem. embedding spaces are deformation embedding spaces}) that the right-pointing arrows are also weak homotopy equivalences.

\clearpage
\section{\texorpdfstring{$\infty$}{infinity}-categorical tools}
\label{section. infty cat tools}

Here we set up notation and introduce one new result: Proposition~\ref{prop. Exsim(C) is C}. It allows us to replace an $\infty$-category $\cC$ with an equivalent $\infty$-category $\Ex_{\simeq}(\cC)$, consisting of certain barycentric subdivision diagrams in $\cC$. The expert reader will recognize that barycentric subdivision diagrams can be used to model localizations; this philosophy will allow us to construct a model of the localization $ \lioustrstab[(\eqs^\dd)^{-1}]$ in Sections~\ref{section. lioudelta} and~\ref{section. proof of localization}.

We expect many readers will not be familiar with some $\infty$-categorical arguments, so we include references in the footnotes.

\begin{notation}[Simplices]
Let $I$ be a finite, non-empty, linearly ordered set. We let $\Delta^I$ denote the simplicial set given by the nerve of $I$. Concretely, a $k$-simplex of $\Delta^I$ is a length $k$ ascending chain $i_0 \leq \ldots \leq i_k$ in $I$. We call $\Delta^I$ the standard (combinatorial) $I$-simplex.

There is likewise a topological space
	\eqn\label{eqn. usual simplex}
	\Delta^I := \{(x_i)_{i \in I} \in \RR_{\geq 0}^I \, | \sum_{i \in I} x_i = 1 \}.
	\eqnd
While it is sometimes common to write the space by the notation $|\Delta^I|$ (to avoid double-booking notation) we will not follow this convention, with the exception of Sections~\ref{section. weak kan} and~\ref{section. lioudelta homs}, where we will repeatedly use the topological spaces, not just the combinatorial simplices. When the vertical bars $| \bullet |$ are missing, it will be clear from context whether we are referring to the combinatorial simplex, or the simplex as a topological space.

Note $\Delta^I$ is also a smooth manifold with corners in the standard way. 

Finally, we will often depart from the convention of~\eqref{eqn. usual simplex} and allow our simplices to have coordinates as small as -1 (see Section~\ref{section. -1 convention for simplex}). This is to more easily define collarings of simplices later, but does not in any meaningful way affect the underlying ideas of our constructions. 
\end{notation}

\subsection{Simplicial and semisimplicial sets}

\begin{notation}[$\Delta_{\inj}$]
Recall that $\Delta$ is the category whose objects are finite, non-empty, linearly ordered sets $I$. A morphism is a weakly order-preserving function. 

We let $\Delta_{\inj}$ denote the subcategory of $\Delta$ with the same objects, but whose morphisms are injections (equivalently, the morphisms must be strictly order-preserving). 
\end{notation}

\begin{defn}
A {\em simplicial set} is a functor $\Delta^{\op} \to \sets$ to the category of sets. A map of simplicial sets is a natural transformation. A {\em semisimplicial set} is a functor $(\Delta_{\inj})^{\op} \to \sets$. A map of semisimplicial sets is a natural transformation.
\end{defn}

Fix a simplicial set $X$. Given an injection $I' \to I$, we call the induced map $X_I \to X_{I'}$ a face map. If $I' \to I$ is a surjection, the induced map is called a degeneracy map. Informally, a semisimplicial set is a simplicial set without the degeneracy maps.

We let $\Lambda^n_k$ delete the $k$th $n$-horn---informally, this is a simplicial set obtained by deleting the interior, and the face opposite the $k$th vertex, of the $n$-simplex. A simplicial set $X$ is called an {\em $\infty$-category} if it satisfies the weak Kan condition: For any $0<k<n$, any map $\Lambda^n_k \to X$ extends to $\Delta^n$. Such an extension is often called a filler.

\begin{remark}
The distinction between the notion of simplicial and semisimplicial sets will become prominent especially in Section~\ref{section. lioudelta is oo cat}. This is because, when the simplicial set is an $\infty$-category, degeneracy maps encode identity morphisms and the higher coherences of identity morphisms. The point of constructing a well-behaved {\em semisimplicial} set is to construct an ``$\infty$-category'' that admits units, but for which we specify none.

It is often the case that one can easily define face maps, but cannot easily define degeneracy maps. This is common in many geometric settings--e.g., cobordism categories, and in settings where strict collaring conditions must be respected (as in our setting---see Definition~\ref{defn. lioudelta}).  
\end{remark}

We now review formal facts from the theory of $\infty$-categories that tell us when semisimplicial sets may be promoted to be simplicial sets, and in fact, $\infty$-categories.

\begin{defn}[\cite{steimle}, Definition~1.1]
\label{defn. idempotent equivalence}
Given a semisimplicial set $X$, an edge $e$ is {\em idempotent} if there is a 2-simplex of $X$ whose three boundary edges are all given by $e$. The edge $e$ is an equivalence if (a) for any horn $\Lambda^n_n \to X$ whose last edge is $e$, there is a filler $\Delta^n \to X$, and (b) the same holds for any horn $\Lambda^n_0 \to X$ whose first edge is $e$.
\end{defn}

\begin{theorem}
\label{theorem. steimle}
(Theorem~1.2 of~\cite{steimle}.) Let $X$ be a semisimplicial set satisfying the semisimplicial weak Kan condition, and for which every object admits an idempotent self-equivalence. Then $X$ admits a simplicial structure with the same underlying face maps, whose degenerate edges are the idempotent self-equivalences, and for which $X$ is an $\infty$-category.

(Theorem~2.1 of ibid.) Moreover, fix a semisimplicial subset $A \subset X$ with a specified simplicial structure. Then one may instead choose a simplicial structure on $X$ extending the degeneracy maps of $A$. 
\end{theorem}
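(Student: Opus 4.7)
The plan is to prove Steimle's theorem by inductively constructing the degeneracy maps of $X$ on simplex dimension, using the idempotent self-equivalences on objects as the base case and invoking the semisimplicial weak Kan condition to produce the required higher degenerate simplices. For each $0$-simplex $x$ of $X$, choose an idempotent self-equivalence $\eta_x$ (per Definition~\ref{defn. idempotent equivalence}) and declare $s_0(x) := \eta_x$. Then, assuming degeneracies have been defined on all simplices of dimension less than $n$, for each $n$-simplex $\sigma$ and each index $0 \le i \le n$, we aim to define $s_i(\sigma)$ as an $(n+1)$-simplex whose faces are prescribed by the simplicial identities: its $i$th and $(i{+}1)$st faces are both $\sigma$, and its other faces are degeneracies (already defined) of the faces of $\sigma$.

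The hard part is that this is a full sphere-filling problem $\partial\Delta^{n+1} \to X$ rather than a horn-filling problem, so the weak Kan condition does not apply directly. The strategy is to reduce it to a horn filling. Consider instead the inner horn $\Lambda^{n+1}_i \to X$ obtained by omitting the $i$th face; because $0 < i$ or $i+1 < n+1$ can be arranged (handling the extremal cases $i=0$ and $i=n$ separately via the outer-equivalence filling condition built into Definition~\ref{defn. idempotent equivalence}), the semisimplicial weak Kan hypothesis produces a filler. We then have to show that the missing $d_i$ face equals $\sigma$. Here the equivalence-plus-idempotence property is essential: the $i$th and $(i{+}1)$st vertices of $s_i(\sigma)$ are literally the same vertex, joined by $\eta_x$, so the missing face differs from $\sigma$ only by a precomposition-style operation with $\eta_x$, and the idempotent $2$-simplex witnessing $\eta_x \circ \eta_x \sim \eta_x$ lets us perturb the filler to absorb this discrepancy. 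This is exactly where the hypothesis of idempotent equivalences on every object is used, and is the main obstacle in the argument.

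Once the $s_i$ have been defined, one must verify the simplicial identities $s_i s_j = s_{j+1} s_i$ and the face/degeneracy relations. This demands a coordinated induction in which the choices of filler at each stage are compatible with the choices previously made on overlapping faces; at each step, the adjustment is again implemented by using the weak Kan condition to replace a filler with an equivalent one whose constrained faces take the prescribed values. With the simplicial structure in hand, the $\infty$-category property is essentially inherited: any inner horn $\Lambda^n_k \to X$ with $0 < k < n$ is filled by the semisimplicial weak Kan condition after noting that its degenerate faces (if any) are $(n{-}1)$-simplices compatible with the identities, so the filling reduces to the semisimplicial case.

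For the relative version, restart the induction using $A$'s simplicial data as initial input: for $x \in A$, take $\eta_x$ to be the already-specified $s_0^A(x)$ (which is automatically an idempotent self-equivalence since $A$ is an $\infty$-category), and at each higher-dimensional step, when $\sigma$ and its faces already lie in $A$, use the degeneracies already provided by $A$ rather than choosing fresh ones. The extension problem for simplices outside $A$ is solved by exactly the same filling procedure as before; compatibility with $A$'s structure is automatic because $A$'s degeneracies already solve the corresponding prescribed-boundary problem on $A$.
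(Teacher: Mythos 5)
The paper does not prove this theorem; it cites Theorem~1.2 and Theorem~2.1 of Steimle's paper as external results, so there is no in-paper argument to compare against. Evaluating your proposal on its own terms: the high-level plan (seed degeneracies from idempotent self-equivalences, climb dimension, quote the weak Kan condition) is a reasonable entry point, and you correctly identify the central obstruction---but the step where you actually confront that obstruction does not constitute a proof.

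The issue is the transition from horn-filling to sphere-filling. After filling the horn $\Lambda^{n+1}_i \to X$, the $(n{+}1)$-simplex you obtain has $d_i$ face determined only up to homotopy, not on the nose; there is no guarantee that $d_i$ of the filler equals $\sigma$. Your remedy---``the idempotent $2$-simplex witnessing $\eta_x\circ\eta_x\sim\eta_x$ lets us perturb the filler to absorb this discrepancy''---is not a well-defined operation. A bare semisimplicial weak Kan complex has no composition, no ``precomposition with $\eta_x$,'' and no perturbation of a simplex rel a prescribed boundary: the only operations available are horn-fillings, and any attempt to ``absorb the discrepancy'' by pasting in extra cells produces another horn whose missing face is again uncontrolled. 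What is actually required is a strengthened filling lemma of roughly the following shape: a map $\partial\Delta^{n+1}\to X$ whose boundary includes a suitably degenerate face admits an extension to $\Delta^{n+1}$. Establishing that lemma---and doing so in a way compatible with the simultaneous demands of all the simplicial identities---is the genuine content of Steimle's theorem, and your argument presupposes it rather than proving it.

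The same gap reappears, compounded, in your treatment of the simplicial identities $s_is_j=s_{j+1}s_i$, $d_is_j=s_{j-1}d_i$, etc. You propose to ``replace a filler with an equivalent one whose constrained faces take the prescribed values.'' Replacing a filler so that \emph{all} its faces take prescribed values is precisely a sphere-filling problem, which is what you were trying to avoid. Without the strengthened filling lemma, the ``coordinated induction'' cannot get started. I would encourage you to look at Steimle's actual argument: it does not attempt to define the $s_i$ simplex-by-simplex from horn-fillings, because that strategy runs into exactly the uncontrolled-last-face obstruction you hit here.
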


\begin{theorem}[\cite{tanaka-non-strict}]
\label{theorem. hiro non strict}
Let $X$ and $Y$ be $\infty$-categories, and fix $f_{\semi}: X \to Y$ a {\em semisimplicial} map (meaning $f$ respects face maps, but may not respect degeneracy maps). If $f_{\semi}$ sends all degenerate edges of $X$ to equivalences in $Y$, then there exists a {\em simplicial} set map $f: X \to Y$ such that, for any simplex $e$ of $X$, $f_{\semi}(e)$ is naturally homotopic to $f(e)$.
\end{theorem}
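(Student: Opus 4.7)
The strategy is to recast the rectification problem as an instance of the universal property of $\infty$-categorical localization. Let $U: \ssets \to \semisets$ denote the forgetful functor and let $L$ be its left adjoint, which freely adjoins degeneracies to a semisimplicial set. The semisimplicial map $f_{\semi}: UX \to UY$ corresponds under the adjunction $L \dashv U$ to a genuine simplicial map $\widetilde{f}: LUX \to Y$. Concretely, $LUX$ has the same non-degenerate simplices as $X$, but every simplex of $X$ that was previously degenerate becomes a new non-degenerate simplex of $LUX$; the counit $\varepsilon: LUX \to X$ collapses these formerly-degenerate simplices back to their degenerate images in $X$.

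The heart of the argument is to show that $\varepsilon$ exhibits $X$ as the Joyal (i.e., $\infty$-categorical) localization of $LUX$ at the set $W$ of edges of $LUX$ that become degenerate under $\varepsilon$. In one direction, $\varepsilon$ sends each $W$-edge to an identity of $X$, hence to an equivalence. The non-trivial direction is that any simplicial map $\Phi: LUX \to Z$ into an $\infty$-category $Z$ that sends $W$ to equivalences factors, up to coherent homotopy, through $\varepsilon$. I would prove this by skeletal induction: every simplex of $X$ is either a non-degenerate simplex of $LUX$ (for which $\Phi$ already supplies a value) or is obtained from such by degeneracies whose edges lie in $W$, and filling the resulting data amounts to filling inner horns (for non-degenerate simplices) or special outer horns (when a $W$-edge appears as an extreme edge)---both of which succeed by the weak Kan axiom on $Z$ combined with the equivalence hypothesis on $\Phi(W)$.

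Given the localization property, the proof concludes formally. The hypothesis that $f_{\semi}$ sends degenerate edges of $X$ to equivalences in $Y$ is exactly the statement that $\widetilde{f}$ carries $W$ into equivalences of $Y$. The universal property then yields a homotopically unique simplicial map $f: X \to Y$ with $f \circ \varepsilon \simeq \widetilde{f}$, and naturality of this homotopy produces, for each simplex $e$ of $X$, a natural simplex-level homotopy between $f(e)$ and $f_{\semi}(e)$. For the ``moreover'' clause, when $f_{\semi}$ strictly respects degeneracies, the map $\widetilde{f}$ already agrees with $f_{\semi} \circ \varepsilon$ on the nose along vertices, so one can arrange $f$ to literally agree with $f_{\semi}$ on $0$-simplices by making the appropriate choice at the base case of the inductive factorization.

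The main obstacle is establishing the localization claim for $\varepsilon$. The subtlety is that $LUX$ is typically \emph{not} an $\infty$-category---since its degeneracies are freely adjoined, there is no reason for inner horns in $LUX$ itself to be fillable---so one cannot directly apply model-categorical lifting arguments inside $LUX$. The skeletal induction I propose therefore performs all horn-filling inside $Y$, where it is justified, while using $\varepsilon$ only to transport data between $LUX$ and $X$. An alternative route would be to pass to a fibrant replacement of $LUX$ in the Joyal model structure and identify it with $X$ directly, tracking the images of $W$-edges through the equivalence; the bookkeeping of this alternative is, in my experience, the delicate part of the argument.
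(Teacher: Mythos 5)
The paper does not prove this theorem; it cites it to \cite{tanaka-non-strict}. The only hint the paper gives about the argument is Remark~\ref{remark. alpha j homotopic to m iota}, which says the proof ``invokes abstract non-sense: That for an $\infty$-category $\cC$, the simplicial set $\cC^+$ obtained by freely adjoining new degenerate edges exhibits a localization $\cC^+ \to \cC$ along these new degenerate edges.'' Your proposal is built on precisely this idea, with $\cC^+ = LUX$ and the marked edges $W$ being the images of the formerly-degenerate edges of $X$, so your high-level strategy matches what the authors indicate.

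Two points of caution on the details of your argument. First, the skeletal induction you sketch for the localization claim conflates two things that are really one: you are not just building $f: X \to Z$ on skeleta, you must simultaneously build the coherent homotopy $H: f \circ \varepsilon \simeq \Phi$. Concretely, if $\sigma$ is a non-degenerate $n$-simplex of $X$ with a degenerate face $d_i\sigma = s_j\tau$, then on that face $f$ is \emph{forced} to be $s_j f(\tau)$, which will in general only be \emph{equivalent} (not equal) to $\Phi(d_i\sigma)$; so the extension problem for $f(\sigma)$ lives inside $Z$ with boundary conditions that do not literally match $\Phi$'s restriction. This means the relevant lifting problem is one for a map out of a quotient of $LUX \times \Delta^1$, and is more naturally phrased as a marked anodyne / mapping-cylinder argument than as isolated inner or outer horn fillings. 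Your description ``both of which succeed by the weak Kan axiom on $Z$'' glosses over exactly this: the horn faces you would want to use are not given by $\Phi$ alone but by the partially constructed $f$, and the inductive bookkeeping of the homotopy $H$ is the content, not a formality.

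Second, you only address essential surjectivity of $\fun(X,Z) \to \fun^W(LUX,Z)$ (that every $W$-inverting $\Phi$ factors up to homotopy), not fully faithfulness. For the existence statement of the theorem as written, essential surjectivity does suffice, so this is not strictly a gap in proving what is asserted; but if you intend to invoke ``the universal property of localization'' as a black box (as your final paragraph does), you need to either prove the full equivalence of functor categories or note explicitly that you are only using the weaker form. Your ``moreover'' clause is handled correctly, since $\widetilde f$ agrees with $f_\semi$ on vertices tautologically, but arranging equality on vertices in the factorization again requires tracking the base case of the simultaneous $(f,H)$-construction rather than appealing to a bare universal property.
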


In some sense, the degeneracies of a simplicial set are the ``least homotopical'' ingredient in the theory of $\infty$-categories, as they demand particular choices and {\em strict} equations to be satisfied by these choices. The above two theorems allow us to bypass this issue, at the level of both individual $\infty$-categories and functors between them.

\subsection{Subdivisions and \texorpdfstring{$\Ex$}{Ex}}
We recall some standard constructions in simplicial homotopy theory~\cite{kan-css, goerss-jardine}

\begin{notation}
Let $I$ be a finite, non-empty, linearly ordered set. By $\cP'(I)$, we mean the collection of all non-empty subsets of $I$.
We note that $\cP'(I)$ is itself a (non-linear) poset, ordered by the subset relation.
\end{notation}

\begin{notation}[$\sd$]
Let $\Delta^I$ be the standard $I$-simplex. 
We let $\sd(\Delta^I)$ denote the simplicial set given by the nerve of $\cP'(I)$. We call it the {\em barycentric subdivision} of $\Delta^I$. Concretely, a 0-simplex of $\sd(\Delta^I)$ is an element of $\cP'(I)$. A $k$-simplex is a chain $A_0 \subset A_1 \subset \ldots \subset A_k$ of elements in $\cP'(I)$, where $\subset$ need not be a proper inclusion.

We have defined $\sd$ on simplices. Following custom, we denote the left Kan extension to all simplicial sets by the same notation. Concretely, given a simplicial set $\cC$, we have
	\eqnn
	\sd(\cC) \cong \colim_{(\Delta^k \to \cC) \in \Delta_{/\cC}} \sd(\Delta^k).
	\eqnd
Informally, $\sd(\cC)$ is the simplicial set obtained from $\cC$ by replacing every $k$-simplex of $\cC$ by $\sd(\Delta^k)$.
\end{notation}

\begin{defn}
\label{defn. max localizing}
We will say that an edge $(A \subset B)$ of $\sd(\Delta^I)$ is {\em $\max$-localizing} if $\max A = \max B$. 
\end{defn}

\begin{remark}\label{remark. max is a localization}
Consider the function
	\eqnn
	\max: \cP'(I) \to I, 
	\qquad
	A \mapsto \max A.
	\eqnd
(Here, $\max A \in A \subset I$ is the maximal element of $A$ with respect to the linear order induced from $I$.)
Then $\max$ is a map of posets. It has a fully faithful right adjoint
	\eqn\label{eqn. max right adjoint}
	I \to \cP'(I),
	\qquad
	i \mapsto [\min I, i] = \{i' \in I \, | \, \min I \leq i' \leq i\}.
	\eqnd
Thus, $\max: \sd(\Delta^I) \to \Delta^I$ is a localization of $\infty$-categories\footnote{Definition~5.2.7.2 of~\cite{htt}.}. (We have abused notation and used $\max$ to also denote the induced map of nerves.) 

In particular, for any $\infty$-category $\cC$, the map of functor $\infty$-categories
	\eqnn
	\fun(\Delta^I, \cC) \to \fun(\sd(\Delta^I),\cC)
	\eqnd
is fully faithful, with essential image consisting of those functors that send $\max$-localizing edges to equivalences in $\cC$.\footnote{Proposition~5.2.7.12 of~\cite{htt}.}
\end{remark}

\begin{warning}
The right adjoints~\eqref{eqn. max right adjoint} are not natural in the $I$ variable.
\end{warning}

\begin{defn}[$\Ex$]
Let $\cC$ be a simplicial set. We define $\Ex(\cC)$ to be the simplicial set
	\eqnn
	\Ex(\cC) : \Delta^{\op} \to \sets,
	\qquad
	I \mapsto \hom_{\sset}(\sd(\Delta^I), \cC)
	\eqnd
so a $k$-simplex of $\cC$ is the data of a simplicial set map from $\sd(\Delta^k)$ to $\cC$.\footnote{The $\Ex$ stands for ``extension'' according to Kan's original work~\cite{kan-css}.}
\end{defn}

\begin{remark}
Given any (not necessarily order-respecting) function $f: I \to J$, we have an induced map of posets $\cP'(I) \to \cP'(J)$ by sending a subset $A \subset I$ to $f(A) \subset J$. Note $f^*: \Ex(\cC)_J \to \Ex(\cC)_I$ is precomposition with this induced map.
\end{remark}

\begin{notation}\label{notation. id to Ex}
There is a natural transformation $\max^*: \id_{\sset} \to \Ex$. Explicitly, for any simplicial set $\cC$, the map 
	\eqnn
	\max\!^*: \cC \to \Ex(\cC)
	\eqnd 
is induced by precomposition with $\max : \cP'(I) \to I, A \mapsto \max A$. \end{notation}

\subsection{\texorpdfstring{$\Ex_{\simeq}$}{Ex sim} of \texorpdfstring{$\infty$}{infinity}-categories}

\begin{notation}[$\Ex_{\simeq}$]\label{notation. Exsim}
Let $\cC$ be an $\infty$-category. We let $\Ex_{\simeq}(\cC)  \subset \Ex(\cC)$ 
denote the simplicial set whose $I$-simplices consist only of those maps $\sd(\Delta^I) \to \cC$ that send $\max$-localizing edges to equivalences.
\end{notation}

\begin{notation}[$m$]
\label{notation. map to Exsimeq}
The natural map $\cC \to \Ex(\cC)$ from Notation~\ref{notation. id to Ex} factors through $\Ex_{\simeq}(\cC)$ because the composition $\sd(\Delta^I) \xrightarrow{\max} \Delta^I \to \cC$ sends $\max$-localizing edges (Definition~\ref{defn. max localizing}) to (degenerate) equivalences in $\cC$.
We will denote this map of simplicial sets by
	\eqnn
	m: \cC \to \Ex_{\simeq}(\cC).
	\eqnd
We note that the assignment $\cC \mapsto \Ex_{\simeq}(\cC)$ is a functor (from the category of $\infty$-categories to the category of simplicial sets) and $m$ defines a natural transformation from the identity functor.

In Proposition~\ref{prop. Ex is homotopical}, we will prove that the $\Ex_{\simeq}$ functor may be enriched over simplicial sets; moreover, we will see shortly that $\Ex_{\simeq}(\cC)$ is an $\infty$-category if $\cC$ is (Proposition~\ref{prop. Exsim(C) is C}). Thus $\Ex_{\simeq}$ defines a functor from the $\infty$-category of $\infty$-categories to itself. 
\end{notation}

\begin{notation}
For this section only, we will use the notation $\fun^\ast$ from time to time. Concretely, $\fun(X,\cC)$ is the simplicial set of maps from $X$ to $\cC$, and the notation $\fun^\ast(X,\cC)$ emphasizes the simplicial structure: $\fun^n(X,\cC)$ is the set of maps $X \times \Delta^n \to \cC$. The reason for the $\ast$ symbol is that we will soon have expressions such as $\fun^\ast(\Delta^\bullet,\cC)$ depending on two indices $\ast$ and $\bullet$, and we want to distinguish their roles.
\end{notation}

\begin{prop}\label{prop. Exsim(C) is C}
Let  $\cC$ be an $\infty$-category.
\enum
	\item $\Ex_{\simeq}(\cC)$ is an $\infty$-category.
	\item The map (Notation~\ref{notation. map to Exsimeq})
	$
	m: \cC \to \Ex_{\simeq}(\cC)
	$
is an equivalence of $\infty$-categories.
\enumd
\end{prop}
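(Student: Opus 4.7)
The strategy hinges on the localization result recalled in Remark~\ref{remark. max is a localization}: the map $\max: \sd(\Delta^I) \to \Delta^I$ exhibits $\Delta^I$ as an $\infty$-categorical localization of $\sd(\Delta^I)$ at the max-localizing edges. Since $\infty$-categorical localizations are stable under homotopy colimits, the analogous statement holds for any simplicial set $K$ in place of $\Delta^I$, and in particular for $K = \Lambda^n_k$ or $K=\del\Delta^n$: for any $\infty$-category $\cE$, the restriction map $\fun(K, \cE) \hookrightarrow \fun(\sd(K), \cE)$ is fully faithful with essential image the functors that invert max-localizing edges. In this language, the $I$-simplices of $\Ex_\simeq(\cC)$ are exactly the $0$-simplices of this essential image when $K = \Delta^I$ and $\cE = \cC$, and $m$ is the counit of this recognition.

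For part (1), I plan to verify the weak Kan condition directly. Given an inner horn $\sigma: \Lambda^n_k \to \Ex_\simeq(\cC)$ with $0 < k < n$, the adjunction $\sd \dashv \Ex$ converts $\sigma$ into a map $\tilde\sigma: \sd(\Lambda^n_k) \to \cC$ inverting max-localizing edges. By the localization property above, $\tilde\sigma$ is naturally equivalent in $\fun(\sd(\Lambda^n_k), \cC)$ to $\bar\sigma \circ \max$ for some $\bar\sigma: \Lambda^n_k \to \cC$. Because $\cC$ is an $\infty$-category, $\bar\sigma$ admits a filler $\bar\tau: \Delta^n \to \cC$; pre-composing with $\max$ produces $\tilde\tau := \bar\tau \circ \max: \sd(\Delta^n) \to \cC$, which automatically inverts max-localizing edges. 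This $\tilde\tau$ will agree with $\tilde\sigma$ only up to equivalence, so to rigidify I will use that the restriction map $\fun(\sd(\Delta^n), \cC) \to \fun(\sd(\Lambda^n_k), \cC)$ is an isofibration in the Joyal model structure, and lift the chosen equivalence $\tilde\tau|_{\sd(\Lambda^n_k)} \simeq \tilde\sigma$ to an equivalence $\tilde\tau \simeq \tilde\tau'$ whose target $\tilde\tau'$ restricts literally to $\tilde\sigma$. The adjoint of $\tilde\tau'$ is then the desired filler of $\sigma$.

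For part (2), I plan to invoke Joyal's theorem that a functor between $\infty$-categories is an equivalence iff it is essentially surjective and fully faithful. Essential surjectivity is immediate because $\sd(\Delta^0) = \Delta^0$, so $m$ is a bijection on $0$-simplices. For fully faithfulness, I will model $\Hom_{\Ex_\simeq(\cC)}(c_0, c_1)$ as the fiber over $(c_0, c_1)$ of the evaluation $\fun(\Delta^1, \Ex_\simeq(\cC)) \to \fun(\del \Delta^1, \Ex_\simeq(\cC))$, unwind via the adjunction to describe its simplices as zigzag diagrams $c_0 \to c_{01} \xleftarrow{\sim} c_1$ (together with their higher-dimensional counterparts) in which the backward edge is an equivalence, and then contract the space of backward equivalences using the fact that in any $\infty$-category the space of equivalences into a fixed object is contractible. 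What remains is precisely $\Hom_\cC(c_0, c_1)$, and $m$ induces the corresponding inclusion.

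The main obstacle I anticipate is the isofibration-lifting step in part~(1): one must verify that the chosen equivalence between $\tilde\tau|_{\sd(\Lambda^n_k)}$ and $\tilde\sigma$ is of the form liftable through the restriction isofibration, and that the resulting $\tilde\tau'$ continues to invert max-localizing edges on all of $\sd(\Delta^n)$ (the latter being automatic since the edges in question lie in each simplex and the condition is stable under equivalence in $\fun$). A secondary subtlety is ensuring the hom-space computation in part~(2) is coherent across all simplicial degrees; this should reduce to repeating the zigzag-and-contract argument parametrically, organized by naturality of the localization statement in $I$.
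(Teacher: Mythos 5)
Your route is genuinely different from the paper's. The paper proves both parts in one stroke via complete Segal spaces: it shows $\fun^\ast_{\simeq}(\sd(\Delta^\bullet), \cC)^{\sim}$ is a CSS (the key technical input is the Reedy fibrancy of Lemma~\ref{lemma. Exeq is fibrant}, proven via the cofibration $\sd(\del\Delta^n) \hookrightarrow \sd(\Delta^n)$), observes the levelwise equivalence with $\fun^\ast(\Delta^\bullet, \cC)^{\sim}$ using only the localization statement of Remark~\ref{remark. max is a localization} for $K = \Delta^I$ itself, and then invokes Joyal--Tierney to extract the $\infty$-category and the equivalence simultaneously. You instead verify the weak Kan condition and fully-faithfulness by hand. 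Your argument hinges on a strictly stronger form of the localization lemma: that $\max: \sd(K) \to K$ is a localization at max-localizing edges for $K = \Lambda^n_k$ (and also $K = \Delta^1, \del\Delta^1$ for the hom-space computation). This is true, but note that the proof from the paper (a fully faithful right adjoint $i \mapsto [\min I, i]$) does \emph{not} transfer: for $\sd(\Lambda^n_k) = N\bigl(\cP'([n]) \setminus \{[n],\, [n]\setminus\{k\}\}\bigr)$, the candidate right adjoint $i \mapsto [0,i]$ sends $n$ to $[n]$, which is not in the subposet. So you cannot cite the adjoint argument; you genuinely need the homotopy-colimit argument you sketch, which in turn requires verifying that the colimits of $\Delta^I$ and $\sd(\Delta^I)$ over $\Delta_{/\Lambda^n_k}$ are \emph{homotopy} colimits in the Joyal model structure, and that the localization functor on marked simplicial sets preserves them. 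That's a real chunk of work the paper's approach avoids by never leaving the realm of single standard simplices. The trade-off: your proof is more self-contained and explicit about what fillers look like, while the paper's outsources the hard part to Joyal--Tierney's CSS theory but needs less about localizations. Your part (2) (the zigzag-and-contract argument) is also sound in outline but will require a careful inductive treatment of the higher simplices of $\Hom^R$, which you acknowledge; the CSS approach gets mapping-space equivalences for free once the levelwise equivalence of Segal spaces is established.
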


\begin{proof}
Consider the simplicial space
	\eqnn
	\fun^\ast(\Delta^\bullet, \cC)^{\sim}
	\eqnd
where $\ast$ is the ``space'' index and $\bullet$ is the ``simplicial'' index.
If $Y$ is an $\infty$-category, $\fun^\ast(X,Y)$ is also an $\infty$-category (regardless of $X$).\footnote{See Proposition~1.2.7.3 of~\cite{htt}.} The notation $\fun^\ast(X,Y)^{\sim} \subset \fun^\ast(X,Y)$ indicates the largest $\infty$-groupoid inside $\fun^\ast(X,Y)$. In short, $\fun^\ast(\Delta^I, \cC)^{\sim}$ is the space (Kan complex) of functors from $\Delta^I$ to $\cC$. Then $\fun^\ast(\Delta^\bullet, \cC)^{\sim}$ is a complete Segal space because $\cC$ is an $\infty$-category.\footnote{See Proposition~4.10 of~\cite{joyal-tierney}, where $\Gamma(X)_{m,n}$ stands for our $\fun^n(\Delta^m,X)^\sim$.}

Let us now define the simplicial space
	\eqnn
	\fun^\ast_{\simeq}(\sd(\Delta^\bullet),\cC)^{\sim}.
	\eqnd
For any $I$, $\fun^\ast_{\simeq}(\sd(\Delta^I),\cC)^{\sim}
	\subset \fun^\ast(\sd(\Delta^I),\cC)^{\sim}$ is declared to be the full Kan complex spanned by those simplicial set maps $\sd(\Delta^I) \to \cC$ sending $\max$-localizing edges to equivalences in $\cC$. (In other words, $\fun^\ast_{\simeq}(\sd(\Delta^I),\cC)^{\sim}$ consists exactly of those connected components containing functors that send $\max$-localizing edges to equivalences.) 

We now claim $\fun^\ast_{\simeq}(\sd(\Delta^\bullet),\cC)^{\sim}$ is a complete Segal space. We will see that $\fun^\ast_{\simeq}(\sd(\Delta^\bullet),\cC)^{\sim}$ is Reedy fibrant in Lemma~\ref{lemma. Exeq is fibrant} below. Further, by Remark~\ref{remark. max is a localization}, we know that $\Delta^I$ is a localization of $\sd(\Delta^I)$ along the $\max$-localizing edges---so the natural map
	$
	\fun^\ast(\Delta^I, \cC)^{\sim}
	\to
	\fun^\ast_{\simeq}(\sd(\Delta^I),\cC)^{\sim}
	$
is a homotopy equivalence of Kan complexes for every $I$. Because $\fun^\ast(\Delta^\bullet, \cC)^{\sim}$ satisfies the completeness and Segal conditions, the claim is proven.

We witnessed in the previous paragraph that 
	$
	\fun^\ast(\Delta^\bullet, \cC)^{\sim}
	\to
	\fun^\ast_{\simeq}(\sd(\Delta^ \bullet),\cC)^{\sim}
	$
is an equivalence of complete Segal spaces. 
On the other hand, if $X_{\bullet,\ast}$ is a complete Segal space, it is known that (i) $X_{\bullet,0}$ is an 
$\infty$-category\footnote{Corollary~3.6 of~\cite{joyal-tierney}.
}, and (ii) any equivalence of complete Segal spaces $W_{\bullet,\ast} \to X_{\bullet,\ast}$ induces an equivalence of 
$\infty$-categories\footnote{This follows from the fact that $X_{\bullet,\ast} \mapsto X_{\bullet,0}$ is right Quillen. (See~\cite{joyal-tierney}, where the notation $i_1^*$ denotes this right adjoint.)  Because $i_1^*$ is right Quillen, it preserves weak equivalences between fibrant objects. On the other hand, complete Segal spaces and $\infty$-categories are the fibrant objects of the domain and codomain of $i_1^*$, respectively.
} $W_{\bullet,0} \to X_{\bullet,0}$. The proposition follows by noting the equality of simplicial sets
	\eqnn
	\Ex(\cC)_\bullet = \fun^0_{\simeq}(\sd(\Delta^\bullet),\cC)^{\sim}.
	\eqnd
\end{proof}

We claimed the following while proving Proposition~\ref{prop. Exsim(C) is C}:

\begin{lemma}\label{lemma. Exeq is fibrant}
If  $\cC$ is an $\infty$-category, then $\fun^\ast_{\simeq}(\sd(\Delta^\bullet),\cC)^{\sim}$ is Reedy fibrant.
\end{lemma}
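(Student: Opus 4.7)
The plan is to identify the matching object of the simplicial space $X = \fun^\ast_\simeq(\sd(\Delta^\bullet), \cC)^\sim$ in level $n$ and then verify that the Reedy matching map is a Kan fibration. Since $\sd$ preserves colimits and $\partial\Delta^n$ is the colimit of its proper face inclusions, we have $\sd(\partial\Delta^n) = \colim_{[k]\to[n],\ k<n}\sd(\Delta^k)$; mapping this into $\cC$ turns the colimit into a limit, yielding
\[
M_n X = \fun^\ast_\simeq(\sd(\partial\Delta^n),\cC)^\sim,
\]
where the simeq subscript on the right means sending max-localizing edges of $\sd(\partial\Delta^n)$ to equivalences. (This identification is valid because max-localization is a property of individual edges and every edge of $\sd(\partial\Delta^n)$ is an edge of some $\sd(\Delta^k)$.) So Reedy fibrancy reduces to verifying that the restriction map
\[
r \colon \fun^\ast_\simeq(\sd(\Delta^n), \cC)^\sim \to \fun^\ast_\simeq(\sd(\partial\Delta^n), \cC)^\sim
\]
is a Kan fibration.

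Next, I would forget the simeq condition and argue at the level of the full functor complexes. The inclusion $\sd(\partial\Delta^n) \hookrightarrow \sd(\Delta^n)$ is a monomorphism of simplicial sets, hence a cofibration in the Joyal model structure; since $\cC$ is fibrant there (being an $\infty$-category), the restriction
\[
R \colon \fun^\ast(\sd(\Delta^n), \cC) \to \fun^\ast(\sd(\partial\Delta^n), \cC)
\]
is a categorical fibration between quasi-categories. Passing to maximal Kan subcomplexes turns $R$ into a Kan fibration: this is the standard fact that the core functor $(-)^\sim$ sends categorical fibrations between quasi-categories to Kan fibrations (inner-horn lifts exist because $R$ is an inner fibration and the $2$-out-of-$3$ property keeps us inside the core; outer-horn lifts exist because $R$ lifts equivalences).

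The final step is to restrict to the simeq components. The simplicial subsets $\fun^\ast_\simeq(\sd(\Delta^n),\cC)^\sim$ and $\fun^\ast_\simeq(\sd(\partial\Delta^n),\cC)^\sim$ are each unions of connected components of their ambient Kan complexes, since the property of sending a fixed collection of edges to equivalences is invariant under homotopy of functors and hence closed-and-open. Moreover $R^\sim$ carries the left clopen locus into the right one, since restricting a functor that sends all max-localizing edges of $\sd(\Delta^n)$ to equivalences in particular sends the max-localizing edges of $\sd(\partial\Delta^n)$ to equivalences. The restriction of a Kan fibration to a union of components of its source, landing in a union of components of its target, is again a Kan fibration: any horn lifting problem against it can be solved using the ambient Kan fibration, and the resulting filler lies in the specified component since $\Delta^n$ is connected and the horn already maps into that component. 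Applying this observation to $R^\sim$ gives that $r$ is a Kan fibration, which is exactly the Reedy fibrancy condition. I expect the only subtlety to be bookkeeping — in particular making sure that max-localization of edges behaves correctly under restriction from $\sd(\Delta^n)$ to $\sd(\partial\Delta^n)$ (it does, since max-localization depends only on the edge, not on an ambient simplex).
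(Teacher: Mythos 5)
Your proof is correct and follows essentially the same route as the paper: reduce Reedy fibrancy to showing the restriction $\fun^\ast_\simeq(\sd(\Delta^n),\cC)^\sim \to \fun^\ast_\simeq(\sd(\partial\Delta^n),\cC)^\sim$ is a Kan fibration, observe that on full functor-cores this follows from the Joyal-cofibration $\sd(\partial\Delta^n)\hookrightarrow\sd(\Delta^n)$ (the paper cites its Lemma on cofibrations inducing Kan fibrations of cores, itself quoting HTT~3.1.3.6, where you instead reconstruct that fact from categorical-fibration plus $(-)^\sim$), and then restrict to the clopen $\simeq$-components. The only cosmetic difference is that you spell out the identification of the $n$-th matching object and the component-restriction step in more detail than the paper, and inline rather than cite the cores-of-fibrations lemma.
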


The proof of Lemma~\ref{lemma. Exeq is fibrant} will rely on another lemma:

\begin{lemma}\label{lemma. cofibrations induce fibrations of mapping spaces}
Let $X$ and $Y$ be $\infty$-categories and $f: X \to Y$ a cofibration (meaning $f_n: X_n \to Y_n$ is an injection for all $n \geq 0$). Then for any $\infty$-category $\cC$, the induced map of Kan complexes
	\eqnn
	\fun(Y,\cC)^\sim \to 
	\fun(X,\cC)^\sim 
	\eqnd 
is a Kan fibration.
\end{lemma}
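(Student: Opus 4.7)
The plan is to deduce this from two standard facts about the Joyal model structure on simplicial sets, combined with a basic property of the core (maximal Kan complex) functor.

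First, I would recall that the Joyal model structure is cartesian closed: cofibrations are exactly monomorphisms, and fibrant objects are exactly $\infty$-categories. The pushout-product axiom then says that if $f\colon X\to Y$ is a monomorphism and $\cC$ is an $\infty$-category (hence fibrant), the induced map $f^*\colon \fun(Y,\cC)\to\fun(X,\cC)$ is a Joyal fibration between $\infty$-categories, i.e.\ an isofibration. This is the content of, e.g., Joyal's original work and is stated as Corollary~2.3.2.5 of~\cite{htt} (applied to the terminal map of $\cC$ together with $f$).

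Second, I would invoke the fact that the core functor $(-)^\sim$ sends isofibrations between $\infty$-categories to Kan fibrations between Kan complexes. The cleanest way to see this is via the right lifting property: a map of Kan complexes is a Kan fibration iff it has the right lifting property against all horn inclusions $\Lambda^n_k\hookrightarrow\Delta^n$ for $0\le k\le n$. For $0<k<n$ this follows immediately from the isofibration property (since the horn inclusions are inner). The remaining outer-horn cases reduce to the statement that an isofibration $p\colon A\to B$ between $\infty$-categories has the right lifting property against $\Lambda^n_0\hookrightarrow\Delta^n$ and $\Lambda^n_n\hookrightarrow\Delta^n$ provided every edge of $A$ lying in the horn image is an equivalence—which is automatic when we restrict attention to the cores $A^\sim$ and $B^\sim$, since every edge there is by definition an equivalence. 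This last step uses Joyal's theorem that inner fibrations with the right lifting property against $\{0\}\hookrightarrow J$ (the nerve of the walking isomorphism) are precisely the isofibrations, combined with the fact that any equivalence in $A$ extends to a map $J\to A$.

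The main obstacle is purely expository: cleanly citing the outer-horn-filling property for isofibrations restricted to cores. Concretely, I would structure the proof as follows: (1) cite the cartesian closure of the Joyal model structure to conclude $f^*$ is an isofibration; (2) factor the desired map as $\fun(Y,\cC)^\sim\to\fun(X,\cC)^\sim$, which is the map on cores induced by $f^*$; (3) verify that on cores, inner horn lifts exist by the isofibration property, and outer horn lifts exist because every edge involved is an equivalence in $\fun(X,\cC)$, so the lift problem reduces to an inner horn problem after reparametrizing via $J$. With these three steps the conclusion is immediate, and the bulk of the argument is bookkeeping rather than new mathematics.
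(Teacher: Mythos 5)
Your proof is correct, and it takes a genuinely different route from the paper. The paper deduces the lemma as a special case of Lurie's Lemma~3.1.3.6 in~\cite{htt}, which lives entirely in the marked-simplicial-set formalism (one takes $S=\Delta^0$, $Z=\cC$ with its natural marking, and observes that $\Map^\sharp(X,\cC)$ is precisely the core of $\fun(X,\cC)$). You instead decompose the statement into two standard facts about the Joyal model structure: (i) cartesian closure, so that $f^*\colon\fun(Y,\cC)\to\fun(X,\cC)$ is an isofibration whenever $f$ is a monomorphism and $\cC$ is an $\infty$-category; and (ii) the core functor sends isofibrations between $\infty$-categories to Kan fibrations. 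Both routes are valid and widely used in the literature. The paper's route is a one-line citation if one already has marked simplicial sets loaded; yours requires no marked machinery, is arguably closer to the Joyal/Cisinski tradition, and factors through two pieces that are individually reusable. One small caveat on citations: the cartesian closure of the Joyal model structure is not cleanly stated as such in~\cite{htt} Chapter~2 (Corollary~2.3.2.5 there is about inner fibrations, not isofibrations between fibrant objects); a sharper reference would be Joyal's notes, Cisinski's book, or Kerodon. The step-(ii) argument you sketch is correct, but for the outer-horn case one should check the small $n=2$ wrinkle: after lifting $\Lambda^2_0\to\Delta^2$, the new edge $\{1,2\}$ is an equivalence because it is exhibited as a composite $\{0,1\}^{-1}\circ\{0,2\}$ of equivalences, so the lift does land in the core. (You also have a typo: the edges in question are equivalences in $\fun(Y,\cC)$, not $\fun(X,\cC)$, since the lift is into $\fun(Y,\cC)^\sim$.)
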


\begin{proof}
This is a special case of Lemma~3.1.3.6 of~\cite{htt}. In the notation and language of loc. cit. (which we use throughout this proof), one sets  $S = \Delta^0$ and $Z = \cC$. Note that because $X$ and $Y$ are $\infty$-categories, any map of simplicial sets sends the naturally marked edges of $X$ and $Y$ (i.e., the equivalences in $X$ and $Y$) to the naturally marked edges (the equivalences) of $\cC$. Thus $\Map^\flat(X,\cC) = \hom_{\sset}(X,\cC)$, and $\Map^\sharp(X,\cC)$ is the largest Kan complex inside $\hom_{\sset}(X,\cC) = \fun(X,\cC)$ (Remark~3.1.3.1 of ibid.). 

(There are other proofs using the fact that the Joyal model structure for $\infty$-categories is Cartesian closed -- see Proposition~4.10 of~\cite{joyal-tierney}.)
\end{proof}

\begin{proof}[Proof of Lemma~\ref{lemma. Exeq is fibrant}.]
It is straightforward to verify that Reedy fibrancy is equivalent to the following statement: For every $n \geq 0$, the map of simplicial sets
	\eqn\label{eqn. Kan fibration for reedy fibrancy}
	\fun^\ast_{\simeq}(\sd(\Delta^n),\cC)^{\sim} \to 
	\fun^\ast_{\simeq}(\sd(\del \Delta^n),\cC)^{\sim}
	\eqnd
is a Kan fibration. We prove this now.

Note that the map $\sd(\del \Delta^n) \to \sd(\Delta^n)$ is a monomorphism. We also see $\sd(\del \Delta^n)$ is an $\infty$-category. (For example, the monomorphism identifies $\sd(\del \Delta^n)$ as the full subcategory of $\sd(\Delta^n)$ obtained by excluding the terminal object $[n]$.)

It follows from Lemma~\ref{lemma. cofibrations induce fibrations of mapping spaces} that the induced map on functor spaces
	$
	\fun^\ast(\sd(\Delta^n),\cC)^{\sim} \to 
	\fun^\ast(\sd(\del \Delta^n),\cC)^{\sim}
	$
is a Kan fibration. Thus the map~\eqref{eqn. Kan fibration for reedy fibrancy} is also a Kan fibration (as it is the restriction of a Kan fibration to certain connected components).
\end{proof}

\begin{prop}
\label{prop. Ex is homotopical}
The functor $\Ex$, from the category of simplicial sets to itself, admits a natural promotion to a functor of $\sset$-enriched categories. 

(Concretely, this means that for any two simplicial sets $\cC$, $\cD$ and a map $\cC \times \Delta^m \to \cD$, we can create a map of simplicial sets $\Ex(\cC) \times \Delta^m \to \Ex(\cD)$, natural in the three variables $\cC,\cD,$ and $\Delta^m$.)

Likewise, restricting to the full subcategory of $\infty$-categories, $\Ex_{\simeq}$ may be promoted to a functor of simplicially enriched categories.
\end{prop}

\begin{remark}
Informally: Proposition~\ref{prop. Ex is homotopical} tells us that $\Ex_{\simeq}$ is a functor that respects natural transformations of functors and higher homotopies of functors.
\end{remark}

\begin{proof}
We have the following sequence of natural (in the $\Delta^k \in \Delta^{\op}$ variable) maps:
	\begin{align}
	\hom_{\sset}(\Delta^k, \Ex(\cC) \times \Delta^m)
	& \cong 	
	\hom_{\sset}(\subdiv(\Delta^k),\cC)
	\times
	\hom_{\sset}(\Delta^k, \Delta^m) \nonumber \\
	& \to 	
	\hom_{\sset}(\subdiv(\Delta^k),\cC)
	\times
	\hom_{\sset}(\subdiv(\Delta^k), \Delta^m) \nonumber \\
	& \cong	\hom_{\sset}(\subdiv(\Delta^k), \cC \times \Delta^m)\nonumber \\
	& 	\cong \hom_{\sset}(\Delta^k, \Ex(\cC \times \Delta^m))\nonumber 
	\end{align}
The one non-bijection is given by pulling back along the map $\max: \sd(\Delta^k) = N(\cP'([k])) \to N([k]) = \Delta^k$. (See Remark~\ref{remark. max is a localization}.) All bijections are  justified by universal properties of products, or by adjunction/definition. By Yoneda's Lemma, we thus have a map of simplicial sets
	\eqn\label{eqn. ExC Delta to ExCDelta}
	\Ex(\cC) \times \Delta^m \to \Ex(\cC \times \Delta^m). 
	\eqnd
Because $\Ex$ is a functor, we have a map (natural in all variables) which takes any map $H: \cC \times \Delta^m \to \cD$ to a map $\Ex(H): \Ex(\cC \times \Delta^m) \to \Ex(\cD)$.  Composing~\eqref{eqn. ExC Delta to ExCDelta} with $\Ex(H)$, we obtain a map of simplicial sets
	\eqnn
	\Ex(\cC) \times \Delta^m \to \Ex(\cD)
	\eqnd
natural in all variables.

The final claim follows by noting that  all maps involved preserve simplices satisfying the max-localizing condition. 
\end{proof}

\subsection{Making a homotopy}
\label{section. non-strict unitality}

Here we explicate the existence of a homotopy we use to prove our localization theorem (Theorem~\ref{theorem. localization}). 
Throughout, we will utilize standard notation and results from the theory of marked simplicial sets. We refer the reader to Section~3.1 of~\cite{htt} for details, where the only relevance for us (in the notation of ibid.)  is the setting where the base simplicial set $S = \Delta^0$ is a point.

\begin{remark}[Marked simplicial sets model localizations]
\label{remark. marked simplicial sets model localizations}
Let us onramp the unfamiliar reader. When $\cD$ is an $\infty$-category, a marked simplicial set $(\cD,\cE)$ is input data for localizing $\cD$ along a collection of edges $\cE$. Indeed, in the model category of marked simplicial sets, a fibrant replacement for $(\cD,\cE)$ produces a localization of $\cD$ along $\cE$. 

This last claim follows as a combination of~\cite[Proposition~3.1.4.1]{htt} which shows that any fibrant replacement is an $\infty$-category marked by its equivalences, of~\cite[Proposition~3.1.3.7]{htt} which by model-categorical nonsense guarantees any marked simplicial set $(\cD,\cE)$ admits a fibrant replacement, and of~\cite[Proposition~3.1.3.3]{htt} which states that any fibrant replacement map is a Cartesian equivalence, co-representing the same spaces of functors as $(\cD,\cE)$.
\end{remark}

\begin{notation}[$\cC^\natural$ and $\cD^\sharp$, Definition~3.1.1.8 and Section~3.1 of~\cite{htt}]
Let $\cC$ be an $\infty$-category. $\cC^\natural$ indicates the marked simplicial set $(\cC,\cE)$ with the marking $\cE$ given by the equivalences in $\cC$. For any simplicial set $\cD$, we let $\cD^\sharp$ denote the marked simplicial set where every edge of $\cD$ is marked.
\end{notation}

\begin{notation}[$\map^\sharp$, see~3.1.3 of~\cite{htt}]
\label{notation. map sharp}
Let $(X,\cE)$ be a marked simplicial set and $Y$ an $\infty$-category. We let
	\eqnn
	\map^\sharp( (X,\cE),Y^\natural)
	\eqnd
denote the Kan complex where a $k$-simplex is a map of marked simplicial sets $f: (\Delta^k)^\sharp \times (X,\cE) \to Y^\natural$. Explicitly, $f$ is a $k$-simplex of $\map^\sharp(X,Y)$ if and only if, for every edge $e$ of $\Delta^k$ and every marked edge $e'$  of $X$, the image $f(e,e')$ is an equivalence in $Y$. 
\end{notation}
 
\begin{notation}[$X_+$]
Let $X$ be a simplicial set.
We let $X_+$ be the free simplicial set generated by the semisimplicial set underlying $X$.
\end{notation}

\begin{remark}
Informally, $X_+$ is obtained by adjoining degeneracies freely to each simplex of $X$. It is most conveniently modeled as a Kan extension of $X$ (restricted to $\Delta_{\inj}$) along the inclusion $\Delta_{\inj} \into \Delta$.

In particular, the set of edges 
	\eqn\label{eqn. splitting of edges in C+}
	(X_+)_1 \cong X_1 \coprod X_0
	\eqnd
decomposes as a disjoint union of the edges arising in $X$ (including all the degenerate edges of $X$), and of all the degenerate edges in $X_+$, which is naturally identified with a copy of the set of vertices of $X$.
\end{remark}

\begin{example}
Let $\mathsf{C}$ be a category (in the usual, non-$\infty$-categorical sense) where only identity morphisms admit left or right inverses. For example, $\mathsf C$ may be a poset. 

We let $\mathsf{C}^{\nonunital}$ denote the non-unital category obtained by removing all identity morphisms from $\mathsf{C}$. Note $\mathsf{C}^{\nonunital}$ still has a well-defined composition thanks to the assumption that $\mathsf{C}$ has no inverses.

One then has a semisimplicial set $N_{\semi}(\mathsf{C}^{\nonunital})$ given by the semisimplicial nerve of $\mathsf{C}^{\nonunital}$; concretely, $N_{\semi}(\mathsf{C}^{\nonunital})$ is the functor $\Delta_{\inj}^{\op} \to \sets$ sending $[k]$ to the set of functors $[k]^{\nonunital} \to \mathsf{C}^{\nonunital}$. Equivalently, $N_{\semi}(\mathsf{C}^{\nonunital})$ is the semisimplicial subset of $N(\mathsf{C})$ obtained by removing all degenerate simplices.

Then the natural map
	\eqnn
	(N_{\semi}(\mathsf{C}^{\nonunital}))_+ 
	\to 
	N(\mathsf{C})
	\eqnd
from the free simplicial set on $N_{\semi}(\mathsf{C}^{\nonunital})$ to the nerve of $\mathsf{C}$ is an isomorphism of simplicial sets. 

Treating the poset $[k]$ as a category, we see a subexample: The standard $k$-simplex $\Delta^k \cong N([k])$ is in fact a free simplicial set; it is generated by the semisimplicial set represented by $[k]$ in $\Delta_{\inj}$; this semisimplicial set is otherwise written as $N_{\semi}([k]^{\nonunital})$. 

Likewise, $\subdiv(\Delta^k)$ is the nerve of a poset (the poset of non-empty subsets of $[k]$). Hence we see that $\subdiv(\Delta^k)$ is also a free simplicial set. 
\end{example}

\begin{notation}\label{notation. plus E marked}
Let
	\eqnn
	\cE = s_0^{X}(X_0) \coprod X_0 \subset X_1 \coprod X_0 \cong (X_+)_1
	\eqnd
be the set of edges in $X_+$ that are identified with the degenerate edges in $X$, or are degenerate in $X_+$. (In the above equation, the last subset inclusion passes through the identification~\eqref{eqn. splitting of edges in C+}.) The pair
	\eqnn
	(X_+, \cE)
	\eqnd
is a marked simplicial set.
\end{notation}

As suggested by our language of ``free'' simplicial set, there is a ``free-forget'' adjunction between the category of simplicial sets and the category of semisimplicial sets. The functor sending a semisimplicial set $\cD$ to the free simplicial set generated by $\cD$ is the left adjoint; accordingly, one has a counit map $X_+ \to X$. 
 
\begin{notation}
Let $X$ be an $\infty$-category and let $X_+ \to X$ be the counit of the adjunction. We naturally obtain a map of marked simplicial sets
	\eqnn
	\epsilon: (X_+,\cE) \to X^\natural.
	\eqnd
\end{notation}

\begin{remark}\label{remark. epsilon is cartesian equivalence}
The main result of~\cite{tanaka-non-strict}, which we have cited as Theorem~\ref{theorem. hiro non strict}, implies that---when $X$ is an $\infty$-category---$\epsilon$ is a Cartesian equivalence of marked simplicial sets. That is, the induced map
	\eqnn
	\map^\sharp(X^\natural, -)
	\to
	\map^\sharp( (X_+,\cE),-)
	\eqnd
is a homotopy equivalence of Kan complexes for any $\infty$-category plugged into the target $-$. (This informally states that $X^\natural$ and $(X_+,\cE)$ corepresent the same functor when tested only against $\infty$-categories.) This is of course equivalent to the formal statement that $X$ is the localization of (an $\infty$-category replacement of) $X_+$ along the edges $\cE$ (see Remark~\ref{remark. marked simplicial sets model localizations}) but the explicit combinatorics of marked simplicial sets brings added power.
\end{remark}

In this work, we will find ourselves in the following formal setting: We will have two simplicial sets $X$ and $Y$, and a map of semisimplicial sets
	\eqnn
	h_{\semi}: X \to Y.
	\eqnd
The geometry will allow us to conclude that $h_{\semi}$ sends degenerate edges of $X$ to equivalences in $Y$. Now, by adjunction, $h_{\semi}$ induces a map of simplicial sets
	\eqnn
	(h_{\semi})_+: X_+ \to Y.
	\eqnd
Because $h_{\semi}$ sends degeneracies of $X$ to equivalences in $Y$, we see that $(h_{\semi})_+$ is a map of marked simplicial sets with domain $(X_+,\cE)$; by Remark~\ref{remark. epsilon is cartesian equivalence}, we are guaranteed a unique (up to contractible choice) map of simplicial sets
	\eqn\label{eqn. h from h semi}
	h: X \to Y
	\eqnd
equipped with a homotopy-commuting diagram
	\eqn\label{eqn. h semi to h}
	\xymatrix{
	X_+ \ar[r]^{(h_{\semi})_+} \ar[d]_{\epsilon} & Y \\
	X \ar[ur]_{h} & .
	}
	\eqnd
\begin{notation}[$H$]
\label{notation. H homotopy}
Concretely, this triangle encodes a map of simplicial sets 
	\eqnn
	H: \Delta^1 \times X_+ \to Y
	\eqnd
whose restriction to $\Delta^{\{0\}} \times X_+$ is equal to $(h_{\semi})_+$, to $\Delta^{\{1\}} \times X_+$ is equal to $h \circ \epsilon$, and for which every edge of the form $(e,e')$ is sent to an equivalence in $Y$ (see Notation~\ref{notation. map sharp}). 
\end{notation}

\begin{remark}\label{remark. alpha semi is easier to work with}
This homotopy $H$ empowers us to do the following. $h$ is a priori completely abstract; on the other hand, for every simplex $f : \Delta^k \to X$ of $X$, consider the natural simplex $f_+: \Delta^k \to X_+$ satisfying $\epsilon f_+ = f$. ($f_+$ is the image of $f$ under the unit map $X \to X_+$ of semisimplicial sets.) $H$ allows us to  homotope $h(f)$ to $(h_{\semi})_+(f_+) = h_{\semi}(f)$ in a way respecting faces.
In fact, we will see that $H$ allows us to replace arguments involving $h$ with those involving the more explicit $h_\semi$.
\end{remark}

Our proof of Theorem~\ref{theorem. localization} will depend on the following Lemma. To state it, let $W$, $X$ and $Y$ be $\infty$-categories and fix functors $i: W \to Y$ and $j: W \to X$. Fix also a semisimplicial set map $h_{\semi}: X \to Y$ sending degenerate edges of $X$ to equivalences in $Y$. (This in particular gives rise to a simplicial set map $h$ as in~\eqref{eqn. h from h semi}.) The Lemma allows us to construct a homotopy-coherent triangle
	\eqnn
	\xymatrix{
	W \ar[r]^j \ar[dr]_i & X \ar[d]^h \\
	& Y.
	}
	\eqnd

\begin{remark}
Note that the case $W=X$ and $j=\id_X$ 
\end{remark}

\begin{lemma}\label{lemma. making a homotopy}
Suppose that for every simplex $f: \Delta^k \to W$, one may exhibit a map 
	\eqnn
	G_f : \Delta^1 \times \Delta^k \to Y
	\eqnd
in such a way that the collection $\{G_f\}_{k\geq 0, f \in W_k}$ satisfies the following:
	\enum[(i)]
	\item\label{item. Gf homotopes what we want} ($G_f$ homotopes between the maps we want.) For each $f$, $G_f|_{\Delta^{\{0\}} \times \Delta^k} = i(f)$ 
		and 
		$G_f|_{\Delta^{\{1\}} \times \Delta^k} = h_{\semi}\circ j(f)$. Moreover, every edge of the form $(e, e')$ with $e$ an arbitrary edge of $\Delta^1$, and $e'$ degenerate in $\Delta^k$, is sent to an equivalence in $Y$.
	\item\label{item. Gf respects face maps} ($G_f$ respects face maps.) For each $f$ and every $0 \leq i \leq k$, $G_f|_{\Delta^1 \times \del_i \Delta^k} = G_{\del_i f}$. 
	\enumd
Then there exists a homotopy between $i$ and $h \circ j$.
\end{lemma}

\begin{proof}
Because the $G_f$ respect faces, the collection $\{G_f\}$ glues together to define a map of simplicial sets
	\eqnn
	G: \Delta^1 \times W_+ \to Y.
	\eqnd
Letting $\epsilon_{W} : W_+ \to W$ denote the counit, we note that $G$ restricted to $\Delta^{\{0\}} \times W_+$ agrees with $i \circ \epsilon_{W}$.  
On the other hand, let $j_+: W_+ \to X_+$ denote the map of simplicial sets induced by applying the $+$ functor to $j$. Then the restriction of $G$ to  $\Delta^{\{1\}} \times W_+$ agrees with $(h_{\semi})_+ \circ j_+$ because each $G_f$ restricts to $h_{\semi} \circ j(f)$ by hypothesis. 

Having spelled out the consequences of Conditions~\eqref{item. Gf homotopes what we want} and~\eqref{item. Gf respects face maps}, the rest of the proof is formal. The definition of $H$ (Notation~\ref{notation. H homotopy}) tells us
	\eqnn
	H|_{\Delta^{\{0\}} \times X_+} \circ j_+
	=
	(h_{\semi})_+ \circ j_+
	\eqnd
so we may concatenate the two homotopies $G$ and $H$ as follows:
	\eqnn
	G \bigcup_{(h_{\semi})_+ \circ j_+} \left( H \circ (\id_{\Delta^1} \times j_+) \right) : \Lambda^2_1 \times W_+ \to Y.
	\eqnd
Because $G$ and $H$ are homotopies (meaning the non-degenerate edges of $\Lambda^2_1$ are taken to equivalences in $Y$), this defines a map
	\eqnn
	\Lambda^2_1 \to \map^\sharp((W_+,\cE), Y^{\natural})
	\eqnd
where $(W_+,\cE)$ is the marked simplicial set defined in Notation~\ref{notation. plus E marked}. 
Because $Y$ is an $\infty$-category, the codomain $\map^\sharp((W_+,\cE), Y^{\natural})$ is a Kan complex, so we may fill the horn $\Lambda^2_1$; the resulting map
	\eqnn
	\Delta^1 \cong \del_1\Delta^2 = \Delta^{\{0,2\}} \subset \Delta^2 \to \map^\sharp((W_+,\cE),Y^{\natural})
	\eqnd
exhibits a homotopy from $i \circ \epsilon_{W}$ to $h \circ \epsilon \circ j_+ = h \circ j \circ \epsilon_{W}$. (This last equality uses that $j$ is a map of simplicial sets.) By Remark~\ref{remark. epsilon is cartesian equivalence}, we conclude there is a homotopy between $i$ and $h \circ j$, as desired.
\end{proof}

\subsection{Understanding subdivisions of subdivisions}
In Section~\ref{section. proof of M to m} we will confront the subdivision of a subdivision of a simplex. So we collect a convenient combinatorial description.

\begin{notation}
We let 	
	\eqnn
	\cP'_{\linear}(Q)
	\eqnd
be the subset of the power set of $Q$ consisting of non-empty subsets of $Q$ that are linear under the poset relation of $Q$. We treat $\cP'_{\linear}(Q)$ as a poset via the inclusion relation of subsets.
\end{notation}

We leave the proof of the following to the reader. 

\begin{prop}
Let $Q$ be a poset.
	\eqnn
	\subdiv(Q) \cong N(\cP'_{\linear}(Q)).
	\eqnd
\end{prop}

In particular, we note that $\subdiv(Q)$ is (the nerve of) a poset, so we have a recursive description of iterated subdivisions of any poset.

\begin{example}
\label{example. subdivision of subdivision}
Fix the $I$-simplex $\Delta^I \cong N(I)$. Then a vertex of $\subdiv(\subdiv(I))$ is the data of a strictly increasing chain
	\eqnn
	\vec A:= A_0 \subset \ldots \subset A_k 
	\eqnd
of non-empty subsets of $I$ -- i.e., an injective map of posets
	\eqnn
	[k] \xrightarrow{\vec A}  \cP'(I).
	\eqnd
An $l$-simplex of $\subdiv(\subdiv(I))$ is the data of a diagram of posets
	\eqnn
	[k_0] \into [k_1] \into \ldots \into [k_l] \xrightarrow{\vec A} \cP'(I).
	\eqnd
Such a simplex is degenerate precisely when at least one of the inclusions $[k_i] \into [k_{i+1}]$ is an isomorphism.
\end{example}

\clearpage
\section{Simplices and collarings}
 
Much of homotopy theory is organized by the combinatorics of simplices and the ability to fill horns. However, we encounter two issues in trying to corral sectorial data into the usual simplicial framework. First, there is no natural sense in which a Liouville structure on $M \times T^*\Delta^n$ ``restricts'' to a Liouville structure on $M \times T^*\del_i \Delta^n$ without choosing normal coordinates to the face $\del_i \Delta^n \subset \Delta^n$. Second, we would also like to naturally embed stabilizations of $M$ and of $M \times T^*\del_i \Delta^n$ into $M \times T^* \Delta^n$. For this, we must choose once and for all a {\em collaring} on our simplices, which allow us to determine compatible normal coordinates along all faces of all simplices.

But introducing collarings creates a degeneracy wrinkle: 
While it is common in most situations to observe that collarings can be chosen compatibly with face maps, standard degeneracy maps may not obviously respect collarings. So Theorems~\ref{theorem. steimle} and~\ref{theorem. hiro non strict} come to the rescue. Thanks to them, we may use collars to encode only semisimplicial objects, and then a posteriori promote semisimplicial data to simplicial data.

This sort of gymnastics is common when producing $\infty$-categories of geometric origins~\cite{tanaka-pairing}.

Here, we make explicit the existence of the necessary collarings. 
Now, our particular construction of collarings (Proposition~\ref{prop. collaring convention exists}) forces us to declare the following unconventional convention:

\subsection{An unconventional convention}
\label{section. -1 convention for simplex}

\begin{notation}\label{notation. I simplex}
In all following sections, for any finite, non-empty, linearly ordered set $I$, the (non-standard) {\em $I$-simplex} is the topological space
	\eqn\label{eqn. simplex}
	\Delta^I := \{ (\sum_{i\in I} x_i = 1) \& (\forall i \in I ,\, x_i \geq -1) \} \subset \RR^{I}.
	\eqnd
\end{notation}
 
The usual definition of the standard $I$-simplex requires that all coordinate be non-negative. Allowing for negative values allows for us to define collarings in Construction~\ref{construction. simplex collars}. 

The usual inclusions of coordinate hyperplanes induce inclusions of lower-dimensinal simplices as faces of higher-dimensional simplices, and hence define a semisimplicial collection of smooth manifolds with corners. 

\begin{remark}
By induction, one may choose homeomorphisms between the non-standard $k$-simplices in this work to the standard $k$-simplices in such a way that all face maps are respected. Fixing such a choice, one obtains a semisimplicial isomorphism between all natural semisimplicial sets defined in terms of our non-standard simplices, and in terms of the standard simplices. (For example, the singular complex of maps $\Delta^k \to \cE$ for some topological space $\cE$.) 

In short, the particular non-standard definition of the $k$-simplex has no deep philosophical value, and we would strongly encourage the reader to just pretend all simplices are standard anyway. We imagine a collaring convention exists surely for the standard simplices; we simply didn't construct one.
\end{remark}

\subsection{Collaring conventions}

\begin{defn}\label{defn. collaring convention}
Consider the data of:
\begin{itemize}
\item For every pair of linearly ordered, finite, non-empty sets $I$, $J$, and for every injective, order-preserving map $a: I \to J$, a smooth, open embedding
	\eqnn
	\eta = \eta_a: \Delta^I \times [-1,\epsilon)^{J \setminus a(I)} \to \Delta^J
	\eqnd
whose restriction to $\Delta^I \times \{0\}$ is the standard simplicial inclusion induced by $a$.
\end{itemize}
The data $\{\eta_a\}_{(I,J,a)}$ (a choice of $\eta_a$ for every $a: I \to J$) are called a  {\em collaring convention} if:
For every pair of injective, order-preserving maps $a: I \to J, b: J \to K$, the following diagram commutes:
	\eqnn
	\xymatrix{
	\Delta^I \times [-1,\epsilon)^{K \setminus ba(I)} \ar[drr]_{\eta_{b \circ a}} \ar@{=}[r]
		& \left(\Delta^I \times [-1,\epsilon)^{J \setminus a(I)}\right) \times [-1,\epsilon)^{K \setminus b(J)} \ar[r]^-{\eta_a \times \id}
		& \Delta^J \times [-1,\epsilon)^{K \setminus b(J)} \ar[d]^{\eta_{b}}
		\\
	&&\Delta^K
	}
	\eqnd
\end{defn}

\begin{remark}
The interval $[-1,\epsilon)$ is chosen to be compatible with our stabilization operation. If our stabilization were defined by $-\times T^*[a,b]$, the collaring would have domain $\Delta^I \times [a,b+\epsilon)^{c}$. 

Note also that the definition of collaring guarantees that $\eta$ is a map respecting the natural stratifications.
\end{remark}

\begin{warning}
The strict commutativity of the diagram in Definition~\ref{defn. collaring convention} is a subtle but important point. Indeed, the most ``natural'' way to collar $\Delta^I$ inside $\Delta^J$ may be by taking join coordinates to realize $\Delta^I \star \Delta^{J \setminus I} \cong \Delta^J$. However, join constructions are only associative up to natural isomorphism.
\end{warning}

For any finite set $I$ and any subset $J$, we construct inclusions
\[
  \Phi_{I \subset J} \colon \Delta^I \times [-1,0]^{J \setminus I} \into \Delta^J
\]
as follows.

\begin{construction}
  \label{construction. simplex collars}
  First, note that is is easy to build piecewise smooth collars $\phi_{I \subset J}$. Indeed, given $x \in \Delta^I$, we can define $\phi_{I \subset J}(x,t) = \phi_{S_x}(x,t)$, where $S_x = \{i \mid x_i > 0\}$ and 
  \begin{equation}\nonumber
    \left(\phi_S(x, t)\right)_j = \begin{cases}
      t_j & j \not\in I \\
      x_j & j \in I \setminus S \\
      b_S(x,t)x_j & j \in S.
    \end{cases}
  \end{equation}
  Here, $b_S(x,t) = \frac{1 - \sum t_j - \sum_{i \not\in S}x_i}{\sum_{i \in S} x_i}$ is the unique scalar for which $\phi_{S_x}(x,t)$ lands in $\Delta^J$. The main task is thus to smooth this map while ensuring that the result is still coherent and is still an embedding.

  To that end, suppose that we had a permutation-equivariant partition of unity $\left\{g_S\right\}_{S \subset J}$ on $\Delta^J$ with the following two properties.
    \begin{enumerate}[i]
    \item\label{it:simplex_partition_support} If $g_S(x) > 0$, then $x_s > 0$ for all $s \in S$.
    \item\label{it:simplex_partition_negative_indepencence} $g_S(x)$ depends only on the positive components of $x$. In particular, its dependence on the negative components factors through their sum.
  \end{enumerate}
  Consider a path $\mathbf t \colon[0,1] \to [-1,0]^{J \setminus I}$. For each $\tau \in [0,1]$ and $S \subset I$, we get an embedding
  \[
    \phi_S(\bullet, \mathbf t(\tau)) \colon \Delta^I \cap \supp(g_S) \into \Delta^I_{\mathbf t(\tau)},
  \]
  where $\Delta^I_{\mathbf t(\tau)} = \left\{x \in \Delta^J \mid x_j = \mathbf t_j(\tau)\right\}$. We can combine these into a smooth vector field along $\Delta^I_{\mathbf t(\tau)}$ by taking the weighted sum
  \eqn\label{eqn. bullet formula}
    \xi(\tau) = \sum_{S \subset I} g_S\cdot\frac{\partial}{\partial\tau} \phi_S(\bullet, \mathbf t(\tau)).
  \eqnd
  (Note that a given $x\in\Delta^I_{\mathbf t(\tau)}$ will come from different values of $\bullet$ for different $S$. See Remark~\ref{remark. bullet formula}.) Define the collar $\Phi_{I \subset J}(\bullet, \mathbf t(1))$ to be the time $1$ flow of this vector field $\xi$. Then $\Phi_{I \subset J}$ is coherent with respect to face maps by condition \eqref{it:simplex_partition_support}, and it is independent of the path $\mathbf t$ because both $g_S$ and $\phi_S$ factor through $\mathbf t \mapsto \sum \mathbf t_j$. 

  It thus suffices to construct the partion of unity $\left\{g_S\right\}$. Let us begin by fixing some notation. Given real numbers $a < b$, we'll write $\kappa_a^b \colon \RR \to [0,1]$ for a nondecreasing cutoff function with $\kappa_a^b(a) = 0$ and $\kappa_a^b(b) = 1$. For a subset $I \subset J$ and a point $x \in \Delta^J$, we'll write $x_I = \sum_{i \in I} x_i$.

  For $S \subset J$ of size $n > 0$, define
  \[
    \kappa_S(x) := \kappa_{1-\frac{1}{2^{n-1}}}^{1-\frac{1}{2^n}}(x_S) \prod_{s \subset S}\kappa_{\frac{1}{2^{n+1}}}^{\frac{1}{2^n}}(x_s),
  \]
  which is nonzero if and only if $x_S$ is big and all $x_s$ are not too small. These functions are positive in the right places and have the correct dependence on the signs of the components of $x$, but it is possible for them to overlap and sum to a quantity larger than $1$. To repair this, set
  \[
    f_S := \kappa_S \cdot \prod_{\substack{S' \ne S \\ \lvert S' \rvert = n}}\left( 1 - \kappa_{S'} \right),
  \]
  which satisfies $\sum_{\lvert S \rvert = n} f_S \le 1$. Finally, we can inductively define our partition of unity by $g_\emptyset = 0$ and
  \[
    g_S = f_S \cdot \Biggl( 1 - \sum_{\lvert S' \rvert < n} g_{S'} \Biggr).
  \]

  Let us check the necessary properties. Permutation-equivariance and properties \eqref{it:simplex_partition_support} and \eqref{it:simplex_partition_negative_indepencence} are automatic from the construction of the $\kappa_S$, so the main task is to show that $\{g_S\}$ is indeed a partition of unity. To that end, consider a point $x \in \Delta^J$, set $S = S_x$ to be the set of positive components, and set $n = \lvert S \rvert$.

  If $\kappa_S(x) = 1$, then we are done, since any other $S'$ of size $n$ must index a negative component, so $\kappa_{S'}$ must vanish. This implies that $f_S(x) = 1$, and so $\sum_{\lvert S' \rvert \le n} g_{S'}(x) = 1$.

  If $\kappa_S(x) < 1$, then since $x_S \ge 1 > 1 - \frac{1}{2^n}$ there must be some component $s_1 \in S$ so that $x_{s_1} < \frac{1}{2^n}$. That means $x_{S \setminus \{x_1\}} > 1-x_{s_1} > 1 - \frac{1}{2^{n-1}}$, and we can repeat the argument with $S \setminus \{x_1\}$ (replacing ``negative component'' in the previous paragraph with ``component smaller than $\frac{1}{2^{n-1+1}} = \frac{1}{2^n}$''). The argument terminates for some nonempty $S \setminus \{x_1, \dotsc, x_m\}$, since otherwise we would have
  \[
    x_S < \frac1{2^n} + \frac{1}{2^{n-1}} + \dotsb + \frac{1}{2^1},
  \]
  contradicting $x_S \ge1$.
\end{construction}

\begin{remark}
\label{remark. bullet formula}
Let us write out the precise (but very cluttered) meaning of~\eqref{eqn. bullet formula}. For $x\in\Delta^I_{\mathbf{t}(\tau)}$, we have $x = \phi_S\left(\bigl[\phi_S(\bullet,\mathbf{t}(\tau))\bigr]^{-1}(x), \mathbf{t}(\tau)\right)$. The weighted sum is
  \[
    \sum_{S \subset I} g_S(x, \mathbf t(\tau)) \cdot
    D_2\phi_S\left(\bigl[\phi_S(\bullet,\mathbf{t}(\tau))\bigr]^{-1}(x), \mathbf{t}(\tau)\right)\circ\frac{\partial\mathbf t}{\partial\tau},
  \]
  where $D_2$ is the partial derivative with respect to the second component.
\end{remark}

Thus, we have shown

\begin{prop}
\label{prop. collaring convention exists}
A collaring convention exists.
\end{prop}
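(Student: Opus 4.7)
The plan is to build the collars $\eta_a$ explicitly by flowing a piecewise candidate along a vector field weighted by a carefully chosen partition of unity, and then to derive the strict cocycle condition from the support properties of that partition. The naive idea---take barycentric/join coordinates that insert negative ``normal'' contributions $\mathbf{t}_j$ into $\Delta^I \subset \Delta^J$ and rescale the positive components of $x$ to compensate---produces smooth embeddings on open strata but fails to be strictly associative, exactly as noted in the warning following Definition~\ref{defn. collaring convention}. So I would set up the smoothing to be symmetric in the ``positive'' index set and then reduce strict coherence to a support condition.

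Concretely, for each subset $S \subset I$ I would define a local piecewise candidate $\phi_S(x,\mathbf{t})$ that inserts the normal coordinates $\mathbf{t}_j$ for $j \notin a(I)$ and rescales only the components of $x$ indexed by $S$ so that the result lies in $\Delta^J$; over the open stratum where $x_s > 0$ for all $s \in S$ this is a smooth embedding. To patch these into a single smooth map I would pick a permutation-equivariant partition of unity $\{g_S\}_{S \subset I}$ on $\Delta^J$ such that (i) $g_S > 0$ forces $x_s > 0$ for all $s \in S$, and (ii) $g_S$ depends on the normal coordinates only through their sum. Then, along the straight-line path $\tau \mapsto \tau\mathbf{t}$, I would form the vector field
\[
  \xi(\tau) \;=\; \sum_{S \subset I} g_S \cdot \frac{\partial}{\partial\tau}\phi_S(\bullet,\mathbf{t}(\tau))
\]
and define $\Phi_{I\subset J}(\bullet,\mathbf{t})$ to be its time-$1$ flow. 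The strict coherence of Definition~\ref{defn. collaring convention} will drop out of (i): restricting $\Phi_{I \subset J}$ to a face $\Delta^{I'} \subset \Delta^I$ obtained by setting $x_i = 0$ kills every $g_S$ with $i \in S$, and the remaining $g_S$ coincide with those used to build $\Phi_{I' \subset J}$; condition (ii), together with the analogous feature of $\phi_S$, makes the flow depend only on the endpoint $\mathbf{t}$, not on the path.

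The main obstacle will be constructing the partition of unity $\{g_S\}$ with all four properties (permutation-equivariance, support condition (i), normal-sum dependence (ii), and actually summing to $1$). I would proceed inductively on $n = |S|$: take permutation-equivariant bumps $\kappa_S$ built as products of one-variable cutoffs, positive precisely when $x_S$ is close to $1$ and each $x_s$ with $s \in S$ is bounded below (with thresholds of the form $1 - 2^{-n}$ and $2^{-n-1}$); correct overlaps between distinct $S$ of the same size $n$ by multiplying with $\prod_{|S'|=n,\, S' \ne S}(1 - \kappa_{S'})$ to get $f_S$; and then subtract off the mass already accounted for by smaller strata, setting $g_S = f_S \cdot (1 - \sum_{|S'|<n} g_{S'})$. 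The delicate verification is that $\sum_S g_S \equiv 1$ everywhere, which reduces to showing that at every $x$ some $\kappa_S$ attains the value $1$. This is a pigeonhole argument: if $\kappa_{S_x}(x) < 1$ at the set $S_x$ of strictly positive components, then $\sum_{s \in S_x} x_s \ge 1$ forces some $s_1 \in S_x$ with $x_{s_1} < 2^{-|S_x|}$; passing to $S_x \setminus \{s_1\}$ and iterating cannot continue indefinitely because $\sum_{k \ge 1} 2^{-k}$ is bounded strictly below $x_{S_x} \ge 1$. Once this partition of unity is in hand, the flow construction gives the required family $\{\eta_a\}$, completing the proof.
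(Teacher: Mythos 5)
Your proposal reproduces the paper's construction essentially verbatim: the piecewise candidates $\phi_S$, the permutation-equivariant partition of unity with the two support/dependence properties, the flow of the weighted vector field, the inductive $\kappa_S \to f_S \to g_S$ construction, and the pigeonhole argument that some $\kappa_S$ is $1$. The only quibble is the phrase ``$\sum_{k\ge 1} 2^{-k}$ is bounded strictly below $1$''---that infinite sum equals $1$; the actual contradiction uses the finite truncation $\sum_{k=1}^{n}2^{-k}<1$---but this is a slip in exposition, not in the argument.
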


\begin{choice}\label{choice. collaring}
From hereon, we fix a collaring convention.
\end{choice}

\begin{remark}
A collaring convention renders every simplex $S = \Delta^k$ a collared manifold-with-corners in the sense of Definition~\ref{defn. collaring of S}. The convention further provides the data showing that the collarings are compatible with the usual face maps.
\end{remark}

\begin{remark}
Thanks to a choice of collaring convention, we may ask geometric structure on $M \times T^*\Delta^k$ known to {\em split} with respect to the collaring near the boundary---that is, we may demand that structures be a direct product of a structure on $M \times T^*\Delta^A$, and on $T^*[-1,\epsilon)^i$.  (Here, $A$ is some subset of $[k]$.) 
\end{remark}

\subsection{Linearity, locally}
At the heart of smooth approximation theorems for $\infty$-dimensional smooth spaces is the ability to reduce local problems to linear ones. (See for example~\cite{oh-tanaka-smooth-approximation} and~\cite{kihara-smooth-homotopy}.) More precisely, suppose we have a continuous collection in an infinite-dimensional space, and we seek to deform it to a smooth collection. A tried-and-true strategy is to write the collection locally as a continuous collection of functions into a linear object (for examples, as a collection of sections of some vector bundle, e.g., as a collection of real-valued functions) and then use the classical smooth approximation theorem for functions to deform these functions into a smooth collection.

We now prove that $\embtop_{\liou}$ is amenable to this strategy.

\begin{prop}
\label{prop. families of Liouvilles are families of Hamiltonians}
Let $S$ be a manifold (possibly with corners) and let $j: S \to \embtop_{\liou}(M,N)$ be a  continuous and collared map. ($j$ is not assumed smooth.) Then, for every $s_0 \in S$, there exist \enum[(i)]
\item some neighborhood $U$ with $s_0 \in U$ and  
\item a continuous function $H: U \times N \to \RR$, whose restriction to each $\{s\} \times N \subset U \times N$ is smooth,
\enumd
so that for all $s \in U$, $j_{s} = \flow_1^{X_{H_{s}}} \circ j_{s_0}$. In other words, $j_{s}$ is obtained from $j_{s_0}$ by post-composing with the time-1 flow of some time-independent but $s$-dependent family of Hamiltonians.
\end{prop}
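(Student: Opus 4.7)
The plan is to exhibit $H_s$ as the Hamiltonian logarithm of a symplectomorphism $\Psi_s : N \to N$ that extends the transition map $\tilde j_s := j_s \circ j_{s_0}^{-1}$, and then verify that this logarithm varies continuously with $s$.

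First, by continuity of $j$ in the colimit topology of Definition~\ref{defn. topology on embliou}, I would shrink $U$ so that for every $s \in U$ the primitives $h_s$ of $j_s^*\lambda^N - \lambda^M$ are supported in a common Liouville-invariant compact $K_0 \subset M$, and so that $j_s$ is arbitrarily $C^\infty$-close to $j_{s_0}$ on every compact subset of $M$. Then $\tilde j_s : j_{s_0}(M) \to N$ is a sectorial embedding $C^\infty$-close to the inclusion on compacts, strict Liouville outside $j_{s_0}(K_0)$, and at infinity a conical perturbation of the identity in the sense of Remark~\ref{remark. sectorial embeddings are eventually conical}.

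Next, I would fix a Weinstein tubular neighborhood of the diagonal $\Delta \subset N \times N^-$, identified with a neighborhood of the zero section in $T^*N$, chosen Liouville-equivariantly at infinity. For $s$ sufficiently close to $s_0$, the graph of $\tilde j_s$ lies in this neighborhood and corresponds to a closed $1$-form $\alpha_s$ on $j_{s_0}(M)$. The exactness identity
\[
  \tilde j_s^*\lambda^N - \lambda^N = d\bigl((h_s - h_{s_0}) \circ j_{s_0}^{-1}\bigr)
\]
exhibits a primitive $G_s$ of $\alpha_s$, compactly supported up to a small conical contribution coming from the variation at infinity of the contact-boundary map of $j_s$. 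Extending $G_s$ to $N$ by zero on the compact part and by its natural conical model at infinity yields $G_s : N \to \RR$ of Liouville-finite type whose Lagrangian section $dG_s \subset T^*N$ corresponds, via the Weinstein identification, to the graph of a symplectomorphism $\Psi_s : N \to N$ with $\Psi_s \circ j_{s_0} = j_s$, $\Psi_{s_0} = \id_N$, and continuous dependence on $s$.

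Finally, since $\Psi_s$ is $C^\infty$-close to $\id_N$ for $s$ near $s_0$, an inverse function argument in the Fréchet space of finite-type smooth functions modulo locally constant ones produces a continuous right inverse $\log$ to the map $H \mapsto \flow_1^{X_H}$ near $\id_N$; setting $H_s := \log(\Psi_s)$ gives the desired $H: U \times N \to \RR$, smooth on each slice $\{s\} \times N$ and continuous on $U \times N$ (the latter following from weak Whitney convergence of $H_s \to H_{s_0}$ together with standard continuity of $H_{s_0}$). The main obstacle is making this Fréchet inverse function theorem precise in the presence of the conical perturbation at $\del_\infty N$; a clean route is to decompose $\Psi_s$ into a composition of a compactly supported symplectomorphism and a conical symplectomorphism generated by a contact Hamiltonian scaled linearly in the Liouville coordinate, and to apply the classical inverse function theorem to each piece separately.
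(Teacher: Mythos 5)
Your first half coincides with the paper's: shrink $U$ using the colimit topology, pass to a Weinstein neighborhood of the diagonal $\Delta_N \subset N \times N^-$, identify $\Graph(\tilde j_s)$ with a Lagrangian section $\Graph(\alpha_s) \subset T^*N$, use the primitive identity to see $\alpha_s = dG_s$ is exact, and extend the primitive to $N$ by Liouville-equivariance outside a compact set. The divergence is in the final step. The paper simply declares the extended primitive $\widetilde g_s$ to be the desired Hamiltonian, appealing to the fact that $\Graph(dg)$ is the time-1 flow of the zero section under $g$ — but that is the flow of $g\circ\pi$ in $T^*N$, not the flow of $g$ in $N$. The two parametrizations of Lagrangians near $\Delta_N$, namely $g \mapsto W(\Graph(dg))$ (Weinstein generating function) and $H \mapsto \Graph(\flow_1^{X_H})$ (time-1 flow), agree only to first order: already on $T^*\RR$ with $H = \epsilon qp$ the two parameters differ by a factor $\sinh\epsilon/\epsilon$, and one checks there is no choice of Weinstein embedding with $\pi_1\circ W = \pi$ (the required two-form fails a decomposability test). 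So your instinct that passing from $G_s$ to an honest Hamiltonian logarithm $H_s$ with $\flow_1^{X_{H_s}} = \Psi_s$ requires an inverse-function argument is a genuine refinement of the paper's proof as written.

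The remaining gap is precisely the one you flag, and it is real: the Fréchet inverse function theorem is not free. You should argue it concretely: the map $H \mapsto \flow_1^{X_H}$ has derivative $h\mapsto X_h$ at $H=0$ and loses no derivatives (a flow of a $C^k$ vector field is $C^k$), so a level-by-level Banach inverse function theorem applies and the $C^k$-inverses are compatible; then check continuity in $s$ and compatibility with the conical splitting at $\del_\infty N$ (the latter is where the shrinking isotopy of Section~\ref{section. shrinking} would be useful). Alternatively — and more in the spirit of the paper — observe that the sole downstream use, Lemma~\ref{lemma. smooth along faces to smooth along interior}, only requires \emph{some} smooth local chart from a convex function space onto a neighborhood of $j_{s_0}$ in $\embtop_{\liou}(M,N)$; restating the conclusion in terms of the Weinstein chart $G\mapsto W(\Graph(dG))\circ j_{s_0}$ would sidestep the logarithm entirely and match what the construction actually produces.
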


\begin{proof}
Fixing $s_0 \in S$, we may as well assume that $M \subset N$ and that $j_{s_0}$ is the canonical inclusion of the subset. Thus the graph of $j_{s_0}$ may be thought of as a subset of the diagonal of $N \times N$, which in particular has a normal neighborhood identified with an open subset $V \subset T^*N$.  Under this identification, the image of $j_{s_0}$ is identified with the graph of the zero 1-form of $N$ (restricted to $M$).

We now claim that for a small-enough neighborhood $U$ with $s_0 \in U \subset S$, and for all $s \in U$, the graph of $j_{s}$ is contained in $V$. Note that if $M$ and $N$ are compact, this is a consequence of the weak $C^\infty$-topology on smooth mapping spaces. But both $M$ and $N$ are non-compact. So invoke the continuity of $j$ with respect to the colimit topology of $\embtop_{\liou}(M,N)$; then the standard yoga of shrinking of $U$, choosing a single $K$ as in Remark~\ref{remark. sectorial embeddings are eventually conical}, and using the definition of the weak $C^\infty$-topology, we may assume that for all $s \in U$, (i) the graph of $j_s(K)$ is contained in $V$, and (ii) $j_s^*\lambda^N = \lambda^M + dh_s$ where the support of $h_s$ is inside $K$.

It is now a standard fact of symplectic geometry that, interpreting the image of $j_s(K)$ as the graph of a 1-form, we may conclude that the image of each $j_s(K)$ is the graph of an {\em exact} 1-form (Remark~\ref{remark. exact maps are exact 1 form graphs}). 

These exact 1-forms are $C^0$-dependent on $s \in S$ by hypothesis, so by integrating, we may in fact choose a single continuous function $g: U \times K \to \RR$ for which the graph of $d_M(g|_{\{s\} \times K})$ is equal to the graph of $j_s(K)$ (when both are interpreted as subsets of $V \subset T^*N$).

(The following works on every path-connected component of $K$, so we may assume $K$ is path-connected and fix a base point $x_0 \in K$. Given that $j_s(K) = graph(\alpha_s)$ and $\alpha_s$ is exact, choose a smooth function $g_s$ so that $dg_s = \alpha_s$ and $g_s(x_0) = 0$. This recovers $g_s(x) = \int_\gamma \alpha_s$ by choosing any smooth $\gamma$ with endpoints $x_0$ and $x$. Because $\alpha_s$ depends continuously on $s$ in the $C^\infty$-topology, $g(s,x):=g_s(x)$ is a continuous function.)

Of course, the graph of an exact 1-form $\alpha = dg$ is the result of flowing the zero-section for time 1 along the Hamiltonian defined by $g$. 

For each $s$, having fixed the value of $g_s$ on $K$, there is a unique extension of $g_s$ to $\widetilde g_s$ on $M$ compatible with that Liouville flow of $N$ outside $K$ (so that, in particular, the time-1 flow $j_s$ commutes with the Liouville flow outside of $K$). Then $H(x,s) = \widetilde g_s(x)$ is smooth in $x$ and continuous in $(s,x)$; and we are finished.
\end{proof}

\begin{remark}
\label{remark. exact maps are exact 1 form graphs}
 
Let us explain why an exact symplectic embedding near the identity is the graph (in the cotangent bundle) of an exact 1-form. Endow $M \times M$ with the 1-form $\lambda \oplus -\lambda$. On the other hand, because the diagonal $\Delta_M \subset M \times M$ is a Lagrangian, we have a Weinstein neighborhood $T^*\Delta_M \supset U \into M \times M$, so we have another Liouville structure in a neighborhood of $\Delta_M$ given by pushing forward ${\bf p}d{\bf q}|_U$. Now note that along the diagonal $\Delta_M$,  $(\lambda \oplus -\lambda) - {\bf p}d{\bf q} = 0$; in particular, the two Liouville forms differ by an exact form along the diagonal. Because we may choose $U$ to deformation retract to $\Delta_M$, we conclude the difference $(\lambda \oplus -\lambda) - {\bf p}d{\bf q}$ is exact on all of $U$. 

Now, let us use that the graph $\Graph(j)$ of an exact symplectic embedding $j: M \to M$ is an exact Lagrangian for the form $\lambda \oplus -\lambda$. (Since $j^*\lambda = \lambda + dg$ for some $g$, the restriction of $\lambda \oplus - \lambda$ to $\Graph(j)$ is also $dg$.)  By the previous paragraph, it follows that $\Graph(j)$ is an exact Lagrangian in $U \subset T^*\Delta_M$ as well. That $j$ is near to $\id_M$ means that $\Graph(j)$, treated as a brane in $T^*\Delta_M$, is the graph of a 1-form, while the exactness of the brane implies it is the graph of an {\em exact} 1-form. 
\end{remark}

\subsection{From continuous to smooth}
\label{section. continuous to smooth}
We collect some facts that allow us to pass between {\em continuous} families of smooth maps and {\em smooth} families of smooth maps. We will use this trick repeatedly in showing that horns can be filled in $\lioudelta$ later on (Theorem~\ref{theorem. weak kan})---by first constructing {\em continuous} data filling a horn (guaranteed by general properties of topological spaces like $\embtop_\liou(M,N)$), then homotoping this data to be smooth (because all data in $\lioudelta$ are required to be smooth).

\begin{remark}[From continuous to continuous and collared]
\label{remark. continuous to collared}
Fix a topological space $\cE$ and a continuous map $f: \Delta^k \to \cE$. Recall that $f$ is called {\em collared} if the value of $f$ is independent of the chosen coordinates normal to the faces and corners of $\Delta^k$ (Definition~\ref{defn. collared}). In particular, choose a continuous homotopy from $\Delta^k$ to itself which is equal to the identity along the boundary strata, but collapses the normal coordinates. (Such a homotopy is necessarily not smooth.) Pre-composing $f$ by this homotopy, we see that $f$ is  continuously homotopic rel boundary to a collared map.

Of course,  one may inductively choose these homotopies for the 1-simplex, then the 2-simplex, and so forth, so that these homotopies respect face maps (i.e., respect restriction to boundary faces).

Now consider the inclusion---of the semisimplicial set of all {\em collared} continuous maps in a fixed space $\cE$---into the semisimplicial set of all (not necessarily collared) continuous maps to $\cE$. By Steimle's theorem (Theorem~\ref{theorem. steimle}) the domain may be promoted to be a simplicial set, and by Theorem~\ref{theorem. hiro non strict}, this semisimplicial set map may be promoted to a map of simplicial sets up to homotopy. In particular, the previous paragraphs show that the induced map of simplicial sets is a weak homotopy equivalence.
\end{remark}

Our goal for Section~\ref{section. continuous to smooth} is to show that we can continuously homotope a continuous and collared map into a smooth and collared map (Proposition~\ref{prop. continuous to smooth}). We start with a Lemma, which crucially uses that we can convert small families of sectorial embeddings into linear data (Proposition~\ref{prop. families of Liouvilles are families of Hamiltonians}).

\begin{lemma}\label{lemma. smooth along faces to smooth along interior}
Suppose $j: \Delta^k \to \embtop_{\liou}(M,N)$ is a continuous map that is collared along the boundary strata of $\Delta^k$, and is smooth when restricted along any face of $\Delta^k$. Then there exists a homotopy of $j$, rel boundary, to a smooth and collared and continuous\footnote{For the terminology behind imposing both smoothness and continuity, see Remark~\ref{remark. smooth and continuous}.} map from $\Delta^k$ to $\embtop_{\liou}(M,N)$. 
\end{lemma}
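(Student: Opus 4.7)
The plan is to use Proposition~\ref{prop. families of Liouvilles are families of Hamiltonians} to linearize the problem, reducing the smoothing of $j$ to the classical smoothing of real-valued functions, and then to patch local smoothings together via an inductive scheme over a triangulation.

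First, by compactness of $\Delta^k$, cover it by finitely many open sets $\{U_\alpha\}$ with basepoints $s_\alpha \in U_\alpha$ small enough that Proposition~\ref{prop. families of Liouvilles are families of Hamiltonians} supplies continuous Hamiltonians $H^\alpha \colon U_\alpha \times N \to \RR$, smooth in the $N$-variable, with $j_s = \flow_1^{X_{H^\alpha_s}} \circ j_{s_\alpha}$ for $s \in U_\alpha$. Each $U_\alpha$ may be taken to sit inside a chart of $\Delta^k$ as a manifold with corners. Now mollify $H^\alpha$ in the $s$-variable against a smooth kernel of scale $\tau$, producing a family $H^\alpha_\tau$ with $H^\alpha_0 = H^\alpha$ and $H^\alpha_\tau$ smooth in $(s,x)$ for $\tau > 0$. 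The tameness-at-infinity condition of Definition~\ref{defn. deformations of liouville structures} is preserved by convolution, being a linear condition on $H^\alpha$ outside a fixed compact subset of $N$, so the time-$1$ Hamiltonian flow of $H^\alpha_\tau$ continues to yield sectorial embeddings. Pre-composing $\tau$ with a cutoff vanishing near $\del\Delta^k$ makes the resulting deformation rel boundary, using that $j$ is already smooth on each face.

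Second, globalize by working skeleton by skeleton over a triangulation $T$ of $\Delta^k$ refining $\{U_\alpha\}$. On a cell $\sigma \subset U_\alpha$ contained in a single chart, apply the local smoothing above. On a cell lying in $U_\alpha \cap U_\beta$, the two local smoothings give two maps into $\embtop_{\liou}(M,N)$ both close to $j$; interpolate them using the Weinstein-neighborhood description from Remark~\ref{remark. exact maps are exact 1 form graphs}, which identifies a small neighborhood of any point $j_{s_0} \in \embtop_{\liou}(M,N)$ with a convex open subset of the space of exact 1-forms on $M$. In this linear model, convex combinations give well-defined continuous homotopies between the two smoothings. Throughout, collaring is preserved because mollification in the $s$-direction commutes with the collared product structure near $\del\Delta^k$, and the interpolations can be chosen collared.

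The hard part will be the patching. Since the Hamiltonian presentation depends on a choice of basepoint, two local smoothings with distinct basepoints cannot be composed or averaged directly, and convex interpolation is only available once both smoothings lie within a common Weinstein neighborhood. This forces one to shrink the cover $\{U_\alpha\}$ until the local smoothings all stay within overlapping Weinstein neighborhoods; such a shrinkage exists by uniform continuity of $j$ on the compact domain $\Delta^k$. With this arranged compatibly with the collaring, the inductive patching over skeleta of $T$ produces the desired homotopy rel boundary.
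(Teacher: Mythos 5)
You have the right linearization tool (Proposition~\ref{prop. families of Liouvilles are families of Hamiltonians}) and the right scaffolding (skeletal induction over a triangulation fine enough to fit in Weinstein/Hamiltonian charts), which matches the paper. But the core of the proof---the patching---is left essentially unargued, and you yourself flag it as ``the hard part.'' Your proposed mechanism (mollify $j$ in the $s$-variable on each $U_\alpha$, then convex-interpolate on overlaps) runs into a real obstruction: the identification of a neighborhood of $j_{s_\alpha}$ with a convex set of exact $1$-forms is a \emph{chart} depending on the basepoint $s_\alpha$, so ``convex combination'' is not canonical, and on a triple overlap $U_\alpha \cap U_\beta \cap U_\gamma$ the pairwise interpolations (taken in different charts) need not cohere. ``Shrink the cover until everything lies in overlapping Weinstein neighborhoods'' does not by itself resolve this, because the ambiguity is about which chart you interpolate in, not whether the smoothings are close. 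The final sentence (``the inductive patching over skeleta of $T$ produces the desired homotopy'') asserts rather than constructs the resolution.

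The paper sidesteps the interpolation problem entirely by a different localization of the smoothing. At inductive step $n+1$, for each $(n+1)$-cell $\sigma$ one smooths the Hamiltonian $H_\sigma$ only on a small interior region $V_\sigma \subset\subset U_\sigma$, leaving $H_\sigma$ \emph{unchanged} outside $V_\sigma$ and near all lower-dimensional cells. The modified regions $V_\sigma$ for distinct $(n+1)$-cells $\sigma$ are then effectively disjoint (or agree with $j^{(n)}$ where they overlap near the shared $n$-faces), so the $G_\sigma$'s glue tautologically---no partition of unity, no chart-dependent interpolation, and the rel-boundary and collaring properties come for free since nothing is modified near $\del\Delta^k$ or near lower skeleta. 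If you want to salvage your mollification approach, you would need to either (a) cut the mollification off to be supported on disjoint interior regions as the paper does, or (b) carefully argue that the chart-dependence of the convex interpolation can be organized over a nerve of the cover; as written, (b) is a gap.
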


\begin{proof}
First, recall that we may locally model $\embtop_{\liou}(M,N)$ as consisting of linear data as in Proposition~\ref{prop. families of Liouvilles are families of Hamiltonians} which---using the notation of loc. cit.---we now apply to the case $S = \Delta^k$. 

Choose a fine enough simplicial subdivision $\DD$ of $\Delta^k$ so that for every simplex $\sigma$ in the subdivision, the open star\footnote{Given a triangulation, recall that the open star of a subsimplex $\sigma$ is defined to be the union of the interior of all the simplices that contain $\sigma$ as a face (including $\sigma$ itself).} of $\sigma$ is contained in a neighborhood $U$ as in Proposition~\ref{prop. families of Liouvilles are families of Hamiltonians}. 

We construct the homotopy inductively, starting with the index $n=-1$, and setting $j^{(-1)} = j$. Assume $j^{(n)}$ has been given, with the property that in a neighborhood of any $n$-simplex of $\DD$, $j^{(n)}$ is smooth.

For every $(n+1)$-simplex $\sigma$ in $\DD$ and its open star $U_\sigma$, fix some point $s_0 \in \sigma$ so that we may treat the data of $j^{(n)}|_{U_\sigma}$ as a function $H_\sigma: U_\sigma \times N \to \RR$, so that $j^{(n)}(s) = \flow_1^{X_{H_{\sigma}}} \circ j^{(n)}_{s_0}$ for all $s \in U_\sigma$. As in Proposition~\ref{prop. families of Liouvilles are families of Hamiltonians}, $H_\sigma$ is continuous, but is smooth upon restriction to $\{s\} \times N$ for any $s \in U_\sigma$. By usual smooth approximation, we may continuously homotope $H_\sigma$ to another function $G_\sigma: U_\sigma \times N \to \RR$ so that 
\enum[(i)]
\item $G_\sigma$ is {\em smooth} in a tiny neighborhood $V_\sigma \subset\subset U_\sigma$ containing $\sigma$, meaning the induced $V_\sigma \times N \to \RR$ is smooth,
\item $G_\sigma$ is equal to $H_\sigma$ outside of $V_\sigma$, and on neighborhoods of any $m$-simplex for $m < n$. (More precisely, the homotopy can be chosen to be constant outside of $V_\sigma$ and on neighborhoods of all $m$-simplices.)  
\item For every $s \in U_\sigma$, the time-1 Hamiltonian flow of $G_\sigma(s,-): N \to \RR$ is a sectorial embedding for all $s \in U_\sigma$.
\item\label{item. continuous to smooth is filtered-continuous} Let $K \subset M$ be the compact set outside which $j^{(n)}|_{U_\sigma}$ is strictly compatible with $\lambda$. Then the sectorial embeddings induced by $G_\sigma(s,-)$ are also strictly compatible with $\lambda$ outside $K$.
\enumd 
We have a collection of functions $\{G_\sigma\}_{\sigma \in \DD_{n+1}}$, and by flowing by time 1, we in turn obtain a collection of maps 
	\eqnn
	\{j^{(n+1)}_\sigma: U_\sigma \to \embtop_{\liou}(M,N)\}_{\sigma \in \DD_{n+1}}.
	\eqnd
The fact that the value of $G_\sigma$ and $H_\sigma$ agree outside $V_\sigma$, and on neighborhoods of all lower-dimensional simplices, means that $j^{(n+1)}_\sigma$ agrees with $j^{(n)}$ in these regions; in particular, the collection $\{j^{(n+1)}_\sigma\}_{\sigma \in \DD_{n+1}}$ (and the collection of homotopies from $j^{(n)}|_{U_\sigma}$ to $j^{(n+1)}|_{U_\sigma}$) glues together to form a single map (and a single homotopy from $j^{(n)}$ to)
	\eqnn
	j^{(n+1)}: \Delta^k \to \embtop_{\liou}.
	\eqnd
Again because $j^{(n+1)}$ does not alter the value of $j^{(n)}$ near boundaries of open stars, it in particular does not change the value of $j^{(n)}$ near the boundary of $\Delta^k$ itself. This proves the ``rel boundary'' claim for every homotopy, and in particular, that each $j^{(n+1)}$ is collared. The continuity of $j^{(n+1)}$ is a result of Condition~\ref{item. continuous to smooth is filtered-continuous}.

After our final inductive step, we have arrived at a smooth map $j^{(k)}: \Delta^k \to \embtop_{\liou}(M,N)$ satisfying all the claims of the Lemma.
\end{proof}

\begin{prop}[From continuous and collared to smooth and collared]
\label{prop. continuous to smooth}
If $j: \Delta^k \to \embtop_{\liou}(M,N)$ is continuous, $j$ can be homotoped continuously to a collared and smooth and continuous map.

Moreover, suppose that the restriction of $j$ to some collection of faces of $\Delta^k$ is smooth and collared. Then $j$ may be homotoped {\em rel this collection of faces} to a collared and smooth and continuous  map.
\end{prop}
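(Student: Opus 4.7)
The plan is to reduce the claim to the two preceding results---Remark~\ref{remark. continuous to collared} (which says any continuous map into $\embtop_{\liou}(M,N)$ is continuously homotopic to a collared one, rel boundary) and Lemma~\ref{lemma. smooth along faces to smooth along interior} (which promotes face-smoothness to global smoothness, rel boundary)---by an induction on $k$. The base case $k=0$ is trivial: the collaring condition is vacuous and a map from a point to $\embtop_{\liou}(M,N)$ is automatically smooth in the sense of Definition~\ref{defn. smooth map}. So fix $k \ge 1$ and assume the proposition for all simplices of dimension less than $k$.

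First I would produce a continuous homotopy, rel the specified collection of already-smooth-and-collared faces, from $j$ to a map $j'$ that is collared along every boundary stratum of $\Delta^k$. This is exactly the content of Remark~\ref{remark. continuous to collared}, applied cumulatively from lowest-dimensional strata upward; since the homotopy there is supported in a neighborhood of the normal coordinates to a given stratum, it may be chosen to be stationary on any face that is already collared.

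Next I would inductively smooth $j'$ along all boundary faces while preserving the collaring achieved in the previous step. Enumerate the faces of $\Delta^k$ by increasing dimension $0 \le d < k$. For each face $F$ of dimension $d$, the restriction $j'|_F$ is a continuous and collared map from $\Delta^d$ to $\embtop_{\liou}(M,N)$ which, by the time we reach $F$, is already smooth and collared on $\partial F$. By the inductive hypothesis applied to $F$ (taking the ``collection of already-smooth-and-collared faces'' to be all of $\partial F$ together with the faces of $F$ that lie in the original specified collection), we obtain a continuous homotopy of $j'|_F$, rel $\partial F$ and rel the specified collection, to a smooth and collared and continuous map. Extending each such homotopy to a small neighborhood of $F$ in $\Delta^k$ by a bump function supported there (using that the collar of $F$ inside $\Delta^k$ provides standard normal coordinates), and concatenating these homotopies in order of increasing $d$, produces a continuous homotopy of $j'$, rel the specified collection, to a map $j''$ that is continuous, collared along the full boundary, and smooth when restricted to any face of $\Delta^k$.

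Finally, $j''$ satisfies the hypotheses of Lemma~\ref{lemma. smooth along faces to smooth along interior}, so one last continuous homotopy rel boundary carries $j''$ to a smooth and collared and continuous map. Concatenating the three homotopies yields the required homotopy, and since every step was rel the specified collection, the ``moreover'' is automatic. The main friction point will be the second paragraph---ensuring that the face-by-face smoothing homotopies extend compatibly to a homotopy on $\Delta^k$ while respecting both the collaring convention fixed in Choice~\ref{choice. collaring} and the tameness-at-infinity built into the topology of $\embtop_{\liou}(M,N)$ (Definition~\ref{defn. topology on embliou})---but this extension can be carried out using the local Hamiltonian model of Proposition~\ref{prop. families of Liouvilles are families of Hamiltonians} in exactly the way it was used in the proof of the Lemma, with a single compact $K \subset M$ governing behavior near infinity on each coordinate chart.
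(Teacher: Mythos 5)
Your proof is correct and follows essentially the same approach as the paper's: first collar via Remark~\ref{remark. continuous to collared}, then smooth faces inductively in order of increasing dimension using Lemma~\ref{lemma. smooth along faces to smooth along interior}, finishing with the top-dimensional simplex. The paper's proof is terser and leaves implicit the collar-neighborhood extension of each face homotopy to all of $\Delta^k$ that you spell out (correctly) in your second paragraph.
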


\begin{proof}
By Remark~\ref{remark. continuous to collared} we may suppose that $j$ is both continuous and collared. By induction on dimension of the faces, we may assume that the restrictions $j|_{\Delta^A}$ to the relevant faces of $\Delta^k$ are smooth, collared, and continuous by Lemma~\ref{lemma. smooth along faces to smooth along interior}. We are finished.
\end{proof}

We have the following:

\begin{prop}
\label{prop. coll emb spaces are equivalent}
Let $\emb_{\liou}(M,N)$ be the semisimplicial set given by (forgetting the degeneracies of) $\sing(\embtop_{\liou}(M,N))$ (Notation~\ref{notation. emb liou}). Let 
	\eqnn
	\emb_{\liou}^{\coll}(M,N)
	\eqnd
denote the sub-semisimplicial set consisting only of the collared continuous simplices, and
	\eqnn
	\emb_{\liou}^{\coll,C^\infty}(M,N)
	\eqnd
be the further sub-semisimplicial set consisting of {\em smooth} and collared and continuous simplices. Then each semisimplicial set may be promoted to be a Kan complex, and further, the inclusions of semisimplicial sets induce homotopy equivalences of Kan complexes.
\end{prop}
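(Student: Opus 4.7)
The plan is to promote each semisimplicial set to a Kan complex using Steimle's Theorem~\ref{theorem. steimle}, and then to deduce the two inclusions are homotopy equivalences from the relative deformation statements in Remark~\ref{remark. continuous to collared} and Proposition~\ref{prop. continuous to smooth}. The outer semisimplicial set $\emb_{\liou}(M,N) = \sing(\embtop_{\liou}(M,N))$ is already a simplicial Kan complex in the classical sense, so no work is needed there.

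First I would verify the semisimplicial Kan condition for $\emb_{\liou}^{\coll}(M,N)$ and $\emb_{\liou}^{\coll,C^\infty}(M,N)$. Given a horn $\Lambda^n_k \to \emb_{\liou}^{\coll,C^\infty}(M,N)$ with $0 \leq k \leq n$, its adjoint is a smooth, collared, continuous map $|\Lambda^n_k| \to \embtop_{\liou}(M,N)$. Since $\sing(\embtop_{\liou}(M,N))$ is a Kan complex, extend this to a continuous map $\Delta^n \to \embtop_{\liou}(M,N)$. Apply Remark~\ref{remark. continuous to collared} to homotope the extension rel $|\Lambda^n_k|$ to a collared map, then apply Proposition~\ref{prop. continuous to smooth} to further homotope rel $|\Lambda^n_k|$ to a smooth, collared, continuous map; the endpoint of this homotopy is the desired filler. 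The argument for $\emb_{\liou}^{\coll}(M,N)$ uses only the first deformation. Constant simplices at any $f \in \embtop_{\liou}(M,N)$ supply idempotent self-equivalences, so Steimle's theorem promotes each semisimplicial set to an $\infty$-category. Since every 1-simplex is a continuous path, hence homotopy-invertible in $\sing(\embtop_{\liou}(M,N))$, every edge is an equivalence and each $\infty$-category is in fact a Kan complex.

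For the two inclusions being homotopy equivalences, I would invoke the standard relative lifting criterion: an inclusion $A \hookrightarrow B$ of Kan complexes is a weak (and so, by Whitehead's theorem, genuine) homotopy equivalence iff every $n$-simplex of $B$ whose boundary lies in $A$ is homotopic rel boundary to a simplex of $A$. For $\emb_{\liou}^{\coll}(M,N) \hookrightarrow \emb_{\liou}(M,N)$ this is exactly Remark~\ref{remark. continuous to collared} applied rel $\partial \Delta^n$, and for $\emb_{\liou}^{\coll,C^\infty}(M,N) \hookrightarrow \emb_{\liou}^{\coll}(M,N)$ this is the second sentence of Proposition~\ref{prop. continuous to smooth}. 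The composite equivalence follows by two-out-of-three.

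The hard part will be the bookkeeping: the rel-boundary homotopies produced when filling or deforming different simplices must agree on overlapping faces, which requires the collaring convention (Choice~\ref{choice. collaring}) to be invoked inductively on skeleta. I expect this to be routine once one arranges that each homotopy is constant on previously processed lower-dimensional faces, but it is the place where one must take care so that the collaring data assembles globally rather than merely locally.
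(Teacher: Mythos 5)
Your horn-filling argument and the identification of constant simplices as the Steimle idempotents match the paper's approach, and you correctly locate the geometric input in Remark~\ref{remark. continuous to collared} and Proposition~\ref{prop. continuous to smooth}. But there is a genuine gap at the final step, and you have mislocated where the care is required.

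When Steimle's theorem promotes each of $\emb_{\liou}^{\coll,C^\infty}(M,N)$, $\emb_{\liou}^{\coll}(M,N)$ and $\emb_{\liou}(M,N)$ to a simplicial set, it equips each one with \emph{new} degeneracy maps, and there is no reason for these to be compatible across the three. In fact, the $\sing$-degeneracy of a collared simplex is typically not collared (precomposing a collared map $\Delta^k \to \embtop_{\liou}(M,N)$ with a codegeneracy $\sigma_i:\Delta^{k+1}\to\Delta^k$ does not yield a map that is collared with respect to the chosen collaring convention), so $\emb_{\liou}^{\coll}(M,N)$ cannot inherit the ambient degeneracies. Consequently, the inclusions in question are inclusions of \emph{semisimplicial} sets only; they are not maps of simplicial sets, so they are not ``inclusions of Kan complexes,'' and the relative lifting criterion you invoke is not directly applicable. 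The paper handles this by citing Theorem~\ref{theorem. hiro non strict} to produce honest simplicial maps that agree with the semisimplicial inclusions on vertices and are simplexwise homotopic to them, and then checks isomorphism on homotopy groups. An alternative patch, closer in spirit to your proof, would be to apply the second half of Steimle's theorem (Theorem~2.1 of \cite{steimle} as quoted) iteratively: first promote $\emb_{\liou}^{\coll,C^\infty}(M,N)$, then promote $\emb_{\liou}^{\coll}(M,N)$ so that its degeneracies extend the ones just chosen, and likewise for $\emb_{\liou}(M,N)$ (accepting that the resulting degeneracies on the latter differ from the $\sing$-degeneracies, which is harmless since any two simplicial structures on a fixed semisimplicial weak Kan complex yield equivalent $\infty$-categories). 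Either way, something must be said here, and your proof says nothing.

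Conversely, the ``hard part'' you flag --- that the rel-boundary deformations of different simplices must agree on overlapping faces --- is not actually an unresolved issue: Proposition~\ref{prop. continuous to smooth} is stated in the rel-collection-of-faces form precisely so that an inductive skeleton-by-skeleton argument closes without further bookkeeping, and the collaring convention (Choice~\ref{choice. collaring}) was chosen compatibly with face maps for exactly this purpose. So the work you anticipate there is already done, while the degeneracy compatibility you did not anticipate is the real thing to worry about.
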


\begin{proof}
It is well-known that $\sing$ of any topological space is a Kan complex, so it is obvious that $\emb_{\liou}(M,N)$ may be promoted to be a Kan complex.

For $\emb_{\liou}^{\coll}(M,N)$, we may easily verify the hypotheses of Steimle's theorem (Theorem~\ref{theorem. steimle}). For example, for any $0$-simplex of the semisimplicial set,  the constant edge at that vertex realizes an idempotent edge. That this edge is a self-equivalence is verified by filling the horn in question by a non-collared simplex, then homotoping the non-collared simplex to a collared simplex rel the horn (using similar techniques as Remark~\ref{remark. continuous to collared}). The end result of this homotopy is the filler we seek. So $\emb_{\liou}^{\coll}(M,N)$ satisfies the hypotheses of Steimle's Theorem.

For $\emb_{\liou}^{\coll,C^\infty}(M,N)$, the constant edges again are idempotents (filled by constant 2-simplices). To see that these edges admit fillers of the relevant horns, use the previous paragraph to fill the horn with a collared and continuous simplex, then use Proposition~\ref{prop. continuous to smooth} rel the horn to homotope this filler to a collared, smooth, and continuous simplex. 

Finally, the inclusions of semisimplicial sets obviously respect the constant edges, so the hypotheses of Theorem~\ref{theorem. hiro non strict} are satisfied, and we obtain maps of simplicial sets
	\eqn\label{eqn. emb spaces equivalences}
	\emb_{\liou}^{\coll,C^\infty}(M,N)
	\to
	\emb_{\liou}^{\coll}(M,N) \to
	\emb_{\liou}(M,N).
	\eqnd
These maps are bijections on vertices. On higher simplices, Theorem~\ref{theorem. hiro non strict} guarantees that these simplicial maps respect the original semisimplicial maps up to homotopy. In particular, by repeated application of Lemma~\ref{lemma. smooth along faces to smooth along interior} and Remark~\ref{remark. continuous to collared}, we see that these inclusions all induce isomorphisms on all homotopy groups. This completes the proof.
\end{proof}

\subsection{The homotopy equivalence of different deformation embedding spaces}
\label{section. different deformation spaces}
Recall that we have defined five different spaces of morphisms between Liouville sectors. They are organized in~\eqref{eqn. diagram of embedding spaces}. The following proves they are all homotopy equivalent.

\begin{theorem}
\label{theorem. embedding spaces are deformation embedding spaces}
The right-pointing arrows in~\eqref{eqn. diagram of embedding spaces} are weak homotopy equivalences.
\end{theorem}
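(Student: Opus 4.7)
The plan is to prove that both right-pointing inclusions — compactly supported deformation embeddings into bordered ones, in both the standard and Moore versions — are weak homotopy equivalences. The strategy is to construct a deformation retraction via a cutoff procedure, converting any bordered deformation into a compactly supported one while preserving compatibility with the embedding $f$ and staying within the class of bordered deformations throughout the homotopy.

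Fix a continuous family $\alpha \colon \Delta^k \to \embtop_{\liou}^{\defliou}(M,N)$ with $\alpha(s) = (f_s, \{\lambda^N_{s,t} = \lambda^N + dh_{s,t}\})$. By Condition~\ref{item. tameness} applied to $\Delta^k \times \Delta^1$, there exist a single compact $K_0 \subset N$ and a proper function $R \colon N \to \RR_{\geq 0}$ with each $\lambda^N_{s,t}$ being $R$-linear outside $K_0$. Using Remark~\ref{remark. sectorial embeddings are eventually conical}, enlarge $K_0$ so that each $f_s$ strictly preserves $\lambda^N$ outside a compact subset of $M$ whose image is contained in the interior of $K_0$. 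Now take a smooth family of cutoff functions $\chi_r \colon N \to [0,1]$, $r \in [0,1]$, with $\chi_0 \equiv 1$, each $\chi_r$ equal to $1$ on a neighborhood of $K_0$, and $\chi_1$ having compact support. Define the interpolating family of primitives $h^{(r)}_{s,t} := \chi_r h_{s,t}$, with corresponding forms $\lambda^{N,(r)}_{s,t} := \lambda^N + dh^{(r)}_{s,t}$; at $r=0$ this recovers the original family, and at $r=1$ the deformation is compactly supported.

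The key technical check is that the interpolation remains a valid bordered family. Each $\lambda^{N,(r)}_{s,t}$ is automatically a primitive of $\omega^N$, so only tameness must be verified. On $K_0$ the form equals $\lambda^N_{s,t}$, while outside $\supp\chi_r$ it equals $\lambda^N$, and in both regions the relevant tameness is immediate. On the transition annulus, the Liouville vector field of $\lambda^{N,(r)}_{s,t}$ differs from that of $\lambda^N$ by a term controlled by $h_{s,t}\, d\chi_r$; by choosing $\chi_r$ to be a function of the Liouville coordinate of $\lambda^N$ outside $K_0$ (so that $d\chi_r$ is conormal to the $R$-level sets), one checks that the modified Liouville flow still increases $R$, preserving tameness. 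The compatibility condition $f_s^*\lambda^N_{s,-1} = \lambda^M$ is preserved because $\chi_r \equiv 1$ on a neighborhood of $f_s(M) \cap K_0$ and $f_s$ is strictly $\lambda^N$-preserving outside that region.

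The main obstacle I expect is verifying continuity of the homotopy $r \mapsto \lambda^{N,(r)}_{s,t}$ in the filtered-colimit topology of Definition~\ref{defn:liou_struct_space}: namely, that near each $r_0 \in [0,1]$ one finds a uniform compact set controlling linearity for all nearby $r$. This is handled by taking such a uniform compact set to be $K_0$ for $r_0$ near $0$ (so tameness is inherited from the original family) and $\supp\chi_{r_0}$ for $r_0$ bounded away from $0$. The Moore-path-space version is handled by the same cutoff construction applied to the extended parameter interval, with the Moore concatenation and the length parameter $T$ visibly unaffected. Combined with the known equivalences on the left and top of~\eqref{eqn. diagram of embedding spaces}, this establishes that both right-pointing arrows are weak homotopy equivalences.
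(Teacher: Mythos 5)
Your proposal attempts a direct cutoff construction, interpolating $h^{(r)}_{s,t} = \chi_r h_{s,t}$ with $\chi_r$ a function of the Liouville coordinate $R$. This is a fundamentally different route from the paper's, which combines a Moser-type argument in families (Lemma~\ref{lem:moser_in_fam}) with shrinking isotopies and a delicate three-stage interpolation in a collar $(\pi^{rc})^{-1}T^*[0,1)$ of $\partial N$ (Lemma~\ref{lemma. big maps shrink}). Your tameness check is in fact correct, and the observation that choosing $\chi_r = \chi_r(R)$ kills $X_R\chi_r$ (so $\lambda^{(r)}(X_R) = R$ outside $K_0$, using $X_R h_{s,t} = 0$ there) is nice. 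But tameness is the easy part, and two other conditions fail.

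\textbf{Borderedness is not preserved.} You assert that ``only tameness must be verified,'' but every intermediate form $\lambda^{(r)}_{s,t}$ must lie in $\bL(N)$, which requires the splitting condition~\ref{item. splitting lambda} near $\partial N$. Near a corner, $\lambda$ splits as $\lambda^F + \pi^*\mathbf{p}\,d\mathbf{q}$, and a bordered $h_{s,t}$ contributes only in the fiber direction. But $d(\chi_r h) = \chi_r\, dh + h\,\chi_r'(R)\, dR$, and $dR$ is generically not a fiber 1-form. For instance in $N = T^*[0,1]^2$ near $\{q_1 = 0\}$, with $R$ roughly $|p_1|+|p_2|$ and $h$ a function of $(q_2,p_2)$, the term $h\,\chi_r'(R)\,d|p_1|$ is in the base direction, so the splitting is destroyed. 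There is a genuine tension: your tameness trick forces $\chi_r = \chi_r(R)$, while borderedness near $\partial N$ needs $\chi_r$ to be constant along the base.

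\textbf{Embedding compatibility is not preserved.} The space $\embtop_{\liou}^{\defliou}$ requires $f_s^*\lambda^{(r)}_{s,0} = \lambda^M$. But $f_s^*\lambda^{(r)}_{s,0} - \lambda^M = d\bigl(f_s^*\bigl((\chi_r-1)h_{s,0}\bigr)\bigr)$, and for an honestly non-compactly-supported deformation, $h_{s,0}$ does not vanish at infinity on the proper, codimension-zero image $f_s(M)$. Once $\chi_r$ drops below $1$, $(\chi_r-1)h_{s,0}$ is not locally constant on $f_s(M)$, and the strictness equation fails. Your intermediate step---``enlarge $K_0$ so that each $f_s$ strictly preserves $\lambda^N$ outside a compact subset''---is also not right: $f_s$ strictly preserves $\lambda^N_{s,0}$, which differs from $\lambda^N$ arbitrarily far out precisely when the deformation is not compactly supported.

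Both failures arise because the cutoff modifies the form in regions the embedding and the boundary reach. The paper's proof escapes this by (a) post-composing with a family of shrinking isotopies so $f_s(M)$ is bounded away from $\partial N$, (b) using the Moser Lemma~\ref{lem:moser_in_fam} to trade the bordered deformation for a family of diffeomorphisms $\psi_{t,b}$ plus uniformly compactly supported corrections, and (c) a three-step interpolation inside the collar which turns off the deformation near $\partial N$ without disturbing the strict embedding. None of this appears to be dispensable.
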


We will need the following version of Moser's theorem in proving Lemma~\ref{lemma. big maps shrink}.
\begin{lemma}
  \label{lem:moser_in_fam}

  Let \[\boldsymbol\lambda = (T_b, \lambda_{t,b}) \colon B \to \deflioumoore(N)\] 
    be a family of  
bordered (and not necessarily compactly supported) deformations of $N$ parametrized by some compact manifold-with-corners $B$, and let $E \subset N$ be a Liouville-invariant neighborhood of $\partial N$ on which $\boldsymbol\lambda$ is trivial. Then there is a family $\psi_{t,b} \colon \RR \times B \to \diff(N)$ which (i) restricts to $\id_E$, (ii) agrees with $\id_N$ for $t\le0$, (iii) is $t$-independent for $t \ge T_b$, and such that (iv) $\psi_{t,b}^*\lambda^N = \lambda_{t,b} + dh_{t,b}$, where $h_{t,b}$ is uniformly compactly supported.
\end{lemma}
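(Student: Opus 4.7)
The plan is to construct $\psi_{t,b}$ as the flow of a smooth family of time-dependent Hamiltonian vector fields, adapting the classical Moser argument to the parametrized setting. Because the deformation is exact ($\lambda_{t,b} = \lambda^N + dg_{t,b}$ for smooth functions $g_{t,b}$), the underlying symplectic form $\omega^N$ is $(t,b)$-independent, so each $\psi_{t,b}$ will automatically be a symplectomorphism of $(N, \omega^N)$, and the 1-form $\psi_{t,b}^*\lambda^N - \lambda_{t,b}$ will be closed. The nontrivial content is to make this closed form exact with uniformly compactly supported primitive $h_{t,b}$, despite $g_{t,b}$ itself not being compactly supported.

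First, by compactness of $B$ and condition \ref{item. tameness}, I would select a single compact set $K \subset N$ (disjoint from a slightly smaller Liouville-invariant neighborhood $E' \subsetneq E$) and a single proper function $R : N \to \RR_{\geq0}$ such that $R$ is $\lambda_{t,b}$-linear on $N \setminus K$ for all $(t,b)$. One normalizes the primitives $g_{t,b}$ to vanish on $E$; this is possible because $\lambda_{t,b}|_E = \lambda^N|_E$ forces $dg_{t,b}|_E = 0$, leaving only an additive constant per component to fix. The simultaneous $\lambda$-linearity of $R$ outside $K$ forces $\{R, g_{t,b}\} = 0$ there, i.e., $g_{t,b}$ is invariant under the Hamiltonian flow of $R$ at infinity (which, in symplectization coordinates, is the Reeb flow on $\partial_\infty N$).

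Next, define a time-dependent Moser Hamiltonian $\widetilde H_{t,b}$ that equals $-\dot g_{t,b}$ on a compact region slightly larger than $K$, and is modified outside to a $\lambda^N$-linear function (i.e.\ one satisfying $\lambda^N(X_{\widetilde H}) = \widetilde H$) that still matches $-\dot g_{t,b}$ along the Reeb direction. Since $\lambda^N$-linear Hamiltonians preserve $\lambda^N$ and hence are complete on the finite-type sector $N$, the vector field $X_{\widetilde H_{t,b}}$ is complete and smoothly $(t,b)$-dependent; let $\psi_{t,b}$ denote its time-$t$ flow starting from the identity at $t=0$. Properties (i)--(iii) follow immediately: $dg_{t,b}|_E = 0$ yields $X_{\widetilde H_{t,b}}|_E = 0$ (so $\psi_{t,b}|_E = \id_E$), and the Moore-path conventions $\dot g_{t,b} \equiv 0$ outside $(0, T_b)$ give $\psi_{t,b} = \id$ for $t \le 0$ and $t$-independence for $t \ge T_b$. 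For (iv), the standard Moser computation
\[
\psi_{t,b}^*\lambda^N - \lambda_{t,b} = d\bigl(F_{t,b} - g_{t,b}\bigr), \qquad F_{t,b} := \int_0^t \psi_{s,b}^*\bigl(\lambda^N(X_{\widetilde H_{s,b}}) - \widetilde H_{s,b}\bigr)\,ds,
\]
lets us set $h_{t,b} := F_{t,b} - g_{t,b}$; the $\lambda^N$-linearity of $\widetilde H_{s,b}$ together with the Reeb-invariance of $g_{s,b}$ then forces $F_{t,b} = g_{t,b}$ on $N \setminus K$, so $h_{t,b}$ is compactly supported uniformly in $(t,b)$.

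The main obstacle is the careful construction of $\widetilde H_{t,b}$: one must extend $-\dot g_{t,b}$ from the compact part of $N$ to a $\lambda^N$-linear function at infinity, smoothly in $(t,b)$, while preserving enough of the Reeb-structure of $g_{t,b}$ so that the correction $F_{t,b} - g_{t,b}$ vanishes outside $K$. This is ultimately a gluing at a level set $\{R = \mathrm{const}\}$ that matches the ``angular'' data of $g_{t,b}$ to a radially scaling extension; the Liouville-invariance of $E$ ensures the modification can be performed without disturbing the collar near $\partial N$.
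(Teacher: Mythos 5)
The parametric Moser framework in your proposal is the right starting point (and matches the paper's), but there is a genuine gap in how you handle the structure at infinity, and it appears at the step that the paper handles with Gray's stability theorem.

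You claim that, by compactness of $B$ and condition~\ref{item. tameness}, one can choose a \emph{single} proper $R \colon N \to \RR_{\ge0}$ that is $\lambda_{t,b}$-linear on $N \setminus K$ for \emph{all} $(t,b)$. That does not follow. The tameness condition (and the definition of $\bL(N)$ via $\bL_K(N)$) supplies a smoothly varying family of symplectization coordinates $r_{t,b}$; compactness of $B$ lets you take a single $K$, but not a single $r$. For a generic bordered, non--compactly-supported deformation, the induced contact structure on $\partial_\infty N$ genuinely changes with $(t,b)$ --- already visible for $N=\CC$, $\lambda^N = \tfrac12 r^2 d\theta$, $g_t = \epsilon t\, r^2\cos\theta$, where $Z_t$ acquires a $\theta$-dependent angular component and no single $R$ can be simultaneously $\lambda_t$-linear. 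Your subsequent deductions --- that $\{R,g_{t,b}\}=0$ at infinity, that $g_{t,b}$ is Reeb-invariant, and that $\widetilde H_{t,b}$ can be designed to ``match $-\dot g_{t,b}$ along the Reeb direction'' while being $\lambda^N$-linear --- all rest on this unjustified uniformity.

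There is also a downstream computational problem even granting your assumptions. If $\widetilde H_{s,b}$ is $\lambda^N$-linear outside a compact set then $\lambda^N(X_{\widetilde H_{s,b}}) - \widetilde H_{s,b}$ vanishes there, the $\lambda^N$-linear flow preserves $\lambda^N$, and hence $F_{t,b}$ is \emph{compactly supported}, not equal to $g_{t,b}$ outside $K$. Your correction term then reduces to $h_{t,b} = -g_{t,b}$ at infinity, which is not compactly supported precisely because the deformation is not. The claim ``$F_{t,b} = g_{t,b}$ on $N\setminus K$'' does not follow from what you have set up; indeed if $\psi_{t,b}^*\lambda^N = \lambda^N$ outside $K$ (which your construction forces), then $\psi_{t,b}^*\lambda^N - \lambda_{t,b} = -dg_{t,b}$ there, which is exactly the failure.

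The paper's proof uses the smoothly varying symplectization coordinates $r_{t,b}$ to produce a $(t,b)$-family of contact structures on $\partial_\infty N$, then invokes Gray's stability theorem in a $B$-family to build a diffeotopy of $N$ that carries $\lambda_{t,b}$ to $\lambda^N$ outside a uniform compact set. Only after this normalization is the remaining deformation compactly supported, at which point the standard Moser vector field $\xi_{t,b}$ with $\iota_{\xi_{t,b}}d\lambda_{t,b} = \dot\lambda_{t,b}$ (your $X_{-\dot g_{t,b}}$) has compactly supported generating Hamiltonian and yields a uniformly compactly supported $h_{t,b}$. Your proposal skips this trivialization and the gap is not patchable without it.
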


\begin{proof}
  The proof is essentially the same as for the usual Moser theorem for Liouville manifolds \cite{cieliebak-eliashberg}, so we give only a sketch.
  The key point is that, by definition of $\bL(N)$, we can find smoothly varying choices of symplectization coordinates $r_{t,b}$, which induce smoothly varying contact structures on $\partial^\infty N$. Applying Gray's stability theorem in a $B$-family, we may immediately assume that $\lambda_{t,b} = \lambda^N$ outside a uniform compact set. The remaining isotopy is given by the flow of $\xi_{t,b}$, where $\imath_{\xi_{t,b}}d\lambda_{t,b} = \frac{d}{dt}\lambda_{t,b}$.
\end{proof}

\begin{lemma}
  \label{lemma. big maps shrink}
  Let $B$ be a compact manifold-with-corners, and let $C \subset B$ be a compact submanifold-with-corners which is a smooth deformation retract of a neighborhood of itself (e.g. a closed submanifold or union of disjoint boundary faces). Then any smooth family 
  \eqnn
  \Phi \colon (B, C) \to \left(\embtop_{\liou,\operatorname{Moore}}^{\defliou}(M,N), \embtop_{\liou,\operatorname{Moore}}^{\defliou,\cmpct}(M,N) \right)
  \eqnd
  is smoothly homotopic rel $C$ to to a family $B \to \embtop_{\liou,\operatorname{Moore}}^{\defliou,\cmpct}(M,N) $.
\end{lemma}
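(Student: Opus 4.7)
The plan is to use the family Moser lemma (Lemma~\ref{lem:moser_in_fam}) to convert $\boldsymbol\lambda = (T_b, \lambda_{t,b})$ into a canonically associated compactly supported deformation. Applying the lemma yields a smooth family of diffeomorphisms $\psi_{t,b}\colon N\to N$ and uniformly compactly supported functions $h_{t,b}$ satisfying $\psi_{t,b}^*\lambda^N = \lambda_{t,b} + dh_{t,b}$, with $\psi_{t,b}|_E = \id$ and $\psi_{t,b} = \id$ for $t \le 0$. Set
\[
\lambda^{\cmpct}_{t,b} := (\psi_{t,b}^{-1})^*\lambda_{t,b} = \lambda^N - d\bigl((\psi_{t,b}^{-1})^*h_{t,b}\bigr).
\]
Because $(\psi_{t,b}^{-1})^*h_{t,b}$ is uniformly compactly supported and vanishes on $E$, $\lambda^{\cmpct}_{t,b}$ is a compactly supported deformation of $\lambda^N$; since $\lambda^{\cmpct}_{0,b} = \lambda^N$, the pair $(f_b, \lambda^{\cmpct}_{t,b})$ lies in $\embtop_{\liou,\operatorname{Moore}}^{\defliou,\cmpct}(M,N)$. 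This will be the target of the homotopy.

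Next, the interpolation from $(f_b, \lambda_{t,b})$ to $(f_b, \lambda^{\cmpct}_{t,b})$ is obtained by choosing a smooth family of diffeomorphisms $\psi^s_{t,b}$ ($s \in [0,1]$) with $\psi^0_{t,b} = \id$, $\psi^1_{t,b} = \psi_{t,b}^{-1}$, $\psi^s_{t,b}|_E = \id$, $\psi^s_{0,b} = \id$, and $\psi^s_{t,b}$ $t$-independent for $t$ beyond some smoothly varying $T^s_b$. Such a family exists because the space of such paths is contractible; a concrete choice runs the inverse Moser flow to time $st$ after reparameterizing the Moore interval. Setting $\lambda^s_{t,b} := (\psi^s_{t,b})^*\lambda_{t,b}$ gives a smooth family of bordered deformations (diffeomorphism pullbacks preserve the bordered Liouville condition relative to $E$), and $\lambda^s_{0,b} = \lambda^N$ guarantees $(f_b)^*\lambda^s_{0,b} = \lambda^M$, so $(f_b, \lambda^s_{t,b}) \in \embtop_{\liou,\operatorname{Moore}}^{\defliou}(M,N)$ throughout.

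The main technical obstacle is arranging the homotopy to be constant on $C$, which reduces to ensuring $\psi_{t,b}|_C = \id$. I would handle this by strengthening Lemma~\ref{lem:moser_in_fam} to a relative version: for $b \in C$, compactness of the deformation lets us write $\lambda_{t,b} = \lambda^N + d\widetilde h_{t,b}$ with $\widetilde h$ uniformly compactly supported on $N$; extend to a smooth family $\widetilde h^{\mathrm{ext}}_{t,b}$ on all of $B$ that is uniformly compactly supported and agrees with $\widetilde h_{t,b}$ on $C$, and pick a bump $\rho \colon B \to [0,1]$ equal to $1$ near $C$ and vanishing outside a neighborhood. Apply the construction in the proof of Lemma~\ref{lem:moser_in_fam} to the modified family $\lambda'_{t,b} := \lambda_{t,b} - \rho(b)\, d\widetilde h^{\mathrm{ext}}_{t,b}$, which equals $\lambda^N$ on $C$: the Moser vector field vanishes there, so the output $\psi'_{t,b}$ has $\psi'_{t,b}|_C = \id$. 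The identity $\psi'^{*}_{t,b}\lambda^N = \lambda_{t,b} + d(h'_{t,b} - \rho(b)\widetilde h^{\mathrm{ext}}_{t,b})$ then provides the required Moser data with a uniformly compactly supported primitive, and with this strengthened version the target $\lambda^{\cmpct}_{t,b}$ agrees with $\lambda_{t,b}$ on $C$, making $\lambda^s_{t,b}$ constant on $C$ as required.
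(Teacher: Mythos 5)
The central step of your proof — ``applying the lemma yields a smooth family of diffeomorphisms $\psi_{t,b}$'' — is not justified. Lemma~\ref{lem:moser_in_fam} takes as a \emph{hypothesis} a Liouville-invariant neighborhood $E \subset N$ of $\partial N$ on which the family $\boldsymbol\lambda$ is trivial, i.e.\ $\lambda_{t,b}|_E = \lambda^N|_E$. The input to Lemma~\ref{lemma. big maps shrink} is only a family in $\embtop_{\liou,\operatorname{Moore}}^{\defliou}(M,N)$, whose deformations are \emph{bordered}, not \emph{interior}: near $\partial N$ the form splits as $\lambda^F_{t,b} + \mathbf{p}\,d\mathbf{q}$ with the fiber piece $\lambda^F_{t,b}$ allowed to vary in $t$. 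For such a family no $E$ as in the Moser lemma exists, and the lemma's conclusion fails: the Moser vector field is tangent to $\partial N$ but nonzero there, so $\psi_{t,b}$ does not restrict to $\id$ on any neighborhood of $\partial N$, and the primitive $h_{t,b}$ cannot be taken compactly supported. Consequently the form $\lambda^{\cmpct}_{t,b} = (\psi_{t,b}^{-1})^*\lambda_{t,b}$ you build is \emph{not} constant near $\partial N$, hence not interior, hence not compactly supported — the defining property you need.

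This is not a small omission; it is essentially the whole content of the lemma. The published proof spends most of its length converting a bordered family into an interior one: it first composes the underlying strict embeddings with a family of shrinking isotopies so their images avoid a Liouville-invariant collar $(\pi^{rc})^{-1}T^*[0,1)$ of $\partial N$ (using a corner-rounding projection $\pi^{rc}$ from Definition~\ref{defn:corner_round_proj}), and then performs three simultaneous $\tau$-parametrized modifications on that collar — a reparametrization $s(t,\tau)$ of the deformation time, insertion of a movie term $d\tilde h_{t,\tau,b}$, and an interpolating diffeomorphism $\tilde\Psi_{t,\tau,b}$ built from the fiberwise Moser maps $\psi_{t,b}$ of $F_{t,b}$ — precisely to turn off the deformation near $\partial N$. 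Only after that reduction does it invoke Lemma~\ref{lem:moser_in_fam} as you do, to pass from interior to compactly supported. Your second paragraph (pull back by the Moser diffeomorphism to trade a deformation for compact support) and your rel-$C$ bookkeeping are in the right spirit for that last step, but as written they presuppose that the boundary has already been dealt with. You would need to supply the bordered-to-interior reduction before anything else you wrote can get off the ground.
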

\begin{proof}[Proof of Lemma~\ref{lemma. big maps shrink}]
  After reparametrizing by a partial retraction, we may assume that $\Phi(b)$ lands in the compactly-supported-deformation-embedding space $ \embtop_{\liou,\operatorname{Moore}}^{\defliou,\cmpct}(M,N) $ on some regular neighborhood $U$ of $C$. We will explain how to homotope $\Phi$ on the complement $B^c = B \setminus U$; the extension to $U$ is obtained by interpolating the duration over which we perform each step of the construction (in a way that it only occurs in the region where the previous step was fully performed).

  Write $N_{t,b} = (N, \lambda_{t,b})$ for the family of Liouville sectors and $\Phi_\strict$ for the family of underlying strict embeddings $M \into N_{T_b,b}$. By post-composing $\Phi_\strict$ with a family of shrinking isotopies, we may assume that the image of $\Phi_\strict$ avoids $\partial N$, and by strictness this means that it avoids a $\lambda_{T_b,b}$-Liouville-invariant neighborhood of $\partial N$ for all $b \in B^c$. Since all deformations are bordered and $B$ is compact, this neighborhood can be made uniform in $B$, and in fact it can be taken to be $(\pi^{rc})^{-1}[0,1)$ for some corner-rounding projection $\pi^{rc}$.

  Since most of our remaining work will take place in this neighborhood, we will write $\pi^{rc}_{t,b}$ for the function $\pi^{rc}$ thought of as a corner-rounding projection on $N_{t,b}$. Write $F_{t,b}$ for the fiber of $\pi^{rc}_{t,b}$. By Lemma \ref{lem:moser_in_fam}, there is a smooth family of symplectomorphisms $\psi_{t,b}\colon F_{t,b} \cong F_{0,b}$ so that $\psi_{t,b}^*\lambda_{0,b} = \lambda_{t,b} + dh_{t,b}$, where $h_{t,b}$ is uniformly compactly supported. Crossing with $T^*(0,1)$, extend $\psi_{t,b}$ to a symplectomorphism
  \[
    \Psi_{t,b} \colon \left(\pi^{rc}_{t,b}\right)^{-1}T^*(0,1) \to \left(\pi^{rc}_{0,b}\right)^{-1}T^*(0,1).
  \]
  
  To turn off the deformation of $\partial N$ while preserving the embedding $\Phi_\strict$, we will need to reparametrize the deformation parameter $t$ as follows. For $\tau \in [0,1]$, write $s(t, \tau)$ for a concave function which is constant for $t \ge \frac1\tau - 1$ and has $s(t, \tau)=t$ for $t \le \frac1\tau - 2$. Then, for any $\tau$, $N_{s(t, \tau), b}$ is $t$-indepentent for $t \ge T_b+1$, so we begin by moving the deformation time $T_b$ to $T_b+1$. Next, perform the following $\tau$-parametrized modifications simultaneously.
  \begin{enumerate}[(i)]
    \item Replace $N_{t, b}$ with $N_{s(t, \tau), b}$ on the collar $\bigl( \pi^{rc}_{s(t, \tau), b} \bigr)^{-1}T^*[0,\frac14)$, so that at the end of the modification $\partial N_{s(t, 1),b} \equiv \partial N$.
    \item Replace the Liouville form $\lambda_{s(t, \tau),b}$ on $\bigl(\pi^{rc}_{s(t, \tau),b}\bigr)^{-1}T^*\left[\frac14,\frac12\right]$ with the movie $\lambda_{s(t, \tau),b} + d\tilde h_{t,\tau,b}$, where $\tilde h_{t,\tau,b}$ interpolates between $0$ near $q=\frac14$ and $h_{s(t,\tau),b} - (\psi_{t,b}^{-1}\psi_{s(t, \tau),b})^*h_{t,b}$ near $q=\frac12$.
    \item Replace the Liouville form on $\bigl(\pi^{rc}_{s(t, \tau),b}\bigr)^{-1}T^*\left[\frac12,1\right)$ with $\tilde\Psi_{t,\tau,b}^*\lambda_{t,b}$, where $\tilde\Psi_{t,\tau,b}$ is a diffeomorphism interpolating (by integrating an interpolating vector field) between $\Psi_{t,b}^{-1}\Psi_{s(t, \tau),b}$ near $q=\frac12$ and $\id_N$ for $q\ge\frac34$.
  \end{enumerate}

  After the above modification, we are in a setting where the family of deformation $N_{t,b}$ is trivial in the Liouville-invariant neighborhood $\bigl(\pi^{rc}\bigr)^{-1}T^*[0,\frac14)$ of $\partial N$, meaning that we have retracted our family of bordered deformation morphisms to a family of interior deformation morphisms. The passage from interior to compactly supported is another application of Lemma \ref{lem:moser_in_fam}.
\end{proof}

\begin{proof}[Proof of Theorem~\ref{theorem. embedding spaces are deformation embedding spaces}.]
Taking $B$ and $C$ to be a cylinder and a sphere, respectively, Lemma~\ref{lemma. big maps shrink} shows that the smooth homotopy groups of $\embtop_{\liou,\operatorname{Moore}}^{\defliou}(M,N)$ and $\embtop_{\liou,\operatorname{Moore}}^{\defliou,\cmpct}(M,N)$ are isomorphic. 

Using methods similar to the proof of Proposition~\ref{prop. coll emb spaces are equivalent}, we in fact have equivalences among all of the following:
\begin{itemize}
\item The Kan complex of continuous simplices in $\embtop_{\liou,\operatorname{Moore}}^{\defliou}(M,N)$ and $\embtop_{\liou,\operatorname{Moore}}^{\defliou,\cmpct}(M,N)$.
\item The Kan complex of collared simplices in $\embtop_{\liou,\operatorname{Moore}}^{\defliou}(M,N)$ and $\embtop_{\liou,\operatorname{Moore}}^{\defliou,\cmpct}(M,N)$.
\item The semisimplicial set (made into a Kan complex by Steimle's theorem) of smooth and collared and continuous simplices in $\embtop_{\liou,\operatorname{Moore}}^{\defliou}(M,N)$ and $\embtop_{\liou,\operatorname{Moore}}^{\defliou,\cmpct}(M,N)$.
\end{itemize}
(The first of these shows the weak homotopy equivalence, but we record the other equivalences for later use.)

Now, that two other edges in the square of~\eqref{eqn. diagram of embedding spaces} are weak homotopy equivalences implies that the bottom horizontal arrow must also be a weak homotopy equivalence. And again, we have a equivalences of the following Kan complexes:
\begin{itemize}
\item The Kan complex of continuous simplices in $\embtop_{\liou}^{\defliou}(M,N)$ and $\embtop_{\liou}^{\defliou,\cmpct}(M,N)$.
\item The Kan complex of collared simplices in $\embtop_{\liou}^{\defliou}(M,N)$ and $\embtop_{\liou}^{\defliou,\cmpct}(M,N)$.
\item The semisimplicial set (made into a Kan complex by Steimle's theorem) of smooth and collared and continuous simplices in $\embtop_{\liou}^{\defliou}(M,N)$ and $\embtop_{\liou}^{\defliou,\cmpct}(M,N)$.
\end{itemize}
\end{proof}

\begin{remark}
  One can construct a space of not-necessarily-exact, not-necessarily-bordered Liouville deformations still satisfying Condition~\ref{item. tameness}. With minor additions to the proof of Lemma \ref{lemma. big maps shrink}, the above arguments show that the resulting spaces of generalized deformation embeddings are homotopy equivalent to the standard ones.
\end{remark}

Lemma~\ref{lemma. big maps shrink} has another significant consequence.

\begin{defn}\label{defn. deformation equivalence}
Fix Liouville sectors $M$ and $N$. We say that a (bordered, but not necessarily compactly supported) deformation embedding $(f, \{\lambda^N_t\}_{t \in \Delta^1})$ is a {\em deformation equivalence} if there exists another (bordered, but not necessarily compactly supported) deformation embedding
	\eqnn
	(g, \{\lambda^M_t\}_{t \in \Delta^1})
	\eqnd
so that
\enum
\item The pair $(gf, \widetilde{g_*(\{\lambda^N_t\})} \star \{\lambda^M_t\})$ is homotopic to $\id_M$ and the constant deformation of $\lambda^M$ -- here, ``homotopic'' means ``through a smooth family of (bordered, but not necessarily compactly supported) deformation embeddings.'' In particular, $gf$ is smoothly isotopic to $\id_M$.

\item Likewise, we demand the pair $(fg, \widetilde{f_*(\{\lambda^M_t\})} \star \{\lambda^N_t\})$ is homotopic to $id_N$ with the constant $\lambda^N$ deformation, through a smooth family of (bordered, but not necessarily compactly supported) deformation embeddings.
\enumd
\end{defn}

\begin{remark}
Let us explain the notation in Definition~\ref{defn. deformation equivalence}. First, $\widetilde{g_*(\{\lambda^N_t\}_{t})}$ is an extension of the deformation on the image of $N$ to a deformation on all of $M$. 

Next, the notation $\lambda'_t \star \lambda_t$ denotes a family of Liouville forms obtained through concatenation -- first through the family $\lambda_t$, then through the family $\lambda'_t$. In particular, one may think of $\widetilde{g_*(\{\lambda^N_t\})} \star \{\lambda^M_t\}$ as a family parametrized by the horn $\Delta^1 \cup \Delta^1 \cong \Lambda^2_1$ comprising two edges in a 2-simplex. 
Then the homotopy required in each item of Definition~\ref{defn. deformation equivalence} ``fills in'' this horn to a 2-simplex $\Delta^2$, where the remaining edge in $\Delta^2$ is occupied by a constant family.

See also Remark~\ref{remark. why composition of families is annoying}.
\end{remark}

Definition~\ref{defn. deformation equivalence} is the notion of equivalence in any $\infty$-category whose morphisms are deformation embeddings (through deformations that are bordered but not necessarily compactly supported). Thus, it seems to be a far weaker relation to be deformation equivalent, than to be sectorially equivalent (Definition~\ref{defn. sectorial equivalence}). However:

\begin{corollary}\label{corollary. deformation equiv is sectorial equiv}
  Deformation-equivalent Liouville sectors are sectorially equivalent.
\end{corollary}
\begin{proof}
Lemma \ref{lemma. big maps shrink} supplies us with morphisms in each direction. The composition of these morphisms are homotopic to identity in $\embtop_{\liou}^{\defliou}$, so by Theorem~\ref{theorem. embedding spaces are deformation embedding spaces} they are homotopic to the identity in $\embtop_{\liou}^{\defliou,\cmpct}$.
\end{proof}

\clearpage
\section{\texorpdfstring{$\lioudelta$}{Liou Delta} and \texorpdfstring{$\lioudeltadef$}{Liou Delta Def}}
\label{section. lioudelta}
Informally, movies of maps (Definition~\ref{defn. movie of maps}) arise when we convert a family of morphisms into the data of a single morphism. Unlike the usual definition of homotopy, which traditionally converts a family of maps $X \to Y$ into a map obtained by replacing the domain (with $X \times [0,1]$), we may replace a family of embeddings with a single strict morphism by thickening both the domain and codomain to remain in the world of codimension zero embeddings. 

This allows us to construct semisimplicial sets $\lioudelta$ and $\lioudeltadef$ whose $n$-simplices are modeled by a diagram of strict sectorial embeddings in the shape of the barycentric subdivision of the $n$-simplex (Definition~\ref{defn. lioudelta}).  The difference between the two is that $\lioudeltadef$ allows for higher simplices to encode all bordered and collared deformations, while $\lioudelta \subset \lioudeltadef$ demands that deformations further be compactly supported.

The present section constructs $\lioudelta$ and $\lioudeltadef$ and proves some of their basic properties. We will later see that both may be promoted to be an $\infty$-category (\ref{section. lioudelta is oo cat}). 

\subsection{Collaring movies of deformation embeddings}

\begin{notation}\label{notation. A fold product of intervals}
For $A$ a set, we let the $A$-fold product $[0,1]^A$ denote as usual the collection of functions from $A$ to $[0,1]$. When $A$ is a finite set, this collection is naturally a topological space, and in fact, a smooth manifold with corners in the usual way. We will apply this notation when the base is any interval -- e.g., $[0,\epsilon)^A$.

We caution that $\Delta^I$, for $I$ a finite, non-empty, linearly ordered set, is the $I$-simplex; it is not an $I$-fold product. (See Notation~\ref{notation. I simplex}.)
\end{notation}

\begin{remark}\label{remark. faces of DeltaI objections}
Fix a $\Delta^I$-movie $M_I \times T^*\Delta^I$ (Definition~\ref{defn. movie object}). Let $A \subset I$ and let $\Delta^{A} \subset \Delta^I$ be the corresponding face.
Because the movie is assumed collared, we may write $M_I \times T^*\Delta^I$ as 
	$\left(M_I \times T^*\Delta^{A}\right) \tensor T^*[-1,\epsilon_{A}]^{I\setminus A}$ in a neighborhood of $M_I \times T^*\Delta^{A}$. 
	
	This decomposition uniquely determines a Liouville structure on $M_I \times T^*\Delta^{A}$ as well, and it follows that $M_I \times T^*\Delta^{A}$ is a $\Delta^{A}$-movie.
\end{remark}

In the following definition, all deformation embeddings are assumed to be bordered (Definition~\ref{defn. various deformation embeddings}) and collared (Definition~\ref{defn. collared}) so that we may safely speak of movies (Definition~\ref{defn. movie object}).

\begin{defn}\label{defn. collaring movie}
Fix an injective, order-preserving map $a: A \to I$, along with a $\Delta^I$-movie $M_I \times T^*\Delta^I $ and a $\Delta^{A}$-movie $M_{A} \times T^*\Delta^{A}$. A sectorial embedding
	\eqn\label{eqn. collared movie}
	\phi: (M_{A} \times T^*\Delta^{A}) \tensor T^*[-1,\epsilon]^{I \setminus A} 
	\to
	M_I \times T^*\Delta^I
	\eqnd
is called a {\em collaring movie} of deformation embeddings if
\enum
	\item For some $\epsilon_A$ with $0 < \epsilon_A < \epsilon$, there exists a map $\widetilde y$ such that $\phi$ can be written as a composition
	\eqnn
	\xymatrix{
	(M_{A} \times T^*\Delta^{A}) \tensor T^*[-1,\epsilon_A]^{I \setminus A}
	\ar[r]^-{\widetilde{y}\tensor \id} &
	(M_{I} \times T^*\Delta^{A}) \tensor T^*[-1,\epsilon_A]^{I \setminus A}
	\ar[rr]^-{\id_{M_{I}} \times T^*\eta_{a}}
	&& M_I \times T^*\Delta^I	 
	}
	\eqnd
	where $\eta_{a}$ is the collaring from Choice~\ref{choice. collaring} (restricted to $[-1,\epsilon_A] \subset [-1,\epsilon]$), and $T^*\eta_{a}$ is the induced smooth map on cotangent bundles. 
(Note that if $\widetilde{y}$ exists, it is unique.)
	\item $\widetilde{y}$ is a movie associated to some family of deformation embeddings 
		\eqn\label{eqn. y family from simplex}
		\{y^s: M_{A} \to M_I\}_{s \in \Delta^{A}}.
		\eqnd
		(See Definition~\ref{defn. movie of maps}.)
	\item Finally, we demand $\widetilde{y}$ itself is collared. This means that for any subset $A' \subset A$ (with induced injection $a': A' \to I$), there exists some $\epsilon_{A'}$ with $0<\epsilon_{A'} < \epsilon$ and a map 
	\eqnn
	\widetilde{y}_{a'}: M_{A} \times T^*\Delta^{A'} \to M_I \times T^*\Delta^{A'}
	\eqnd
so that $\widetilde{y}$---when restricted to a neighborhood of $\Delta^{A'}\times[-1,\epsilon_{A'})^{A \setminus A'} \subset \Delta^{A}$---can be written as a composition
	\eqn
	\label{eqn. family of embeddings}
	\xymatrix{
	(M_{A} \times T^*\Delta^{A'}) \tensor T^*[-1,\epsilon_{A'}]^{I \setminus A'}
	\ar[rr]^--{\widetilde{y}_{a'}\tensor \id} &&
	(M_{I} \times T^*\Delta^{A'}) \tensor T^*[-1,\epsilon_{A'}]^{I \setminus A'}
	\ar[d]^-{\id_{M_{I}} \times T^*\eta_{a'}} \\
	&& M_I \times T^*\Delta^{I}	 .
	}
	\eqnd
(Equivalently, the family $\{y^s\}_{s \in \Delta^{A}}$ is collared near every face $\Delta^{A'}$ of $\Delta^{A}$.)
	\enumd
\end{defn}

\begin{remark}\label{remark. movies give movies}
Suppose that we are given a collaring movie of deformation embeddings~\eqref{eqn. collared movie}, and that we further know that $M_I \times T^*\Delta^I$ and $M_{A} \times T^*\Delta^{A}$ are both movies of {\em compactly supported} deformations (Definition~\ref{defn. movie object}). Then $\phi$ defines a continuous, smooth, and collared map from $\Delta^{A}$ to the space of sectorial embeddings from $M_{A}$ to $M_{I}$ (Remark~\ref{remark. movies give smooth maps to embedding spaces}). 
\end{remark}

\subsection{\texorpdfstring{$\lioudeltadef$}{LiouDeltaDef} (as a semisimplicial set)}

\begin{defn}\label{defn. lioudeltadef}
We define a semisimplicial set $\lioudeltadef$ as follows.
An $I$-simplex of $\lioudeltadef$ is the data of:
	\begin{itemize}
	\item For every non-empty subset ${A} \subset I$, a $\Delta^{A}$-movie $M_{A} \times T^*\Delta^{A}$. (Note that $A$ may equal $I$.)
	\item For every proper subset inclusion ${A'} \subset{A}$ of non-empty subsets of $I$, a strict sectorial embedding
		\eqnn
		\phi_{{A'} \subset {A}}: M_{{A'}}  \times T^*\Delta^{A'} \tensor T^*[-1,\epsilon_{A,A'}]^{{A} \setminus {A'}}
		\to
		M_{{A}} \times T^*\Delta^{A}
		\eqnd
	such that $\phi_{{A'} \subset {A}}$ is a collaring movie of deformation embeddings (Definition~\ref{defn. collaring movie}).\footnote{Note that $\phi_{A' \subset A}$ thus defines a family of smooth embeddings $M_A' \to M_A$ parametrized by $\Delta^{A'}$. Also note that $[-1,\epsilon]^{A\setminus A'}$ is a cube of dimension $\#(A \setminus A')$ (Notation~\ref{notation. A fold product of intervals}).}
	\end{itemize}
satisfying the following conditions.

\newenvironment{lioudelta-props}{
	  \renewcommand*{\theenumi}{(LD\arabic{enumi})}
	  \renewcommand*{\labelenumi}{(LD\arabic{enumi})}
	  \enumerate
	}{
	  \endenumerate
}

\begin{lioudelta-props}
		\item\label{item. lioudelta composition} For every string of proper inclusions ${A''} \subset {A'} \subset {A}$, we have
	\eqnn
	\phi_{{A''} \subset {A}} = \phi_{{A'} \subset {A}} \circ (\phi_{{A''} \subset {A'}} \times \id_{T^*[-1,\epsilon_{A',A''}]^{{A'} \setminus {A''}}})
	\eqnd
	where both sides are defined.
		\item\label{item. lioudelta is max localizing} If $\max A = \max A'$, then the function $M_{A'} \to M_{A}$  is a strict isomorphism of Liouville sectors. (Here, the function is the one determined by the collaring near the vertex $\Delta^{ \{\max A\}} \subset \Delta^{A'} \subset \Delta^{A} \subset \Delta^{I}$. We also note that the Liouville forms on both domain and codomain are those determined by the $\Delta$-movie structures near the vertex $\max A$.)
\end{lioudelta-props}

\begin{remark}
Note that given an $I$-simplex of $\lioudeltadef$, for all non-empty subsets $A,A' \neq I$, we have that $\dim M_A = \dim M_{A'}$. (By definition of dimension of a manifold, the empty manifold is a manifold of all dimensions.)
\end{remark}

\begin{remark}
The collaring on our movies---and the collarings on all the simplices of $\lioudeltadef$---guarantees that our movies are sectors, as we saw in Proposition~\ref{prop. movies are sectors}. Collaring by cubes of the form $[-1,\epsilon]^N$ for $\epsilon>0$ will serve a further purpose in proving Theorem~\ref{theorem. localization}; namely, the construction in Remark~\ref{remark. lioudelta is a sub of Ex lioustr} is justified by the fact our collarings take place on neighborhoods of the form $[-1,\epsilon]^N$ for $\epsilon>0$.
\end{remark}

The face maps of $\lioudeltadef$ are what one would expect: Given an injection $\iota: J \to I$ of linearly ordered sets and an $I$-simplex $(\{M_{A} \times T^*\Delta^{A}\}_{\emptyset \neq A\subset I},\{\phi_{{A'} \subset {A}}\})$, the associated $J$-simplex is pulled back along $\iota$. More precisely, it is given by:
	\eqnn
	(\{M_{\iota(B)} \times T^*\Delta^{B}\}_{\emptyset \neq B\subset J},\{\phi_{{\iota(B')} \subset {\iota(B)}}|_{M_{\iota(B)} \times T^*\Delta^{B}} \}).
	\eqnd
\end{defn}

\begin{remark}\label{remark. y compose}
Fix an element $s \in \Delta^{A''}$. Then the composition requirement~\ref{item. lioudelta composition} guarantees that
	\eqnn
	y^s_{A' \subset A} 
	\circ
	y^s_{A'' \subset A'}
	=
	y^s_{A'' \subset A}.  
	\eqnd
Here, $\{y^s_{A'' \subset A}: M_{A''} \to M_A\}_{s \in \Delta^{A''}}$ is the family of deformation embeddings determined by the collaring movies $\phi$. (See~\eqref{eqn. family of embeddings}.) Note that in this family, the Liouville structures on $M_{A}$ and $M_{A''}$ are $s$-dependent.
\end{remark}

\subsection{A 1-simplex}
\label{section. 1 simplex of lioudelta}

Let us explicate the definition of a 1-simplex; see also Figure~\ref{figure. 1-simplex-lioudelta}. In this section, we will use the symbol $\epsilon = \epsilon_A$ to mean some positive real number, and not indicate the dependency on the choice of a particular set $A$.

Following the notation of Definition~\ref{defn. lioudeltadef},
we take $I$ to be the poset $[1] = \{0, 1\}$. Recall that the standard notation $\Delta^I$ in this case is often abbreviated to $\Delta^1$ -- that is, $\Delta^{\{0,1\}}$ is the standard 1- simplex. 

Then to give an $I$-simplex (i.e., a 1-simplex) is to give the data of 
\begin{itemize}
\item  Three Liouville sectors 
$M_{\{0\}}$,
$M_{\{1\}}$, and
$M_{\{0,1\}} \times T^*\Delta^{\{0,1\}}$. 

\item A strict sectorial embedding
	\begin{equation}\nonumber
	\phi_{ \{0\} \subset \{0,1\}} :
	M_{\{0\}} \tensor T^*[-1,\epsilon]^{\{1\}} \to M_{\{0,1\}} \times T^*\Delta^{\{0,1\}}
	\end{equation}
	and
\item A strict sectorial embedding
	\begin{equation}\nonumber
	\phi_{ \{1\} \subset \{0,1\}} :
	M_{\{1\}} \tensor T^*[-1,\epsilon]^{\{0\}} \to M_{\{0,1\}} \times T^*\Delta^{\{0,1\}}.
	\end{equation}.
\end{itemize}

\begin{figure}[ht]
    \begin{equation}\nonumber
			\xy
			\xyimport(8,8)(0,0){\includegraphics[width=4in]{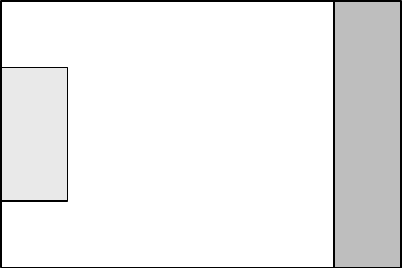}}
				,(7.3,4)*{M_{\{1\}}}	
				,(0.7,4)*{M_{\{0\}}}
			\endxy
    \end{equation}
	\begin{figurelabel}
    \label{figure. 1-simplex-lioudelta}
The largest rectangle in this image represents $M_{\{0,1\}} \times T^*\Delta^1$, with the cotangent direction omitted. The horizontal direction represents the $\Delta^1$ direction, and the vertical direction represents the $M_{\{0,1\}}$ direction. We depict a copy of $M_{\{0\}} \times [-1,\epsilon]_s$, in light gray, embedding into $M_{\{0,1\}} \times \Delta^1$, where the $M_{\{0\}}$ component is mapped independently of $s$. Likewise we depict a copy of $M_{\{1\}} \times [-1,\epsilon]_s$ embedded into $M_{\{0,1\}}$ where the $M_{\{1\}}$ component is mapped independently of of $s$; that $M_{\{1\}}$ fills the entire vertical component is meant to connote that $M_{\{1\}}$ maps diffeomorphically to $M_{\{0,1\}}$. In contrast, that $M_{\{0\}}$ occupies only a proper subset of the vertical direction shows that $y_0$ \eqref{eqn. y_0 of 1 simplex} need not be a diffeomorphism.
	\end{figurelabel}
\end{figure}

These data must satisfy the following condition to be a 1-simplex in 
$\lioudeltadef$:

\begin{itemize}
\item 
$M_{\{0,1\}} \times T^*\Delta^{\{0,1\}}$ must be a $\Delta^{\{0,1\}}$-movie (Definition~\ref{defn. movie object}). In particular, it must have a Liouville sector structure of the form $\theta_0 \oplus pdq$ along ``$M_{\{0,1\}}$ times $T^*$ of a neighborhood of the 0th vertex of $\Delta^{\{0,1\}}$,'' and likewise of the form $\theta_1 \oplus pdq$ in a neighborhood of ``$M_{\{0,1\}}$ times $T^*$ of a neighborhood of the 1st vertex of $\Delta^{\{0,1\}}$.'' Note $\theta_0$ and $\theta_1$ are (not necessarily equal) Liouville structure on $M_{\{0,1\}}$.  

\item 
By the condition that each $\phi$ be a collaring movie of deformation embeddings, we conclude that $\phi_{ \{1\} \subset \{0,1\}}$ is simply a direct product of functions
	\eqn\label{eqn. y_0 of 1 simplex}
	\phi_{ \{i\} \subset \{0,1\}}
	=
	y_i \times T^*\eta
	\eqnd
where $y_i$ is some strict sectorial embedding from $M_{\{i\}}$ to $M_{\{0,1\}}$ (with $M_{\{0,1\}}$ endowed with the Liouville structure $\theta_i$) and $\eta = \eta_{ \{i\} \subset \{0,1\}}$ is a collaring we have already chosen (Choice~\ref{choice. collaring}). 

\item Because $I = \{0,1\}$, there are no proper inclusions $A'' \subset A' \subset A$, so condition~\ref{item. lioudelta composition}
is empty.

\item By Condition~\ref{item. lioudelta is max localizing}, 
the map $y_1: M_{\{1\}} \to M_{\{0,1\}}$ must be a strict {\em isomorphism} (that is, a diffeomorphism respecting Liouville structures on the nose). 
In particular, using $y_1$, one may think of the movie $M_{\{0,1\}} \times T^*\Delta^1$ as a movie of Liouville structures on $M_{\{1\}}$.
\end{itemize} 

\begin{remark}
\label{remark. 1-simplex is family of structures and an embedding}
Thus, a 1-simplex in $\lioudeltadef$ may be informally thought of as the data of two sectors $M_{\{0\}}$ and $M_{\{1\}}$, equipped with 
(i) a smooth codimension zero embedding $M_{\{0\}} \into M_{\{1\}}$ and
(ii) a (bordered and collared) deformation of the Liouville structure on $M_{\{1\}}$ which renders the embedding a strict sectorial embedding.
\end{remark}

\subsection{A 2-simplex}
Now we illustrate what it means to be a 2-simplex in $\lioudeltadef$. We set $I = [2] = \{0< 1 < 2\}$. As before,  we use the symbol $\epsilon = \epsilon_A$ to mean some positive real number, and not indicate the dependency on the choice of a particular set $A$.

The data of a 2-simplex of $\lioudeltadef$ involves

\begin{itemize}
\item Liouville sectors
 \begin{align}
 M_{\{0\}} \cong M_{\{0\}} \times T^*\Delta^{\{0\}}, & &
 M_{\{1\}} \cong M_{\{1\}} \times T^*\Delta^{\{1\}}, & &
 M_{\{2\}} \cong M_{\{2\}} \times T^*\Delta^{\{2\}} \nonumber \\
 M_{\{0,1\}} \times T^*\Delta^{\{0,1\}} , & &
 M_{\{0,2\}} \times T^*\Delta^{\{0,2\}} , & &
 M_{\{1,2\}} \times T^*\Delta^{\{1,2\}} , \nonumber \\
   &&M_{\{0,1,2\}}  \times T^*\Delta^{\{0,1,2\}}. & & \label{eqn. sectors of 2-simplex}
 \end{align} 
 Each of these are movies; for example, $M_{\{0,2\}} \times T^*\Delta^{\{0,2\}} $ is a $\Delta^{\{0,2\}}$-movie, and in particular induces a Liouville structure on  $M_{\{0,2\}}$ near vertex 0 of $\Delta^2$, and a Liouville structure vertex 2 of $\Delta^2$. Likewise, the Liouville sector structure on $M_{\{0,1,2\}}  \times T^*\Delta^{\{0,1,2\}}$ is induced from some $\Delta^{\{0,1,2\}}$-parameter family of Liouville structures on $M_{\{0,1,2\}}$. 
Using the same reasoning as in Remark~\ref{remark. 1-simplex is family of structures and an embedding}, one may think of $M_{\{0,1,2\}} \times T^*\Delta^{\{0,1,2\}}$ as encoding a $\Delta^2$-family of (bordered and collared) deformations of the Liouville structure on $M_{\{2\}}$.
 
 \item For every cardinality 2 subset $\{i,j\}$ of $[2]$ (where $i$ may be less than, or greater than, $j$), a $\Delta^{\{i,j\}}$-simplex of $\lioudeltadef$. Section~\ref{section. 1 simplex of lioudelta} already described what data this entails -- Liouville sectors of the form $M_{\{i\}}$ and $M_{\{i,j\}} \times T^*\Delta^{\{i,j\}}$, along with strict sectorial embeddings
 	\eqnn
	M_{\{i\}} \tensor T^*[-1,\epsilon]^{\{j\}}
	\to
	M_{\{i,j\}} \times T^*\Delta^{\{i,j\}}
	\leftarrow
	M_{\{j\}} \tensor T^*[-1,\epsilon]^{\{i\}}
	\eqnd
of the form 
	\eqnn
	y_{\{i\} \subset \{i,j\}} \times T^*\eta_{\{i\}\subset\{i,j\}},
	\qquad
	y_{\{j\} \subset \{i,j\}} \times T^*\eta_{\{j\}\subset\{i,j\}}
	\eqnd
again for $\eta$ given by collaring conventions (Choice~\ref{choice. collaring}). As an example, the manifold $M_{\{1\}}$ is thus equipped with two sectorial embeddings
	\eqnn
		y_{\{1\} \subset \{1,2\}} : M_{\{1\}} \to  M_{\{1,2\}} \qquad
		\text{and}
		\qquad
		y_{\{1\} \subset \{0,1\}} : M_{\{1\}} \to  M_{\{0,1\}} .
	\eqnd

 \item Again for every cardinality 2 subset $\{i,j\}$ of $[2]$, a strict sectorial embedding
 	\eqnn
	\phi_{\{i,j\} \subset [2]}: 
	M_{\{i,j\}} \times T^*\Delta^{\{i,j\}} \tensor T^*[-1,\epsilon]^{[2] \setminus\{i,j\}}
	\to
	M_{\{0,1,2\}}  \times T^*\Delta^{\{0,1,2\}}.
	\eqnd
Note that the domain is isomorphic to $
	M_{\{i,j\}} \times T^*\Delta^1 \tensor T^*[-1,\epsilon]$. Further, Definition~\ref{defn. lioudeltadef} demands that $\phi_{\{i,j\} \subset [2]}$ be a collaring movie of deformation embeddings; in particular, $
	\phi_{\{i,j\} \subset [2]}$ encodes a $\Delta^{\{i,j\}}$-parametrized family of smooth embeddings 
		\eqnn
		\{y^s : M_{\{i,j\}} \to M_{\{0,1,2\}}\}_{s \in \Delta^{\{i,j\}}}.
		\eqnd
	See Figure~\ref{figure. 2-simplex-lioudelta}.
\end{itemize}

These maps must satisfy the composition requirement~\ref{item. lioudelta composition} (so that, for example, $\phi_{\{0\} \subset [2]}$ may be recovered from $\phi_{\{0\} \subset [1]}$ and $\phi_{[1] \subset [2]}$) and the max-localizing requirement~\ref{item. lioudelta is max localizing} (so that, for example, $\phi_{\{0,2\} \subset \{0,1,2\}}$ induces an isomorphism from $M_{\{0,2\}}$ to $M_{\{0,1,2\}}$). 

\begin{figure}[ht]
    \begin{equation}\nonumber
			\xy
			\xyimport(8,8)(0,0){\includegraphics[width=4in]{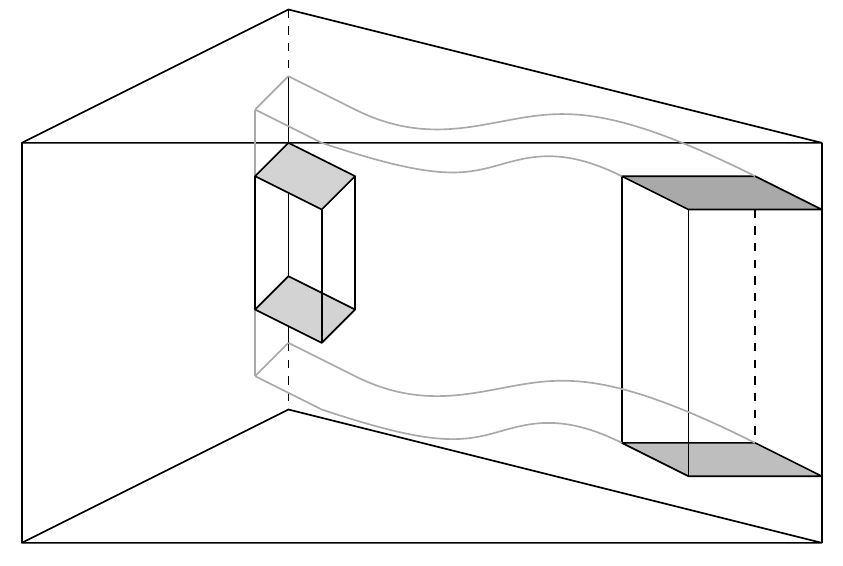}}
				,(6.9,3.2)*{M_{\{1\}}}
				,(2.8,4.4)*{M_{\{0\}}}
			\endxy
    \end{equation}
    \label{figure. 2-simplex-lioudelta}
    
	\begin{figurelabel}
Depicted is a drawing of a triangular prism modeling the manifold $M_{\{0,1,2\}} \times T^*\Delta^2$. The ``base'' directions (i.e., the directions of the triangle) indicate the directions of $\Delta^2$. The vertical direction depicts the $M_{\{0,1,2\}}$ direction, and the cotangent directions of $T^*\Delta^2$ are not indicated. 
The rectangular prism with the light grey top and bottom represents the image of $\phi_{\{0\} \subset [2]}$; the image is isomorphic to a copy of $M_{\{0\}} \times T^*(\Delta^1 \times [-1,\epsilon])$. Likewise, the rectangular prism with the dark grey top and bottom represents the image of $\phi_{\{1\} \subset [2]}$. 
Finally, the curvy grey figure represents the image of $\phi_{\{0,1\} \subset [2]}$ -- i.e., the image of an embedding of $M_{\{0,1\}} \times T^*(\Delta^1 \times [-1,\epsilon])$ into $M_{\{0,1,2\}} \times T^*\Delta^2$. The curviness connotes that $\phi_{\{0,1\} \subset [2]}$ potentially parametrizes a non-constant $\Delta^1$-family of embeddings $y^2$ of $M_{\{0,1\}}$ into $M_{\{0,1,2\}}$. 
	\end{figurelabel}
\end{figure}

\clearpage

\subsection{\texorpdfstring{$\lioudelta$}{Liou Delta} (as a semisimplicial set)}
\label{section. lioudelta properties}

\begin{defn}
\label{defn. lioudelta}
We define $\lioudelta$ to be the subsemisimplicial set of $\lioudeltadef$ consisting only of those movies that are associated to compactly supported deformations.
\end{defn}

\begin{remark}
The consequence of requiring that all movies in sight be compactly supported is that the maps $\{\widetilde{y}_s\}_{s \in \Delta^A}$ are now families of sectorial embeddings for a {\em fixed} Liouville structure on $M$ and $N$ (because any two of $\lambda_s,\lambda_{s'}$ are equivalent). See Remark~\ref{remark. movies give smooth maps to embedding spaces}.
\end{remark}

\begin{remark}
 
The simplices of $\lioudelta$ will thus have concretely interpretable geometric data. As we will see,
\begin{itemize}
\item A 1-simplex in $\lioudelta$ will give rise to the data of a (not necessarily strict) sectorial embedding from $M_0$ to $M_1$ (Example~\ref{example. edges in lioudelta are non-strict embeddings}).
\item A 2-simplex will give rise to a diagram of sectorial embeddings commuting up to isotopy through sectorial embeddings (Proposition~\ref{prop. 2-simplex is an isotopy}).
\end{itemize}
Conversely,
\begin{itemize}
\item Any (not necessarily strict) sectorial embedding from $M$ to $N$ gives rise to a (non-unique, but determined by contractible data) 1-simplex with initial vertex $M$ and terminal vertex $N$ (Construction~\ref{construction. edges in lioudelta}).
\item Any isotopy through sectorial embeddings $f, g: M \to N$ will give rise to a (non-unique, but determined up to contractible data) 2-simplex (Example~\ref{example. isotopies are 2 simplices}).
\end{itemize}

There is a similar interpretation of higher simplices; this is encapsulated in Theorem~\ref{theorem. EmbLiou is homLiouDelta}.

We remark that the ``converse'' constructions are postponed until Section~\ref{section. lioudelta geometry}, as we will not have established that $\lioudelta$ admits degeneracy maps until then. The delay is worth it: Having degeneracy maps available will allow us to articulate what we mean by homotopies of morphisms in $\lioudelta$ precisely. Because the converse constructions require choices, we wait for the availability of homotopies to be able to assuage the reader that these choices are not consequential (up to homotopy).
\end{remark}

\begin{remark}
Fix an $I$-simplex of $\lioudelta$ and fix an element $a \in I$ along with a subset $A \subset I$, $a \in A$. Then the inclusion  $\{a\} \subset A$ gives the data of a map
	\eqnn
	\phi_{\{a\} \subset {A}} : M_{ \{a\} } \tensor T^*[-1,\epsilon]^{A} \to M_{A} \times T^*\Delta^{A}.
	\eqnd
By the collaring assumption, this in fact determines a smooth embedding
	\eqn\label{eqn. liouville embeddings y}
	y_{\{a\} \subset A} : M_{\{a\}} \to M_{A}
	\eqnd
and this is a strict sectorial embedding for the Liouville structure on $M_{A}$ determined near the vertex $\Delta^{\{a\}} \subset \Delta^{A}$. 
\end{remark}

\begin{remark}
\label{remark. M_A structure}
When further $a = \max A$, Condition~\ref{item. lioudelta is max localizing} states that $y_{\{a\} \subset A}$ is a Liouville isomorphism:
	\eqn\label{eqn. max isomorphisms for lioudelta}
	y_{\{\max A\} \subset A} : M_{\{\max A\}} \xrightarrow{\cong} M_{A}
	\eqnd
and this isomorphism is strict if $M_{A}$ is endowed with this Liouville structure determined near the vertex $\Delta^{\{a\}} \subset \Delta^{A}$. 

For this reason, when we write $M_A$, we will often consider the manifold $M_A$ to be equipped with the Liouville structure determined near the vertex $\Delta^{\max A}$ of $\Delta^A$.
\end{remark}

\begin{remark}
Given the data of all the families $\{\widetilde{y}^s_{A' \subset A}\}_{s \in \Delta^{A'}}$, and of Liouville structure on each $M_A$ for $A \subset I$, the data of an $I$-simplex of $\lioudelta$ is determined completely by the additional data of compactly supported smooth functions $h_{A} : M_{A} \times \Delta^{A} \to \RR$.

Because the space of $h_{A}$ is contractible, one should imagine that the families 
$\{\widetilde{y}^s_{A' \subset A}\}_{s \in \Delta^{A'}}$ are the homotopically relevant ``meat'' of the information conveyed by a simplex of $\lioudelta$.
\end{remark}

\begin{example}\label{example. edges in lioudelta are non-strict embeddings}
Fix an $I$-simplex of $\lioudelta$ with $|I| \geq 2$, and fix two elements $a< b$ in $I$. Let us study the associated 1-simplex of $\lioudelta$. We have the data of maps
	\eqnn
	M_{\{a\}} \tensor T^*[-1,\epsilon_a] \to M_{\{a,b\}} \times T^*\Delta^{\{a,b\}} \leftarrow M_{\{b\}} \tensor T^*[-1,\epsilon_b].
	\eqnd
The main difference with a 1-simplex in $\lioudeltadef$ (as studied in Section~\ref{section. 1 simplex of lioudelta}) is that the movie $M_{\{a,b\}} \times T^*\Delta^{\{a,b\}}$ is a movie through {\em compactly supported} deformations of the Liouville structure.

These maps in turn determine (not necessarily strict) sectorial embeddings
	\eqnn
	y_{\{a\} \subset \{a,b\}} : M_{\{a\}} \to M_{\{a,b\}},
	\qquad
	y_{\{b\} \subset \{a,b\}}: M_{\{b\}} \to M_{\{a,b\}}
	\eqnd
as in \eqref{eqn. liouville embeddings y}. In fact, by endowing $M_{\{a,b\}}$ with the Liouville structure determined near the vertex $\Delta^{\{b\}} \subset \Delta^{\{a,b\}}$  (Remark~\ref{remark. M_A structure}), the map from $M_{\{b\}}$ is a strict Liouville isomorphism as in~\eqref{eqn. max isomorphisms for lioudelta}. By composing with the inverse, we thus obtain a (not necessarily strict) sectorial embedding
	\eqn\label{eqn.  y ab}
	y_{a < b} = (y_{\{b\} \subset \{a,b\}})^{-1} \circ y_{\{a\} \subset \{a,b\}} : M_{\{a\}} \to M_{\{b\}}. 
	\eqnd
Thus, though $\lioudelta$ has diagrams built out of strict sectorial embeddings, we see that we can detect non-strict sectorial embeddings as edges in $\lioudelta$.
\end{example}

Here is the first hint that $\lioudelta$ detects the topology of Liouville embedding spaces:

\begin{prop}\label{prop. 2-simplex is an isotopy}
Fix a (semisimplicial) 2-simplex $\Delta^2 \to \lioudelta$. Letting $0,1,2$ denote the vertices of $\Delta^2$, consider the (not necessarily strict) sectorial embeddings $y_{0<1}, y_{1<2}, y_{0<2}$ as defined in~\eqref{eqn.  y ab}.

Then the composition $y_{1<2} \circ y_{0<1}$ is isotopic, through (not necessarily strict) sectorial embeddings, to $y_{0<2}$.
\end{prop}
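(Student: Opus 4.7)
The plan is to exploit the fact that inside the triangle $\Delta^{\{0,1,2\}}$ there are two natural paths from the vertex $v_0 := \Delta^{\{0\}}$ to $v_2 := \Delta^{\{2\}}$: the direct one along the edge $\Delta^{\{0,2\}}$, and the indirect one through $v_1$ along $\Delta^{\{0,1\}}$ and then $\Delta^{\{1,2\}}$. Each of the three collaring movies $\phi_{\{a,b\} \subset \{0,1,2\}}$ encodes, via Remark~\ref{remark. movies give movies}, a smooth $\Delta^{\{a,b\}}$-parametrized family of sectorial embeddings $\widetilde y^s_{\{a,b\} \subset \{0,1,2\}} \colon M_{\{a,b\}} \to M_{\{0,1,2\}}$. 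Turning these two simplicial paths into two paths of embeddings $M_{\{0\}} \to M_{\{0,1,2\}}$ with common endpoints, and then projecting back along the strict Liouville isomorphism $\iota := y_{\{2\} \subset \{0,1,2\}}$ (which is an isomorphism by Condition~\ref{item. lioudelta is max localizing}), will produce the required isotopy in $M_{\{2\}}$.

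Concretely, I will introduce the three smooth families of maps $M_{\{0\}} \to M_{\{0,1,2\}}$
\[
\alpha_s := \widetilde y^s_{\{0,2\} \subset \{0,1,2\}} \circ y_{\{0\} \subset \{0,2\}}, \qquad \beta_s := \widetilde y^s_{\{0,1\} \subset \{0,1,2\}} \circ y_{\{0\} \subset \{0,1\}},
\]
\[
\gamma_s := \widetilde y^s_{\{1,2\} \subset \{0,1,2\}} \circ y_{\{1\} \subset \{1,2\}} \circ y_{0<1},
\]
parametrized respectively by $s \in \Delta^{\{0,2\}}$, $\Delta^{\{0,1\}}$, $\Delta^{\{1,2\}}$. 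Applying the composition axiom~\ref{item. lioudelta composition} to the chains $\{i\} \subset \{i,j\} \subset \{0,1,2\}$, together with Remark~\ref{remark. y compose} and the definition~\eqref{eqn.  y ab} of $y_{a<b}$, I will read off the endpoints $\alpha_{v_0} = \beta_{v_0} = y_{\{0\} \subset \{0,1,2\}}$, $\alpha_{v_2} = \iota \circ y_{0<2}$, $\beta_{v_1} = \gamma_{v_1} = y_{\{1\} \subset \{0,1,2\}} \circ y_{0<1}$, and $\gamma_{v_2} = \iota \circ y_{1<2} \circ y_{0<1}$. Concatenating the reverse of $\alpha$ with $\beta$ followed by $\gamma$ therefore yields a path from $\iota \circ y_{0<2}$ to $\iota \circ y_{1<2} \circ y_{0<1}$ through sectorial embeddings $M_{\{0\}} \to M_{\{0,1,2\}}$; post-composition with $\iota^{-1}$ gives the asserted isotopy in $M_{\{2\}}$.

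The main technical issue will be to verify that each $\alpha_s$, $\beta_s$, $\gamma_s$ is genuinely a sectorial embedding for a fixed pair of reference Liouville structures on $M_{\{0\}}$ and $M_{\{0,1,2\}}$, since ``isotopy through sectorial embeddings'' refers to a fixed pair of Liouville sectors. Because we work in $\lioudelta$ (Definition~\ref{defn. lioudelta}) rather than in $\lioudeltadef$, every family of Liouville structures entering the movies is a compactly supported exact deformation, so the identity $f^*\lambda^N = \lambda^M + dh$ with $h$ of compact support is preserved under changes of admissible reference structure on either $M_{\{a,b\}}$ or $M_{\{0,1,2\}}$; one recovers the required pullback relation by tracking the compactly supported generators through each composition. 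Smoothness of the concatenated path at the gluing vertices $v_0$ and $v_1$ then follows from the collaring clause in Definition~\ref{defn. collaring movie}, which forces each of $\alpha$, $\beta$, $\gamma$ to be constant in the collar coordinates normal to the relevant vertex of its parameter simplex.
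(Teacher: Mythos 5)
Your proposal is correct and follows the same strategy as the paper's own proof. The three families $\alpha_s$, $\beta_s$, $\gamma_s$ are precisely the three isotopies implicitly traversed in the paper's chain of $\sim$ and $=$ relations (the paper traverses $\bar\gamma$, then $\bar\beta$, then $\alpha$, starting from $\iota \circ y_{1<2} \circ y_{0<1}$ and ending at $\iota \circ y_{0<2}$, i.e.\ the reverse of your concatenation), and the endpoint-matching you verify via the composition axiom~\ref{item. lioudelta composition} is the same computation carried out inline in the paper; the reorganization into a concatenation of named families with explicitly computed endpoints, plus the remark on smoothness at the gluing vertices via collaring, is a modest expositional improvement but not a new argument.
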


\begin{proof}
The 2-simplex gives rise to the following commutative diagram\footnote{Though we have drawn the diagram rectilinearly, the reader may reshape the diagram to witness that the diagram has the shape of the barycentric subdivision of the 2-simplex.} of smooth maps (none of which are codimension zero embeddings):
	\eqnn
	\xymatrix{
	M_{\{0\}} \times \Delta^{\{0\}}
	\ar[rr]^{y_{\{0\}\subset\{0,1\}} \times \iota}
	\ar[dd]^{y_{\{0\}\subset\{0,2\}} \times \iota}
	&& 	M_{\{0,1\}} \times \Delta^{\{0,1\}}
		\ar[d]^{(y^s_{\{0,1\}\subset\{0,1,2\}},s)}
	&& \ar[ll]_{y_{\{1\}\subset\{0,1\}} \times \iota}
		M_{\{1\}} \times \Delta^{\{1\}} 
		\ar[d]^{y_{\{1\}\subset\{1,2\}} \times \iota}
	\\
	&& 	M_{\{0,1,2\}} \times \Delta^{\{0,1,2\}}
	&& 	\ar[ll]^{(y^s_{\{1,2\}\subset\{0,1,2\}},s)} 
		M_{\{1,2\}} \times \Delta^{\{1,2\}}
	\\
	M_{\{0,2\}} \times \Delta^{\{0,2\}}
		\ar[urr]_-{(y^s_{\{0,2\}\subset\{0,1,2\}},s)} 
	&&
	&& \ar[llll]^{y_{\{2\}\subset\{0,2\}} \times \iota}
		M_{\{2\}} \times \Delta^{\{2\}}
		\ar[u]_{y_{\{2\}\subset\{1,2\}} \times \iota}
	}
	\eqnd
where we have used the notation from~\eqref{eqn. liouville embeddings y} and \eqref{eqn. y family from simplex}, and the $\iota$ are the obvious inclusions of faces of simplices. The diagonal map takes an element $(x,s) \in M_{\{0,2\}} \times \Delta^{\{0,2\}}$ and maps it to the element $(y^s_{\{0,2\}\subset\{0,1,2\}}(x),s) \in M_{\{0,1,2\}} \times \Delta^{\{0,1,2\}}$.

Moreover, by the definition of a simplex in $\lioudelta$, we know that the $y^s$ form smooth families of (not necessarily strict) sectorial embeddings. So, for example, the middle vertical map encodes an isotopy
	\eqnn
	\{y^s_{\{0,1\} \subset \{0,1,2\}} : M_{\{0,1\}} \to M_{\{0,1,2\}} \}_{s \in \Delta^{\{0,1\}}}
	\eqnd
from the sectorial embedding encoded at the vertex $s = \Delta^{\{0\}}$ to the sectorial embedding encoded at the vertex $s = \Delta^{\{1\}}$. In the equations that follow, we will abuse notation and write $s=0, 1, 2$ to mean the vertices $\Delta^{\{0\}}, 
\Delta^{\{1\}}, 
\Delta^{\{2\}}$ of the 2-simplex $\Delta^{\{0,1,2\}}$. We also let $\sim$ mean ``is smoothly isotopic to, through (not necessarily strict) sectorial embeddings.'' Then we have:
\begin{align}	
	y^{s=2}_{ \{1,2\} \subset \{0,1,2\}} 
		\circ y_{ \{2\} \subset \{1,2\}}
		\circ y_{1 < 2}
		\circ y_{0 < 1}
	& \sim 
	y^{s=1}_{ \{1,2\} \subset \{0,1,2\}} 
		\circ y_{ \{2\} \subset \{1,2\}}
		\circ y_{1 < 2}
		\circ y_{0 < 1}\nonumber \\
	& = 
	y^{s=1}_{ \{1,2\} \subset \{0,1,2\}} 
		\circ y_{ \{1\} \subset \{1,2\}}
		\circ y_{0 < 1}\nonumber \\
	& = 
	y^{s=1}_{ \{0,1\} \subset \{0,1,2\}} 
		\circ y_{ \{1\} \subset \{0,1\}}
		\circ (y_{ \{1\} \subset \{0,1\}})^{-1}
		\circ y_{ \{0\} \subset \{0,1\}}\nonumber \\
	& = 
	y^{s=1}_{ \{0,1\} \subset \{0,1,2\}} 
		\circ y_{ \{0\} \subset \{0,1\}}\nonumber \\
	& \sim 
	y^{s=0}_{ \{0,1\} \subset \{0,1,2\}} 
		\circ y_{ \{0\} \subset \{0,1\}}\nonumber \\
	& = 
	y^{s=0}_{ \{0,2\} \subset \{0,1,2\}} 
		\circ y_{ \{0\} \subset \{0,2\}}\nonumber \\
	& \sim
	y^{s=2}_{ \{0,2\} \subset \{0,1,2\}} 
		\circ y_{ \{0\} \subset \{0,2\}}. \nonumber
\end{align}
We are finished by post-composing with the inverse diffeomorphism to 
	\eqnn
	y^{s=2}_{ \{1,2\} \subset \{0,1,2\}} 
		\circ y_{ \{2\} \subset \{1,2\}}
	=
	y^{s=2}_{ \{0,2\} \subset \{0,1,2\}} 
		\circ y_{ \{2\} \subset \{0,2\}}
	.
	\eqnd
\end{proof}

\subsection{Change of bases to max-constant simplices}
\label{section. max constant simplices}
\begin{defn}[Max-constant simplices]
Fix an $I$-simplex of $\lioudelta$. Let us say that the simplex is {\em max-constant} (as opposed to just max-localizing) if for every subset $A \subset I$,
	\enum
	\item $M_A = M_{\{\max A\}}$, and
	\item The sectorial isomorphism $y_{\{\max A\} \subset A} : M_{\{\max A\}} \to M_A$---see~\eqref{eqn. max isomorphisms for lioudelta}---is the identity map. 
	\enumd
\end{defn}

There is a natural way to ``change bases'' to take the data of an $I$-simplex of $\lioudelta$ and render it a max-constant simplex. Given a chain of subsets $\emptyset \neq A \subset B \subset I$, consider the map
	\eqnn
	y_{A \subset B}: \Delta^A \to \embliou(M_A, M_B)
	\eqnd
given by the original $I$-simplex. This induces a map
	\eqnn
	y'_{A \subset B}: \Delta^A \to \embliou(M_{\max A},M_{\max B})
	\eqnd
by conjugating by the isomorphisms $M_{\max A} \cong M_A$ and $M_{B} \cong M_{\max B}$ determined by the original $I$-simplex. It is straightforward to verify that the collection of maps $\{y'_{A \subset B}\}_{\emptyset \neq A \subset B \subset I}$ defines a max-constant $I$-simplex.

This change of basis has the advantage that a collection of maps $M_A \to M_B$ and $M_{A'} \to M_{B'}$ may---if $\max A = \max A'$ and $\max B = \max B'$---be considered as a collection of maps $M_{\max A} \to M_{\max B}$ in a single mapping space (i.e., with fixed domain and codomain). We will utilize this when proving that $\lioudelta$ satisfies the weak Kan condition (see Construction~\ref{construction. tilde alpha on jth face}, for example).

\clearpage
\section{The weak Kan condition}
\label{section. weak kan}

Our goal is to now prove:
\begin{theorem}\label{theorem. weak kan}
$\lioudelta$ satisfies the (semisimplicial) weak Kan condition. 
\end{theorem}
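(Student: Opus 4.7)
The plan is to fill a horn $\alpha \colon \Lambda^n_k \to \lioudelta$ (with $I = \{0,\dots,n\}$ and $0 < k < n$) by reducing to a max-constant representative, constructing the missing data via continuous horn-filling in spaces of sectorial embeddings together with Liouville isotopy extension, and then smoothing using the continuous-to-smooth techniques of Section~\ref{section. continuous to smooth}. I first apply the change-of-basis of Section~\ref{section. max constant simplices} to assume $\alpha$ is max-constant. Because $k < n$, the vertex $n = \max I$ is contained in both $I$ and $I \setminus \{k\}$, and the sector $M_{\{n\}}$ is already determined by the horn; max-constancy then gives $M_I = M_{I \setminus \{k\}} = M_{\{n\}}$ for free. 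What remains to construct are (i) bordered, collared, compactly supported deformation families of Liouville structures on $M_{\{n\}}$ parametrized by $\Delta^{I \setminus \{k\}}$ and $\Delta^I$, and (ii) the associated collaring movies, which by the movie-of-maps correspondence amount to smooth, collared, continuous families $\{y^s_{A \subset B} \colon M_{\max A} \to M_{\max B}\}_{s \in \Delta^A}$ of (not-necessarily-strict) sectorial embeddings satisfying the composition relation on all shared boundary strata.

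The construction proceeds in two stages: first fill the missing face $A = I \setminus \{k\}$ (a lower-dimensional instance of the same problem, handled by induction on $n$), then fill the top $A = I$ with the now-completed $(n-1)$-skeleton as boundary data. For the top, I would work inductively in decreasing order on $i \in I$. The base case $i = n$ is trivial, and at each subsequent stage I must extend the family $\{y^s_{\{i\} \subset A}\}$ valued in $\embtop_\liou(M_{\{i\}}, M_{\{n\}})$ from the horn to the relevant simplex. Because singular complexes of topological spaces are Kan, a continuous fill always exists. To enforce compatibility of the fills across different $i$'s via the composition relation, I would apply the Liouville isotopy extension theorem (Proposition~\ref{prop. isotopy extension}) in $\Delta^A$-families; the extending Hamiltonians can be chosen with arbitrary collaring profile, so the resulting isotopies inherit the collaring of the horn data. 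By first precomposing with a shrinking isotopy from Section~\ref{section. shrinking}, each embedded image can be pushed uniformly off $\partial M_{\{n\}}$, so that the ``no shrinking required'' case of isotopy extension applies directly. Simultaneously, the families of Liouville structures on $M_{\{n\}}$ are extended from the horn using the contractibility of the space of compactly supported bordered deformations.

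The fill produced by the preceding steps is a priori only continuous and collared along $\Lambda^n_k$. I would then apply the rel-boundary form of Proposition~\ref{prop. continuous to smooth} to homotope it into a smooth, collared, continuous fill on all of $\Delta^I$. Reversing the change of basis produces the desired $I$-simplex of $\lioudelta$. The most delicate point I anticipate is coordinating the inductive isotopy extensions across all $i$ in a way that simultaneously preserves strictness along the horn, respects the composition relation, and keeps the Liouville deformations of $M_{\{n\}}$ compactly supported (so that the fill lies in $\lioudelta$ rather than merely in $\lioudeltadef$). The first two issues are handled by the ``rel boundary'' and collaring-compatible versions of isotopy extension (Remark~\ref{remark. can isotopy extend while collaring}); the third follows because, at each inductive step, the new Hamiltonians may be chosen with compact support, so the resulting perturbations of $\lambda^{M_{\{n\}}}$ automatically remain compactly supported, and we can fall back on Theorem~\ref{theorem. embedding spaces are deformation embedding spaces} to correct any residual slippage rel the horn.
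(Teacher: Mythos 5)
Your overall architecture---change of basis to max-constant, set $M_I = M_{I\setminus\{k\}} = M_{\{n\}}$, fill by combining the Kan property of $\embtop_\liou$ with isotopy extension, then smooth using the Section~\ref{section. continuous to smooth} machinery---is aligned with the paper's strategy at a coarse level. But there is a genuine gap at the single most delicate point of the argument: your plan to ``first fill the missing face $A = I\setminus\{k\}$ (a lower-dimensional instance of the same problem, handled by induction on $n$)'' misidentifies the filling problem. The entire boundary $\partial\Delta^{I\setminus\{k\}}$ of the $k$th face lies inside the horn $\Lambda^n_k$, so its data is already fully prescribed; what must be produced is an extension of a \emph{sphere}, not a lower-dimensional horn. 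The Kan property of $\embtop_\liou$ that you invoke only supplies horn-fillers, so this step cannot be ``handled by induction on $n$'' in the way a horn-filling recursion would be. In fact, filling this sphere is precisely the technical heart of the paper's proof, and it is resolved not by a standalone lower-dimensional argument but by building up the collaring movies $\phi_{B\subset[n]}$ for all facets $B$ containing vertex $n$ first (Constructions~\ref{construction. tilde alpha B} and~\ref{construction. B' of tilde alpha}), and then observing a piecewise-smooth homeomorphism between $|\del_j\Delta^n|$ and the glued complex $(\bigcup_{B'}|\Delta^{B'}|) \cup_{|\Lambda^{n-1}_j|} |\Delta^{n-1}|$, which converts the sphere into a sequence of genuine horn-fillers in $\widetilde{\embtop_\liou}(M_{\{n\}},M_{\{n\}})$ that can then be smoothed (Construction~\ref{construction. tilde alpha on jth face}).

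This projection argument depends crucially on the families of shrinking isotopies $\sigma_{t,u}$ of Choice~\ref{choice. sigma}, which are not merely a device to avoid the shrinking case of isotopy extension (as in your proposal). Rather, they are woven directly into the \emph{definition} of the maps $\widetilde{\alpha}(B''\subset[n])$ in Construction~\ref{construction. tilde alpha B}---with carefully controlled collaring behavior near the $n$th vertex---so that all the collaring movies into $M_{[n]}$ can be collapsed into a single map from a piecewise-smooth model of $|\del_j\Delta^n|$. Your description treats the shrinking as a preprocessing step applied to images before isotopy extension, which is a different and weaker use and does not produce the projection. Finally, your sketch does not address the construction of $\phi_{[n-1]\subset[n]}$ (the one facet not containing $n$), which the paper handles by a case split on whether $j=n-1$ (Sections~\ref{section. b 1} and~\ref{section. b 2}) with a separate isotopy-extension argument constrained by data that has already been fixed; and the $n=2$ base case (Section~\ref{section. n=2 horn filling}) is a genuinely separate explicit construction rather than a trivial base of an induction on $n$. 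The issues about compact support and about smoothing rel the horn are handled plausibly in your outline, but the sphere-versus-horn confusion at the $k$th face is a fatal gap as written.
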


We will, in parallel, prove also the following:

\begin{theorem}\label{theorem. weak kan def}
$\lioudeltadef$ satisfies the (semisimplicial) weak Kan condition. 
\end{theorem}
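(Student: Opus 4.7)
The plan is to prove both theorems in parallel. Fix an inner horn $\Lambda^n_k \to \lioudelta$ (resp. $\lioudeltadef$) with $0 < k < n$. The horn supplies $\Delta^A$-movies $M_A \times T^*\Delta^A$ and collaring movies $\phi_{A' \subset A}$ for every proper $A \subsetneq [n]$ except $A = [n] \setminus \{k\}$, along with all compatibilities. My task is to construct the missing movie on $[n] \setminus \{k\}$, the top movie on $[n]$, and all collaring movies with these as codomain. Because $0 < k < n$, the vertex $n$ is maximal in both $[n] \setminus \{k\}$ and $[n]$, so I first pass to max-constant form (Section~\ref{section. max constant simplices}) and work with a fixed Liouville sector $M := M_{\{n\}}$; the data of each $y^s_{A' \subset A}$ then becomes a family of sectorial self-embeddings of $M$ (for $s$-dependent Liouville forms).

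I would proceed in two stages. First, I construct the movie structures, i.e., collared, bordered (resp. compactly supported) deformations of $\lambda^M$ parametrized by $\Delta^{[n] \setminus \{k\}}$ and then by $\Delta^{[n]}$, extending the data already given on the boundary by the horn. The existence of such an extension follows from contractibility of the deformation spaces $\defliou(M)$ and $\defliou^{\cmpct}(M)$ (which are convex subspaces of function spaces), combined with the collaring convention of Section~\ref{section. -1 convention for simplex}: any extension may be made collared by first extending arbitrarily and then reparametrizing using the chosen collars. I fill the face $A = [n] \setminus \{k\}$ first, so that $\partial \Delta^{[n]}$ is fully specified before I extend across $\Delta^{[n]}$.

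Second, I construct the families $\{y^s_{A' \subset A} : M \to M\}_{s \in \Delta^{A'}}$ for $A \in \{[n] \setminus \{k\}, [n]\}$ and every $A' \subsetneq A$, inducting on $|A'|$. The horn supplies these families on $\partial \Delta^{A'}$ (via the composition condition~\ref{item. lioudelta composition} and the inductive hypothesis), and the missing interior is filled by exploiting the Kan property of $\sing \, \embtop_{\liou}(M,M)$ for the topology seeing the deformed Liouville forms. A continuous filler exists, and Proposition~\ref{prop. continuous to smooth} deforms it, rel boundary, to a smooth and collared filler. At each inductive step I must ensure compatibility with the already-chosen $y^s_{A'' \subset A'}$ for $A'' \subsetneq A'$; this is enforced by invoking the isotopy extension theorem (Proposition~\ref{prop. isotopy extension}), which gives enough ambient flexibility to absorb the strict composition condition. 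Once all $y^s$ are chosen, Proposition~\ref{prop. movies of maps} assembles them (at the cost of a compactly supported adjustment of the movie's Liouville form) into strict sectorial embeddings of movies, hence into the desired collaring movies. Condition~\ref{item. lioudelta is max localizing} is automatic from the construction, since near the vertex $\{n\}$ the movie structures are $s$-independent and each $y^s$ restricts to the identity of $M$.

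The main obstacle is maintaining strict composition while filling the families inductively: each freshly filled $y^s_{A' \subset A}$ must agree on the nose with all compositions $y^s_{A' \subset A} \circ y^s_{A'' \subset A'} = y^s_{A'' \subset A}$ for $A'' \subsetneq A'$, not merely up to homotopy. The saving grace is that these constraints live in codimension $\geq 1$ of $\Delta^{A'}$, so the Kan condition on embedding spaces and the isotopy extension theorem together leave room to satisfy them. The same argument works verbatim for $\lioudeltadef$, replacing ``compactly supported'' with ``bordered'' throughout; the only properties used --- contractibility of the deformation space and closure under collaring --- hold equally in both settings.
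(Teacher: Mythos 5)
Your overall architecture aligns with the paper's: pass to max-constant form, fill $\partial_k\Delta^n$ before filling $\Delta^n$, and use Kan fillers in embedding spaces plus the smooth-approximation machinery of Proposition~\ref{prop. continuous to smooth}. But there are two interlocking gaps.

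First, you invoke Proposition~\ref{prop. isotopy extension} to ``absorb the strict composition condition,'' but that theorem does not extend an isotopy $j_{s,t}$ on the nose: in general it extends $\shrink_t \circ j_{s,t}$ for a chosen family of shrinking isotopies, and only yields a genuine extension of $j_{s,t}$ itself when the images stay uniformly away from $\partial N$ (which for strict sectorial embeddings of a subsector into a parent is the exception rather than the rule). The paper handles this by making the shrinking isotopies an explicit part of the data before any extension happens: a family $\sigma_{t,u}$ is chosen once in Choice~\ref{choice. sigma} (subject to a careful collaring constraint described in Remark~\ref{remark. collaring of sigma}), and the composite maps $\widetilde\alpha(B''\subset[n])$ are \emph{defined} as $\sigma_{t,u}$ post-composed with the horn data in~\eqref{eqn. tilde alpha B''}; isotopy extension is then used to \emph{factor} these composites, not to extend the raw horn data. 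Without this, your inductive step has no way to produce a filler that both extends the given boundary families and satisfies~\ref{item. lioudelta composition} on the nose. ``Enough ambient flexibility'' is exactly what the shrinking isotopy supplies, and it must be built in from the start so that all subsequent fillers are compatible with the same choice.

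Second, fixing the movie structure (the deformation $h$) before constructing the embedding families inverts the paper's order and undercuts the main flexibility you have. In $\lioudelta$ the families $y^s$ are the homotopically meaningful data and the compactly supported $h$ lives in a convex (hence contractible) space of \emph{afterthoughts} compatible with a given choice of $y^s$ (Remark~\ref{remark. M_A structure} and the discussion after it); once you freeze $h$, you are asking for families strict with respect to a prescribed deformation, a much more rigid condition to impose on top of the composition constraints and the horn's boundary data. In $\lioudeltadef$ the issue is worse: the deformations are no longer contractible in the relevant sense, and the paper tracks them simultaneously with the embeddings by working with sections of the fibration $\widetilde{\embtop_\liou} \to \Delta^B$ from Notation~\ref{notation. tilde emb}, using shrinking isotopies again (Remark~\ref{remark. extending deformations}) to propagate a deformation from the image of a subsector to the whole codomain. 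Your ``same argument verbatim'' dismissal hides precisely this step. You would need to reorganize the construction so the embedding families and deformations are chosen in tandem (with the deformations either contractible afterthoughts in the compactly supported case, or extended via shrinking in the bordered case), and so the shrinking isotopies enter as specified auxiliary data from the beginning.
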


\begin{remark}
The underlying philosophy of establishing the weak Kan property for both semisimplicial sets is identical, but there are technical differences. For $\lioudelta$, the only deformations appearing in movies are compactly supported, so all the technical difficulty is in constructing particular families of sectorial embeddings. (The construction of the compactly supported deformations is simple because the space of compactly supported functions is contractible.) For $\lioudeltadef$, horn-filling further requires us to extend families of (collared, bordered, but otherwise arbitrary) Liouville structures from an image of an isotopy to the rest of the codomain. Here, we make extensive use of shrinking isotopies (Section~\ref{section. shrinking}). These allow us to rearrange families to only alter the Liouville structures away from the boundary of the image, and hence to extend these families to the entire codomain.
\end{remark}

\begin{notation}[$\alpha$ and $\widetilde{\alpha}$]\label{notation. alpha}
So fix a map 
	\eqnn
	\alpha: \Lambda^n_j \to \lioudelta \qquad \text{ (or $\lioudeltadef$)}
	\eqnd
of semsimplicial sets, with $0<j<n$. We must extend $\alpha$ to a map 
	\eqnn
	\tilde \alpha: \Delta^n \to \lioudelta \qquad \text{ (or $\lioudeltadef$)}
	\eqnd
of semisimplicial sets. 
\end{notation}

\begin{notation}[$\widetilde{\embtop_{\liou}}$]
\label{notation. tilde emb}
By the compact-support condition on movies, one may interpret the data of an $I$-simplex in $\lioudelta$ as giving rise to maps
	\eqnn
	\Delta^B \to \embtop_{\liou}(M_B,M_A)
	\eqnd
for each $B \subset A \subset I$ (Remark~\ref{remark. movies give movies}). This is a map to the space of all sectorial embeddings. Because the identity function defines a sectorial isomorphism $(M_A,\lambda^A_s) \to (M_A,\lambda^A_{s'})$ for any $s,s' \in \Delta^B$ (and likewise for $M_B$), this space is independent of $s$.

Let us now define a space $\widetilde{\embtop_{\liou}}(M_B,M_A)$ as the total space of a fibration
	\eqn\label{eqn. section tilde emb}
	\widetilde{\embtop_{\liou}}(M_B,M_A) \to \Delta^B
	\eqnd
whose fiber above $s \in \Delta^B$ is the space $\embtop_{\liou}((M_B,\lambda^B_s),(M_A,\lambda^A_s))$.

An $I$-simplex in $\lioudeltadef$ defines a section of this fibration.
\end{notation}

\begin{remark}
Thus, to produce the simplices in $\lioudeltadef$, we will have to produce maps from $\Delta^B$ to $\widetilde{\embtop_{\liou}}$, while to produce simplices in $\lioudelta$, we must also produce such maps, but we may identify the image with $\embtop_{\liou}$. 

In our arguments below, to preserve generality, we will thus often speak of maps to $\widetilde{\embtop_{\liou}}$, with the understanding that all such maps are sections of the fibration~\eqref{eqn. section tilde emb}.
\end{remark}

\begin{notation}[The values of $\alpha$ and $\tilde \alpha$]
\label{notation. tilde alpha values}
To give $\tilde\alpha$, we must make explicit $\tilde\alpha(A)$---that is, we must specify the movies $M_A \times T^*\Delta^A$---for every $A \subset [n]$. Of course, given $\alpha$, this amounts to specifying the movies $\tilde\alpha([n])$ and $\tilde \alpha([n] \setminus \{j\})$.

Moreover, for every $B \subset A$, we must specify collaring movies $\phi_{B \subset A}$. By Remark~\ref{remark. movies give movies}, the data of such collaring movies is equivalent to the data of a function from $\Delta^B$ to the space of smooth function from $M_B$ to $M_A$ (where each function at $s \in \Delta^B$ is a sectorial embedding with respect to the Liouville structures on $M_B$ and $M_A$ at $s$). We will denote this smooth function by $\widetilde \alpha(B \subset A)$.

Likewise, for every $B \subset A$ with $\Delta^A$ a face of the horn $\Lambda^n_j$, $\alpha$ defines a map
	\eqnn
	\alpha(B \subset A) : \Delta^B \to \widetilde{\embtop_{\liou}}(M_B,M_A)(M_B,M_A).
	\eqnd
\end{notation}

\subsection{For \texorpdfstring{$n=2$}{n equals 2}}
\label{section. n=2 horn filling}
We begin with the case $n=2$ (and hence $j=1$). Unlike the higher-$n$ case, we here require no isotopy extension arguments. We begin with the $\lioudelta$ case.

\begin{construction}[$\tilde \alpha$ when $n=2$, for $\lioudelta$.]\label{construction. alpha n = 2}
The data of $\alpha: \Lambda^n_1 \to \lioudelta$ determine smooth, proper codimension zero embeddings
	\eqn\label{eqn. constructing alpha 2 filler}
	M_{\{0\}}\to M_{\{0,1\}} \xleftarrow{\cong}M_{\{1\}} \to M_{\{1,2\}} \xleftarrow{\cong} M_{\{2\}}
	\eqnd
Each $M_{\{i,j\}}$ is equipped with a compactly supported deformation of Liouville structures; when given the Liouville structure near $\max\{i,j\}$, the leftward pointing arrows are isomorphisms of Liouville sectors as in~\eqref{eqn. max isomorphisms for lioudelta}. To fill $\alpha$, let us declare 
	\eqn\label{eqn. tilde alpha for n 2}
	M_{\{0,1,2\}} = M_{\{2\}}
	\eqnd
and define the smooth maps
	\eqnn
	y^s_{\{0,1\} \subset \{0,1,2\}}: M_{\{0,1\}} \to M_{\{0,1,2\}},
	\qquad
	s \in \Delta^{\{0,1\}}
	\eqnd 
as the composition
	\eqnn
	M_{\{0,1\}} \xrightarrow{\cong} M_1 \to M_{\{1,2\}} \xrightarrow{\cong} M_{\{2\}} = M_{\{0,1,2\}}.
	\eqnd
(The rightward pointing isomorphisms are inverses of the isomorphisms in~\eqref{eqn. constructing alpha 2 filler}. Note also that this is independent of $s \in \Delta^{\{0,1\}}$.) 
We would like to extend this to a strict sectorial embedding
	\eqn\label{eqn. example M01 embedding}
	M_{\{0,1\}} \times T^*\Delta^{\{0,1\}} \to M_{\{0,1,2\}} \times T^*\Delta^{\{0,1\}}
	\eqnd
where the domain of~\eqref{eqn. example M01 embedding} is given the Liouville structure specified by $\alpha$ for the $\Delta$-movie $M_{\{1,2\}} \times T^*\Delta^{\{1,2\}}$, and we must specify the Liouville structure for the codomain. To do this, we note that because the map $y^s_{\{0,1\} \subset \{0,1,2\}}$ for $s = \Delta^{\{1\}}$ is a Liouville isomorphism, one can choose a $\Delta^{\{0,1\}}$-dependent family of compactly supported functions $\tilde h_s$ on $M_{\{0,1,2\}}$ 
	\eqn\label{eqn. h for 01}
	\tilde h : M_{\{0,1,2\}} \times \Delta^{\{0,1\}} \to \RR,
	\qquad
	(x,s) \mapsto \tilde h_s(x)
	\eqnd
so that, endowing $M_{\{0,1,2\}} = M_{\{2\}}$ with the Liouville structure $\lambda^{M_{\{2\}}} + d \tilde h_s$---and giving $M_{\{0,1,2\}} \times T^*\Delta^{\{0,1\}}$ the induced Liouville structure---\eqref{eqn. example M01 embedding} becomes a sectorial embedding.

Likewise, we define an $s$-independent family of smooth maps
	\eqnn
	y^s_{\{1,2\} \subset \{0,1,2\}}: M_{\{1,2\}} \to M_{\{0,1,2\}}, 
	\qquad
	s \in \Delta^{\{1,2\}}
	\eqnd 
by the composition
	$
	M_{\{1,2\}} \xrightarrow{\cong} M_{\{2\}}=M_{\{0,1,2\}}.
	$
As before, by specifying an appropriate family of compactly supported functions $\tilde h_s: M_{\{2\}} \to \RR$ for $t \in \Delta^{\{1,2\}}$
	\eqn\label{eqn. h for 12}
	\tilde h : M_{\{0,1,2\}} \times \Delta^{\{1,2\}} \to \RR,
	\qquad
	(x,s) \mapsto \tilde h_s(x)
	\eqnd
we obtain a sectorial embedding
	\eqn\label{eqn. example M12 embedding}
	M_{\{1,2\}} \times T^*\Delta^{\{1,2\}} \to M_{\{0,1,2\}} \times T^*\Delta^{\{1,2\}}.
	\eqnd
Now, because all the structures given by $\alpha$ are collared, one can smoothly extend the function
	\eqnn
	\tilde h: M \times (\Delta^{\{0,1\}} \bigcup_{\Delta^{\{1\}}} \Delta^{\{1,2\}}) \to \RR
	\eqnd
(which is obtained by gluing together ~\eqref{eqn. h for 01} and~\eqref{eqn. h for 12})
to a function
	\eqn\label{eqn. h M 012}
	\tilde h: M \times \Delta^{\{0,1,2\}} \to \RR,
	\qquad
	(x,s) \mapsto \tilde h_s(x),
	\qquad (s \in \Delta^{\{0,1,2\}})
	\eqnd
for which $\tilde h$ is collared, and for which each $\tilde h_s$ is compactly supported. We then declare the $\Delta$-movie
	\eqn\label{eqn. M012 definition}
	M_{\{0,1,2\}} \times T^*\Delta^{\{0,1,2\}}
	:= M_{\{2\}} \times T^*\Delta^{\{0,1,2\}}
	\eqnd
to be given the Liouville structure induced by adding $d$ of~\eqref{eqn. h M 012} to the Liouville structure of $M_{\{2\}}$. 

\eqref{eqn. M012 definition} defines the value of $\tilde \alpha$ on $[2] = \{0,1,2\}$. It is also clear that the maps
~\eqref{eqn. example M01 embedding} and~\eqref{eqn. example M12 embedding} extend, by collaring, to specify maps $\phi_{\{1,2\} \subset \{0,1,2\}}$ and $\phi_{\{0,1\} \subset \{0,1,2\}}$. It remains to define $\tilde \alpha$ on $\{0,2\}$ and the associated $\phi$.
\begin{itemize}
\item We declare  (Notation~\ref{notation. tilde alpha values})
	\eqn\label{eqn. M02}
	\tilde \alpha (\{0,2\}) = M_{\{0,2\}} \times T^*\Delta^{\{0,2\}} := M_{\{2\}} \times T^*\Delta^{\{0,2\}}
	\eqnd
with the Liouville form inherited from restricting the form on~\eqref{eqn. M012 definition} along the edge $\Delta^{\{0,2\}} \subset \Delta^{\{0,1,2\}}$. (This restriction is well-defined thanks to the collared choice of~\eqref{eqn. h M 012}.) 
\item We declare (see Notation~\ref{notation. tilde alpha values}) $\tilde \alpha ( \{2\} \subset \{0,2\})$ to be (extended from) the identity map $M_{\{2\}} \to M_{\{2\}}$. Note that this is a sectorial embedding because by the collaring of $\tilde h$, the Liouville form on $M_{\{0,1,2\}} \times T^*\Delta^2$ agrees with that of $M_{\{2\}}$ near the vertex $\Delta^{\{2\}}$. 
\item We declare $\tilde \alpha ( \{0\} \subset \{0,2\})$  to be (extended from) the composition of smooth maps
	\eqnn
	M_{\{0\}} \to M_{\{0,1\}} \xrightarrow{\cong} M_{\{1\}} \to M_{\{1,2\}} \xrightarrow{\cong} M_{\{2\}} = M_{\{0,2\}}.
	\eqnd
(Every map here was already given by $\alpha$.) By design of $\tilde h$~\eqref{eqn. h M 012} and by our definition of the Liouville structure on~\eqref{eqn. M02}, this is a strict sectorial embedding.
\end{itemize}
We have defined $\tilde \alpha$. The data satisfy~\ref{item. lioudelta composition} and~\ref{item. lioudelta is max localizing} by construction, and clearly extend $\alpha$. 
\end{construction}

\begin{remark}
The above construction fails when $j=0$ and $n=2$ (and likewise for $j=2$). To see this, fix three sectors $M_0, M_1, M_2$ and strict sectorial embeddings $y_{0 < 2}: M_0 \to M_2, y_{1 < 2}: M_1 \to M_2$ for which $y_{0<2}$ is not isotopic to any map factoring through $y_{1<2}$. (This defines a horn by setting, for example, $M_{\{i,j\}} \times T^*\Delta^{\{i,j|} := M_{\{ \max\{i,j\}\}} \tensor T^*\Delta^{\{i,j\}}$.) 

Proposition~\ref{prop. 2-simplex is an isotopy} guarantees that this horn cannot be filled. 
\end{remark}

\begin{construction}[$\tilde \alpha$ when $n=2$, for $\lioudeltadef$.]
\label{construction. alpha n = 2 def}
We follow the notations of~\eqref{eqn. constructing alpha 2 filler} and~\eqref{eqn. tilde alpha for n 2}. Then, by pulling back along under the isomorphism $M_{\{2\}} \xrightarrow{\cong}M_{\{1,2\}}$, one obtained a collared family of deformations on $M_{\{0,1,2\}}$ along the edge $\Delta^{\{1,2\}}$. We must now extend this collared deformation to all of $\Delta^{\{0,1,2\}}$. 

We begin by defining it along the edge $\Delta^{\{0,1\}}$. Here, we are already given a deformation of $M_{\{0,1\}} \cong M_{\{1\}}$, and $M_{\{1\}}$ is equipped with a strict sectorial embedding into $M_{\{0,1,2\}}$ (strict with respect t the Liouville structure of $M_{\{0,1,2\}}$ at $s = \Delta^{\{1\}}$). By applying a shrinking isotopy to $M_{\{1\}}$ (and hence to its image inside $M_{\{0,1,2\}}$) as necessary, we obtain a family of embeddings of $M_{\{1\}}  \cong M_{\{0,1\}}$ into $M_{\{0,1,2\}}$, parametrized by $s \in \Delta^{\{0,1\}}$. The deformations of Liouville structures on $M_{\{0,1\}}$ pushforward, and these pushforwards may be extended to become deformations of Liouvuille structures on $M_{\{0,1,2\}}$ again thanks to the shrinking. (See Remark~\ref{remark. extending deformations}.)

So we have defined a deformation of Liouville structures for $M_{\{0,1,2\}}$ parametrized by the horn $\Delta^{\{0,1\}} \bigcup \Delta^{\{1,2\}}$. Of course the space of deformations is a space, so we may continuously extend this to all of $\Delta^{\{0,1,2\}}$; the usual smooth-approximation trick applies as in Section~\ref{section. continuous to smooth} (because the space of deformations is locally modeled by the space of (very small) exact 1-forms), and we may thus ensure that we have a smooth, collared, and continuous $\Delta^{\{0,1,2\}}$-parametrized deformation of Liouville structures on $M_{\{0,1,2\}}$. 

Now it is straightforward to define the embeddings $M_{\{0\}} \to M_{\{0,1,2\}}$ at $s=\Delta^{\{0\}}$, and declaring $M_{\{0,2\}} = M_{\{0,1,2\}}$ and restricting the family of Liouville structures on $M_{\{0,1,2\}}$ to one on $M_{\{0,2\}}$ along the edge $\Delta^{\{0,2\}}$.
\end{construction}

\begin{remark}
\label{remark. extending deformations}
Let us elaborate on how to extend families of deformations from a subsector to a large subsector, after a suitable shrinking isotopy of the subsector. We explain in the context of Construction~\ref{construction. alpha n = 2 def}, but the remarks here apply more widely.

Fix the image $A$ of $M_{\{1\}}$ inside $M_{\{0,1,2\}}$, induced by the inclusion $M_{\{1\}} \into M_{\{1,2\}}$ at $s = \Delta^{\{1\}}$. We may identify $A$ with $M_{\{0,1\}}$ due the the isomorphism $M_{\{1\}} \cong M_{\{0,1\}}$ at $s = \Delta^{\{1\}}$. For every $s$ in the edge $\Delta^{\{0,1\}}$, we have a Liouville structure $\lambda_s$ on $M_{\{0,1\}}$, which is collared by assumption. This induces a new Liouville structure on $A$, by choosing a shrinking of $A$ (Section~\ref{section. shrinking}) to have image $A' \subset A$, and defining a Liouville structure $\lambda^A_s$ on $A$ which equals the pushforward of $\lambda_s$ onto $A'$, and parametrizes between $\lambda_s$ and $\lambda_{\Delta^{\{1\}}}$ along the region $A \setminus A'$. Now, because $\lambda^A_s$ agrees with $\lambda^{M_{\{0,1,2\}}}$ in a neighborhood of $\del A$, we may extend $\lambda^A_s$ to equal $\lambda^M_{\{0,1,2\}}$ outside of $A$.
\end{remark}

\subsection{For \texorpdfstring{$n \geq 3$}{n at least 3}, filling most of the simplex}
Fix $\alpha: \Lambda^n_j \to \lioudelta$ (or to $\lioudeltadef$) as in Notation~\ref{notation. alpha}. Roughly speaking, the construction of $\widetilde{\alpha}$ may be divided into two steps: (a) Defining the maps $\widetilde{\alpha}(A \subset [n])$ for those $A$ with $\max A = n$, then (b) Defining the map $\widetilde{\alpha}(A \subset [n])$ for $A = [n-1]$. We establish (a) in the present section, using a strategy illustrated in Table~\ref{table. horn-filling example outline} for the case of $n=3$ and $j=1$. We leave (b) to Sections~\ref{section. b 1} and~\ref{section. b 2}.

\newcommand{\FigureTildeAlphaB}{
    \begin{tikzpicture}[line join = round, line cap = round]
        \coordinate [label=above:1] (1) at (0,{sqrt(2)},0);
        \coordinate [label=left:2] (2) at ({-.5*sqrt(3)},0,-.5);
        \coordinate [label=below:0] (0) at (0,0,1);
        \coordinate [label=right:3] (3) at ({.5*sqrt(3)},0,-.5);
        \begin{scope}[decoration={markings,mark=at position 0.5 with {\arrow{to}}}]
        \node[] at (0) {$\bullet$};
        \node[] at (2) {$\bullet$};
        \node[] at (1) {$\bullet$};
        \node[] at (3) {$\bullet$};
         \draw[] (0)--(3);
         \draw[] (2)--(3);
         \draw[] (1)--(3);
        \end{scope}
    \end{tikzpicture}
}

\newcommand{\FigureTildeAlphaBPrime}{
    \begin{tikzpicture}[line join = round, line cap = round]
        \coordinate [label=above:1] (1) at (0,{sqrt(2)},0);
        \coordinate [label=left:2] (2) at ({-.5*sqrt(3)},0,-.5);
        \coordinate [label=below:0] (0) at (0,0,1);
        \coordinate [label=right:3] (3) at ({.5*sqrt(3)},0,-.5);
        \begin{scope}[decoration={markings,mark=at position 0.5 with {\arrow{to}}}]
        \node[] at (0) {$\bullet$};
        \node[] at (2) {$\bullet$};
        \node[] at (1) {$\bullet$};
        \node[] at (3) {$\bullet$};
        \draw[fill=gray,fill opacity=.5] (1)--(2)--(3)--cycle;
        \draw[fill=gray,fill opacity=.5] (0)--(1)--(3)--cycle;
         \draw[] (0)--(3);
         \draw[] (2)--(3);
         \draw[] (1)--(3);
        \end{scope}
    \end{tikzpicture}
}

\newcommand{\FigureTildeAlphaBPrimeLast}{
    \begin{tikzpicture}[line join = round, line cap = round]
        \coordinate [label=above:1] (1) at (0,{sqrt(2)},0);
        \coordinate [label=left:2] (2) at ({-.5*sqrt(3)},0,-.5);
        \coordinate [label=below:0] (0) at (0,0,1);
        \coordinate [label=right:3] (3) at ({.5*sqrt(3)},0,-.5);
        \begin{scope}[decoration={markings,mark=at position 0.5 with {\arrow{to}}}]
        \node[] at (0) {$\bullet$};
        \node[] at (2) {$\bullet$};
        \node[] at (1) {$\bullet$};
        \node[] at (3) {$\bullet$};
        \draw[fill=gray,fill opacity=.5] (1)--(2)--(3)--cycle;
        \draw[fill=gray,fill opacity=.5] (0)--(1)--(3)--cycle;
        \draw[fill=gray,fill opacity=.5] (0)--(3)--(2)--cycle;
         \draw[] (0)--(3);
         \draw[] (2)--(3);
         \draw[] (1)--(3);
        \end{scope}
    \end{tikzpicture}
}

\newcommand{\FigureTildeAlphaBHorn}{
    \begin{tikzpicture}[line join = round, line cap = round]
        \coordinate [label=above:1] (1) at (1.1,0.3,0);
        \coordinate [label=left:2] (2) at (0.3,1.8,0);
        \coordinate [label=below:0] (0) at (0,0,0);
        \coordinate [label=right:3] (3) at (1.95,0,0);
        \begin{scope}[decoration={markings,mark=at position 0.5 with {\arrow{to}}}]
        \node[] at (0) {$\bullet$};
        \node[] at (2) {$\bullet$};
        \node[] at (1) {$\bullet$};
        \node[] at (3) {$\bullet$};
         \draw[] (0)--(3);
         \draw[] (2)--(3);
         \draw[] (1)--(3);
        \end{scope}
    \end{tikzpicture}
}

\newcommand{\FigureTildeAlphaBPrimeHorn}{

    \begin{tikzpicture}[line join = round, line cap = round]
    
        \coordinate [label=above:1] (1) at (1.1,0.3,0);
        \coordinate [label=left:2] (2) at (0.3,1.8,0);
        \coordinate [label=below:0] (0) at (0,0,0);
        \coordinate [label=right:3] (3) at (1.95,0,0);
        \begin{scope}[decoration={markings,mark=at position 0.5 with {\arrow{to}}}]
        \node[] at (0) {$\bullet$};
        \node[] at (2) {$\bullet$};
        \node[] at (1) {$\bullet$};
        \node[] at (3) {$\bullet$};
        \draw[fill=gray,fill opacity=.5] (1)--(2)--(3)--cycle;
        \draw[fill=gray,fill opacity=.5] (0)--(1)--(3)--cycle;
         \draw[] (0)--(3);
         \draw[] (2)--(3);
         \draw[] (1)--(3);
        \end{scope}
    \end{tikzpicture}

}

\newcommand{\FigureTildeAlphaBPrimeLastHorn}{
    \begin{tikzpicture}[line join = round, line cap = round]
    
        \coordinate [label=above:1] (1) at (1.1,0.3,0);
        \coordinate [label=left:2] (2) at (0.3,1.8,0);
        \coordinate [label=below:0] (0) at (0,0,0);
        \coordinate [label=right:3] (3) at (1.95,0,0);
        \begin{scope}[decoration={markings,mark=at position 0.5 with {\arrow{to}}}]
        \node[] at (0) {$\bullet$};
        \node[] at (2) {$\bullet$};
        \node[] at (1) {$\bullet$};
        \node[] at (3) {$\bullet$};
        \draw[fill=gray,fill opacity=.5] (1)--(2)--(3)--cycle;
        \draw[fill=gray,fill opacity=.5] (0)--(1)--(3)--cycle;
        \draw[fill=gray,fill opacity=.5] (0)--(1)--(2)--cycle;
         \draw[] (0)--(3);
         \draw[] (2)--(3);
         \draw[] (1)--(3);
        \end{scope}
    \end{tikzpicture}
    \qquad
    \begin{tikzpicture}[line join = round, line cap = round]
        \coordinate [label=left:2] (2) at (0.3,1.8,0);
        \coordinate [label=below:0] (0) at (0,0,0);
        \coordinate [label=right:3] (3) at (1.95,0,0);
        \begin{scope}[decoration={markings,mark=at position 0.5 with {\arrow{to}}}]
        \node[] at (0) {$\bullet$};
        \node[] at (2) {$\bullet$};
        \node[] at (3) {$\bullet$};
        \draw[fill=gray,fill opacity=.5] (2)--(0)--(3)--cycle;
        \end{scope}
    \end{tikzpicture}
}

\begin{table}[htp]
\begin{center}
\begin{tabular}{c|c|c}
Construction~\ref{construction. tilde alpha B} & Construction~\ref{construction. B' of tilde alpha} & Construction~\ref{construction. tilde alpha on jth face} \\
\FigureTildeAlphaB & \FigureTildeAlphaBPrime & \FigureTildeAlphaBPrimeLast \\
\FigureTildeAlphaBHorn & \FigureTildeAlphaBPrimeHorn & \FigureTildeAlphaBPrimeLastHorn \\
\end{tabular}
\end{center}
\caption{Partially filling the horn $\Lambda^3_1$ by accomplishing (a). The top row of images emphasizes the filling of the 3-simplex; the bottom row of images emphasizes the filling of the face opposite the vertex 1, using the decomposition of this face induced by projecting the horn. The subsets $B''$ of Construction~\ref{construction. tilde alpha B} are the edges containing the 3rd vertex. 
The subsets $B'$ of Construction~\ref{construction. B' of tilde alpha} are the faces of $\Delta^3$ that are not the 1st face, nor the 0th face. Construction~\ref{construction. tilde alpha on jth face} defines a piecewise smooth map from the $1$st face, then smooths it out to give a smooth map from the $1$st face. After this, to complete the horn-filling, one need only (b) define data along the face $\Delta^{\{0,1,2\}}$---and more generally, on the face opposite the $n$th vertex.
}
\label{table. horn-filling example outline}
\end{table}

\begin{construction}[Of $\widetilde{\alpha}$, Part I]
\label{construction. tilde alpha [n]}
We set $M_{[n]} := M_{\{n\}}$.
\end{construction}

\begin{remark}\label{remark. inductive step choice of M_[n]}
More generally, we may choose a sectorial diffeomorphism from $M_{\{n\}}$ to some other Liouville sector $Y$, and declare $M_{[n]}$ to be given by $Y$. This freedom will be important for a later inductive step, so that in fact we can choose $M_{[n]}$ to be a pre-given $Y$.
\end{remark}

\begin{choice}[A family $\sigma_{t,u}$ of shinking isotopies]
\label{choice. sigma}
Consider the geometric realization $|\Lambda^n_j|$ of the $j$th $n$-horn. We may up to homeomorphism write $|\Lambda^n_j|$ as a cone/join
	\eqnn
	|\Lambda^n_j| \cong 
	\left(
	\left(
	[0,1] \times \del(\del_j\Delta^n) 
	\right)
	/_{\sim}
	\right)
	\cong
	|\Delta^{\{n\}}| \star \del(\del_j\Delta^n)
	\eqnd
from the $n$th vertex, to the boundary of the $j$th face. Writing the coordinates of $[0,1] \times \del(\del_j\Delta^n)$ as pairs $(t,u)$, we now choose a $u$-family of $t$-parametrized shrinking isotopies
	\eqn
	\sigma_{t,u}: [0,1] \times \del(\del_j\Delta^n) \to \embtop_{\liou}(M_{[n]},M_{[n]})
	\eqnd
satisfying the following:
	\enum
	\item $\sigma_{t,u}$ is the identity for all small $t$ (and hence descends to a map from the join),
	\item $\sigma_{t,u}$ is, along every top-dimensional face of $|\Lambda^n_j|$, collared and smooth.
	\item\label{item. sigma collaring} Finally, we put a restriction on the collaring region of $\sigma_{t,u}$. We demand that $\sigma_{t,u}$ is collared (by the identity) whenever\footnote{See Remark~\ref{remark. collaring of sigma}.} the maps $\widetilde{y}^s$ determined by $\alpha$ are collared near the vertex $|\Delta^{\{n\}}|$, but everywhere else, $\sigma_{t,u}: M_{[n]} \to M_{[n]}$ has image bounded away from the sectorial boundary of $M_{[n]}$. 
	\enumd
\end{choice}

\begin{remark}
\label{remark. collaring of sigma}
We make precise the collaring restriction~\ref{item. sigma collaring} on $\sigma_{t,u}$. By definition, $\alpha$ determines collared families $\widetilde{y}^s_{I'' \subset I'}$, so we know that every pair $I'' \subset I'$---with $n \in I'$ and $I'$ determining some simplex of $\Lambda^n_j$---defines some region $R_{I'' \subset I'} \subset |\Delta^n|$ (i) containing the $n$th vertex and (ii) along which $\widetilde{y}^s$ is independent of $s \in R_{I'' \subset I'}$. (Each region is open in the face $\Delta^{I''}$, but not necessarily in $\Lambda^n_j$.) Let $R \subset |\Delta^n|$ denote the union of all these regions, where the union is indexed by pairs $I'' \subset I'$.

We know that the region $R$ contains a subset $R_{\min}$, which is the union, indexed by $A$ for which $\Delta^{\{n\}} \subset \Delta^A \subset \Lambda^n_j$, of the regions $[0,1]^{I' \setminus A}$ under the image of the collarings $\eta$. (These $A$-dependent regions are each obtained by setting $\epsilon_A = 0$ in~\eqref{eqn. family of embeddings}.) This $R_{\min}$ represents the closed subset containing the $n$th vertex along which everything in sight must be collared. We are then guaranteed that---because each $\epsilon_A>0$---$R$ contains some open neighborhood $U$ of $R_{\min}$. (This $U$ is open and $R$, and may be chosen so that its closure is contained in the interior of $R$.)

So here is the precise statement: We demand that the family $\sigma_{t,u}$ is constant along $R_{\min}$, and that for each $(t,u)$ whose image is not in $R$, $\sigma_{t,u}: M_{[n]} \to M_{[n]}$ does not intersect the sectorial boundary of $M_{[n]}$ (hence, by properness, has image bounded away from the sectorial boundary).
\end{remark}

\begin{construction}[Of $\widetilde{\alpha}$, Part II]
\label{construction. tilde alpha B}
Let $B'' \subset [n]$ be a subset of cardinality $n-1$ containing $n$.
We now construct maps 
	\eqn\label{eqn. tilde alpha B'' B'}
	\widetilde{\alpha}_{B'',B'}:\Delta^{B''} \to \widetilde{\embtop_{\liou}}(M_{B'},M_{[n]})
	\eqnd
for all $B'' \subset B' \subset [n]$ with $B'$ a top-dimensional simplex of $\Lambda^n_j$.\footnote{In other words, $B'$ is a proper subset of $[n]$ of cardinality $n$, and $B' \cup \{j\} \neq [n]$.} 
The goal is to choose~\eqref{eqn. tilde alpha B'' B'} so that the  composite
	\eqn\label{eqn. composition B' B''}
	\Delta^{B''} \xrightarrow{\widetilde{\alpha}_{B'',B'} \times \alpha(B'' \subset B')}
	\widetilde{\embtop_{\liou}}(M_{B'},M_{[n]}) \times \widetilde{\embtop_{\liou}}(M_{B''},M_{B'})
	\xrightarrow{\circ}
	\widetilde{\embtop_{\liou}}(M_{B''},M_{[n]})
	\eqnd
is independent of the choice of $B'$. (Given $B''$, there are at most two $B'$ that fit into a string of proper inclusions $B'' \subset B' \subset [n]$.)

For this, consider the diffeomorphism
	\eqnn
	(\widetilde{y}^{\{n\}}_{\{n\} \subset B'})^{-1}: M_{B'} \to M_{\{n\}} = M_{[n]}
	\eqnd
determined by $\alpha$, and the composition
	\eqn\label{eqn. tilde alpha B''}
	\xymatrix{
	\Delta^{B''} \ar[drrrr]
	\ar[rrrr]^-{\sigma_{t,u} \times (\widetilde{y}^{\{n\}}_{\{n\} \subset B'})^{-1} \times \alpha(B'' \subset B')}
	&&&&\widetilde{\embtop_{\liou}}(M_{[n]},M_{[n]}) \times \widetilde{\embtop_{\liou}}(M_{B'},M_{[n]}) \times \widetilde{\embtop_{\liou}}(M_{B''},M_{B'}) \ar[d]^{\circ}\\
	&&&&\widetilde{\embtop_{\liou}}(M_{B''},M_{[n]})
	}
	\eqnd
where $\sigma_{t,u}$ is the family of isotopies from Choice~\ref{choice. sigma}.
We define this composition to be the value of $\widetilde{\alpha}$ on the inclusion $B'' \subset [n]$; that is, this composition defines
	\eqnn
	\widetilde{\alpha}(B'' \subset [n]).
	\eqnd

(To make sense of this in the case of $\lioudeltadef$, we must also of course construct deformations of Liouville structures on $M_{[n]}$ parametrized by $\Delta^{B''}$. Once we have defined the maps $\widetilde{\alpha}(B'' \subset [n])$, it is straightforward to construct such a deformation by extending from the images of $M_{B''}$. See Remark~\ref{remark. extending deformations}. On the other hand, in the case of $\lioudelta$, where everything is compactly supported, we have defined a map $\Delta^{B''} \to \embtop_{\liou}(M_{B''},M_{[n]})$ on the nose without any need for choices of deformations of the codomain.)

This is a $\Delta^{B''}$-family of (not necessarily strict) sectorial embeddings of $M_{B''}$ into $M_{[n]}$, which near the $n$th vertex of $\Delta^{B''}$ is constant, and is given by
	\eqnn
	(\widetilde{y}^{\{n\}}_{\{n\} \subset B''})^{-1}: M_{B''} \to M_{\{n\}} = M_{[n]}.
	\eqnd
By Isotopy Extension (Proposition~\ref{prop. isotopy extension 2}), there also exists a family of maps
	$
	\widetilde{\alpha}_{B'',B'}
	$
as in~\eqref{eqn. tilde alpha B'' B'} so that the composition~\eqref{eqn. composition B' B''} equals the map $\widetilde{\alpha}(B'' \subset [n])$ given by the composition in~\eqref{eqn. tilde alpha B''}.

We fix such a choice.  Of course, by the usual yoga, we may assume everything in sight is collared and smooth and continuous. To summarize, we have now defined $\widetilde{\alpha}$ on the inclusion $B'' \subset [n]$ for every $B'' \subset [n]$ of cardinality $n-1$ containing $n$; and also defined maps as in~\eqref{eqn. tilde alpha B'' B'}.
\end{construction}

\begin{construction}[Of $\widetilde{\alpha}$, Part III]
\label{construction. B' of tilde alpha}
Now fix $B'\subset[n]$ to be a subset of cardinality $n$ for which $B' \bigcup \{j\} \neq [n]$ and $B' \bigcup \{n\} \neq [n]$. By Construction~\ref{construction. tilde alpha B}, we have a map
	\eqnn
	\bigcup_{B'' \subset B'} \widetilde{\alpha}_{B'',B'}: \bigcup_{B'' \subset B'} \Delta^{B''} \to \widetilde{\embtop_{\liou}}(M_{B'},M_{[n]})
	\eqnd 
where the index runs over all $B'' \subset B'$ for which $B''$ has cardinality $n-1$ and contains $n$. Recognizing the union of $\Delta^{B''}$ as the horn $\Lambda^{B'}_n$ obtained by deleting from $\Delta^{B'}$ the face opposite the vertex $\Delta^{\{n\}}$, we thus have a map
	\eqn\label{eqn. lambda B' step}
	\Lambda^{B'}_n \to \widetilde{\embtop_{\liou}}(M_{B'},M_{[n]}).
	\eqnd
It is smooth, continuous, and collared one very simplex of $\Lambda^{B'}_n$.

Now, the space of Liouville structures on $M_{[n]}$ is a space, and in particular, horns may be filled. Thus we may extend the $\Lambda^{B'}_n$-family of Liouville structures on $M_{[n]}$ implicit in~\eqref{eqn. lambda B' step} to a simplex $\Delta^{B'}$ worth of Liouville structures. As usual, we may choose this family to be collared, smooth, and continuous.

By the usual lifting properties of fibrations, we may extend~\eqref{eqn. lambda B' step} to a continuous map
	\eqn\label{eqn. wilde alpha on B'}
	\widetilde{\alpha}(B' \subset [n]): \Delta^{B'}\to \widetilde{\embtop_{\liou}}(M_{B'},M_{[n]}).
	\eqnd
Then, altering this map if necessary, we may assume~\eqref{eqn. wilde alpha on B'} is actually smooth and collared and continuous by Proposition~\ref{prop. continuous to smooth}. 

To summarize, we have now defined $\widetilde{\alpha}$ on the inclusions $B' \subset [n]$ for all facets $\Delta^{B'} \subset \Delta^n$ for which $\Delta^{B'} \neq \del_n \Delta^n$, and $\Delta^{B'} \neq \del_j \Delta^n$. 
\end{construction}

\begin{construction}[Of $\widetilde{\alpha}$, Part IV]
\label{construction. tilde alpha on jth face}
We have a natural isomorphism of simplicial sets
	\eqnn
	\bigcup_{B'} \del_n \Delta^{B'} \cong \Lambda^{n-1}_j.
	\eqnd
The lefthand side's union is indexed over those $B' \subset [n]$ for which $B' \neq [n] \setminus \{j\}$, and for which $B' \neq [n-1]$. $\del_n \Delta^{B'}$ is the simplicial set given by the face opposite the vertex $\Delta^{\{n\}} \subset \Delta^{B'}$. By Section~\ref{section. max constant simplices}, and because we know $\max B' = n$ for all $B'$, we may change bases and assume that the maps~\eqref{eqn. wilde alpha on B'} define sections
	\eqn\label{eqn. B' maps to Emb n n}
	\Delta^{B'} \to \widetilde{\embtop_{\liou}}(M_{[n]},M_{[n]})
	\eqnd
for all choices of $B'$. By construction, it follows that these maps agree along intersections of the various $\Delta^{B'}$, so they glue to give a single map
	\eqnn
	\bigcup_{B'} \del_n \Delta^{B'} \cong \Lambda^{n-1}_j \to \widetilde{\embtop_{\liou}}(M_{[n]},M_{[n]}).
	\eqnd
Again by extending the $\Lambda^{n-1}_j$-family of Liouville structures to a $\Delta^{n-1}$-family, then using the fact that $\widetilde{\embtop_{\liou}}$ is a Kan fibration over the space of Liouville structures, we may fill the horn to yield a map
	\eqn\label{eqn. fill Delta n-1 map to Emb n n}
	\Delta^{n-1} \to \widetilde{\embtop_{\liou}}(M_{[n]},M_{[n]})
	\eqnd
where we have used Remark~\ref{remark. extending deformations} again to know what we mean by Liouville structures on $M_{[n]}$ over $s \in \Delta^{n-1}$. We may as usual assume everything in sight is smooth, collared, and continuous.

By gluing~\eqref{eqn. fill Delta n-1 map to Emb n n} and~\eqref{eqn. B' maps to Emb n n} we obtain a map
	\eqn\label{eqn. piecewise map to Emb n n}
	\left(\bigcup_{B'} \Delta^{B'}\right) \bigcup_{\Lambda^{n-1}_j} \Delta^{n-1} \to  \widetilde{\embtop_{\liou}}(M_{[n]},M_{[n]}).
	\eqnd
We now note that, at the level of geometric realizations, there is a piecewise smooth homeomorphism\footnote{
This can be seen, for example, by projecting the horn $\Lambda^n_j$ to the face $|\del_j \Delta^n|$ along a vector normal to $|\del_j \Delta^n|$ and passing through the vertex $|\Delta^{\{j\}}|$. }
	\eqnn
	\left(\bigcup_{B'} |\Delta^{B'}| \right) \bigcup_{|\Lambda^{n-1}_j|} |\Delta^{n-1}|
	\cong |\del_j \Delta^n|.
	\eqnd
Thus, the map~\eqref{eqn. piecewise map to Emb n n} induces a piecewise smooth map from $|\del_j \Delta^n|$ to $\widetilde{\embtop_{\liou}}(M_{[n]},M_{[n]})$. Importantly, the map~\eqref{eqn. piecewise map to Emb n n} is collared along all the boundaries and corners; thus, Proposition~\ref{prop. continuous to smooth} allow us to continuously homotope the piecewise smooth map to an honest smooth map, and collared along the boundary of $|\del_j \Delta^n|$. Thus, we have a map
	\eqnn
	\del_j \Delta^n \to \widetilde{\embtop_{\liou}}(M_{[n]},M_{[n]})
	\eqnd
that is smooth and continuous and collared according to our collaring conventions. By undoing our change of basis, we obtain a map
	\eqn\label{eqn. tilde alpha on jth face}
	\widetilde{\alpha}([n] \setminus\{j\} \subset [n]): \del_j \Delta^n \to \widetilde{\embtop_{\liou}}(M_{[n] \setminus\{j\} },M_{[n]})
	\eqnd
(End of Construction~\ref{construction. tilde alpha on jth face}.)
\end{construction}

To this point, we have constructed $\widetilde{\alpha}(B \subset [n])$ for all facets $\Delta^B \subset \Delta^n$ that contain the vertex $\Delta^{\{n\}}$. It remains to define $\widetilde{\alpha}$ on the face opposite the $n$th vertex. Unraveling the definition of a simplex of $\lioudeltadef$, and again assuming that both $\alpha$ and $\widetilde{\alpha}$ as max-constant for simplicity (Section~\ref{section. max constant simplices}), this means we must produce a map
	\eqn\label{eqn. tilde alpha on nth face}
	\widetilde{\alpha}([n-1] \subset [n]): \Delta^{n-1} \to \widetilde{\embtop_{\liou}}(M_{\{n-1\}}, M_{\{n\}}).
	\eqnd
This will be the goal of Sections~\ref{section. b 1} and~\ref{section. b 2}. 

\subsection{\texorpdfstring{$n \geq 3$}{n at least 3} and \texorpdfstring{$j \neq n-1$}{j at most n-1}}
\label{section. b 1}
Assume $j \neq n-1$. Again we assume that all simplices of $\alpha$ are max-constant (Section~\ref{section. max constant simplices}). Then $\alpha$ assigns 
	\enum
	\item to the inclusion $[n-2] \subset [n] \setminus \{n-1\}$ a map
    	\eqnn
    	\alpha([n-2] \subset [n] \setminus \{n-1\}): \Delta^{n-2} \to \widetilde{\embtop_{\liou}}(M_{\{n-2\}}, M_{\{n\}})
    	\eqnd
	(This is the point at which we are using that $j \neq n-1$.) By post-composing with (the restriction to $\Delta^{n-2} \subset \Delta^{n-1}$ of) the map $\widetilde{\alpha}([n] \setminus \{n-1\} \subset [n])$, we obtain a map
		\eqn\label{eqn. j not n-1, a}
		\Delta^{n-2} \to \widetilde{\embtop_{\liou}}(M_{\{n-2\}},M_{\{n\}}).
		\eqnd
	\item to the inclusion $[n-2] \subset [n-1]$ a map
    	\eqn\label{eqn. j not n-1, b}
    	\alpha([n-2] \subset [n-1]): \Delta^{n-2} \to \widetilde{\embtop_{\liou}}(M_{\{n-2\}},M_{\{n-1\}}),
    	\eqnd
	\item and, for any $A \subset [n]$ with $\max A = n-1$, and for any $i \neq j$, to the inclusion $A \subset [n] \setminus \{i\}$ a map
		\eqnn
		\alpha(A \subset [n] \setminus \{i\}): \Delta^A \to \widetilde{\embtop_{\liou}}(M_{\{n-1\}}, M_{\{n\}}).
		\eqnd
	By Construction~\ref{construction. tilde alpha B}, if we post-compose with $\widetilde{\alpha}([n]\setminus \{i\} \subset [n])$, the induced map from $\Delta^A$ to $\widetilde{\embtop_{\liou}}(M_{\{n-1\}}, M_{\{n\}})$ is independent of the choice of $i$.	In particular, by gluing together those $A$ for which $\Delta^{A \setminus \{n-1\}} \subset \del \Delta^{n-2}$, we obtain a map
		\eqn\label{eqn. j not n-1, c}
		\del \Delta^{n-2} \to \widetilde{\embtop_{\liou}}(M_{\{n-1\}},M_{\{n\}}).
		\eqnd
	\enumd
By construction, we are guaranteed that the composition of~\eqref{eqn. j not n-1, b} with~\eqref{eqn. j not n-1, c} equals the map~\eqref{eqn. j not n-1, a} when restricted to $\del \Delta^{n-2}$.
	
By isotopy extension, we may thus extend~\eqref{eqn. j not n-1, c} to a map
	\eqn\label{eqn. tilde alpha on n-2 simplex}
	\Delta^{n-2} \to \widetilde{\embtop_{\liou}}(M_{\{n-1\}},M_{\{n\}}).
	\eqnd
Now, by again composing the data of (i) $\alpha([n-1] \setminus \{i\} \subset [n] \setminus \{i\})$ for $i \neq j, n-1$, with (ii) $\widetilde{\alpha}([n] \setminus \{i\} \subset [n])$, we obtain maps
	\eqn\label{eqn. j not n-1, e}
	\Delta^{[n-1] \setminus \{i\}} \to \widetilde{\embtop_{\liou}}(M_{\{n-1\}}, M_{\{n\}}),
	\qquad
	i\neq j, n.
	\eqnd
By construction, the maps~\eqref{eqn. tilde alpha on n-2 simplex} and~\eqref{eqn. j not n-1, e} agree on their overlaps, so we have a map
	\eqnn
	\Lambda^{n-1}_j \to \widetilde{\embtop_{\liou}}(M_{\{n-1\}},M_{\{n\}}).
	\eqnd
Because $\widetilde{\embtop_{\liou}}(M_{\{n-1\}},M_{\{n\}})$ is a Kan complex, we may fill this horn; and again Proposition~\ref{prop. continuous to smooth} allows us to deform this filler to be smooth and continuous and collared. The result is a map
	\eqnn
	\widetilde{\alpha}([n-1] \subset [n]): \Delta^{n-1} \to \widetilde{\embtop_{\liou}}(M_{\{n-1\}},M_{\{n\}})
	\eqnd
which, as we have indicated by the notation, defines $\widetilde{\alpha}([n-1] \subset [n])$. This finishes our construction of the desired map~\eqref{eqn. tilde alpha on nth face} when $j \neq n-1$. (By construction, all the composition requirements are met, so $\widetilde{\alpha}$ indeed defines a simplex of $\lioudelta$.)

\subsection{\texorpdfstring{$n \geq 3$}{n at least 3} and \texorpdfstring{$j = n-1$}{j equals n-1}}
\label{section. b 2}
This is an easier case than the previous one; as it turns out, we have fewer constraints because $\alpha$ does not make any assignment to the inclusion $[n-2] \subset [n] \setminus \{n-1\}$. Indeed, by post-composing the maps
	\eqnn
	\alpha([n-1] \setminus \{i\} \subset [n] \setminus \{i\}) : \Delta^{[n-1] \setminus \{i\}} \to \widetilde{\embtop_{\liou}}(M_{\{n-1\}},M_{\{n\}})
	\eqnd
with (the restrictions of) the maps
	\eqnn
	\widetilde{\alpha}([n] \setminus \{i\} \subset [n]): \del_i \Delta^n \to \widetilde{\embtop_{\liou}}(M_{\{n\}},M_{\{n\}}),
	\qquad
	i \neq n-1, n
	\eqnd
from Construction~\ref{construction. B' of tilde alpha}, we have maps
	\eqnn
	\Delta^{[n-1] \setminus \{i\}} \to \widetilde{\embtop_{\liou}}(M_{\{n-1\}},M_{\{n\}}),
	\qquad i \neq n-1
	\eqnd
which agree on the overlaps. In other words, we obtain a single map
	\eqnn
	\Lambda^{n-1}_{n-1} \to \widetilde{\embtop_{\liou}}(M_{\{n-1\}},M_{\{n\}}).
	\eqnd
We may again use the Kan property of the codomain to extend this to a map from $\Delta^{n-1}$, and we may deform this map to be smooth and collared. We declare the resulting map
	\eqnn
	\Delta^{n-1} \to \widetilde{\embtop_{\liou}}(M_{\{n-1\}},M_{\{n\}})
	\eqnd
to be the desired map~\eqref{eqn. tilde alpha on nth face}. It is straightforward, while tedious, to check that all composition requirements are met.

\subsection{Proof of Theorem~\ref{theorem. weak kan} and~\ref{theorem. weak kan def}}
The $n=2$ case was accomplished in Section~\ref{section. n=2 horn filling}.
For $n \geq 3$, unwinding the definition of $\tilde \alpha$, the horn is filled when we produce maps~\eqref{eqn. wilde alpha on B'},
\eqref{eqn. tilde alpha on jth face}, and
\eqref{eqn. tilde alpha on nth face}, satisfying the compatibilities of agreeing along faces and agreeing with $\alpha$. This was accomplished by construction. 

\subsection{\texorpdfstring{$\lioudeltadefstab$}{Stabilized Liou Delta Def} is weak Kan}
Simplices of $\lioudeltadef$ encode sectorial embeddings $M_B \times T^*\Delta^B \times T^*[-1,\epsilon) \to M \times T^*\Delta^A$. All structures are preserved by taking the direct product with $T^*[0,1]$ on the left, so we have a functor
	\eqnn
	T^*[0,1] \times - : \lioudeltadef \to \lioudeltadef,
	\qquad
	M \mapsto T^*[0,1] \times M.
	\eqnd
We thus define the semisimplicial set $\lioudeltadefstab$ as an increasing union, as in~\eqref{eqn. lioudeltadefstab}.

\begin{corollary}\label{cor. lioudeltadef stab is weak Kan}
The semisimplicial set $\lioudeltadefstab$ satisfies the (semisimplicial) weak Kan condition.
\end{corollary}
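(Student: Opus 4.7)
The plan is to reduce this directly to Theorem~\ref{theorem. weak kan def} via a finiteness argument for the colimit. Since $\lioudeltadefstab$ is defined as a sequential colimit of semisimplicial sets along the endofunctor $T^*[0,1] \times -$, and colimits of (semi)simplicial sets are computed pointwise in each simplicial degree, the set $\hom_{\ssets_{\semi}}(\Lambda^n_j, \lioudeltadefstab)$ is naturally identified with the colimit $\colim_k \hom_{\ssets_{\semi}}(\Lambda^n_j, \lioudeltadef)$, where the index $k$ runs over the stages of the colimit diagram. This uses the fact that $\Lambda^n_j$ has only finitely many non-degenerate simplices, so any map to a filtered colimit factors through some finite stage.

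Concretely, given a horn $\alpha : \Lambda^n_j \to \lioudeltadefstab$ with $0<j<n$, I would first choose $k \geq 0$ and a factorization $\alpha = \iota_k \circ \alpha'$, where $\alpha' : \Lambda^n_j \to \lioudeltadef$ and $\iota_k : \lioudeltadef \to \lioudeltadefstab$ is the $k$-fold stabilization inclusion. Such a factorization exists because each finite collection of simplices of $\alpha$ lies in the image of some $\iota_k$, and a common $k$ can be chosen once and for all. Next I would apply Theorem~\ref{theorem. weak kan def} to fill $\alpha'$ to a map $\widetilde{\alpha'} : \Delta^n \to \lioudeltadef$, and then set $\widetilde{\alpha} := \iota_k \circ \widetilde{\alpha'}$.

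There is no real obstacle here. The only point worth checking is that the structure maps $T^*[0,1] \times -$ are injective on simplices, so that the colimit is an increasing union and the factorization through stage $k$ is unambiguous; this is immediate because if two distinct $I$-simplices of $\lioudeltadef$ yield the same image after applying $T^*[0,1] \times -$, then their underlying movies and embeddings must already have coincided, since the $T^*[0,1]$ factor contributes a fixed, canonically identified direct summand to every movie and every collaring movie of embeddings. Thus the entire content of the corollary is the combination of Theorem~\ref{theorem. weak kan def} with this elementary finiteness observation.
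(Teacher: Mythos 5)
Your argument is essentially identical to the paper's proof: both reduce to the finiteness of $\Lambda^n_j$ to factor a horn through a finite stage of the filtered colimit and then apply Theorem~\ref{theorem. weak kan def}. Your additional remark that the stabilization maps are injective on simplices is a small but correct observation left implicit in the paper.
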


\begin{proof}
Any (semisimplicial) map of $\Lambda^n_j$ into a filtered colimit will factor through a finite stage of the filtration. Thus the corollary reduces to filling horns $\Lambda^n_i \to \lioudeltadef$, which is Theorem~\ref{theorem. weak kan def}.
\end{proof}

\begin{remark}
Here we see a key difference between $\lioudelta$ and $\lioudeltadef$. For the former, $T^*[0,1] \times M$ is not a functor, as the compact-support condition for deformations is not preserved under product with (non-compact) $T^*[0,1]$. On the other hand, we saw in Section~\ref{section. lioudelta properties} that $\lioudelta$ admits straightforward comparisons to the usual theory of sectorial embeddings and their isotopies; it is more geometrically tractable than $\lioudeltadef$. But it does not stabilize easily.
\end{remark}

\clearpage

\section{\texorpdfstring{$\lioudelta$}{Liou Delta} as an \texorpdfstring{$\infty$}{infinity}-category}
\subsection{Idempotent equivalences and constant simplices}

\begin{lemma}\label{lemma. idempotents self equivalences}
Every object $M \in \lioudelta, \lioudeltadef$ admits an idempotent equivalence---i.e., an edge from $M$ to $M$ that is an idempotent equivalence (Definition~\ref{defn. idempotent equivalence}).
\end{lemma}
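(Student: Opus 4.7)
The plan is to exhibit at every object $M$ a canonical ``constant identity'' edge $e_M : M \to M$ and check directly that it is both idempotent and a self-equivalence.

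Define $e_M$ as the $1$-simplex of $\lioudelta$ (hence of $\lioudeltadef$) given by setting $M_{\{0\}} = M_{\{1\}} = M_{\{0,1\}} = M$, endowing $M_{\{0,1\}} \times T^*\Delta^{\{0,1\}}$ with the product Liouville structure (i.e.~the movie of the trivial deformation $h \equiv 0$), and taking both embeddings $\phi_{\{i\} \subset \{0,1\}}$ to be the standard collaring inclusions $\id_M \tensor T^*\eta$ from Choice~\ref{choice. collaring}. Conditions~\ref{item. lioudelta composition} and~\ref{item. lioudelta is max localizing} hold trivially, the latter because the relevant map is literally the identity.

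For idempotency, perform the analogous constant construction on $\Delta^2$: put $M_A = M$ with zero movie for every non-empty $A \subset [2]$ and let every $\phi_{A' \subset A}$ be the standard collaring inclusion. Strict associativity of the collaring convention (Definition~\ref{defn. collaring convention}) yields~\ref{item. lioudelta composition}, and restricting to each of the three edges $\{0,1\}, \{0,2\}, \{1,2\}$ recovers $e_M$ on the nose.

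To verify the equivalence condition, one must fill every horn $\Lambda^n_n \to \lioudelta$ whose last edge is $e_M$, and symmetrically every $\Lambda^n_0$ whose first edge is $e_M$. The strategy is to rerun the weak-Kan proof of Section~\ref{section. weak kan}, exploiting the fact that $e_M$ equips us with the identification $M_{\{n-1\}} = M_{\{n\}} = M$ (respectively $M_{\{0\}} = M_{\{1\}} = M$) via a literal identity map. One sets $M_{[n]} := M$, uses a family of shrinking isotopies analogous to Choice~\ref{choice. sigma} but now emanating from the $n$-th vertex into $\partial(\partial_n \Delta^n)$, and produces the maps $\widetilde{\alpha}(B \subset [n])$ for $B \ni n$ by Constructions~\ref{construction. tilde alpha B}--\ref{construction. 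B' of tilde alpha}. The missing face $\partial_n \Delta^n$ itself is then filled using isotopy extension and the Kan property of $\widetilde{\embtop_{\liou}}$, mirroring Sections~\ref{section. b 1}--\ref{section. b 2}; the absence of a nontrivial ``$j$-face'' is replaced by the identity identification supplied by $e_M$.

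I expect the main obstacle to be verifying that this ``swap'' between indices $n-1$ and $n$ enabled by the identity edge $e_M$ interacts correctly with the collaring compatibility~\ref{item. lioudelta composition} and the max-localizing condition~\ref{item. lioudelta is max localizing} throughout the filled simplex. This reduces to a bookkeeping exercise using the strict associativity of the collaring convention $\{\eta_a\}$, but some care is required in matching up the various $\epsilon$-parameters when one combines the ``swapped'' identity data with the horn's pre-existing data.
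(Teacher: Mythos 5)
Your construction of the constant edge $e_M$ and the constant $2$-simplex proving idempotency match the paper exactly.

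For the equivalence condition you and the paper take genuinely different routes. The paper makes a short \emph{reduction} to the already-proven inner weak Kan condition: since $e_M$ identifies $M_{\{n-1\}}=M_{\{n\}}=M$ by a literal equality, the transposition $\sigma$ that swaps $n-1$ and $n$ carries a horn $\Lambda^n_n\to\lioudelta$ with last edge $e_M$ to a horn $\Lambda^n_{n-1}\to\lioudelta$, which is inner and hence fillable by Theorem~\ref{theorem. weak kan}; applying $\sigma$ once more transports the filler back. That this re-indexing still lands in $\lioudelta$ is where the permutation-equivariance built into the collaring convention of Construction~\ref{constr:simplex_collars}, together with the identity structure of $e_M$, does the work --- this is precisely the $\epsilon$-bookkeeping concern you flag in your last paragraph, resolved once and for all by the choice of collaring.

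Your alternative --- rerunning Constructions~\ref{construction. tilde alpha B}--\ref{construction. tilde alpha on jth face} and Sections~\ref{section. b 1}--\ref{section. b 2} with $j=n$ --- is not a minor rewording. For an inner horn the final filling step extends a map from a horn $\Lambda^{n-1}_j$ of $\del_n\Delta^n$ to all of $\del_n\Delta^n$, which always succeeds because $\widetilde{\embtop_{\liou}}$ is a Kan complex. For $\Lambda^n_n$ the horn data already constrains the \emph{entire} boundary $\del(\del_n\Delta^n)$, so the final step is a genuine nullhomotopy problem, and the mere equality $M_{\{n-1\}}=M_{\{n\}}$ is not manifestly what supplies that nullhomotopy. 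Your phrase ``the absence of a nontrivial $j$-face is replaced by the identity identification'' gestures at the right idea but leaves exactly this point unaddressed. The paper's $\sigma$-reduction is the device that cashes in the identity structure of $e_M$ without having to confront the nullhomotopy head on, and it reuses the inner-horn machinery of Section~\ref{section. weak kan} verbatim; your plan would require re-deriving that machinery in a setting where the key step fails to be automatic.
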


\begin{proof}
Given an object $M$, define a 1-simplex $e_M$ by declaring
	\eqnn
	M_{\{0,1\}}  \times T^*\Delta^1 := M \tensor T^*\Delta^1
	\eqnd
and set the maps $y_{\{i\} \subset [1]} := \id_M$. We claim this is an idempotent self-equivalence. It is obviously idempotent by witnessing the 2-simplex whose $\Delta$-movies, for any $A \subset [2]$, are given by
	\eqnn
	M_{\{A\}} \times T^*\Delta^A := M \tensor T^*\Delta^A
	\eqnd
and whose morphisms $y_{\{A \subset B\}}^s := \id_M$ are $s$-independent. This exhibits a 2-simplex with all three faces equaling $e_M$, so this shows that $e_M$ is idempotent.

(a) of Definition~\ref{defn. idempotent equivalence}: By assumption, a horn $\alpha$ gives us identifications $M_{\{n\}} = M_{\{n-1\}} = M$. Thus, by symmetry, $\alpha$ is equivalent to the data of a map $\Lambda^n_{n-1} \to \lioudelta$ (or $\lioudeltadef$). Concretely: Consider the bijection $\sigma: [n] \to [n]$ that swaps $n-1$ and $n$, leaving all other elements alone. (This is not order-preserving.) We have an induced map on subsets by $A \mapsto \sigma(A)$. Then we can define a map
	\eqnn
	\alpha^\sigma: \Lambda^n_{n-1} \to \lioudelta,
	\qquad
	A \mapsto \alpha(\sigma(A)) = M_{\sigma(A)},
	\qquad
	\phi_{A \subset B} \mapsto \phi_{\sigma(A) \subset \sigma(B)}.
	\eqnd
Because the collaring functions are symmetric with respect to permutation actions, these $\phi$ are all collared. It is clear that $\alpha^\sigma$ still sends sequences of inclusions to compositions, the assignment satisfies~\ref{item. lioudelta composition}. And \ref{item. lioudelta is max localizing} is still satisfied because $M_{\{n\}} = M_{\{n-1\}}$. 
 Now, because $\alpha^\sigma$ is an inner horn, it may be filled by a map $\widetilde{\alpha^\sigma}$ from $\Delta^n$ (Theorems~\ref{theorem. weak kan} and~\ref{theorem. weak kan def}). By pre-composing the data of $\widetilde{\alpha^\sigma}$ with the automorphism of $T^*\Delta^n$ induced by $\alpha$, we obtain a simplex $\widetilde{\alpha}$ filling $\alpha$.

The proof for (b)  of Definition~\ref{defn. idempotent equivalence} is similar.
\end{proof}

\begin{cor}\label{cor. lioudelta stab has idempotents}
Every object of the semisimplicial set $\lioudeltadefstab$ admits an idempotent equivalence.
\end{cor}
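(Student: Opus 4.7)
The plan is to reduce the corollary to Lemma~\ref{lemma. idempotents self equivalences} by exploiting the fact that $\lioudeltadefstab$ is defined as a filtered colimit (increasing union) of copies of $\lioudeltadef$ under the stabilization endofunctor $T^*[0,1] \times -$. Any object $[M]$ of $\lioudeltadefstab$ is represented by some object $M$ of $\lioudeltadef$, and any horn mapping into $\lioudeltadefstab$ factors through a finite stage of the filtration (since horns are finite semisimplicial sets), so that all relevant horn-filling issues reduce to statements in a single copy of $\lioudeltadef$.

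First, I would observe that the endofunctor $T^*[0,1]\times -$ sends the ``constant'' 1-simplex $e_M$ constructed in the proof of Lemma~\ref{lemma. idempotents self equivalences} (the one built from $M \tensor T^*\Delta^1$ with all maps equal to $\id_M$) to the analogous constant 1-simplex $e_{T^*[0,1] \times M}$, and similarly for the constant 2-simplex witnessing idempotency. Consequently, the image of $e_M$ in $\lioudeltadefstab$ is a well-defined 1-simplex, and the image of the constant 2-simplex witnesses its idempotency.

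Next I would verify the two horn-filling properties of Definition~\ref{defn. idempotent equivalence}. Given any horn $\Lambda^n_n \to \lioudeltadefstab$ whose last edge is the image of $e_M$, the horn factors through some stage $\lioudeltadef$ of the filtration, where the last edge is the constant 1-simplex $e_{M'}$ for the appropriate stabilization $M' = T^*[0,1]^k \times M$. The symmetry argument used in the proof of Lemma~\ref{lemma. idempotents self equivalences}---using the transposition of the vertices $n-1$ and $n$ together with the fact that $M'_{\{n\}} = M'_{\{n-1\}} = M'$ and the permutation-symmetry of our collaring convention (Proposition~\ref{prop. collaring convention exists})---converts this into a horn $\Lambda^n_{n-1}\to\lioudeltadef$, which is fillable by Theorem~\ref{theorem. weak kan def}. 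The filler is a simplex of $\lioudeltadef$, and its image in $\lioudeltadefstab$ fills the original horn. The case of $\Lambda^n_0$ with first edge $e_M$ is identical.

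The potential obstacle is confirming that the symmetry argument is compatible with stabilization, but this is immediate: the stabilization endofunctor commutes with the relabeling of vertices and preserves all the relevant constant structure. Thus no new geometric work is required beyond what was already established in Lemma~\ref{lemma. idempotents self equivalences} and Corollary~\ref{cor. lioudeltadef stab is weak Kan}.
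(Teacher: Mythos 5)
Your proof is correct and takes essentially the same approach as the paper, which simply observes that any map of $\Lambda^n_n$ or $\Lambda^n_0$ into the filtered colimit $\lioudeltadefstab$ factors through a finite stage, reducing to Lemma~\ref{lemma. idempotents self equivalences}. You spell out the compatibility of the constant simplex $e_M$ with stabilization and re-run the symmetry argument in the stabilized setting, but these are details the paper leaves implicit rather than a different route.
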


\begin{proof}
Any map of $\Lambda^n_n$ or $\Lambda^n_0$ to $\lioudeltadefstab$ factors through some finite stage of the filtered colimit. 
\end{proof}

\begin{defn}[Constant simplices]
\label{defn. constant simplices in lioudelta}
Given any object (i.e., vertex) $M$ of $\lioudelta$ (or $\lioudeltadef$), we define the associated {\em constant $I$-simplex} as follows:
\begin{itemize}
\item For every subset $I' \subset I$, we declare the $\Delta^{I'}$-movie to be $M \tensor T^*\Delta^{I'}.$
\item We declare, for every inclusion $I'' \subset I'$, the collared movie $\phi_{I'' \subset I'}$ to be the collared movie associated to the constant movie of the identity function. In other words, we set $\phi_{I'' \subset I'} = \id_{M} \times T^*\eta_{I'' \subset I'}$. 
\end{itemize}
\end{defn}

\begin{remark}
The idempotent edge from
Lemma~\ref{lemma. idempotents self equivalences}
is precisely the constant edge associated to the object $M$. 
\end{remark}

\begin{remark}\label{remark. constant simplices of lioudelta form a simplicial set}
The collection of constant simplices defines a semisimplicial subset of $\lioudelta$ with the same underlying set of vertices.

Moreover, the collection of constant simplices may obviously be promoted to a simplicial set isomorphic to a disjoint union of $\Delta^0$.
\end{remark}

\subsection{Proof of Theorem~\ref{theorem. liou delta is oo-cat}}
\label{section. lioudelta is oo cat}

We treat categories as simplicial sets (and semisimplicial sets) by taking nerves (and forgetting degeneracies).
Recall the category $\lioustr$.

\begin{construction}\label{construction. j from lioustr to lioudelta}
We now define a map of semisimplicial sets
	\eqnn
	\lioustr \to \lioudelta
	\eqnd
(and hence a map $\lioustr \to \lioudeltadef$ by recalling that $\lioudelta$ is a sub-semsimplicial-set of $\lioudeltadef$).
Recall that an $n$-simplex of (the nerve of) $\lioustr$ is determined by (i) Liouville sectors $(M_i, \lambda_i)$ for $i=0,1,\ldots,n$, and (ii) for every $i<j$, a strict embedding 
	\eqn\label{eqn. morphisms in lioustr}
	f_{i,j}: M_i \to M_j
	\eqnd
satisfying $f_{i,k} = f_{j,k} \circ f_{i,j}$. 
Given such data, we define an $n$-simplex of $\lioudelta$ as follows.
\enum
	\item For any subset $I' \subset [n]$, 
		\eqnn
		M_{I'} \times T^*\Delta^{I'}
		:=
		M_{\max I'} \tensor T^*\Delta^{I'}.
		\eqnd
	\item For any $a: I'' \subset I'$, we define
		\eqnn
		\phi_{I'' \subset I'} = f_{\max I'', \max I'} \times T^*\eta_{a}.
		\eqnd
\enumd
\end{construction}

\begin{remark}
This construction is somewhat easier to conceptualize after passing to stabilized versions of $\lioustr$ and $\lioudelta$ -- see Examples~\ref{example. j 1 simplex} and~\ref{example. j 2 simplex}.
\end{remark}

\begin{theorem}\label{theorem. lioudelta is an infinity-cat}
The semisimplicial set $\lioudelta$ may be promoted to be a simplicial set satisfying the following properties:
\enum[(a)]
\item The face maps are unchanged.
\item The map of semisimplicial sets $\lioustr \to \lioudelta$ (Construction~\ref{construction. j from lioustr to lioudelta}) is a map of simplicial sets.
\item\label{item. constant degeneracies are constant} The degeneracies of constant simplices are constant simplices (Definition~\ref{defn. constant simplices in lioudelta}).
\item $\lioudelta$ is an $\infty$-category.
\enumd

\end{theorem}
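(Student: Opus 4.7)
The plan is to invoke Steimle's theorem (Theorem~\ref{theorem. steimle}) with a carefully chosen semisimplicial subset. The first-part hypotheses are already in hand: Theorem~\ref{theorem. weak kan} gives the semisimplicial weak Kan condition on $\lioudelta$, and Lemma~\ref{lemma. idempotents self equivalences} produces an idempotent self-equivalence at every vertex. Steimle's theorem then yields a simplicial structure on $\lioudelta$ whose underlying face maps agree with the original semisimplicial ones (establishing (a)) and for which $\lioudelta$ is an $\infty$-category (establishing (d)).

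To arrange (b) and (c) simultaneously, I would apply the refinement (the second part of Theorem~\ref{theorem. steimle}): given a prescribed simplicial structure on a semisimplicial subset $A \subset \lioudelta$, one may extend this data to a simplicial structure on $\lioudelta$. The natural choice is to take $A$ to be the image of the map $j \colon \lioustr \to \lioudelta$ from Construction~\ref{construction. j from lioustr to lioudelta}, endowed with the simplicial structure inherited from the nerve of $\lioustr$. Two observations make this coherent. First, $j$ is injective on $n$-simplices: the underlying sectors $M_i = M_{\{i\}}$ and the strict morphisms $f_{i,j}$ are recoverable from the collared movies $\phi_{\{i\} \subset \{i,j\}}$, so $A$ inherits a well-defined simplicial structure from $\lioustr$. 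Second, unpacking Construction~\ref{construction. j from lioustr to lioudelta} at the degenerate simplex $(M,M,\dotsc,M)$ with all $f_{i,j} = \id_M$ gives precisely $M_{I'} \otimes T^*\Delta^{I'} = M \otimes T^*\Delta^{I'}$ and $\phi_{I'' \subset I'} = \id_M \times T^*\eta_{I'' \subset I'}$, which is exactly the constant simplex at $M$ in the sense of Definition~\ref{defn. constant simplices in lioudelta}. Thus the constant simplices of $\lioudelta$ are contained in $A$ as the images of degenerate simplices of vertices of $\lioustr$, so that once the simplicial structure on $A$ is extended to $\lioudelta$ via Steimle's refinement, (b) holds by construction and (c) follows automatically.

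The main obstacle will be verifying the bookkeeping for the refinement step: one must check that $A$ is actually a semisimplicial subset (face maps of image simplices return to the image), that the inherited simplicial structure on $A$ is genuinely simplicial (requires the injectivity of $j$), and that the specified degeneracies on $A$ are compatible with the idempotent-equivalence hypothesis so that Steimle's refinement applies. Each reduces to a routine inspection of the explicit formulas in Construction~\ref{construction. j from lioustr to lioudelta}, using that the idempotent self-equivalence at $M$ chosen in the proof of Lemma~\ref{lemma. idempotents self equivalences} coincides with the constant edge at $M$, which in turn is the image under $j$ of the unique degenerate edge at $M$ in $\lioustr$. Once these compatibilities are recorded, properties (a)--(d) follow mechanically from Theorem~\ref{theorem. steimle}.
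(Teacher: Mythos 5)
Your proposal follows the same route as the paper: verify the semisimplicial weak Kan condition (Theorem~\ref{theorem. weak kan}) and the existence of idempotent self-equivalences (Lemma~\ref{lemma. idempotents self equivalences}), then invoke the relative version of Steimle's theorem with $A$ the image of $j\colon \lioustr \to \lioudelta$, observing that the constant simplices lie in this image as the images of degenerate simplices of $\lioustr$ and that the idempotent self-equivalences agree with the constant edges. This matches the paper's argument; your additional bookkeeping (injectivity of $j$, explicit matching of the constant simplex with the degenerate simplex at $M$) fills in details the paper leaves to the reader, and both are correct.
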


\begin{notation}
From now on, we also let $\lioudelta$ denote the simplicial set guaranteed by Theorem~\ref{theorem. lioudelta is an infinity-cat}.  This abuse of notation will not cause too much confusion---often, the important structure will be whether a {\em map} to/from $\lioudelta$ is semisimplicial or simplicial. 
\end{notation}

\begin{remark}
The last item of Theorem~\ref{theorem. lioudelta is an infinity-cat} is Theorem~\ref{theorem. liou delta is oo-cat} from the introduction. Theorem~\ref{theorem. lioudelta is an infinity-cat} elaborates on the various properties that the $\infty$-category $\lioudelta$ satisfies.
\end{remark}

\begin{proof}[Proof of Theorem~\ref{theorem. lioudelta is an infinity-cat}.]
By Theorem~\ref{theorem. weak kan}, $\lioudelta$ satisfies the semisimplicial weak Kan condition. By Lemma~\ref{lemma. idempotents self equivalences}, every object admits an idempotent self-equivalence. 
Thus Theorem~\ref{theorem. steimle} shows that we can satisfy (b) and (c) by taking $A$ (see the notation in Theorem~\ref{theorem. steimle}) to be the the image of $\lioustr$ (which also contains all constant simplices as in Remark~\ref{remark. constant simplices of lioudelta form a simplicial set}). We finally note that, indeed, Construction~\ref{construction. j from lioustr to lioudelta} is compatible with the idempotent self-equivalences from the proof of Lemma~\ref{lemma. idempotents self equivalences}.
\end{proof}

\begin{theorem}\label{theorem. lioudeltadef is an infinity-cat}
The semisimplicial set $\lioudeltadef$ may be promoted to be a simplicial set satisfying the following properties:
\enum[(a)]
\item The face maps are unchanged.
\item\label{item. lioudelta maps to lioudeltadef} The map of semisimplicial sets $\lioudelta \to \lioudeltadef$ is a map of simplicial sets when $\lioudelta$ is given the simplicial set structure from Theorem~\ref{theorem. lioudelta is an infinity-cat}. (In particular, the degeneracies of constant simplices are constant simplices.)
\item $\lioudeltadef$ is an $\infty$-category.
\enumd
\end{theorem}

\begin{proof}
Identical to the proof of Theorem~\ref{theorem. lioudelta is an infinity-cat}, except one takes $A$ to be $\lioudelta$.
\end{proof}

\subsection{Some of the geometry of \texorpdfstring{$\lioudelta$}{Liou Delta}}
\label{section. lioudelta geometry}
Concrete geometric constructions in $\lioudelta$ can be given $\infty$-categorical interpretations. We have already seen in Proposition~\ref{prop. 2-simplex is an isotopy} that 2-simplices in $\lioudelta$ encode isotopies (through sectorial embeddings) of sectorial embeddings.

And in Example~\ref{example. edges in lioudelta are non-strict embeddings}, we saw that 1-simplices encode (not necessarily strict) sectorial embeddings. We now record the converse construction.

\begin{remark}
 
We could have introduced the following construction back in Section~\ref{section. lioudelta properties}, but we have waited until we established the $\infty$-categorical structure (namely, degeneracies) of $\lioudelta$. The reason is so that we can then state Proposition~\ref {prop. homotopic maps are homotopic}, which requires a notion of homotopy to state precisely (and hence, in turn, requires us to know the degenerate edges of $\lioudelta$).
\end{remark}

\begin{construction}\label{construction. edges in lioudelta}
Let $f: M_0 \to M_1$ be a sectorial embedding (Definition~\ref{defn. sectorial embedding}) so that $f^*\lambda_1 = \lambda_0 + dh$ for some compactly supported $h$. Then for every 
\begin{itemize}
\item smooth, compactly supported function $\tilde h: M_1 \to \RR$ extending $f_* h$, and  
\item smooth, collared, weakly increasing function $\chi: \Delta^1 \to \RR_{\geq 0}$ interpolating from 1 to 0 (so that $\chi$ equals 1 at the initial vertex of $\Delta^1$, and equals 0 at the terminal vertex),
\end{itemize}
one obtains an edge of $\lioudelta$ by declaring
	\eqn\label{eqn. edge construction}
	M_{01} \times T^*\Delta^1= (M_1 \times T^*\Delta^1 , \lambda_1 \oplus pdq + d(\chi(q)\tilde h)),
	\eqnd
and
\begin{itemize}
\item $(M_0 \times T^*[-1,0] \to  M_{01} \times T^*\Delta^1) = f \times T^*\eta_{0 \subset 01}$,
\item  $(M_1 \times T^*[-1,0] \to  M_{01} \times T^*\Delta^1 )= \id \times T^*\eta_{1 \subset 01}$.
\end{itemize}
\end{construction}

\begin{prop}\label{prop. homotopic maps are homotopic}
Fix a map $f: M_0 \to M_1$ for which $f^*\lambda_1 = \lambda_0 + dh$ for some compactly supported function $h: M_0 \to \RR$.
\enum[(a)]
\item \label{item. choices of tilde h and chi} For any two choices of $\tilde h$ and $\chi$, Construction~\ref{construction. edges in lioudelta} results in homotopic edges of $\lioudelta$. (Informally: Construction~\ref{construction. edges in lioudelta} is independent of the choices of $\chi,\tilde h$ up to homotopy.)
\item\label{item. homotopic maps are homotopic}
Further, if $f$ is smoothly isotopic through sectorial embeddings to $f'$, then Construction~\ref{construction. edges in lioudelta} for $f$ and $f'$ result in homotopic edges in $\lioudelta$.  (Informally: Construction~\ref{construction. edges in lioudelta} is compatible with the relation of isotopy.)
\item\label{item. composition} Given another sectorial embedding $g: M_1 \to M_2$, there exists a triangle in $\lioudelta$ whose 02, 01, and 12 edges are associated to $g \circ f$, $f$, and $g$, respectively. (Informally: Construction~\ref{construction. edges in lioudelta} respects composition up to homotopy.)
\item\label{item. equivalences are equivalences}
In particular, if $f$ is a Liouville equivalence, then the edge associated to $f$ is an equivalence in $\lioudelta$.
\enumd
\end{prop}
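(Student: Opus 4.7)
Proof plan. All four parts are established by constructing explicit 2-simplices in $\lioudelta$ via the movie construction (Construction~\ref{construction. movie}) applied to collared, bordered, compactly supported deformations (Proposition~\ref{prop. movies are sectors}). The underlying inputs are the contractibility of the auxiliary choice spaces $(\tilde h, \chi)$, the standard interpolation afforded by convex combinations, and the isotopy extension theorems of Section~2.

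For part~\ref{item. choices of tilde h and chi}, the space of compactly supported extensions of $f_*h$ to $M_1$ is an affine space, and the space of collared weakly increasing $\chi: \Delta^1 \to \RR_{\geq 0}$ with the prescribed boundary values $1 \mapsto 0$ is convex; both are contractible. Given two data sets $(\tilde h_0, \chi_0), (\tilde h_1, \chi_1)$ producing edges $e_0, e_1$, the plan is to build a 2-simplex with $M_{\{0,1,2\}} = M_1$, $\Delta^{01}$-face $e_0$, $\Delta^{02}$-face $e_1$, and $\Delta^{12}$-face the constant edge at $M_1$, by specifying a collared compactly supported $H: M_1 \times \Delta^{012} \to \RR$ interpolating the three edge data. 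Since the face specifications at vertex $\{0\}$ force $\tilde h_0 = \tilde h_1$ there, we first reduce to this case by factoring the homotopy through the intermediate edge $(\tilde h_0, \chi_1)$ (for which only $\chi$ varies, and the vertex value $\tilde h_0$ is consistent), and then appealing to the equivalence $\lioudelta \simeq \lioudeltadef$ of Theorem~\ref{theorem. lioudelta is lioudeltadef} to carry out the remaining interpolation $(\tilde h_0, \chi_1) \leadsto (\tilde h_1, \chi_1)$, where the ambient Liouville deformation on $M_{\{0,1,2\}} = M_1$ may be bordered rather than compactly supported and thus accommodate the varying vertex value.

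For parts~\ref{item. homotopic maps are homotopic} and~\ref{item. composition}, given an isotopy $\{f_t: M_0 \to M_1\}_{t \in [0,1]}$ through sectorial embeddings, the primitives $h_t$ with $f_t^*\lambda_1 = \lambda_0 + dh_t$ can be chosen to vary smoothly in $t$ with uniformly compact support; extending these to $\tilde h_t$ on $M_1$, the $t$-parameter and the standard $\chi$-interpolation assemble into a 2-simplex with $M_{\{0,1,2\}} = M_1$, $\Delta^{01}$-face $e_f$, $\Delta^{02}$-face $e_{f'}$, and degenerate $\Delta^{12}$-face. For composition, given $f: M_0 \to M_1$ and $g: M_1 \to M_2$, the primitive for the composite reads $h_{gf} = h_f + f^*h_g$ (using $f^*\lambda_1 = \lambda_0 + dh_f$ and $g^*\lambda_2 = \lambda_1 + dh_g$); we take $M_{\{0,1,2\}} = M_2$ with Liouville form $\lambda_2 + \mathbf{p}d\mathbf{q} + dH$ for a compactly supported collared $H: M_2 \times \Delta^{012} \to \RR$ restricting to $\chi_f \widetilde{h_f}$, $\chi_g \widetilde{h_g}$, $\chi_{gf}\widetilde{h_{gf}}$ (pushed forward appropriately) near the three edges, and with the three edge embeddings realized by $f$, $g$, and $g\circ f$.

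Part~\ref{item. equivalences are equivalences} is then formal. If $f$ is a sectorial equivalence with quasi-inverse $g$ and sectorial-embedding isotopies $g \circ f \sim \id_{M_0}$, $f \circ g \sim \id_{M_1}$, then part~\ref{item. composition} produces 2-simplices giving $[g] \circ [f] = [g \circ f]$ and $[f] \circ [g] = [f \circ g]$ in the homotopy category of $\lioudelta$, while part~\ref{item. homotopic maps are homotopic} identifies $[g \circ f] = [\id_{M_0}]$ and $[f \circ g] = [\id_{M_1}]$. Since identity morphisms of $\lioudelta$ are represented by the constant (hence degenerate) edges of Definition~\ref{defn. constant simplices in lioudelta}, which are tautologically equivalences, $[f]$ admits a two-sided homotopy inverse $[g]$ and is therefore an equivalence in $\lioudelta$.

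The main obstacle is the vertex compatibility in part~\ref{item. choices of tilde h and chi}: when $\tilde h_0 \neq \tilde h_1$ on $M_1 \setminus f(M_0)$, the two candidate vertex Liouville forms $\lambda_1 + d\tilde h_0$ and $\lambda_1 + d\tilde h_1$ on $M_{\{0,1,2\}}$ disagree, so one cannot construct the 2-simplex directly inside $\lioudelta$. The resolution outlined above routes the relevant deformation through the strictly larger $\infty$-category $\lioudeltadef$ (and then invokes Theorem~\ref{theorem. lioudelta is lioudeltadef} to transport the resulting homotopy back to $\lioudelta$); this is where the use of bordered-but-not-compactly-supported deformations proves essential.
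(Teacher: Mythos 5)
There is a genuine gap in your proof of part~\ref{item. choices of tilde h and chi} (and, by extension, part~\ref{item. homotopic maps are homotopic}), and it traces to your choice of 2-simplex configuration.

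You build a 2-simplex with the $\Delta^{\{0,1\}}$-face $e_0$, the $\Delta^{\{0,2\}}$-face $e_1$, and the $\Delta^{\{1,2\}}$-face the degenerate edge at $M_1$. You correctly identify that this configuration forces the Liouville form on $M_{\{0,1,2\}}=M_1$ near vertex $0$ to simultaneously restrict to $\lambda_1 + d\tilde h_0$ (collaring along the $\{0,1\}$-face) and to $\lambda_1 + d\tilde h_1$ (collaring along the $\{0,2\}$-face) --- an honest contradiction whenever $\tilde h_0 \neq \tilde h_1$ off $f(M_0)$. Your proposed repair, routing the $\tilde h$-varying stage of the homotopy through $\lioudeltadef$, does not work: the collaring requirement imposed by Definition~\ref{defn. collaring movie} on a $2$-simplex near its vertices is identical in $\lioudelta$ and $\lioudeltadef$. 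The distinction ``compactly supported vs.\ bordered'' controls the behavior of $\lambda_s - \lambda_1$ as one goes to infinity in $M_1$; it has no bearing on the incompatibility between two prescribed values of the deformation at a single point $s = \Delta^{\{0\}}$ of the simplex. (Invoking Theorem~\ref{theorem. lioudelta is lioudeltadef}, which is established much later in the paper, is also a logical ordering concern, but it is moot since the transport wouldn't fix the issue even if permitted.)

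The paper's actual proof sidesteps this entirely by choosing a different configuration: the $\Delta^{\{0,1\}}$-face is the \emph{degenerate edge at $M_0$}, the $\Delta^{\{1,2\}}$-face is $(\chi,\tilde h)$, and the $\Delta^{\{0,2\}}$-face is $(\chi',\tilde h')$. Here vertices $0$ and $1$ are both $M_0$; the only constraint imposed by the degenerate $\{0,1\}$-face at either of those vertices is that $f\colon M_0\to M_1$ be strict for whatever Liouville form $M_{\{0,1,2\}} = M_1$ carries there --- and that holds for \emph{any} compactly supported extension of $f_*h$, since every such extension pulls back to $h$ on $M_0$. Consequently the deformation on $M_1$ is free to interpolate from $\tilde h'$ near vertex $0$ to $\tilde h$ near vertex $1$ (constant on $f(M_0)$ where they agree), and the 2-simplex can be completed inside $\lioudelta$ with no detour. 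Since the $\{0,1\}$-edge is an identity, the two outer edges are exhibited as homotopic, proving part~\ref{item. choices of tilde h and chi}. The paper uses the same ``$\{0,1\}$ degenerate at the source'' configuration for part~\ref{item. homotopic maps are homotopic}, where your construction faces the analogous vertex-$0$ obstruction (since $\tilde h_f$ and $\tilde h_{f'}$ extend pushforwards along different maps). Your treatment of parts~\ref{item. composition} and~\ref{item. equivalences are equivalences} is in line with the paper's.
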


\begin{proof}
\eqref{item. choices of tilde h and chi} It suffices to construct a 2-simplex in $\lioudelta$ for which the edge from 0 to 1 is a degenerate edge for $M_0$, the edge from 1 to 2 is constructed using $\chi$ and $h$, and the edge from 0 to 2 is constructed using some other choices $\chi'$ and $h'$.

Let $M_{01} \times T^*\Delta^1$ denote the Liouville sector constructed out of $\chi$ and $\tilde h$~\eqref{eqn. edge construction}. We let $(M_{01} \times T^*\Delta^1)'$ denote the Liouville sector constructed out of $\chi'$ and $\tilde h'$. By definition, we have smooth embeddings
	\eqnn
	(M_{01} \times T^*\Delta^1 \times T^*[-1,0]) \to M_{1} \times T^*\Delta^2
	\leftarrow (M_{01} \times T^*\Delta^1 \times T^*[-1,0])'
	\eqnd
each given by $\id \times T^*\eta$ (where the rightward pointing $\eta$ is induced by the inclusion $\{1,2\} \subset \{0,1,2\}$ and the leftward pointing $\eta$ is induced by the inclusion $\{0,2\} \subset \{0,1,2\}$). Because everything in sight is collared, we may extend (the pushforwards of) both $\chi\tilde h$ and $\chi'\tilde h'$ to a smooth, compactly supported function $\tilde H: M_1 \times \Delta^2 \to \RR$; moreover, by the smooth contractibility of the space of collared and compactly supported smooth functions, we may further demand that $\tilde H$ is collared by a function $h_{01} : M_1 \times \Delta^1 \to \RR$ along the edge from 0 to 1 for which $f^* h_{01} = h$. By declaring the inclusion $M_0 \times T^*\Delta^1 \to M_{1} \times T^*\Delta^2$ to be given by $f \times T^*\eta_{01 \subset 012}$, the totality of the data in this paragraph exhibits the desired two-simplex.

\eqref{item. homotopic maps are homotopic} Let $f_s$ denote a homotopy between two sectorial embeddings $f = f_0$ and $f' = f_1$. Without loss of generality, we may assume that $f_s$ is parametrized by $s \in \Delta^1$, and that $f_s$ is collared (and in particular, $s$-independent near the endpoints of $\Delta^1$). 

We may construct a strict sectorial embedding
	\eqnn
	\tilde f: M_0 \tensor T^*\Delta^1
	\to
	M_1 \times T^*\Delta^1
	\eqnd
where the Liouville structure on the codomain depends on the family $f_s$, and is given by $\lambda_1 \oplus pdq + H$ for some smooth, compactly supported function $H: M_1 \times \Delta^1 \to \RR$. By identifying $\Delta^1$ with the 2nd face of $\Delta^2$ (i.e., with the face spanning the 0th and 1st vertices) and again by the collaring condition of everything in sight, we may extend the trio of $H$, $\chi \tilde h$ and $\chi' \tilde h'$ to a single smooth, compactly supported function $\tilde H: M_1 \times \Delta^2 \to \RR$. (Here, $\chi \tilde h$ is pushed forward along the inclusion of the 12 face, constant along the collaring, and likewise $\chi' \tilde h'$ is pushed forward along the inclusion of the 02 face.) The totality of the data in this paragraph exhibits a 2-simplex collared by $\id_{M_0}$ (along the 01 edge) and the edges associated to $f$ and $f'$ from Construction~\ref{construction. edges in lioudelta}; in other words, we have constructed a homotopy from $f$ to $f'$.

\eqref{item. composition} We define $M_{012}=M_2$ as a Liouville sector, and define a smooth embedding
	\eqnn
	M_{01} \times T^*\Delta^1 := M_1 \times T^*\Delta^1 \to M_2 \times T^*\Delta^2
	\eqnd
by $g \times T^*\eta_{01 \subset 012}$. By our collaring assumptions, the three pushforwards
	\eqnn
	(g \times \eta_{01 \subset 012})* \chi_{01}\tilde h_{01},
	\qquad
	(\id_{M_2} \times \eta_{12 \subset 012})_* \chi_{12}\tilde h_{12},
	\qquad
	(\id_{M_2} \times \eta_{02 \subset 012})_* \chi_{02} \tilde h_{02}	
	\eqnd
(which are smooth functions defined on subsets of $M_2 \times \Delta^2$) agree along their overlaps; we can choose a smooth, compactly supported function  $H: M_2 \times \Delta^2 \to \RR$ extending all three. This defines our Liouville sector $M_{012} \times T^*\Delta^2 = (M_2 \times T^*\Delta^2, \lambda_2 \oplus pdq + dH)$, and the three face inclusions are defined by
	\eqnn
	g \times T^*\eta_{01 \subset 012}
	\qquad
	\id_{M_2} \times T^*\eta_{12 \subset 012}
	\qquad
	\id_{M_2} \times T^*\eta_{02 \subset 012}.
	\eqnd

\eqref{item. equivalences are equivalences} This is obvious in light of \eqref{item. homotopic maps are homotopic} and \eqref{item. composition}.
\end{proof}

\begin{remark}
When every sectorial embedding in sight is {\em strict}, we may choose every extension to be constant in the simplicial directions; by declaring every $h$ and $H$ to be zero (and hence removing the dependence on $\chi$) the proof of Proposition~\ref{prop. homotopic maps are homotopic} in fact recovers the functor of $\infty$-categories from $N(\lioustr)$ to $\lioudelta$ from Theorem~\ref{theorem. lioudelta is an infinity-cat}. 
\end{remark}

\begin{remark}
Indeed, one can think of an edge in $\lioudelta$ as a choice of embedding $f: M_0 \to M_1$, along with an interpolation of $\lambda_1$ to an extension of $f_* \lambda_0$ along compactly supported functions $h_t: M_1 \to \RR$. The rough idea is that the forgetful map $(f,\{h_t\}) \mapsto f$ has contractible fibers (the space of $h$ is convex) so that the space of edges $(f,h_t)$ in $\lioudelta$ is the space of (not necessarily strict) Liouville embeddings $f$.
\end{remark}

\begin{example}[Isotopies]
\label{example. isotopies are 2 simplices}
The converse to Proposition~\ref{prop. 2-simplex is an isotopy} is also true---any isotopy gives rise to a 2-simplex in $\lioudelta$. 

Let us illustrate this in a simple example: Given two Liouville sectors $M$ and $N$, and a smooth isotopy of sectorial embeddings $\{f_t: M \to N\}_{t \in [0,1]}$, we will produce a 2-simplex in $\lioudelta$ that ``encodes'' this data (in a way depending on some choices).

We choose
	\eqnn
	M_{\{0\}} =
	M_{\{0,1\}} =
	M_{\{1\}} =
	M
	\eqnd
with the maps $M_{\{i\}}\tensor T^*[-1,\epsilon] \to M_{\{0,1\}} \times T^*\Delta^{\{0,1\}}$ given by $\id_M \times T^*\eta_{\{i\} \subset \{0,1\}}$ for $i=0,1$. We also choose
	\eqnn
	M_{\{2\}} =
	M_{\{0,2\}} =
	M_{\{1,2\}} =
	M_{\{0,1,2\}} =
	N
	\eqnd
choosing all relevant maps among these to be given by $\id_N \times T^*\eta$.

Finally, let us convert $\{f_t\}_{t \in [0,1]}$ to a collared family $\{\widetilde{y}^s: M \to N\}_{s \in \Delta^{\{0,1\}}}$ by choosing some orientation-respecting $\phi: \Delta^{\{0,1\}} \to [0,1]$ that sends some open neighborhood of the initial vertex $\Delta^{\{0\}}$ to $0 \in [0,1]$, and some open neighborhood of the vertex $\Delta^{\{1\}}$ to $1 \in [0,1]$.

Then we are finished by 
\begin{itemize}
\item declaring the map 
	$y^0_{\{0\} \subset \{0,2\}}
	M = M_{\{0\}} \to M_{\{0,2\}} = N
	$
to be the map $f_0$, 
\item declaring the map  $y^1_{\{1\} \subset \{1,2\}}: 	M = M_{\{1\}} \to M_{\{1,2\}} = N $ to be the map $f_1$,
\item declaring the family $\{y^s_{\{0,1\} \subset \{0,1,2\}}: M_{\{0,1\}} \to M_{\{0,1,2\}}\}_{s \in \Delta^{\{0,1\}}}$ to be $y^s_{\{0,1\} \subset \{0,1,2\}} = f_{\phi(s)}$, and
\item   Choosing a collared, compactly supported smooth function $N \times \Delta^2 \to \RR$ so that all embeddings are strict (similar to Construction~\ref{construction. edges in lioudelta}).
\end{itemize}
We may informally draw this 2-simplex as follows:
	\eqn\label{eqn. 2-simplex isotopy}
	\xymatrix{
	&& M_2 \ar[dr]^{\sim} \ar[dl]_{\sim} \ar[d]^{\sim}&& \\
	& M_{02} \ar@{=>}[r]^{\sim} & M_{012} & M_{12} \ar@{=>}[l]_{\sim} & \\
	M_0\ar[rr] \ar[ur] \ar[urr] && M_{01} \ar@{=>}[u] && M_1\ar[ll]_{\sim} \ar[ull] \ar[ul]
	}
	=
	\xymatrix{
	&& N \ar[dr]^{=} \ar[dl]_{=} \ar[d]^{=}&& \\
	& N \ar@{=>}[r]^{=} & N & N\ar@{=>}[l]_{=} & \\
	M \ar[rr]^{=} \ar[ur]^{f_0} \ar[urr] && M  \ar@{=>}[u]_{f_t} && M \ar[ll]_{=} \ar[ull] \ar[ul]_{f_1}
	}
	\eqnd
Above, the double arrows $\implies$ indicate the data of a family of morphisms, and arrows labeled by an equality sign indicate the identity morphism, or a constant family of identity morphisms.

\end{example}

\begin{remark}
There is likewise a way to construct a 2-simplex out of the data of $\{f_t\}$ for which the 1st and 2nd vertices are given by $N$, while only the 0th vertex is given by $M$. (This is in contrast to Example~\ref{example. isotopies are 2 simplices}, whose 2-simplex has the property that the 1st vertex is given by $M$.) Such a construction involves extending the isotopy $f_t$ to an isotopy of $N$ to itself. This is possible thanks to isotopy extension (Proposition~\ref{prop. isotopy extension}). 

We leave the details to the reader, but point out the following: If $\lioudelta$ is to be promoted to be an $\infty$-category, then there should be no appreciable homotopic difference between the space of 2-simplices where the $0,1$ edge is constant, and the space of 2-simplices whose $1,2$ edge is constant. We see here the importance of isotopy extension in even having a hope for this to be true.
\end{remark}

\clearpage
\section{Mapping spaces of \texorpdfstring{$\lioudelta$}{Liou Delta}}
\label{section. lioudelta homs}
We have already seen that edges in $\lioudelta$ encode (not necessarily strict) sectorial embeddings, and that triangles in $\lioudelta$ encode isotopies among (not necessarily strict) sectorial embeddings. See Example~\ref{example. edges in lioudelta are non-strict embeddings} and Proposition~\ref{prop. 2-simplex is an isotopy}.

Using higher-dimensional analogues of the construction in Example~\ref{example. isotopies are 2 simplices}, we now prove that morphism spaces of $\lioudelta$ are homotopy equivalent to the spaces of (not necessarily strict) sectorial embeddings:

\begin{theorem}
\label{theorem. EmbLiou is homLiouDelta}
Fix two Liouville sectors $M$ and $N$. There exists a homotopy equivalence of simplicial sets
	\eqnn
	\emb_{\liou}(M,N) \simeq \hom^R_{\lioudelta}(M,N).
	\eqnd
Here, $\hom^R_{\lioudelta}(M,N)$ is the Kan complex modeling the space of morphisms from $M$ to $N$ (Notation~\ref{notation. right morphism space} below), and $\emb_{\liou}(M,N)$ is the singular complex of the space of sectorial embeddings (Notation~\ref{notation. emb liou}).
\end{theorem}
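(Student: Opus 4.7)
The plan is to build an explicit simplicial map from a convenient model of $\emb_\liou(M,N)$ into $\hom^R_{\lioudelta}(M,N)$ and then argue it is a weak homotopy equivalence by producing a retraction and a contractible-choice homotopy; the key construction generalizes the 2-simplex of Example~\ref{example. isotopies are 2 simplices} to arbitrary dimensions.

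First, I will replace $\emb_\liou(M,N)$ by the weakly equivalent Kan complex $\emb_\liou^{\coll,C^\infty}(M,N)$ of smooth, collared, continuous simplices (Proposition~\ref{prop. coll emb spaces are equivalent}). A $k$-simplex there is a smooth collared family $\{y^s : M \to N\}_{s \in \Delta^k}$ of not-necessarily-strict sectorial embeddings, which by Proposition~\ref{prop. movies of maps} packages into a single movie of embeddings between suitable movies of compactly supported deformations. Next, I will construct $\Phi : \emb_\liou^{\coll,C^\infty}(M,N) \to \hom^R_{\lioudelta}(M,N)$ sending $\{y^s\}$ to the $(k{+}1)$-simplex of $\lioudelta$ whose ``$M$-face'' $\Delta^{\{0,\ldots,k\}}$ is degenerate at $M$ and whose collaring movies $\phi_{A \subset A'}$, for $A \subset \{0,\ldots,k\}$ and $k{+}1 \in A'$, are produced from $\{y^s\}|_{\Delta^A}$ via the movie construction. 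The compactly supported interpolating functions $h_{A'} : N \times \Delta^{A'} \to \RR$ needed to render every $\phi_{A \subset A'}$ strict will be chosen inductively on $|A'|$ using the collaring convention (Choice~\ref{choice. collaring}) together with the convexity of the space of collared, compactly supported smooth functions, exactly as in Construction~\ref{construction. edges in lioudelta} for $k=0$ and Example~\ref{example. isotopies are 2 simplices} for $k=1$. These choices will be manifestly compatible with face and degeneracy maps, giving a simplicial map.

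Conversely, I will define $\Psi$ by extracting, from a $(k{+}1)$-simplex of $\lioudelta$ with the prescribed vertex data, the smooth collared family $\Delta^k \to \embtop_\liou(M,N)$ determined by the collaring movie $\phi_{\{0,\ldots,k\} \subset \{0,\ldots,k+1\}}$ after the max-constant change of basis of Section~\ref{section. max constant simplices}; this uses Remark~\ref{remark. movies give movies} to interpret the movie as a map of embedding spaces. Then $\Psi \circ \Phi = \id$ by direct inspection. For $\Phi \circ \Psi \simeq \id$, the observation is that a $(k{+}1)$-simplex in $\hom^R_{\lioudelta}(M,N)$ is determined, beyond its underlying family $\{y^s\} = \Psi(\sigma)$, by a compatible system of collared, compactly supported deformation functions $\{h_{A'}\}$, and the space of such systems with $\{y^s\}$ fixed is convex and non-empty. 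A straight-line homotopy between the $h_{A'}$ coming from $\sigma$ and those coming from $\Phi(\Psi(\sigma))$, followed by smoothing via Proposition~\ref{prop. continuous to smooth}, will produce the required homotopy $\Phi \circ \Psi \simeq \id$.

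The main obstacle will be organizing the interpolation of the $h_{A'}$'s simultaneously across all $A' \subset [k{+}1]$ so that the strict composition condition~\ref{item. lioudelta composition} is preserved throughout; equivalently, showing that the forgetful map from $(k{+}1)$-simplices of $\lioudelta$ (with the prescribed vertex data) to their underlying smooth collared families $\{y^s\}$ is a trivial Kan fibration. The plan is to establish this by induction on $|A'|$, using the collaring convention to decouple the interpolation along each face from the interpolation in the interior, and invoking the smooth-approximation toolkit of Section~\ref{section. continuous to smooth} to maintain smoothness and continuity at each stage. This amounts to a parametric, strictly simpler version of the horn-filling argument of Section~\ref{section. weak kan}, with bookkeeping that I expect to be involved but geometric content that is routine given the machinery already developed.
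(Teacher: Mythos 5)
Your plan contains a genuine gap, and it is located exactly where the paper's proof does the hardest work. You propose to define a retraction $\Psi$ by extracting the underlying family from a simplex of $\hom^R_{\lioudelta}(M,N)$, and then to establish $\Phi\circ\Psi\simeq\id$ by a straight-line (convexity) homotopy of the interpolating functions $\{h_{A'}\}$. But a general simplex of $\hom^R_{\lioudelta}(M,N)$ carries more data than a family $\{y^s\}$ plus a convex family of $h$'s: for subsets $A'$ containing the last vertex with $|A'|\geq 2$, the sector $M_{A'}$ is some Liouville sector $Q$ that is \emph{isomorphic} to $N$ (via the datum $\phi$ recorded at the top vertex) but need not equal $N$, and the whole family of maps factors through $Q$. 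The ``max-constant change of basis'' of Section~\ref{section. max constant simplices} that you invoke is not a deformation retraction inside $\hom^R_{\lioudelta}(M,N)$: it produces a \emph{different} simplex, and relating a simplex to its max-constant replacement is precisely a homotopical claim, not a convexity observation. In short, the fiber of your proposed forgetful map is not a convex space of functions; it also contains the choice of $Q$ and the isomorphism $Q\cong N$, and showing this extra data is inessential is the hard part.

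The paper isolates the convexity phenomenon you correctly identify into a separate, easy step (the forgetful map $\widetilde{\emblioucoll}(M,N)\to\emblioucoll(M,N)$ has contractible fibers, Proposition~\ref{prop. tilde emb to emb is equivalence}), but then separately proves that the inclusion $\hom'_{\lioudelta}(M,N)\into\hom^R_{\lioudelta}(M,N)$ is a weak equivalence (Lemma~\ref{lemma. hom' is homR}). The latter requires the explicit $\pi_0$- and $\pi_k$-level arguments of Section~\ref{section. lioudelta homs}: constructing cone-offs $\Gamma_a,\Gamma_b,\Gamma_c$ from $|\Delta^k|$, collapsing and smoothing them, and crucially Lemma~\ref{lemma. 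Phi commutes with phi}, which establishes that $\phi^{-1}\circ\Phi_N^Q|_{\del\Delta^k}$ is homotopic to $\Phi_N^Q|_{\del\Delta^k}\circ\phi^{-1}$ as a family in $\embliou(M,N)$. None of this is a straight-line homotopy; it uses the Kan structure of the embedding space, the composition constraint~\ref{item. lioudelta composition}, and smooth-approximation in a way that your proposal underestimates when you describe the remaining work as ``a parametric, strictly simpler version of the horn-filling argument.'' You would need to supply an argument that handles the $(Q,\phi)$ ambiguity; the convexity of the $h$'s alone does not do it.
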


\subsection{The right mapping spaces}
Now that we have established $\lioudelta$ as an $\infty$-category (Theorem~\ref{theorem. lioudelta is an infinity-cat}), we may speak of its morphism spaces.

\begin{notation}
\label{notation. right morphism space}
We recall the ``space\footnote{See the discussion around Proposition~1.2.2.3 of~\cite{htt}.} of right morphisms'' for $\lioudelta$, 
	\eqnn
	\hom^R_{\lioudelta}(M,N).
	\eqnd
$\hom^R_{\lioudelta}(M,N)$ is the simplicial set whose $k$-simplices are given by maps $\Delta^{k+1} \to \lioudelta$ whose restriction to the vertex $\Delta^{\{k+1\}}$ is given by $N$ and whose restriction to $\Delta^{\{0,\ldots,k\}}$ is the degenerate simplex at $M$.
\end{notation}

\begin{example}\label{example. f in hom^R}
Fix a (not necessarily exact) sectorial embedding $f: M \to N$. Then one may construct a 1-simplex in $\lioudelta$ as in Example~\ref{example. edges in lioudelta are non-strict embeddings}. In particular, these $f$ are examples of zero-simplices in $\hom^R_{\lioudelta}(M,N)$.
\end{example}

\begin{example}\label{example. simplex in hom^R}
Fix $f$ as in Example~\ref{example. f in hom^R}. Now suppose that we have a $k-1$-simplex $\gamma$ in $\hom^R_{\lioudelta}(M,N)$ for which the boundary $\del \Delta^{k-1}$ is sent to the degenerate simplex at $f$. (By definition of simplicial homotopy groups, such a thing defines an element  $[\gamma] \in \pi_{k-1}(\hom^R_{\lioudelta}(M,N),f)$.) Let us parse  this data.

$\gamma$, by definition, is the data of a $k$-simplex in $\lioudelta$ whose $k$th face is the degenerate simplex at the object $M$, and whose other faces are the degenerate simplices $s_0 \circ \ldots \circ s_0 (f)$ at the morphism $f$. Thus, $\gamma$ is determined by
\begin{itemize}
\item A choice of Liouville sector $Q = \gamma([k])$ (together with Liouville structure on $Q \times T^*\Delta^k$),
\item For every $k \in I \subset [k]$, choice of map $\gamma(I \subset [k]): \Delta^I \to \embliou(N,Q)$ (so that the associated movie defines a strict embedding of $N \times T^*\Delta^I \times [-1,\epsilon]^{[k] \setminus I}$ into $Q \times T^*\Delta^k$), and
\item For every $I \subset [k-1]$, a choice of map $\gamma(I \subset [k]): \Delta^I \to \embliou(M,Q)$ (with the analogous strict embedding requirement as above),
\end{itemize}
satisfying the composition requirements. Because of the composition requirements, the above data may be summarized. $\gamma$ is determined by $Q$ (together with Liouville structure on $Q \times T^*\Delta^k$), along with 
\enum[(i)]
\item A map
	\eqnn
	\Phi_N^Q: \Lambda^k_k \to \embliou(N,Q),
	\eqnd
	and
\item a map
	\eqnn
	\Phi_M^Q: \Delta^{k-1} \to \embliou(M,Q)
	\eqnd
\enumd
satisfying the equation
	\eqnn
	\Phi_N^Q|_{\del \Delta^{k-1}} \circ f = \Phi_M^Q|_{\del \Delta^{k-1}}
	\eqnd
where $f$ is treated as a constant map $\del \Delta^{k-1} \to \embliou(M,N)$. (Note also that $\del \Delta^{k-1}$ is a subsimplicial set of $\Lambda^k_k$.)

By definition of $\lioudelta$, every $\Phi$ above, when restricted to any subsimplex, must be smooth and collared and continuous. 

Note also that $N$ is necessarily diffeomorphic to $Q$ by~\ref{item. lioudelta is max localizing}; the diffeomorphism is given by evaluating the map $\Phi^Q_N$ at the maximal vertex of $\Lambda^k_k$. In particular, note that $\Phi^Q_N$ encodes a (contractible) family of embeddings all isotopic to the just-mentioned diffeomorphism.
\end{example}

\begin{notation}[$\hom'_{\lioudelta}(M,N)$]
\label{notation. hom'}
Let 
	\eqn\label{eqn. hom' to homR semi}
	\hom'_{\lioudelta}(M,N) \subset \hom^R_{\lioudelta}(M,N)
	\eqnd
denote the semisimplicial subset of those $(k+1)$-simplices in $\lioudelta$ satisfying the following:
\begin{itemize}
\item If $I' \subset [k+1]$ does not contain $k+1$, then the $\Delta^{I'}$-movie is given by $M \tensor T^*\Delta^{I'}$.
\item If $I' \subset [k+1]$ does contain $k+1$, then the corresponding $\Delta^{I'}$-movie has underlying manifold $N \times T^*\Delta^{I'}$. (As a sector, this object need not necessarily be a tensor.)
\item If $I'' \subset I'$ and neither contain $k+1$, then the associated map $\phi_{I'' \subset I'}$ is given by $\id_M \times T^*\eta_{I'' \subset I'}$.
\item Likewise, if $I'' \subset I'$ and both contain $k+1$, then the map $\phi_{I'' \subset I'}$ is given by $\id_N \times T^*\eta_{I'' \subset I'}$.
\end{itemize}
Note that if $I''$ does not contain $k+1$ but $I'$ does, then $\phi_{I'' \subset I'}$ encodes some $\Delta^{I''}$-family of sectorial embeddings from $M$ to $N$.
\end{notation}

\begin{notation}
Let $\widetilde{\emblioucoll}(M,N)$ denote the semisimplicial set where a $k$-simplex is a pair of
\enum
\item A collared, smooth map $f: \Delta^k \to \emb_{\liou}(M,N)$, and
\item A $\Delta^{k+1}$-parametrized family of compactly supported smooth functions $\{h^s: N \to \RR\}_{s \in \Delta^{k+1}}$. 

Notice this determines a $\Delta^{k+1}$-movie $N \times T^*\Delta^{k+1}$ associated to $\{h^s\}$. We demand that the pair $(f, \{h^s\})$ satisfies the following: 
	\enum
		\item The family $\{h^s\}$ is collared.
       	\item For every subset $I \subset [k]$, there exists a (necessarily unique) strict sectorial embedding
        	\eqnn
        	M \tensor T^*(\Delta^{I} \times [-1,0]^{[k+1] \setminus I})
        	\to
        	N \times T^*\Delta^{k+1}
        	\eqnd 
        exhibiting a movie associated to $f|_{\Delta^I}$, and
        \item For every $s \in \Delta^{k+1}$ near the vertex $\Delta^{\{k+1\}}$, $h^s \equiv 0$.
     \enumd
\enumd
\end{notation}

\begin{remark}
\label{remark. tile emb is hom'}
There is a natural isomorphism of semisimplicial sets between
	$
	\widetilde{\emblioucoll}(M,N)
	$
	and
	$
	\hom'_{\lioudelta}(M,N).
	$
For example, given a pair $(f, \{h^s\})$, the restrictions of $f$ to the faces of $\Delta^{k}$ determine maps $\phi_{I'' \subset I'}$, while the restrictions of $\{h^s\}$ to the faces of $\Delta^{k+1}$ determine the Liouville structures on the various $\Delta$-movies.
\end{remark}

\begin{remark}
\label{remark. tilde emb is Kan}
We have a forgetful map of semisimplicial sets. 
	\eqn\label{eqn. tilde emb to emb}
	\widetilde{\emblioucoll}(M,N)
	\to
	\emblioucoll(M,N)
	\eqnd
Using the same methods as
Proposition~\ref{prop. coll emb spaces are equivalent} (and noting the convexity of the space of compactly supported functions $h$), we see that $\widetilde{\emblioucoll}(M,N)$ admits idempotent autoequivalences for all its vertices. By utilizing Theorem~\ref{theorem. steimle} again, we conclude that~\eqref{eqn. tilde emb to emb} results in a map of {\em simplicial} sets, and both the domain and codomain are Kan complexes.
\end{remark}

\begin{prop}
\label{prop. tilde emb to emb is equivalence}
The map~\eqref{eqn. tilde emb to emb} is a homotopy equivalence of simplicial sets.
\end{prop}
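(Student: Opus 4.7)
The plan is to show that the forgetful map $p:\widetilde{\emblioucoll}(M,N)\to\emblioucoll(M,N)$ is a trivial Kan fibration, i.e.\ satisfies the right lifting property against every boundary inclusion $\partial\Delta^n\hookrightarrow\Delta^n$. Since both the domain and codomain are Kan complexes (Remark~\ref{remark. tilde emb is Kan} and Proposition~\ref{prop. coll emb spaces are equivalent}), this will imply $p$ is a homotopy equivalence.

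So suppose we are given a smooth, collared family $f\colon\Delta^n\to\embtop_{\liou}(M,N)$ representing an $n$-simplex of the base, together with a lift $(f|_{\partial\Delta^n},\{h^s\}_{\mathrm{bdy}})$ of its restriction to $\partial\Delta^n$. By definition, $\{h^s\}_{\mathrm{bdy}}$ is a compactly supported, smooth, collared family of functions on $N$ parametrized by the union of those faces of $\Delta^{n+1}$ that correspond under the cone identification to faces of $\partial\Delta^n$; it vanishes near the apex vertex $\Delta^{\{n+1\}}$, and for every face $\Delta^I$ of this union the associated strict embedding condition is satisfied. We must extend $\{h^s\}_{\mathrm{bdy}}$ to a compactly supported, smooth, collared family $\{h^s\}_{s\in\Delta^{n+1}}$ meeting the strict embedding condition for every $I\subset[n]$.

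First I would carry out the essentially unique \emph{cone construction} that already appeared in low dimensions in Construction~\ref{construction. edges in lioudelta} and Example~\ref{example. isotopies are 2 simplices}. Using the collaring convention of Choice~\ref{choice. collaring}, identify a punctured neighborhood of the apex of $\Delta^{n+1}$ with $\Delta^n\times(-1,\epsilon)$, and choose a smooth collared cut-off $\chi\colon\Delta^{n+1}\to\RR_{\ge0}$ which equals $1$ along $\Delta^{\{0,\dots,n\}}$ and vanishes near the apex. For each $s\in\Delta^n$, the sectorial embedding $f_s$ yields a compactly supported $h^M_s$ on $M$ with $f_s^{*}\lambda^N=\lambda^M+dh^M_s$, and by Proposition~\ref{prop. families of Liouvilles are families of Hamiltonians} (applied locally and patched by a partition of unity) we may extend the pushforwards $(f_s)_{*}h^M_s$ to a smooth, collared, uniformly compactly supported family $\tilde h_s$ on $N$, with $s\mapsto\tilde h_s$ smooth in the same sense as $f$. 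Setting $h^s_{\mathrm{cone}}:=\chi(s)\cdot\tilde h_{\pi(s)}$, where $\pi$ is the cone projection, produces a family satisfying all strict embedding conditions in the interior of $\Delta^{n+1}$ and restricting correctly near the apex.

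Now I would interpolate between $h^s_{\mathrm{cone}}$ and the given $\{h^s\}_{\mathrm{bdy}}$. Choose a smooth collared function $\rho$ on $\Delta^{n+1}$ which equals $1$ in a neighborhood of the boundary faces carrying $\{h^s\}_{\mathrm{bdy}}$ and vanishes away from a slightly larger neighborhood; set
\[
h^s:=\rho(s)\,h^s_{\mathrm{bdy}}+(1-\rho(s))\,h^s_{\mathrm{cone}}.
\]
On a collared neighborhood of each boundary face of $\partial\Delta^n$, both summands already satisfy the strict embedding condition for the family $f$ (the boundary data by hypothesis, the cone data by construction), so the convex combination satisfies it there as well. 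Elsewhere only the cone term contributes. All compact-support, smoothness, and collaring properties are preserved under this convex combination, so $(f,\{h^s\})$ is the desired filler.

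The main obstacle that I expect to have to handle carefully is the compatibility of these extensions with the strict collaring requirement built into $\lioudelta$: both $\chi$, $\rho$, and the extensions $\tilde h_s$ must be collared with respect to every face of $\Delta^{n+1}$, and the patching along intersections of collared neighborhoods of the various faces of $\partial\Delta^n$ must be consistent. This is handled by the same inductive partition-of-unity/smoothing argument used in Construction~\ref{constr:simplex_collars} and Lemma~\ref{lemma. smooth along faces to smooth along interior}: one builds the extension face-by-face in increasing codimension, each step a convex choice compatible with the previous one. Once the filling is done for arbitrary $n$ and arbitrary $(f,\{h^s\}_{\mathrm{bdy}})$, $p$ is a trivial Kan fibration of Kan complexes, hence a homotopy equivalence.
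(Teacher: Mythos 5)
Your proposal is correct and rests on exactly the same two facts as the paper's (quite terse) proof: (i) collared, compactly supported extensions of the $h$-data can always be found because the strictness constraint ``$f_s^*h^s + h^M_s$ is locally constant'' is affine in $h^s$; and (ii) the space of valid $h$-data is convex, hence contractible. The structural difference is that you establish the right lifting property against $\partial\Delta^n\hookrightarrow\Delta^n$ directly---i.e.\ that the forgetful map is a \emph{trivial} Kan fibration---whereas the paper establishes a Kan fibration (horn filling for the $h$-data) and then separately argues the fibers are contractible via convexity. For Kan complexes these routes are equivalent, so there is no mathematical divergence, and your cone-plus-interpolation construction is essentially what ``simply by choosing collared extensions'' must mean in the paper once the strict-movie constraints are taken into account; you supply the missing details. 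One small quibble: invoking Proposition~\ref{prop. families of Liouvilles are families of Hamiltonians} to extend $(f_s)_*h^M_s$ to a compactly supported $\tilde h_s$ on $N$ is overkill and not quite the right tool (that proposition is about encoding families of embeddings as families of Hamiltonians, not about extending a compactly supported function off a subsector); the extension you need is the elementary one used in Construction~\ref{construction. edges in lioudelta}, where one just pushes forward and then cuts off using the fact that the difference of the two Liouville forms is the differential of a compactly supported function. With that replacement, the argument goes through.
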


\begin{proof}
The map is a Kan fibration because we can relatively fill horns simply by choosing collared extensions of $\{h^s\}_{s \in \Lambda^n_j}$ to $\{h^s\}_{s \in \Delta^n}$. The fibers are contractible because of the convexity of the space of compactly supported functions. 
\end{proof}

\subsection{\texorpdfstring{$\hom'$}{hom'} is equivalent to \texorpdfstring{$\hom^R$}{hom R}}
Combining Remarks~\ref{remark. tile emb is hom'} and~\ref{remark. tilde emb is Kan}, we conclude:

\begin{prop}
$\hom'_{\lioudelta}(M,N)$ admits a simplicial structure lifting the face maps inherited from $\hom^R_{\lioudelta}(M,N)$.
\end{prop}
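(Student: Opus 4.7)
The plan is to invoke Steimle's theorem (Theorem~\ref{theorem. steimle}) directly on the semisimplicial set $\hom'_{\lioudelta}(M,N)$. The required hypotheses have essentially all been assembled in the preceding remarks: Remark~\ref{remark. tile emb is hom'} identifies $\hom'_{\lioudelta}(M,N)$ with $\widetilde{\emblioucoll}(M,N)$ as semisimplicial sets, while Remark~\ref{remark. tilde emb is Kan} observes that the latter satisfies the (semisimplicial) weak Kan property and that every vertex admits an idempotent self-equivalence. Transporting this along the semisimplicial isomorphism gives the same properties for $\hom'_{\lioudelta}(M,N)$.

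First I would verify the idempotent-equivalence hypothesis intrinsically on $\hom'$, since this provides the cleanest compatibility with the face maps inherited from $\hom^R_{\lioudelta}(M,N)$. Given a vertex (i.e., an edge of $\lioudelta$ of the shape specified in Notation~\ref{notation. hom'}), one produces an idempotent self-equivalence by taking the ``constant'' edge: the 2-simplex in $\hom'$ whose $\Delta^{I'}$-movies are the obvious tensor products with $T^*\Delta^{I'}$ and whose $s$-families of embeddings are $s$-independent. That this edge is idempotent follows by exhibiting the constant 2-simplex, and that it is an equivalence follows by the symmetry argument of Lemma~\ref{lemma. idempotents self equivalences} (applied inside the sub-semisimplicial set $\hom'$, where all the relevant fillers constructed in Section~\ref{section. weak kan} remain in split/tensor form because the constraints on objects and edges propagate through the horn-filling construction).

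Steimle's theorem then produces a simplicial structure on $\hom'_{\lioudelta}(M,N)$ whose face maps are the pre-existing ones, and in particular agree with the face maps inherited from $\hom^R_{\lioudelta}(M,N)$ through the inclusion~\eqref{eqn. hom' to homR semi}. Using the second part of Theorem~\ref{theorem. steimle}, I would further take the specified simplicial subset $A$ to be the subset of constant simplices (Definition~\ref{defn. constant simplices in lioudelta}) inside $\hom'$, so that the produced degeneracy maps on $\hom'$ extend the tautological ones on constant simplices that already exist inside $\hom^R_{\lioudelta}(M,N)$ by Theorem~\ref{theorem. lioudelta is an infinity-cat}\ref{item. constant degeneracies are constant}.

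There is no significant obstacle here; the geometric content was already absorbed into Remarks~\ref{remark. tile emb is hom'} and~\ref{remark. tilde emb is Kan}, and the only thing the proof adds is the observation that the face maps produced by Steimle's theorem match, on the nose, those inherited from $\hom^R$. If any subtlety arises, it will be purely bookkeeping: checking that the constant edge defined above really does sit inside $\hom'$ (i.e., that it has the split/tensor form required by Notation~\ref{notation. hom'}), which is immediate from the construction.
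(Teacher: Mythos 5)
The core of your argument --- transporting the simplicial structure on $\widetilde{\emblioucoll}(M,N)$ guaranteed by Remark~\ref{remark. tilde emb is Kan} across the semisimplicial isomorphism of Remark~\ref{remark. tile emb is hom'} --- is exactly the paper's argument, and it is correct as stated. The paper's own proof is literally that one sentence, and you have it.

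Two cautions about the elaboration you added. First, the parenthetical claim that the horn-fillers of Section~\ref{section. weak kan} ``remain in split/tensor form because the constraints on objects and edges propagate through the horn-filling construction'' is not justified and is probably false in the generality you need. The horn-filling in that section makes non-canonical choices (shrinking isotopies $\sigma_{t,u}$ in Choice~\ref{choice. sigma}, isotopy extensions, Kan-complex-level fillings of families of Liouville structures) which are not designed to preserve the very restrictive constraints of Notation~\ref{notation. hom'}, e.g.\ that every $\Delta^{I'}$-movie with $k+1 \notin I'$ is literally the tensor $M \tensor T^*\Delta^{I'}$ and that the maps $\phi_{I'' \subset I'}$ are literally $\id \times T^*\eta$. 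This is precisely why Remark~\ref{remark. tilde emb is Kan} cites the methods of Proposition~\ref{prop. coll emb spaces are equivalent} --- those arguments run directly on $\widetilde{\emblioucoll}$ using convexity of the $h$-space and the smooth/collared approximation machinery of Section~\ref{section. continuous to smooth}, never passing through the $\lioudelta$-level horn-filling --- so your intrinsic verification should be dropped in favor of the transport-along-isomorphism argument you already gave.

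Second, the attempt to pin down degeneracies by taking $A$ to be ``the constant simplices (Definition~\ref{defn. constant simplices in lioudelta}) inside $\hom'$'' is vacuous whenever $M \neq N$: any simplex of $\hom^R_{\lioudelta}(M,N)$ has terminal vertex $N$, so cannot be a constant simplex of $\lioudelta$ in the sense of Definition~\ref{defn. constant simplices in lioudelta}, which requires all vertices to coincide. Moreover the proposition asks only for compatibility with face maps; the paper's remark immediately following it explicitly observes that the degeneracy maps produced on $\hom'$ (via $\widetilde{\emblioucoll}$) and those of $\hom^R$ (via $\lioudelta$) will in general not agree, so trying to arrange more is working against the design.
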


Now, the inclusion~\eqref{eqn. hom' to homR semi} is a map of semisimplicial sets, but may not be a map of simplicial sets. This is because the degeneracy maps of $\hom'_{\lioudelta}(M,N)$ are induced from $\widetilde{\emblioucoll}(M,N)$, while the degeneracy maps of $\hom^R_{\lioudelta}(M,N)$ are induced from the degeneracy maps of $\lioudelta$.

However, it is straightforward to see that, under the semisimplicial set map~\eqref{eqn. hom' to homR semi}, degenerate edges of $\hom'_{\lioudelta}(M,N)$ are sent to equivalences in $\hom^R_{\lioudelta}(M,N)$. Theorem~\ref{theorem. hiro non strict} guarantees that the semisimplicial set map may then be deformed to a {\em simplicial} set map
	\eqn\label{eqn. simplicial set map hom' to homR}
	\hom'_{\lioudelta}(M,N) \to
	\hom^R_{\lioudelta}(M,N)
	\eqnd
with the same assignment on objects as~\eqref{eqn. hom' to homR semi}, and with the property that the images of~\eqref{eqn. simplicial set map hom' to homR} are naturally homotopic to the images of~\eqref{eqn. hom' to homR semi}. (We used this result in Construction~\ref{construction. alpha} as well.)

We now prove:

\begin{lemma}
\label{lemma. hom' is homR}
The map~\eqref{eqn. simplicial set map hom' to homR} is a homotopy equivalence of simplicial sets.
\end{lemma}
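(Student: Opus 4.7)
The plan is to produce a simplicial retraction $r : \hom^R_{\lioudelta}(M,N) \to \hom'_{\lioudelta}(M,N)$ with $r \circ i = \id$, together with a simplicial homotopy $H$ from $i \circ r$ to $\id$. Since both sides are Kan complexes (Remark~\ref{remark. tilde emb is Kan} on one hand and the $\infty$-categoricity of $\lioudelta$ on the other), this will establish the homotopy equivalence.

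The retraction is a change-of-basis construction in the spirit of Section~\ref{section. max constant simplices}. Given a $k$-simplex $\sigma$ of $\hom^R_{\lioudelta}(M,N)$, regarded as a $(k+1)$-simplex of $\lioudelta$, condition~\ref{item. lioudelta is max localizing} supplies, for every $A \subset [k+1]$ with $A \ni k+1$, a strict Liouville isomorphism $\Psi_A \colon N = M_{\{k+1\}} \xrightarrow{\cong} M_A$ arising from $\phi_{\{k+1\} \subset A}$ near the vertex $\Delta^{\{k+1\}}$; for $A \subset [k]$ the simplex is already constant at $M$ by the $\hom^R$ condition. I would define $r(\sigma)$ by replacing $M_A \times T^*\Delta^A$ with the movie $N \times T^*\Delta^A$ whose Liouville form is pulled back along $\Psi_A \times \id_{T^*\Delta^A}$, and by correspondingly conjugating each $\phi_{A \subset B}$ by $\Psi_A$ and $\Psi_B$. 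The composition law $\phi_{\{k+1\} \subset B} = \phi_{A \subset B} \circ \phi_{\{k+1\} \subset A}$ from~\ref{item. lioudelta composition} guarantees that for $A, B \ni k+1$ the new maps $\phi^r_{A \subset B}$ are precisely $\id_N \times T^*\eta_{A \subset B}$ in a neighborhood of the $(k+1)$st vertex (and by collaring, genuinely $\id_N \times T^*\eta$); consequently $r(\sigma)$ lies in $\hom'_{\lioudelta}(M,N)$, and $r(\sigma) = \sigma$ whenever $\sigma$ already lies in $\hom'$. Face maps in $\hom^R$ preserve the vertex $k+1$, so the change-of-basis data is carried along and $r$ is a map of semisimplicial sets. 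Because $r$ sends constant simplices (in the sense of Definition~\ref{defn. constant simplices in lioudelta}) to constant simplices, Theorem~\ref{theorem. hiro non strict} promotes $r$ to a map of simplicial sets up to natural homotopy, still satisfying $r \circ i \simeq \id$ on the nose.

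For the homotopy $H$ from $i \circ r$ to $\id$, I would exhibit, for each $k$-simplex $\sigma$, a $(k+2)$-simplex of $\lioudelta$ whose two extremal $(k+1)$-faces recover $\sigma$ and $i(r(\sigma))$, with the intermediate edge encoding each strict Liouville isomorphism $\Psi_A$. Concretely, I would interpolate the Liouville structures on $N \times T^*\Delta^A$ along the new homotopy coordinate by linearly interpolating the compactly supported functions $h^s$ that differ between $\sigma$ and $r(\sigma)$, and use the movie construction (Proposition~\ref{prop. movies are sectors}) together with the isotopy extension theorem (Proposition~\ref{prop. isotopy extension}) to assemble these into an honest simplex of $\lioudelta$. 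The collaring convention (Choice~\ref{choice. collaring}) and the smoothing techniques of Section~\ref{section. continuous to smooth} ensure all collaring, smoothness, and tameness requirements are met; Theorem~\ref{theorem. hiro non strict} once more converts the resulting semisimplicial homotopy into a simplicial one.

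The main obstacle is the bookkeeping to check that the retraction and the homotopy respect all face maps simultaneously---in particular that the interpolating Liouville structures glue coherently across the various faces of the $(k+2)$-simplex used to witness $H$. Because each $\Psi_A$ is a strict Liouville isomorphism, hence an equivalence in $\lioudelta$ via Construction~\ref{construction. j from lioustr to lioudelta} and Proposition~\ref{prop. homotopic maps are homotopic}\ref{item. equivalences are equivalences}, there is no essential obstruction to constructing the interpolating simplex; the remaining work is purely combinatorial and collar-theoretic, in direct analogy with the horn-filling arguments of Sections~\ref{section. b 1}--\ref{section. b 2}.
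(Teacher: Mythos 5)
The central step of your argument---the change-of-basis retraction $r$---does not actually land in $\hom'_{\lioudelta}(M,N)$, and the reason is worth spelling out precisely.

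Recall that a $k$-simplex of $\hom^R_{\lioudelta}(M,N)$ is a $(k+1)$-simplex $\sigma$ of $\lioudelta$ whose only constraints are at two faces: the vertex $\Delta^{\{k+1\}}$ must be $N$ and the face $\Delta^{[k]}$ must be the degenerate simplex at $M$. No constraint is placed on the faces $\Delta^A$ with $A \ni k+1$ and $|A|\geq 2$. For each such $A$, the collaring movie $\phi_{A\subset B}$ encodes a $\Delta^A$-\emph{family} of sectorial maps $\{y^s_{A\subset B}\colon M_A \to M_B\}_{s \in \Delta^A}$ (necessarily isomorphisms when $\max A = \max B = k+1$). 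Your conjugation by $\Psi_A$ and $\Psi_B$ normalizes this family so that at the single vertex $s = \Delta^{\{k+1\}}$ it equals $\id_N$, and the composition law~\ref{item. lioudelta composition} indeed guarantees this. But the definition of $\hom'_{\lioudelta}(M,N)$ (Notation~\ref{notation. hom'}) requires $\phi^r_{A\subset B} = \id_N \times T^*\eta_{A\subset B}$ \emph{identically}, i.e.\ that the conjugated family $\{\Psi_B^{-1}\circ y^s_{A\subset B}\circ\Psi_A\}_{s\in\Delta^A}$ be the constant family at $\id_N$ for all $s\in\Delta^A$. This is false in general: an arbitrary $\sigma\in\hom^R$ is permitted to have genuinely $s$-varying families of automorphisms of $N$ along these faces. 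Your parenthetical ``(and by collaring, genuinely $\id_N\times T^*\eta$)'' does not follow: collaring (Definitions~\ref{defn. collaring movie}, \ref{defn. collared}) only forces constancy in the normal directions to the boundary strata of $\Delta^A$; it does not propagate the value at one vertex across the entire simplex. So $r$ is not well-defined into $\hom'$, and the rest of the proposal (the homotopy $H$) is built on this faulty foundation.

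Flattening these families to constants is exactly the geometric difficulty that the paper's proof confronts head-on. The paper does not construct a strict simplicial retraction. Instead it compares the two Kan complexes on homotopy groups, building explicit fillers out of pieces such as $\Gamma_a,\Gamma_b,\Gamma_c$ assembled along piecewise-smooth decompositions of $|\Delta^k|$, and invoking Lemma~\ref{lemma. Phi commutes with phi} (a homotopy-commutation statement for $\Phi_N^Q$ against $\phi^{-1}$) precisely to absorb the failure of the $y^s$-families to be constant. If you wanted to rescue the retraction route, you would still need, for every simplex and coherently across all face maps, a chosen isotopy from each family $\{y^s_{A\subset B}\}$ to the constant family---which is essentially as hard as the homotopy-group argument the paper carries out, and in addition would have to be performed strictly functorially rather than up to coherent homotopy, a stronger and likely false demand.
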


Because both sides of~\eqref{eqn. simplicial set map hom' to homR} are Kan complexes, it suffices to show that the induced maps on $\pi_0$, and on $\pi_k$ (along all connected components), are bijections.

\begin{proof}[Proof of surjection on $\pi_0$]
Let us show that the inclusion $\hom'_{\lioudelta}(M,N) \subset \hom^R_{\lioudelta}(M,N)$ is a surjection on $\pi_0$. 

For this, fix an arbitrary vertex $F$ of $\hom^R_{\lioudelta}(M,N)$. By definition of an edge of $\lioudelta$, $F$ produces the data of a $\Delta^1$-movie of the form $X \times T^*\Delta^{1}$, along with two morphisms
	\eqnn
	f: M \to (X, \lambda^0)
	\qquad
	\phi: N \to (X, \lambda^1)
	\eqnd
where we have emphasized the Liouville form $\lambda^t$ of $X_{01}$, where $\lambda^t$ is the form on $X_{01}$ at time $t \in \Delta^1 \cong [0,1]$. By definition of 1-simplex in $\lioudelta$, we know that $\phi$ is an isomorphism of Liouville sectors. (In particular, a diffeomorphism.)

We produce a 2-simplex in $\lioudelta$ which we illlustrate informally using the following diagram:
	\eqn\label{eqn. 2-simplex hom pi0 onto}
	\xymatrix{
	&& N = N_2 \ar[dr]^{\id} \ar[dl]_{\phi} \ar[d]^{\id}&& \\
	& X \ar@{=>}[r]^{\phi^{-1}}_{\phi^{-1}} & N & N \ar@{=>}[l]_{\id} & \\
	M = M_0\ar[rr]^{\id} \ar[ur]^{f} \ar[urr] && M \ar@{=>}[u]^{\phi^{-1}f}_{\phi^{-1}f} && M = M_1\ar[ll]_{\id} \ar[ull] \ar[ul]_{\phi^{-1}f}
	}
	\eqnd
Note that the $\{0,1\}$ edge (the bottom edge) is a constant 1-simplex at the object $M$. In particular, the 2-simplex indeed depicts an edge in $\hom^R_{\lioudelta}(M,N)$. The $\{0,2\}$ edge of the diagram is the given vertex $F$ of $\hom^R_{\lioudelta}(M,N)$. The $\{1,2\}$ edge of the diagram is the vertex in $\hom'_{\lioudelta}(M,N)$ to which we claim the given vertex is homotopic.

We first describe the underlying smooth maps. All the ``double'' arrows of the form $\implies$ would, in general, depict a 1-simplex family of morphisms. Here, they are all constant families given by the morphisms as labeled in the diagram. (So for example, we have a constant 1-simplex family of morphisms from $X$ to $N$ given by $\phi^{-1}$.)

To finish, we describe the Liouville structures on the $\Delta$-movies 
$X \times T^* \Delta^{\{0,2\}}$,
$N \times T^* \Delta^{\{1,2\}}$, and
$N \times T^* \Delta^{\{0,1,2\}}$.
The Liouville structure on the first is the one specified by $F$. On $N \times T^*\Delta^{\{1,2\}}$, we choose a family of Liouville structures on $N$ which equals the given structure on $N$ near the vertex $\Delta^{\{2\}}$, and equals the structure induced by $\phi^{-1}$ near the vertex $\Delta^{\{1\}}$. 

Finally, given the maps above, the Liouville structure on $N \times T^* \Delta^{\{0,1,2\}}$ is uniquely specified along the boundary of $\del \Delta^{\{0,1,2\}}$ by requiring that all morphisms in sight be strict. Now, the space of compactly supported smooth functions on $N$ is convex (hence contractible) so we may choose any filling of this family of Liouville structures to all of $\Delta^{\{0,1,2\}}$ to exhibit a $\Delta^{\{0,1,2\}}$-movie structure on $N \times T^*\Delta^{\{0,1,2\}}$. 
\end{proof}

\subsection{Proof of injection on homotopy groups}

\begin{proof}
We choose some element $f\in\hom'(M,N)$ and show that the map of homotopy groups based at $f$
	\eqnn
	\pi_{k-1}(\hom'_{\lioudelta}(M,N),f) \to \pi_{k-1}(\hom^R_{\lioudelta}(M,N),f),
	\qquad
	k \geq 2
	\eqnd
is an injection. Similar techniques as what follows also demonstrate that the induced map on $\pi_0$ is an injection.

\begin{remark}[Parsing the hypotheses]
Fix $\gamma_0,\gamma_1$ representing elements on $\pi_{k-1}(\hom'_{\lioudelta}(M,N),f)$ and assume their images are homotopic (rel $f$) in $\hom^R_{\lioudelta}(M,N)$. By definition of simplicial homotopy groups, and by the definition of $\hom^R$, we conclude there exists a simplex
	\eqnn
	F: \Delta^{k+1} \to \lioudelta
	\eqnd
satisfying the usual boundary conditions. By definition of simplices in $\lioudelta$, $F$ in particular gives rise to maps
	\begin{align}
	\Phi_M^N: &\del \Delta^k \to \embliou(M,N) \nonumber \\
	\Phi_N^Q: &\Lambda^{k+1}_{k+1} \to \embliou(N,Q) \nonumber \\
	\Phi_M^Q: &\Delta^k \to \embliou(M,Q) \label{eqn. Phi maps in injective pi hom}
	\end{align}
which are all piecewise smooth and collared, and which satisfy
	\eqnn
	\Phi_N^Q |_{\del \Delta^k} \circ \Phi_M^N = \Phi_M^Q|_{\del \Delta^k}.
	\eqnd
Because $F$ exhibits a homotopy (rel $f$) from $\gamma_0$ to $\gamma_1$, we observe
	\eqn\label{eqn. Phi boundary conditions}
	\Phi^M_N|_{\del_i\Delta^k} =
	\begin{cases}
	\gamma_0 \circ (\del_0\Delta^k \cong \Delta^{k-1}) & i = 0 \\
	\gamma_1 \circ (\del_1\Delta^k \cong \Delta^{k-1}) & i = 1 \\
	\underline{f} & i \neq 0, 1
	\end{cases}
	\eqnd
where $\underline{f}$ is the degenerate $(k-1)$-simplex mapping to $\embliou(M,N)$ with value $f$, and the precompositions are by the unique simplicial isomorphisms.
Finally, $F$ also specifies a Liouville isomorphism
	\eqn\label{eqn. phi isomorphism N to Q}
	\phi: N \to Q
	\qquad
	\text{ at the vertex $\Delta^{\{k+1\}}$.}
	\eqnd
\end{remark}

Now that we have parsed the hypotheses, our task is to construct a smoothly collared map
	\eqn\label{eqn. Gamma homotopy injection}
	\Gamma: \Delta^k \to \embliou(M,N)
	\eqnd
satisfying the boundary conditions~\eqref{eqn. Phi boundary conditions} (with $\Gamma|_{\del_i \Delta^k}$ replacing $\Phi_M^N|_{\del_i\Delta^k}$). We refer the reader to Figure~\ref{figure. Gamma drawings} for a visual guide to the construction of $\Gamma$, which we now make precise.

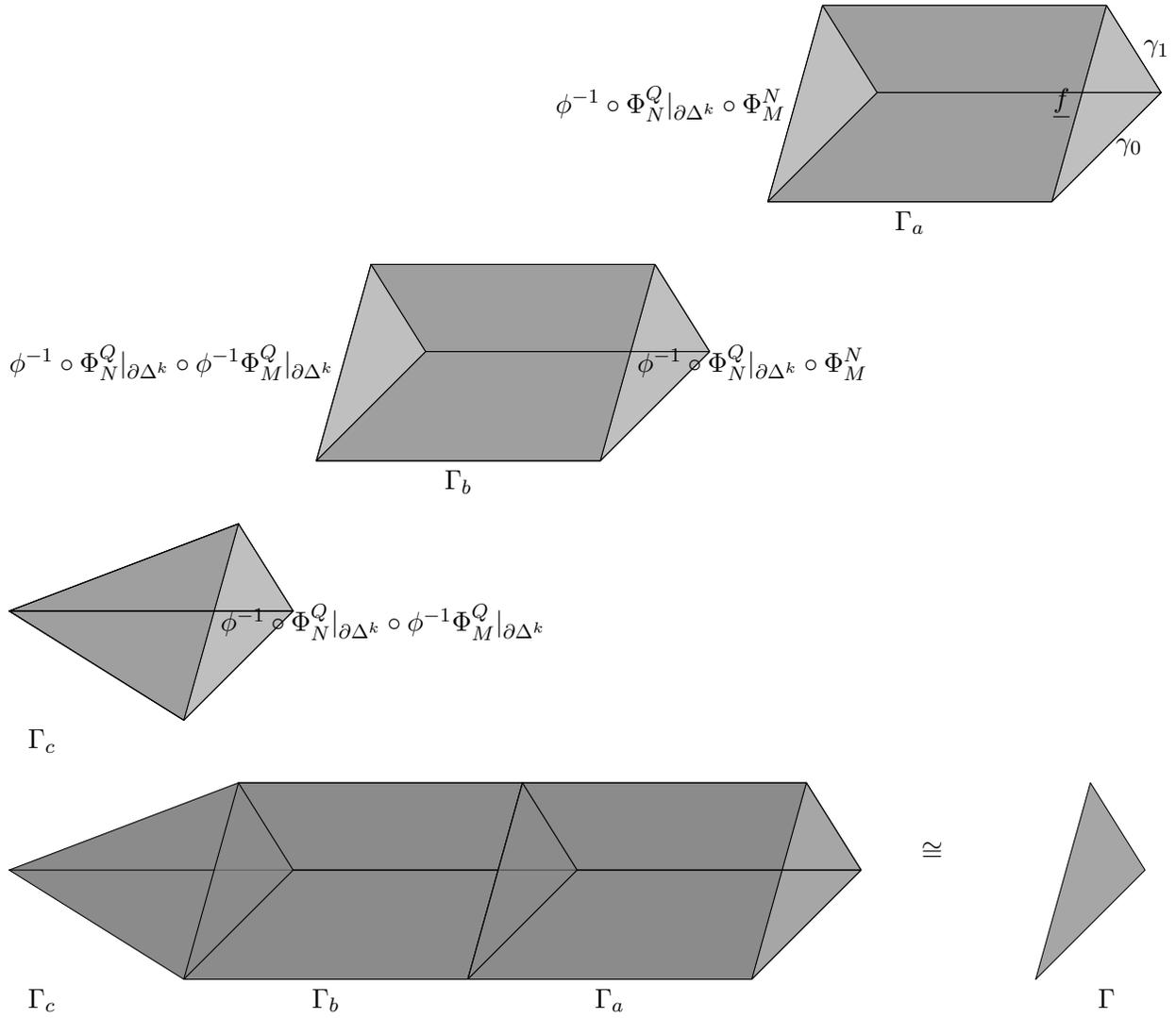
\begin{figure}
    \qquad\qquad
    \qquad\qquad
    \qquad\qquad
    \qquad\qquad
    \qquad\qquad
    \begin{tikzpicture}[line join = round, line cap = round]
        \coordinate [label=above:] (B0) at (0,0,4);
        \coordinate [label=above:] (B1) at (0,0,0);
        \coordinate [label=above:] (B2) at (0,2,2);
        \coordinate [label=above:] (F0) at (4,0,4);
        \coordinate [label=above:] (F1) at (4,0,0);
        \coordinate [label=above:] (F2) at (4,2,2);
        \begin{scope}[decoration={markings,mark=at position 0.5 with {\arrow{to}}}]
        \draw[] (B1)--(B2)--(B0)--cycle;
        \draw[] (F1)--(F2)--(F0)--cycle;
        \fill[opacity=0.5,gray] (F1)--(F2)--(B2)--(B1)--cycle;
        \fill[opacity=0.5,gray] (F1)--(F0)--(B0)--(B1)--cycle;
        \fill[opacity=0.5,gray] (F2)--(F0)--(B0)--(B2)--cycle;
		\draw[] (F1)--(F2)--(B2)--(B1)--cycle;
        \draw[] (F1)--(F0)--(B0)--(B1)--cycle;
        \draw[] (F2)--(F0)--(B0)--(B2)--cycle;
        \end{scope}
        \coordinate [label=below:$\Gamma_a$] (Gamma_a) at (2,0,4);
        \coordinate [label=right:$\gamma_1$] (del1) at (4,1,1);
        \coordinate [label=right:$\gamma_0$] (del0) at (4,0,2);
        \coordinate [label=left:$\underline{f}$] (del_i) at (4,1,3);
        \coordinate [label=left:$\phi^{-1}\circ \Phi_N^Q|_{\del \Delta^k} \circ \Phi_M^N$] (del_i) at (0,1,3);
    \end{tikzpicture}
    
    \begin{tikzpicture}[line join = round, line cap = round]
        \coordinate [label=above:] (B0) at (0,0,4);
        \coordinate [label=above:] (B1) at (0,0,0);
        \coordinate [label=above:] (B2) at (0,2,2);
        \coordinate [label=above:] (F0) at (4,0,4);
        \coordinate [label=above:] (F1) at (4,0,0);
        \coordinate [label=above:] (F2) at (4,2,2);
        \begin{scope}[decoration={markings,mark=at position 0.5 with {\arrow{to}}}]
        \draw[] (B1)--(B2)--(B0)--cycle;
        \draw[] (F1)--(F2)--(F0)--cycle;
        \fill[opacity=0.5,gray] (F1)--(F2)--(B2)--(B1)--cycle;
        \fill[opacity=0.5,gray] (F1)--(F0)--(B0)--(B1)--cycle;
        \fill[opacity=0.5,gray] (F2)--(F0)--(B0)--(B2)--cycle;
		\draw[] (F1)--(F2)--(B2)--(B1)--cycle;
        \draw[] (F1)--(F0)--(B0)--(B1)--cycle;
        \draw[] (F2)--(F0)--(B0)--(B2)--cycle;
        \end{scope}
        \coordinate [label=below:$\Gamma_b$] (Gamma_b) at (2,0,4);
        \coordinate [label=left:$\phi^{-1}\circ \Phi_N^Q|_{\del \Delta^k} \circ \phi^{-1}\Phi_M^Q|_{\del \Delta^k}$] (del_i) at (0,1,3);
        \coordinate [label=right:$\phi^{-1}\circ \Phi_N^Q|_{\del \Delta^k} \circ \Phi_M^N$] (del_i) at (4,1,3);
    \end{tikzpicture}
    
    \begin{tikzpicture}[line join = round, line cap = round]
        \coordinate [label=above:] (B) at (0,0,0);
        \coordinate [label=above:] (F0) at (4,0,4);
        \coordinate [label=above:] (F1) at (4,0,0);
        \coordinate [label=above:] (F2) at (4,2,2);
        \begin{scope}[decoration={markings,mark=at position 0.5 with {\arrow{to}}}]
        \draw[] (F1)--(F2)--(F0)--cycle;
        \draw[] (B)--(F0);
        \draw[] (B)--(F1);
        \draw[] (B)--(F2);
        \fill[opacity=0.5,gray] (F1)--(F2)--(B)--cycle;
        \fill[opacity=0.5,gray] (F2)--(F0)--(B)--cycle;
        \fill[opacity=0.5,gray] (F1)--(F0)--(B)--cycle;
		\draw[] (F1)--(F2)--(B)--cycle;
        \draw[] (F2)--(F0)--(B)--cycle;
        \draw[] (F1)--(F0)--(B)--cycle;
        \end{scope}
        \coordinate [label=below:$\Gamma_c$] (Gamma_c) at (2,0,4);
        \coordinate [label=right:$\phi^{-1}\circ \Phi_N^Q|_{\del \Delta^k} \circ \phi^{-1}\Phi_M^Q|_{\del \Delta^k}$] (del_i) at (4,1,3);
    \end{tikzpicture}

    \begin{tikzpicture}[line join = round, line cap = round]
        \coordinate [label=above:] (B) at (0,0,0);
        \coordinate [label=above:] (F0) at (4,0,4);
        \coordinate [label=above:] (F1) at (4,0,0);
        \coordinate [label=above:] (F2) at (4,2,2);
        \coordinate [label=above:] (C0) at (4,0,4);
        \coordinate [label=above:] (C1) at (4,0,0);
        \coordinate [label=above:] (C2) at (4,2,2);
        \coordinate [label=above:] (D0) at (8,0,4);
        \coordinate [label=above:] (D1) at (8,0,0);
        \coordinate [label=above:] (D2) at (8,2,2);
        \coordinate [label=above:] (E0) at (8,0,4);
        \coordinate [label=above:] (E1) at (8,0,0);
        \coordinate [label=above:] (E2) at (8,2,2);
        \coordinate [label=above:] (EE0) at (12,0,4);
        \coordinate [label=above:] (EE1) at (12,0,0);
        \coordinate [label=above:] (EE2) at (12,2,2);
        \coordinate [label=above:] (FF0) at (16,0,4);
        \coordinate [label=above:] (FF1) at (16,0,0);
        \coordinate [label=above:] (FF2) at (16,2,2);
        \begin{scope}[decoration={markings,mark=at position 0.5 with {\arrow{to}}}]
        \draw[] (F1)--(F2)--(F0)--cycle;
        \fill[opacity=0.7,gray] (F1)--(F2)--(B)--cycle;
        \fill[opacity=0.7,gray] (F2)--(F0)--(B)--cycle;
        \fill[opacity=0.7,gray] (F1)--(F0)--(B)--cycle;
        \draw[] (B)--(F0);
        \draw[] (B)--(F1);
        \draw[] (B)--(F2);
        \fill[opacity=0.7,gray] (D1)--(D2)--(C2)--(C1)--cycle;
        \fill[opacity=0.7,gray] (D1)--(D0)--(C0)--(C1)--cycle;
        \fill[opacity=0.7,gray] (D2)--(D0)--(C0)--(C2)--cycle;
		\draw[] (D1)--(D2)--(C2)--(C1)--cycle;
        \draw[] (D1)--(D0)--(C0)--(C1)--cycle;
        \draw[] (D2)--(D0)--(C0)--(C2)--cycle;
        \draw[] (E1)--(E2)--(E0)--cycle;
        \draw[] (EE1)--(EE2)--(EE0)--cycle;
        \fill[opacity=0.7,gray] (EE1)--(EE2)--(E2)--(E1)--cycle;
        \fill[opacity=0.7,gray] (EE1)--(EE0)--(E0)--(E1)--cycle;
        \fill[opacity=0.7,gray] (EE2)--(EE0)--(E0)--(E2)--cycle;
		\draw[] (EE1)--(EE2)--(E2)--(E1)--cycle;
        \draw[] (EE1)--(EE0)--(E0)--(E1)--cycle;
        \draw[] (EE2)--(EE0)--(E0)--(E2)--cycle;
        \coordinate [label=above:$\cong$] (cong) at (13,0,0);
        \fill[opacity=0.7,gray] (FF1)--(FF2)--(FF0)--cycle;
        \draw[] (FF1)--(FF2)--(FF0)--cycle;
        \end{scope}
        \coordinate [label=below:$\Gamma_c$] (Gamma_c) at (2,0,4);
        \coordinate [label=below:$\Gamma_b$] (Gamma_b) at (6,0,4);
        \coordinate [label=below:$\Gamma_a$] (Gamma_a) at (10,0,4);
        \coordinate [label=below:$\Gamma$] (Gamma) at (17,0,4);
    \end{tikzpicture}
    
\caption{An informal drawing, for $k=2$, of the maps $\Gamma_a, \Gamma_b, \Gamma_c,$ and $\Gamma$ in proving the injectivity on homotopy groups.}
\label{figure. Gamma drawings}
\end{figure}

Consider the maps
	\begin{align}
	\underline{\Phi_M^N} : \del \Delta^k \times \Delta^1
		\to \del \Delta^k
		\xrightarrow{\Phi_M^N} \embliou(M,N) \nonumber \\
	\underline{\Phi_N^Q} : \del \Delta^k \times \Delta^1
		\to \del \Lambda^{k+1}_{k+1}
		\xrightarrow{\Phi_N^Q} \embliou(N,Q) \nonumber 
	\end{align}
where the unlabeled maps are the projections to the $\del \Delta^k$ factor; realizing the horn as a quotient of a cylinder along $\del \Delta^k \times \Delta^{\{1\}}$; and to a point, respectively. Composition defines a map
	\eqnn
	\Gamma_a := {\phi^{-1}} \circ \underline{\Phi_N^Q} \circ \underline{\Phi_M^N}
	: 
	\del \Delta^k \times \Delta^1 \to \embliou(M,N).
	\eqnd
Because $\Phi_N^Q$ is identical to $\phi$~\eqref{eqn. phi isomorphism N to Q} near the vertex $\Delta^{\{k+1\}}$, $\Gamma_a$ has the property that its restriction to $\del \Delta^k \cong \del \Delta^k \times \Delta^{\{1\}}$ satisfies the boundary conditions~\eqref{eqn. Phi boundary conditions}.

We now ``cap off'' $\Gamma_a$ along $\del \Delta^k \times \Delta^{\{0\}}$. For this, choose a piecewise smooth homeomorphism $p: |\Lambda^{k+1}_{k+1}| \xrightarrow{\cong} |\Delta^k|$, for example by projection along a vector normal to $|\Delta^k|$ and passing through the vertex $|\Delta^{\{k+1\}}|$. We choose a collared smoothing $(\Phi_N^Q)'$ of the composition $\Phi_N^Q \circ p^{-1}$, and define
	\eqnn
	\Gamma_c :=  {\phi^{-1}} \circ (\Phi_N^Q)' \circ {\phi^{-1}} \circ \Phi_M^Q
	:
	|\Delta^k| \to \embliou(N,Q).
	\eqnd
To continue the cap-off process, we note that
	\eqn\label{eqn. Gamma a and Gamma c boundaries}
	\Gamma_a|_{\del \Delta^k \times \Delta^{\{0\}}} = {\phi^{-1}} \circ \Phi_N^Q|_{\del \Delta^k} \circ \Phi_M^N
	\qquad \text{and} \qquad
	\Gamma_c|_{\del \Delta^k} =  {\phi^{-1}} \circ \Phi_N^Q|_{\del \Delta^k} \circ {\phi^{-1}} \circ \Phi_M^Q|_{\del \Delta^k}
	\eqnd
where the latter equality follows from the collaring assumption on $(\Phi_N^Q)'$.
In Lemma~\ref{lemma. Phi commutes with phi} below, we show that these two are smoothly homotopic as maps $|\del \Delta^k| \to \embliou(M,N)$. Thus we conclude the existence of a smooth map
	\eqnn
	\Gamma_b: |\del \Delta^k| \times |\Delta^1| \to \embliou(M,N)
	\eqnd
realizing this smooth homotopy. Because everything in sight is assumed collared, we obtain a gluing
	\eqnn
	\Gamma_c \bigcup \Gamma_b \bigcup \Gamma_a
	:
	|\Delta^k| \bigcup \left(
	|\del \Delta^k| \times |\Delta^1|
	\right)
	\bigcup
	|\del \Delta^k \times \Delta^1|
	\to
	\embliou(M,N).
	\eqnd
Now choose a diffeomorphism
	\eqnn
	|\Delta^k| \xrightarrow{\cong} 	|\Delta^k| \bigcup \left(
	|\del \Delta^k| \times |\Delta^1|
	\right)
	\bigcup
	|\del \Delta^k \times \Delta^1|
	\eqnd
preserving the collaring of $\Gamma_a$ near the boundary of the domain $|\Delta^k|$. We define $\Gamma$~\eqref{eqn. Gamma homotopy injection} to be the composition of this diffeomorphism with $\Gamma_c \bigcup \Gamma_b \bigcup \Gamma_a$. We see that $\Gamma$ satisfies the boundary conditions~\eqref{eqn. Phi boundary conditions} because $\Gamma_a$ does.

We conclude by noting that, given $\Gamma$, one now is free to choose a Liouville structure on $N \times T^*\Delta^{k+1}$ rendering $\Gamma$ a family of strict embeddings.
\end{proof}

\begin{lemma}
\label{lemma. Phi commutes with phi}
There exists a smooth homotopy
	$
	{\phi^{-1}} \circ \Phi_N^Q|_{\del \Delta^k} \sim \Phi_N^Q|_{\del \Delta^k} \circ {\phi^{-1}}
	$
among maps from $\del \Delta^k$ to $\embliou(M,N)$. In particular, there exists a homotopy between the two maps in~\eqref{eqn. Gamma a and Gamma c boundaries}.
\end{lemma}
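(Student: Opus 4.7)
The plan is to reduce the ``In particular'' claim to the assertion that the family
\[
\phi^{-1} \circ \Phi_N^Q|_{\del \Delta^k} \colon \del \Delta^k \to \embtop_{\liou}(N,N)
\]
of self-embeddings of $N$ is homotopic (through collared, smooth, continuous families of sectorial self-embeddings) to the constant family $\id_N$. First, using the compositional identity $\Phi_N^Q|_{\del \Delta^k} \circ \Phi_M^N = \Phi_M^Q|_{\del \Delta^k}$ built into any simplex of $\lioudelta$ (see Remark~\ref{remark. y compose}), the two maps in \eqref{eqn. Gamma a and Gamma c boundaries} become respectively $\phi^{-1} \circ \Phi_M^Q|_{\del \Delta^k}$ and $(\phi^{-1} \circ \Phi_N^Q|_{\del \Delta^k}) \circ (\phi^{-1} \circ \Phi_M^Q|_{\del \Delta^k})$. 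Post-composing the desired null-homotopy of $\phi^{-1} \circ \Phi_N^Q|_{\del \Delta^k}$ to $\id_N$ with the fixed family $\phi^{-1} \circ \Phi_M^Q|_{\del \Delta^k}$ yields the homotopy between the two expressions in~\eqref{eqn. Gamma a and Gamma c boundaries}, as required.

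To build the null-homotopy itself, I would exploit that $\Lambda^{k+1}_{k+1}$ is, up to piecewise smooth homeomorphism, a cone on $\del \Delta^k = \del \del_{k+1} \Delta^{k+1}$ with apex $\Delta^{\{k+1\}}$, by a piecewise smooth identification $|\Lambda^{k+1}_{k+1}| \cong |\Delta^{\{k+1\}}| \star |\del \Delta^k|$ analogous to the one used in Construction~\ref{construction. tilde alpha on jth face}. This cone structure supplies a straight-line contraction $r \colon |\del \Delta^k| \times [0,1] \to |\Lambda^{k+1}_{k+1}|$ with $r_0$ the inclusion and $r_1$ the constant map to the apex. By the defining condition~\ref{item. lioudelta is max localizing} for a simplex in $\lioudelta$ (together with the choice of $F$), the map $\Phi_N^Q$ evaluates to $\phi$ at this apex, so pre-composing $\Phi_N^Q$ with $r$ produces a continuous, piecewise smooth homotopy through sectorial embeddings $N \to Q$ from $\Phi_N^Q|_{\del \Delta^k}$ to the constant family at $\phi$. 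Post-composition with the Liouville isomorphism $\phi^{-1}$ converts this into the required homotopy from $\phi^{-1} \circ \Phi_N^Q|_{\del \Delta^k}$ to the constant family $\id_N$ of self-embeddings of $N$; the same recipe, pre-composing with $\phi^{-1}$ instead, gives a symmetric homotopy from $\Phi_N^Q|_{\del \Delta^k} \circ \phi^{-1}$ to $\id_Q$, explaining the ``commutativity up to homotopy'' phrased in the lemma.

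The main obstacle is technical rather than conceptual: the contraction $r$ is only piecewise smooth (it collapses cone coordinates), so the homotopy initially produced takes values in the space of continuous maps into $\embtop_{\liou}$ rather than smooth, collared, continuous ones. This is precisely the setting handled by Proposition~\ref{prop. continuous to smooth} (using, at its base, the Hamiltonian linearization of Proposition~\ref{prop. families of Liouvilles are families of Hamiltonians}): after extending collaringly near the boundary of $[0,1]$, where the data is already smooth, one may homotope rel the boundary into a collared, smooth, continuous family. Because the underlying map $r$ lands entirely in the subspace of $\Lambda^{k+1}_{k+1}$ where $\Phi_N^Q$ is defined, and because post- or pre-composition by the fixed sectorial isomorphism $\phi^{-1}$ preserves membership in $\embtop_{\liou}$, the homotopy stays throughout in the relevant space of sectorial embeddings; and the smoothing step of Proposition~\ref{prop. continuous to smooth} upgrades it to the smooth homotopy claimed by the lemma.
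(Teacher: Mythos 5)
Your proposal is correct and follows essentially the same approach as the paper: contract $\del \Delta^k$ to the apex $\Delta^{\{k+1\}}$ of the horn $\Lambda^{k+1}_{k+1}$ (the paper phrases this as ``the obvious projection $\del \Delta^k \times \Delta^1 \to \Lambda^{k+1}_{k+1}$'' collapsing $\del \Delta^k \times \Delta^{\{1\}}$, while you invoke the join/cone structure---same thing), observe that $\Phi_N^Q$ evaluates to $\phi$ at the apex, and deduce the homotopy $\phi^{-1}\circ\Phi_N^Q|_{\del\Delta^k}\sim\id_N$; the ``In particular'' clause then follows from the composition identity, just as in the paper's chain of equalities and homotopies. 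You are somewhat more explicit than the paper about invoking Proposition~\ref{prop. continuous to smooth} to upgrade the piecewise-smooth contraction to a genuinely smooth and collared homotopy, which is a fair and careful point that the paper leaves implicit.
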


\begin{proof}
We have the obvious projection $\del \Delta^k \times \Delta^1 \to \Lambda^{k+1}_{k+1}$, again by collapsing $\del \Delta^k \times \Delta^{\{1\}}$. Composing the projection with $\Phi_N^Q$ from~\eqref{eqn. Phi maps in injective pi hom}, we thus witness a homotopy from $\Phi_N^Q|_{\del \Delta^k}$ to ${{\phi}}$ (by abuse of notation, this stands for the constant family $\del \Delta^k$-parametrized family with value $\phi$). We thus have the string of homotopies
	\eqn\label{eqn. phi and Phi homotopy commute}
	{\phi^{-1}} \circ \Phi_N^Q|_{\del \Delta^k}
	\sim
	{\phi^{-1}}{{\phi}}
	= {{\id_N}}
	=
	{{\phi}}{\phi^{-1}}
	\sim
	\Phi_N^Q|_{\del \Delta^k}
	\circ {\phi^{-1}}.
	\eqnd
We may prove the second claim as follows. The following are all homotopies and equalities as maps from $\del \Delta^k$ to $\embliou(M,N)$:
	\begin{align}
	\Gamma_c|_{\del \Delta^k} 
	&=  {\phi^{-1}} \circ \Phi_N^Q|_{\del \Delta^k} \circ {\phi^{-1}} \circ \Phi_M^Q|_{\del \Delta^k} \nonumber \\
	&\sim
	{{\id_N}}  \circ {\phi^{-1}} \circ \Phi_M^Q|_{\del \Delta^k}\nonumber \\
	&={\phi^{-1}} \circ \Phi_N^Q|_{\del \Delta^k} \circ \Phi_M^N|_{\del \Delta^k}\nonumber \\
	&=\Gamma_a|_{\del \Delta^k \times \Delta^{\{0\}}}  \nonumber
	\end{align}
The lone homotopy is from~\eqref{eqn. phi and Phi homotopy commute}, while the non-trivial equality is by the composition assumption of being a simplex in $\lioudelta$. (See~\ref{item. lioudelta composition}.) 
\end{proof}

\begin{remark}
Because $\Phi_N^Q|_{\del \Delta^k} $ is homotopic, as a family, to ${{\phi}}$, the above lemma is just a homotopical version of the (obvious) statement that any isomorphism commutes with its inverse.
\end{remark}

\subsection{Proof of surjection on homotopy groups}

In the present section, we will use $|\Delta^k|$ to mean the topological space, and $\Delta^k$ to mean the simplicial set. (Both are rightly called $k$-simplices.)

\begin{proof}
Let $\gamma_0$ be a representative of an element of $\pi_{k-1}(\hom^R(M,N),f)$. We may assume that $f$ is in the image of the map $\hom'_{\lioudelta}(M,N) \to \hom_{\lioudelta}^R(M,N)$ by the surjectivity on $\pi_0$.

We must exhibit an element $\gamma_1$ in $\hom'_{\lioudelta}(M,N)$, together with a simplex exhibiting a homotopy from $\gamma_0$ to $\gamma_1$. Before we do this, let us parse the hypotheses.

\begin{remark}[Parsing the data of $\gamma_0$.]
As usual, we will ignore the data of the Liouville structures on the $\Delta$-movies. By definition, the data of $\gamma_0$ is a $k$-simplex in $\lioudelta$ whose $k$th face is the degenerate $(k-1)$-simplex associated to $M$, and whose other faces are the degenerate simplices $s_0 \circ \ldots \circ s_0(f)$ associated to $f$. So, by Example~\ref{example. simplex in hom^R},  $\gamma_0$ gives rise to the following data:
\enum[(i)]
	\item A Liouville sector $Q = \gamma_0([k+1])$.
	\item A map
		\eqn\label{eqn. pi surjection PhiNQ}
		\Phi_N^Q: \Lambda^k_k \to \embliou(N,Q)
		\eqnd
	which, at the vertex $\Delta^{\{k\}}$ specifies a diffeomorphism
		\eqn\label{eqn. pi surjection phi}
		\phi: N \to Q
		\eqnd
		and
	\item A map
		\eqn\label{eqn. pi surjection PhiMQ}
		\Phi_M^Q : \Delta^{k-1} = \del_k\Delta^k \to \embliou(M,Q)
		\eqnd
\enumd
satisfying the condition
	\eqn\label{eqn. pi surjection PhiMQ = PhiNQ o f}
	\Phi_M^Q|_{\del \Delta^{k-1}} = \Phi_N^Q|_{\del \Delta^{k-1}} \circ f
	\eqnd
where $f$ is treated as the constant map $\del\Delta^{k-1} \to \embliou(M,N)$.
\end{remark}

How do we exhibit $\gamma_1$? By definition, $\gamma_1$ must be a $k$-simplex in $\lioudelta(M,N)$ satisfying the following boundary conditions:
\enum[(a)]
	\item The $k$th face is the degenerate $(k-1)$-simplex generated by the object $M$,
	\item All other faces are degenerate $(k-1)$-simplices $s_0\circ\ldots\circ s_0(f)$generated by the morphism $f$. 
	\item Moreover, because $\gamma_1$ is in $\hom'_{\lioudelta}(M,N)$, we conclude 
		$
		\gamma_1([k]) = N
		$
	and $\gamma_1 (I \subset [k]) \cong \id_N$ for any $I \subset [k]$ with $k \in I$. So the data of $\gamma_1$ is specified entirely by a map
		\eqnn
		\Psi_M^N: \gamma_1([k-1] \subset [k]) : \Delta^{k-1} \to \emblioucoll(M,N)
		\eqnd
	(along with some Liouville structure on $N \times T^*\Delta^k$).
\enumd
So let us produce $\Psi_M^N$. As usual, the extra data of a Liouville structure on $N \times T^*\Delta^k$ is an afterthought; we ignore it.

\begin{construction}[$\Psi_M^N$]
Note that we have a map
	\eqnn
	\overline{\Phi_N^Q \circ f}: \Delta^1 \times \del \Delta^{k-1} \to \del \Delta^{k-1} \xrightarrow{\Phi_N^Q|_{\del \Delta^{k-1}} \circ f} \embliou(M,Q)
	\eqnd
where the first arrow is the projection forgetting the $\Delta^1$ factor. Because of~\eqref{eqn. pi surjection PhiMQ = PhiNQ o f}, and by collaring, we may glue this map with $\Phi_M^Q$ to obtain a single map
	\eqnn
	\overline{\Phi_N^Q \circ f} \bigcup \Phi_M^Q: \Delta^1 \times \del \Delta^{k-1} \bigcup_{\del \Delta^{k-1}} \Delta^{k-1} \to \embliou(M,Q).
	\eqnd 
Now choose a diffeomorphism
	\eqnn
	j: |\Delta^{k-1}| \cong |\Delta^1 \times \del \Delta^{k-1}|
	\eqnd
respecting the collars. We define
	\eqnn
	\Psi_M^N:= \phi^{-1} \circ \left( \overline{\Phi_N^Q \circ f} \bigcup \Phi_M^Q \right) \circ j
	\eqnd
where $\phi^{-1}$ is the inverse to~\eqref{eqn. pi surjection phi}.
\end{construction}

\begin{remark}
By construction, $\Psi^M_N$ is equal to $f$ near the boundary of $|\Delta^{k-1}|$. Moreover, $\Psi^M_N$ satisfies the following:
	\eqn\label{eqn. PsiMN interior equals PhiMQ}
	\text{In some interior region of $|\Delta^{k-1}|$, $\Psi^M_N$ encodes (up to reparametrization) the family  $\phi^{-1} \circ \Phi_M^Q$.}
	\eqnd
\end{remark}

So now we construct a $(k+1)$-simplex $H$ in $\lioudelta$ realizing a homotopy from $\gamma_0$ to $\gamma_1$ in $\hom^R_{\lioudelta}(M,N)$. As it turns out, we will be able to define $H$ so that $H([k+1]) = Q$ and so that the embedding
	\eqnn
	H([k+1] \setminus \{0\} \subset [k+1]) : \del_0\Delta^{k+1} \to \embliou(Q,Q)
	\eqnd
is the constant map to $\id_Q$. Then, unwinding the definitions, we must only specify the following data to specify $H$:
\begin{itemize}
\item $\Gamma_N^Q: \del_1 \Delta^{k+1} \to \embliou(N,Q)$
\item $(\Xi_N^Q)_i: \del_i \Delta^{k+1} \to \embliou(N,Q)$ for $2 \leq i \leq k$
\item $\Gamma_M^Q: \Delta^k = \del_{k+1}\Delta^{k+1} \to \embliou(M,Q)$
\end{itemize}
(along with a Liouville structure on $Q \times T^*\Delta^{k+1}$, which we again ignore because one may be constructed straightforwardly).

\begin{construction}[$\Gamma_N^Q$]
Because $\embliou(N,Q)$ is a Kan complex, we have a horn-filler
	\eqnn
	\xymatrix{
	\Lambda^k_k \ar[rr]^-{\Phi_N^Q \eqref{eqn. pi surjection PhiNQ}} \ar[d]
	&& \embliou(N,Q) \\
	\Delta^k \ar@{-->}[urr]^{\widetilde{\Phi_N^Q}}_{\exists}.
	}
	\eqnd
Again by Proposition~\ref{prop. continuous to smooth}, we may guarantee that the filler $\widetilde{\Phi_N^Q}$ is collared. In fact, if we choose this filler using a ``project-to-$\del_k\Delta^k$-and-interpolate'' construction, we may guarantee that the value of $\widetilde{\Phi_N^Q}$ has the following boundary behavior along $\del_k \Delta^k$:
	\eqn\label{eqn. tilde PhiNQ region with phi}
	\text{There is some region in the interior of $|\del_k \Delta^k| \cong |\Delta^{k-1}|$ where $\widetilde{\Phi_N^Q}$ is identical to $\phi$.}
	\eqnd
We choose $\widetilde{\Phi_N^Q}$ so that this region~\eqref{eqn. tilde PhiNQ region with phi} contains the region referenced in~\eqref{eqn. PsiMN interior equals PhiMQ}. 
We let $\Gamma_N^Q$ be the composition
	\eqnn
	\del_1 \Delta^{k+1} \xrightarrow{\cong} \Delta^k \xrightarrow{\widetilde{\Phi_N^Q}} \embliou(N,Q)
	\eqnd
where the first arrow is the unique isomorphism of simplicial sets.
\end{construction}

\begin{construction}[$(\Xi_N^Q)_i$]
For every $2 \leq i \leq k$, we define the $(\Xi_N^Q)_i$ to be the composition
	\eqnn
	\xymatrix{
	\del_i \Delta^{k+1}
	\cong
	\Delta^k
	\ar[r]^{\sigma_0}
	& \Delta^{k-1}
	\cong \del_i \Delta^k
	\ar[r]^{\Phi_N^Q}
	& \embliou(N,Q)
	}
	\eqnd
where $\sigma_0$ is the codegeneracy map, induced by the surjection $[k] \to [k-1]$ sending both $0,1 \in [k]$ to $0 \in [k-1]$.
\end{construction}

\begin{construction}[$\Gamma_M^Q$]
Based on our previous choices, the value of $\Gamma_M^Q: \Delta^k \to \embliou(M,Q)$ is completely constrained on $\del \Delta^k$. For example, on $\del_1 \Delta^k$, the composition requirement for a $(k+1)$-simplex in $\lioudelta$ demands that we have
	\eqnn
	\Gamma_M^Q|_{\del_1 \Delta^k} = \Gamma_N^Q|_{\del_1 \Delta^k} \circ \Phi_M^N|_{\del_1 \Delta^k \cong \Delta^{k-1}}
	\eqnd
We make an important observation. Because the regions of~\eqref{eqn. tilde PhiNQ region with phi} was chosen to contain the region of~\eqref{eqn. PsiMN interior equals PhiMQ}, one can choose a diffeomorphism
	\eqn
		|\del_1 \Delta^k| \cong |\Delta^{k-1}| \bigcup |\del \Delta^{k-1} \times \Delta^1|
	\eqnd
so that
	\eqnn
	\text{$\Gamma_M^Q|_{\del_1 \Delta^k}$ encodes a homotopy $h$ from $\Phi_M^Q|_{\del \Delta^{k-1}}$ to itself.}
	\eqnd
In particular, by choosing a reverse homotopy $\overline{h}$, there is a map
	\eqnn
	H'_1: \del \Delta^{k-1} \times \Delta^2 \to \embliou(M,Q)
	\iff
	\Delta^2 \to \hom_{\sset}(\del \Delta^{k-1}, \embliou(M,Q))
	\eqnd
where the 2-simplex has boundary conditions $h$, $\overline{h}$, and a constant map along the edges $\Delta^{\{0,1\}}$, $\Delta^{\{1,2\}}$, and $\Delta^{\{0,2\}}$, respectively. Informally, the two-simplex thus encodes a homotopy from the concatenation $\overline{h} \sharp h$ to the constant homotopy at $\Phi_M^Q|_{\del \Delta^{k-1}}$.

Now let $H'_2$ be the composition
	\eqnn
	H'_2: \Delta^{k-1} \times \Delta^1 \to \Delta^{k-1} \xrightarrow{\Phi_M^Q}\embliou(M,Q)
	\eqnd
and choose a piecewise smooth, appropriately collared diffeomorphism
	\eqn\label{eqn. l PL diff}
	l: |\Delta^{k-1} \times \Delta^1| \bigcup |\del \Delta^{k-1} \times \Delta^2|
	\cong
	|\Delta^{k-1} \times \Delta^1|
	\eqnd
where the union 
	$|\Delta^k \times \Delta^{\{0\}}| \bigcup |\del \Delta^k \times \Delta^{\{0,1\}}|$
of the domain is mapped to the face 
	$|\Delta^k \times \Delta^{\{0\}}|$ 
of the codomain. (See Figure~\ref{figure. cylinder collapse}.) 
\begin{figure}
	\eqnn
    \begin{tikzpicture}[line join = round, line cap = round]
        \coordinate [label=above:] (B0) at (0,0,4);
        \coordinate [label=above:] (B1) at (0,0,0);
        \coordinate [label=above:] (B2) at (0,2,2);
        \coordinate [label=above:] (F0) at (4,0,4);
        \coordinate [label=above:] (F1) at (4,0,0);
        \coordinate [label=above:] (F2) at (4,2,2);
        \coordinate [label=above:] (G) at (0,0.67,2);
        \coordinate [label=above:] (G0) at (0,0.34,3);
        \coordinate [label=above:] (G1) at (0,0.34,1);
        \coordinate [label=above:] (G2) at (0,1.34,2);
        \begin{scope}[decoration={markings,mark=at position 0.5 with {\arrow{to}}}]
        \draw[] (G1)--(G2)--(G0)--cycle;
        \draw[] (G0)--(F0);
        \draw[] (G1)--(F1);
        \draw[] (G2)--(F2);
        \draw[] (B1)--(B2)--(B0)--cycle;
        \draw[] (F1)--(F2)--(F0)--cycle;
        \fill[opacity=0.5,gray] (F1)--(F2)--(B2)--(B1)--cycle;
        \fill[opacity=0.5,gray] (F1)--(F0)--(B0)--(B1)--cycle;
        \fill[opacity=0.5,gray] (F2)--(F0)--(B0)--(B2)--cycle;
		\draw[] (F1)--(F2)--(B2)--(B1)--cycle;
        \draw[] (F1)--(F0)--(B0)--(B1)--cycle;
        \draw[] (F2)--(F0)--(B0)--(B2)--cycle;
        \end{scope}
        \coordinate [label=below:$|\Delta^{k-1}\times \Delta^1| \cup |\del \Delta^{k-1} \times \Delta^2| \cong |\Delta^{k-1} \times \Delta^1|$] (Gamma_b) at (2,0,4);
    \end{tikzpicture}
  	\eqnd
	\caption{The piecewise smooth isomorphism~\eqref{eqn. l PL diff}}\label{figure. cylinder collapse}
\end{figure}
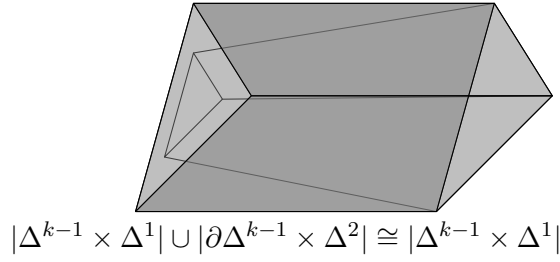
Now we note that the map
	\eqn\label{eqn. H' pi surjection}
	\left(H'_2 \bigcup H'_1\right)\circ l^{-1} : |\Delta^{k-1} \times \Delta^1| \to \embtop_{\liou}(M,Q)
	\eqnd
is constant along the $\Delta^1$-direction of $|\del_0 \Delta^{k-1} \times \Delta^1|$. Consequently, \eqref{eqn. H' pi surjection} factors through a quotient
		\eqnn
		|\Delta^{k-1} \times \Delta^1| \to |\Delta^k|
		\eqnd
(for example, by writing $|\Delta^k|$ as a cone on $|\Delta^{k-1}|$); we may again apply Proposition~\ref{prop. continuous to smooth} to smooth and collar the resulting map. We define the resulting factorization to be
		\eqnn
		\Gamma_M^Q: \Delta^k \to \emb_{\liou}(M,Q).
		\eqnd
We leave to the reader the details of verifying that all compositional requirements are met.
\end{construction}
\end{proof}

\subsection{Proof of Theorems~\ref{theorem. EmbLiou is homLiouDelta} and~\ref{theorem. hom liou delta is hom liou}.}
\begin{proof}
We have the following homotopy equivalences of Kan complexes:
	\eqnn
	\xymatrix{
	\widetilde{\emblioucoll}(M,N) 
	\ar[rr]^{\text{Rem~\ref{remark. tile emb is hom'}}} \ar[d]_{\text{Prop~\ref{prop. tilde emb to emb is equivalence}}}
	&& \hom'_{\lioudelta}(M,N)\ar[rr]^{\text{Lem~\ref{lemma. hom' is homR}}}
	&& \hom^R_{\lioudelta}(M,N)
	\\
	\emblioucoll(M,N) \ar[rr]_{\text{Prop~\ref{prop. coll emb spaces are equivalent}}}
 	&& \emb_{\liou}(M,N).
	}
	\eqnd
\end{proof}

\begin{remark}
We point out one subtlety, which is that the simplicial set structure used in Lemma~\ref{lemma. hom' is homR} is not necessarily the simplicial set structure used in Proposition~\ref{prop. tilde emb to emb is equivalence}; more specifically, the isomorphism of semisimplicial sets in Remark~\ref{remark. tile emb is hom'} need not respect degeneracy maps.

However, if two Kan complexes have the same underlying semisimplicial sets (i.e., with differing degeneracy maps but with equal face maps), they are automatically homotopy equivalent.\footnote{See for example Theorem~1.4 of~\cite{steimle}, which proves a more general result---two $\infty$-categories with the same underlying semisimplicial sets are equivalent.} So the reader should interpret the arrow labeled by Remark~\ref{remark. tile emb is hom'} not as ``the'' map constructed in the remark, but as a homotopy equivalence guaranteed by it.
\end{remark}

\subsection{An informal argument}
\label{section. lioudelta equivalence sketch}
In Theorem~\ref{theorem. EmbLiou is homLiouDelta}, we proved that $\lioudelta$ has morphism spaces that are homotopy equivalent to the usual spaces of sectorial embeddings. One can do better. Let us give an informal argument, using the language of constructible sheaves, as to why $\lioudelta$ is equivalent to the usual $\infty$-category of sectors with spaces of sectorial embeddings. (The only difference between this ``better'' result and Theorem~\ref{theorem. EmbLiou is homLiouDelta} is the exhibition of a {\em functor} resulting in the equivalences of morphism spaces guaranteed in Theorem~\ref{theorem. EmbLiou is homLiouDelta}.)

Fix an $I$-simplex of $\lioudelta$.
For all $\emptyset \neq A \subset I$, If one forgets the sectorial structures on $M_{A} \times T^*\Delta^{A}$ (and only remembers the sectorial structure on $M_{A}$), one obtains families of sectorial embeddings $\Delta^{A} \to \embliou(M_{A},M_{B}).$ We note that the assignment
\begin{itemize}
\item To every subsimplex $\Delta^A$ of $\Delta^I$, an object $M_A$, and
\item To every inclusion $\Delta^A \subset \Delta^B$, a family of maps $\Delta^A \to \embliou(M_A,M_B)$
\end{itemize}
determines a constructible sheaf on $\Delta^I$, where $\Delta^I$ is stratified in the usual way. 

\begin{remark}
The stratification is as follows. Given a point $x \in \Delta^I \subset \RR^I$, $x$ is assigned to the set of those $i \in I$ for which $x_i>0$. This assignment from $x$ to subsets of $I$ defines a stratification on $\Delta^I$ by endowing the power set of $I$ the Alexandroff topology. 

For more on this perspective on stratified spaces, we refer the reader to~\cite[Appendix A]{higher-algebra}, \cite{aft-1} and~\cite{tanaka-paracyclic}.
\end{remark}

Moreover, because we demand that our $I$-simplices have a max-localizing property, it is formal that this constructible sheaf is constructible when considered as a sheaf on $\Delta^I$ with the ``max stratification'' (where now $\Delta^I$ is stratified by the poset $I$ itself, and $x$ is sent to the element $\max \{i, x_i>0\}$). It is easy to see that the $\infty$-category of constructible sheaves on $\Delta^I$ with the max stratification is equivalent to the $\infty$-category of functors from $I$ to the $\infty$-category in which the sheaf is taking values.

Put another way, we see that every $I$-simplex of $\lioudelta$ gives rise to an $I$-simplex in the usual $\infty$-category of Liouville sectors\footnote{This may be obtained, for example, by topologically enriching the category of Liouville sectors, then taking the homotopy coherent nerve.}. Thus, by varying the $I$-variable, $\lioudelta$ admits a map to the $\infty$-category of Liouville sectors; this assignment is redundant in that different Liouville structures on $M_A \times T^*\Delta^A$ (with a fixed structure near $\max A$) may be sent to the same simplices; but the space of these Liouville structures (with fixed structure near $\max A$) is contractible because, by assumption, these deformations are all compactly supported.

Let us make one remark: $\lioudelta$ does not see all constructible sheaves on the simplex. This is because we demand that along max-localizing inclusions, the morphism $M_A \to M_B$ be a {\em diffeomorphism} along the $\max A = \max B$ vertex, as opposed to just a sectorial equivalence. However, it is straightforward to verify that, for any constructible sheaf consisting of max-localizing morphisms that are sectorial equivalences (as opposed  to sectorial  isomorphisms), one can construct a constructible sheaf on a higher-dimensional simplex parametrizing to a simplex whose max-localizing morphisms are all sectorial {\em isomorphisms} along the relevant vertices.

This is an informal argument. We included our informal argument to give the reader peace of mind that, indeed, $\lioudelta$ has a moral reason to be equivalent to the $\infty$-category of Liouville sectors. The combinatorics to make the argument rigorous will be left for another work.

\clearpage
\section{\texorpdfstring{$\lioudelta$}{Liou Delta} and \texorpdfstring{$\lioudeltadef$}{Liou Delta Def} are equivalent}

\begin{notation}[$\emb_{\liou}^{\defliou}(M,N)$]
Fix Liouville sectors $M$ and $N$. We let $\emb_{\liou}^{\defliou}(M,N)$ denote the singular complex of the space of deformation embeddings $\embtop_{\liou}^{\defliou}(M,N)$.
\end{notation}

We have the following analogue of Theorem~\ref{theorem. EmbLiou is homLiouDelta}:

\begin{theorem}
\label{theorem. emblioudef is hom^R}
There is a homotopy equivalence of simplicial sets
	\eqnn
	\emblioudef(M,N) \simeq \hom_{\lioudeltadef}(M,N).
	\eqnd
\end{theorem}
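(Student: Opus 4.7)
The plan is to mimic the proof of Theorem~\ref{theorem. EmbLiou is homLiouDelta} with the bordered (not necessarily compactly supported) deformation space playing the role of the compactly supported one, and to transfer control between the two settings via Theorem~\ref{theorem. embedding spaces are deformation embedding spaces}.

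First, I would introduce the exact analogue of Notation~\ref{notation. hom'}: a semisimplicial subset $\hom'_{\lioudeltadef}(M,N) \subset \hom^R_{\lioudeltadef}(M,N)$ whose $k$-simplices are those $(k+1)$-simplices of $\lioudeltadef$ for which the underlying manifold of the $\Delta^{I'}$-movie is $M \times T^*\Delta^{I'}$ whenever $k+1 \notin I'$ and $N \times T^*\Delta^{I'}$ whenever $k+1 \in I'$, with all transition maps $\phi_{I'' \subset I'}$ (for $I''$ and $I'$ on the same side of $k+1$) given by $\id \times T^*\eta_{I'' \subset I'}$. Paralleling Remark~\ref{remark. tile emb is hom'}, these simplices are naturally identified with a semisimplicial set $\widetilde{\emb}^{\defliou}_{\liou,\collared}(M,N)$ of collared smooth maps $\Delta^k \to \emblioudef(M,N)$ equipped with a $\Delta^{k+1}$-family of bordered (not necessarily compactly supported) deformations of $\lambda^N$, vanishing near the vertex $\Delta^{\{k+1\}}$ and compatible with the isotopy.

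Next, I would show (in parallel with Proposition~\ref{prop. tilde emb to emb is equivalence}) that the forgetful map $\widetilde{\emb}^{\defliou}_{\liou,\collared}(M,N) \to \emb^{\defliou}_{\liou}(M,N)$ is a homotopy equivalence of Kan complexes. The key point is again that the fibers are contractible: the simplex of deformation data extending the base $\Delta^{k+1}$-family is a convex subspace of the (contractible) space of bordered deformations, once the collar conditions are set. The usual smooth/collared approximation argument of Section~\ref{section. continuous to smooth} (still applicable, since the smoothness arguments do not rely on compact support) shows this forgetful map is a Kan fibration with contractible fibers. Composed with the equivalence $\emb^{\defliou}_{\liou}(M,N) \simeq \emb_\liou(M,N)$ of Theorem~\ref{theorem. embedding spaces are deformation embedding spaces}, this identifies the candidate source with the desired space.

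Then I would promote the inclusion $\hom'_{\lioudeltadef}(M,N) \hookrightarrow \hom^R_{\lioudeltadef}(M,N)$, a priori only semisimplicial, to a map of simplicial sets via Theorem~\ref{theorem. hiro non strict} (as was done in obtaining~\eqref{eqn. simplicial set map hom' to homR}), and show it is a homotopy equivalence by verifying surjectivity and injectivity on $\pi_0$ and on $\pi_{k-1}$ at any basepoint, just as in Lemma~\ref{lemma. hom' is homR}. The $\lioudelta$ arguments transfer essentially verbatim: given an arbitrary simplex $F$ in $\hom^R_{\lioudeltadef}(M,N)$, one uses the isomorphism $\phi: N \to Q$ forced by the max-localizing condition~\ref{item. lioudelta is max localizing} (see Remark~\ref{remark. max phi in Liou is equivalence}) to conjugate everything back to a simplex with underlying manifold $N$. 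The 2-simplex of~\eqref{eqn. 2-simplex hom pi0 onto}, the homotopy-group witnesses, and the piecewise-smooth-gluing diffeomorphisms go through unchanged; the only new ingredient is that, whenever a family of Liouville deformations must be extended from a sub-sector image to the ambient sector, one uses Remark~\ref{remark. extending deformations} (with its accompanying shrinking-isotopy trick) in place of the obvious extension afforded by compact support.

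The main obstacle will be the last point: each time the $\lioudelta$ proof produced an extension of compactly supported $h$'s by convexity, the deformation proof needs the more delicate shrinking-and-extending procedure of Lemma~\ref{lemma. big maps shrink} applied in families parametrized by horns, to keep bordered deformations bordered under gluing and to ensure the produced simplices still satisfy the tameness condition~\ref{item. tameness}. This is exactly the type of family-parametrized modification established in Lemma~\ref{lemma. big maps shrink}; applied to the horns $\Lambda^n_k$ and their fillers arising in the $\pi_{k-1}$ arguments, it supplies the needed coherent bordered extensions, after which smooth-and-collared approximation (Proposition~\ref{prop. continuous to smooth}) finishes each step. Assembling these pieces yields the chain of equivalences $\emb^{\defliou}_{\liou}(M,N) \simeq \widetilde{\emb}^{\defliou}_{\liou,\collared}(M,N) \simeq \hom'_{\lioudeltadef}(M,N) \simeq \hom^R_{\lioudeltadef}(M,N)$, which is the desired homotopy equivalence.
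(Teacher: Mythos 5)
Your proposal is essentially the paper's own proof, which is a one-paragraph pointer to Theorem~\ref{theorem. EmbLiou is homLiouDelta} plus the observation that Remark~\ref{remark. extending deformations} replaces the compactly-supported convexity argument when extending bordered families of Liouville structures from an embedded image to the ambient sector. The only slight oddity is your mid-argument detour through $\emb_\liou(M,N)$ via Theorem~\ref{theorem. embedding spaces are deformation embedding spaces}, which is not needed here (the theorem's source is already $\emblioudef(M,N)$); your concluding chain of equivalences correctly omits it.
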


\begin{proof}
The proof is nearly identical to that of Theorem~\ref{theorem. EmbLiou is homLiouDelta}; the same strategy to extending (non-compactly-supported) families of Liouville structure of an embedded image to the rest of the codomain as in Remark~\ref{remark. extending deformations} applies. For the reader's sake, we do not reproduce the arguments.
\end{proof}

Finally, we have:

\begin{theorem}\label{theorem. lioudelta is lioudeltadef}
The inclusion $\lioudelta \to \lioudeltadef$ (which is a map of $\infty$-categories by Theorem~\ref{theorem. lioudeltadef is an infinity-cat})  is an equivalence of $\infty$-categories.
\end{theorem}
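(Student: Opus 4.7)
The plan is to verify that the inclusion $\iota\colon \lioudelta \to \lioudeltadef$ is both essentially surjective and fully faithful. Essential surjectivity is immediate: both $\infty$-categories have the same objects, namely all Liouville sectors, so every object in the target is literally in the image. All of the content therefore lies in showing that $\iota$ induces a homotopy equivalence on mapping spaces
\[
\hom_{\lioudelta}(M,N) \xrightarrow{\ \simeq\ } \hom_{\lioudeltadef}(M,N)
\]
for every pair of Liouville sectors $M,N$.

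To do so, the strategy is to fit $\iota$ into a commutative (up to natural homotopy) square with the geometric models of morphism spaces. Specifically, I will assemble the diagram
\[
\xymatrix{
\hom_{\lioudelta}(M,N) \ar[r]^-{\iota_*} \ar[d]_{\simeq}
& \hom_{\lioudeltadef}(M,N) \ar[d]^{\simeq} \\
\emb_{\liou}(M,N) \ar[r]
& \emb_{\liou}^{\defliou}(M,N),
}
\]
where the left vertical is the equivalence of Theorem~\ref{theorem. EmbLiou is homLiouDelta}, the right vertical is the equivalence of Theorem~\ref{theorem. emblioudef is hom^R}, and the bottom arrow is the natural map sending a (compactly supported) sectorial embedding to the same map equipped with the constant deformation of Liouville structures. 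By Theorem~\ref{theorem. embedding spaces are deformation embedding spaces} (together with the chain of equivalences in~\eqref{eqn. diagram of embedding spaces}), the bottom arrow is a weak homotopy equivalence. Once the square commutes up to homotopy, the two-out-of-three property will force $\iota_*$ to be a homotopy equivalence, and hence $\iota$ itself to be an equivalence of $\infty$-categories.

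The first routine step is to trace through the explicit models used to construct the vertical equivalences: the intermediate Kan complexes $\widetilde{\emblioucoll}(M,N)$ and its deformation analogue, together with the isomorphism of Remark~\ref{remark. tile emb is hom'} identifying these with the simplicial subsets $\hom'_{\lioudelta}$ and $\hom'_{\lioudeltadef}$. The inclusion $\iota$ restricts to the evident inclusion on these intermediate models (every compactly supported deformation is, tautologically, a bordered deformation), and the forgetful maps back down to the embedding and deformation-embedding spaces are compatible with this inclusion on the nose at the semisimplicial level; the promotion to simplicial sets via Theorem~\ref{theorem. hiro non strict} forces commutativity only up to natural homotopy, but this is harmless for the two-out-of-three argument.

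The main obstacle, and the only step requiring care, is this compatibility check: one must verify that under the identifications of Remark~\ref{remark. tile emb is hom'} (which require choices of extensions of compactly supported functions $h$ to collared families on higher simplices) the two composites in the square represent the same class in the space of deformation embeddings. I expect this to reduce to noting that a compactly supported deformation morphism and its image under forgetting the compact-support constraint differ only by data living in contractible spaces (convex spaces of compactly supported or bordered functions on $N$), so any two ways of passing from $\hom_{\lioudelta}$ to $\emb_{\liou}^{\defliou}$ are canonically homotopic. Given this, the two-out-of-three argument applied to the square concludes the proof.
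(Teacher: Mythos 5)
Your proposal is correct and follows essentially the same route as the paper: the paper likewise reduces to showing $\iota$ induces equivalences on mapping spaces, chains together Theorems~\ref{theorem. EmbLiou is homLiouDelta}, \ref{theorem. embedding spaces are deformation embedding spaces}, and \ref{theorem. emblioudef is hom^R}, and defers the verification that the composite agrees with $\iota_*$ to a ``tedious but straightforward'' check, just as you do. Your commutative-square formulation and two-out-of-three phrasing are a cosmetic repackaging of the same argument, and your remark about the contractible spaces of compactly supported functions is the right intuition for why that final compatibility check goes through.
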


\begin{proof}
The inclusion is a bijection on objects, so it remains to show that it induces a homotopy equivalence of mapping spaces. For this, fix two sectors $M$ and $N$ and note the equivalences
	\eqnn
	\xymatrix{
	\hom^R_{\lioudelta}(M,N) \ar[r]^-{Th~\ref{theorem. EmbLiou is homLiouDelta}}_{\sim}
	&\embliou(M,N) \ar[rr]^-{Th~\ref{theorem. embedding spaces are deformation embedding spaces}}_{\sim}
	&&\emblioudef(M,N) \ar[r]^-{Th~\ref{theorem. emblioudef is hom^R}}_-{\sim}
	&\hom^R_{\lioudeltadef}(M,N).
	}	
	\eqnd
It is a tedious but straightforward verification that, at the level of $\pi_0$ and of homotopy groups, the composition of these equivalences equals the map induced by the inclusion $\lioudelta \to \lioudeltadef$.
\end{proof}

\clearpage
\section{Localization (the proof of Theorem~\ref{theorem. localization})}\label{section. proof of localization}

In this section, we will prove Theorem~\ref{theorem. localization}---that $\lioudeltadefstab $ is equivalent to the localization $ \lioustrstab[(\eqs^\dd)^{-1}]$. 
 
Recall the category of stabilized sectors $\lioustrstab$ from Notation~\ref{notation. lioustr}. There, in defining $\lioustrstab$, we stabilized $\lioustr$ by the object $T^*[0,1]$. Because $T^*[0,1]$ and $T^*[-1,0]$ are obviously isomorphic objects (for example, induced by the translation diffeomorphism from $[0,1]$ to $[-1,0]$), we adopt a different model here. (See Remark~\ref{remark. why stabilize by -1} for why prefer $[-1,0]$ over $[0,1]$). Stabilizing by either interval obviously yields equivalent $\infty$-categories, so we abuse notation:

\begin{notation}
\label{notation. lioustrstab 2}
 
We define
	\eqn\label{eqn. lioustr defn 2}
	\lioustrstab := \colim \left( \lioustr \xrightarrow{T^*[-1,0] \times -}  \lioustr \xrightarrow{T^*[-1,0] \times -} \ldots \right)
	\eqnd 
and	\eqn\label{eqn. lioudeltadefstab 2}
	\lioudeltadefstab := \colim \left(\lioudeltadef \xrightarrow{T^*[-1,0] \times - }
	\lioudeltadef \xrightarrow{T^*[-1,0] \times - } \ldots \right).
	\eqnd
\end{notation}

\begin{remark}
The techniques of this section come down to combinatorial arguments enabling Constructions~\ref{construction. Gf for alpha left inverse} and~\ref{construction. GGf for alpha right inverse}. 
Indeed, it should be true that for any $\infty$-category $\cC$ with a notion of ``simplex object'', and with a class of morphisms $\eqs$ that pick out inclusions of faces of a simplex, the methods here carry over to show that the localization $\cC[\eqs^{-1}]$ is equivalent to an $\infty$-category $\cC_\Delta$ built out of certain barycentric subdivision diagrams in $\cC$.
\end{remark}

\subsection{Defining \texorpdfstring{$\beta$}{beta}}

\begin{construction}[$j$]
\label{construction. j from lioustrstab to lioudeltadefstab}
Recall the map $\lioustr \to\lioudeltadef$ from Construction~\ref{construction. j from lioustr to lioudelta}; it is in fact a map of $\infty$-categories (i.e., simplicial sets) by combining Theorems~\ref{theorem. lioudelta is an infinity-cat} and~\ref{theorem. lioudeltadef is an infinity-cat}. This map is compatible with taking direct product with a fixed sector; in particular, we may pass to the stabilizations to obtain a map of $\infty$-categories
	\eqnn
	j: \lioustrstab \to \lioudeltadefstab.
	\eqnd
\end{construction}

\begin{example}[$j$ on 1-simplices]
\label{example. j 1 simplex}
Fix a 1-simplex $f_{01}: X_0 \to X_1$ in $\lioustrstab$. Then $j(f_{01})$ is a 1-simplex in $\lioudeltadefstab$ whose underlying diagram may be drawn as follows:
	\eqnn
	\xymatrix{
	X_0 \cong X_0 \times T^*[-1,0] \ar[rr]^-{f_{01} \times T^*\eta}
	&& X_1 \times T^*\Delta^1 
	&&  X_1 \times T^*[-1,0]  \cong X_1 \ar[ll]_-{\id_{X_1} \times T^*\eta}.
	}
	\eqnd
To reduce clutter, we will suppress the $T^*[-1,0]$ factors of stabilizations, and the $\id_{X_1}$ factors of functions, to depict the above diagram as
	\eqnn
	\xymatrix{
	X_0   \ar[rr]^-{f_{01} \times T^*\eta}
	&& X_1 \times T^*\Delta^1 
	&&  X_1  \ar[ll]_-{  T^*\eta}.
	}
	\eqnd
\end{example}

\begin{example}[$j$ on 2-simplices]
\label{example. j 2 simplex}
Fix a 2-simplex in $\lioustrstab$ -- this is the data of sectors $X_0, X_1, X_2$ and morphisms $f_{01},f_{12},f_{02}$ with $f_{02} = f_{12} \circ f_{01}$ (up to stabilization). Then $j$ of this 2-simplex may be depicted as the following barycentric subdivision diagram:
	\eqnn
	\xymatrix{
	&&&& X_2 \ar[drr]^{T^*\eta} \ar[dll]_{T^*\eta} \ar[d]^{T^*\eta} \\
	&& X_2 \times T^*\Delta^1 \ar[rr]^{T^*\eta}  && X_2 \times T^*\Delta^2 && X_2 \times T^*\Delta^1 \ar[ll]_{T^*\eta}  \\
	X_0 \ar[rrrr]_{f_{01} \times T^*\eta} \ar[urr]^{f_{02} \times T^*\eta} \ar[urrrr]_{f_{02} \times T^*\eta} &&&& X_1 \times T^*\Delta^1 \ar[u]^{f_{12} \times T^*\eta} &&&& X_1 \ar[llll]^{T^*\eta} \ar[ullll]^{f_{12} \times T^*\eta} \ar[ull]_{f_{12} \times T^*\eta}
	}
	\eqnd
Note we are suppressing the identifications $X_0 = X_0 \times T^*[-1,0] = X_0 \times T^*[-1,0]^2$. We are also suppressing subscripts on $\eta$ -- that $\eta \circ \eta = \eta$ should be interpreted as $\eta_{J \subset K} \circ \eta_{I \subset J} = \eta_{I \subset K}$, for example.
\end{example}

\begin{prop}
\label{prop. j factors through i}
Let $\eqs \subset \lioustrstab$ denote those strict (stabilized) sectorial embeddings which are also equivalences of Liouville sectors (Definition~\ref{defn. sectorial equivalence}). Then the functor $j$ (Construction~\ref{construction. j from lioustrstab to lioudeltadefstab}) factors, up to homotopy, through the localization $ \lioustrstab[(\eqs^\dd)^{-1}]$. 
\end{prop}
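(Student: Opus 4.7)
The plan is to invoke the universal property of $\infty$-categorical localization: the induced map $\lioustrstab[\eqs^{-1}] \to \lioudeltadefstab$ exists (up to homotopy, and uniquely so) as soon as we verify that $j$ sends every morphism in $\eqs$ to an equivalence in $\lioudeltadefstab$. So the entire content is to check this single property, after which the factorization is formal.

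First I would reduce the stabilized statement to an unstabilized one. By construction, $j$ is obtained by passing to the filtered colimit~\eqref{eqn. lioudeltadefstab 2} of the maps $\lioustr \to \lioudeltadef$ from Construction~\ref{construction. j from lioustr to lioudelta}. Equivalences in a filtered colimit of $\infty$-categories are detected at a finite stage, and the operation $T^*[-1,0] \times -$ sends strict sectorial equivalences to strict sectorial equivalences. Thus it suffices to prove that for every strict sectorial embedding $f \colon M \to N$ that is also a sectorial equivalence in the sense of Definition~\ref{defn. sectorial equivalence}, the edge $j(f)$ is an equivalence in $\lioudeltadef$.

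Next I would factor $j$ on the unstabilized level through $\lioudelta$, using the chain
\[
  \lioustr \longrightarrow \lioudelta \longhookrightarrow \lioudeltadef.
\]
The first functor is the one supplied by Theorem~\ref{theorem. lioudelta is an infinity-cat}, and its action on an individual morphism agrees (up to the canonical homotopy provided in Construction~\ref{construction. edges in lioudelta}) with the edge constructed there. By Proposition~\ref{prop. homotopic maps are homotopic}\ref{item. equivalences are equivalences}, any (not necessarily strict) sectorial equivalence gives rise to an equivalence in $\lioudelta$; in particular this applies to $f \in \eqs$. Then Theorem~\ref{theorem. lioudelta is lioudeltadef} asserts that the inclusion $\lioudelta \hookrightarrow \lioudeltadef$ is an equivalence of $\infty$-categories, so it sends equivalences to equivalences. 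Composing, $j(f)$ is an equivalence in $\lioudeltadef$, and after stabilization in $\lioudeltadefstab$.

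Having verified the key property, I would conclude by applying the universal property of the Dwyer--Kan (equivalently, $\infty$-categorical) localization: the localization functor $\lioustrstab \to \lioustrstab[\eqs^{-1}]$ is initial among functors out of $\lioustrstab$ that invert $\eqs$. Since we have just shown that $j$ does invert $\eqs$, $j$ factors through $\lioustrstab[\eqs^{-1}]$, uniquely up to homotopy. The only mildly delicate point—really the one place where one must pay attention rather than quote—is the reduction from the stabilized to the unstabilized situation, since one must be sure that the stabilization endofunctors are strictly compatible with $j$ at the semisimplicial level; but this is automatic from Construction~\ref{construction. j from lioustr to lioudelta} because the stabilization $T^*[-1,0] \times -$ acts by direct product on every piece of the simplex data and commutes with the collaring conventions of Choice~\ref{choice. collaring}.
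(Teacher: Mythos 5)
Your proposal is correct and takes essentially the same route as the paper: identify that the only thing to check is that $j$ inverts $\eqs$, then cite Proposition~\ref{prop. homotopic maps are homotopic}\ref{item. equivalences are equivalences} (noting, via Construction~\ref{construction. j from lioustr to lioudelta}, that $j$ on a strict morphism agrees with the edge of Construction~\ref{construction. edges in lioudelta}) and apply the universal property of localization. The paper's proof is a one-line citation of the same three ingredients; the only minor overkill in your argument is invoking Theorem~\ref{theorem. lioudelta is lioudeltadef} to get from $\lioudelta$ to $\lioudeltadef$ — any functor of $\infty$-categories preserves equivalences, so it would have sufficed to cite that $\lioudelta \hookrightarrow \lioudeltadef$ is a functor (Theorem~\ref{theorem. lioudeltadef is an infinity-cat}).
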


\begin{proof}
Any equivalence of Liouville sectors is rendered homotopy-invertible in $\lioudelta$ (Proposition~\ref{prop. homotopic maps are homotopic}), and hence in $\lioudeltadef$ (Theorem~\ref{theorem. lioudeltadef is an infinity-cat}\eqref{item. lioudelta maps to lioudeltadef}). Further, any morphism which is homotopy-invertible in a finite stage (such as $\lioudeltadef$) of an increasing union of $\infty$-categories  (such as $\lioudeltadefstab$) remains homotopy-invertible in the union. Thus the claim follows by the universal property of localizations.
\end{proof}

\begin{notation}[$\iota, j, \beta$]\label{notation. iota j beta}
By Proposition~\ref{prop. j factors through i}, letting $\iota$ denote the localization map, we have a homotopy-commuting diagram of $\infty$-categories
    \eqn\label{eqn. beta in general}
        \xymatrix{
       \lioustrstab \ar[rr]^j \ar[d]_{\iota}
    		&& \lioudeltadefstab  \\
         \lioustrstab[(\eqs^\dd)^{-1}] \ar[urr]_{\beta} 
        }
    \eqnd
where $\beta$ is a morphism guaranteed by the universal property of localizations.
\end{notation}

Theorem~\ref{theorem. localization} is the claim that $\beta$ is an equivalence. To see $\beta$ is an equivalence, we will exhibit an equivalence $\alpha$ for which $\alpha \circ \beta$ is also an equivalence.

\subsection{Defining  \texorpdfstring{$\alpha$}{alpha}}

\begin{remark}\label{remark. lioudelta to exliou}
Let $\subdiv(I)$ be the barycentric subdivision of $I$ (thought of as a simplicial set). The data of an $I$-simplex of $\lioudelta$ determines a functor
	\eqnn
	\subdiv(I) \to \lioustrstab
	\eqnd
which assigns to a subset $I' \subset I$ the object 
	\eqnn
	M_{I'} \times T^*\Delta^{I'} =  T^*[-1,0]^n \tensor M_{I'} \times T^*\Delta^{I'}
	\cong M_{I'} \times T^*\Delta^{I'} \tensor T^*[-1,0]^n
	\eqnd 
of $\lioustrstab$.

Likewise, the data of an $I$-simplex of $\lioudeltadef$ determines a functor $\subdiv(I) \to \lioustrstab$.
\end{remark}

\begin{remark}
\label{remark. why stabilize by -1}
 
Remark~\ref{remark. lioudelta to exliou} is the reason that we stabilize by $T^*[-1,0]$ instead of $T^*[0,1]$; our collarings $\eta$ were by $[-1,0]$. In turn, we chose to collar by $[-1,0]$ because of the particulars of Construction~\ref{construction. simplex collars}.
If the collarings were given by embeddings of $[a,b]^n$, then we would stabilize by $T^*[a,b]$. 
\end{remark}

\begin{remark}\label{remark. lioudelta is a sub of Ex lioustr}
Remark~\ref{remark. lioudelta to exliou} exhibits maps of semisimplicial sets
	\eqnn
	\lioudelta \to \Ex(\lioustrstab),
	\qquad
	\lioudeltadef \to \Ex(\lioustrstab).
	\eqnd
(By abuse of notation, the codomain is the semisimplicial set obtained by forgetting the degeneracy maps of the simplicial set $\Ex(\lioustrstab)$.) 
In fact, 
we may identify the stabilization $\lioudeltadefstab$ with a semisimplicial subset of $\Ex(\lioustrstab)$, so we obtain a map
	\eqn\label{eqn.  lioudelta is a sub of Ex lioustr}
	\lioudeltadefstab \to \Ex(\lioustrstab)
	\eqnd
of semisimplicial sets. We caution, however, that the image of $\lioudeltadefstab $ is not a simplicial subset. This is because the image inside $\Ex(\lioustrstab)$ is not closed under degeneracy maps.
\end{remark}

\begin{construction}[$\alpha$ and $\alpha_{\semi}$]
\label{construction. alpha}
By post-composing~\eqref{eqn.  lioudelta is a sub of Ex lioustr} with $\Ex(\iota)$, we  obtain a map of semisimplicial sets
	$
	\lioudeltadefstab \to \Ex( \lioustrstab[(\eqs^\dd)^{-1}]).
	$	
Condition~\ref{item. lioudelta is max localizing} guarantees that this map factors through $\Ex_{\simeq}(  \lioustrstab[(\eqs^\dd)^{-1}])$---this is because if $y: M \to N$ is a strict sectorial isomorphism, then any sectorial embedding of the form
	$
	y \tensor T^*\eta: M \tensor T^*[-1,0]^k \to N \tensor T^*\Delta^k
	$
is an equivalence of Liouville sectors (Example~\ref{example. from isotopies to equivalences}). And more generally, any map of the form $\phi_{A' \subset A}$ is also a sectorial equivalence when $\max A' = \max A$. See Remark~\ref{remark. max phi in Liou is equivalence}.

Thus, we have a map of semisimplicial sets
	\eqn\label{eqn. alpha semi}
	\alpha_{\semi}: \lioudeltadefstab \to \Ex_{\simeq}(  \lioustrstab[(\eqs^\dd)^{-1}]).
	\eqnd
On the other hand, the degeneracy maps of the simplicial set $\lioudeltadefstab$ are such that the map $j: \lioustrstab \to \lioudeltadefstab$ is a simplicial set map (Construction~\ref{construction. j from lioustrstab to lioudeltadefstab}). This guarantees that the semisimplicial set map ~\eqref{eqn. alpha semi} sends degenerate edges of the domain to equivalences (in fact, degenerate edges) in the codomain.
So Theorem~\ref{theorem. hiro non strict} guarantees the existence of a map of {\em simplicial} sets
	\eqnn
	\alpha:
	\lioudeltadefstab
	\to
	\Ex_{\simeq}( \lioustrstab[(\eqs^\dd)^{-1}]).
	\eqnd 
There is moreover a homotopy (unique up to contractible choice) from $(\alpha_{\semi})_+$ to $\alpha \circ \epsilon$, as in~\eqref{eqn. h semi to h} for $h = \alpha$.
\end{construction}

\begin{remark}
The map $\alpha_{\semi}$ in~\eqref{eqn. alpha semi} is explicit. By definition, a $k$-simplex in $\lioudeltadefstab$ is a diagram in the shape of a barycentric subdivision of $k$-simplex; a $k$-simplex in the codomain is also some diagram in the shape of a barycentric subdivision. The construction above is nothing more than a simple check that every such $k$-simplex in the domain is an example of a $k$-simplex in the codomain. 

Moreover, every statement about $\alpha$ will be deduced by providing explicit constructions involving $\alpha_{\semi}$. 
\end{remark}

\subsection{\texorpdfstring{$\alpha$}{alpha} as a left inverse}
\label{section. alpha left inverse}
\begin{lemma}\label{lemma. alpha j homotopic to m iota}
The composition
	\eqnn
	\lioustrstab \xrightarrow{j}
	\lioudeltadefstab
	\xrightarrow{\alpha}
	\Ex_{\simeq}( \lioustrstab[(\eqs^\dd)^{-1}])
	\eqnd
is homotopic to the composition
	\eqnn
	\lioustrstab \xrightarrow{\iota}
	 \lioustrstab[(\eqs^\dd)^{-1}]
	\xrightarrow{m}
	\Ex_{\simeq}( \lioustrstab[(\eqs^\dd)^{-1}]).
	\eqnd
\end{lemma}

Assuming Lemma~\ref{lemma. alpha j homotopic to m iota} for the moment, we have the following consequence:

\begin{lemma}\label{lemma. alpha-beta equivalence}
$\alpha \circ \beta$ is an equivalence of $\infty$-categories.
\end{lemma}

\begin{proof}
We have a diagram in the $\infty$-category of $\infty$-categories as follows:
	\eqnn
    \xymatrix{
    \lioustrstab \ar[d]^{\iota} \ar[r]^-{j}
		& \lioudeltadefstab \ar[d]^{\alpha} 
		\\
     \lioustrstab[(\eqs^\dd)^{-1}] \ar[r]^-{\alpha\circ\beta} \ar[ur]^{\beta}
    	& \Ex_{\simeq}( \lioustrstab[(\eqs^\dd)^{-1}]).
    }
	\eqnd
The lower-right triangle commutes on the nose as a diagram of simplicial sets, while the upper-left triangle (Notation~\ref{notation. iota j beta}) commutes up to homotopy specified by the universal property of localization.

In what follows, we write $\sim$ to mean ``is homotopic to:''
	\eqnn
	m \circ \iota
	\sim \alpha \circ j 
	\sim \alpha \circ \beta \circ \iota \nonumber.
	\eqnd
The first homotopy is by Lemma~\ref{lemma. alpha j homotopic to m iota}. The next homotopy follows because we know that $j$ is homotopic to $\beta \circ \iota$ by definition (Notation~\ref{notation. iota j beta}). 
By the universal property of localizations, we see that $\alpha \circ \beta$ is homotopic to $m$. We conclude by noting that $m$ is an equivalence (Proposition~\ref{prop. Exsim(C) is C}).
\end{proof}

The rest of Section~\ref{section. alpha left inverse} is devoted to proving Lemma~\ref{lemma. alpha j homotopic to m iota}.
As mentioned in Remark~\ref{remark. alpha semi is easier to work with}, the map $\alpha$ is rather inexplicit, while $\alpha_{\semi}$ has a known simplex-by-simplex description.
So with the goal of utilizing Lemma~\ref{lemma. making a homotopy}, we construct a map
	\eqnn
	G_f: \Delta^1 \times \Delta^k \to \Ex_{\simeq}( \lioustrstab[(\eqs^\dd)^{-1}])
	\eqnd
for every $k$-simplex $f : \Delta^k \to \lioustr^{\dd}$, satisfying the hypotheses of Lemma~\ref{lemma. making a homotopy}.

\begin{notation}[$A$]
\label{notation. A for Delta 1 times Delta k}
Recall that a top-dimensional simplex of $\Delta^1 \times \Delta^k$ is uniquely determined by any of the following equivalent data:
	\enum
	\item A length $k+1$ chain in the product poset $[k] \times [1]$. By noting that any such chain must be of the form 
		\eqn\label{eqn. top simplex in a cylinder}
		(0,0) < (0,1) < \ldots < (0,j) < (1,j) < (1,j+1) < \ldots < (1,k)
		\eqnd
	we see that the data of this chain is in turn equivalent to:
	\item A choice of element $j \in [k]$. In turn, a choice of $j$ determines:
	\item A choice of convex subset $A \subset [k]$ for which $\min A = 0$. Here, convexity means if $p,q \in A$ then $[p,q] \subset A$. Any such $A$ is of the form $A = \{i \, | \, 0 \leq i \leq j\}$ for some $j \in [n]$.
	\enumd
\end{notation}

\begin{construction}[$G_f$]
\label{construction. Gf for alpha left inverse}
Fix a $k$-simplex $f$ in $\lioustr^{\dd}$ -- this fixes, up to stabilization by finitely many dimensions, a choice of Liouville sector 
	\eqnn
	X_i
	\eqnd
for each $i \in [k$],  and strict sectorial embeddings 
	\eqnn
	f_{i,j} : X_i \to X_j
	\eqnd
for each $i < j$, satisfying $f_{j,h}f_{i,j}=f_{i,h}$. (In the case $i=j$, we have $f_{ii} = \id_{X_i}$.) We fix also a top-dimensional simplex $\Delta^{k+1} \xrightarrow{A} \Delta^1 \times \Delta^k$. Note that $A$ determines the surjection 
	\eqnn
	s_j: [k+1] \to [k]
	\eqnd
via the composition $[k+1] \xrightarrow{A} [1] \times [k] \to [k]$ (see Notation~\ref{notation. A for Delta 1 times Delta k}).

Given the data of $(A,f)$, we associate:
\begin{itemize}
	\item To every non-empty subset $I \subset [k+1]$, the (stabilized class of the) Liouville sector
		\eqnn
		X_{\max s_j(I)} \times T^*\Delta^{I/A}
		\eqnd
	where $I/A$ refers to the linear poset obtained from $I$ by declaring any two elements in $I \cap A$ to be equivalent. (There is an induced poset structure on this quotient set by virtue of $A$ being convex.) The simplex $\Delta^{I/A}$ is defined for any linear poset $I/A$ as in~\eqref{eqn. simplex}
.
	\item To every inclusion $I \subset J \subset [k+1]$ with $I$ non-empty, the strict sectorial embedding
		\eqn\label{eqn. Gf tricky bit}
		f_{\max s_j(I), \max s_j(J)} \times T^*\eta
		: 
		X_{\max s_j(I)}  \times T^*\Delta^{I/A} \times T^*[-1,0]^{ (J/A) \setminus (I/A) }
		\to 
		X_{ \max s_j(J)} \times T^*\Delta^{J/A}.
		\eqnd
	Here, $\eta$ is the collaring associated to the inclusion $I/A \into J/A$ (see Definition~\ref{defn. collaring convention} and Choice~\ref{choice. collaring}).
	\end{itemize}
We leave it to the reader to verify that for every $(A,f)$, the above data define a map 
	\eqnn
	G_f|_A: \subdiv(\Delta^{k+1}) \to \lioustrstab
	\eqnd
and hence a map to $ \lioustrstab[(\eqs^\dd)^{-1}]$. It is somewhat abusive to write $G_f|_A$ at this stage as we have not defined $G_f$ yet; but we justify this notation now: One straightforwardly  checks that, by fixing $f$ and varying $A$, the $G_f|_A$ glue together to define a single map
	\eqnn
	\colim_A (G_f|_A):  \colim_{A} \Delta^{k+1} \cong \Delta^1 \times \Delta^k \to \Ex_{\simeq}( \lioustrstab[(\eqs^\dd)^{-1}]).
	\eqnd
And we define
	\eqnn
	G_f := \colim_A (G_f|_A).
	\eqnd
\end{construction}

\begin{example}
In the case $k=0$ (so $f$ is just a choice of object $X$ in $\lioustrstab$), $G_f$ is the map
	\eqnn
	\Delta^1 \times \Delta^0 \to \Ex_{\simeq}( \lioustrstab[(\eqs^\dd)^{-1}])
	\eqnd
encoded by a diagram $\subdiv(\Delta^1) \to \lioustrstab$ we depict as follows:
	\eqnn
	\xymatrix{
	X \times T^*[-1,0]^{ \{(0,0)\}} \ar[rr]^-{\id_X \times T^*\eta} 
	&& X \times T^*\Delta^{ \{(0,0), (1,0) \}} \cong X \times T^*\Delta^1
	&& X \times T^*[-1,0]^{ \{(1,0)\}} \ar[ll]_-{\id_X \times T^*\eta}
	}
	\eqnd
For sake of space, and because we are in the stabilization $\lioustrstab$, we will simplify the above diagram and not explicitly encode the relevant posets or stabilizing intervals, and also not indicating the identity components of $\id_X \times T^*\eta$:
	\eqnn
	\xymatrix{
	X   \ar[rr]^-{T^*\eta} 
	&&  X \times T^*\Delta^1
	&& X  \ar[ll]_-{T^*\eta}
	}
	\eqnd
\end{example}

\begin{example}
In the case $k=1$ (so $f$ is a choice of strict sectorial embedding $f_{01}: X_0 \to X_1$ up to stabilization), $G_f$ is a map
	\eqnn
	\Delta^1 \times \Delta^1 \to \Ex_{\simeq}( \lioustrstab[(\eqs^\dd)^{-1}])
	\eqnd
encoded by a diagram $\subdiv(\Delta^2) \bigcup \subdiv(\Delta^2)\to \lioustrstab$ whose two halves we informally depict as
	\eqnn
	\xymatrix{
	X_0   \ar[r]^-{T^*\eta}  \ar[d]_{f_{01} \times T^*\eta}
	&  X_0 \times T^*\Delta^1 \ar[d]_{f_{01} \times T^*\eta}
	&&& X_0   \ar[lll]_{T^*\eta} \ar[dll]^-{f_{01} \times T^*\eta} \\
	X_1 \times T^*\Delta^1 \ar[r]_{T^*\eta} 
	& X_1 \times T^*\Delta^2 
	& X_1 \times T^*\Delta^1 \ar[l]_{T^*\eta}\\
	X_1 \ar[u]^{T^*\eta}   \ar[urr]_{T^*\eta} 
	}
	\eqnd
and
	\eqnn
	\xymatrix{
	&&&& X_0 \ar[dll]_{f_{01} \times T^*\eta}  \ar[d]^{f_{01}}\\
	&& X_1 \times T^*\Delta^1 \ar[r]_-{\cong} 
	& X_1 \times T^*\Delta^1
	& X_1 \ar[l]_-{T^*\eta}\\
	X_1 \ar[urr]^-{T^*\eta}  \ar[rrr]_{T^*\eta}
	&&& X_1 \times T^*\Delta^1 \ar[u]^-{\cong}
	& X_1 \ar[u]_-{\cong} \ar[l]^-{T^*\eta}
	}
	\eqnd
We note that the arrows labeled by $\cong$ really are isomorphisms in $\lioustrstab$; this is because in~\eqref{eqn. Gf tricky bit}, it may be that $I \subset_{\neq} J$ but that $I/A \cong J/A$. 
\end{example}

\begin{proof}[Proof of Lemma~\ref{lemma. alpha j homotopic to m iota}.]
We will apply Lemma~\ref{lemma. making a homotopy}. In terms of the notation there, we will set
	\eqnn
	W = \lioustrstab,
	\qquad
	X = \lioudeltadefstab,
	\qquad
	Y = \Ex_{\simeq}(\lioustrstab[(\eqs^{\dd})^{-1}]),
	\qquad
	i = m \circ \iota,
	\qquad
	h_{\semi} = \alpha_{\semi}
	\eqnd
(and the map $j$ in the Lemma is the map $j$ we are considering from Contruction~\ref{construction. j from lioustrstab to lioudeltadefstab}).
We must verify that the collection $\{G_f\}_{f : \Delta^k \to \lioustrstab, k \geq 0}$ satisfies the hypotheses of Lemma~\ref{lemma. making a homotopy}. To this end, fix $f: \Delta^k \to \lioustrstab$.

We first check condition~\eqref{item. Gf homotopes what we want} of Lemma~\ref{lemma. making a homotopy}. 
The $k$-simplex $\Delta^{\{0\}}\times\Delta^k \subset \Delta^1 \times \Delta^k$ is the $(k+1)$st face of the simplex determined by $A=[k]$. Construction~\ref{construction. Gf for alpha left inverse} assigns the $k$-simplex
	\eqn\label{eqn. Gf check on 0th vertex}
	G_f|_{\Delta^{\{0\}} \times \Delta^k} : \Delta^k \cong \Delta^{\{0\}} \times \Delta^k \to \Ex_{\simeq}(\lioustrstab[(\eqs^{\dd})^{-1}])
	\eqnd
given by a map
	\eqnn
	\subdiv(\Delta^k) \to \lioustrstab[(\eqs^{\dd})^{-1}]
	\eqnd
described as follows:
	\begin{itemize}
	\item To every $\emptyset \neq I \subset [k]$, the (stabilized class of the) Liouville sector $X_{\max I}$.
	\item To every $\emptyset \neq I \subset J \subset [k]$, the  (stabilized class of the) strict sectorial embedding $f_{\max I, \max J} : X_{\max I} \to X_{\max J}$.  Note that when $\max I = \max J$, this is simply the identity morphism.
	\end{itemize}
Thus the map~\eqref{eqn. Gf check on 0th vertex} is precisely $m \circ \iota (f)$. 

Let us next understand the map
	\eqn\label{eqn. Gf check on 1st vertex}
	G_f|_{\Delta^{\{1\}} \times \Delta^k} : \Delta^k \cong \Delta^{\{1\}} \times \Delta^k \to \Ex_{\simeq}(\lioustrstab[(\eqs^{\dd})^{-1}])
	\eqnd
(which corresponds to the choice $A =[0]$). For convenience, we will identify any subset $I \subset \{1,\ldots,k+1\}$ with a subset $I' \subset [k]$ by sending $i \mapsto i - 1$. Then~\eqref{eqn. Gf check on 1st vertex} is a diagram in the shape of a barycentric subdivision one may describe as follows:
	\begin{itemize}
	\item To every $\emptyset \neq I \subset \{1,\ldots,k+1\}$, the (stabilized class of the) Liouville sector $X_{\max I'} \times \Delta^{I'}$.
	\item To every $\emptyset \neq I \subset J \subset \{1,\ldots,k+1\}$, the  (stabilized class of the) strict sectorial embedding 
		\eqnn
		f_{\max I', \max J'} \times T^*\eta : X_{\max I'} \times T^*\Delta^{I'} \times T^*[-1,0]^{J' \setminus I'} \to X_{\max J'} \times T^*{\Delta^{J'}}.
		\eqnd
	\end{itemize}
Thus the map~\eqref{eqn. Gf check on 1st vertex} is precisely the barycentric subdivision diagram encoded by $j(f)$ (see Constructions~\ref{construction. j from lioustr to lioudelta} and~\ref{construction. j from lioustrstab to lioudeltadefstab}). So $G_f$, restricted to $\Delta^{\{1\}} \times \Delta^k$, recovers $\alpha_{\semi} \circ j(f)$. 

We have finished checking condition~\eqref{item. Gf homotopes what we want} of Lemma~\ref{lemma. making a homotopy}.

We now check condition~\eqref{item. Gf respects face maps}. Fix $0 \leq i \leq k$.  Note that the top-dimensional simplices of $\Delta^1 \times \del_i \Delta^k$ are indexed by the subsets
	\eqnn
	\del_i A := A \setminus \{i\} = A \cap ([k] \setminus \{i\}),
	\qquad
	i \neq \max A.
	\eqnd
Thus, to check the condition $G_f|_{\Delta^1 \times \del_i \Delta^k} = G_{d_i f}$, it suffices to check---for every $A = \{0,1,\ldots,\max A\} \subset [k]$ with $\max A \neq i$---that
	\eqn\label{eqn. checking Gf respects faces}
	(\del_{A,i})^* G_{f,A} = G_{\del_i f, \del_i A}. 
	\eqnd
Here, $\del_{A,i}$ is the map fitting into the following commutative diagram:
	\eqnn
	\xymatrix{
	\Delta^{k+1} \ar[r]^-A & \Delta^1 \times \Delta^k \\
	\Delta^k \ar[u]_{\del_{A,i}} \ar[r]^-{\del_i A} & \Delta^1 \times \del_i \Delta^k \ar[u].
	}
	\eqnd
Parsing the definitions, we see that the lefthand side of~\eqref{eqn. checking Gf respects faces} is a map $\subdiv(\Delta^k) \to \lioustrstab$ that assigns, to every subset $\emptyset \neq I \subset [k]$, the object
	\eqnn
	X_{\max( s_{\max \del_i A}(I))} \times T^*\Delta^{I/\del_i A}.
	\eqnd
The righthand side of~\eqref{eqn. checking Gf respects faces} assigns to each $I$ the exact same object. A similar check of morphisms associated to $I \subset J$ shows that the barycentric subdivision diagrams defined by both sides of~\eqref{eqn. checking Gf respects faces} are identical; this finishes the check of condition~\eqref{item. Gf respects face maps}, so we conclude by applying Lemma~\ref{lemma. making a homotopy}.
\end{proof}

\subsection{\texorpdfstring{$\alpha$}{alpha} as a right inverse}
\label{section. alpha is right inverse}
The purpose of this section is to prove the following:

\begin{lemma}\label{lemma. main lemma for Ex}
The map 
	\eqnn
	\Ex_{\simeq}(\beta) \circ \alpha: \lioudeltadefstab \to \Ex_{\simeq}(\lioudeltadefstab)
	\eqnd
is homotopic to  the map $m: \lioudeltadefstab \to \Ex_{\simeq}(\lioudeltadefstab)$ from Proposition~\ref{prop. Exsim(C) is C}.
In particular, $\Ex_{\simeq}(\beta) \circ \alpha$ is an equivalence.
\end{lemma}

The proof requires some set-up.

\begin{notation}[$\eta$]
\label{notation. eta}
For this section only, we will denote the natural transformation $\Ex_{\simeq} \into \Ex$ by $\eta$. (This is a natural transformation between two functors from the category of simplicial sets to itself.)
\end{notation}

\begin{notation}[$M$ and $M_+$]
Because every simplex in $\lioudeltadefstab$ encodes a diagram in $\lioustrstab$ in the shape of a barycentric subdivision, we easily obtained the map~\eqref{eqn.  lioudelta is a sub of Ex lioustr} of semisimplicial sets. Post-composing with $\Ex(j)$ (Constructin~\ref{construction. j from lioustrstab to lioudeltadefstab}), we obtain a map from $\lioudeltadefstab$ to $\Ex(\lioudeltadefstab)$. This composition factors through $\eta$, and we notate the factoring morphism by $M$ to obtain a commuting rectangle of semisimplicial sets as follows:
    \eqnn
	\xymatrix{
	\lioudeltadefstab \ar[rrrr]^{M} 
	\ar[d]^{\eqref{eqn.  lioudelta is a sub of Ex lioustr}} 
	&&&& \Ex_{\simeq}(\lioudeltadefstab) \ar[d]_{\eta} \\
	 \Ex(\lioustrstab) \ar[rrrr]^{\Ex(j)}
		&&&& \Ex(\lioudeltadefstab)
	}
    \eqnd
Let us justify the factorization through $\Ex_{\simeq}(\lioudeltadefstab)$. By construction, the map \eqref{eqn.  lioudelta is a sub of Ex lioustr} hits only those simplices in $\Ex(\lioustrstab)$ whose max-localizing edges are sectorial equivalences (Condition~\ref{item. lioudelta is max localizing}). And---applying $j$---such edges are homotopy-invertible in $\lioudeltadef$ (combine Proposition~\ref{prop. homotopic maps are homotopic}\eqref{item. equivalences are equivalences} with Theorem~\ref{theorem. lioudelta is lioudeltadef}). 

Finally, while $M$ and \eqref{eqn.  lioudelta is a sub of Ex lioustr} do not respect degeneracy maps, by adjunction they induce maps of simplicial sets with domain $\lioudeltadefstab_+$. Calling these maps $M_+$ and $\eqref{eqn.  lioudelta is a sub of Ex lioustr}_+$ respectively, we obtain a commutative diagram of simplicial sets:
    \eqn\label{eqn. defining M eta}
	\xymatrix{
	\lioudeltadefstab_+ \ar[rrrr]^{M_+} 
	\ar[d]^{\eqref{eqn.  lioudelta is a sub of Ex lioustr}_+} 
	&&&& \Ex_{\simeq}(\lioudeltadefstab) \ar[d]_{\eta} \\
	 \Ex(\lioustrstab) \ar[rrrr]^{\Ex(j)}
		&&&& \Ex(\lioudeltadefstab)
	}
    \eqnd
\end{notation}

We now have the notation to state the main combinatorial lemma.

\begin{lemma}\label{lemma. M is homotopic to m}
The composition 
	\eqnn
	\xymatrix{
	\lioudeltadefstab_+
	\ar[r]^-\epsilon
	& \lioudeltadefstab \ar[r]^-{m}
	& \Ex_{\simeq}(\lioudeltadefstab )
	}
	\eqnd
is homotopic to $M_+$. That is, there exists an edge in
	\eqnn
	\map^{\sharp}( (\lioudeltadefstab_+,\cE), \Ex_{\simeq}(\lioudeltadefstab))
	\eqnd
from $m \circ \epsilon$ to $M_+$.
\end{lemma}

\begin{proof}[Proof of Lemma~\ref{lemma. main lemma for Ex}, assuming Lemma~\ref{lemma. M is homotopic to m}.]
Consider the following diagram of simplicial sets.
	\eqnn
	\xymatrix{
	 \lioudeltadefstab_+ \ar[rrrr]^{M_+} 
	\ar[d]^{\eqref{eqn.  lioudelta is a sub of Ex lioustr}_+} 
	\ar@/_70pt/[ddd]_{(\alpha_{\semi})_+} 
	&&&& \Ex_{\simeq}(\lioudeltadefstab) \ar[d]_{\eta} \\
	 \Ex(\lioustrstab) \ar[rrrr]^{\Ex(j)} \ar[d]^{\Ex(\iota)}
		&&&& \Ex(\lioudeltadefstab)	\\
	 \Ex(\lioustrstab[(\eqs^{\dd})^{-1}] \ar[urrrr]_{\Ex(\beta)} \\
	\Ex_{\simeq}(\lioustrstab[(\eqs^{\dd})^{-1}]) \ar[rrrr]_{\Ex_{\simeq}(\beta)} \ar[u]^{\eta}
	&&&& \Ex_{\simeq}(\lioudeltadefstab) \ar[uu]^{\eta}
	}
	\eqnd
There are four cells in the diagram, all of which commute on the nose except for the triangle.
\begin{itemize}
\item The homotopy-coherent triangle in the middle is obtained by applying $\Ex$ to the diagram~\eqref{eqn. beta in general} -- note that Proposition~\ref{prop. Ex is homotopical} shows us that $\Ex$ transports homotopy-coherent diagrams to homotopy-coherent diagrams. Thus there is a homotopy of maps of simplicial sets
	\eqnn
	\Ex(\beta) \circ \Ex(\iota) \circ \eqref{eqn.  lioudelta is a sub of Ex lioustr}_+
	\sim 
	\Ex(j) \circ \eqref{eqn.  lioudelta is a sub of Ex lioustr}_+.
	\eqnd
\item On the other hand, $(\alpha_{\semi})_+$ factors $\Ex(\iota) \circ \eqref{eqn.  lioudelta is a sub of Ex lioustr}_+$ (as indicated by the curved diagram on the left) so the above homotopy restricts to a homotopy of maps of simplicial sets
	\eqnn
	\Ex(\beta) \circ \eta \circ (\alpha_{\semi})_+
		\sim 
	\Ex(j) \circ \eqref{eqn.  lioudelta is a sub of Ex lioustr}_+.
	\eqnd
\item The top rectangle is~\eqref{eqn. defining M eta}.

\item The bottom trapezoid is the naturality of $\eta$ (Notation~\ref{notation. eta}).

By using both the rectangle and the trapezoid, the above homotopy becomes
	\eqn\label{eqn. post eta homotopy}
	\eta \circ \Ex_{\simeq}(\beta) \circ (\alpha_{\semi})_+
	\sim
	\eta \circ M_+.
	\eqnd
\end{itemize}
Now, the homotopy encoded in the triangle~\eqref{eqn. beta in general} is a homotopy of functors between $\infty$-categories; in particular, the induced homotopy~\eqref{eqn. post eta homotopy} is a map $\lioudeltadefstab_+ \times \Delta^1 \to \Ex(\lioudeltadefstab)$ whose image lies entirely in $\Ex_{\simeq}(\lioudeltadefstab)$. So~\eqref{eqn. post eta homotopy} restricts to a homotopy
	\eqnn
	\Ex_{\simeq}(\beta) \circ (\alpha_{\semi})_+
	\sim
	M_+
	\eqnd
respecting the marked edges of $(\lioudeltadefstab_+,\cE)$.
By Lemma~\ref{lemma. M is homotopic to m}, $M_+$ is homotopic to $m \circ \epsilon$. Combining with the homotopy $(\alpha_{\semi})_+ \sim \alpha \circ \epsilon$ (Construction~\ref{construction. alpha}), we conclude
	\eqnn
	\Ex_{\simeq}(\beta) \circ \alpha \circ \epsilon 
	\sim
	m \circ \epsilon.
	\eqnd
Because precomposition with $\epsilon$ induces a homotopy equivalence of mapping spaces (Remark~\ref{remark. epsilon is cartesian equivalence}), we conclude that $\Ex_{\simeq}(\beta) \circ \alpha$ is homotopic to $m$, as desired.
\end{proof}

\subsection{Proof of Lemma~\ref{lemma. M is homotopic to m}}
\label{section. proof of M to m}

One may establish Lemma~\ref{lemma. M is homotopic to m} via the strategy used to prove Lemma~\ref{lemma. alpha j homotopic to m iota}. Because the techniques are similar, we will simply exposit an understanding of the maps $M$ and $m$ and outline the construction; what we do not spell out in detail is the construction of the maps $\GG_{f, k, \vec A}^\vee$ in~\eqref{eqn. G fka check}.

\begin{example}[A description of $M$]
Fix an $l$-simplex $f: \Delta^l \to \lioudeltadefstab$. Such a simplex is equivalent to the data of a diagram in the shape of $\subdiv(\Delta^l)$ which assigns
\begin{itemize}
	\item To any non-empty subset $A \subset [l]$, a Liouville sector $X_A \times T^*\Delta^A$, and
	\item To any inclusion $A \subset B$, a morphism
		\eqnn
		f_{A \subset B} \tensor T^*\eta_{A \subset B} : X_A \times T^*\Delta^A \tensor T^*[-1,0]^{B \setminus A} \to X_B \times T^*\Delta^B.
		\eqnd
\end{itemize}

Then, unwinding the definitions, $M(f): \Delta^l \to \Ex_{\simeq}(\lioudeltadefstab)$ encodes a diagram
	\eqnn
	\subdiv(\subdiv(\Delta^l)) \to \lioustrstab
	\eqnd
which assigns the following. (See Example~\ref{example. subdivision of subdivision} for the notation.)
\begin{itemize}
	\item To every pair $(k, A_0 \subset \ldots \subset A_k)$ where $k \geq 0$ and $A_k \subset [l]$, the sector
		\eqnn
		(X_{A_k} \times T^*\Delta^{A_k} ) \tensor T^*\Delta^k
		\eqnd
	where the sectorial structure in the parentheses is that specified by $f$. Indeed, the object above can be identified as what $j$ assigns to the barycenter of the simplex in $\subdiv(\Delta^l)$ determined by the chain $A_0 \subset \ldots \subset A_k$.
	\item To every inclusion $\delta: [k] \into [k']$ exhibiting $A_0 \subset \ldots \subset A_k$ as a subchain of $A'_{0} \subset \ldots \subset A'_{k'}$, the morphism
		\eqnn
		f_{A_k \subset A'_{k'}} \tensor T^*\eta_{A_k \subset A'_{k'}} \tensor T^*(\delta \times \eta_{\delta}).
		\eqnd
	Concretely, this is the sectorial inclusion
		\eqnn
		X_{A_k} \times T^*\Delta^{A_k} \tensor T^*[-1,0]^{A'_{k'} \setminus A_k} \tensor T^*(\Delta^k \times [-1,0]^{[k'] \setminus \delta([k])})
		\to
		(X_{A'_{k'}} \times T^*\Delta^{A'_{k'}} ) \tensor T^*\Delta^{k'}.
		\eqnd
\end{itemize}
Note that $M$ requires no knowledge of the degeneracy maps of $\lioudeltadefstab$.
\end{example}

We now move onto understanding $m$. 
Let us first set some notation that will help us describe $m: \cC \to \Ex(\cC)$ for a general simplicial set $m$. 

\begin{notation}[$\sigma_{\vec A}$ and $\delta_{\vec A}$]
\label{notation. sigma delta A}
Fix an $l$-simplex $f: \Delta^l \to \cC$. The data of $m(f): \Delta^l \to \Ex(\cC)$ is equivalent to a map
	\eqnn
	\subdiv(\Delta^l) \to \cC.
	\eqnd
We will understand this map by understanding what it does on every $k$-simplex $\Delta^k \to \subdiv(\Delta^l)$.

First, note that a $k$-simplex in $\subdiv(\Delta^l)$ is the data of a chain
	\eqnn
	\vec A = A_0 \subset \ldots \subset A_k \subset [l].
	\eqnd
Define the map
	\eqnn
	\max \vec A : [k] \to [l],
	\qquad
	i \mapsto \max A_i.
	\eqnd
This is the composition
	\eqnn
	[k] \xrightarrow{\vec A} \cP'([l]) \xrightarrow{\max} [l].
	\eqnd
Then $\max \vec A$ factors uniquely as a surjection $\sigma$ followed by an injection $\delta$:
	\eqnn
	[k] \xrightarrow{\sigma_{\vec A}}
	[l'] \cong \Image(\max \vec A)
	\xrightarrow{\delta_{\vec A}}
	[l].
	\eqnd
(The subscript $\vec A$ is to remind us of the dependence on $\vec A$.) Then the composite
	\eqnn
	\Delta^k \to \subdiv(\Delta^l) \to \cC
	\eqnd
is the simplex
	\eqnn
	(\sigma_{\vec A})^*(\delta_{\vec A})^* f
	\eqnd
(where the pullback along $\sigma$ and $\delta$ are defined using the simplicial set structure of $\cC$).
\end{notation}

\begin{example}[A description of $m$]
Now we explicate $m: \lioudeltadefstab \to \Ex_{\simeq}(\lioudeltadefstab)$. Fix an $l$-simplex $f: \Delta^l \to \lioudeltadefstab$. 

As indicated in Notation~\ref{notation. sigma delta A}, it is hard to give an explicit definition of $m(f)$ when $\max \vec A$ is not an injection, for the degeneracy maps of $\lioudeltadefstab$ are, in general, not concretely given (and instead are known to exist by abstract results). We only know the degeneracy maps of a constant barycentric diagram (Definition~\ref{defn. constant simplices in lioudelta})---these degenerate simplices are also constant by Theorem~\ref{theorem. lioudeltadef is an infinity-cat}\eqref{item. lioudelta maps to lioudeltadef}. So let us do the best we can and describe those $k$-simplices in $\subdiv(\Delta^l)$ defining $m(f)$ that do not involve the abstractly defined degeneracies:
\begin{itemize}
	\item We note that $\delta_{\vec A}^* f$ is a constant barycentric diagram when the restriction of $f: \Delta^l \to \lioudeltadefstab$ to the image of $\max \vec A$ is constant. In this case, the simplex $(\sigma_{\vec A})^* (\delta_{\vec A})^* f$ is also constant. In particular, the map $\Delta^k \to \subdiv(\Delta^l) \to \lioudeltadefstab$ is a diagram that is encoded by a single sector $X$. The diagram $\Delta^k \to \lioudeltadefstab$ sends any $\emptyset \neq A \subset [k]$ to the sector $X \tensor T^*\Delta^A$ and all $A \subset B$ to morphisms of the form $\id \tensor T^*\eta_{A \subset B}$.
	\item The other case we can describe is when $\delta_{\vec A}$ is a bijection. This is the case when $\max \vec A: [k] \to [l]$ is an injection---i.e., when the chain $A_0 \subset \ldots \subset A_k$ satisfies the property that $\max A_i$ is strictly less than $\max A_{i+1}$. In this case, the chain $\max A_0 < \max A_1 < \ldots < \max A_k$ defines a subsimplex of $\Delta^l$, and $(\sigma_{\vec A})^* (\delta_{\vec A})^* f$ is simply the restriction of $f$ to this subsimplex.
\end{itemize} 
\end{example}

\begin{example}[$M$ and $m$ on a 1-simplex]
Fix a 1-simplex $f: \Delta^1 \to \lioudeltadefstab$. This is the data of a diagram of (stabilized) Liouville sectors as follows:
	\eqnn
	\xymatrix{
	X_0 \ar[rr]^-{f_{0 \subset 01} \times T^*\eta}
	&& X_{01} \times T^*\Delta^1
	&& X_1 \ar[ll]_-{f_{1 \subset 01} \times T^*\eta}.
	}
	\eqnd
Then $M(f)$ is a 1-simplex in $\Ex_{\simeq}(\lioudeltadefstab)$ that we obtain by applying the map $j$ (Construction~\ref{construction. j from lioustrstab to lioudeltadefstab}) to each edge in the above diagram. The result is a diagram of the shape $\subdiv(\subdiv(\Delta^1))$ in $\lioustrstab$, which we draw as follows:
	\eqnn
	\resizebox{\displaywidth}{!}{
	\xymatrix{
	X_0 \ar[rrr]^-{(f_{0 \subset 01} \times T^*\eta) \tensor T^*\eta}
	&&& (X_{01} \times T^*\Delta^1) \tensor T^*\Delta^1
	&& \ar@{-->}[ll]_-{\id \tensor T^*\eta} X_{01} \times T^*\Delta^1
	  \ar@{-->}[rr]^-{\id \tensor T^*\eta}
	&& (X_{01} \times T^*\Delta^1) \tensor T^*\Delta^1
	&&& X_1 \ar[lll]_-{(f_{1 \subset 01} \times T^*\eta) \tensor T^*\eta}.
	}
	}
	\eqnd
The dashes are to conform to the notation in Figure~\ref{figure. M on 2 simplex}.

On the other hand, $m(f)$ can be drawn as follows:
	\eqnn
	\resizebox{\displaywidth}{!}{
	\xymatrix{
	X_0 \ar[rrr]^-{f_{0 \subset 01} \times T^*\eta}
	&&& X_{01} \times T^*\Delta^1
	&& 	\ar[ll]_-{f_{1 \subset 01} \times T^*\eta} 
		X_1
	  	\ar@{-->}[rr]^-{\id \tensor T^*\eta}
	&& X_1 \times T^*\Delta^1 
	&&& X_1 \ar@{-->}[lll]_-{\id \tensor T^*\eta}.
	}
	}
	\eqnd
\end{example}

\begin{example}[$M$ and $m$ on a 2-simplex]
Fix a 2-simplex $f: \Delta^2 \to \lioudeltadefstab$. Both $M(f)$ and $m(f)$ define a diagram $\subdiv(\subdiv(\Delta^2) \to \lioustrstab$, which we depict in Figure~\ref{figure. M on 2 simplex}.
\end{example}

\clearpage

\begin{figure}[ht]
	
	\eqnn
	\resizebox{\displaywidth}{!}{
	\xymatrix{
	&&&&
	&&&&
	&&&&
	&&&&	X_2 \ar[ddddrrrr] \ar[ddddllll] \ar[dddd]
	\\ \\ \\ \\ 
	&&&&
	&&&&
	&&&&	X_{02} \times T^*\Delta^1 \tensor T^*\Delta^1
	&&&&	X_{012} \times T^*\Delta^2 \tensor T^*\Delta^1
	&&&&	X_{12} \times T^*\Delta^1 \tensor T^*\Delta^1
	\\ \\ 
	&&&&
	&&&&
	&&&&	
	&& X_{012} \times T^*\Delta^2 \tensor T^*\Delta^2 &&	
	&& X_{012} \times T^*\Delta^2 \tensor T^*\Delta^2 &&	
	\\ \\ 
	&&&&
	&&&&	X_{02} \times T^*\Delta^1 \ar@{-->}[uuuurrrr]\ar@{-->}[ddddllll] \ar[rrrr]
	&&&&	X_{012} \times T^*\Delta^2 \tensor T^*\Delta^1
	&&&&	X_{012} \times T^*\Delta^2 
		\ar@{-->}[uuuu] \ar@{-->}[dddd] \ar@{-->}[rrrr] \ar@{-->}[llll] \ar@{-->}[ddddllll] \ar@{-->}[ddddrrrr]
	&&&&	X_{012} \times T^*\Delta^2 \tensor T^*\Delta^1
	&&&&	X_{12} \times T^*\Delta^1  \ar@{-->}[uuuullll]\ar@{-->}[ddddrrrr] \ar[llll]
	\\ \\ \\ \\ 
	&&&&	X_{02} \times T^*\Delta^1 \tensor T^*\Delta^1
	&&&&	X_{012} \times T^*\Delta^2 \tensor T^*\Delta^2
	&&&&	X_{012} \times T^*\Delta^2 \tensor T^*\Delta^1
	&&&&	X_{012} \times T^*\Delta^2 \tensor T^*\Delta^1
	&&&&	X_{012} \times T^*\Delta^2 \tensor T^*\Delta^1
	&&&&	X_{012} \times T^*\Delta^2 \tensor T^*\Delta^2
	&&&&	X_{12} \times T^*\Delta^1 \tensor T^*\Delta^1
	\\ \\ 
	&&&& 
	&&&& 
	&&&& 
	&& X_{012} \times T^*\Delta^2 \tensor T^*\Delta^2 
	&& 
	&& X_{012} \times T^*\Delta^2 \tensor T^*\Delta^2 
	\\ \\ 
			X_0 \ar[rrrruuuu] \ar[rrrrrrrr] \ar[rrrrrrrrrrrruuuu]
	&&&&
	&&&&	X_{01} \times T^*\Delta^1 \tensor T^*\Delta^1
	&&&&
	&&&&	X_{01} \times T^*\Delta^1 \ar@{-->}[rrrrrrrr] \ar@{-->}[llllllll] \ar[uuuu]
	&&&&
	&&&&	X_{01} \times T^*\Delta^1 \tensor T^*\Delta^1
	&&&&
	&&&&	X_1 \ar[lllluuuu] \ar[llllllll] \ar[uuuullllllllllll]
	}
	}
	\eqnd

	\eqnn
	\resizebox{\displaywidth}{!}{
	\xymatrix{
	&&&&
	&&&&
	&&&&
	&&&&	X_2 \ar@{-->}[ddddrrrr] \ar@{-->}[ddddllll] \ar@{-->}[dddd]
	\\ \\ \\ \\ 
	&&&&
	&&&&
	&&&&	X_2 \times T^*\Delta^1
	&&&&	X_2 \times T^*\Delta^1
	&&&&	X_2 \times T^*\Delta^1
	\\ \\ 
	&&&&
	&&&&
	&&&&	
	&& X_2 \times T^*\Delta^2 &&	
	&& X_2 \times T^*\Delta^2 &&	
	\\ \\ 
	&&&&
	&&&&	X_2 \ar@{-->}[uuuurrrr]\ar[ddddllll] \ar@{-->}[rrrr]
	&&&&	X_2 \times T^*\Delta^1
	&&&&	X_2 \ar@{-->}[uuuu] \ar[dddd] \ar@{-->}[rrrr] \ar@{-->}[llll] \ar[ddddllll] \ar[ddddrrrr]
	&&&&	X_2 \times T^*\Delta^1
	&&&&	X_2 \ar@{-->}[uuuullll]\ar[ddddrrrr] \ar@{-->}[llll]
	\\ \\ \\ \\ 
	&&&&	 X_{02} \times T^*\Delta^1
	&&&&	 s
	&&&&	 X_{02} \times T^*\Delta^1
	&&&&	 X_{12} \times T^*\Delta^1
	&&&&	 X_{12} \times T^*\Delta^1
	&&&&	 s
	&&&&	 X_{12} \times T^*\Delta^1
	\\ \\ 
	&&&& &&&& &&&& && X_{012} \times T^*\Delta^2 && && s 
	\\ \\ 
			 X_0 \ar[rrrruuuu] \ar[rrrrrrrr] \ar[rrrrrrrrrrrruuuu]
	&&&&
	&&&&	 X_{01} \times T^*\Delta^1
	&&&&
	&&&&	 X_1 \ar@{-->}[rrrrrrrr] \ar[llllllll] \ar[uuuu]
	&&&&
	&&&&	 X_1 \times T^*\Delta^1
	&&&&
	&&&&	 X_1  \ar[lllluuuu] \ar@{-->}[llllllll] \ar[uuuullllllllllll]
	}
	}
	\eqnd
	\begin{figurelabel}
	\label{figure. M on 2 simplex} 
	\label{figure. m on 2 simplex}
	On top is the diagram $\subdiv(\subdiv(\Delta^2)) \to \lioustrstab$ given by $M$ of a 2-simplex. The dashed arrows are all of the form $\id \tensor T^*\eta$. Not indicated are the morphisms to the objects $X_{012} \times T^*\Delta^2 \tensor T^*\Delta^2$, to reduce clutter. 

	On the bottom is the diagram given by $m$ of a 2-simplex. The dashed arrows are all of the form $\id \tensor T^*\eta$. Simply labeled as ``$s$'' are centers of barycentric subdivision diagrams that are degenerate simplices in $\lioudeltadefstab$. (The only degenerate simplices that are explicitly indicated are the constant simplices.) 
	
	Despite the kinks in the edges, we hope the reader can detect the outline of the $\subdiv(\Delta^2)$ of which we are taking the subdivision.
	\end{figurelabel}
\end{figure}

\clearpage

\begin{construction}[$\GG_f$]
\label{construction. GGf for alpha right inverse}
We have fixed an $l$-simplex $f: \Delta^l \to \lioudeltadefstab$. 
Fix a $k$-simplex $\vec A: \Delta^k \to \subdiv(\Delta^l)$. 
The definition of $m(f)$ on this $k$-simplex involves using degeneracy maps in $\lioudeltadefstab$. Because these degeneracy maps are maps whose existence was concluded formally, they are rather inexplicit. As a result, we will construct $\GG_f$ inductively, by the cardinality of the fibers of the map
	\eqnn
	\Delta^k \xrightarrow{\vec A} \subdiv(\Delta^l) \xrightarrow{\max} \Delta^l.
	\eqnd
\end{construction}

Fix an $l$-simplex $f: \Delta^l \to \lioudeltadefstab$. We may understand the simplices
	\eqnn
	M(f), m(f): \Delta^l \to \Ex_{\simeq}(\lioudeltadefstab)
	\eqnd
by their adjoint maps $\subdiv(\Delta^l) \to \lioudeltadefstab$. To understand these maps, let us note
	\eqnn
	\subdiv(\Delta^l) \cong \bigcup_{k, \vec A} \Delta^k.
	\eqnd
(This is a formula expressing that a $k$-simplex in a barycentric subdivision is the same thing as an increasing length-$k$ chain $\vec A$ of subsimplices.) For each length $k$ chain $\vec A = A_0 \subset \ldots A_k$, suppose one can construct maps
	\eqn\label{eqn. G fka check}
	\xymatrix{
	\Delta^1 \times \Delta^k \ar[rrr]^{\GG_{f, k, \vec A}^\vee}
	&&& \lioudelta
	}
	\eqnd
that glue together to define a map
	\eqn\label{eqn. G check}
	\GG^\vee_f:
	\Delta^1 \times \subdiv(\Delta^l) \cong 
	\Delta^1 \times \left(\bigcup_{(k,\vec A)} \Delta^k \right) 
	\to
	\lioudelta.
	\eqnd
As indicated, we let $\GG^\vee_f$ denote the composite map (of the isomorphism above, together with the gluings of $\GG^\vee_{f,k,\vec A}$). 
We then have 
	\eqn\label{eqn. collapse map}
	\xymatrix{
	\Delta^1 \times \subdiv(\Delta^l)  \ar[rrr]^{\GG^\vee_f}
	&&& \lioudelta \\
	\subdiv(\Delta^1) \times \subdiv(\Delta^l) \ar[u]^{\max_{\Delta^1} \times \id_{\subdiv(\Delta^l)}} \\
	\subdiv(\Delta^1 \times \Delta^l) \ar[u]^{\text{collapse}}.
	}
	\eqnd
By adjunction, we thus obtain a map $\Delta^l \times \Delta^1 \to \Ex(\lioudeltadefstab)$. A routine check on edges demonstrates that this map factors through $\Ex_{\simeq}(\lioudeltadefstab)$, so we obtain the desired map
	\eqnn
	\GG_f: \Delta^1 \times \Delta^l \to \Ex_{\simeq}(\lioudeltadefstab).
	\eqnd

\begin{remark}
Given any two simplicial sets $X$ and $Y$, there is a collapse map given by the dashed arrow below, defined as the composition of the solid arrows. 
	\eqnn	
	\resizebox{\displaywidth}{!}{
	\xymatrix{
	\subdiv(X) \times \subdiv(Y)	
	&	\ar[l]_-{\cong} \left(\colim\limits_{\Delta^{k'} \to X} \subdiv(\Delta^{k'}) \right) \times \subdiv(Y)	
	&	\ar[l]_-{\cong} \colim\limits_{\Delta^{k'} \to X} \subdiv(\Delta^{k'}) \times \subdiv(Y)
	&	\ar[l]_-{\cong} \colim\limits_{\Delta^{k'} \to X} \subdiv(\Delta^{k'}) \times \left( \colim\limits_{\Delta^{k''} \to Y}  \subdiv(\Delta^{k''}) \right)
	&	\ar[l]_-{\cong} \colim\limits_{\Delta^{k'} \to X} \colim\limits_{\Delta^{k''} \to Y} \subdiv(\Delta^{k'}) \times \subdiv(\Delta^{k''})
	\\
	&&&& \colim\limits_{\Delta^k \to X \times Y} \subdiv(\Delta^k) \times \subdiv(\Delta^k) \ar[u]
	\\
	\subdiv(X \times Y) \ar@{-->}[uu]
	&&&&  \ar[llll]_{\cong} \colim\limits_{\Delta^k \to X \times Y} \subdiv(\Delta^k) \ar[u]
	}
	}
	\eqnd
Every isomorphism in the above diagram is by definition of $\subdiv$ of a simplicial set, and by the fact that direct products of simplicial sets commutes with colimits (because, for example, the category of simplicial sets is Cartesian closed). 

The two vertical arrows in the right are diagonal maps -- the lower-right upward map is induced by the diagonal map of simplicial sets $W \to W\times W$ in the case $W = \subdiv(\Delta^k)$, while the upper-right upward map is the inclusion of the diagram of simplices $\{\Delta^k \to X \times Y\}$ into the diagram of simplices $\{\Delta^{k'} \to X, \Delta^{k''} \to Y\}$ (by including into the subdiagram where $k' = k''$).

The collapse map in~\eqref{eqn. collapse map} is the special case of $X = \Delta^1,Y=\Delta^l$.
\end{remark}

\begin{example}
Let $l=1$. Then the diagram $\Delta^1 \times \subdiv(\Delta^l)$ may be drawn as follows:
	\eqn\label{eqn. delta 1 x subdiv delta 1}
	\resizebox{\displaywidth}{!}{
	\xymatrix{
	\bullet \ar[rrrrr]
	&&& 
	&& 
	\bullet 
	&&
	&&& \bullet \ar[lllll]
	\\
	& & 
	\\
	\bullet \ar[rrrrr]  \ar[uu]   \ar[uurrrrr]
	&&& 
	&& 	
		\bullet \ar[uu] 
	&& 
	&&& \bullet \ar[lllll]  \ar[uu] \ar[uulllll] 
	}
	}
	\eqnd
Fix a simplex $f: \Delta^l \to \lioudeltadefstab$. 
The map $\GG^\vee_f: \Delta^1 \times \subdiv(\Delta^l) \to \lioudeltadefstab$ from~\eqref{eqn. G check} represents a diagram $\sub(\Delta^1 \times \subdiv(\Delta^l)) \to \lioustrstab$; we can draw the parts of this diagram corresponding to the arrows in~\eqref{eqn. delta 1 x subdiv delta 1} (and not, say, the 2-simplices) as follows:
	\eqnn
	\resizebox{\displaywidth}{!}{
	\xymatrix{
	X_0 \ar[rrr]^-{(f_{0 \subset 01} \times T^*\eta) \tensor T^*\eta} \ar[d] 
	&&& (X_{01} \times T^*\Delta^1) \tensor T^*\Delta^1
	&& \ar@{-->}[ll]_-{\id \tensor T^*\eta} 
		\ar[dll]
		X_{01}  \times T^*\Delta^1
	  \ar@{-->}[rr]^-{\id \tensor T^*\eta} 
	   \ar[d]
	   \ar[drr] 
	&& (X_{01} \times T^*\Delta^1) \tensor T^*\Delta^1
	&&& X_1 \ar[lll]_-{(f_{1 \subset 01} \times T^*\eta) \tensor T^*\eta}. \ar[d]
	\\
	X_0 \tensor T^*\Delta^1 
	&&& (X_{01} \times T^*\Delta^1) \tensor T^*\Delta^1
	&&  X_{01} \times T^*\Delta^1  \tensor T^*\Delta^1 
	&& (X_{01} \times T^*\Delta^1) \tensor T^*\Delta^1
	&&& X_1 \tensor T^*\Delta^1  
	\\
	X_0 \ar[rrr]^-{f_{0 \subset 01} \times T^*\eta} \ar[u]  \ar[urrr]
	&&& X_{01} \times T^*\Delta^1
	&& 	\ar[ll]_-{f_{1 \subset 01} \times T^*\eta} 
		X_1
	  	\ar@{-->}[rr]^-{\id \tensor T^*\eta} \ar[u]
	&& X_1 \times T^*\Delta^1 
	&&& X_1 \ar@{-->}[lll]_-{\id \tensor T^*\eta}. \ar[u]\ar[ulll]
	}
	}
	\eqnd
\end{example}

\subsection{Completing the proof of Theorem~\ref{theorem. localization}}

We have the following corollary of Lemmas~\ref{lemma. alpha-beta equivalence} and~\ref{lemma. main lemma for Ex}.

\begin{cor}\label{cor. beta is equivalence}
The maps $\alpha$ and $\beta$ are equivalences of $\infty$-categories.
\end{cor}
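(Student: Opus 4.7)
The plan is to assemble the corollary from the two lemmas immediately preceding it, using only the two-out-of-three property for equivalences in an $\infty$-category. The key observation is that, between them, the lemmas supply $\alpha$ with both a left and a right homotopy inverse; standard $\infty$-categorical nonsense then promotes $\alpha$ to an equivalence, and once $\alpha$ is known to be an equivalence the same two-out-of-three gives $\beta$ as well.

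More concretely, first I would argue that $\alpha$ is an equivalence of $\infty$-categories. Lemma~\ref{lemma. main lemma for Ex} tells us that $\Ex_{\simeq}(\beta)\circ\alpha$ is an equivalence, which means that $\alpha$ admits a left inverse up to homotopy (namely, any inverse of $\Ex_{\simeq}(\beta)\circ\alpha$ composed with $\Ex_{\simeq}(\beta)$). On the other hand, Lemma~\ref{lemma. k equivalence} tells us that $k=\alpha\circ\beta$ is an equivalence, so $\alpha$ also admits a right inverse up to homotopy (namely, $\beta$ composed with any inverse of $k$). Any morphism in an $\infty$-category (or $(\infty,1)$-category) possessing both a left and a right homotopy inverse is itself an equivalence, with the two inverses canonically equivalent. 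Hence $\alpha$ is an equivalence.

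Having established that $\alpha$ is an equivalence, I would then invoke Lemma~\ref{lemma. k equivalence} once more: since $\alpha\circ\beta$ is an equivalence and $\alpha$ is an equivalence, two-out-of-three forces $\beta$ to be an equivalence as well. This completes the corollary.

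I do not expect a serious obstacle here, as the corollary is essentially an abstract consequence of the two preceding lemmas; the only point worth double-checking is the two-out-of-three principle applied in the first step, where the left/right inverses a priori live in different ``slots'' (one on $\lioudeltadefstab$, one on $\lioustrstab[\eqs^{-1}]$, one going through $\Ex_{\simeq}(\lioudeltadefstab)$). This is harmless because equivalences in $\infty$-categories of $\infty$-categories are detected by the existence of any homotopy inverse, so the two witnesses may be compared by composing with $\alpha$ on either side and using the lemmas' equivalences to identify them up to homotopy.
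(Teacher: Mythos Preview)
Your proof is correct and slightly cleaner than the paper's. Both arguments assemble the corollary from the same two lemmas, but the paper proceeds via the ``essentially surjective $+$ fully faithful'' characterization: it notes separately that $\alpha$ is essentially surjective (a bijection on vertices), then uses Lemma~\ref{lemma. main lemma for Ex} to deduce that $\alpha$ is injective on $\pi_*\hom$ and Lemma~\ref{lemma. k equivalence} to deduce surjectivity on $\pi_*\hom$, concluding that $\alpha$ is fully faithful. Your route instead extracts from the same two lemmas a left homotopy inverse and a right homotopy inverse for $\alpha$, then invokes the standard fact that a morphism with both is an equivalence. This buys you the essential surjectivity for free and stays at the level of abstract $\infty$-categorical nonsense, whereas the paper's version is marginally more concrete but requires the extra observation about objects. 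Both then finish $\beta$ by two-out-of-three in the same way.
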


\begin{proof}
It is clear that $\alpha$ is essentially surjective. Lemma~\ref{lemma. main lemma for Ex} shows that $\alpha$ induces an injection on $\pi_*\hom$. Since Lemma~\ref{lemma. alpha-beta equivalence} says $\alpha \circ \beta$ is an equivalence, we see $\alpha$ induces a surjection on $\pi_*\hom$. Thus $\alpha$ is an equivalence of $\infty$-categories. Since $\alpha$ and $\alpha\circ\beta$ are both equivalences, we conclude that $\beta$ is an equivalence as well. 
\end{proof}

\begin{proof}[Proof of Theorem~\ref{theorem. localization}]
$\beta$ from Notation~\ref{notation. iota j beta} is the map induced by the universal property of localization. Now invoke Corollary~\ref{cor. beta is equivalence}.
\end{proof}

\clearpage
\section{The symmetric monoidal structure}
The goal of this section is to construct the symmetric monoidal structure on $ \lioustrstab[(\eqs^\dd)^{-1}]$.

\subsection{Stabilizing commutes with localization}\label{section. stabilizing and localization}
Let $\cC$ be an arbitrary $\infty$-category, and fix a collection of morphisms $S \subset \cC$. We denote by $\cC[S^{-1}]$ the localization. 

\begin{notation}
We let $\dd: \cC \to \cC$ be an endofunctor and define
	\eqnn
	\cC^{\dd}:= \colim \left(\cC \xrightarrow{\dd} \cC \xrightarrow{\dd} \ldots \right)
	\eqnd
Assuming that $\dd(S) \subset S$, we define
	\eqnn
	\cC[S^{-1}]^{\dd} := \colim \left(\cC[S^{-1}] \xrightarrow{\dd} \cC[S^{-1}] \xrightarrow{\dd} \ldots \right).
	\eqnd
\end{notation}

Then:

\begin{lemma}\label{lemma. increasing union of localizations is localization}
The natural functor $\cC^{\dd} \to \cC[S^{-1}]^{\dd}$ is a localization of $\cC^{\dd}$. In fact, letting $a_j: \cC \to \cC^{\dd}$ denote the natural morphism from the $j$th copy of $\cC$ appearing in the colimit, and letting $S^{\dd} \subset \cC^{\dd}$ denote the collection of morphisms 
	\eqnn
	S^{\dd} = \bigcup_j a_j(S)
	\eqnd
the map $\cC^{\dd} \to \cC[S^{-1}]^{\dd}$ realizes an equivalence
	\eqnn
	\cC^{\dd}[(S^{\dd})^{-1}] \xrightarrow{\sim} \cC[S^{-1}]^{\dd}
	\eqnd
\end{lemma}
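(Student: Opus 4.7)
The plan is to prove the stronger second claim---that the natural functor induces an equivalence $\cC^{\dd}[(S^{\dd})^{-1}] \xrightarrow{\sim} \cC[S^{-1}]^{\dd}$---by verifying the universal property of localization using the functorial behavior of $\Fun(-,\cD)$ on colimits. The first claim follows from this equivalence.

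First, for an arbitrary $\infty$-category $\cD$, I would apply $\Fun(-,\cD)$ to the sequential colimit defining $\cC^{\dd}$. Since $\Fun(-,\cD)$ sends colimits to limits, I obtain the identification
\[
\Fun(\cC^{\dd}, \cD) \simeq \lim\bigl( \cdots \xrightarrow{\dd^*} \Fun(\cC,\cD) \xrightarrow{\dd^*} \Fun(\cC,\cD)\bigr).
\]
Next, I would observe that a functor $F: \cC^{\dd} \to \cD$ sends every morphism in $S^{\dd} = \bigcup_j a_j(S)$ to an equivalence if and only if each restriction $F \circ a_j: \cC \to \cD$ sends $S$ to equivalences; this is immediate from the definition of $S^{\dd}$. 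Thus the above limit decomposition restricts to an equivalence
\[
\Fun(\cC^{\dd}, \cD)^{\text{inverts }S^{\dd}} \simeq \lim_j \Fun(\cC, \cD)^{\text{inverts }S}.
\]

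Then I would apply the universal property of $\cC[S^{-1}]$ termwise to replace each $\Fun(\cC,\cD)^{\text{inverts }S}$ with $\Fun(\cC[S^{-1}],\cD)$, and turn the limit back into a colimit in the first variable:
\[
\lim_j \Fun(\cC[S^{-1}],\cD) \simeq \Fun\bigl(\colim_j \cC[S^{-1}], \cD\bigr) = \Fun(\cC[S^{-1}]^{\dd}, \cD).
\]
Chaining these equivalences yields $\Fun(\cC^{\dd},\cD)^{\text{inverts }S^{\dd}} \simeq \Fun(\cC[S^{-1}]^{\dd},\cD)$, naturally in $\cD$. By the Yoneda lemma for $\infty$-categories (or equivalently by reading off the universal object), this identifies $\cC[S^{-1}]^{\dd}$ with the localization $\cC^{\dd}[(S^{\dd})^{-1}]$, and the identification is induced by the stated natural functor.

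The main obstacle is a coherence bookkeeping issue rather than a conceptual one: one has to ensure that the identifications above are truly natural in the transition maps $\dd^*$, so that the limit manipulations are valid at the level of $\infty$-categories (not just homotopy categories). This is handled by working throughout with the coherent nerve of the simplicial category of $\infty$-categories---the key inputs are that $\Fun(-,\cD)$ takes colimits to limits in $\inftycat$ (a standard fact, e.g.\ \cite[5.1.2.2]{htt}) and that the full subcategory $\Fun(\cC,\cD)^{\text{inverts }S} \subset \Fun(\cC,\cD)$ is preserved under precomposition with any functor, in particular with $\dd$, so that the limit diagrams assemble coherently.
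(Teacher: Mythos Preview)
Your proof is correct and reaches the same conclusion as the paper, but via a somewhat different route. The paper argues more abstractly: it works in the $\infty$-category of pairs $(\cC,S)$ (marked simplicial sets), observes that localization $(\cC,S) \mapsto \cC[S^{-1}]$ is a left adjoint (modeled as fibrant replacement), and then invokes that left adjoints preserve colimits together with the identification $(\cC^{\dd}, S^{\dd}) = \colim_j (\cC,S)$. Your approach instead verifies the universal property of $\cC^{\dd}[(S^{\dd})^{-1}]$ directly by mapping out into a test $\infty$-category $\cD$ and using that $\Fun(-,\cD)$ takes colimits to limits. These are essentially dual packagings of the same fact; yours is slightly more elementary in that it avoids invoking the model structure on marked simplicial sets, at the cost of doing the limit bookkeeping by hand.

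One small correction: your final sentence claims that $\Fun(\cC,\cD)^{\text{inverts }S}$ is preserved under precomposition with \emph{any} functor. That is not true in general; if $F$ inverts $S$ and $g: \cC \to \cC$ is arbitrary, $F \circ g$ need not invert $S$. What you actually need, and what holds here, is preservation under precomposition with $\dd$, and this follows precisely from the standing hypothesis $\dd(S) \subset S$. With that fix, the coherence of the limit diagram goes through as you describe.
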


\begin{proof}
We $\mathcal{W}\inftycat$ be the $\infty$-category of $\infty$-categories marked by systems. Concretely, an object is a pair $(\cC,S)$ of an $\infty$-category $\cC$ together and a collection of edges $S$ in $\cC$ containing all equivalences of $\cC$, and closed under homotopy and composition. (For more details, see Construction~4.1.3.1 of~\cite{higher-algebra}.) In what follows, we also let $\cC^{\sim}$ denote the largest $\infty$-groupoid contained in $\cC$---it is the Kan complex generated by all objects of $\cC$ and by all equivalences of $\cC$.
The functor
	\eqnn
	\mathcal{W}\inftycat \to \mathcal{W}\inftycat,
	\qquad 
	(\cC,S) \mapsto ( \cC[S^{-1}], \cC[S^{-1}]^{\sim})
	\eqnd
---which sends the pair $(\cC,S)$ to the localization $\cC[S^{-1}]$, marked by all equivalences of the localization---may be modeled as a fibrant replacement functor in the (left proper, combinatorial) model category of marked simplicial sets. The image of this localization functor is equivalent to the $\infty$-category of $\infty$-categories;  the functor  is in fact an idempotent left adjoint to the inclusion of the $\infty$-category of $\infty$-categories. (See Proposition~4.1.3.2 of~\cite{higher-algebra}.) So it preserves colimits. We conclude by noting that
	\eqnn
	(\cC^{\dd}, S^{\dd}) = \colim \left( (\cC,S) \xrightarrow{\dd} (\cC,S) \xrightarrow{\dd} \ldots \right)
	\eqnd
as marked simplicial sets.
\end{proof}

\begin{corollary}\label{cor. stab loc is loc stab}
We have an equivalence 
	$
	\lioustrstab[(\eqs^{\dd})^{-1}] \xrightarrow{\sim} \lioustr[\eqs^{-1}]^{\dd}.
	$
\end{corollary}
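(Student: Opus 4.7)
The plan is to obtain the corollary as an immediate instance of Lemma~\ref{lemma. increasing union of localizations is localization}, applied to $\cC = \lioustr$, to the system $S = \eqs$ of (strict) sectorial equivalences, and to the endofunctor $\dd = T^*[-1,0] \times -$. By Notation~\ref{notation. lioustrstab 2} and Definition of $\eqs^{\dd}$, we have
\[
\lioustrstab = \cC^{\dd}, \qquad \eqs^{\dd} = S^{\dd}, \qquad \lioustr[\eqs^{-1}]^{\dd} = \cC[S^{-1}]^{\dd},
\]
so once the hypotheses of the lemma are verified, the conclusion reads exactly as the statement of the corollary.

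The one substantive thing to check is that $\dd(\eqs) \subset \eqs$, i.e.\ that stabilization by $T^*[-1,0]$ sends (strict) sectorial equivalences to (strict) sectorial equivalences. First, $\dd$ is well-defined as an endofunctor of $\lioustr$: the direct product of two strict sectorial embeddings is a strict sectorial embedding (Remark~\ref{remark. strict is monoidal}), so $T^*[-1,0] \times -$ preserves morphisms of $\lioustr$ on the nose and is strictly functorial. Second, if $f \colon M \to N$ is a sectorial equivalence with inverse-up-to-isotopy $g \colon N \to M$, then $\id_{T^*[-1,0]} \times f$ and $\id_{T^*[-1,0]} \times g$ are again sectorial embeddings whose compositions are $\id_{T^*[-1,0]}$-cross the corresponding isotopies; in particular they are isotopic through (strict) sectorial embeddings to the identity, so $\id \times f \in \eqs$.

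With the hypotheses verified, Lemma~\ref{lemma. increasing union of localizations is localization} directly produces the asserted equivalence $\lioustrstab[(\eqs^{\dd})^{-1}] \xrightarrow{\sim} \lioustr[\eqs^{-1}]^{\dd}$. There is no real obstacle here, because the work has already been done in the general lemma; the only thing to watch for is the purely bookkeeping point above that $\dd$ genuinely preserves the marked class $\eqs$, which is straightforward from the definitions in Section~\ref{section. lioudelta properties} and from the monoidality remark~\ref{remark. strict is monoidal}.
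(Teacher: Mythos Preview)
Your approach is exactly the paper's: the corollary is stated there without proof as an immediate application of Lemma~\ref{lemma. increasing union of localizations is localization} with $\cC=\lioustr$, $S=\eqs$, and $\dd=T^*[-1,0]\times-$.

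That said, your verification of the hypothesis $\dd(\eqs)\subset\eqs$ is not quite right. The homotopy inverse $g$ and the witnessing isotopies in Definition~\ref{defn. sectorial equivalence} are only through \emph{sectorial} embeddings, not strict ones; hence $\id_{T^*[-1,0]}\times g$ and $\id_{T^*[-1,0]}\times H_t$ need not be sectorial embeddings at all. Concretely, if $g^*\lambda^M=\lambda^N+dh$ with $h$ compactly supported on $N$, then $(\id\times g)^*(\lambda^{T^*I}+\lambda^M)=\lambda^{T^*I}+\lambda^N+dh$, and the pulled-back $h$ has support $T^*I\times\supp(h)$, which is noncompact. This is precisely the obstruction flagged in Footnote~\ref{footnote.direct product} and the contrast drawn in Remark~\ref{remark. strict is monoidal}. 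So the clause ``$\id\times g$ [is] again [a] sectorial embedding \ldots\ isotopic through (strict) sectorial embeddings to the identity'' fails as written. The paper also leaves this point implicit; one honest justification is to pass through $\lioudeltadef$, where $T^*[-1,0]\times-$ is a genuine endofunctor and therefore preserves equivalences, and then translate back via Theorems~\ref{theorem. EmbLiou is homLiouDelta} and~\ref{theorem. lioudelta is lioudeltadef}. Alternatively, one can work with deformation embeddings (Theorem~\ref{theorem. embedding spaces are deformation embedding spaces}), which \emph{are} closed under $T^*[-1,0]\times-$ since their underlying maps are strict and bordered deformations are stable under product.
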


\subsection{Symmetric monoidal localization}
Let $\cC^\tensor$ be a symmetric monoidal $\infty$-category with underlying $\infty$-category $\cC$. Fix also a collection of morphisms $S \subset \cC$.

\begin{defn}
We say that $S$ is {\em compatible} with a symmetric monoidal structure $\tensor$ if for all  objects $M \in \cC$ and all morphisms $f \in S$, we have that $\id_M \tensor f$ and $f \tensor \id_M$ are both in $S$. 
\end{defn}

\begin{prop}\label{prop. localization inherits symmetric monoidal structure}
Suppose $S$ is compatible with $\tensor$.
Then there is an induced symmetric monoidal structure on $\cC[S^{-1}]$ so that the functor $\cC \to \cC[S^{-1}]$ may be promoted to a symmetric monoidal functor.

In fact, more is true: for any symmetric monoidal $\infty$-category $\cD^{\tensor}$, the $\infty$-category of symmetric monoidal functors $\fun^\tensor(\cC[S^{-1}]^{\tensor},\cD^{\tensor})$ is identified with the full subcategory of $\fun^\tensor(\cC^{\tensor},\cD^{\tensor})$ sending morphisms in $S$ to equivalences in $\cD$. This identification is given by composing with the symmetric monoidal functor $\cC^{\tensor} \to \cC[S^{-1}]^{\tensor}$ 
\end{prop}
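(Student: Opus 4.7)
The plan is to present a symmetric monoidal $\infty$-category as a commutative algebra object in the Cartesian symmetric monoidal $\infty$-category $(\inftycat, \times)$, and to upgrade the ordinary localization functor $\cC \mapsto \cC[S^{-1}]$ to a functor between such algebras. Equivalently (via the straightening/unstraightening dictionary), one models $\cC^\tensor$ as a coCartesian fibration over $\mathrm{Fin}_*$ satisfying the Segal condition, and constructs the desired $\cC[S^{-1}]^\tensor$ as a coCartesian fibration over $\mathrm{Fin}_*$ whose fiber over $\langle n\rangle$ is $\cC[S^{-1}]^n$. The compatibility hypothesis is exactly what allows every piece of operadic data to descend through the localization.

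\textbf{Key lemma.} The technical heart is that the localization functor $L$, regarded as an endofunctor of marked $\infty$-categories (objects are pairs $(\cC,S)$), preserves finite products. Concretely, for $(\cC,S)$ and $(\cC',S')$, the natural map
\[
	(\cC \times \cC')\bigl[ (S \boxtimes S')^{-1}\bigr] \xrightarrow{\ \sim\ } \cC[S^{-1}] \times \cC'[S'^{-1}]
\]
is an equivalence, where $S \boxtimes S'$ denotes the collection of morphisms $(f,g)$ with $f$ in $S$ or $g$ in $S'$ (the other being an identity). This follows formally from the universal property: mapping either side into an $\infty$-category $\cD$ corepresents the same space of product-inverting functors. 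As in Section~\ref{section. stabilizing and localization} (where we used that localization is a left adjoint and so preserves colimits), here we use that it preserves products.

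\textbf{Descent of the operadic structure.} Given the key lemma, apply $L$ objectwise to the diagram $\mathrm{Fin}_* \to \inftycat$ encoding $\cC^\tensor$. By the Segal condition, the value at $\langle n\rangle$ is $\cC^n$, and each active morphism $\langle n\rangle \to \langle 1\rangle$ is an $n$-fold tensor functor $\cC^n \to \cC$; the compatibility hypothesis on $S$ says exactly that this functor sends $S^{\boxtimes n}$-morphisms to $S$-morphisms, so post-composition with $L$ factors through $\cC[S^{-1}]^n \to \cC[S^{-1}]$ by the key lemma. The same argument descends every unit, associator, symmetry, and higher coherence datum simultaneously, producing a coCartesian fibration $\cC[S^{-1}]^\tensor \to \mathrm{Fin}_*$ satisfying the Segal condition, together with a symmetric monoidal functor $\cC^\tensor \to \cC[S^{-1}]^\tensor$.

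\textbf{Universal property and main obstacle.} For the universal property, a symmetric monoidal functor $F : \cC^\tensor \to \cD^\tensor$ whose underlying functor inverts $S$ induces, at each arity and upon comparison with the localization of $\cC^n$ along $S^{\boxtimes n}$, a canonical factorization through $\cC[S^{-1}]^\tensor$; conversely, restriction along $\cC^\tensor \to \cC[S^{-1}]^\tensor$ lands in the subcategory of $S$-inverting symmetric monoidal functors, and these two operations are mutually inverse equivalences by the ordinary universal property of $\cC[S^{-1}]$. The main obstacle is organizing the descent uniformly across all of the (infinitely many) coherence data at once rather than handling the binary tensor in isolation; this is where the $(\inftycat,\times)$-algebra viewpoint is essential, and where one ultimately invokes Lurie's model-categorical treatment in \cite{higher-algebra}, Section~4.1.7, which realizes $L$ as a left Bousfield localization on marked simplicial sets compatible with Cartesian products of fibrant objects. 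Once that input is available, every step above is formal.
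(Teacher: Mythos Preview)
Your proposal is correct and aligns with the paper's approach: the paper's proof is a one-line citation to Proposition~4.1.7.4 of~\cite{higher-algebra} (the $\mathcal{W}\inftycat$ framework you invoke), while you have unpacked the strategy behind that result---viewing symmetric monoidal $\infty$-categories as commutative algebras in $(\inftycat,\times)$ and checking that localization respects finite products---before ultimately deferring to the same source. One small note: the precise reference in~\cite{higher-algebra} is Section~4.1.7 (specifically Proposition~4.1.7.4), which matches your citation; the paper's internal cross-references to Construction~4.1.7.1 and Proposition~4.1.7.2 in the proof of Lemma~\ref{lemma. increasing union of localizations is localization} confirm this is the same machinery.
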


\begin{proof}
This is a consequence of Proposition 4.1.3.4 of~\cite{higher-algebra}.
\end{proof}

\subsection{Symmetric monoidal structures on stabilizations}

Let $\cC^\tensor$ be a symmetric monoidal $\infty$-category with underlying $\infty$-category $\cC$, and an object $T$. One may contemplate whether the colimit
	\eqn\label{eqn. colimit T inverse}
	\colim\left(
	\cC \xrightarrow{T \tensor -}
	\cC \xrightarrow{T \tensor -}\ldots
	\right)
	\eqnd
---which may\footnote{We will not use the notation $\cC[T^{-1}]$ in the rest of this work, to avoid confusion with notation such as $\eqs^{-1}$. The philosophy of our work centers around compositionally inverting morphisms, not multiplicatively inverting objects.} rightfully be called $\cC[T^{-1}]$--- admits a symmetric monoidal structure for which $T$ becomes an invertible object. (That is, for which there exists some object $U$ such that $T \tensor U$ is equivalent to the tensor unit.)

Inspired by observations of Voevodsky~\cite[Section~4]{voevodsky-icm}, and following ideas of To\"en-Vezzosi~\cite{toen-vezzosi-hag2}, Robalo gives the following criterion.

\begin{prop}[Corollary~4.24 of~\cite{robalo-non-commutative-motives-i}]\label{prop. symmetric T}
Let $\cC^\tensor$ be a symmetric monoidal $\infty$-category with underlying $\infty$-category $\cC$.  Let $T \in \cC$ and let
	\eqnn
	\tau: T \tensor T \tensor T \to T\tensor T \tensor T
	\eqnd
denote the cyclic permutation map $(123)$ determined by the symmetric monoidal structure. If $\tau$ is homotopic to the identity of $T \tensor T \tensor T$, then 
	\enum
	\item The colimit~\eqref{eqn. colimit T inverse} inherits a symmetric monoidal structure from $\cC$ for which the functor
		\eqnn
		j: \cC \to \colim\left(
	\cC \xrightarrow{T \tensor -}
	\cC \xrightarrow{T \tensor -}\ldots
	\right)
		\eqnd
	can be made symmetric monoidal.
	\item Moreover, $j$ is universal among symmetric monoidal functors that send $T$ to an invertible object.
	\enumd
\end{prop}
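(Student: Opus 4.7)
The plan is to appeal directly to Corollary 4.24 of Robalo \cite{robalo-non-commutative-motives-i}, since the proposition as stated is just a restatement of his theorem. For the reader unfamiliar with that work, let me outline the conceptual structure one would follow if spelling the argument out from scratch.

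The natural framework is the $\infty$-category of symmetric monoidal $\infty$-categories. Inverting an object $T$ with respect to the tensor product is morally a left adjoint to the inclusion of those symmetric monoidal $\infty$-categories in which the distinguished object $T$ is already $\tensor$-invertible. From this adjoint perspective, the universal property claimed in item~(2) is formal once item~(1) is established. So the real content is item~(1): that this left adjoint, at the level of underlying $\infty$-categories, is computed by the naive sequential colimit in~\eqref{eqn. colimit T inverse}.

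The obstruction is that $T \tensor - : \cC \to \cC$ is not a symmetric monoidal functor of $\cC^\tensor$, so one cannot form this sequential colimit directly in symmetric monoidal $\infty$-categories. Following Voevodsky's insight \cite{voevodsky-icm}, one models the inversion via $T$-prespectra: sequences $\{X_n\}$ with structure maps $T \tensor X_n \to X_{n+1}$. Promoting this to a symmetric monoidal construction requires refining to \emph{symmetric} $T$-spectra, which in turn requires producing compatible $\Sigma_n$-actions on the tensor powers $T^{\tensor n}$. The cyclic-permutation hypothesis is precisely the minimal input from which one bootstraps all such equivariances: the $3$-cycle together with the (always available) transposition generates $\Sigma_3$, and higher coherences follow from standard $\infty$-operadic arguments once the $n=3$ case is in place.

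The main obstacle if one were writing the proof from scratch is this $\Sigma_n$-equivariance bookkeeping—verifying that the triviality of the $3$-cycle on $T^{\tensor 3}$ is enough to generate all the higher symmetric group coherences needed to put a symmetric monoidal structure on the colimit. This is the technical heart of Robalo's argument, and since none of our geometry interacts with the argument, we simply quote his result.
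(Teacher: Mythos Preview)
Your proposal is acceptable in that the proposition is literally stated as Corollary~4.24 of Robalo, so citing it outright is fine. However, your conceptual outline diverges from how the paper (and Robalo) actually argue, and the divergence is worth noting because your sketch slightly mislocates where the cyclic-permutation hypothesis enters.

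The paper's sketch does not pass through symmetric $T$-spectra or $\Sigma_n$-equivariance bookkeeping. Instead it works in the module-theoretic framework: one first constructs the universal $T$-inverting symmetric monoidal $\infty$-category $\cL^\tensor_{(\cC^\tensor,T)}(\cC^\tensor)$ abstractly (Robalo, Proposition~4.1), along with a module-level left adjoint $\cL_{(\cC^\tensor,T)}$ characterized by rendering $T\tensor-$ an autoequivalence. One then compares this to the naive sequential colimit $\stab_T(M) = \colim(M \xrightarrow{T\tensor-} M \to \ldots)$. The hypothesis $\tau \sim \id_{T^{\tensor 3}}$ is invoked at a single, precise point: it is exactly what is needed (Robalo, Proposition~4.21) to show that $T\tensor-$ already acts as an autoequivalence on $\stab_T(M)$, so that $\stab_T(M)$ lies in the image of the right adjoint. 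The equivalence $\cL_{(\cC^\tensor,T)}(M) \simeq \stab_T(M)$ then follows from a short factorization argument using that $\cL$ commutes with colimits. Combining this with $\cL^\tensor_{(\cC^\tensor,T)}(\cC^\tensor) \simeq \cL_{(\cC^\tensor,T)}(\cC)$ gives both the symmetric monoidal structure and the universal property at once.

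So the difference: you describe the role of $\tau \sim \id$ as seeding higher $\Sigma_n$-coherences for a symmetric-spectra model, whereas the paper uses it to verify that the sequential colimit already has $T\tensor-$ invertible as a $\cC^\tensor$-module endofunctor. Both perspectives are correct and related, but the module-theoretic one is cleaner and is what Robalo actually does.
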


\begin{proof}
For the reader's convenience, let us recall that any symmetric monoidal $\infty$-category is a commutative (that is, $E_\infty$) algebra in the $\infty$-category of $\infty$-categories (where the monoidal structure is direct product of $\infty$-categories). It thus makes sense to speak of modules over any symmetric monoidal $\infty$-category. The original reference is Section~4.5 of~\cite{higher-algebra}; we also recommend Section~3 of~\cite{robalo-non-commutative-motives-i}.

We follow the notation of \cite{robalo-non-commutative-motives-i}. In Proposition 4.1 of ibid., the universal symmetric monoidal $\infty$-category obtained from $\cC$ by inverting $T$ is constructed---it is denoted by $\cL^\tensor_{(\cC^{\tensor},T)}(\cC^\tensor)$. In fact, $\cL^\tensor_{(\cC^{\tensor},T)}$ is more generally a left adjoint taking any commutative $\cC^\tensor$-algebra $A$ to another $\cC^\tensor$-algebra $\cL^\tensor_{(\cC^{\tensor},T)}(A)$ for which the map $\cC \to A$ sends $T$ to an invertible object.  

The same construction works for $\cC^{\tensor}$-modules: Pulling back along the canonical map $\cC^\tensor \to \cL^\tensor_{(\cC^{\tensor},T)}(\cC^\tensor)$, one witnesses a fully faithful embedding from the $\infty$-category of $\cL^\tensor_{(\cC^{\tensor},T)}(\cC^\tensor)$-modules to that of $\cC^\tensor$-modules. The left adjoint produces, out of any $\cC^\tensor$-module $M$, the universal $\cC^\tensor$-module 
	\eqnn
	M \to \cL_{(\cC^{\tensor},T)}(M)
	\eqnd
rendering $T \tensor-$ an autoequivalence (Proposition~4.2 of ibid.). We note that---by the usual passage between algebras and modules---in the special case that $M = \cC$ (where the module action is the canonical one of $\cC^\tensor$ acting on itself), there is a natural equivalence
	\eqn\label{eqn. non tensor L}
	\cL_{(\cC^{\tensor},T)}^{\tensor}(\cC^\tensor)
	\simeq
	\cL_{(\cC^{\tensor},T)}(\cC).
	\eqnd
	
We shall now finally use the assumption on $\tau$ being homotopic to $\id_{T^{\tensor 3}}$.
Given a module $M$ over $\cC^\tensor$, the colimit
	\eqnn
	\stab_T(M):= \colim\left(
	M \xrightarrow{T \tensor -}
	M \xrightarrow{T \tensor -} \ldots
	\right)
	\eqnd
is another $\cC^\tensor$-module with the property that the endofunctor $T \tensor -$ is an equivalence (Proposition~4.21 of ibid.; this requires $\tau \sim \id_{T^{\tensor 3}}$). Thus, we obtain a canonical map 
	\eqn\label{eqn. cL to stab}
	\cL_{(\cC^{\tensor},T)}(M) \to \stab_T(M).
	\eqnd
We claim this map is an equivalence; this is more or less the exact same proof as in Corollary~4.24 of ibid. and we reproduce it here for the reader's convenience.

By adjunction, we have a factorization $\cL_{(\cC^{\tensor},T)}(M) \to \cL_{(\cC^{\tensor},T)}(\stab_T M) \to \stab_T(M)$. Because $\cL_{(\cC^{\tensor},T)}$ is a left adjoint, it commutes with colimits, hence $\stab_T\cL_{(\cC^{\tensor},T)} \simeq \cL_{(\cC^{\tensor},T)} \stab_T$. This results in a factorization
	\eqnn
	\cL_{(\cC^{\tensor},T)}(M) 
	\to \stab_T\cL_{(\cC^{\tensor},T)}(M) 
	\xrightarrow{\sim} \cL_{(\cC^{\tensor},T)}(\stab_T (M)) 
	\to \stab_T(M).
	\eqnd
The last arrow is an equivalence because $\stab_T M$ is already a module for whom $T \tensor -$ is an equivalence. The first arrow is an equivalence because $T\tensor -$ is an autoequivalence of $\cL_{(\cC^{\tensor},T)}(M)$, meaning the sequence colimit $\stab_T$ recovers $\cL_{(\cC^{\tensor},T)}(M)$ itself.

Combining the equivalences~\eqref{eqn. non tensor L} and~\eqref{eqn. cL to stab}, and by invoking the known universal property for the lefthand side of~\eqref{eqn. non tensor L}, we are finished.
\end{proof}

\subsection{The symmetric monoidal structure}\label{section. construction of symmetric monoidal structure}
We will prove the following lemma shortly; it is at the interface of geometry and localizations:

\begin{lemma}\label{lemma. main monoidal lemma}
Consider the object $T^*[0,1] \times T^*[0,1]  \times T^*[0,1]$ in $\lioustr$ (and hence in $\lioustr[\eqs^{-1}]$). Denote by $T^*\tau$ the natural automorphism given by the cyclic order 3 permutation $(123)$ of coordinates:
	\eqnn
	\tau: \left( (q_1,p_1),
	(q_2,p_2),
	(q_3,p_3)) \right)
	\mapsto
	\left(
	(q_3,p_3), 
	(q_1,p_1),
	(q_2,p_2)) \right).
	\eqnd
(This map is part of the symmetric monoidal structure in $\lioustr$, hence in $\lioustr[\eqs^{-1}]$.)  Then in $\lioustr[\eqs^{-1}]$, $T^*\tau$ is homotopic to the identity.
\end{lemma}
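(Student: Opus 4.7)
The plan is to transport the problem from Liouville geometry to smooth differential topology via the cotangent functor. As in Example~\ref{example. from isotopies to equivalences}, the assignment $M \mapsto T^*M$---sending a smooth codimension-zero embedding $f$ to the induced strict sectorial embedding $T^*f$---defines a symmetric monoidal functor
\[ T^*: \mfld^{\cmpct} \longrightarrow \lioustr, \]
where $\mfld^{\cmpct}$ denotes the (ordinary $1$-)category of compact smooth manifolds-with-corners and codimension-zero embeddings. Call a pair of smooth embeddings $(f,g)$ a \emph{smooth equivalence} if $gf$ and $fg$ are smoothly isotopic to the respective identities through smooth embeddings; under $T^*$, such a pair goes to a sectorial equivalence, so $T^*$ descends to a symmetric monoidal functor $\overline{T^*}: \mfld^{\cmpct}[(\eqs^{\mathsf{sm}})^{-1}] \to \lioustr[\eqs^{-1}]$, where $\eqs^{\mathsf{sm}}$ is the class of smooth equivalences. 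Since $T^*\tau$ is the image of the ordinary coordinate permutation $\tau \in \mfld^{\cmpct}([0,1]^3,[0,1]^3)$, it suffices to show $\tau \sim \id$ in the smooth localization.

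The key technical input---and the main obstacle, flagged in the paper's preamble as the ``non-trivial computation''---is a smooth analogue of Theorem~\ref{theorem. EmbLiou is homLiouDelta} and Theorem~\ref{theorem. localization}: one must establish that the mapping space in the smooth localization recovers the topological space of smooth self-embeddings,
\[ \hom_{\mfld^{\cmpct}[(\eqs^{\mathsf{sm}})^{-1}]}\!\bigl([0,1]^3,[0,1]^3\bigr) \;\simeq\; \mathsf{Emb}^{\mathsf{sm}}\!\bigl([0,1]^3,[0,1]^3\bigr). \]
We expect the entire simplicial/geometric apparatus built earlier in this paper---the barycentric-subdivision model $\lioudelta$, the shrinking isotopies of Section~\ref{section. shrinking}, isotopy extension (Proposition~\ref{prop. isotopy extension}), the continuous-to-smooth approximation (Proposition~\ref{prop. continuous to smooth}), and the simplicial technology of Theorem~\ref{theorem. steimle} and Theorem~\ref{theorem. hiro non strict}---to carry over (in fact more easily, with no Liouville structure to track) in the smooth setting, yielding the displayed identification. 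This mirrors computations familiar from the foundations of factorization homology; compare Section~5.4.5 of~\cite{higher-algebra}.

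Granting this identification, the remaining step is the simple geometric fact that in $\mathsf{Emb}^{\mathsf{sm}}([0,1]^3,[0,1]^3)$, the cyclic permutation lies in the path component of the identity. Explicitly, recenter the cube at $c=(\tfrac12,\tfrac12,\tfrac12)$; then $\tau$ is the restriction of the linear rotation $R \in SO(3)$ by $2\pi/3$ about the diagonal axis (the permutation matrix of $(123)$ has determinant $+1$ and fixes $c$). Choose a smooth path $R_t$ in $SO(3)$ from $R_0=R$ to $R_1=\id$, and fix $\epsilon>0$ small enough that $\epsilon R_t(x-c)+c \in [0,1]^3$ for every $x \in [0,1]^3$ and every $t$. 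Now concatenate three smooth isotopies of self-embeddings of $[0,1]^3$: first, deform $\tau$ to $f_0:=\tau\circ h_\epsilon$ (where $h_\epsilon(x)=\epsilon(x-c)+c$ is scaling about $c$) via $s \mapsto \tau \circ h_s$ with $h_s(x):=((1-s)+s\epsilon)(x-c)+c$, whose image stays in $[0,1]^3$ because $\tau$ preserves $[0,1]^3$; second, use $\{\epsilon R_t(\cdot-c)+c\}_{t\in[0,1]}$ to go from $f_0$ to $h_\epsilon$; third, scale $h_\epsilon$ back up to $\id$ by running $h_s$ backward. Applying $\overline{T^*}$ to the resulting class $\tau \sim \id$ in $\mfld^{\cmpct}[(\eqs^{\mathsf{sm}})^{-1}]$ finishes the proof.
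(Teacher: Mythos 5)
Your reduction to smooth differential topology via the symmetric monoidal functor $T^*\colon \mfldcmpct \to \lioustr$ matches the paper's strategy exactly, and your explicit isotopy (scale into the interior, rotate by $2\pi/3$ about the diagonal, scale back out) is a correct demonstration that $\tau$ lies in the identity component of $\mathsf{Emb}^{\mathsf{sm}}([0,1]^3,[0,1]^3)$. However, the bridge you propose between these two steps has a genuine gap.

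You claim that the needed technical input is the equivalence
$\hom_{\mfldcmpct[(\eqs^{\mathsf{sm}})^{-1}]}([0,1]^3,[0,1]^3) \simeq \mathsf{Emb}^{\mathsf{sm}}([0,1]^3,[0,1]^3)$,
and then assert that the paper's apparatus (Theorems~\ref{theorem. EmbLiou is homLiouDelta} and~\ref{theorem. localization}, plus the simplicial machinery) ``carries over'' to give it. This is where the argument fails. The mapping-space identifications the paper proves are for the \emph{stabilized} localization $\lioustrstab[\eqs^{-1}]$---Theorem~\ref{theorem. localization} concerns $\lioustrstab$, not $\lioustr$, and Theorem~\ref{theorem. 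EmbLiou is homLiouDelta} is a statement about $\lioudelta$, which is only compared to the localization after stabilizing. The object in Lemma~\ref{lemma. main monoidal lemma} lives in the \emph{unstabilized} localization $\lioustr[\eqs^{-1}]$, and the paper emphasizes in Remark~\ref{remark. the least formal part of monoidal structure} that there is \emph{no} straightforward geometric description of morphism spaces there: ``All we have is the definition as a localization.'' Your proposed smooth analogue faces exactly the same obstruction, so the ``we expect it to carry over'' step is precisely the nontrivial content the paper is warning about, and no amount of Liouville-free simplification removes it.

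The paper instead sidesteps the full mapping-space identification. It does not prove (nor need) that the hom-space in $\mfldcmpct[\eqs^{-1}]$ equals the topological embedding space; it proves only the weaker, more targeted statement that the groupoid completion $|B_M^\delta|$ of the poset of cubes embedded in $M$ is equivalent to a Kan complex $B_M$ encoding the topological embedding data (Proposition~\ref{prop. BM contractible when M is cube}, Lemmas~\ref{lemma. p q equivalences}, \ref{lemma. psi' equivalence}), following the strategy of Lemma~5.4.5.10 of~\cite{higher-algebra}. This comparison lives entirely inside the slice $\mfldcmpct[\eqs^{-1}]_{/\overline{M}}$ and produces the single required edge---a homotopy from $\tau$ to $\id$---without ever describing the full morphism space. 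The ``simple geometric fact'' needed is only that $\tau$ is orientation-preserving (Example~\ref{example. every oriented f is an equivalence} via Proposition~\ref{prop. O_n}), which is equivalent to but lighter than your explicit path in $SO(3)$. To repair your proof you would need to either prove the mapping-space equivalence for the unstabilized localization as a standalone theorem (nontrivial and not done in the paper), or replace that appeal with the paper's $B_M$-style factorization-homology computation.
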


\begin{remark}
\label{remark. the least formal part of monoidal structure}
 
Lemma~\ref{lemma. main monoidal lemma} is the only non-formal ingredient of exhibiting a symmetric monoidal structure on $\lioustr[\eqs^{-1}]$. The main issue is that there is no straightforward geometric description of the morphism spaces in  $\lioustr[\eqs^{-1}]$ (in contrast to  $ \lioustrstab[(\eqs^\dd)^{-1}]$). All we have is the definition as a localization. Thus, we must somehow argue that the categorically formal process of localization recovers the geometric information of $(123)$ being isotopic to the identity.
\end{remark}

\subsection{Proof of Theorem~\ref{theorem. lioustab is symmetric monoidal}}
Assume Lemma~\ref{lemma. main monoidal lemma} for the moment.

\begin{proof}[Proof of Theorem~\ref{theorem. lioustab is symmetric monoidal}]
Clearly, $\lioustr$ has a symmetric monoidal structure given by direct products---of objects, and of morphisms. (Here we are using that all morphisms in $\lioustr$ are {\em strict} sectorial embeddings.) 

By Proposition~\ref{prop. localization inherits symmetric monoidal structure}, we thus obtain a symmetric monoidal structure on the $\infty$-category
	\eqnn
	\lioustr[\eqs^{-1}]
	\eqnd
and the symmetric monoidal functor 
	\eqnn
	\lioustr \to \lioustr[\eqs^{-1}]
	\eqnd
has the universal property that any symmetric monoidal functor $\lioustr \to \cD$ sending Liouville equivalences to equivalences in $\cD$ will admit a symmetric monoidal, homotopy-coherent factorization through $\lioustr[\eqs^{-1}]$.

Assuming Lemma~\ref{lemma. main monoidal lemma}, we see that $T = T^*[0,1]$ satisfies the hypotheses of Proposition~\ref{prop. symmetric T}. 
Invoking Proposition~\ref{prop. symmetric T}, we conclude that 
	$
	\lioustr[\eqs^{-1}]^{\dd} 
	= 
	\stab_{T^*[0,1]}\lioustr[\eqs^{-1}]
	$ 
thus\footnote{Here, we are utilizing notation from Section~\ref{section. stabilizing and localization} and from the Proof of Proposition~\ref{prop. symmetric T}.} inherits a symmetric monoidal structure, and the symmetric monoidal functor
	\eqnn
	\lioustr[\eqs^{-1}] \to \lioustr[\eqs^{-1}]^{\dd}
	\eqnd
is universal for symmetric monoidal functors out of $\lioustr[\eqs^{-1}]$ that send $T^*[0,1]$ to an invertible object. 

Finally, we have an equivalence of $\infty$-categories as follows:
	\eqnn
	\xymatrix{
	\lioustr[\eqs^{-1}]^{\dd} \ar[rr]_-{\sim}^-{\text{Cor~\ref{cor. stab loc is loc stab}}}
	&& \lioustrstab[(\eqs^{\dd})^{-1}]
	} 
	\eqnd
The result follows by endowing $\lioulocal \simeq \lioustrstab[(\eqs^{\dd})^{-1}]$ with the induced symmetric monoidal structure.
\end{proof}

\subsection{Proof of Lemma~\ref{lemma. main monoidal lemma}}

\nc{\mfldcmpct}{\mfld^{\mathsf{cmpct}}}
In this proof, we will replace $T^*[0,1]$ with $T^*[-1,1]$ for convenience. (We would like the origin to be in the interior of the interval.)

Let $\mfld$ denote the category (in the classical sense) of smooth manifolds, possibly with corners, and whose morphisms are smooth codimension zero embeddings; it is symmetric monoidal under direct product. We let $\mfldcmpct \subset \mfld$ denote the full subcategory of {\em compact} smooth manifolds, possibly with corners. There is a symmetric monoidal functor
	\eqn\label{eqn. T* functor}
	\mfldcmpct \to \lioustr,
	\qquad
	Q \mapsto T^*Q.
	\eqnd
Because this functor is symmetric monoidal, it sends the permutation  map $\tau$ of $[-1,1] \times [-1,1] \times [-1,1]$ to $T^*\tau$. 

\begin{defn}\label{defn. isotopy equivalence}
Let $X$ and $Y$ be objects of $\mfldcmpct$. We say that a smooth, codimension zero embedding $\overline{f}: X \to Y$ is an {\em isotopy equivalence} if there exist a smooth, codimension zero embedding $\overline{g}: Y \to X$ and isotopies $\overline{f}\overline{g} \sim \id_Y, \overline{g}\overline{f} \sim \id_X$.
\end{defn}

\begin{example}\label{example. every oriented f is an equivalence}
If $X = Y = [-1,1]^n$ and if $\overline{f}$ is orientation-preserving, then $\overline{f}$ is an isotopy equivalence. This is a result of the following stronger fact:
\end{example}

\begin{prop}\label{prop. O_n}
Fix $n \geq 0$ and a real number $0<\epsilon \leq 1$. The following spaces are all homotopy equivalent:
\enum
	\item\label{item. all f} The space of smooth, codimension-zero embeddings $\overline{f}: [-1,1]^n \to [-1,1]^n$.
	\item\label{item. small f} The space of those $\overline{f}$ such that the image of $\overline{f}$ is contained in $[-\epsilon,\epsilon]^n$.
	\item\label{item. all j} The space of smooth, codimension-zero embeddings $\overline{j} : [-1,1]^n \to \RR^n$. 
	\item\label{item. j_0} The space of those $\overline{j}$ which send the origin to the origin.
	\item\label{item. linear j} The space of those $\overline{j}$ which are (restrictions of) linear maps.
	\item\label{item. f preserving 0} The space of those $\overline{f}: [-1,1]^n \to [-1,1]^n$ which preserve the origin.
\enumd
In particular, all these spaces are homotopy equivalent to the orthogonal group $O(n)$.

Now fix an orientation on $[-1,1]^n$ and $\RR^n$. If we consider the subspaces of the above consisting only of orientation-preserving maps, they are all homotopy equivalent to $SO(n)$. 
\end{prop}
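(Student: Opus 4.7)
The plan is to reduce all six spaces to the classical fact that the space of invertible linear maps $GL_n(\RR)$ deformation retracts onto $O(n)$ via polar decomposition, by organizing the six spaces into two zigzags that both land at (5).

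First I would handle the chain (3)$\simeq$(4)$\simeq$(5). For (3)$\simeq$(4), evaluation at the origin, $\overline{j}\mapsto\overline{j}(0)$, defines a fibration from (3) to $\RR^n$ whose fiber over $0$ is (4); since $\RR^n$ is contractible, the inclusion of the fiber is a homotopy equivalence. For (4)$\simeq$(5), I would construct a deformation retraction by the linearization homotopy
\[
H_s(\overline{j})(x) := \tfrac{1}{s}\,\overline{j}(sx) \quad (s\in(0,1]), \qquad H_0(\overline{j})(x) := D\overline{j}(0)\cdot x,
\]
which is a continuous path in (4) from $\overline{j}$ to its derivative at $0$. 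Finally (5) is the space of restrictions of elements of $GL_n(\RR)$ to $[-1,1]^n$, which is homotopy equivalent to $O(n)$ by polar decomposition.

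Next I would relate the $[-1,1]^n$-valued spaces (1), (2), (6) to (3)--(5). To see (2)$\simeq$(1), I would pick a smooth family of shrinking isotopies $\{\sigma_t:[-1,1]^n\to[-1,1]^n\}_{t\in[0,1]}$ with $\sigma_0=\id$ and $\sigma_1([-1,1]^n)\subset[-\epsilon,\epsilon]^n$; then $(t,\overline{f})\mapsto\sigma_t\circ\overline{f}$ deformation-retracts (1) onto (2). For (2)$\simeq$(3), the inclusion $[-\epsilon,\epsilon]^n\subset\RR^n$ gives a natural map (2)$\to$(3), while post-composition with any fixed smooth embedding $\RR^n\hookrightarrow[-\epsilon,\epsilon]^n$ gives a map back; the two composites are homotopic to the identity via the same kind of shrinking argument, using contractibility of the space of smooth embeddings $\RR^n\hookrightarrow\RR^n$ fixing the origin modulo scaling. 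Finally (6) appears as the fiber over $0$ of the origin-evaluation fibration (1)$\to[-1,1]^n$, and the linearization homotopy (restricted to $[-1,1]^n$-valued maps using the shrinking isotopies) identifies it with (5) as well. The oriented variants follow by restricting each step to orientation-preserving maps, using that polar decomposition restricts to an equivalence between orientation-preserving linear isomorphisms and $SO(n)$.

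The main technical obstacle will be verifying that the linearization homotopy $H_s$ is well-defined and jointly continuous at $s=0$ in the weak $C^\infty$ topology, and that each $H_s(\overline{j})$ remains a smooth codimension-zero embedding. The first point follows from the Taylor expansion $\overline{j}(y) = D\overline{j}(0)\cdot y + |y|^2 R(y)$ with $R$ smooth, which gives $H_s(\overline{j})(x) = D\overline{j}(0)\cdot x + s|x|^2 R(sx)$---visibly smooth and continuous jointly in $(s,\overline{j},x)$, together with all derivatives in $x$. The second is immediate: for $s\in(0,1]$ the map $H_s(\overline{j})$ is a rescaling of an embedding, hence an embedding, while $H_0(\overline{j}) = D\overline{j}(0)$ is invertible because $\overline{j}$ is a smooth codimension-zero embedding at the origin.
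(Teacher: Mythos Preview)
Your proposal is correct and follows essentially the same route as the paper: shrinking/scaling for (1)$\sim$(2), translation for (3)$\sim$(4), the linearization homotopy $\tfrac{1}{s}\overline{j}(sx)$ for (4)$\sim$(5), and Gram--Schmidt/polar decomposition for (5)$\sim O(n)$. The only substantive differences are in (2)$\sim$(3) and in the treatment of (6): the paper writes $\emb([-1,1]^n,\RR^n)$ as the filtered colimit of the spaces $F_{k\epsilon}$ of embeddings with image in $[-k\epsilon,k\epsilon]^n$ (each inclusion being a trivial cofibration), and similarly handles (6)$\sim$(4) by a filtered colimit of origin-preserving versions $F'_{k\epsilon}$, whereas you use an explicit homotopy inverse via a fixed embedding $\RR^n\hookrightarrow[-\epsilon,\epsilon]^n$ and a fibration argument for $\mathrm{ev}_0$. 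Both work; the colimit argument is slightly cleaner in that it avoids having to check that $\mathrm{ev}_0\colon(1)\to[-1,1]^n$ is a fibration (which it is, but only over the interior $(-1,1)^n$, via isotopy extension). Your final remark about linearizing (6) directly to (5) is unnecessary once the fibration argument gives (6)$\sim$(1), and as stated is slightly off since $\tfrac{1}{s}\overline{f}(sx)$ need not stay inside $[-1,1]^n$---but this does not affect the overall argument.
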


\begin{proof}
	$\ref{item. all f} \sim \ref{item. small f}$: There is a smooth isotopy from $[-1,1]^n$ to $[-\epsilon,\epsilon]^n$, for example by scaling by $s$ as we take $s$ from $1$ to $\epsilon$.

	$\ref{item. small f}\sim \ref{item. all j} $: Let $F_\epsilon$ denote the space of all smooth, codimension-zero embeddings $[-1,1]^n \to \RR^n$ with image contained in $[-\epsilon,\epsilon]$. We have weak homotopy equivalences of simplicial sets as follows: 
		\eqnn
		\sing(F_\epsilon) \xrightarrow{\sim}
		\colim\left(\sing F_\epsilon \into \sing F_{2\epsilon}\into \ldots \right)
		\xrightarrow{\cong} \sing(\ref{item. all j}).
		\eqnd
		The first arrow is an equivalence because the inclusions $\sing(F_{n\epsilon}) \to \sing(F_{(n+1)\epsilon})$ are trivial cofibrations of simplicial sets. (The colimit is modeled as a honest ``union'' of simplicial sets; because all arrows in the colimit are cofibrations, this also models the homotopy colimit.) $\sing(\ref{item. all j})$ refers to the singular complex of the space \ref{item. all j}; so the final arrow is an isomorphism because  $[-1,1]^n$ is compact. 
	
	$\ref{item. all j} \sim \ref{item. j_0} $: Given $\overline{j}$, consider the $s$-dependent isotopy $(s,x) \mapsto \overline{j}(x) - s\overline{j}(0)$ for $x \in [-1,1]^n$.
	
	$\ref{item. j_0} \sim\ref{item. linear j} $: Consider the $s$-dependent isotopy $(s,x) \mapsto {\frac{1}{s}}\overline{j}(sx)$. (This is the usual trick of homotoping a map to its derivative at the origin.)
	
	$\ref{item. j_0}\sim\ref{item. f preserving 0}$: Let $F'_\epsilon$ denote the collection of those $j:[-1,1]^n \to \RR^n$ which preserve the origin, and which have image inside the cube $[-\epsilon,\epsilon]^n$. Because scaling respects the origin, we have a sequential diagram of trivial cofibrations
	\eqnn
	F'_\epsilon \into F'_{2\epsilon} \into F'_{4\epsilon} \into \ldots
	\eqnd
	whose (homotopy) colimit is equivalent to \ref{item. j_0}. When $\epsilon=1$, we see that the inclusion of $\ref{item. f preserving 0}$ as the first term of the sequential diagram exhibits a homotopy equivalence to \ref{item. j_0}.
	
	$\ref{item. linear j} \sim O(n)$: \ref{item. linear j} gives us the equivalence to the space $GL(n)$, which in turn is homotopy equivalent to $O(n)$ by Gram-Schmidt.
	
	The last claim of the proposition (regarding orientation-preserving versions) is proven by running through the exact same isotopies.
	\end{proof}

\begin{remark}\label{remark. 0 preserving is all f}
In fact, the obvious inclusion $\ref{item. f preserving 0} \to \ref{item. all f}$ factors the homotopy equivalence from \ref{item. f preserving 0} to \ref{item. j_0}, so we see that this obvious inclusion is a homotopy equivalence.
\end{remark}

\begin{notation}
We let $\eqs \subset \mfldcmpct$ denote the collection of isotopy equivalences. Given an object $X$ of $\mfldcmpct$, we let $\mfldcmpct_{/X}$ denote the slice category, and let
	\eqnn
	\eqs_{/X} \subset \mfldcmpct_{/X}
	\eqnd
be the collection of morphisms obtained by pulling back $\eqs$ along the forgetful functor $\mfldcmpct_{X} \to \mfldcmpct$.

Concretely, an object of $\mfldcmpct_{/X}$ is the data of a smooth, codimension zero embedding $j: A \to X$, and a morphism from $j$ to $j'$ is the data of a smooth, codimension zero embedding $f: A \to A'$ satisfying $j'f = j$. Such a morphism is in $\eqs_{/X}$ if and only if $f$ is an isotopy equivalence.
\end{notation}

\begin{remark}
The map $\mfldcmpct \to \mfldcmpct[\eqs^{-1}]$ induces a diagram
	\eqnn
	\xymatrix{
	\mfldcmpct_{/X} \ar[r] \ar[d] 
		&\mfldcmpct[\eqs^{-1}]_{/X} \ar[d] \\
	\mfldcmpct \ar[r] 
		& \mfldcmpct[\eqs^{-1}]
	}
	\eqnd
hence, by the universal property of localizations, we obtain a map
	\eqn\label{eqn. slice map localization}
	\mfldcmpct_{/X}[(\eqs_{/X})^{-1}] \to \mfldcmpct[\eqs^{-1}]_{/X}.
	\eqnd
\end{remark}

To prove Lemma~\ref{lemma. main monoidal lemma}, we will prove that $\tau$ is homotopic to $\id_{[-1,1]^3}$ in the localization $\mfldcmpct[\eqs^{-1}]$. The strategy to accomplish this follows techniques we learned from Lemma~5.4.5.10 of~\cite{higher-algebra}\footnote{Another approach would be to utilize techniques from Proposition~2.22 of~\cite{aft-2}.}, with slight modifications stemming from a small complication: $[-1,1]^3$ (because of its corners) does not admit a cover by open disks. The reason for considering ``$\mathrm{ex}$''-subscripted versions of smooth objects below is to overcome this issue.

Fix a smooth manifold $M$ with $\dim M = n$, and fix a smooth, codimension-zero embedding $i: M \into \overline{M}$ where $\overline{M}$ is a smooth {\em compact} manifold (possibly with corners), and for which $i(M) = \overline{M} \setminus \del \overline{M}$. (Here, $\del \overline{M}$ is the locus of all boundary and corner strata of $\overline{M}$.) The prototypical example for us is $i: (-1,1)^N \into [-1,1]^N$. (And in fact, all examples for us will be diffeomorphic to this example.) 

We also fix an orientation on $M$. 

Given this data, we define the following:
\nc{\diskex}{\mathrm{Disk}_{\mathrm{ex}}}
\nc{\embex}{\mathrm{Emb}_{\mathrm{ex}}}
\nc{\oriented}{\mathrm{or}}

\begin{notation}
 We let $B_M^\delta$ denote the category where an object is the data of a smooth, orientation-preserving,
 codimension zero embedding $j: (-1,1)^n \into M$ with the property that $j$ extends to a smooth embedding\footnote{That $\overline{j}$ is smooth, is an injection, and has derivative everywhere given by a linear isomorphism, is all important in what follows. For example, that $\overline{j}$ is an injection allows us to identify the closure of the image of $j$ with the image of $\overline{j}$.} $\overline{j}:[-1,1]^n \into \overline{M}$. A morphism from $j$ to $j'$ is the data of a smooth map $f: (-1,1)^n \to (-1,1)^n$ for which $j'f = j$. (Note it is then automatic that $f$ extends to a smooth embedding $\overline{f}: [-1,1]^n \to [-1,1]^n$.)
\end{notation}

\begin{remark}
There is a functor $B_M^\delta \to \mfldcmpct_{/\overline{M}}$ by sending $j: (-1,1)^n \to M$ to $\overline{j}$. We note that every morphism $f$ necessarily maps to an isotopy equivalence $\overline{f}$ (Example~\ref{example. every oriented f is an equivalence})
so we have an induced map of localizations 
	\eqn\label{remark. BM delta to slice}
	|B_M^\delta| \to \mfldcmpct_{/\overline{M}}[(\eqs_{/\overline{M}})^{-1}].
	\eqnd
Here, $|B_M^\delta|$ is the Kan complex obtained by localizing $B_M^\delta$ with respect to every morphism; in other words, it is the $\infty$-groupoid completion of the category $B_M^\delta$.
\end{remark}

\begin{notation}
We let $\diskex(\overline{M})$ denote the poset whose elements are open subsets $U \subset M$ that are equal to the image of some $j$ as above. (Equivalently, these are open subsets $U \subset M$ for which the closures $\overline{U}$ (taken in $\overline{M}$) are diffeomorphic to the cube $[-1,1]^n$.) The poset relation is the usual inclusion relation of subsets. 
\end{notation}

\begin{remark}
There is a functor 
		\eqn\label{eqn. p BM}
		p: B_M^\delta \to \diskex(\overline{M})
		\eqnd
---called take the image of $j$---and this functor is an equivalence of categories. In fact, $p$ is a coCartesian fibration classifying a functor from $\diskex(\overline{M})$ to the category of preorders. More precisely, thinking of every preorder as a category, and hence as a simplicial set (by taking the nerve), this coCartesian fibration classifies the map
		\eqn\label{eqn. functor PU}
		\diskex(\overline{M}) \to \sset,
		\qquad
		U \mapsto p^{-1}(U).
		\eqnd.
\end{remark}	

\begin{notation}[$\embex^{\oriented}$]
Given oriented manifolds $A$ and $B$, let $\emb^{\oriented}(A,B)$ be the space of smooth, codimension zero orientation-preserving embeddings, and---given further the data of smooth compactifications $A \subset \overline{A}, B \subset \overline{B}$ to manifolds with corners---let $\embex^{\oriented}(\overline{A},\overline{B}) \subset \emb^{\oriented}(A,B)$ denote the subspace consisting of those $j: A\to B$ that smoothly extend to an embedding $\overline{A} \to \overline{B}$. 
\end{notation}

\begin{lemma}\label{lemma. pullback at point of M}
Fix an embedding $j: (-1,1)^n \to M$ which extends to a smooth embedding $\overline{j}: [-1,1]^n \to \overline{M}$. Let $\embex^{\oriented,0}((-1,1)^n,(-1,1)^n) \subset \embex^{\oriented}((-1,1)^n,(-1,1)^n)$ denote the subspace consisting of those $f$ that respect the origin.

The diagram
	\eqn\label{eqn. lemma pullback at point of M}
	\xymatrix{
	\embex^{\oriented,0}((-1,1)^n,(-1,1)^n) \ar[r]^-{j \circ} \ar[d]
		&\embex^{\oriented}((-1,1)^n,M) \ar[d]^{\ev_0}\\
	\ast \ar[r]^-{j(0)}
		&M 
	}
	\eqnd
is a homotopy pullback square.
\end{lemma}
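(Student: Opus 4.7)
\emph{Proof plan.} The plan is to exhibit $\ev_0\colon\embex^{\oriented}((-1,1)^n,M)\to M$ as a Serre fibration whose genuine fiber over $j(0)$ is weakly equivalent to $\embex^{\oriented,0}((-1,1)^n,(-1,1)^n)$ via the map $j\circ-$; these two statements together force the square to be a homotopy pullback. The fibration property is a consequence of the ambient isotopy extension theorem for manifolds with corners: given a path $\gamma\colon[0,1]\to M$ and a lift $f_0$ of $\gamma(0)$, we extend $\gamma$ to a compactly supported smooth isotopy $\Phi_t$ of $\overline{M}$ supported in a neighborhood of $\overline{f_0}([-1,1]^n)$ and set $f_t:=\Phi_t\circ f_0$. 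Since $\Phi_t$ is a diffeomorphism of the \emph{compactified} manifold $\overline{M}$, the lifted family lies in $\embex^{\oriented}$, and the same construction works in parametrized families over a CW base $K$.

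Next, we would identify the strict fiber $F:=\ev_0^{-1}(j(0))$ up to weak equivalence. Let $F'\subset F$ be the subspace of those $f$ whose image lies inside $j((-1,1)^n)\subset M$. The assignment $g\mapsto j\circ g$ is a homeomorphism
\[
\embex^{\oriented,0}((-1,1)^n,(-1,1)^n)\xrightarrow{\ \cong\ }F',
\]
with inverse $f\mapsto j^{-1}\circ f$; extendability is preserved because any smooth extension $\overline{f}$ of $f\in F'$ necessarily has image in $\overline{j}([-1,1]^n)$ (the closure of $j((-1,1)^n)$ in $\overline{M}$), so $\overline{j}^{-1}\circ\overline{f}$ is a smooth extension of $j^{-1}\circ f$. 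The inclusion $F'\hookrightarrow F$ is then a weak equivalence via the scaling homotopy $f_s(x):=f(sx)$ for $s\in(0,1]$: for any compact family $K\to F$ there is a uniform $s_K\in(0,1]$ such that $f_s\in F'$ whenever $f\in K$ and $s\leq s_K$, and reparametrizing in $s$ (rel any subfamily already landing in $F'$) gives the required deformation.

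The main obstacle will be consistently maintaining the extendability condition throughout the argument. For isotopy extension this is managed by performing the isotopy inside $\overline{M}$ rather than merely inside $M$, so that extensions to $[-1,1]^n\to\overline{M}$ are preserved automatically. For the scaling homotopy, any $f_s$ with $s<1$ has image precompact in $M$, so $f_s$ trivially extends to $\overline{M}$. Once these technicalities are in place, the three steps above combine to identify the homotopy fiber of $\ev_0$ over $j(0)$ with $\embex^{\oriented,0}((-1,1)^n,(-1,1)^n)$ via $j\circ-$, which is exactly the assertion that \eqref{eqn. lemma pullback at point of M} is a homotopy pullback square.
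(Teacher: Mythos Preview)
Your argument is correct and complete in spirit, though it takes a genuinely different route from the paper. The paper never proves $\ev_0$ is a fibration directly. Instead, it replaces both columns of the square by equivalent ``germ'' spaces: defining $\germ(M)$ as the filtered colimit of $\sing\embex^{\oriented}((-1/2^k,1/2^k)^n,M)$ along restriction maps, one gets $\embex^{\oriented}((-1,1)^n,M)\simeq\germ(M)$, and then the diagram
\[
\xymatrix{
\germ((-1,1)^n)\ar[r]\ar[d] & \germ(M)\ar[d]\\
\sing((-1,1)^n)\ar[r]^-{j} & \sing(M)
}
\]
is a \emph{strict} pullback of simplicial sets with vertical Kan fibrations. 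The paper's scaling is thus hidden in the passage to germs, and no isotopy extension theorem is invoked. Your approach trades this for the more classical ``$\ev_0$ is a fiber bundle'' argument, which is equally valid and arguably more transparent to a differential topologist; the price is that you must smooth continuous families to apply isotopy extension, whereas the germ argument works directly at the level of Kan complexes.

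One small imprecision: you write that the extending isotopy $\Phi_t$ can be taken ``supported in a neighborhood of $\overline{f_0}([-1,1]^n)$,'' but the path $\gamma$ may well leave any such fixed neighborhood. What you actually need (and what suffices) is that $\Phi_t$ be compactly supported in the interior $M$, which is automatic since $\gamma([0,1])\subset M$ is compact; the extension to $\overline{M}$ is then by the identity on the boundary. With that correction your argument goes through.
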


\begin{proof}
\nc{\germ}{\mathrm{Germ}}
This is a version of Remark~5.4.1.11~\cite{higher-algebra} for manifolds with corners. We provide the arguments for the reader's convenience.

Letting $C(t):= (-t,t)^n$ with choice of compatification $[-t,t]^n$, we have a filtered diagram of homotopy equivalences parametrized by $t \in (0,1]$
	\eqnn
	n \mapsto \sing\embex^{\oriented}(C(1/2^n),M).
	\eqnd
We let $\germ(M)$ denote the colimit simplicial set---informally, a vertex of $\germ(M)$ is the data of a point $x \in M$ and the germ of an embedding of a cube sending the origin of the cube to $x$. It is easy to see that $\germ(M)$ is a Kan complex. Restriction defines a homotopy equivalence 
	\eqn\label{eqn. emb disks is germ}
	\sing\embex^{\oriented}( (-1,1)^n ,M)
	\xrightarrow{\sim} \germ(M).
	\eqnd
Now choose an element $j \in \embex^{\oriented}((-1,1)^n,M)$. The commutative diagram	
	\eqn\label{eqn. germ pullback}
	\xymatrix{
	\germ( (-1,1)^n) \ar[r] \ar[d] & \germ(M) \ar[d] \\
	\sing((-1,1)^n) \ar[r]^{j} & \sing(M)
	}
	\eqnd
is obviously a pullback diagram, and the vertical arrows are Kan fibrations. (Here, we note the importance of having taken $\sing(M)$ rather than $\sing(\overline{M})$.) Thus~\eqref{eqn. germ pullback} is a homotopy pullback diagram.

By applying the equivalence~\eqref{eqn. emb disks is germ} we see that the diagram~\eqref{eqn. lemma pullback at point of M} is equivalent to the diagram~\eqref{eqn. germ pullback}, and we are finished.
\end{proof}

\begin{notation}[$\cC_M$ and $B_M$]
We define a topologically enriched category $\cC_M$ with two objects---$(-1,1)^n$ and $M$---and morphism spaces as follows:
		\eqnn
		\hom( (-1,1)^n,M) := \embex^{\oriented}((-1,1)^n,M),
		\qquad
		\hom( (-1,1)^n, (-1,1)^n) := \embex^{\oriented}( (-1,1)^n, (-1,1)^n),
		\eqnd
	and
		\eqnn
		\qquad
		\hom(M,M) = \ast,
		\qquad
		\hom( M, (-1,1)^n) = \emptyset.
		\eqnd
		We let $B\embex^{\oriented}$ denote the full subcategory spanned by $(-1,1)^n$. We then define $B_M$ to be the fiber product $\infty$-category
			\eqnn
			B_M:= N(B\embex^{\oriented}) \times_{N(\cC_M)} N(\cC_M)_{/M}.
			\eqnd
\end{notation}

\begin{remark}
Informally, this is the $\infty$-category whose objects are orientation-preserving, codimension-zero embeddings $j: (-1,1)^n \to M$ that smoothly extend to the chosen compactifications. A morphism from $j$ to $j'$ is the data of a map $f: (-1,1)^n \to (-1,1)^n$ in $\embex^{\oriented}((-1,1)^n, (-1,1)^n)$ together with an isotopy from $j'f$ to $j$---an isotopy through smooth embeddings that extend to the compactifications. 

$B_M$ is in fact a Kan complex.
\end{remark}

\begin{prop}\label{prop. BM contractible when M is cube}
$B_M$ is homotopy equivalent to $M$. In particular, if $M \cong (-1,1)^n$ with chosen compactification $\overline{M} \cong [-1,1]^n$, then $B_M$ is weakly contractible. 
\end{prop}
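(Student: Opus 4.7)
The plan is to identify $B_M$ with a homotopy orbit space and then use Lemma~\ref{lemma. pullback at point of M} to analyze the resulting fibration over $M$. Write $G := \embex^{\oriented}((-1,1)^n,(-1,1)^n)$ and $E := \embex^{\oriented}((-1,1)^n, M)$, and let $G_0 \subset G$ be the sub-monoid of embeddings preserving the origin. Unwinding the fiber product definition, the projection $B_M \to N(B\embex^{\oriented}) \simeq BG$ is a left fibration whose straightening is the right action of $G$ on $E$ by precomposition; consequently $B_M$ is naturally weakly equivalent to the Borel/bar construction $E /\!/ G$.

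The next step is to replace $G$ by its submonoid $G_0$. By Proposition~\ref{prop. O_n} together with Remark~\ref{remark. 0 preserving is all f}, the inclusion $G_0 \hookrightarrow G$ is a homotopy equivalence (both sides are models for $SO(n)$). The standard change-of-groups principle then upgrades this to a weak equivalence $E /\!/ G_0 \xrightarrow{\sim} E /\!/ G$, so $B_M \simeq E /\!/ G_0$. The payoff is that the evaluation $\ev_0 \colon E \to M$, $j \mapsto j(0)$, is genuinely $G_0$-equivariant (for the trivial action on $M$), since $f \in G_0$ satisfies $(j \circ f)(0) = j(0)$. Hence $\ev_0$ descends to a canonical map
\[
  \overline{\ev_0} \colon E /\!/ G_0 \to M.
\]

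I would finish by showing that $\overline{\ev_0}$ has contractible homotopy fibers. Fix $x \in M$ and any basepoint embedding $j_x$ with $j_x(0) = x$. Lemma~\ref{lemma. pullback at point of M} identifies the homotopy fiber of $\ev_0 \colon E \to M$ over $x$ with $G_0$ via $f \mapsto j_x \circ f$. Under this identification, the $G_0$-action on the fiber (inherited from precomposition on $E$) corresponds to the right regular action $G_0 \curvearrowright G_0$, which is free; its homotopy quotient is $EG_0 \simeq \ast$. Thus each homotopy fiber of $\overline{\ev_0}$ is contractible, so $\overline{\ev_0}$ is a weak equivalence. Composing gives $B_M \simeq E /\!/ G \simeq E /\!/ G_0 \simeq M$, and the special case $M \cong (-1,1)^n$ follows because this cube is contractible. (Alternatively, in that special case the identity embedding is a terminal object of $B_{(-1,1)^n}$, since for any $j$ the morphism space to $\id$ is the homotopy fiber of $G \xrightarrow{=} G$ over $j$, which is contractible.)

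The main obstacle is making the identification $B_M \simeq E /\!/ G$ fully rigorous at the level of $\infty$-categories and then checking equivariance carefully; in particular, some care is needed to see that the $G_0$-action on the strict fiber of $\ev_0$ really models the right regular action up to coherent homotopy, so that one may invoke $EG_0 \simeq \ast$. Once that bookkeeping is in place, the proof is essentially the parametrized version of the classical observation that the space of framed points in a manifold is homotopy equivalent to the manifold.
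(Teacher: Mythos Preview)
Your proof is correct and follows essentially the same approach as the paper's own argument. The paper works in slice-category language: it defines subcategories $\cC_M' \subset \cC_M$ (origin-preserving endomorphisms) and $\cC_M''$ (trivial endomorphisms, with $\hom((-1,1)^n,M)=M$), then shows $B_M \simeq B_M' \to B_M''$ is an equivalence by comparing the fiber sequences $G_0 \to E \to B_M'$ and $\ast \to M \to B_M''$ via Lemma~\ref{lemma. pullback at point of M} and a loop-space diagram chase. Your Borel-construction phrasing $B_M \simeq E/\!\!/G \simeq E/\!\!/G_0 \to M$ with contractible homotopy fibers $(G_0)_{hG_0}\simeq\ast$ is the same argument repackaged; both hinge on the same lemma at the same place.

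Two minor points. First, the projection $B_M \to N(B\embex^{\oriented})$ is a \emph{right} fibration (it is pulled back from the slice $N(\cC_M)_{/M}\to N(\cC_M)$), not a left fibration; this is consistent with your stated right $G$-action by precomposition, so only the word needs correcting. Second, your identification $B_M \simeq E/\!\!/G$ and the contractibility of $G_0/\!\!/G_0$ implicitly use that $G$ and $G_0$ are grouplike topological monoids, which follows from Proposition~\ref{prop. O_n}; it is worth saying so explicitly. Your parenthetical terminal-object argument for $M=(-1,1)^n$ is a clean addition not present in the paper.
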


\begin{proof}
Our proof of this parallels Remark~5.4.5.2 of~\cite{higher-algebra}, with extra caution sprinkled in (because we are dealing with manifolds with corners). 

Let $\cC_M' \subset \cC_M$ denote the subcategory for which 
	\eqnn
	\hom_{\cC_{M'}}( (-1,1)^n,(-1,1)^n) \subset \hom_{\cC_M}( (-1,1)^n,(-1,1)^n)
	\eqnd
consists only of those embeddings that respect the origin (and with all other hom spaces unchanged). 

First, we see that inclusion $\cC_M' \into \cC_M$ is a (Dwyer-Kan) equivalence of Kan-complex-enriched categories. This follows from Remark~\ref{remark. 0 preserving is all f}.

Thus, defining $B_M' = N(\hom_{\cC_{M'}}( (-1,1)^n,(-1,1)^n)) \times_{N(\cC_{M'})} N(\cC_{M})_{/M}$, the induced map $B_M' \to B_M$ is an equivalence.

On the other hand, let $\cC_M''$ denote the simplicially enriched category with the same objects as $\cC_M'$, but where the only endomorphism of $(-1,1)^n$ is the identity, and where we declare $\hom_{\cC_M''}( (-1,1)^n, M) = M$ (with other hom spaces unchanged). Then ``evaluation at the origin'' defines a functor $\cC_M' \to \cC_M''$. Though this is certainly not an equivalence, we claim that the induced map
	\eqn\label{eqn. BM' to BM''}
	B_M' \to B_M''
	\eqnd 
is. Here, $B_M''$ is defined analogously to $B_M$ and $B_M'$:
	\eqnn
	B_M'' := \Delta^0 \times_{N(\cC_M'')} N(\cC_{M}'')_{/M}.
	\eqnd
To see \eqref{eqn. BM' to BM''} is an equivalence, choose an object $j: (-1,1)^n \to M$ of $B_M'$. This choice induces a diagram
	\eqnn
	\xymatrix{
	\hom_{\cC_{M}'}( (-1,1)^n,(-1,1)^n) \ar[r]^-{j \circ} \ar[d]
		&\hom_{\cC_{M'}}( (-1,1)^n,M) \ar[r] \ar[d]^{\ev_0}
		&B_{M}' \ar[d] \\
	\ast = \hom_{\cC_{M}''}( (-1,1)^n,(-1,1)^n) \ar[r]^-{j(0)}
		&M = \hom_{\cC_{M}''}( (-1,1)^n,M) \ar[r] 
		&B_{M}''
	}
	\eqnd
where each row is a homotopy fiber sequence.
The left square is a homotopy pullback square by Lemma~\ref{lemma. pullback at point of M}. 

We thus observe a diagram
	\eqnn
	\xymatrix{
		& \Omega_j B_M' \ar[rrr] \ar[dd] \ar[dl]_{\sim}
			&&& \ast \ar[dd]  \ar[dl] \\
		\Omega_{j(0)}M \ar[dd] \ar[rrr]
		&&& \ast \ar[dd] \\
		& \hom_{\cC_{M}'}( (-1,1)^n,(-1,1)^n) \ar[rrr]^{j \circ} \ar[dd] \ar[dl]
			&&&\hom_{\cC_{M'}}( (-1,1)^n,M) \ar[dd]  \ar[dl]_{\ev_0} \\
	\ast \ar[dd] \ar[rrr]
		&&& M \ar[dd] \\
		& \ast \ar[rrr] \ar[dl]
			&&& B_M' \ar[dl] \\
	\ast \ar[rrr] 
		&&&B_M''
	}
	\eqnd
where the topmost face is a homotopy pullback square; in particular, the map $\Omega_j B_M' \to \Omega_{j(0)} M$ is a homotopy equivalence (as indicated).

So it suffices to show that the map $B_M' \to M$ induces a bijection on connected components. This is obvious, and our proof is finished.
\end{proof}

\begin{notation}[$q$ and $\tilde B$]
Note that if $U$ is any object of $\diskex(\overline{M})$, we by definition have a smooth manifold with corners $\overline{U}$ defined by taking the closure of $U$. This results in a functor
		\eqn\label{eqn.B_U functor}
		\diskex(\overline{M}) \to \ssets,
		\qquad
		U \mapsto B_U.
		\eqnd
We let $q: \tilde B \to \diskex(\overline{M})$ denote the coCartesian fibration classifying this functor. 	(The map $q$ can be concretely modeled applying the relative nerve construction of~\cite[Section 3.2.5]{htt} to the functor~\eqref{eqn.B_U functor}.) So for example, a vertex of $B_U$ is a pair $(U, j: (-1,1)^n \to U)$ where $j$ extends to a smooth embedding $[-1,1]^n \to \overline{U}$.

Now let $p$ be the map from~\eqref{eqn. p BM}. The inclusion $p^{-1}(U) \to B_U$ (from the strictly commuting diagrams to the isotopy-commuting diagrams) is natural in $U$, so defines a map of coCartesian fibrations:
		\eqnn
		\xymatrix{
		B_M^\delta \ar[rr] \ar[dr]_p && \widetilde{B} \ar[dl]^q\\
		& \diskex(\overline{M})
		}
		\eqnd 
\end{notation}
	
\begin{lemma}\label{lemma. p q equivalences}
$p$ and $q$ are weak homotopy equivalences of simplicial sets. In particular, the map $B_M^\delta \to \tilde B$ is a weak homotopy equivalence of simplicial sets.
\end{lemma}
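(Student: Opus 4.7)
The plan is to leverage the coCartesian fibration structure: a standard principle (a special case of the $\infty$-categorical Quillen Theorem A, or more directly the computation of a coCartesian fibration's total space as the homotopy colimit of its classifying functor) says that a coCartesian fibration with weakly contractible fibers is a weak homotopy equivalence. Since $p$ and $q$ are both coCartesian over $\diskex(\overline{M})$, it will suffice to show that the fibers $p^{-1}(U)$ and $q^{-1}(U) = B_U$ are weakly contractible for every $U \in \diskex(\overline{M})$, after which the third claim follows from two-out-of-three applied to the triangle over $\diskex(\overline{M})$.

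For $q$, the content is already encoded in Proposition~\ref{prop. BM contractible when M is cube}: since an object $U$ of $\diskex(\overline{M})$ has closure $\overline{U}$ diffeomorphic to $[-1,1]^n$, we have $U$ diffeomorphic to $(-1,1)^n$, so $B_U \simeq U$ is weakly contractible.

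For $p$, I will unwind the fiber $p^{-1}(U)$ directly. Objects are embeddings $j:(-1,1)^n \to M$ whose image is precisely $U$; morphisms from $j$ to $j'$ are smooth maps $f:(-1,1)^n \to (-1,1)^n$ with $j'\circ f = j$. Because the images coincide, such an $f$ is forced to be $j'^{-1} \circ j$, a uniquely determined orientation-preserving diffeomorphism of $(-1,1)^n$ extending smoothly over the closure. Thus between any two objects of $p^{-1}(U)$ there is a unique morphism in each direction, so $p^{-1}(U)$ is a contractible groupoid (equivalent to the terminal category), and its nerve is weakly contractible.

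The hardest bookkeeping step will be verifying cleanly that $p$ really is a coCartesian fibration with the fiberwise description above (the content of the paragraph preceding the lemma, which we would want to spell out using the relative-nerve description of a functor to preorders), so that the ``contractible fibers $\Rightarrow$ weak equivalence'' principle genuinely applies; once that is in hand the rest is formal. With $p$ and $q$ both weak homotopy equivalences, the map $B_M^\delta \to \tilde B$, being a map over the common base, is also a weak homotopy equivalence by two-out-of-three, completing the lemma.
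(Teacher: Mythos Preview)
Your argument is correct and, for $q$, identical to the paper's: coCartesian fibration with contractible fibers (via Proposition~\ref{prop. BM contractible when M is cube}) gives a weak equivalence.

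For $p$, your route and the paper's diverge slightly in emphasis. The paper simply says $p$ is an equivalence of categories (this was already recorded in the remark introducing $p$), hence a weak homotopy equivalence on nerves. Your fiber analysis---that $p^{-1}(U)$ is a groupoid with a unique morphism between any two objects---is exactly what proves $p$ is an equivalence of categories (full faithfulness plus surjectivity on objects), so you are recovering the same fact from the inside. But having observed this, you do not need to worry about the ``hardest bookkeeping step'' of verifying that $p$ is a coCartesian fibration: once you know $p$ is an equivalence of categories, the weak-equivalence conclusion is immediate, with no appeal to the fibration structure needed. In other words, your concern about bookkeeping dissolves once you recognize what your own fiber computation has shown.
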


\begin{proof}
That $p$ is a weak equivalence is obvious because $p$ is an equivalence of categories. 
$q$ is a coCartesian fibration, so to show it is a weak equivalence, it suffices to show that $q$ has contractible fibers. This follows from Proposition~\ref{prop. BM contractible when M is cube}. 
\end{proof}
	
\begin{notation}[$\psi'$]
	There is also the natural functor 
		\eqn\label{eqn. psi'}
		\psi': \widetilde{B} \to B_M
		\eqnd
	obtained by post-composing any $j$ with the inclusion $U \subset M$. 
\end{notation}

\begin{lemma}\label{lemma. psi' equivalence}
$\psi'$ is a weak homotopy equivalence.
\end{lemma}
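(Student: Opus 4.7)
The plan is to reduce this to a Seifert--van Kampen computation, exploiting that both $\widetilde{B}$ and $B_M$ have already been identified (via evaluation at the origin) with the underlying space $M$.

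First I would set up the reduction. By Proposition~\ref{prop. BM contractible when M is cube}, evaluation at the origin defines a weak homotopy equivalence $\ev_0: B_M \xrightarrow{\sim} \sing(M)$, and likewise $B_U \xrightarrow{\sim} \sing(U)$ for every $U \in \diskex(\overline{M})$, naturally in $U$. Since $q: \widetilde{B} \to \diskex(\overline{M})$ is a coCartesian fibration classifying the functor $U \mapsto B_U$, Thomason's theorem (or straightening/unstraightening) identifies $|\widetilde{B}|$ with the homotopy colimit
\[
  |\widetilde{B}| \;\simeq\; \hocolim_{U \in \diskex(\overline{M})} |B_U| \;\simeq\; \hocolim_{U \in \diskex(\overline{M})} \sing(U).
\]
Under this identification, $\psi'$ is intertwined with the natural map $\hocolim_{U} \sing(U) \to \sing(M)$ induced by the inclusions $U \subset M$. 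Thus the lemma reduces to showing that this latter map is a weak equivalence.

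Next I would supply the geometric input: $\diskex(\overline{M})$ is a \emph{good basis} for the topology of $M$. Every point $x \in M$ lies in the interior of $\overline{M}$, so admits arbitrarily small closed cubical neighborhoods $\overline{j}([-1,1]^n) \subset \overline{M} \setminus \partial\overline{M}$ whose interiors lie in $\diskex(\overline{M})$. Moreover, given two elements $U, U' \in \diskex(\overline{M})$, any point of $U \cap U'$ admits a cubical neighborhood belonging to $\diskex(\overline{M})$ and contained in $U \cap U'$; in particular $U \cap U'$ is the union of the objects of $\diskex(\overline{M})$ refining it.

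With this in hand, I would invoke the $\infty$-categorical Seifert--van Kampen theorem (Theorem~A.3.1 of \cite{htt}, or the version used in the foundations of factorization homology \cite{aft-1}; this is precisely the template of Lemma~5.4.5.10 of \cite{higher-algebra}) to conclude
\[
  \hocolim_{U \in \diskex(\overline{M})} \sing(U) \;\simeq\; \sing(M).
\]
Combining this with the previous paragraph yields the desired equivalence $\psi'$. The main technical obstacle is the Seifert--van Kampen step: $\diskex(\overline{M})$ is a basis rather than a cover, so one must either restrict to a countable hypercover-compatible subposet cofinally or invoke the ``basis version'' of the theorem. Both routes are standard, and the good-basis properties verified above are precisely what both formulations require.
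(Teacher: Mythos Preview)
Your argument is correct and follows the same approach as the paper, which simply defers to the second half of the proof of Lemma~5.4.5.10 in~\cite{higher-algebra}; you have essentially unpacked that reference. One small correction: the Seifert--van Kampen theorem you invoke is Theorem~A.3.1 of~\cite{higher-algebra}, not of~\cite{htt}.
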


\begin{proof}
One can copy-paste the (second half of) the proof of Lemma~5.4.5.10 in~\cite{higher-algebra}. The only important note is that we have deliberately taken our $U$ to be open so that the collection of $U$ forms an open cover of $M$ (which is a smooth manifold with no boundary or corners).
\end{proof}

\begin{proof}[Proof of Lemma~\ref{lemma. main monoidal lemma}.]
By Lemmas~\ref{lemma. p q equivalences} and~\ref{lemma. psi' equivalence} we obtain a composition of weak homotopy equivalences $B_M^\delta \to \widetilde{B} \to B_M$, and in particular, a homotopy equivalence of Kan complexes 
	\eqnn
	|B_M^\delta| \to B_M.
	\eqnd
Choosing an inverse to this homotopy equivalence, consider the composition
	\eqnn
	B_M \xrightarrow{\sim} |B_M^{\delta}| \xrightarrow{\eqref{remark. BM delta to slice}} (\mfldcmpct_{/\overline{M}})[(\eqs_{/\overline{M}})^{-1}] \xrightarrow{\eqref{eqn. slice map localization}} \mfldcmpct[\eqs^{-1}]_{/\overline{M}}.
	\eqnd
When $M = [-1,1]^3$, in $B_M$ there is an edge given by an isotopy-commuting diagram 
	\eqnn
	\xymatrix{
	[-1,1]^3 \ar[dr]^{\tau} \ar[rr]^{\id} && [-1,1]^3\ar[dl]_{\id} \\
	& [-1,1]^3
	}
	\eqnd
because $\tau$ is orientation-preserving.
Its image in the slice $\mfldcmpct[\eqs^{-1}]_{/[-1,1]^3}$ exhibits a homotopy in $\mfldcmpct[\eqs^{-1}]$ from $\tau$ to $\id_{[-1,1]^3}$.

Thus, the natural functor of $\infty$-categories $\mfldcmpct[\eqs^{-1}] \to \lioustr[\eqs^{-1}]$---induced by \eqref{eqn. T* functor}---shows that $T^*\tau$ is homotopic to $\id_{T^*[-1,1]^3}$ in $\lioustr[\eqs^{-1}]$. 
\end{proof}

\clearpage

\section{Equivalent localizations}
\label{section. equivalent localizations}
There are four natural candidates for morphisms to invert (i.e., to declare to be equivalences) between Liouville sectors:
\enum
\item (Bordered) deformation equivalences (Definition~\ref{defn. deformation equivalence}),
\item Sectorial equivalences (Definition~\ref{defn. sectorial equivalence}), 
\item Trivial inclusions (Definition~\ref{defn. trivial inclusion} below),
\item Movie inclusions (Definition~\ref{defn. movie inclusion} below). 
\enumd

Here we prove Theorem~\ref{thm. three localizations of lioustr}, which shows that the localizations of $\lioustr$ with respect to the first two are equivalent, and that localization with respect to all four are equivalent upon stabilization.

\begin{defn}[Trivial inclusions, following Section~2.4 of~\cite{gps}]\label{defn. trivial inclusion}
A {\em trivial inclusion} is a sectorial embedding $f: M \to N$ for which there exists:
	\enum
	\item A smooth family of 1-forms $\{\lambda_t\}_{t\in[0,1]}$ on $M$ rendering each $(M,\lambda_t)$ a Liouville sector, and
	\item A smooth isotopy $\{f_t: M \to N\}_{t \in [0,1]}$
	\enumd
	such that
	\enum[(i)]
	\item For all $t$, $f_t: (M,\lambda_t) \to N$ is a sectorial embedding,
	\item $f_0 = f$ and $\lambda_0 = \lambda_M$, and
	\item $f_1(M) = N$. (In light of the other conditions, this condition is equivalent to demanding that $f_1$ is an isomorphism of Liouville sectors between $(M,\lambda_1)$ and $N$.)
	\enumd
	We emphasize here that the $\lambda_t$ for differing $t$ may not be related to each other by compactly supported deformations. In particular, the $f_t$ do not need to form an isotopy through sectorial embeddings for a fixed Liouville structure on $M$.
\end{defn}

\begin{defn}[Movie inclusions]\label{defn. movie inclusion}
Let $S = [0,1]$, fix some $1>\epsilon>0$ and declare $S_0 = [0,\epsilon]$ or $S_0 = [1-\epsilon, 1]$.
Finally, fix a collared, bordered, $S$-parametrized exact family of Liouville structures on $M$, and let $M \times T^*S$ be the associated movie (Construction~\ref{construction. movie}) and assume that the $S$-parametrized exact family is constant in $S_0 \subset S$. There is then a unique Liouville structure on $M$ for which the inclusion $M \tensor T^*S_0 \into M \times T^*S$ is a strict sectorial embedding.

For any choice of $\epsilon$, either choice of $S_0$, and any choice of exact family as above, we call the strict sectorial embedding $M \tensor T^*S_0 \into M \times T^*S$ a {\em movie inclusion of intervals}. 
\end{defn}

\begin{remark}
A prototypical trivial cofibration of topological spaces is an inclusion $X \times \{0\} \into X \times [0,1]$. We view movie inclusions as the sectorial analogue of such ``endpoint inclusions.''
\end{remark}

\begin{example}
Fix $0 < a_i < b_i < 1$ for $i = 1, \ldots, n$. 
By allowing ourselves to vary $M$, the codimension zero inclusion
	\eqnn
	T^*([a_1,b_1] \times \ldots \times [a_n,b_n])
	\into
	T^*[0,1]^n
	\eqnd
may be written as a composition of movie inclusions and Liouville isomorphisms. For example, when $n=1$, the inclusion $[a,b] \into [0,1]$ may be factored as such a composition by doing the following. Choose 
	\begin{itemize}
	\item Any diffeomorphism $\phi: \RR \to \RR$ taking $[a,b]$ to the interval $[a,1]$ (and fixing $a$). 
	\item Any diffeomorphism $\psi: [\phi^{-1}(0),b] \to [0,b]$ which restricts to the identity along $[a,b]$. 
	\end{itemize}
Consider the diagram
	\eqnn
	\xymatrix{
	\RR \ar[r]^\phi_{\cong}
		& \RR \ar@{=}[r]
		& \RR \ar[r]^{\phi^{-1}}_{\cong}
		& \RR \\
	[a,b] \ar[u]_{\subset}  \ar[r]_-{\cong}
		& [a, 1] \ar[r]^{\iota} \ar[u]^{\subset}
		& [0,1] \ar[r]_-{\cong} \ar[u]^{\subset}
		& [\phi^{-1}(0),b]  \ar[u]^{\subset} \ar[r]^-{\psi}_-{\cong}
		& [0,b] \ar[r]^{\iota}
		& [0,1]
	}
	\eqnd
where each $\iota$ is a standard inclusion.
Then the bottom horizontal arrows, upon passage to cotangent bundles, exhibit the standard inclusion $T^*[a,b] \into T^*[0,1]$ as a composition of movie inclusions and Liouvile isomorphisms.

For $n=2$, note that $T^*$ of the inclusion $[a_1,1] \times [a_2,1] \into [0,1]^2$ can be written as a composition of two movie inclusions, where we change which of the two components of the square are treated as the $M$ component (in the notation of Definition~\ref{defn. movie inclusion}). This is what we mean by ``varying $M$.''

Now suppose $j: [0,1]^n \into [0,1]^n$ is  any smooth, codimension zero embedding (that need not respect the stratification of the cubes). Then we may find a commutative diagram of embeddings as follows:
    \begin{equation}\nonumber
    \xymatrix{
    [0,1]^n \ar[r]^j \ar[d]_{h'}
    & [0,1]^n \ar[r]^{h}
    & [0,1]^n \\
    [0,1]^n \ar@{-->}[urr]^{j'}_{\cong}
    }
    \end{equation}
Here, the arrows labeled $h$ and $h'$ are compositions of the form $[0,1]^n \cong [\epsilon,1-\epsilon]^n \into [0,1]^n$ -- i.e., diffeomorphisms followed by a standard inclusion -- so that the images of $h$ and $h'$ are each bounded away from the boundary of $[0,1]^n$. Then there exists a {\em diffeomorphism} $j' : [0,1]^n \to [0,1]^n$ extending the top horizontal composition in the diagram. That $hj = j' h'$ shows that, upon inverting $T^*h$ and $T^*h'$ (in particular, upon inverting movie inclusions) the arbitrary embedding $j$ may be obtained up to homotopy as a composition of $T^*$ of diffeomorphisms and movie inclusions.

In particular, inverting movie inclusions renders $T^*$ of isotopy equivalences of cubes into homotopy-invertible morphisms. (This will also be a consequence of Theorem~\ref{thm. three localizations of lioustr} below.)
\end{example}

\begin{prop}\label{prop. equivalent oo categorical localizations}
Let $\cC$ be an $\infty$-category and let $S \subset T$ be two collections of morphisms in $\cC$. Assume every morphism of $T$ becomes invertible in the localization $\cC[S^{-1}]$ (more precisely, that the image of $T$ under the map $\cC \to \cC[S^{-1}]$ is contained in the collection of equivalences of $\cC[S^{-1}]$). Then the natural map
	\eqnn
	\cC[S^{-1}] \to
	\cC[T^{-1}]
	\eqnd
is an equivalence of $\infty$-categories.
\end{prop}

\begin{proof}
We first set some notation: Let $i_S$ and $i_T$ denote the localization maps from $\cC$ to $\cC[S^{-1}]$ and $\cC[T^{-1}]$, respectively. The natural map $h: \cC[S^{-1}] \to \cC[T^{-1}]$, by the universal property of localizations, induces a homotopy $i_T \sim h \circ i_S$.

By hypothesis, the universal property of localizations guarantees 
\enum
\item a map $j: \cC[T^{-1}] \to \cC[S^{-1}]$,
\item a homotopy $j \circ i_T \sim i_S$, and hence (by pulling back along $i_S$)
\item a homotopy $j \circ h \sim \id_{\cC[S^{-1}]}$.
\enumd
Similar reasoning shows that $h \circ j \sim \id_{\cC[T^{-1}]}$.
\end{proof}

\begin{prop}\label{prop. equivalence implications}
Let $f: X \to Y$ be a (not necessarily strict) sectorial embedding.
\enum[(a)]
\item\label{item. bordered eq is sectorial eq} $f$ is a sectorial equivalence if and only if $f$ lifts to (i.e., may be equipped with data realizing $f$ as being) a bordered deformation equivalence.
\item\label{item. trivial inclusion is bordered eq} If $f$ is a trivial inclusion, then $f$ is a bordered deformation equivalence (hence, by~\eqref{item. bordered eq is sectorial eq}, a sectorial equivalence).
\item\label{item. movie inclusion is bordered eq} If $f$ is a movie inclusion, then $f$ is a bordered deformation equivalence (hence, by~\eqref{item. bordered eq is sectorial eq}, a sectorial equivalence).
\enumd
\end{prop}

We note that the proof of this proposition will not rely on any of the results of Section~\ref{section. proof of localization}---this is logically necessary, as Section~\ref{section. proof of localization} relies on Remark~\ref{remark. max phi in Liou is equivalence} below, which in turn relies on the proof methods of the present proposition.

\begin{proof}
\eqref{item. bordered eq is sectorial eq}: It is obvious that any sectorial equivalence is a bordered deformation equivalence (by choosing, for example, compactly supported deformations). So now suppose $f:M \to N$ is a bordered deformation equivalence. This means the image of $f$ under the composition $\lioustr \to \lioudelta \to \lioudeltadef$ is an equivalence in $\lioudeltadef$. Moreover, the inclusion $\lioudelta \to \lioudeltadef$  is an equivalence of $\infty$-categories by Theorem~\ref{theorem. lioudelta is lioudeltadef}, so the existence of inverse data exhibiting $f$ as an equivalence in $\lioudeltadef$ implies the existence of inverse data exhibiting $f$ as an equivalence in $\lioudelta$; but by applying Proposition~\ref{prop. 2-simplex is an isotopy}, the 2-simplices exhibiting $f$ as an equivalence in $\lioudelta$ supply an inverse map $g: N \to M$ and isotopies of sectorial embeddings demonstrating that $f$ is a sectorial equivalence.

\eqref{item. trivial inclusion is bordered eq}: Given the data exhibiting $f: M \to N$ as a trivial inclusion (Definition~\ref{defn. trivial inclusion}), one may construct data exhibiting $f$ as a bordered deformation equivalence. In the notation of Definition~\ref{defn. deformation equivalence}, one may for example choose $g = f_1^{-1}$, and bordered deformations extending $(f_t)_*(\lambda_t^M)$.

\eqref{item. movie inclusion is bordered eq}: We use the notation from Definition~\ref{defn. movie inclusion}.  Let $\lambda_0$ denote the Liouville structure on $M$ associated to $S_0$, and choose an isotopy $\{j_t: S_0 \to [0,1]\}_{0 \leq t \leq 1}$ where $j_0$ is the inclusion and $j_1$ is a diffeomorphism. From hereon, by the sector $M$, we mean the sector $(M,\lambda_0)$. Then $j_t$ induces an isotopy, through exact sectorial embeddings, from $\id_M \times T^*j_0$ to $\id_M \times T^*j_1$. There is an obvious inverse (up to isotopy) from $M \tensor T^*[0,1]$ to $M \tensor T^*S_0$, so $\id_M \times T^*j_0$ is a bordered deformation equivalence from $M \tensor T^*S_0$ to $M \tensor T^*[0,1]$. (In fact, the deformations of Liouville structures may be chosen to be constant.)

Now note that, by definition of movie inclusion, there is a bordered deformation $\{\lambda^{M \times T^*[0,1]}_s\}_{s \in [0,1]}$ of Liouville forms from $\lambda_0 + {\bf p}d{\bf q}$ to the movie Liouville form~\eqref{eqn. movie form} on $M \times T^*[0,1]$. Thus, the ``identity map'' $M \tensor T^*[0,1] \to M \times T^*[0,1]$ (from the stabilization of $M$ to the movie of $M$), coupled with $\{\lambda^{M \times T^*[0,1]}_s\}_{s \in [0,1]}$, is a non-strict bordered deformation equivalence. The inverse is given by the ``identity map'' along with the reverse deformation $\{\lambda^{M \times T^*[0,1]}_{1-s}\}_{s \in [0,1]}$, where obviously the $s$-concatenation of the reverse and forward deformations may be deformed to the constant $s$-parametrized deformation.

Now we see that the strict map $\id_M \times T^*j_0$ from $M \tensor T^*S_0$ to the movie $M \times T^*[0,1]$ is a bordered deformation equivalence---one simply composes and concatenates the inverse data from the previous two paragraphs to exhibit the bordered deformation inverse to  $\id_M \times T^*j_0$.
\end{proof}

\begin{remark}
\label{remark. max phi in Liou is equivalence}
When $\max A' = \max A$, we claim that the map $\phi_{A' \subset A}$ is an equivalence of Liouville sectors. (This was used crucially in Construction~\ref{construction. alpha}.) To see why this claim is true, note we have a commutative diagram
	\eqnn
	\xymatrix{
	& M_{\max A'} \tensor T^*\Delta^{A'} \tensor T^*[-1,0]^{A \setminus A'} \ar[d]^{\sim}
	\\
	M_{{\max A'}} \tensor T^*[-1,0]^{A} \ar[d]^{\cong}
	\ar[r] \ar[ur]^{\sim}
		& M_{A'} \times T^*\Delta^{A'} \tensor T^*[-1,0]^{A \setminus A'}\ar[d]^{\phi_{A' \subset A}} 
	\\
	M_{\max A} \tensor T^*[-1,0]^A 
	\ar[r] \ar[dr]_{\sim}
	& M_A \times T^*\Delta^A \\
	& M_A \tensor T^*\Delta^A  \ar[u]^{\sim}.
	}
	\eqnd
The claim follows if we can prove that both (unlabeled) horizontal arrows are sectorial equivalences. This follows from more or less the same method as in Proposition~\ref{prop. equivalence implications}~\eqref{item. movie inclusion is bordered eq}. Namely, the inclusions of $[-1,0]^A$ into $\Delta^{A'} \times [-1,0]^{A \setminus A'}$ and $\Delta^A$ are both isotopy equivalences of smooth manifolds with corners. (Warning: this does not mean the inclusions are isotopic to diffeomorphisms; see Definition~\ref{defn. isotopy equivalence}.) Thus, the horizontal arrows are sectorial equivalences if the codomains are given the constant movie structure. One can then apply a bordered deformation from the constant movie Liouville structure to the movie structures of the original codomains of the horizontal arrows. As in the proof of Proposition~\ref{prop. equivalence implications}~\eqref{item. movie inclusion is bordered eq},   this exhibits the horizontal arrows as bordered deformation equivalences, and hence sectorial equivalences by Proposition~\ref{prop. equivalence implications}~\eqref{item. bordered eq is sectorial eq}.
\end{remark}

\begin{theorem}\label{thm. three localizations of lioustr}
In $\lioustr$, let $\eqs_{\triv}$ be the collection of strict trivial inclusions, $\eqs$ the collection of strict sectorial equivalences,  $\eqs_{\movie}$ the collection of (strict) movie inclusions, and $\eqs_{\defliou}$ the collection of strict sectorial embeddings that may be promoted to bordered deformation equivalences. Then the natural maps
	\enum[(a)]
	\item\label{item. def and eqs localizations are the same}
	$	\lioustr[\eqs^{-1}] \to
	\lioustr[(\eqs_{\defliou})^{-1}] $,
	\item\label{item. stabilized localizations are the same}
	$	\lioustr^{\dd}[(\eqs^{\dd})^{-1}] \to
	\lioustr^{\dd}[(\eqs_{\defliou}^{\dd})^{-1}] $,
	\item\label{item. triv and eqs localizations are the same stabilized}
	$	\lioustr^{\dd}[(\eqs_{\triv}^{\dd})^{-1}] \to \lioustr^{\dd}[(\eqs_{\defliou}^{\dd})^{-1}] $, and
	\item\label{item. mov and eqs localizations are the same stabilized}
	$	\lioustr^{\dd}[(\eqs_{\movie}^{\dd})^{-1}] \to \lioustr^{\dd}[(\eqs_{\defliou}^{\dd})^{-1}] $
	\enumd
are equivalences of $\infty$-categories. (Note that the final three equivalences are stabilized, while the first is not. Also note that the above natural maps are guaranteed by Proposition~\ref{prop. equivalence implications}.) 

In particular, whether one localizes $\lioustrstab$ by trivial inclusions, by sectorial equivalences, by bordered deformation equivalences, or by movie inclusions, one obtains an $\infty$-category equivalent to $\lioulocal$.
\end{theorem}

\begin{proof}

\eqref{item. def and eqs localizations are the same} This follows from Proposition~\ref{prop. equivalence implications}~\eqref{item. bordered eq is sectorial eq} and Proposition~\ref{prop. equivalent oo categorical localizations}.

\eqref{item. stabilized localizations are the same} This follows from the above by applying Lemma~\ref{lemma. increasing union of localizations is localization}. (See also Corollary~\ref{cor. stab loc is loc stab}.)

The rest of this proof is devoted to establishing \eqref{item. triv and eqs localizations are the same stabilized} and 
\eqref{item. mov and eqs localizations are the same stabilized}.

By Proposition~\ref{prop. equivalent oo categorical localizations} and the equivalence \eqref{item. stabilized localizations are the same},  we are reduced to showing that stabilized sectorial equivalences become homotopy invertible in $\cD$, where $\cD$ is either of $\lioustr^{\dd}[(\eqs_{\triv}^{\dd})^{-1}]$ or $\lioustr^{\dd}[(\eqs_{\movie}^{\dd})^{-1}]$.

So fix a strict sectorial equivalence $f: M \to N$. By definition, there is a (not necessarily strict) sectorial embedding $g: N \to M$ and isotopies $gf \sim \id_M, fg \sim \id_N$ through (not necessarily strict) sectorial embeddings. 
Endow $N$ with a form $\lambda^N_0$ for which $g$ is a strict map to $M$. Then, the isotopy $fg \sim \id_N$ and Proposition~\ref{prop. movies of maps} guarantee a strict map from $(N,\lambda^N_0) \tensor T^*[0,1]$ to a movie $N \times T^*[0,1]$ realizing this isotopy, fitting into the following commutative diagram in $\lioustr$:	
	\eqnn
	\xymatrix{
	(N,\lambda^N_0) \tensor T^*[0,\epsilon]
		\ar[d]^{\simeq}_{\id_N \times T^*\iota_0}
		\ar[drrrr]_{\simeq}^{\id_N \times T^*\iota_0} \\
	(N,\lambda^N_0) \tensor T^*[0,1] \ar[rrrr]_{\text{Prop~\ref{prop. movies of maps}}}
		&&&& N \times T^*[0,1] \\
	(N,\lambda^N_0) \tensor T^*[1-\epsilon,1] \ar[u]_{\simeq}^{\id_N \times T^*\iota_1}
		\ar[rr]^-{g \times \id}
		&& M \tensor T^*[1-\epsilon,1] \ar[rr]^-{f}
		&& N \tensor  T^*[1-\epsilon,1] \ar[u]^{\simeq}_{\id_N \times T^*\iota_1}
	}.
	\eqnd
	
Here, $\iota_0: [0,\epsilon] \to [0,1]$ and $\iota_1: [1-\epsilon,1] \to [0,1]$ are the standard inclusions. Note also we are identifying $f$ with its stabilization. The arrows labeled by $\simeq$ are simultaneously movie inclusions and trivial inclusions, so are homotopy invertible in $\cD$. 
It follows that in $\cD$, $f$ admits a right homotopy inverse.

Likewise, one has a commutative diagram as follows in $\lioustr$:
	\eqnn
	\xymatrix{
	M \tensor T^*[0,\epsilon]
		\ar[d]^{\simeq}_{\id_M \times T^*\iota_0}
		\ar[drrrr]_{\simeq}^{\id_M \times T^*\iota_0} \\
	M \tensor T^*[0,1] \ar[rrrr]_{\text{Prop~\ref{prop. movies of maps}}}
		&&&& M \times T^*[0,1] \\
	M \tensor T^*[1-\epsilon,1] \ar[u]_{\simeq}^{\id_M \times T^*\iota_1}
		\ar[rr]^-{ f}
		&& N \tensor T^*[1-\epsilon,1] \ar[rr]^-{g \times \id}
		&& (M,\lambda^M_1) \tensor  T^*[1-\epsilon,1] \ar[u]^{\simeq}_{\id_M \times T^*\iota_1}
	}.
	\eqnd
Here, we have chosen a form $\lambda^M_1$ to render $g$ strict, along with a movie structure on $M \times T^*[0,1]$ to render all maps strict and to invoke Proposition~\ref{prop. movies of maps}. The arrows labeled by $\simeq$ again become equivalences in $\cD$, and it follows that
$f$ admits a left homotopy inverse.

$f$ is thus an equivalence in  $\cD$.
\end{proof}

\subsection{The wrapped Fukaya category is unchanged by equivalences}

We have the following consequence of Theorem~\ref{thm. three localizations of lioustr}:

\begin{prop}\label{prop. wrapped category invariant under sectorial equivalences}
Fix Liouville sectors $M$ and $N$.
\enum[(a)]
\item\label{item. wrapped fukaya preserves sectorial equivalences} Let $f: M \to N$ be a sectorial equivalence. Then the induced map on the stabilized wrapped Fukaya categories $f_*: \cW^{\dd}(M) \to \cW^{\dd}(N)$ is an equivalence of $A_\infty$-categories.

\item\label{item. wrapped fukaya preserves bordered equivalences} Let $f: M \to N$ be a bordered deformation equivalence. Then the induced map on the stabilized wrapped Fukaya categories $f_*: \cW^{\dd}(M) \to \cW^{\dd}(N)$  is an equivalence of $A_\infty$-categories.

\item\label{item. wrapped fukaya preserves deformation equivalences of weinsteins} Now suppose $M$ and $N$ are Weinstein sectors, and that $f: M \to N$ is a bordered deformation equivalence. Then the induced map on wrapped Fukaya categories is an equivalence.
\enumd
\end{prop}

\begin{proof}
The work~\cite{gps} exhibited the wrapped Fukaya category functor
	\eqn\label{eqn. wrapped functor GPS}
	\lioustr \to \Ainftycatt
	\eqnd
and showed that trivial inclusions induce equivalences of $A_\infty$-categories. Thus, letting $\Ainftycat$ be the localization of $\Ainftycatt$ along quasi-equivalences, one has an induced functor $\cW$ as below:
	\eqnn
	\lioustr \to \lioustr[\eqs_{\triv}^{-1}] \xrightarrow{\cW} \Ainftycat.
	\eqnd 
There are natural stabilization transformations $\cW(M) \to \cW(M \times T^*[0,1])$, so we have an induced functor
	\eqnn
	\lioustr^{\dd} \to \Ainftycat,
	\qquad
	M \mapsto \cW^{\dd}(M) = \hocolim_k \cW(M \times T^* [0,1]^k)
	\eqnd
factoring through $\lioustr^{\dd}[(\eqs_{\triv}^{\dd})^{-1}] \simeq \lioustr[\eqs_{\triv}^{-1}]^{\dd}.$
Thus~\eqref{item. wrapped fukaya preserves sectorial equivalences} and~\eqref{item. wrapped fukaya preserves bordered equivalences} follow from Theorem~\ref{thm. three localizations of lioustr}~\eqref{item. stabilized localizations are the same} and \eqref{item. triv and eqs localizations are the same stabilized}.

\eqref{item. wrapped fukaya preserves deformation equivalences of weinsteins} This follows from (b) by noting that the natural maps $\cW(M) \to \cW(M \times T^*[0,1])$ are equivalences for Weinstein sectors, and hence so are the maps $\cW(M) \to \cW^{\dd}(M)$.
\end{proof}

\begin{remark}
Note that the proof of Proposition~\ref{prop. wrapped category invariant under sectorial equivalences} required no mention of holomorphic disks -- indeed, the only requisite arguments concerning holomorphic disks are fully contained in exhibiting the functor~\eqref{eqn. wrapped functor GPS}. The universal property of localizations, together with the geometry of {\em sectors} (without reference to branes or holomorphic disks) established in Theorem~\ref{thm. three localizations of lioustr}, allowed us to conclude the proposition.  
While it is not too onerous to prove Proposition~\ref{prop. wrapped category invariant under sectorial equivalences} by directly using Floer-theoretic methods, any such proof would first pass through the geometry used in proving Theorem~\ref{thm. three localizations of lioustr}, then require additional arguments about branes and holomorphic disks (by riffing on arguments already used in~\cite{gps}'s Lemma~3.33, for example).
\end{remark}

\clearpage
\section{Incorporating tangential structures}\label{section. tangential}
To define a $\ZZ$-graded (as opposed to two-periodic) wrapped Fukaya category, one must trivialize a natural map from a Liouville sector to $BU(1)$. One must also trivialize certain tangentially classified obstructions to render the coefficient ring $\ZZ$. Likewise, the construction of Nadler-Shende's microlocal category requires a trivialization of the stable normal Gauss map. So it is natural and necessary to consider not only Liouville sectors, but Liouville sectors equipped with particular tangential structures (such as trivializations).

Here we give a description of the $\infty$-categories of Liouville sectors equipped with  tangential structures, paying particular attention to the symmetric monoidal structures we can endow upon them. We omit some details that rely on classical differential-topological constructions (such as equipping a bundle with a locally finite collection of local trivializations, and observing that the space of such choices is contractible upon allowing for common refinements).

\begin{notation}
Throughout this section, $\ast$ refers to the one-point space.
\end{notation}

\subsection{Preliminaries: The \texorpdfstring{$\infty$}{infinity}-categories of (stable) vector bundles}
First, suppose that $\cC$ is a symmetric monoidal category, and fix an object $B \in \cC$. It is not the case that the slice category $\cC_{/B}$ (an object of which is the data of an object $X$ equipped with a map $X \to B$) has an induced symmetric monoidal structure. 

\begin{recollection}\label{recollection. slice categories are symmetric monoidal}
However, $\cC_{/B}$ can be rendered symmetric monoidal each time $B$ is given the structure of a commutative algebra object. Informally, the monoidal product $(X \to B) \tensor (Y \to B)$ is encoded by the composition $X \tensor Y \to B \tensor B \to B$. This holds true when $\cC$ is a symmetric monoidal $\infty$-category and $B$ is given an $E_\infty$-algebra structure. (See Notation~2.2.2.3 and Theorem~2.2.2.4 of~\cite{higher-algebra}; in the notation of loc. cit., we are concerned with the case $S = N(\mathcal{F}\!\operatorname{in}_\ast)$.)
\end{recollection}

\begin{example}
We let $\Top$ denote the $\infty$-category of topological spaces. We let $BO$ denote the usual classifying space of stable vector bundles, which we model concretely as the colimit of infinite Grassmannians:
	\eqnn
	BO := \colim \left( BO(1) \to BO(2) \to \ldots\right),
	\qquad
	BO(n) = \{ \text{$n$-planes in $\RR^\infty$}\}.
	\eqnd
We let $\Top_{/BO}$ denote the slice $\infty$-category. Concretely, an object of $\Top_{/BO}$ is the data of a topological space $X$ equipped with a map $f: X \to BO$. A 1-simplex in $\Top_{/BO}$ is the data of a homotopy-commuting diagram
	\eqnn
	\xymatrix{
	X\ar[dr]_f \ar[rr]^g && X' \ar[dl]^{f'} \\
	& BO
	}
	\eqnd
together with the data of the homotopy $f' \circ g \sim f$. Endowing $BO$ with the standard infinite-loop-space structure (which models the direct sum of stable vector bundles), one obtains a symmetric monoidal structure on $\Top_{/BO}$.
\end{example}

\begin{remark}
Suppose one has a model for the infinite-loop space structure on a space $B$ using the language of operads \`a la May and Boardman-Vogt. For example, one might use the linear isometries operad to model the infinite-loop space structure on $BO$ or $BU$. (See for example Chapter I of~\cite{may-Eoo-ring-spaces}.) Then one can endow the usual topologically enriched category $Top_{/B}$ with the structure of an $E_\infty$-algebra in topologically enriched categories. Taking the homotopy coherent nerve\footnote{By a version of Proposition~2.1.1.27 of~\cite{htt}, any symmetric monoidal, topologically enriched category yields a symmetric monoidal $\infty$-category by taking the homotopy coherent nerve.}, one obtains a symmetric monoidal $\infty$-category $N(Top_{/B})^\tensor$---concretely, the $E_\infty$-algebra structure, via the $\infty$-categorical Grothendieck construction (otherwise known as the unstraightening), models an $\infty$-category equipped with a coCartesian fibration over the (nerve of the multicategory of the particular model of the) $E_\infty$-operad, which in turn is fibered over $N(\mathcal{F}\!\operatorname{in}_*)$. One can then exhibit an equivalence of symmetric monoidal $\infty$-categories
	\eqnn
	N(Top_{/B})^\tensor \simeq \Top_{/B}^\tensor
	\eqnd
where the righthand side is the symmetric monoidal $\infty$-category from Recollection~\ref{recollection. slice categories are symmetric monoidal}. 

We only outline the proof: Both sides are most naturally modeled as having a coCartesian fibration not to $N(\mathcal{F}\!\operatorname{in}_*)$, but to the $\infty$-operad modeling (for example) the linear isometries operad. By definition of the nerve construction, one observes that every simplex of $N(Top_{/B})^\tensor$ defines a simplex of $\Top_{/B}^\tensor$ by the characterization of simplices of $\Top_{/B}^\tensor$ given in Notation~2.2.2.3 of~\cite{higher-algebra}. This map of simplicial sets is obviously essentially surjective and respects coCartesian edges; it remains to show that the map is an equivalence of $\infty$-categories, which follows from a straightforward computation of mapping spaces. (To compute mapping spaces on the lefthand side, one may invoke that $Top_{/B}$ arises as the nerve of a simplicial model category, then use the usual formula for mapping spaces in a multicategory associated to a symmetric monoidal category. The righthand side is computed using the usual fibration sequence for computing morphism spaces in slice categories\footnote{For example, Proposition~5.5.5.12 of~\cite{htt}.}, then the formula for computing mapping spaces in an $\infty$-operad\footnote{For example, (2) of Definition~2.1.1.10 of~\cite{higher-algebra}.}.)
\end{remark}

\nc{\vecbun}{\mathcal{V}\!\operatorname{ecBun}}
\nc{\vecbuntop}{\operatorname{VecBun}}

Let $\vecbuntop_\RR$ denote the topologically enriched category whose objects are vectors bundles $(E \to X)$, and whose morphisms are pairs $(f: X \to X', \tilde f: E \to E')$ where $f$ is continuous and $\tilde f$ is a bundle map (meaning it induces isomorphisms along fibers) over $f$. As an example, the object $(\RR^n \to \ast)$ given by the trivial rank $n$ bundle over a point has endomorphism space homeomorphic to the Lie group $GL(n)$. For simplicity, we will assume that every $X$ is (homotopy equivalent to) a CW complex---this will later allow us to use paracompactness arguments. 

We endow $\vecbuntop_\RR$ with the symmetric monoidal structure induced by Whitney sum and direct product of spaces; so the product of $(E \to X)$ and $(E' \to X')$ is given by the bundle $p_X^* E \oplus p_{X'}^* E'$ over $X \times X'$.  We let
	\eqnn
	(\vecbun_\RR)^\oplus
	\eqnd
denote the symmetric monoidal $\infty$-category obtained by taking the homotopy coherent nerve. (We note the change in font from $\vecbuntop$ to $\vecbun$.) There is a stabilizing endofunctor
	\eqnn
	\dd: \vecbun_\RR \to \vecbun_\RR,
	\qquad
	(E \to X) \mapsto ((E \oplus \underline{\RR}) \to X)
	\eqnd
which is modeled by taking a product with the object $(\RR \to \ast)$. (Here, $\underline{\RR}$ denotes the trivial vector bundle.) Obviously, the cyclic permutation action on $(\RR^{\oplus 3} \to \ast)$---which may be identified with an orientation-preserving element of $O(3)$---is homotopic to the identity. Thus the colimit of the diagram of $\infty$-categories
	\eqnn
	\vecbun_\RR \xrightarrow{\dd} \vecbun_\RR \xrightarrow{\dd} \ldots
	\eqnd
inherits a symmetric monoidal structure by Proposition~\ref{prop. symmetric T}. 

\begin{notation}
We let
	\eqnn
	(\vecbun_\RR^{\dd})^\oplus
	\eqnd
denote the resulting symmetric monoidal $\infty$-category. When considering complex vector bundles, we likewise let
	\eqnn
	(\vecbun_\CC^\dd)^\oplus
	\eqnd
denote the symmetric monoidal $\infty$-category of   CW complexes equipped with stable complex vector bundles. 
\end{notation}

\begin{remark}
The functor of taking a product with $(\RR \to \ast)$ is equivalent to taking a product with $(\underline{\RR} \to T^*[0,1])$ because $T^*[0,1] \to \ast$ is an equivalence in the $\infty$-category $\Top$. Thus, $\vecbun^{\dd}$ admits a model by stabilizing with respect to direct product with $(\underline{\RR} \to T^*[0,1])$. This model is more on-the-nose compatible with the stabilization of $\lioustr$. 
\end{remark}

\begin{remark}
Concretely, an object of 
$\vecbun_\RR^\dd$
is the data of a pair $\left( (E \to X), k \right)$---where $(E \to X)$ is a real vector bundle over a CW base, and $k\geq 0$ is some integer---up to an equivalence relation. The relation identifies $\left( (E \to X), k \right)$ with the object $\left( (E\oplus \underline{\RR} \to X), k+1 \right)$, where $\underline{\RR}$ is the trivial rank one real vector bundle over $X$.

In particular, there are objects $((\underline{\RR} \to \ast), k)$ which one may think of as a stable rank one vector bundle destabilized $k$ times. Each of these has a monoidal inverse given by $((\underline{\RR}^{\oplus k-1} \to \ast),0)$.
\end{remark}

\begin{lemma}\label{lemma. classical vecbun to BO symm mon}
There exists an equivalence of symmetric monoidal $\infty$-categories
	\eqnn
	(\vecbun_\RR^{\dd})^\oplus \simeq \Top_{/BO}^\tensor,
	\qquad
	(\vecbun_\CC^{\dd})^\oplus \simeq \Top_{/BU}^\tensor.
	\eqnd
\end{lemma}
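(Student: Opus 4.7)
The plan is to construct an explicit symmetric monoidal functor $F\colon (\vecbun_\RR)^\oplus \to \Top_{/BO}^\tensor$, show that $F$ sends the stabilization endofunctor $\dd$ to something monoidally equivalent to the identity, hence factors through $(\vecbun_\RR^\dd)^\oplus$, and finally check that the factored functor is an equivalence. The complex case is entirely parallel, replacing $BO(n)$ with $BU(n)$ and $BO$ with $BU$.

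First I would produce a strictly symmetric monoidal topologically-enriched functor $F^{\mathrm{top}}\colon \vecbuntop_\RR \to \operatorname{Top}_{/BO}$ using an explicit model of $BO(n)$ as the Grassmannian of $n$-planes in $\RR^\infty$: a rank-$n$ bundle $(E\to X)$ is sent to a choice of classifying map $X\to BO(n)\to BO$, with Whitney sum matching the strictly associative and commutative pairing $BO(n)\times BO(m)\to BO(n+m)$ coming from direct sum of linear subspaces (using an isomorphism $\RR^\infty\oplus\RR^\infty\cong\RR^\infty$ once for all). Choosing classifying maps is non-canonical, but the space of such choices is contractible for paracompact $X$, so after taking the homotopy coherent nerve we obtain a well-defined symmetric monoidal functor $F\colon(\vecbun_\RR)^\oplus\to\Top_{/BO}^\tensor$ (the $E_\infty$-structure on $BO$ used to construct $\Top_{/BO}^\tensor$ via Recollection~\ref{recollection. slice categories are symmetric monoidal} is precisely the infinite-loop-space structure induced by this Whitney sum pairing).

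Next I would show $F$ inverts stabilization. The object $(\RR\to\ast)\in\vecbun_\RR$ is sent by $F$ to $(\ast\to BO)$ picking out the basepoint; this is the monoidal unit of $\Top_{/BO}^\tensor$, hence $F\circ\dd\simeq F$ through the natural transformation $F(E\to X)\simeq F(E\to X)\tensor F(\RR\to\ast)=F(E\oplus\underline\RR\to X)$. Therefore $F$ factors through the symmetric monoidal colimit (invoking Proposition~\ref{prop. symmetric T} and the universal property of the stabilization) to give a symmetric monoidal functor $\tilde F\colon(\vecbun_\RR^\dd)^\oplus\to\Top_{/BO}^\tensor$.

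To show $\tilde F$ is an equivalence of underlying $\infty$-categories I would check essential surjectivity and full faithfulness. Essential surjectivity: for $X$ a CW complex, any $f\colon X\to BO$ factors through some $BO(N)$ up to homotopy (cellular approximation applied to each cell, using $BO=\colim BO(n)$), and such a factorization is classified by an honest rank-$N$ bundle on $X$, whose image under $\tilde F$ is equivalent to $(X,f)$. Full faithfulness: both mapping spaces sit in fibration sequences over $\Map(X,X')$. On the slice side, the fiber over $g\colon X\to X'$ is the path space $\Map_{\Map(X,BO)}(f,f'\circ g)$, i.e.\ the space of homotopies between classifying maps for $E$ and $g^*E'$. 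On the $\vecbun^\dd$ side, the fiber over $g$ is the mapping space of stable bundle isomorphisms $E\to g^*E'$, which, by the classification theorem for stable bundles (equivalently, by the fact that $BO$ represents the $\infty$-groupoid of stable vector bundles and isomorphisms on each CW complex), is homotopy equivalent to the same space of homotopies. Matching these fibers compatibly in $g$ gives the desired equivalence on mapping spaces.

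The main obstacle will be coherently identifying the symmetric monoidal structure on $\Top_{/BO}^\tensor$—which is built abstractly from the $E_\infty$-algebra structure on $BO$—with the ``Whitney sum'' $E_\infty$-structure coming from the geometric construction of $F$. Equivalently, one must show that the $E_\infty$-space structure used in Recollection~\ref{recollection. slice categories are symmetric monoidal} to make $\Top_{/BO}^\tensor$ symmetric monoidal agrees with the one classifying Whitney sum. This is classical (both structures recover the standard infinite-loop-space structure on $BO$ by May's recognition principle, since direct sum of linear subspaces is strictly commutative and associative up to a contractible space of choices), but articulating it at the level of $\infty$-operads requires a careful identification of the two $E_\infty$-structures, perhaps via a comparison passing through $\vecbun_\RR^\dd$ itself viewed as a symmetric monoidal nerve of a genuine permutative category.
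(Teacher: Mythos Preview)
Your approach and the paper's share the same core: model $BO$ as a colimit of Grassmannians and exploit the contractibility of the space of classifying data (local trivializations plus partitions of unity over a paracompact base) to build the comparison. Two differences are worth noting. Organizationally, you construct $F$ on the unstabilized $\vecbun_\RR$ and then invoke the universal property of Proposition~\ref{prop. symmetric T} to factor through the stabilization, whereas the paper works directly over $\vecbun_\RR^\dd$; your route also supplies an explicit fiber-sequence check of full faithfulness that the paper omits entirely. However, your sentence ``the space of such choices is contractible, so after taking the homotopy coherent nerve we obtain a well-defined symmetric monoidal functor'' hides a genuine gap: contractibility of the choice space for each object does not by itself produce coherent choices across all objects and morphisms, and passing to the nerve does not repair a non-functorial assignment. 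The paper's fix is to introduce an auxiliary $\infty$-category $\widetilde{\vecbun_\RR^\dd}$ whose vertices are bundles \emph{equipped} with the classifying data (locally finite covers, trivializations, partitions of unity sufficient to define a fiberwise-linear injection $E\to\RR^\infty$) and whose higher simplices encode common refinements and homotopies of this data; the forgetful map to $\vecbun_\RR^\dd$ is then a right fibration with contractible fibers, while the map to $\Top_{/BO}$ is defined on the nose. You should borrow this device to make your first step rigorous.
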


\begin{proof}[Sketch of proof.]
There are several proof methods. Because this is a standard result, we sketch one proof method and omit the details. First, we recall that all $(E \to X) \in \vecbun_\RR$ are assumed to have a CW base, and that every CW complex is paracompact. 

$\vecbun_\RR^\dd$ can be written as the base of a right fibration
	\eqnn
	p: \widetilde{\vecbun_\RR^{\dd}} \to \vecbun_\RR^\dd.
	\eqnd
The vertices of $\widetilde{\vecbun_\RR^{\dd}}$ consist of vector bundles $(E \to X)$ equipped with data of local trivializations subordinate to a locally finite cover\footnote{Here we are using paracompactness.}, along with partitions of unity sufficient to define a map $j: E \to \RR^\infty$ for which $j|_{E_x}$ is a linear injection for all $x \in X$. Higher simplices are encoded by common refinements of such covering data, along with homotopies of trivializations and of the partitions of unity. (In particular, the ability to construct edges using refinements of covers is one utility of the model of $\infty$-categories; it would be more difficult to construct a topologically enriched category incorporating such refinements.) 

The fibers of $p$ are contractible, so $p$ is an equivalence. Modeling $BO$ as a colimit of Grassmannians of planes in $\RR^\infty$, we obtain a functor from $\widetilde{\vecbun_\RR^{\dd}}$ to $\Top_{/BO}^{\tensor}$; on an object, we assign the map taking $x \in X$ to $j(E_x)$. (The basic idea of this construction is age-old; see for example Lemma~5.3 of~\cite{milnor-stasheff}.) The details omitted are the constructions of $\widetilde{\vecbun_\RR^{\dd}}$, the symmetric monoidal $\infty$-category structure on $
	\widetilde{\vecbun_\RR^{\dd}}$ and the symmetric monoidal enrichment of the map to $\Top_{/BO}$. 
\end{proof}

\subsection{Tangential structures respect direct products}

\begin{lemma}\label{lemma. vecbun_CC from liou}
There exists a symmetric monoidal functor
	\eqnn
	(\lioustr)^{\tensor}  \to (\vecbun_\CC)^{\oplus}
	\eqnd
which sends a Liouville sector $X$ to $(TX \to X)$ with some choice of almost-complex structure compatible with $\omega_X$. 
\end{lemma}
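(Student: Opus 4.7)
The plan is to use the same auxiliary-category / right-fibration technique that was sketched for Lemma~\ref{lemma. classical vecbun to BO symm mon}, with the additional ingredient that the space of compatible almost-complex structures on a symplectic manifold is contractible. Because a sectorial embedding $f: X \to Y$ is in particular a codimension-zero smooth embedding, its differential $df: TX \to f^*TY$ is an isomorphism of real vector bundles over $X$, and direct products of Liouville sectors (Definition~\ref{defn. products of sectors}) induce direct sums of tangent bundles on the nose. So at the level of underlying real bundles, the assignment $X \mapsto (TX \to X)$ is already strictly symmetric monoidal. The remaining task is to coherently upgrade this to complex structures compatible with the symplectic forms.

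First I would construct an auxiliary symmetric monoidal $\infty$-category $\widetilde{\lioustr}^{\tensor}$ whose vertices are pairs $(X, J)$ where $X$ is a Liouville sector and $J$ is an $\omega_X$-compatible almost-complex structure on $TX$, whose edges are pairs $(f, \{J_t\}_{t\in[0,1]})$ consisting of a strict sectorial embedding $f$ together with a smooth path from $J_X$ to $f^*J_Y$ inside the (non-empty, convex, hence contractible) space $\mathcal{J}(X,\omega_X)$ of compatible almost-complex structures, and whose higher simplices encode the analogous homotopy-coherent data. The symmetric monoidal structure is given by $(X,J_X) \tensor (Y,J_Y) = (X\tensor Y, J_X \oplus J_Y)$; this is well-defined because the sum of compatible $J$'s is compatible with the sum symplectic form, and strict embeddings compose with direct product on the nose (Remark~\ref{remark. strict is monoidal}). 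The forgetful functor
\[
U: \widetilde{\lioustr}^{\tensor} \to \lioustr^{\tensor}
\]
is a symmetric monoidal right fibration whose fibers over each simplex are (singular complexes of) the contractible spaces $\mathcal{J}(X,\omega_X)$ and path spaces therein. Hence $U$ is an equivalence of symmetric monoidal $\infty$-categories, so we may pick a symmetric monoidal section $s$.

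Next I would construct a symmetric monoidal functor $\widetilde{\Phi}: \widetilde{\lioustr}^{\tensor} \to (\vecbun_{\CC})^{\oplus}$ by $(X,J) \mapsto (TX, J) \to X$ on vertices, sending an edge $(f,\{J_t\})$ to the bundle map $df: (TX,J_X) \to (TY, J_Y)$ together with the path of complex structures $\{J_t\}$ on $TX$ (all of which are isomorphic as complex bundles by contractibility of $\mathcal{J}$); this pins down the $k$-simplex by the standard relative-nerve description of $\vecbun_{\CC}$ analogous to the right fibration $\widetilde{\vecbun_\CC^{\dd}}$ used in the proof of Lemma~\ref{lemma. classical vecbun to BO symm mon}. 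Monoidality is immediate because $(T(X\times Y), J_X\oplus J_Y) = (TX, J_X) \oplus (TY, J_Y)$ as complex vector bundles. The desired functor is then the composition $\widetilde{\Phi} \circ s$.

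The main obstacle is bookkeeping rather than geometry: one must make sure that the auxiliary category $\widetilde{\lioustr}^{\tensor}$ is constructed as an actual symmetric monoidal $\infty$-category (e.g.\ as a coCartesian fibration over $N(\mathcal{F}\!\operatorname{in}_\ast)$ in the sense of~\cite{higher-algebra}) and that $U$ is shown to be a symmetric monoidal right fibration whose underlying functor is essentially surjective with contractible fibers---which is routine because $\mathcal{J}(X,\omega_X)$ is convex and the assignment $X \mapsto \mathcal{J}(X,\omega_X)$ is contravariantly functorial along strict sectorial embeddings via pullback of almost-complex structures on open subsets. Once this is in place, the construction of $\widetilde{\Phi}$ is essentially the same as in the proof of Lemma~\ref{lemma. classical vecbun to BO symm mon}, and everything else is formal.
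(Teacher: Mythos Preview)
Your approach is essentially the same as the paper's: build an auxiliary symmetric monoidal $\infty$-category $\widetilde{\lioustr}^{\tensor}$ of pairs $(X,J)$ with $J$ an $\omega_X$-compatible almost-complex structure, observe that the forgetful map to $\lioustr^{\tensor}$ is a fiberwise-contractible right fibration (hence an equivalence), and then map to complex vector bundles. The one place where the paper is more careful is in the construction of your $\widetilde{\Phi}$. Your description of what $\widetilde{\Phi}$ does on an edge $(f,\{J_t\})$---``the bundle map $df\colon (TX,J_X)\to (TY,J_Y)$ together with the path $\{J_t\}$''---is not literally a $1$-simplex of $\vecbun_\CC$ (a $1$-simplex there is a single $\CC$-linear bundle map, and $df$ is generally not $\CC$-linear for the given $J$'s). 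What you have written down is instead a $1$-simplex in the total space of the right fibration over $\vecbun_\RR$ classified by $(E\to X)\mapsto\{J:J^2=-1\}$. The paper makes exactly this move explicit: it first takes the strictly symmetric monoidal functor $\lioustr\to\vecbun_\RR$, $X\mapsto TX$, then uses the lax monoidal natural transformation from ``$\omega$-compatible $J$'s'' to ``all $J$'s'' to produce a map of $\infty$-operads $\widetilde{(\lioustr)}^{\tensor}\to\widetilde{\vecbun_\RR}^{\oplus}$, and finally invokes the equivalence $\widetilde{\vecbun_\RR}\simeq\vecbun_\CC$. So the paper's factorization through $\vecbun_\RR$ is precisely what makes your hand-wave rigorous; once you insert that intermediate step, your argument and the paper's coincide.
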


\begin{proof}
There is a symmetric monoidal functor from $\lioustr$ to $\vecbun_\RR$ sending any Liouville sector to its tangent bundle. Now note the following two functors to the $\infty$-category of spaces:
	\eqnn
	(\lioustr)^{\op} \to \Top,
	\qquad
	X \mapsto \{\text{$\omega_X$-compatible almost-complex structures $J$}\}
	\eqnd
and
	\eqnn
	\vecbun_\RR^{\op} \to \Top,
	\qquad
	(E \to X) \mapsto \{\text{endomorphisms $J$ of $E$ that square to $-1$.}\}
	\eqnd
The first of these functors sends every $X$ to a contractible space, as is well-known; hence it admits a symmetric monoidal promotion. The second is a lax symmetric monoidal functor. The following diagram commutes up to natural transformation:
	\eqnn
	\xymatrix{
	(\lioustr)^{\op} \ar[dr]_{X \mapsto \{J\}} \ar[rr]^{X \mapsto TX} && \vecbun_\RR^{\op} \ar[dl]^{(E \to X) \mapsto \{J\}} \\
		& \Top
	}
	\eqnd
The natural transformation is the inclusion of $\omega$-compatible $J$ into the space of all $J$, and can obviously be upgraded to a lax monoidal natural transformation.   Because the above functors are (lax) symmetric monoidal, the  fibrations classifying these functors are $\infty$-operads, and the lax symmetric monoidal natural transformation results in a diagram of $\infty$-operads:
	\eqnn
	\xymatrix{
	\widetilde{(\lioustr)}^{\tensor} \ar[d]_{\sim} \ar[r] & \widetilde{\vecbun_\RR}^{\oplus}  \ar[d] \ar[r]^\sim & (\vecbun_\CC)^{\oplus}  \\
	(\lioustr)^{\tensor} \ar[r] & \vecbun_\RR^{\oplus} 
	}
	\eqnd
References for the $\infty$-operadic claims include Example 2.4.2.4,
Proposition 2.4.2.5, and
Remark 2.4.2.7 (to deal with the contravariance of the functors we classify) of~\cite{higher-algebra}.

We note that, as $\infty$-categories, the total space $\widetilde{\vecbun_\RR}$ of the classifying fibration is equivalent to $\vecbun_\CC$. The equivalence extends (as indicated by the rightmost horizontal arrow in the diagram) to an equivalence of symmetric monoidal $\infty$-categories. We also note that the left downward arrow is an equivalence of $\infty$-categories because the fibration has contractible fibers. It is then straightforward to see that all the unlabeled arrows in the diagram are maps of symmetric monoidal $\infty$-categories (not just maps of $\infty$-operads).

The desired symmetric monoidal functor is the composition of an inverse to the left vertical arrow with the top horizontal arrows.
\end{proof}

\begin{lemma}
There exists a symmetric monoidal functor
	\eqn\label{eqn. lioustr to BU spaces}
	(\lioustr)^{\tensor} \to \Top_{/BU}^{\tensor}
	\eqnd
which sends a Liouville sector $X$ to some map $(X \to BU)$ classifying the stable tangent bundle of $X$ (equipped with a compatible almost-complex structure).
\end{lemma}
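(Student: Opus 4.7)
The plan is to construct the desired functor as a composition of three symmetric monoidal functors already essentially available from earlier results.

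First, I would invoke Lemma~\ref{lemma. vecbun_CC from liou} to obtain a symmetric monoidal functor $(\lioustr)^\tensor \to (\vecbun_\CC)^\oplus$ sending $X$ to $(TX \to X)$ equipped with a choice of $\omega_X$-compatible almost complex structure. Second, I would compose with the canonical stabilization functor $(\vecbun_\CC)^\oplus \to (\vecbun_\CC^\dd)^\oplus$. The target inherits a symmetric monoidal structure, and this stabilization map is symmetric monoidal by the general machinery: by Proposition~\ref{prop. symmetric T}, it suffices to observe that the cyclic permutation $(123)$ acts on $(\CC^{\oplus 3} \to \ast)$ by an orientation-preserving (in fact, complex-linear) automorphism, hence is homotopic to the identity in $\vecbun_\CC$. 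Third, I would apply the equivalence of symmetric monoidal $\infty$-categories $(\vecbun_\CC^\dd)^\oplus \simeq \Top_{/BU}^\tensor$ supplied by (the complex version of) Lemma~\ref{lemma. classical vecbun to BO symm mon}.

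Composing these three symmetric monoidal functors produces a symmetric monoidal functor $(\lioustr)^\tensor \to \Top_{/BU}^\tensor$. To verify the claim about its effect on objects, I would simply trace through the composition: a Liouville sector $X$ is first sent to $(TX \to X)$ with a compatible complex structure; this object is then identified with its image in $\vecbun_\CC^\dd$ (so that $(TX \oplus \underline{\CC}^{\oplus k} \to X)$ all represent the same object for every $k \ge 0$); the equivalence with $\Top_{/BU}^\tensor$ finally identifies this object with the homotopy class of maps $X \to BU$ classifying the stable complex tangent bundle. Since the equivalence in Lemma~\ref{lemma. classical vecbun to BO symm mon} is constructed precisely by choosing fiberwise-linear injections into $\CC^\infty$ via a locally finite trivializing cover and partition of unity, the resulting map $X \to BU$ is by construction a classifying map for the stable tangent bundle.

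I do not expect any serious obstacle here; the statement is essentially formal given Lemmas~\ref{lemma. vecbun_CC from liou} and~\ref{lemma. classical vecbun to BO symm mon}. The only mild technical point is confirming that the stabilization functor $\vecbun_\CC \to \vecbun_\CC^\dd$ inherits a symmetric monoidal structure compatible with the symmetric monoidal structure on its source, but this is an immediate application of Proposition~\ref{prop. symmetric T} together with the triviality of the $(123)$-permutation action on $\CC^{\oplus 3}$.
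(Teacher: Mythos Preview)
Your proposal is correct and follows essentially the same approach as the paper: compose the functor of Lemma~\ref{lemma. vecbun_CC from liou} with the stabilization $(\vecbun_\CC)^\oplus \to (\vecbun_\CC^\dd)^\oplus$ and then the equivalence of Lemma~\ref{lemma. classical vecbun to BO symm mon}. The paper's proof is a single sentence to this effect; your additional remarks on why the stabilization map is symmetric monoidal and on tracing the effect on objects are fine elaborations but not new ideas.
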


\begin{proof}
Combine Lemma~\ref{lemma. classical vecbun to BO symm mon} with Lemma~\ref{lemma. vecbun_CC from liou}, composing with the natural functor $(\vecbun_\CC)^\oplus \to (\vecbun_\CC^\dd)^\oplus$ along the way.
\end{proof}

\subsection{Stabilized Liouville sectors with tangential structures}
The functor~\eqref{eqn. lioustr to BU spaces} sends sectorial equivalences to homotopy equivalences, so one obtains a symmetric monoidal map from the localization (Proposition~\ref{prop. localization inherits symmetric monoidal structure}):
	\eqnn
	(\lioustr[\eqs^{-1}])^{\tensor} \to (\Top_{/BU})^{\tensor}.
	\eqnd
Because the object $T^*[0,1]$ is sent to an invertible object in the target\footnote{In fact, the image of $T^*[0,1]$ under this functor is (equivalent to) the unit---it is a/the trivial stable vector bundle on $\ast \simeq T^*[0,1]$.}, we obtain a symmetric monoidal functor
	\eqnn
	(\lioulocal)^{\tensor} := (\lioustr[\eqs^{-1}]^{\dd})^{\tensor} \to \Top_{/BU}^{\tensor}. 
	\eqnd
(This follows from Theorem~\ref{theorem. lioustab is symmetric monoidal}, or from Proposition~\ref{prop. symmetric T}. See also Notation~\ref{notation. lioulocal}.)

At this moment, we may contemplate tangential structures as follows:

\begin{construction}[$\lioulocal_{/(B' \to B)}$]
\label{construction. Liou with structures}
Fix $E_\infty$-spaces $B$ and $B'$, along with $E_\infty$ maps $BU \to B \leftarrow B'$. 

We define the $\infty$-category of stable Liouville sectors equipped with a reduction of induced $\Omega B$-structures to $\Omega B'$-structures as the following pullback $\infty$-category:
	\eqnn
	\xymatrix{
	\lioulocal_{/(B' \to B)} \ar[rr] \ar[d] && \Top_{/(B' \to B)} \ar[d] \\
	\lioulocal \ar[r] & \Top_{/BU} \ar[r] & \Top_{/B}.
	}
	\eqnd
(The bottom horizontal arrow is depicted as a composition for explicitness.)  Here, $\Top_{/(B' \to B)}$ is the $\infty$-category whose objects are topological spaces $X$ equipped with (i) Maps to $B'$ and $B$, and (ii) a homotopy between $X \to B' \to B$ and $X \to B$.

By remembering the $E_\infty$-structure of the maps $BU \to B$ and $B' \to B$, one may promote this pullback diagram to be a pullback of symmetric monoidal $\infty$-categories, thereby endowing $\lioulocal_{/(B' \to B)}$ with a symmetric monoidal structure. (See Recollection~\ref{recollection. slice categories are symmetric monoidal}.)
\end{construction}

\begin{example}[Reductions to $\Omega B'$] 
Let $B = BU$.
Fix an $E_\infty$ map $B' \to BU$ from some $E_\infty$ space $B'$. Then one can contemplate stabilized Liouville sectors with a reduction of the stable complex structure of its tangent bundle to some $\Omega B'$-structure. The associated $\infty$-category of such sectors is the pullback of $\infty$-categories
	\eqnn
	\xymatrix{
	\lioulocal_{/(B' \to BU)} \ar[r] \ar[d] & \Top_{/(B' \to BU)} \ar[d] \\
	\lioulocal \ar[r] & \Top_{/BU}.
	}
	\eqnd
Informally, $(\lioulocal)_{/(B' \to BU)}$ is the $\infty$-category of stabilized Liouville sectors equipped with a reduction of the stable complex structure bundle's structure group to $\Omega B'$. Taking $B' = \ast$ to be the trivial/initial $E_\infty$-space, this is the data of a stable trivialization of the complex tangent bundle. One could also choose $B' = BG$ for some infinite-loop space $G$ (e.g., $GL(\infty)$) in which case objects of $\lioulocal_{/(B' \to BU)}$ are stabilized Liouville sectors with a reduction of the stable complex structure of its tangent bundle to a $G$-structure.
\end{example}

\begin{example}[Trivializations of induced structures] 
This time, fix an $E_\infty$ map $BU \to B$ and let $B' = \ast$. One may then contemplate the following pullback diagram, where the bottom horizontal arrow is depicted as a composition for explicitness:
	\eqnn
	\xymatrix{
	\lioulocal_{/(\ast \to B)} \ar[rr] \ar[d] && \Top_{/(\ast \to B)} \ar[d] \\
	\lioulocal \ar[r] & \Top_{/BU} \ar[r] & \Top_{/B}.
	}
	\eqnd
An object of $(\lioulocal)_{\ast \to B}$ is a stable Liouville sector $X$ equipped with a null-homotopy of the map $X \to B$. For example, when $BU \to BU/BO \simeq B(U/O) =: B$ is chosen to be the map classifying the stable Lagrangian Grassmanian bundle, such a null-homotopy exhibits a stable trivialization of this bundle. 
\end{example}
 
\begin{example}
There is another example relevant in the conjectural construction of (wrapped) Floer theory with various coefficient ring spectra. 

For example, choose a commutative coefficient ring spectrum $R$. Then $B\Pic(R)$---the classifying space of rank-one $R$-gerbes---inherits an action of $B\Pic(\SS)$ by virtue of $\SS$ being the initial ring. 
One can take 
\begin{itemize}
	\item $BU \to B = B^2 \Pic(\SS)$ to be a two-fold delooping of the complex $J$ homomorphism,  and 
	\item $B' = B\Pic(R)/B\Pic(\SS) \to B^2 \Pic(\SS)$ to be the map induced by the $B\Pic(\SS)$-action on $B\Pic(R)$.
\end{itemize}
See, for example, Section~1.3 of~\cite{lurie-rotation}.
\end{example}

\begin{remark}\label{remark. tilde liou}
For any choice of $E_\infty$ maps $BU \to B \leftarrow B'$, the map 
	$
	\lioulocal_{/(B' \to B)} \to \lioulocal
	$
is a right fibration. To put this remark in context, recall that a right fibration is a map that classifies contravariant functor from the base to the $\infty$-category of spaces---see~\cite[Section~2.1]{htt}. In our case, the contravariant functor may be informally describe as sending an object $X \in \lioulocal$ to the space of ways to reduce the stable $\Omega B$ bundle to have structure group $\Omega B'$.
\end{remark}

\subsection{Localization with tangential structures (Proof of Theorem~\ref{maintheorem. B structures})}
There is also the obvious strict analogue of $\lioulocal_{/(B' \to B)}$:

\begin{construction}[$(\lioustrstab)_{/(B' \to B)}$]
\label{construction. Lioustr with structures}
Fix $E_\infty$-spaces $B$ and $B'$, along with $E_\infty$ maps $BU \to B \leftarrow B'$. 
We define the following pullback $\infty$-category:
	\eqnn
	\xymatrix{
	(\lioustrstab)_{/(B' \to B)} \ar[rr] \ar[d] && \Top_{/(B' \to B)} \ar[d] \\
	\lioustrstab \ar[r] & \Top_{/BU} \ar[r] & \Top_{/B}.
	}
	\eqnd
(The bottom horizontal arrow is depicted as a composition for explicitness.)
\end{construction}

Note that $(\lioustrstab)_{/(B' \to B)}$ is typically not a category (in the usual sense), but is an $\infty$-category. Informally, this $\infty$-category only detects (stabilized) strict sectorial embeddings and no topology of the space of sectorial embeddings, but detects the homotopy type of lifts of $B$-structures to $B'$.

\begin{remark}[Symmetric monoidal structure]
 As in Construction~\ref{construction. Liou with structures}, one may promote this pullback diagram to be a pullback of symmetric monoidal $\infty$-categories, endowing $(\lioustrstab)_{/(B' \to B)}$ with a symmetric monoidal structure.
 \end{remark}
 
By the universal property of pullbacks, the functor $\lioustrstab \to \lioulocal$ (which factors the map to $\Top_{/BU}$) induces a functor
	\eqn\label{eqn. localization map with structures}
	(\lioustrstab)_{/(B' \to B)} \to \lioulocal_{/(B' \to B)}.
	\eqnd

\begin{theorem}[Theorem~\ref{maintheorem. B structures}]
\label{theorem. proof of B structure localization}
The map~\eqref{eqn. localization map with structures} is a localization. In fact, letting $\eqs_{/B' \to B} \subset (\lioustrstab)_{/(B' \to B)}$ be the pullback of $\eqs \subset \lioustrstab$, the induced map
	\eqnn
	(\lioustrstab)_{/(B' \to B)}[\eqs_{/B' \to B}^{-1}] \to \lioulocal_{/(B' \to B)}	
	\eqnd 
is an equivalence of $\infty$-categories.
\end{theorem}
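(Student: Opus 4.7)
The plan is to reduce Theorem~\ref{maintheorem. B structures} to Theorem~\ref{theorem. localization} by invoking the $\infty$-categorical principle that localization is preserved under pullback along right fibrations. By Remark~\ref{remark. tilde liou}, the forgetful functor $p\colon \lioulocal_{/(B' \to B)} \to \lioulocal$ is a right fibration, classified under straightening by the presheaf $\Phi\colon \lioulocal^{\op} \to \inftyGpd$ that sends a stabilized Liouville sector $X$ to the Kan complex of reductions of its induced $\Omega B$-tangential structure to an $\Omega B'$-structure. By Construction~\ref{construction. Lioustr with structures} and the universal property of pullbacks, the $\infty$-category $(\lioustrstab)_{/(B' \to B)}$ is precisely the pullback $\iota^* p$ of $p$ along the localization functor $\iota\colon \lioustrstab \to \lioulocal$, and the class $\eqs_{/B' \to B}$ is by definition the preimage under the forgetful functor $(\lioustrstab)_{/(B' \to B)} \to \lioustrstab$ of $\eqs^{\dd} \subset \lioustrstab$.

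The key ingredient is the following general fact: if $F\colon \cA \to \cB$ is a localization of $\infty$-categories at a class $S$ of morphisms, and $q\colon \cE \to \cB$ is a right fibration with pullback $F^*q\colon F^*\cE \to \cA$, then the natural map $F^*\cE \to \cE$ exhibits $\cE$ as the localization of $F^*\cE$ at the class $W$ of morphisms whose image in $\cA$ lies in $S$. This follows from the straightening equivalence $\mathrm{RFib}(\cX) \simeq \fun(\cX^{\op}, \inftyGpd)$ together with the universal property of $\cB = \cA[S^{-1}]$: presheaves on $\cB$ valued in $\inftyGpd$ correspond under $F^{\op}$ to precisely those presheaves on $\cA$ that invert $S$, and the pullback presheaf $\Phi \circ F^{\op}$ classifying $F^*\cE$ trivially enjoys this property since every morphism in an $\infty$-groupoid is invertible to begin with. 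Unwinding the universal property of localization then identifies the canonical map $F^*\cE \to \cE$ with the universal functor inverting $W$.

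Applying this principle with $F = \iota$, $S = \eqs^{\dd}$, and $q = p$, and invoking Theorem~\ref{theorem. localization} to certify that $\iota$ is a localization at $\eqs^{\dd}$, we conclude that the canonical map $(\lioustrstab)_{/(B' \to B)} \to \lioulocal_{/(B' \to B)}$ exhibits the target as the localization of the source at $\eqs_{/B' \to B}$. Equivalently, the induced map $(\lioustrstab)_{/(B' \to B)}[\eqs_{/B' \to B}^{-1}] \to \lioulocal_{/(B' \to B)}$ is an equivalence of $\infty$-categories. The main obstacle is to articulate the second-paragraph principle with enough precision for the paper's intended readership; although it is part of the standard $\infty$-categorical toolkit, it will likely be worth including a brief self-contained argument (combining straightening for right fibrations with the universal property of localization in $\inftyCat$) so that symplectically-minded readers need not chase down external $\infty$-categorical references.
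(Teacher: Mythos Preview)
Your proposal is correct and follows essentially the same strategy as the paper: both reduce the theorem to the general principle that localizations of $\infty$-categories are stable under pullback along right fibrations, invoking Remark~\ref{remark. tilde liou} to identify $p\colon \lioulocal_{/(B' \to B)} \to \lioulocal$ as a right fibration and Theorem~\ref{theorem. localization} for the base localization. The only difference lies in how this auxiliary principle is justified---the paper argues that colimits in $\inftycat$ commute with base change along right fibrations and then reduces to the elementary case $\Delta^1 \to \Delta^0$, whereas you argue via straightening and the identification $\fun(\cB^{\op},\inftyGpd) \simeq \fun_S(\cA^{\op},\inftyGpd)$; both routes are standard, though your final ``unwinding'' step (passing from an equivalence of presheaf categories to the statement that the map of \emph{total spaces} is itself a localization) is doing real work and, as you already note, would benefit from being spelled out.
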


\begin{proof}
This follows more generally from the following fact: If $\cC \to \cD$ is a localization, and $\cD' \to \cD$ is a right fibration, then the pullback induces a localization $\cC' \to \cD'$.

Here is a proof. The formation of colimits of $\infty$-categories commutes with pulling back along right fibrations.
So the question immediately reduces to the case where $\cC$ consists of a single morphism and $\cD$ is obtained by
inverting it; this can be modeled by the projection  $\Delta^1 \to \Delta^0$. In this case the right fibration $\cD' \to \cD$ is constant, so the result is trivial.
\end{proof}
\clearpage
\bibliographystyle{plain}
\bibliography{biblio}

\end{document}